
\documentclass[11pt,a4paper,twoside]{article}
\usepackage{titling}
\usepackage{titletoc}
\usepackage{url}

\usepackage[utf8]{inputenc}
\usepackage[T1]{fontenc}
\usepackage{amsmath}
\usepackage{amsfonts}
\usepackage{amssymb}
\usepackage{amsthm}

\usepackage{mathrsfs}
\usepackage{mathtools}
\usepackage{stmaryrd}
\usepackage[all]{xy}
\usepackage{makeidx}


\usepackage[french,english]{babel}\frenchsetup{GlobalLayoutFrench=false}
\usepackage{xspace}
\usepackage{float}

\usepackage{etoolbox}

\RequirePackage{amsbsy}

\makeatletter\def\th@plain{\slshape}\makeatother
\makeatletter\patchcmd{\th@remark}{\itshape}{\slshape}{}{}\makeatother

\marginparwidth 0pt
\oddsidemargin 0cm
\evensidemargin 0cm
\marginparsep 0pt
\topmargin -2cm
\textwidth 16cm
\textheight 26cm
\sloppy

\newcounter{bidon}

\newcommand{\rdb}{\refstepcounter{bidon}}


\newcommand{\eop}{\hfill \mbox{$\Box$}}

\newcommand \spa {\hspace*{1cm}}

\newcommand \CC{\mathbb {C}} 
\newcommand \DD{\mathbb {D}}

\newcommand \KK{\mathbb {K}} 
 
\newcommand \NN{\mathbb {N}} 
\newcommand \N{\NN} 
\newcommand \ZZ{\mathbb {Z}}

\newcommand \QQ{\mathbb {Q}} 
\newcommand \RR{\mathbb {R}} 
\newcommand \R{\RR} 

\newcommand \cit{\CC}

\newcommand \wi {\widetilde}

\newcommand \thref[1] {Theorem~\ref{#1}}
\newcommand \lemref[1] {Lemma~\ref{#1}}

\newcommand \flo[1] {\lfloor #1 \rfloor}
\newcommand \Flo[1] {\llfloor #1 \rrfloor}
\newcommand \abs[1]{\left|{#1}\right|}
\newcommand \abS[1]{\big|{#1}\big|}
\newcommand \Abs[1]{\Big|{#1}\Big|}
\newcommand \norme[1]{\lVert #1 \rVert}
\newcommand \Norme[1]{\big\lVert #1 \big\rVert}
\newcommand \NOrme[1]{\Big\lVert #1 \Big\rVert}

\newcommand \mni {\medskip\noindent }
\newcommand \sni {\smallskip\noindent }
\newcommand \bni {\bigskip\noindent }
\newcommand \snic[1] {\sni\centerline{$#1$}\smallskip}

\newcommand \sC{{{\mathscr C}}}
\newcommand \sE{{{\mathscr E}}}

\newcommand \ca{{{\cal C}}}
\newcommand \M{{{\cal M}}}
\newcommand \p{{\cal P}}
\newcommand \np{{\cal NP}}
\newcommand \cnp{{\cal CO}{\rm-}{\cal NP}}

\newcommand\Ev{{\rm Ev}}
\newcommand\Lin{{\rm Lin}}
\newcommand\QLin{{\rm QLin}}
\newcommand\Poly{{\rm Poly}}

\newcommand\csl{{\sC_{\rm slc}}}
\newcommand\cbo{{\sC_{\rm bc}}}
\newcommand\ckf{{\sC_{\rm KF}}}
\newcommand\caf{{\sC_{\rm ac}}}
\newcommand\capo{{\sC_{\rm pac}}}
\newcommand\cw{{\sC_{\rm W}}}
\newcommand\crf{{\sC_{\rm rff}}}
\newcommand\csr{{\sC_{\rm srff}}}
\newcommand\csp{{\sC_{\rm sp}}}

\newcommand\DTI{{\rm DTIME}}
\newcommand\DSPA{{\rm DSPACE}}
\newcommand\DSRT{{\rm DSRT}}

\newcommand\DSR{{\rm DSR}}
\newcommand\DRT{{\rm DRT}}
\newcommand\QL{{\rm QL}}
\newcommand\LINT{{\rm LINTIME}}
\newcommand\LINS{{\rm LINSPACE}}
\newcommand\PSP{{\rm PSPACE}}
\newcommand\Rec{{\rm Rec}}
\newcommand\Fnc{{\rm Fnc}}
\newcommand\Prim{{\rm Prim}}
\newcommand\lst{{\rm lst}}
\newcommand\Exp{{\rm Exp}}

\newcommand\Exec{{\rm Exec}}

\newcommand\Y{{\bf Y}}
\newcommand\U{{\bf U}}
\newcommand\CB{{\rm BC}}
\newcommand\CA{{\rm AC}}
\newcommand\Gv{{\bf Gev}}
\newcommand\ysl{{\Y_{\rm slc}}}
\newcommand\ybc{{\Y_{\rm bc}}}
\newcommand\ykf{{\Y_{\rm KF}}}
\newcommand\yaf{{\Y_{\rm ac}}}
\newcommand\yap{{\Y_{\rm pac}}}
\newcommand\yw{{\Y_{\rm W}}}
\newcommand\yrf{{\Y_{\rm rff}}}
\newcommand\ysr{{\Y_{\rm srff}}}
\newcommand\ysp{{\Y_{\rm sp}}}

\newcommand \cE {{\cal E}}

\newcommand \loca {locally\xspace}
\newcommand \uni {uniformly\xspace}
\newcommand \mcu {uniform continuity modulus\xspace}
\newcommand \unico {uniformly continuous}

\newcommand \com {complexity\xspace}
\newcommand \equiva {equivalent\xspace}
\newcommand \etpo {in polynomial time\xspace}

\newcommand \rp{rational presentation\xspace}
\newcommand \rps{rational presentations\xspace}
\newcommand \rapr{rationally presented\xspace}

\newcommand \num {{n$^{{\rm o}}$}}

\newcommand\C{{\bf C}}
\newcommand\Ci{\C^{(\infty)}}
\newcommand\czu{{\C[0,1]}}
\newcommand\cab{{\C[a,b]}}
\newcommand\cuu{{\C[-1,1]}}
\newcommand\po{{\bf P}}

\newcommand \Oo {{\rm O}}
\newcommand \sz {{\rm sz}}
\newcommand \Tch {{\rm T}}
\newcommand \Id {{\rm Id}}
\newcommand \depth {{\rm depth}}
\newcommand \Mag {{\rm mag}}
\newcommand \Lg {{\rm lg}}
\newcommand \F {{\rm F}}

\def\.@{\char'76}

\thickmuskip = 7mu plus 2mu

\usepackage[bookmarksopen=false,breaklinks=true,%
      backref=page,pagebackref=true,plainpages=false,%
      hyperindex=true,pdfstartview=FitH,colorlinks=true,%
      pdfpagelabels=true,linkcolor=blue,%
      citecolor=red,urlcolor=red,
      ]%
   {hyperref}

\begin{document}
\selectlanguage{english}

\thispagestyle{empty}

\begin{center} 
{\LARGE \bf Rationally presented metric spaces and complexity, the case of the space of uniformly continuous real functions on a compact interval}\vspace{3cm}
\end{center}
 
\vspace{-2cm}
\noindent In this file you find the English version starting on the page  numbered \pageref{beginenglish}.


\begin{abstract}
We define the notion of {\em rational
presentation of a complete metric space} in order to study metric spaces from the algorithmic complexity point of view. In this setting, we study some presentations of the space $\czu$ of uniformly continuous real functions over [0,1] with the usual norm:
$\norme{f}_{\infty} = {\bf Sup} \{ \abs{f(x)} ; \;
 0 \leq x \leq 1\}.$
This allows us to have a comparison of a global kind between complexity notions attached to these presentations. In particular, we get a generalisation of Hoover's results concerning the {\sl Weierstrass approximation theorem in polynomial time}. We get also a generalisation of previous results on analytic functions which are computable in polynomial time.
\end{abstract}

\bigskip This is the English translation of the French paper 
``Espaces métriques rationnellement présentés et complexité, le cas de l'espace des fonctions réelles uniformément continues sur un intervalle compact'', \emph{Theoretical Computer Science} {\bf 250}, \num 1-2, 265--332, (2001). (Received April 1997; revised March 1999.) We have fixed some minor typos.

\bigskip\noindent   {\large \bf Authors}  

\smallskip \noindent Salah Labhalla,
 Dépt. de Mathématiques,
 Univ. de Marrakech, Maroc\\
{\tt labhalla@ucam.ac.ma} 

\smallskip \noindent Henri Lombardi, Université Marie et Louis Pasteur, F-25030 Besançon Cedex, France, \\
email: {\tt henri.lombardi@univ-fcomte.fr}

\smallskip \noindent 
E. Moutai,
Dépt. de Mathématiques,
Univ. de Marrakech, Maroc

\bigskip \noindent  
Then the French version begins on the page numbered \pageref{beginfrench}.

\begin{center} \label{center}

   {\Large \bf Espaces métriques rationnellement présentés et complexité, le cas de l'espace des fonctions réelles uniformément continues sur un intervalle compact}

\end{center}

\begin{abstract}
 Nous définissons la notion de {\em présentation rationnelle d'un
espace métrique complet} comme moyen d'étude des espaces métriques et des
fonctions continues du point de vue de la complexité algorithmique. Nous
étudions dans ce cadre différentes manières de présenter l'espace $\czu$ 
des fonctions réelles uniformément continues sur l'intervalle $[0,1]$, muni de la norme 
usuelle:
 $\norme{f}_{\infty} = {\bf Sup} \{ \abs{f(x)}  \;
 0 \leq x \leq 1\}.$ Ceci nous permet de
faire une comparaison de nature globale entre les notions de complexité
attachées à ces présentations. En particulier, nous obtenons une
généralisation des résultats de Hoover concernant le {\em théorème 
d'approximation de Weierstrass en temps polynomial}. Nous obtenons également une
généralisation des résultats de Ker-I. Ko, H. Friedman et N. Müller 
concernant les fonctions analytiques calculables en temps polynomial.
\end{abstract}

\normalsize
\newpage
\thispagestyle{empty}

~

\pagestyle{headings}
\patchcmd{\sectionmark}{\MakeUppercase}{}{}{}
\setcounter{page}{0}
\renewcommand\thepage{E\arabic{page}}

\begingroup
\def\proofname{\textsl{Proof}}



\theoremstyle{plain}
\newtheorem{theorem}{Theorem}[subsection]
\newtheorem{proposition}[theorem]{Proposition}
\newtheorem{propdef}[theorem]{Proposition and Definition}
\newtheorem{lemma}[theorem]{Lemma}
\newtheorem{corollary}[theorem]{Corollary}

\theoremstyle{definition}
\newtheorem{definition}[theorem]{Definition}
\newtheorem{notation}[theorem]{Notation}
\newtheorem{example}[theorem]{Example}
\newtheorem{examples}[theorem]{Examples}

\theoremstyle{remark}
\newtheorem{remark}[theorem]{Remark}
\newtheorem{remarks}[theorem]{Remarks}

\title{Rationally presented metric spaces and complexity, the case 
of the space of uniformly continuous real functions on a compact interval}

\author{S. Labhalla\\
 Dépt. de Mathématiques\\
 Univ. de Marrakech, Maroc\\
{\tt labhalla@ucam.ac.ma} 
\and H. Lombardi\\ Laboratoire de Mathématiques de Besançon \\
Université Marie et Louis Pasteur, France \\
{\tt henri.lombardi@univ-fcomte.fr}
\and E. Moutai\\
Dépt. de Mathématiques\\
Univ. de Marrakech, Maroc}

\maketitle
\rdb
\label{beginenglish}

This is the English translation of the French paper 
``Espaces métriques rationnellement présentés et complexité, le cas de l'espace des fonctions réelles uniformément continues sur un intervalle compact'', \emph{Theoretical Computer Science} {\bf 250}, \num 1-2, 265--332, (2001). (Received April 1997; revised March 1999.) We have fixed some minor typos.

\begin{abstract} \label{abstract}
We define the notion of {\em rational
presentation of a complete metric space} in order to study metric spaces from the algorithmic complexity point of view. In this setting, we study some presentations of the space $\czu$ of uniformly continuous real functions over [0,1] with the usual norm:
$\norme{f}_{\infty} = {\bf Sup} \{ \abs{f(x)} ; \;
 0 \leq x \leq 1\}.$
This allows us to have a comparison of a global kind between complexity notions attached to these presentations. In particular, we get a generalisation of Hoover's results concerning the {\sl Weierstrass approximation theorem in polynomial time}. We get also a generalisation of previous results on analytic functions which are computable in polynomial time.
\end{abstract}

\noindent {\bf Key words}: Metric spaces, Real functions, Turing machine, Boolean 
circuit, Binary semilinear circuit, Arithmetic circuit, Algorithmic complexity, Weierstrass
 approximation theorem, Gevrey class, Chebyshev series.

\newpage
\setcounter{tocdepth}{4}
\markboth{Contents}{Contents}
\startcontents[english]

\printcontents[english]{}{1}{}
\normalsize

\section*{Introduction}\label{sec0}
\addcontentsline{toc}{section}{Introduction}
\markboth{Introduction}{Introduction}
Let $\czu$ be the space of uniformly continuous real functions on the interval 
$[0,1]$.

\smallskip In \cite{KF82}, Ker-I.\ Ko and Friedman introduced and studied the notion of complexity of real functions defined via an oracle Turing machine. 
In the article \cite{Ho90}, Hoover studied presentations of the space 
$\czu$  via boolean circuits and arithmetic circuits.

In these two articles the complexity in the $\czu$ space is studied ``in a pointwise meaning''. In other words, they study sentences such as: 
\begin{itemize}

\item 
$f$ is a $\p$-point in the sense of Ker-I.\ Ko and Friedman.

\item 
$f$ is a $\p$-point in the sense of boolean circuits.

\item 
$f$ is a $\p$-point in the sense of arithmetic circuits.
\end{itemize}

Ker-I.\ Ko and Friedman studied the properties of $\p$-points (in their sense). 
Hoover showed that $\p$-points in the sense of arithmetic circuits are the same as $\p$-points in the sense of Ker-I.\ Ko and Friedman.

\medskip In this paper, we study several presentations of the space $\czu$ based on the notion of {\em rational presentation of a complete metric space}. 
This allows us to make a global comparison between the notions of complexity attached to different presentations.
 
\medskip After some preliminaries in Section~\ref{sec1}, Section \ref{sec2} essentially contains an exposition of what seems to us to be a natural approach concerning complexity questions related to separable complete metric spaces. 

\smallskip 
Usually a metric space contains objects of infinite nature (the paradigm being real numbers defined à la Cauchy), which rules out a direct computer presentation (i.e.\ encoded on a finite alphabet) of these objects.

To get round this difficulty, we proceed as  usual for the space $\RR$. 
Let us consider a dense part~$Y$ of the metric space $X$ under consideration, which is simple enough that 
\begin{itemize}

\item 
its elements can be encoded as (certain) words on a fixed finite alphabet;

\item 
the distance function restricted to $Y$ is computable, i.e.\ given by a computable function:

\snic{\delta: Y\times Y\times \NN_1\to \QQ \; \qquad {\rm { with }}\qquad \; \abs{d_X(x,y) - \delta(x,y,n)}\; \leq 1/2^n.
}
\end{itemize}

The space $X$ then appears as the completion of $Y$.

We will say that the proposed encoding of $Y$ and the proposed description of the distance function constitute a ``rational presentation'' of the metric space $X$.
 
It should be noted that we do not intend to deal with non-complete metric spaces, for which the encoding difficulties seem insurmountable, and for which it is practically impossible to demonstrate anything serious in constructive analysis.

Similarly, the metric spaces treated are ``separable'' (also known as ``second countable'').

\smallskip The metric spaces studied in constructive analysis (cf.\ \cite{BB}) are very often defined via a rational presentation of this type, or at least easy to define according to this scheme. 
The problem that arises is generally to define constructively a dense countable part of the space under consideration. This is obviously impossible for classical non-separable Banach spaces such as $L^\infty$, but it is precisely these spaces that cannot be treated in their classical form by constructive methods. 
The problem is more delicate for spaces that are classically separable but for which there is no natural constructive method that gives a dense countable part. 
For example, this is the case for an arbitrary closed subspace of a complete separable space, and it is still the case for certain spaces of continuous functions.

\smallskip The unary presentation and the binary presentation of $\ZZ$ are not equivalent from the point of view of polynomial time complexity. 
The unary presentation is a natural presentation of $\ZZ$ as a group whereas the binary presentation is a natural presentation of $\ZZ$ as a ring. Similar questions can be asked about the usual classical metric spaces.

\smallskip The first question that arises is the comparison of the different presentations of a usual metric space. 
Note that the usual presentation of $\RR$ is  to be the only natural one and that it is from this presentation of $\RR$ that the complexity of a presentation $(Y_1, \delta_1)$ of $X$ or the complexity of the map $\Id_X$ from $(Y_1, \delta_1)$ to $(Y_2, \delta_2)$ is defined when comparing two distinct presentations of~$X$.

\smallskip The next question is that of the complexity of computable  continuous maps between metric spaces. 
While there is a natural notion of the complexity of functions in the case of compact spaces, the question is much trickier in the general case, since it refers to the complexity of arbitrary type-2 functionals.
 
\smallskip Another question is that of the complexity of objects linked in a natural way to the metric space under study. 
For example, in the case of the reals, and considering only the algebraic structure of $\RR$, we are interested in the fact that the complexity of addition, multiplication or finding the complex roots of a polynomial with real coefficients are all ``low level''. 
This kind of result legitimises a posteriori the choice that is usually made to present $\RR$, and results of the opposite kind disqualify other presentations (cf.\ \cite{LL}), which are less efficient than the presentation via Cauchy sequences of rationals written in binary.\footnote{Nevertheless, the sign test is undecidable for the reals presented à la Cauchy. 
We are satisfied with the constructive test in polynomial time: $x + y \geq 1/2^n \Rightarrow ( x \geq 1/2^{n+2} \;\hbox{ or }y \geq 1/2^{n+2} )$.}
 
Similarly, a usual metric space is generally provided with a richer structure than just the metric structure, and it is then a question of studying, for each presentation, the complexity of these ``natural elements of structure''.

\medskip 
In Section \ref{sec3}, we introduce the space $\czu$ of uniformly continuous real functions on the interval $[0,1]$ from the point of view of its rational presentations.
 
The evaluation function $(f, x) \mapsto f(x)$ is not locally uniformly continuous. 
Given its importance, we discuss what the complexity of this function means when we have chosen a rational presentation of the space $\czu$. 

We then give two significant examples of such rational presentations:
\begin{itemize}

\item 
A binary semilinear circuit presentation, $\csl$, where the rational points are exactly the binary semilinear maps. Such a map can be defined by a binary semilinear circuit (cf.\ Definition~\ref{321}) and encoded by an evaluation program corresponding to the circuit.

\item 
A presentation, denoted by $\crf$, via rational functions suitably controlled and given in presentation by formula (cf.\ Definition~\ref{325}).
\end{itemize}
Finally we establish complexity results related to Newman's approximation theorem. 
Briefly, Newman's theorem has polynomial complexity and consequently the piecewise linear functions are, each individually, points of complexity $\p$ in the rational presentation $\crf$.

\medskip In Section~\ref{sec4} we define and study several ``natural'' rational presentations of $\czu$, equivalent from the point of view of polynomial time complexity.
\begin{itemize}
\item 

The so-called Ko-Friedman presentation, denoted by $\ckf$, for which a rational point is given by a quadruplet $(Pr, n, m, T)$ where $Pr$ is a Turing machine program. The integers $n$, $m$, $T$ are control parameters. We make the link with the notion of complexity introduced by Ker-I.\ Ko and Friedman. We show a universal property that characterises this rational presentation from the point of view of polynomial-time complexity.

\item 

The boolean circuit presentation which is denoted by $\cbo$ and for which a rational point is given by a quadruplet $(C,n,m,k)$ where $C$ encodes a boolean circuit and $n$, $m$, $k$ are control parameters.

\item 

The presentation by arithmetic circuits (with magnitude) noted $\caf$. A rational point of this presentation is given by a pair $(C,M)$ where $C$ is the code of an arithmetic circuit, and $M$ is a control parameter.

\item 
The presentation by polynomial arithmetic circuits (with magnitude) noted $\capo$ analogous to the preceding one, but here the circuit is polynomial, i.e.\ it does not contain the ``pass to inverse'' gates). 
\end{itemize}
We show in Section \ref{subsec42} that the presentations $\ckf$, $\cbo$, $\csl$, $\caf$ and $\capo$ are equivalent in polynomial time. 

This result generalises and clarifies Hoover's results. Not only are the $\p$-points of $\ckf$ ``the same'' as the $\p$-points of $\capo$, but even better, the bijections 
\[
\ckf \rightarrow \capo \;\; \; {\rm and} \; \; \; \capo \rightarrow \ckf
\]
which represent the identity of $\czu$ are computable in polynomial time. 
Thus we obtain a fully controlled formulation of the Weierstrass approximation theorem from an algorithmic point of view. 

\smallskip In Section~\ref{subsec43}, we show that these presentations are not of class $\p$ by establishing the infeasibility of computing the norm function (if $\p\neq \np$). 
More precisely, we properly define ``the norm problem'' and show that it is an $\np$-complete problem for the  presentations we have introduced. 

As for the membership test of the set of rational (codes of) points, it is a $\cnp$-complete problem for the presentations $\ckf$ and $\cbo$, while it is in linear time for the presentation $\csl$. The latter is therefore slightly more satisfactory.

\medskip In Section \ref{sec5} we define other presentations of the space $\czu$.
\begin{itemize}

\item 
The presentation denoted by $\cw$ (like Weierstrass) for which the set of rational points is the set of (one-variable) polynomials with rational coefficients given in dense presentation.

\item 
The presentation denoted by $\csp$ for which the set of rational points is the set of piecewise polynomial functions, each polynomial being given as for $\cw$.

\item 
The presentation $\csr$ is obtained from $\crf$ in the same way as $\csp$ is obtained from $\cw$.
\end{itemize}
We try to see to what extent the $\p$ class character of these presentations provides a suitable framework for numerical analysis.

We characterise the $\p$-points of $\cw$ by establishing the equivalence between the following properties (cf.\ \thref{527}).

\begin{itemize}

\item [a)]
$f$ is a $\p$-point of $\cw$;

\item [b)]
the sequence $A_n(f)$ (which gives the expansion of $f$ in a Chebyshev series) is a $\p$-sequence in $\RR$ and verifies the inequality 

\snic{\abS{A_n(f)}\; \leq Mr^{n^{\gamma}} \;{\rm 
with }\;M > 0,~\gamma > 0,~0< r < 1;
}

\item [c)] 
$f$ is a $\p$-point of $\ckf$ and is in the Gevrey class. 
\end{itemize}
We deduce an analogous equivalence between the following properties (cf.\ \thref{528}).
\begin{itemize}

\item [a)] 
$f$ is an analytic function and is a $\p$-point of $\cw$;

\item [b)] 
$f$ is an analytic function and is a $\p$-point of $\ckf$.
\end{itemize}

The usual calculations on $\p$-points of $\cw$ (calculation of the norm, the maximum and the integral of a $\p$-point of $\cw$) are in polynomial time (cf.\ Proposition \ref{526}). 

Moreover a fairly good behaviour of  derivation with respect to complexity is obtained by showing that for any $\p$-point (or any $\p$-sequence) of $\cw$ the sequence (or double sequence) of its derivatives is a $\p$-sequence of $\cw$ (cf.\ \thref{5210} and Corollary \ref{5211}).

\smallskip In \thref{5213} and its Corollary \ref{5214}, we give a more uniform version of the previous results. Combined with Proposition \ref{526}
we get a significant improvement of the results in \cite{Mu87} and \cite{KF88}. And our proofs are more conceptual. 

\smallskip Moreover, these results, combined with Newman's theorem (in its version given in Section~\ref{subsec33}), show that the transition from the presentation by rational functions $\crf$ to the presentation by (dense) polynomials $\cw$ is not in polynomial time. 
Newman's theorem also shows that the presentations $\crf$ and $\csr$ are polynomially equivalent.

\bigskip Summarising the results of sections \ref{sec4} and \ref{sec5}, we obtain that the identity of $\czu$ is uniformly of class $\p$ in the following cases: 
\[
\cw \rightarrow \csp \rightarrow \crf \equiv \csr 
\rightarrow \ckf \equiv \cbo \equiv \csl \equiv \caf 
\equiv \capo.
\] 
and none of the arrows in the line above is a $\p$-equivalence except perhaps $\csp \to \crf$ and also perhaps $\crf \to \ckf$ (but that would imply $\p = \np$).

\medskip We end this introduction with a few remarks on constructivism.

The work presented here is written in the style of constructive mathematics à la Bishop as developed in particular in the books by Bishop \cite{Bi}, Bishop \& Bridges \cite{BB}, Mines, Richman \& Ruitenburg \cite{mrr}. 
This is a kind of minimal body of constructive mathematics.\footnote{A pioneering work in a purely formal style is that of Goodstein \cite{Go1,Go}.}
All the theorems proved in this way are in fact equally valid for classical mathematicians, the advantage being that here the theorems and proofs always have an algorithmic content. 
The classical reader can therefore read this work as an extension of the work on recursive analysis by Kleene and Turing and later by Ker-I.\ Ko  \& H.\ Friedman (\cite{KF82}, \cite{Ko91}), N.\ Th.\ Müller \cite{Mu86,Mu87}, Pour El \& Richards \cite{PR}, who themselves work within a framework of classical mathematics.

On the other hand, theorems and their proofs in Bishop's style are also acceptable within the framework of other variants of constructivism, such as Brouwer's intuitionism or the Russian constructive school of A.A.\ Markov, G.S.\ Ceitin, N.A.\ Shanin, B.A.\ Kushner and their pupils. 
An enlightening discussion of these different variants of constructivism can be found in Bridges \& Richman's book \cite{BR}. 

Beeson's comprehensive book \cite{Be} also gives thorough discussions of these views and their variants, including a remarkable survey of the work of logicians who have attempted to formalise constructive mathematics. 

Russian constructive mathematics can be discovered through the books by Kushner \cite{Ku} and O.\ Aberth \cite{Ab}. 
A very relevant historical article on the subject is written by M.\ Margenstern~\cite{Ma}. 
Chapter IV of \cite[Beeson]{Be} is also very instructive. 
Beeson states on page 58: ``We hope to show that this mathematical universe is an extremely entertaining place, full of surprises (like any foreign country), but by no means too chaotic or unlivable''. 
Russian constructive mathematics restricts its objects of study to ```recursive beings'', and in this respect is similar to certain works of classical recursive analysis developed by Grzegorczyk, Kreisel, Lacombe, Schoenfield and Specker, with whom it shares many results.
 One of the principles of Russian constructive mathematics, for example, is that there are only recursive reals given by algorithms (which calculate sequences of rationals with a controlled speed of convergence). 
And a real function defined on the reals is an algorithm $F$ which takes as input an algorithm $x$ and gives as output, under the condition that $x$ is an algorithm producing a recursive real, an algorithm $y$ producing a recursive real. 
This leads, for example, to Ceitin's theorem, which is classically false, according to which any real function defined on the reals is continuous at any point.
Classical recursive analysis states: if an algorithm terminates each time the input is the code of a recursive real and then gives the code of a recursive real as output, and if it defines a function (i.e.\ two codes of the same recursive real as input lead to two codes of the same recursive real as output) then it defines a function that is continuous at any recursive real point (Kreisel-Lacombe-Schoenfield-Ceitin theorem). 
Beeson analysed the Kreisel-Lacombe-Schoenfield proof to clarify its non-constructive aspects. 
Ceitin's proof, which is more constructive than Kreisel-Lacombe-Schoenfield's, is analysed in \cite{BR}.
 
In Bishop's style, since the notion of effectivity is  to be a primitive notion that does not necessarily reduce to recursivity, and since recursivity, on the contrary, cannot be defined without this primitive notion of effectivity, real numbers and real functions are objects that retain a greater degree of ``freedom'' and are therefore closer to real numbers and real functions as intuitively conceived by classical mathematicians. 
And the preceeding ``Russian'' theorem is therefore unprovable in Bishop style. Symmetrically, theorems from classical mathematics that directly contradict results from Russian constructivism (such as the possibility of unambiguously defining real discontinuous functions) cannot be proved in Bishop-style mathematics. 

In conclusion, Russian constructivism is of undeniable historical interest, develops a very coherent mathematical philosophy and can undoubtedly be revived by theoretical computer science concerns. 
It would also be interesting to study this mathematics from the point of view of algorithmic complexity. 

\section{Preliminaries}\label{sec1}
\subsection{Notations}\label{subsec11}
\begin{description}\itemsep.5pt
\item [$\NN_1$] 
set of unary natural numbers.

\item [$\N$]  
set of natural integers in binary. From the point of view of complexity, $\NN_1$ is isomorphic to the part of $\NN$ formed by the powers of $2$.

\item [$\ZZ$] 
set of integers in binary.

\item [$\QQ$] 
set of rational numbers presented in the form of a fraction with numerator and denominator in binary.

\item [$\QQ^{\NN_1}$] 
set of sequences of rationals, where the index is in unary.

\item [$\DD$] 
set of numbers of the form $k/2^n$ with $(k,n) \in \ZZ \times \NN_1$.

\item [$\DD_{[0,1]}$] 
$\DD \cap [0,1]$.

\item [$\DD_n$] 
set of numbers of the form $k/2^n $ where $k \in \ZZ$.

\item [$\DD_{n,[0,1]}$] 
$\DD_n \cap [0,1]$.

\item [${\DD[X]}$] 
set of polynomials with coefficients in $\DD$ given in dense presentation.

\item [${\DD[X]_f}$] 
set of polynomials with coefficients in $\DD$ given in formula presentation.

\item [$\RR$] 
set of real numbers presented by Cauchy sequences in $\QQ$.

\item [$\CB$] 
set of codes of boolean circuits.

\item [$\CA$] 
set of codes of arithmetic circuits.

\item [{\rm TM}] 
Turing machine.

\item [{\rm OTM}] 
Turing machine with oracle.

\item [$\mu$] 
modulus of uniform continuity (cf.\ Definition \ref{221}).

\item [$\Lg(a)$] 
the length of the binary coding of the dyadic $\vert a \vert$ (for $a \in \DD$).

\item [$\sz(C)$] 
size of circuit $C$ (the number of its gates).

\item [$\depth(C)$] 
depth of circuit $C$.

\item [$\Mag(C)$] 
magnitude of the arithmetic circuit $C$ (cf.\ Definition \ref{4110}).

\item [$\wi{f}$] 
continuous function encoded by $f$.

\item [$\M(n)$] 
complexity of calculating the multiplication of two integers in binary representation (in fast multiplication $\M(n) = \Oo(n \log(n) \log\log(n)$).

\item[$\log(n)$] for a natural integer $n$ is the length of its binary code.

\item [$\flo d$]	
the length (of the discrete encoding) of the object $d$ (the encoding of $d$ is a word on a fixed finite alphabet).

\item[$\Flo{x}$] 
the largest integer $m$ such that $m\leq x$.  
 
\end{description}

\paragraph{Presentations of the space $\czu$}~

\begin{itemize}\itemsep.5pt

\item [$\ckf$] 
presentation by Turing Machine, à la Ko-Friedman\\
$\ykf$ is the set of (codes of) rational points  (cf.\ Definition \ref{411})

\item [$\caf$] 
presentation by arithmetic circuits\\
$\yaf$ is the set of (codes of) rational points (cf.\ Definition \ref{4110})

\item [$\capo$] 
presentation by divisionfree (or polynomial) arithmetic circuits\\
$\yap$ is the set of (codes of) rational points (cf.\ Section \ref{subsubsec414})

\item [$\cbo$] 
presentation by boolean circuits\\
$\ybc$ is the set of (codes of) rational points (cf.\ Definition \ref{418})

\item [$\csl$] 
presentation by binary semilinear circuits\\
$\ysl$ is the set of (codes of) rational points (cf.\ Definition \ref{321})

\item [$\crf$] 
presentation by rational functions in a formula encoding\\
$\yrf$ is the set of (codes of) rational points (cf.\ Definition \ref{325})

\item [$\cw$] 
presentation à la Weierstrass by polynomials in a dense encoding\\
$\yw$ is the set of (codes of) rational points (cf.\ Section \ref{subsec51})

\item [$\csp$] 
presentation by piecewise polynomial functions in a dense encoding\\
$\ysp$ is the set of (codes of) rational points (cf.\ Section \ref{subsubsec512})

\item [$\csr$] 
presentation by piecewise rational functions in a formula encoding\\
$\ysr$ is the set of (codes of) rational points (cf.\ Section \ref{subsubsec513})
\end{itemize}

\subsection{Interesting classes of discrete functions}\label{subsec12} 

We shall consider classes of discrete functions $\ca$ (a discrete function is a function from $A^{\star}$ to $B^{\star}$, where~$A$ and $B$ are two finite alphabets)\footnote{If we want to make this sound more mathematical and less computational, we can replace $A^{\star}$ and $B^{\star}$ by sets~$\NN^k$.} with the following basic stability properties:
\begin{itemize}

\item 
 
$\ca$ contains the usual arithmetic functions and the comparison test in $\ZZ$ (for $\ZZ$ encoded in binary);

\item 
$\ca$ contains functions that can be calculated in linear time (i.e.\ $\LINT \subset \ca$);

\item 
$\ca$ is stable by composition;

\item 
$\ca$ is stable by lists: if $f:A^{\star} \to B^{\star}$ is in $\ca$ then $\lst(f): \lst(A^{\star}) \to \lst(B^{\star})$ is also in $\ca$ ($\lst(f)[x_1, x_2,\ldots, x_n] = [f(x_1), f(x_2),\ldots, f(x_n)]$).
\end{itemize}
A class which verifies the above stability properties is said to be \emph{elementarily stable}.
In practice, we will be particularly interested in the following elementarily stable classes:
\begin{itemize}

\item 
 
$\Rec:$ the class of recursive functions.

\item 
 
$\Fnc:$ the class of constructively defined functions (in constructive mathematics, this concept is a primitive concept which does not coincide with the preceding one, and it is necessary to have it beforehand in order to be able to define the preceding one, recursivity being interpreted as a purely mechanical construction in its unfolding as a process of calculation).

\item $\Prim:$ the class of primitive recursive functions.

\item $\p :$ the class of functions computable in polynomial time.

\item  $\cE:$ the class of elementary recursive functions, i.e.\ functions that can still be calculated in time bounded by a compound of exponentials.

\item $\PSP:$ the class of functions computable in polynomial space (with an output polynomially bounded in size).

\item $\LINS:$ the class of functions computable in linear space (with an output linearly bounded in size).

\item $\DSRT(\Lin, \Lin, \Poly):$ the class of functions computable in linear space, in polynomial time and with an output linearly bounded in size.

\item $\DSRT(\Lin, \Lin, \Oo(n^k)):$ the class of functions computable in linear space, in $\Oo(n^k)$ time (with $k > 1$) and with an output linearly bounded in size.

\item $\DSRT(\Lin, \Lin, \Exp):$ the class of functions computable in linear space, in time $\exp(\Oo(n))$ and with an output linearly bounded in size.

\item $\DRT(\Lin, \Oo(n^k)):$ the class of functions computable in time $\Oo(n^k)$, (with $k > 1$) and with an output linearly bounded in size.

\item $\DSR(\Poly, \Lin):$ the class of functions computable in polynomial space with an output linearly bounded in size.

\item $\DSR(\Oo(n^k), \Lin):$ the class of functions computable in $\Oo(n^k)$ space, (with $k > 1$) with an output linearly bounded in size.

\item $\QL:$ the class of functions computable in quasilinear time (cf.\ Schnorr \cite{Sc}.)  \\($\QL: = \cup_b \DTI (\Oo(n.\log^b(n))) = \DTI (\QLin)$).
\end{itemize}

\smallskip Note that, apart from the $\Rec$ and $\Fnc$ classes, all the classes we have defined are complexity classes (in Blum's sense). However, there is no need for this, as the examples of $\Rec$ and $\Fnc$ show. When we consider only the $\Fnc$ class, we develop a chapter of abstract constructive mathematics.
 Note that $\DTI (\Oo(n^k))$ for $k > 1$ is not stable by composition. But, more often than not, the $\Oo(n^k)$ time calculations that we will have to consider are in the class $\DRT(\Lin, \Oo(n^k))$ or even $\DSRT(\Lin, \Lin, \Oo(n^k))$ and these classes have the right stability properties.

\subsection{Complexity of a Universal Turing Machine}\label{subsec13} 
In the following we will need to use a Universal Turing Machine and estimate its algorithmic complexity. 
The following result, for which we have not found a reference, seems to be part of folklore, and was pointed out to us by Mr Margenstern.

\begin{lemma} \label{131} 
There is a universal TM $MU$ which does the following work.

\noindent 
It takes as input:

\noindent 
--- the code (whose size is $p$) of a TM $M_0$ (operating on a fixed alphabet, with one input band, one output band, several work bands) assumed to be of complexity in time $T$ and space $S$ (with $S(n) \geq n$);

\noindent 
--- an input $x$ (of size $n$) for $M_0$.

\noindent 
It outputs the result of the calculation performed by $M_0$ for input $x$.

\noindent 
It executes this task in time $\Oo(T(n)(S(n)+p))$ and using a space $\Oo(S(n)+p)$.
\end{lemma}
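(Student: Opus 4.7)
The plan is to construct $MU$ as a multi-tape Turing machine over a fixed alphabet with a fixed number of tapes: an input tape holding the pair $(M_0,x)$; a \emph{program} tape onto which the description of $M_0$ (of size $p$) is copied at initialization; a \emph{state} tape holding the current state of $M_0$ in binary (of size $O(\log p)$); a \emph{simulation} tape onto which the configurations of all work tapes of $M_0$ are encoded, using separator symbols between the tapes and head-marker symbols indicating the position of each of $M_0$'s heads; and an output tape. Since $M_0$'s alphabet is fixed, encoding a single cell of $M_0$ requires only $O(1)$ cells of $MU$, with the constant independent of $M_0$.

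The simulation of one step of $M_0$ proceeds in three phases. First, $MU$ sweeps the simulation tape once to locate the head markers and collect into auxiliary storage the symbols currently scanned by each head of $M_0$; since the used region has total length $O(S(n))$, this costs $O(S(n))$. Second, $MU$ sweeps the program tape, matching the current $(\text{state},\text{read symbols})$ tuple against the transitions of $M_0$ and copying the matching right-hand side into auxiliary storage; this costs $O(p)$. Third, $MU$ applies the transition by sweeping the simulation tape again to rewrite symbols and move head markers, and updates the state tape, costing $O(S(n)+p)$. Summing over the $T(n)$ steps gives time $O\bigl(T(n)(S(n)+p)\bigr)$; the simulation tape occupies $O(S(n))$ cells, the program tape $p$ cells, and the state tape $O(\log p)$ cells, giving total space $O(S(n)+p)$.

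The main technical obstacle is that $MU$ has a fixed number of tapes while $M_0$ may have any (constant) number of work tapes. I would handle this by multiplexing all of $M_0$'s work tapes onto the single simulation tape, with explicit delimiters; a head move in $M_0$ that would push material past a delimiter is implemented by a local shift of the contents of the simulation tape, which still fits in $O(S(n))$ per step by standard techniques, so the asymptotic bounds are unaffected. A second minor point is the cost of matching a transition in the program: we must take care that reading the current state (stored in binary of length $O(\log p)$) against an entry in the program table is linear in the entry length, so that the whole lookup is genuinely $O(p)$ and not $O(p\log p)$. This is achieved by a streaming comparison that advances on both tapes in parallel and discards non-matching entries as soon as a mismatch is detected.
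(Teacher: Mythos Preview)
Your construction is correct and is essentially the same approach as the paper's: store the full configuration of $M_0$ (all its work-tape contents, head positions, and state) on a single simulation tape, and simulate each step of $M_0$ by one or two sweeps over this tape plus a sweep over the program description, costing $O(S(n)+p)$ per simulated step. The paper's proof is a two-sentence sketch of exactly this idea; your version simply fleshes out the standard details (multiplexing the tapes with delimiters and head markers, the transition-table lookup, etc.) that the paper leaves implicit.
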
 

\begin{proof} 
Machine $MU$ uses a work tape to write, at each elementary step of machine $M_0$, which it simulates, the list of the contents of each of the variables of $M_0.$ To simulate a step of $M_0$, machine $MU$ needs $\Oo(S(n)+p)$ elementary steps, using a space of the same order of magnitude. 
\end{proof}

\subsection{Circuits and evaluation programs}\label{subsec14} 

Circuit families are interesting computational models, particularly from the point of view of parallelism. 
Boolean circuit families are to some extent an alternative to the standard TM model. Small arithmetic circuits are able to calculate polynomials of very high degree. 
This is the case, for example, for an arithmetic circuit that simulates a Newton iteration for a map given by a rational function.
 
In all cases, the problem arises of knowing what coding to adopt for a circuit. We will always choose to code a circuit with one of the evaluation programs (or straight-line programs) which execute the same task as it does.\footnote{Different evaluation programs may correspond to the same circuit, depending on the order in which the instructions to be executed are written.}
 
Furthermore, w.r.t.\ arithmetic circuits, which represent polynomials or rational functions, we will consider them not from the point of view of exact calculation (which would be too costly), but from the point of view of approximate calculation. The question then arises as to how to evaluate their execution time when we want to guarantee a given precision on the result, for entries in $\DD$ which are themselves given with a certain precision. 
No reasonable bound in execution time can be obtained using general-purpose arguments unless the depth of the circuit is very small, as the degrees obtained are too large and the calculated numbers are likely to be too large. As it is not always possible to limit the depth of circuits to very small depths, it is essential to give a control parameter, called magnitude, which ensures that, despite a possible very large degree, the size of all the numbers calculated by the circuit (which will be evaluated with a precision that is itself limited) is not too large when the input represents a real number varying within a compact interval.

\section{Rationally presented complete metric spaces}\label{sec2}

\subsection[Rational presentation of a metric space, complexity of points \ldots]{Rational presentation of a metric space, complexity of points and families of points}\label{subsec21}

Unless explicitly stated otherwise, the classes of discrete functions we consider will be elementarily stable classes.

\begin{definition}[rational presentation of class $\ca$ for a complete metric space] \label{211}

\noindent 
A complete metric space $(X, d_X)$ is {\em given in a rational presentation of class $\ca$} in the following way. We give a triple $(Y, \delta, \eta)$ where
\begin{itemize}

\item $Y$ is a $\ca$-part of a language $A^{\star}$

\item  $\eta$ is a map from $Y$ to $X$

\item $\delta: Y\times Y\times \NN_1\to
\DD$ is a function in the class $\ca$ and verifying (for $n\in\NN_1$ and $x , y , z \in Y$):

\noindent 
--- $\abs{d_X(x,y)-\delta(\eta(x),\eta(y))}\; \leq 1/2^n$

\noindent 
--- $\delta(x,y,n) \in \DD_n $

\noindent 
--- $\delta(x,y,n) = \delta(y,x,n)$

\noindent 
--- $\delta(x,x,n) = $0

\noindent 
--- $\abs{\delta(x,y,n+1)-\delta(x,y,n)}\; \leq 1/2^{n+1}$

\noindent 
--- $\delta(x,z,n) \leq \delta(x,y,n) + \delta(y,z,n) + 2/2^n$ 
\end{itemize}
If we define the distance $d_Y(x,y)$ as the limit of $\delta(x,y,n)$ when $n$ tends to infinity, the map $\eta$ from~$Y$ to~$X$ identifies $(X,d_X)$ to the completion of $(Y,d_Y)$.
The set $\eta(Y)$ is called {\em the set of rational points of $X$ for the presentation under consideration}. If $y \in Y$ we will often note~$\wi{y}$ for~$\eta(y)$, and $y$ is called the code of~$\wi{y}$.
\end{definition}

We sometimes abbreviate ``$(Y,\delta,\eta)$ is a rational presentation of class $\ca$ for $(X,d_X)$'' to ``$(Y,\delta)$ is a $\ca$-presentation of $(X,d_X)$''.

\begin{remarks} \label{212}
1) We can replace $\DD$ by $\QQ$ in the definition above without modifying the notion that is defined. Choosing $\QQ$ is nicer mathematically, whereas choosing $\DD$ is more natural from a computing point of view. In addition, the constraint $\delta(x,y,n) \in \DD_n$ satisfies the natural request not to use more space than necessary to represent an approximation to within $1/2^n$ of a real number.

\noindent 
2) When we have given a rational presentation for an abstract metric space $(X,d_X)$, we will say that {\em we have provided $(X,d_X)$ with a calculability structure}. This is essentially the same as defining a language $Y \subset A^{\star}$ and then a  map $\eta$ from $Y$ to $X$ whose image is a dense part of $X$. The presentation is completely defined only when we have also given a map $\delta\colon Y \times Y \times \NN_1 \to \DD$ verifying the requirements of Definition \ref{211}. 
{\em In the following, we will nevertheless allow ourselves to say that an encoding of a dense part $Y$ of $X$ defines a presentation of class $\ca$ of $X$ when we show that the requests of Definition \ref{211} can be satisfied}. 

\noindent 
3) Understood in the constructive sense, the sentence included in the definition ``we give a  map $\eta$ from $Y$ to $X$'' requires that we have ``a certificate that $\eta(y)$ is indeed an element of $X$ for any $y$ in $Y$''. 	
It may be that this certificate of membership itself implies a natural notion of complexity. When this is the case, for example for the space $\czu$, it will be inevitable to take this complexity into account. {\em Thus, Definition \ref{211} is considered as being incomplete and needing to be specified on a case-by-case basis}. This is undoubtedly a pity from the point of view of the formal elegance of general definitions. But it is a de facto situation that seems very difficult to get round. 
\end{remarks}

\begin{examples}\label{213}~

\noindent 
--- The space $\RR$ is usually defined in the presentation where the set of rational points is $\QQ$. Equivalently, and this is what we will do in the following, we can consider as the set of rational points of $\RR$ the set $\DD$ of dyadic numbers.\footnote{This corresponds to the notation $\RR_{\rm conv}$ in \cite{LL} and $\RR_{\rm con}$ in \cite{Ko83}.}

\noindent 
--- We can easily define the product of two presentations for two metric spaces.

\noindent 
--- Any {\em discrete set} $Z$ ($Z$ is given as a part of a language $A^{\star}$ with an equivalence relation which defines equality in $Z$ and which is testable in the class $\ca$) gives rise to a {\em discrete metric space}, i.e.\ in which the distance of two distinct points is equal to $1$. 

\noindent 
--- The complete metric spaces of constructive mathematics in the style of Bishop \cite{BB} generally admit a rational presentation of class $\Fnc$. 
In each concrete case the presentation turns out to be a presentation of class $\Prim$ or even $\p$. 
\end{examples}

\begin{definition}[complexity of a point in a rationally presented metric space] \label{214} ~

\noindent 
Consider a complete metric space $X$ given in a presentation $(Y,\delta,\eta)$ of class $\ca'$. A point $x$ of $X$ is said to be of class $\ca$ when we know a sequence ($n \mapsto y_n$) of class $\ca$ (as a function $\NN_1 \to Y$) with $d_X(x,\eta(y_n)) \leq 1/2^n$.
We also say that $x$ is a \emph{$\ca$-point in $X$}.
\end{definition}

\begin{example} \label{215}
When $X = \RR$, the above definition corresponds to the usual notion of a real number of class $\ca$ (in the Cauchy sense).
\end{example}

\begin{remark} \label{216}
It would seem natural to require that the class $\ca'$ contain the class $\ca$, but this is not completely essential. This remark applies to almost all the definitions that follow.
\end{remark}

\mni
{\bf Definition \ref{214}bis}
{\em (complexity of a point in a rationally presented metric space, more explicit version)} 
 Consider a complete metric space $X$ given in a presentation of class $\ca'$: $(Y, \delta,\eta)$. 
\\ 
A point $x$ of class $\ca$ in $X$ is given by a sequence ($n \mapsto y_n$) of class $\ca$ (as a map $\NN_1 \to Y$) satisfying the  conditions
$
\delta(y_n,y_{n+1},n+1) \leq 1/2^n
$ 
and $x=\lim_{n\to \infty}\eta(y_n)$.

\medskip
We leave it to the reader to check that the two definitions \ref{214} and \ref{214}bis are equivalent.
We now turn to the definition of complexity for a family of points (with a discrete set of indices)

\begin{definition}[complexity of a family of points in a rationally presented metric space] \label{217} ~

\noindent 
Consider a discrete preset $Z$ (this is the set of indices of the family, it is given as a part of a language $A^{\star}$ over a finite alphabet $A$) and a complete metric space $X$ given in a presentation of class $\ca':~(Y, \delta,\eta)$. 
A function (or family) $f\colon Z \to X$ is said to be of class $\ca$ when we know a map $\varphi\colon Z \times \NN_1 \to Y$ which is of class $\ca$ and which verifies:
\[
d_X(f(z),\eta(\varphi(z,n))) \leq 1/2^n \;{\rm for\, all}\; z \in Z.
\] 
We then say that $\varphi$ is {\em a presentation of class $\ca$ of the family $f(z)_{z\in Z}$ of points of $X$.}
\end{definition}

\begin{examples} \label{218}~

\noindent 
--- When $Z = \NN_1$ and $X = \RR$ the above definition corresponds to the usual notion of ``sequence of real numbers of class $\ca$'' (we also say: ``a $\ca$-sequence in $\RR$''). 

\noindent 
--- The definition of a space rationally presented in class $\ca$ can be reread in the following way. We give: 

\noindent 
\spa -- a $\ca$-part $Y$ of a language $A^{\star}$. 

\noindent 
\spa -- a function $\varphi\colon Y \to X$ such that $\varphi(Y)$ is dense in $X$ and such that the family of real numbers $\; \; Y \times Y\to \RR$, $\; \; (y_1, y_2) \mapsto d_X(\varphi(y_1), \varphi(y_2))$ is of class $\ca$. 
\end{examples}
 
\begin{proposition} \label{219}
Let $(x_n)$ be a sequence of class $\ca$ (the index set is $\NN_1$) in a rationally presented metric space $(X,d)$. If the sequence is explicitly Cauchy with the inequality $d(x_n, x_{n+1}) \leq 1/2^n$ then the limit of the sequence is a point of class $\ca$ in $X$.
\end{proposition}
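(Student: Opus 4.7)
The plan is to exhibit an explicit $\ca$-sequence $k\mapsto y_k$ in $Y$ satisfying $d_X(x,\eta(y_k))\le 1/2^k$, where $x=\lim_n x_n$. By Definition \ref{217} applied with $Z=\NN_1$, the hypothesis that $(x_n)$ is a sequence of class $\ca$ in $X$ gives a map $\varphi\colon \NN_1\times\NN_1\to Y$ of class $\ca$ with $d_X(x_n,\eta(\varphi(n,k)))\le 1/2^k$ for all $n,k$. The natural candidate is then $y_k:=\varphi(k+a,k+b)$ for suitable small shifts $a,b$.

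First I would estimate $d_X(x,x_n)$ from the explicit Cauchy condition: a telescoping triangle inequality yields $d_X(x,x_n)\le \sum_{j\ge n} 1/2^j = 1/2^{n-1}$. Combining this, for any chosen $y_k=\varphi(n(k),m(k))$, with the presentation bound $d_X(x_{n(k)},\eta(y_k))\le 1/2^{m(k)}$ gives $d_X(x,\eta(y_k))\le 1/2^{n(k)-1}+1/2^{m(k)}$. Taking $n(k)=k+2$ and $m(k)=k+1$ produces the bound $1/2^{k+1}+1/2^{k+1}=1/2^k$, which is exactly what Definition \ref{214} requires.

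Second I would check that the map $k\mapsto y_k=\varphi(k+2,k+1)$ is itself in class $\ca$. Since $\NN_1$ is encoded in unary, the auxiliary map $k\mapsto(k+2,k+1)$ is computable in linear time, hence lies in $\LINT\subset\ca$. Stability of $\ca$ under composition (one of the required properties of an elementarily stable class, Section \ref{subsec12}) then gives that the composite $k\mapsto\varphi(k+2,k+1)$ is of class $\ca$. Therefore the limit $x$ is a point of class $\ca$ in $X$ in the sense of Definition \ref{214}.

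There is really no serious obstacle: the whole argument is a triangle-inequality bookkeeping exercise together with an appeal to the two stability properties (closure under linear-time operations and under composition) that were built into the definition of an elementarily stable class. The only subtlety worth flagging is the factor-of-two loss in the Cauchy tail bound, which is why one needs the small shifts $k+2$ and $k+1$ rather than using $\varphi(k,k)$ directly; this tiny index shift is what makes both error terms simultaneously at most $1/2^{k+1}$ and ensures the final sum sits inside $1/2^k$.
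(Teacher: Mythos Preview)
Your proof is correct and follows essentially the same diagonal-extraction idea as the paper: the paper takes $z_n=\psi(n+1,n+1)$ where your $\varphi$ is their $\psi$, while you take $y_k=\varphi(k+2,k+1)$. Your index shift is slightly larger because you correctly bound the Cauchy tail as $d(x,x_n)\le 1/2^{n-1}$; the paper writes $d(x,x_{n+1})\le 1/2^{n+1}$, which is off by a factor of two, so your version is in fact the more careful one.
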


\begin{proof} Let $(Y, \delta, \eta)$ be a rational representation of $(X,d)$ and $x$ be the limit of the sequence $(x_n)$.
The sequence $(x_n)$ is of class $\ca$ in $X$, so there exists a function $\psi\colon \NN_1 \times \NN_1 \to Y$ of class $\ca$ such that: 
\[
d(x_n,\psi(n,m)) \leq 1/2^m \; \;{\rm for\; all}\; n, m \in \NN_1.
\]
Let $z_n = \psi(n+1,n+1)$,  this is a sequence of class $\ca$ and
\[
d(x,z_n) \leq d(x,x_{n+1}) + d(x_{n+1},\psi(n+1,n+1)) \leq 1/2^{n+1} +
1/2^{n+1} = 1/2^n.
\]
So $x$ is a $\ca$-point in $X$. 
\end{proof}

\begin{remark} \label{2110}
If we slow down the speed of convergence of the Cauchy sequence sufficiently, we can obtain as the limit point of a sequence of class $\DTI (n^2)$ an arbitrary recursive point of $X$ (cf.\ \cite{KF82} for the space $[0,1]$).
\end{remark} 

\mni
{\bf Definition \ref{217}bis }
{\em (complexity of a family of points in a metric space, more explicit version) }

\noindent 
Consider a discrete preset $Z$ (a part of a language $A^{\star}$) and a complete metric space $X$ given in a presentation of class $\ca': (Y,\delta,\eta).$ 
A family $ f\colon Z \to X$ is said to be of class $\ca$ if we have a function $ \varphi\colon Z \times \NN_1 \to Y$ of class $\ca$ which verifies
$
\delta(\varphi(z,n),\varphi(z,n+1),n+1) \leq 1/2^n 
$
and $f(z)=\lim_{n\to \infty}\eta(\varphi(z,n))$  for all  $z \in Z$.

\subsection{Complexity of uniformly continuous maps} \label{subsec22}

We begin by giving a ``reasonable'' definition, which will be justified by the examples and propositions that follow.

\begin{definition}[complexity of uniformly continuous maps between rationally presented metric spaces] \label{221}
Consider two complete metric spaces $X_1$ and $X_2$ given in presentations of class $\ca'$: 
$(Y_1,\delta_1,\eta_1)$ and $(Y_2,\delta_2,\eta_2)$. 
Let $f\colon X_1 \to X_2$ be a uniformly continuous map and $\mu\colon \NN_1 \to \NN_1$ be a sequence of integers.

\noindent 
We will say that $\mu$ is {\em a modulus of uniform continuity for the map $f$} if we have: 
\[
d_{X_1}(x,y) \leq 1/2^{\mu(n)} \Rightarrow d_{X_2}(f(x),f(y)) \leq
1/2^n \;{\rm for\, all}\; x, y \in X_1 \;{\rm and}\; n \in \NN_1
\]
We will say that the function $f$ is {\em uniformly of class} $\ca$ (for the considered presentations) when

\noindent 
--- it has a modulus of uniform continuity $\mu\colon \NN_1 \to \NN_1$ in the class $\ca$.

\noindent 
--- the restriction of $f$ to $Y_1$ is in the class $\ca$ in the sense of Definition \ref{217}, i.e.\ it is presented by a function $\varphi\colon\NN_1 \times Y_1 \to Y_2 $ which is in the class $\ca$ and which verifies:
\[
d_{X_2}(f(y),\eta_2(\varphi(y,n))) \leq 1/2^n \; {\rm for\, all}\;  y \in 
Y_1.
\]	
When $X_1 = X_2 =X$ the map $\Id_X$ has $\Id_{\NN_1}$ as its modulus of uniform continuity. If the two maps~$\Id_X$ (from $X_1$ to $X_2$ and from $X_2$ to $X_1$) are in the class $\ca$ we will say that {\em the two presentations are (uniformly) $\ca$-equivalent}.
\end{definition}

\begin{remarks} \label{222}~

\noindent 
1) We are not asking that, for $n$ fixed, the function $y \mapsto\varphi(y,n)$ be uniformly continuous on $Y_1$ or even that it be continuous at every point of $Y_1$.

\noindent 
2) Note that the definition here gives a modulus of uniform continuity corresponding (very nearly) to the definition given in  the classical approximation theory. A modulus of uniform continuity is there a nondecreasing sequence of integers 
$\NN\to\ZZ: n \mapsto \nu(n)$ which tends to $+\infty$ and verifies the following implication.
\[
d_{X_1}(x,y) \leq 1/2^n \Rightarrow d_{X_2}(f(x),f(y)) \leq 1/2^{\nu(n)}
\]
The function $\nu$ is a kind of ``reciprocal'' of the function $\mu$.
\end{remarks}

\begin{examples}\label{223}~

\noindent 
1) When $X_1 = [0,1]$, $X_2 = \RR$ and $\ca$ is an \emph{elementarily stable} class of complexity in time or space, the above definition is equivalent to the usual notion of a real function computable in the class $\ca$ (cf.\ \cite{KF82}, \cite{Ko91}), as we will prove in detail in Proposition~\ref{415}. 

\noindent 
2) When $X_1$ is a discrete metric space, Definition~\ref{221} gives back  Definition~\ref{217}. 
\end{examples}

Definition \ref{221} makes the notion of complexity accessible for uniformly continuous maps. It therefore deals with all continuous maps in the case where the initial space is compact. 
Remember that in constructive analysis, in the case of a compact space, continuity is defined as meaning uniform continuity (cf.\ \cite{Bi}) and not pointwise continuity. 

\begin{remark}\label{224} 
The control of continuity given by the modulus of uniform continuity is essential in Definition \ref{221}. For example, we can define a function $\varphi\colon [0,1] \to \RR$ which is continuous in the classical sense, whose restriction to $\DD_{[0,1]} = \DD\cap [0,1]$ is $\LINT$ but which does not have a recursive modulus of uniform continuity. For this we consider a function $\theta\colon \NN \to \NN$ of class $\LINT$ that is injective and has non-recursive range. For each $m \in \NN_1$ we consider $n = \theta(m)\; , a_n = 1/(3.2^n)$ and we define $\psi_m\colon [0,1] \to\RR$ everywhere zero except on an interval centred in $a_n$ on which the graph of $\psi_m$ makes a peak of height $1/2^n$ with a slope equal to $1/2^m$. 
Finally, we define $\varphi$ as the sum of the series $\sum_m \psi_m$. Although the sequence $\varphi(a_n)$ is not a recursive sequence of real numbers (which implies that $\varphi$ cannot have a recursive modulus of uniform continuity), the restriction of $\varphi$ to dyadic numbers is very easy to calculate (the disputed real numbers $a_n$ are not dyadic). 
This example clearly shows that {\em the classical definition of continuity (continuity at any point) does not give access to the calculation of the values of the function from its restriction to a dense part of the initial space}.
\end{remark}

\begin{remark}\label{225}
In \cite{KF82} it is ``proved'' that if a functional defined via an Oracle Turing Machine (OTM) computes a function from $[0,1]$ to $\RR$, then the function has a recursive modulus of uniform continuity. If we avoid all recourse to non-constructive principles (hidden in the Heine-Borel theorem), the proof of \cite{KF82} can be easily transformed into a constructive proof of the following theorem: 
\emph{if a function $f:[0,1] \to \RR$ is uniformly continuous and computable by an OTM then its modulus of uniform continuity is recursive}.

\noindent 
On the other hand, if we make the (by no means implausible) assumption that every oracle in~a TM is provided by a mechanical (but unknown) process, it is possible to define ``pathological'' functions from $[0,1]$ into $\RR$ by means of OTMs: these are functions that are continuous at any recursive real point but not uniformly continuous. 
This is based on the ``singular tree of Kleene'': an infinite recursive binary tree which has no infinite recursive branch. Cf.\ \cite{Be}, theorem from Section 7 of Chapter 4, page 70 where we are given a function $t(x)$ which is continuous at any recursive real point on $[0,1]$, but which is not bounded (hence not uniformly continuous) on this interval.

\noindent 
It seems strange that in a proof concerning computability issues, one can use without even mentioning it the hypothesis that the oracles of an OTM behave antagonistically with Church's Thesis (at least in the form in which it is admitted in Russian constructivism).
\end{remark}

We can easily verify that Definition \ref{221} can be translated into the following more explicit form. 

\mni
{\bf Definition \ref{221}bis}
{\em (complexity of uniformly continuous functions between rationally presented metric spaces, more explicit form).}
Consider two complete metric spaces $X_1$ and $X_2$ given in presentations of class $\ca'$,  $(Y_1,\delta_1,\eta_1)$ and $(Y_2,\delta_2,\eta_2). $ A uniformly continuous function $f\colon X_1 \to X_2 $ is said to be {\em uniformly of class $\ca$} (for the considered presentations) when it is presented by means of two data 

\noindent 
--- the restriction of $f$ to $Y_1$ is presented by a function $\varphi\colon Y_1\times \NN_1 \to Y_2$ which is of class $\ca$ and which verifies 
\[
\delta_2(\varphi(y,n),\varphi(y,n+1),n+1) \leq 1/2^n \; \rm{for \; all} \; 
y \in Y_1,
\]
with $f(y)=\lim_{n\to \infty}\eta_2(\varphi(y,n))$.

\noindent 
--- a sequence $\mu\colon \NN_1 \to \NN_1$ in the class $\ca$ satisfying: for all $x$ and $y$ in $Y_1$
\[
\delta_1(x,y,\mu(n))\le 1/2^{\mu(n)} \; \Rightarrow\; \delta_2(\varphi(x,n+2),\varphi(y,n+2),n+2) \leq 1/2^n.
\]

\medskip
The following results are easy to establish.

\begin{proposition} \label{226}
The composite of two maps uniformly of class $\ca$ is a map uniformly of class~$\ca$. 
\end{proposition}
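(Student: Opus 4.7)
The plan is to unpack Definition~\ref{221}bis for both maps, then build the data for the composite by composing the presentations with a suitable shift in precision, using the modulus of the outer map to absorb the approximation error on the inner map.

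More precisely, let $X_1, X_2, X_3$ be complete metric spaces given in presentations $(Y_i,\delta_i,\eta_i)$ of class $\ca'$, and let $f\colon X_1\to X_2$, $g\colon X_2\to X_3$ be uniformly of class $\ca$, presented by $(\varphi_f,\mu_f)$ and $(\varphi_g,\mu_g)$ respectively. First I would dispatch the modulus: set
\[
\mu(n) \;=\; \mu_f(\mu_g(n)).
\]
If $d_{X_1}(x,y)\le 1/2^{\mu(n)}$ then $d_{X_2}(f(x),f(y))\le 1/2^{\mu_g(n)}$ by the defining property of $\mu_f$, hence $d_{X_3}(g(f(x)),g(f(y)))\le 1/2^n$ by that of $\mu_g$. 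Since $\ca$ is stable by composition and contains $\mu_f,\mu_g$, the sequence $\mu$ is of class $\ca$.

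Next I would define the presentation of the composite on rational points. For $y\in Y_1$ and $n\in\NN_1$ set
\[
\varphi(y,n) \;=\; \varphi_g\bigl(\varphi_f(y,\mu_g(n+1)),\,n+1\bigr) \;\in\; Y_3.
\]
The verification is a triangle inequality cut into two halves of size $1/2^{n+1}$. Writing $z:=\varphi_f(y,\mu_g(n+1))\in Y_2$, the definition of $\varphi_f$ gives $d_{X_2}(f(y),\eta_2(z))\le 1/2^{\mu_g(n+1)}$, so by the modulus property of $g$ applied at level $n+1$,
\[
d_{X_3}\bigl(g(f(y)),\,g(\eta_2(z))\bigr) \;\le\; 1/2^{n+1}.
\]
The definition of $\varphi_g$ yields $d_{X_3}(g(\eta_2(z)),\eta_3(\varphi_g(z,n+1)))\le 1/2^{n+1}$, and adding these two bounds gives $d_{X_3}(g(f(y)),\eta_3(\varphi(y,n)))\le 1/2^n$, which is the requirement of Definition~\ref{217}. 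The map $\varphi$ lies in $\ca$ because it is obtained by composing/substituting $\varphi_f$, $\varphi_g$ and $\mu_g$, all of which are in $\ca$, and $\ca$ is elementarily stable.

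There is essentially no hard step here; the only thing to watch is the bookkeeping of precisions, namely the choice to compute $f$ at accuracy $\mu_g(n+1)$ so that the error it contributes survives one application of $g$ at level $n+1$, and then to compute $g$ itself at level $n+1$, so that the two halves add up to $1/2^n$. One should also note that the definition as stated asks only for a function $\varphi(\cdot,n)$ on $Y_1$, not for any continuity of $\varphi$ in its first argument (cf.\ Remark~\ref{222}), so no extra care is needed beyond the closure properties of $\ca$.
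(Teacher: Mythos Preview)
Your proof is correct and is exactly the natural argument; the paper itself omits all details, merely stating that the result is ``easy to establish,'' so there is nothing to compare against beyond confirming that your bookkeeping of precisions and your appeal to the elementary stability of $\ca$ are sound, which they are.
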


\begin{proposition} \label{227}
The image of a point (respectively of a family of points) of class $\ca$ by a map uniformly of class $\ca$ is a point (respectively of a family of points) of class $\ca$. 
\end{proposition}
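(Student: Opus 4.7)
The plan is to compose the approximating data for the point with the approximating data for the map, using the modulus of uniform continuity to synchronise the precisions. I treat the point case first; the family case is then an immediate parametrisation.

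Suppose $f : X_1 \to X_2$ is uniformly of class $\ca$, so we have a function $\varphi : Y_1 \times \NN_1 \to Y_2$ of class $\ca$ with $d_{X_2}(f(\eta_1(y)), \eta_2(\varphi(y,n))) \leq 1/2^n$ for all $y \in Y_1$, together with a modulus $\mu : \NN_1 \to \NN_1$ of class $\ca$ satisfying the implication of Definition \ref{221}. Suppose $x \in X_1$ is of class $\ca$, presented by a sequence $n \mapsto y_n$ of class $\ca$ with $d_{X_1}(x, \eta_1(y_n)) \leq 1/2^n$. I then set
\[
z_n \;=\; \varphi\!\left(y_{\mu(n+1)},\, n+1\right).
\]
The map $n \mapsto z_n$ is of class $\ca$ as the composition of maps of class $\ca$, the class $\ca$ being elementarily stable (in particular closed under composition and substitution).

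The verification is a triangle inequality. By the defining inequality for $\varphi$, $d_{X_2}(f(\eta_1(y_{\mu(n+1)})), \eta_2(z_n)) \leq 1/2^{n+1}$. By the defining inequality for the sequence $(y_n)$, $d_{X_1}(x, \eta_1(y_{\mu(n+1)})) \leq 1/2^{\mu(n+1)}$, hence by the modulus $\mu$, $d_{X_2}(f(x), f(\eta_1(y_{\mu(n+1)}))) \leq 1/2^{n+1}$. Summing yields
\[
d_{X_2}(f(x), \eta_2(z_n)) \;\leq\; 1/2^{n+1} + 1/2^{n+1} \;=\; 1/2^n,
\]
which is exactly the condition of Definition \ref{214} saying that $f(x)$ is a $\ca$-point of $X_2$.

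For the family case, suppose $g : Z \to X_1$ is of class $\ca$, presented by $\psi : Z \times \NN_1 \to Y_1$ of class $\ca$ with $d_{X_1}(g(z), \eta_1(\psi(z,n))) \leq 1/2^n$ for every $z \in Z$. I define
\[
\Phi(z,n) \;=\; \varphi\!\left(\psi(z,\mu(n+1)),\, n+1\right),
\]
again of class $\ca$ by stability under composition. The same triangle-inequality argument, applied uniformly in $z$, gives $d_{X_2}(f(g(z)), \eta_2(\Phi(z,n))) \leq 1/2^n$, so $\Phi$ presents $f \circ g : Z \to X_2$ as a family of class $\ca$ in the sense of Definition \ref{217}.

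There is no genuine obstacle; the only point requiring attention is the choice of the shift $n+1$ and of the precision $\mu(n+1)$, which must be consistent both with the modulus condition (to absorb the approximation of $x$ by $\eta_1(y_{\mu(n+1)})$) and with the approximation of $f$ on $Y_1$ (to absorb the error $1/2^{n+1}$ coming from $\varphi$). Everything else is closure of $\ca$ under composition, which is part of the elementary stability assumption.
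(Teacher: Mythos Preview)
Your proof is correct and is exactly the natural argument the paper has in mind: the paper does not spell out a proof here, simply remarking beforehand that ``the following results are easy to establish.'' Your composition $z_n = \varphi(y_{\mu(n+1)}, n+1)$ with the triangle-inequality verification is the standard way to unpack that remark, and your handling of the family case via the same parametrised construction is likewise the intended one.
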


\begin{remark} \label{228}
Consider two rational presentations of the same complete metric space $X$ given by the two families of rational points $(\wi{y})_{y \in Y}$ and $(\wi{z})_{z \in Z}$ respectively. If the first family is $\LINT $ for the first presentation, it can naturally be of greater complexity for the second. \\
In general, {\em to say that the identity of $X$ is uniformly of class $\ca$ when we go from the first to the second presentation is exactly the same as saying that the family $(\wi{y})_{y \in Y}$ is a $\ca$-family of points for the second presentation}. \\
Thus the two presentations are $\ca$-equivalent if and only if the family $(\wi{y})_{y \in Y}$ is a family of class $\ca$ in the second presentation and $(\wi{z})_{z \in Z}$ is a family of class $\ca$ in the first. 
Proposition~\ref{227} allows us to formulate an analogous and more intrinsic result: {\em two rational presentations of the same complete metric space are $\ca$-equivalent if and only if they define the same families of points of class $\ca$}.
\end{remark}

We now turn to the definition of complexity for a family of uniformly continuous maps (with a discrete set of indices).

\begin{definition}[complexity of a family of uniformly continuous maps between rationally presented metric spaces] \label{229} 
Consider a discrete preset $Z$ (this is the set of indices of the family, it is given as a part of a language $A^{\star}$ over a finite alphabet $A$) and two complete metric spaces $X_1$ and $X_2$ given in 
$\ca'$-presentations: $(Y_1,\delta_1,\eta_1)$ and $(Y_2,\delta_2,\eta_2)$.

\noindent 
We denote by $\U(X_1,X_2)$ the set of uniformly continuous maps from $X_1$ into $X_2$. A family of uniformly continuous maps $\wi{f}: Z \to \U(X_1,X_2)$ is said to be {\em uniformly of class $\ca$} (for the considered presentations) when

\noindent 
--- the family has a modulus of uniform continuity $\mu\colon Z \times \NN_1 \to\NN_1$ in class $\ca$ : the function $\mu$ must satisfy 
\[
\forall n \in \NN_1\;\forall x,
y \in X_1 \;\; \; \; \big(d_{X_1}(x,y) \leq 1/2^{\mu(f,n)} \Rightarrow 
d_{X_2}(\wi{f}(x),\wi{f}(y)) \leq 1/2^n \big)
\] 
--- the family $(f,x) \mapsto \wi{f}(x)\colon Z \times X_2 \to
X_2$ is a family of points in $X_2$ of class $\ca$ in the sense of  Definition~\ref{217}, i.e.\ it is presented by a map $\varphi: Z \times Y_1 \times \NN_1 \to Y_2$ which is of class $\ca$ and which verifies:
\[
d_{X_2}\big(\wi{f}(x),\varphi(f,x,n)\big) \leq 1/2^n \; {\rm for\, all} \; (f, x, n) \in Z \times Y_1 \times \NN_.
\]
\end{definition}

\begin{remark} \label{2210}
The notion defined in 2.2.9 is natural because it is the relativisation to the class~$\ca$ of the constructive notion of a family of uniformly continuous maps. 
But this natural notion does not seem to be deducible from Definition \ref{221} by providing $Z \times X_1$ with a suitable structure of rationally presented metric space and by requiring that the map $(f,x) \mapsto \wi{f}(x)\colon Z \times X_2 \to X_2$ be uniformly of class $\ca$. If, for example, we take on $Z \times X_1$ the metric deduced from the discrete metric of $Z$ and the metric of $X_1$ we will obtain the second of the conditions of Definition \ref{229} but the first will be replaced by the requirement that all the maps of the family have the same modulus of uniform continuity of class $\ca$. In other words, the family should be uniformly equicontinuous. This condition is intuitively too strong.  Propositions \ref{2211} and \ref{2212} which follow are a confirmation that Definition \ref{229} is suitable.
\end{remark}

Propositions \ref{226} and \ref{227} generalise to the case of families of maps. The proofs present no difficulty.

\begin{proposition} \label{2211}
Let $X_1$ and $X_2$ be two rationally presented complete metric spaces. Let $\big(\wi{f}\,\big)_{f \in Z}$ be a family in $\U(X_1,X_2)$ uniformly of class $\ca$ and $(x_n)_{n \in \NN_1}$ be a family of class $\ca$ in $X_1$.

\noindent 
Then the family $(\wi{f}(x_n))_{(f,n) \in Z \times \NN_1}$ is a family of class $\ca$ in $X_2$.
\end{proposition}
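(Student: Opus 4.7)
The plan is to manufacture, from the data witnessing the two hypotheses, a single map $\Phi\colon Z\times\NN_1\times\NN_1\to Y_2$ of class $\ca$ that presents the family $(\wi{f}(x_n))_{(f,n)\in Z\times\NN_1}$ in the sense of Definition~\ref{217}.

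First I unpack the hypotheses. The family $(\wi{f})_{f\in Z}$ being uniformly of class $\ca$ supplies, via Definition~\ref{229}, a modulus $\mu\colon Z\times\NN_1\to\NN_1$ in $\ca$ satisfying
$$d_{X_1}(u,v)\leq 1/2^{\mu(f,n)}\;\Longrightarrow\;d_{X_2}(\wi{f}(u),\wi{f}(v))\leq 1/2^n,$$
together with a map $\varphi\colon Z\times Y_1\times\NN_1\to Y_2$ in $\ca$ with $d_{X_2}(\wi{f}(\eta_1(y)),\eta_2(\varphi(f,y,n)))\leq 1/2^n$ for every $(f,y,n)$. The hypothesis that $(x_n)$ is a $\ca$-family in $X_1$ supplies, via Definition~\ref{217}, a map $\psi\colon\NN_1\times\NN_1\to Y_1$ in $\ca$ such that $d_{X_1}(x_n,\eta_1(\psi(n,k)))\leq 1/2^k$.

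Second, I would set
$$\Phi(f,n,m)\;:=\;\varphi\bigl(f,\;\psi(n,\mu(f,m{+}1)),\;m{+}1\bigr)\in Y_2.$$
Writing $y:=\psi(n,\mu(f,m+1))$, the inequality $d_{X_1}(x_n,\eta_1(y))\leq 1/2^{\mu(f,m+1)}$ combined with the modulus property yields $d_{X_2}(\wi{f}(x_n),\wi{f}(\eta_1(y)))\leq 1/2^{m+1}$; combining this with the bound $d_{X_2}(\wi{f}(\eta_1(y)),\eta_2(\Phi(f,n,m)))\leq 1/2^{m+1}$ via the triangle inequality gives $d_{X_2}(\wi{f}(x_n),\eta_2(\Phi(f,n,m)))\leq 1/2^m$, which is exactly the estimate required by Definition~\ref{217}.

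Finally, $\Phi$ is obtained by composing $\mu$, $\psi$, $\varphi$ and the successor map, all of class $\ca$, so the stability of $\ca$ under composition (built into the notion of elementarily stable class) gives $\Phi\in\ca$. The only point requiring care is the bookkeeping in the choice of the intermediate precision $m+1$, arranged so that the two errors of $1/2^{m+1}$ coming respectively from the modulus and from $\varphi$ add up to at most $1/2^m$; beyond this scheduling of precisions there is no real mathematical obstacle.
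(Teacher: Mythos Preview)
Your argument is correct and is precisely the natural construction the paper has in mind; the paper itself does not spell out a proof, merely remarking before the statement that ``the proofs present no difficulty.'' Your choice of intermediate precision $m{+}1$ and the appeal to closure of $\ca$ under composition are exactly what is needed, so there is nothing to add.
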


\begin{proposition} \label{2212}
Let $X_1$, $X_2$ and $X_3$ be three rationally presented complete metric spaces. Let $\big(\wi{f}\,\big)_{f \in Z}$ be a family in $\U(X_1,X_2)$ uniformly of class $\ca$ and $(\wi{g})_{g \in Z'}$ be a family in $\U(X_2,X_3)$ uniformly of class $\ca$.  
Then the family $(\wi{f} \circ \wi{g})_{(f,g) \in Z \times Z'}$ in $\U(X_1,X_3)$ is uniformly of class $\ca$.
\end{proposition}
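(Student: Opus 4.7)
The plan is to mimic the proof of Proposition~\ref{226} but parameterised by the two index sets $Z$ and $Z'$, taking care that the modulus and the evaluator we build still depend on $(f,g)$ through a finite composition of class-$\ca$ functions. Write the data provided by the hypotheses as follows: for the family $(\wi f)_{f\in Z}$ we are given a modulus $\mu_1\colon Z\times\NN_1\to\NN_1$ and a presentation $\varphi_1\colon Z\times Y_1\times\NN_1\to Y_2$, both of class $\ca$; for $(\wi g)_{g\in Z'}$ we have the analogous $\mu_2\colon Z'\times\NN_1\to\NN_1$ and $\varphi_2\colon Z'\times Y_2\times\NN_1\to Y_3$, again of class $\ca$. (I read the composite as the map $X_1\to X_3$ sending $x\mapsto\wi g(\wi f(x))$; the order of letters in the statement is a typo.)

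For the modulus of the composed family I would simply set
\[
 \mu(f,g,n)\;=\;\mu_1\!\bigl(f,\mu_2(g,n)\bigr),
\]
which is in $\ca$ by stability under composition. The verification is immediate: if $d_{X_1}(x,y)\le 1/2^{\mu(f,g,n)}$, then the defining property of $\mu_1$ gives $d_{X_2}(\wi f(x),\wi f(y))\le 1/2^{\mu_2(g,n)}$, and the defining property of $\mu_2$ then yields $d_{X_3}(\wi g\wi f(x),\wi g\wi f(y))\le 1/2^n$.

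For the evaluator I would produce, given $(f,g,y,n)\in Z\times Z'\times Y_1\times\NN_1$, a point of $Y_3$ approximating $\wi g(\wi f(\eta_1(y)))$ to within $1/2^n$ by a two-stage construction: first compute an intermediate approximation in $Y_2$ at a precision adapted to the continuity of $\wi g$, then apply $\varphi_2$. Concretely, set
\[
\psi(f,g,y,n)\;=\;\varphi_2\!\bigl(g,\;\varphi_1(f,y,\mu_2(g,n+1)),\;n+1\bigr).
\]
Again $\psi$ is in $\ca$ by closure under composition. The correctness is a single triangle inequality: writing $y_2=\varphi_1(f,y,\mu_2(g,n+1))$, the defining property of $\varphi_1$ gives $d_{X_2}(\wi f(\eta_1(y)),\eta_2(y_2))\le 1/2^{\mu_2(g,n+1)}$, hence by $\mu_2$ we get $d_{X_3}(\wi g\wi f(\eta_1(y)),\wi g(\eta_2(y_2)))\le 1/2^{n+1}$; combining with $d_{X_3}(\wi g(\eta_2(y_2)),\eta_3(\psi(f,g,y,n)))\le 1/2^{n+1}$ produces the desired $1/2^n$ bound.

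There is no real obstacle: the argument is purely the quantitative bookkeeping already present in Proposition~\ref{226}, uniformised over the discrete parameters $(f,g)$. The only point requiring a little care is that the intermediate precision in the evaluator must be chosen using the modulus $\mu_2$ before the evaluator $\varphi_2$ is called, otherwise the triangle inequality does not give a bound that is uniform in $g$. Once $\mu$ and $\psi$ are written down, the requirements of Definition~\ref{229} are satisfied and the proposition follows.
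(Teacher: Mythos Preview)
Your argument is correct and is exactly the routine parameterised version of Proposition~\ref{226} that the paper has in mind: the authors do not spell out a proof but simply state that ``Propositions \ref{226} and \ref{227} generalise to the case of families of maps. The proofs present no difficulty.'' You have also correctly identified (and resolved) the typo in the order of composition.
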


\subsection{Complexity of locally uniformly continuous maps} \label{subsec23}

The notion of complexity defined in the previous paragraph for uniformly continuous maps is entirely legitimate when the domain space is compact. 
In the case of a locally compact space in the sense of Bishop,\footnote{This means that every bounded subset is contained in a compact space \cite{BB}.} the  maps which are uniformly continuous on every bounded subset (from the classical point of view this is a theorem, from the constructive point of view it is a definition). This leads to the  following natural notion of complexity.

\begin{definition} \label{231}
Consider two complete metric spaces $X_1$ and $X_2$ given in presentations of class $\ca'$: $(Y_1,\delta_1,\eta_1)$ and $(Y_2,\delta_2,\eta_2).$ We assume that we have specified a point $x_0$ of $X_1$ and a point $y_0$ of~$X_2$. A map $f: X_1 \to X_2$ is said to be {\em locally uniformly continuous} if it is uniformly continuous and bounded on any bounded part.

\noindent 
It is said to be {\em locally uniformly of class $\ca$} (for the considered presentations) when

\noindent 
--- it has a bound computable in class $\ca$ on any bounded part, i.e.\ a $\ca$-sequence $\beta\colon \NN_1 \to \NN_1$ satisfying, for all $x$ in $X_1$ and all $m$ in $\NN_1$: 
\[
d_{X_1}(x_0,x) \leq 1/2^n \Rightarrow d_{X_2}(y_0,f(x)) \leq 1/2^{\beta(n)};
\]
--- it has in the class $\ca$ a modulus of uniform continuity on all bounded subsets, i.e.\ a $\ca$-map $\mu\colon \NN_1 \times \NN_1 \to \NN_1$ verifying, for $x, z$ in $X_1$ and $n, m$ in $\NN_1$: 
\[
\left( d_{X_1}(x_0,x) \leq 1/2^m, \; d_{X_1}(x_0,z) \leq 1/2^m, \; d_{X_1}(x,z) \leq 1/2^{\mu(m,n)}\right) \; \Rightarrow\; d_{X_2}(f(x),f(z)) \leq 1/2^n;
\] 
--- the restriction of $f$ to $Y_1$ is in class $\ca$ in the sense of Definition \ref{217}, i.e.\ it is presented by a $\ca$-map $\varphi:\NN_1 \times Y_1 \to Y_2 $ which verifies:
\[
d_{X_2}(f(y),\varphi(y,n)) \leq 1/2^n \;{\rm{for all}} \; y \in Y_1.
\]
\end{definition}
 
Note that the concept defined above does not depend on the choice of points
$x_0$ and $y_0$.
\begin{examples} \label{232}~

\noindent 
--- When $X_1 = \RR$ and $X_2 = \RR$ the above definition is equivalent to the natural notion of a real function computable in the class $\ca$ as found in \cite{Ho90} and \cite{Ko91}.

\noindent 
--- The function $x \mapsto x^2$ is locally uniformly continuous of class $\QL$ but it is not uniformly continuous on $\RR$.
\end{examples}

\begin{remarks}\label{233} ~

\noindent 
1) When $f$ is a uniformly continuous function, Definition \ref{231} and Definition \ref{221} are very similar. However, Definition \ref{221} is a priori more restrictive. Indeed, if $\mu(n)$ is a modulus of uniform continuity in Definition \ref{221}, then we can take in Definition \ref{231} $\mu'(m,n) = \mu(n)$. 
But conversely, suppose we have a modulus of uniform continuity on any bounded subset verifying $\mu'(m,n) = \inf(m, 2^n)$ then, the function is uniformly continuous but the best modulus of uniform continuity that we can deduce~is 
\[
\mu(n) = {\bf Sup} \left\{ \mu'(m,n): m \in \NN \right\} = 2^n
\]
and it has an exponential growth rate whereas $\mu'(m,n)$ is linear. Thus the function $f$ can be of linear complexity as a locally uniformly continuous function, and of exponential complexity as~a uniformly continuous function. 
The two definitions are equivalent if the space $X_1$ has a finite diameter.

\noindent 
2) In constructive analysis, a compact space is a precompact and complete space. It is not possible to prove constructively that any closed part of a compact space is compact. A (uniformly) locally compact space is a complete metric space in which every bounded subset is contained in~a compact~$K_n$, where the sequence $(K_n)$ is an nondecreasing sequence given once and for all (for example, the compact $K_n$ contains the ball $B(x_0, 2^n)$). A function defined on such a space is then said to be continuous if it is uniformly continuous on all bounded subsets. In particular, it is bounded on any bounded subset. In this case, Definition \ref{231} gives an ``algorithmic complexity'' version of the constructive definition of continuity.

\noindent 
3) Proposition \ref{226} about the composition of functions uniformly of class $\ca$ remains valid for functions locally uniformly of class $\ca$. Proposition \ref{227} is also valid.
\end{remarks}

\subsection{A general approach to the complexity of continuous functions} 
\label{subsec24}
 
The question of the complexity of continuous functions has clearly not been exhausted. 
As we have already pointed out, while it is true that a continuous function $f(x)$ is classically well known from a presentation $(y, n) \mapsto \varphi(y,n)$ which allows it to be computed with arbitrary precision on a dense part $Y$ of $X$, the complexity of $\varphi$ can only be considered a pale reflection of the complexity of $f$ (cf.\ remarks \ref{224} and \ref{233}(1)).
A crucial question, little studied until now, is to what extent we can certify that such a $\varphi$ datum corresponds to a continuous function $f$. If the answer is in the affirmative, we need to explain the procedure for calculating approximate values of~$f(x)$ when~$x$ is an arbitrary point of $X$ (given, for example, by a Cauchy sequence of rational points of the presentation in the case of a rationally presented metric space).

\noindent 
We propose a somewhat informal approach to this question.
Let $\phi\colon X_1 \to X_2$ be a map between metric spaces and $(F_{\alpha})_{\alpha \in M}$ a family of subsets of $X_1$. \emph{A modulus of uniform continuity for $\phi$ near each subset $F_{\alpha}$} is by definition a map $\mu \colon M \times \NN_1 \to \NN_1$ verifying:	
\[ 
\forall \alpha \in M \; \forall x \in F_{\alpha} \; \forall x'\in X_1\; \; \; \left(d_{X_1}(x,x') 
\leq 1/2^{\mu(\alpha,n)} \Rightarrow d_{X_2}(\phi(x),\phi(x'))
 \leq 1/2^n \right).
\]
 
\begin{definition}[general but somewhat informal definition of what a continuous function is and what its complexity is]\label{241}

Consider two complete metric spaces $X_1$ and $X_2$ given in rational presentations of class $\ca'$: $(Y_1,\delta_1,\eta_1)$ and $(Y_2,\delta_2,\eta_2)$. We assume that we have specified a $y_0\in X_2$.
Suppose we have defined a family $F_{X_1} = (F_{\alpha})_{\alpha \in M}$ of parts of $X_1$ with the following property:

\noindent 
--- (2.4.1.1) every compact of $X_1$ is contained in some $F_{\alpha}$.

\noindent 
We will say that a map $\phi: X_1 \to X_2$ is {\em $F_{X_1}$-uniformly continuous} if it is bounded on each part $F_{\alpha}$ and if it has a modulus of uniform continuity near each subset $F_{\alpha}$.

\noindent 
Suppose now in addition that the family $(F_{\alpha})_{\alpha \in M}$ has the following property:

\noindent 
--- (2.4.1.2) the set of indices $M$ has a certain computability structure.

\noindent 
We will say that a map $\phi: X_1 \to X_2$ is \emph{$F_{X_1}$-uniformly continuous of class $\ca$} if it verifies the following properties:

\noindent 
--- a bound on each part $F_{\alpha}$ can be calculated as a map $ \beta\colon M \to \NN_1$ in class $\ca$ verifying  
\[
\forall \alpha \in M \; \forall x \in F_{\alpha} \; \;
 d_{X_2}(y_0, \phi (x)) \leq 2^{\beta(\alpha)};
\] 
--- a modulus of uniform continuity near to $F_{\alpha}$ can be calculated in the class $\ca$: a $\ca$-map $\mu\colon M \times \NN_1 \to \NN_1$  satisfying
\[
\forall \alpha \in M \; \forall x \in F_\alpha\; \forall x' \in X_1\;\;  \left(d_{X_1}(x,x') \leq 1/2^{\mu(\alpha,n)} \Rightarrow d_{X_2}(\phi(x),\phi(x')) \leq 1/2^n \right).
\]
--- the restriction of $\phi$ to $Y_1$ is $\ca$-computable.
\end{definition}

This is clearly an extension of Definitions \ref{221} and \ref{231}.
The informal nature of the definition is obviously due to the ``computability structure'' of $M$.
A priori we would like to take for $(F_{\alpha})_{\alpha \in M}$ a family of parts sufficiently simple to verify condition (2.4.1.2) and sufficiently large to verify condition (2.4.1.1).
But these two conditions pull in opposite directions.

Note that Definition \ref{241} is inspired by the notion of a continuous map defined by D.~Bridges in  \cite[Constructive functional analysis]{Br}.

\subsection {Open and closed subspaces} \label{subsec25}

We turn to the definition of a computability structure for an open subspace $U$ of a metric space~$X$ given with a computability structure. In general, the metric induced by $X$ on $U$ cannot satisfy us because the resulting space is generally not complete. Nevertheless, we have a construction that works in an important special case.

Let $X$ be a complete metric space and $f: X \to \RR$ a locally uniformly continuous map. The open $U_f = \{x \in X: f(x) > 0 \}$ is a complete metric space for the distance $d_f$ defined by 
\[
d_f(x,y) = d_X(x,y) + \abs{1/f(x) - 1/f(y)}.
\]

\begin{propdef} \label{251}
Let $X$ be a complete metric space given with a $\ca$-presentation $(Y, \delta, \eta)$, and $f: X \to \RR$ a locally uniformly  $\ca$-map represented by a modulus of uniform continuity and a discrete $\ca$-map,
$ \varphi\colon Y \times \NN_1 \to \DD$. Then the complete metric space $(U_f, d_f)$ can be provided with a $\ca$-presentation where the set of rational points is coded by the set $Y_U$ of pairs $(y,n)$ of $Y \times \NN_1$ satisfying: 
\[
n \geq 10 \; \; {\rm and} \; \; \varphi(y,n) \geq 1/2^{n/8}.
\]
\end{propdef}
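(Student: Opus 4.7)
The plan is to verify each requirement of Definition~\ref{211} for the proposed triple $(Y_U,\delta_U,\eta_U)$, the main technical content being the quantitative control of $1/f(\tilde{y})$ away from zero for codes in $Y_U$.

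\textbf{Step 1: a code in $Y_U$ really defines a point of $U_f$.} If $(y,n)\in Y_U$ then $\varphi(y,n)\geq 1/2^{n/8}$ and $|f(\tilde{y})-\varphi(y,n)|\leq 1/2^n$, hence for $n\geq 10$
\[
f(\tilde{y})\;\geq\;\frac{1}{2^{n/8}}-\frac{1}{2^n}\;\geq\;\frac{1}{2}\cdot\frac{1}{2^{n/8}}>0.
\]
This shows $\tilde{y}\in U_f$, so we may take $\eta_U(y,n)=\tilde{y}$; it also gives the crucial quantitative bound $1/f(\tilde{y})\leq 2\cdot 2^{n/8}$, needed in Step~2. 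The membership test defining $Y_U$ only involves an evaluation of $\varphi$ and comparisons of dyadics, so $Y_U$ is a $\ca$-part of a language.

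\textbf{Step 2: construction of $\delta_U$.} Given codes $(y_1,n_1),(y_2,n_2)\in Y_U$ and target precision $m\in\NN_1$, set $k_i=m+5+\lceil n_i/4\rceil$ and compute $\varphi(y_i,k_i)\in\DD$. For each $i$ one has $\varphi(y_i,k_i)\geq \varphi(y_i,n_i)-(1/2^{n_i}+1/2^{k_i})\geq \tfrac12\cdot 1/2^{n_i/8}$, so $\varphi(y_i,k_i)\cdot f(\tilde{y_i})\geq\tfrac14\cdot 1/2^{n_i/4}$, whence
\[
\Bigl|\frac{1}{\varphi(y_i,k_i)}-\frac{1}{f(\tilde{y_i})}\Bigr|\;\leq\;\frac{4\cdot 2^{n_i/4}}{2^{k_i}}\;\leq\;\frac{1}{2^{m+3}}.
\]
Define $\delta_U((y_1,n_1),(y_2,n_2),m)$ as the dyadic rounding to $\DD_m$ of
\[
\delta(y_1,y_2,m+3)\;+\;\Bigl|\tfrac{1}{\varphi(y_1,k_1)}-\tfrac{1}{\varphi(y_2,k_2)}\Bigr|.
\]
Summing the errors on the two terms and the final rounding yields $|d_f(\tilde{y_1},\tilde{y_2})-\delta_U(\cdot)|\leq 1/2^m$. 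Since all the ingredients ($\delta$, $\varphi$, dyadic arithmetic, rounding) are in $\ca$ and $\ca$ is stable by composition and by dyadic size polynomial in the data, $\delta_U$ is a $\ca$-map. The symmetry, the condition $\delta_U=0$ on the diagonal, the stability $|\delta_U(\cdot,m+1)-\delta_U(\cdot,m)|\leq 1/2^{m+1}$, and the relaxed triangle inequality follow from the analogous properties for $\delta$, from the genuine triangle inequality for $d_f$, and from the error bound above by choosing the working precision a few units larger when needed.

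\textbf{Step 3: density.} Given $x\in U_f$, pick $n$ large enough that $f(x)\geq 4/2^{n/8}$ (possible since $f(x)>0$). Using the local modulus of uniform continuity of $f$, choose $y\in Y$ with $d_X(\tilde{y},x)$ small enough that $|f(\tilde{y})-f(x)|\leq 1/2^{n/8}$; then $f(\tilde{y})\geq 3/2^{n/8}$, so $\varphi(y,n)\geq f(\tilde{y})-1/2^n\geq 1/2^{n/8}$, i.e.\ $(y,n)\in Y_U$. By taking $n$ still larger and $\tilde{y}$ still closer to $x$, both $d_X(\tilde{y},x)$ and $|1/f(\tilde{y})-1/f(x)|$ become arbitrarily small, which is density for $d_f$. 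Together with completeness of $(U_f,d_f)$ (a standard fact) this identifies $(U_f,d_f)$ with the completion of $(\eta_U(Y_U),d_f)$.

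The main obstacle is the error analysis in Step~2: one must calibrate $k_i$ carefully so that the approximation of $1/f$ is accurate enough, while keeping $k_i$ polynomially bounded in the input sizes so that the final algorithm remains in the class $\ca$. The choice of the threshold $n\geq 10$ and the exponent $n/8$ in the definition of $Y_U$ is precisely what makes this calibration work uniformly.
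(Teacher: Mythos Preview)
Your argument is correct and follows the natural route: bound $f(\tilde y)$ from below using the defining inequality of $Y_U$, use this to control the error in approximating $1/f$, assemble $\delta_U$ from $\delta$ and the reciprocals of $\varphi$-values at a precision calibrated by $n_i$, and check density. The error bookkeeping in Step~2 is accurate (in particular your choice $k_i=m+5+\lceil n_i/4\rceil$ does give $|1/\varphi(y_i,k_i)-1/f(\tilde y_i)|\leq 1/2^{m+3}$), and the density argument in Step~3 is fine. One small point: the exact formal conditions of Definition~\ref{211} (notably $|\delta_U(\cdot,m+1)-\delta_U(\cdot,m)|\leq 1/2^{m+1}$) do not follow automatically from the approximation bound alone, but as Remark~\ref{212}(2) indicates, it suffices to show they \emph{can} be satisfied, which your construction does after a routine adjustment (e.g.\ define $\delta_U(\cdot,m)$ by successive roundings from a higher working precision).

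As for comparison with the paper: there is nothing to compare, since the paper does not give a proof in the text and simply refers to \cite{Mo} for the details. Your write-up is exactly the kind of verification that reference presumably contains.
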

\begin{proof} A detailed proof is given in \cite{Mo}. 
\end{proof}

We now turn to the definition of a computability structure for a closed subspace $F$ of a metric space $X$ with a computability structure. In constructive analysis, a closed subset is only really useful when it is \emph{located}, i.e.\ when the function $D_F$ ``distance to the closed subset $F$'' is computable. When translating this notion into terms of recursive computability or complexity, we must take care that in the constructive definition, the fact that the function $D_F$ is the function ``distance to the closed subset $F$'' must also be made explicit.

\begin{definition} \label{252}
Consider a complete metric space $X$ given by a $\ca$-presentation $(Y, \delta, \eta)$. 
A part~$F$ of $X$ is called a \emph{$\ca$-located closed subset of $X$} if:
\begin{itemize}
\item [i)] the function $D_F\colon x \mapsto d_X(x,F)$ from $X$ to $\R^{\geq 0}$ is $\ca$-computable,
\item [ii)] there exists a $\ca$-function  $P_F\colon Y \times \NN_1\to X$ which certifies the function $D_F$ in the following sense: 
for all $(y,n) \in Y \times \NN_1$,
\[
D_F(P_F(y,n)) = 0,\; \; \; \; d_X(y,P_F(y,n)) \leq D_F(y) + 1/2^n.
\] 
\end{itemize}
\end{definition}

\begin{remarks} \label{253}~

\noindent 
1) The function $P_F(y,n)$ calculates an element of $F$ whose distance from $y$ is close enough to $D_F(y)$. However, the function $P_F(y,n)$ does not in general define, by extension by continuity to~$X$ and by passage to the limit when $n$ tends to $\infty$, a projector on the closed subset $F$. 

\noindent 
2) We can show that the points $P_F(y,n)$ (coded by the pairs $(y,n) \in Y \times \NN_1$) form a dense countable part of $F$ which is the set of rational points of a $\ca$-presentation of $F$ (cf.\ \cite{Mo}).
\end{remarks}

\subsection{Rationally presented Banach spaces}\label{subsec26}

We give a minimal definition. It goes without saying that for each particular Banach space, notions of complexity naturally attached to the space under consideration may be taken into account in addition to obtain a truly reasonable notion. 

\begin{definition}[rational presentation of a Banach space] \label{261} 
A rational presentation of a separable Banach space $X$ over the field $\KK$ ($\RR$ or $\cit$) will be said to be of class $\ca$ if, on the one hand, it is of class $\ca$ as a presentation of the metric space and, on the other hand, the following vector space operations are in class $\ca$

\noindent 
--- the product by a scalar,

\noindent 
--- the sum of a list of vectors chosen from the rational points.

\end{definition}

In the context of the field $\CC$ we will denote by $\DD_\KK$ the set of complex numbers whose real and imaginary parts are in $\DD$. In the real context $\DD_\KK$ will only be another denomination of $\DD$.

\noindent 
The following proposition is not difficult to establish. 

\begin{proposition} \label{262}
To give a rational presentation of class $\ca$ of a Banach space $X$ in the sense of the definition above is equivalent to giving  

\noindent 
--- an encoding of class $\ca$ for a countable part $G$ of $X$ which generates $X$ as a Banach space,\footnote{We can suppose that all the elements of $G$ are vectors of norm between 1/2 and 1.}

\noindent 
--- a map $\nu\colon \lst(\DD_{\KK} \times G) \times \NN_1 \to \DD$ which is in the class $\ca$ (for the considered encoding of $G$) and which calculates the norm of a linear combination of elements of $G$ in the following sense:

\noindent 	
for all $\big([(x_1,g_1),(x_2,g_2),\ldots,(x_n,g_n)],m\big)$ in $\lst(\DD_{\KK} \times G) \times \NN_1$, we have the inequality
\[
\Abs{\;\nu\big([(x_1,g_1),(x_2,g_2),\ldots,(x_n,g_n)],m\big) - \Norme{ x_1.g_1 + x_2.g_2 +\cdots.+ x_n.g_n }_X \; }\; \leq 1/2^m.
\]
\end{proposition}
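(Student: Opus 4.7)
The plan is to prove both directions of the equivalence by explicit construction. I would start with the less obvious direction, from the abstract data $(G,\nu)$ back to a full rational presentation of the Banach space.

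For the direction $(G,\nu) \Rightarrow (Y,\delta,\eta)$, I take $Y = \lst(\DD_{\KK} \times G)$, the language of codes for finite formal $\DD_{\KK}$-linear combinations of elements of $G$; this is a $\ca$-subset of a language on a finite alphabet by the stability of $\ca$ under products and lists. The map $\eta$ sends such a formal combination to its actual value in $X$, and it has dense image since $G$ generates $X$ as a Banach space. I define $\delta(u,v,n)$ to be the rounding to $\DD_n$ of $\nu(u \ominus v,\, n+3)$, where $u \ominus v$ denotes the list obtained by concatenating $u$ with the code of $v$ all of whose scalars have been negated (a linear-time, hence $\ca$, operation on codes). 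Using the precision $n+3$ together with the rounding to $\DD_n$ leaves enough slack to verify all the axioms of Definition~\ref{211}: symmetry and vanishing on the diagonal are immediate, the $1/2^{n+1}$-consistency between $\delta(\cdot,\cdot,n)$ and $\delta(\cdot,\cdot,n+1)$ comes from the comparable property of two approximations of a single norm, and the relaxed triangle inequality with slack $2/2^n$ follows from the ordinary triangle inequality for the norm combined with the two approximation errors. The sum of a list of rational points is encoded by concatenating their underlying lists (linear time), and scalar multiplication by $c \in \DD_{\KK}$ acts coordinatewise on the scalars, so both vector-space operations are in $\ca$.

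For the converse direction, suppose $(Y,\delta,\eta)$ is a $\ca$-rational presentation of $X$ in which the vector operations are in $\ca$. I would take for $G$ a suitably normalised subfamily of $Y$. Concretely, one fixes a code $0_Y \in Y$ for the zero vector (for instance the empty sum, available in constant time), and for each $y \in Y$ the norm $\|\eta(y)\|$ is approximated by $\delta(y, 0_Y, \cdot)$. A conditional $\ca$-procedure then either discards $y$ (when its approximate norm is too small) or multiplies it by a dyadic scalar $c_y$ chosen so that $\|c_y \eta(y)\| \in [1/2, 1]$, and places $c_y y$ into $G$; the generating property is preserved because one only rescales by nonzero dyadic scalars and $\eta(Y)$ is already dense in $X$. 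To compute $\nu$ on a list $[(x_1,g_1), \ldots, (x_k,g_k)]$ at precision $m$, one applies the $\ca$-operations of scalar multiplication and summation to produce a single code $w \in Y$ with $\eta(w) = x_1 g_1 + \cdots + x_k g_k$, and then returns $\delta(w, 0_Y, m)$; the required inequality $\abs{\nu - \|\cdot\|} \leq 1/2^m$ is then exactly the defining property of $\delta$.

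The main obstacle I foresee is not conceptual but bookkeeping. In the direction from $(G,\nu)$ to a presentation, showing that the constructed $\delta$ really lands in $\DD_n$ while simultaneously satisfying the relaxed triangle inequality with slack $2/2^n$ and the $1/2^{n+1}$-consistency of successive approximations requires choosing the evaluation precision of $\nu$ (and the rounding step) with enough margin to absorb all the approximation errors together. In the converse direction, the delicate point is to ensure that no element of $G$ has a vanishingly small norm, which is taken care of by the normalisation step above at the cost of one test per element. Nothing else presents any serious difficulty, since the vector operations together with $\delta$ (respectively $\nu$) are in $\ca$ by hypothesis, and the remaining steps are routine compositions and rounding.
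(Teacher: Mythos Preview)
The paper does not actually prove this proposition: it only says ``not difficult to establish'' and moves on. Your sketch supplies the natural two-sided construction the authors have in mind, and it is essentially correct.

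Two small points are worth flagging, both of the bookkeeping kind you already anticipate. First, in the direction $(G,\nu)\Rightarrow(Y,\delta,\eta)$ you claim that symmetry $\delta(u,v,n)=\delta(v,u,n)$ is ``immediate'', but $\nu(u\ominus v,\cdot)$ and $\nu(v\ominus u,\cdot)$ are two different approximations of the same norm and need not coincide; you must symmetrise explicitly (for instance by ordering the pair $(u,v)$ lexicographically before calling $\nu$). Similarly, the consistency axiom $|\delta(x,y,n+1)-\delta(x,y,n)|\le 1/2^{n+1}$ is not met by independently rounding two separate calls to $\nu$, no matter how much extra precision you request, because the two rounding errors alone already sum to more than $1/2^{n+1}$; one standard fix is to define $\delta(\cdot,\cdot,n)$ by successive refinement of $\delta(\cdot,\cdot,n-1)$. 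Second, in the converse direction you write ``produce a single code $w\in Y$ with $\eta(w)=x_1g_1+\cdots+x_kg_k$'', but Definition~\ref{261} only guarantees the vector-space operations up to a prescribed precision (this is how they are used later, e.g.\ in Proposition~\ref{442}); so $\eta(w)$ is only close to the sum, and you must request enough precision there before applying $\delta(w,0_Y,m)$. None of this affects the substance of your argument.
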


\begin{remarks}\label{263}~

\noindent 
1) As for Definition \ref{211} (cf.\ remark \ref{212} (3)), it may be that the certificate of inclusion of $G$ in~$X$ implies a notion of complexity, which it will be inevitable to take into account in a more precise definition, on a case-by-case basis.

\noindent 
2) The spaces $L^p(\RR)$ of functional analysis, with $1 \leq p < \infty$ can be rationally presented in different ways, according to different possible choices for the set of rational points and an encoding of this set. All reasonable choices turn out to give $\Prim$-equivalent presentations. 

\noindent 
3) It would be interesting to know whether the framework proposed by M. Pour El and I. Richards \cite{PR} concerning computability in Banach spaces can have concrete consequences which would go beyond what can be treated by rational presentations (which offer a natural framework not only for recursion problems but also for complexity problems). The ``counter example'' concerning $L^\infty$ given in \cite{PR} suggests the opposite. 
\end{remarks}

\section[Continuous real functions on a compact interval \ldots]{Space of continuous real functions on a compact interval, first properties}\label{sec3}

In this section, we introduce the problem of rational presentations for the space $\czu$: the space of uniformly continuous real functions on the interval [0,1], given the usual norm:
\[
\norme{f}_{\infty} = {\bf Sup} \{ \abs{f(x)} ; 0 \leq x \leq 1 \}.
\] 
Consider a rational presentation of the space $\czu$ given by a family $\big(\wi{f}\,\big)_{f \in Y}$ of uniformly continuous functions indexed by a part $Y$ of a language $A^{\star}$. We are interested in the following complexity problems:

\noindent 
--- the complexity of the set of (codes of) rational points of the presentation, i.e.\ more precisely the complexity of $Y$ as a part of the language $A^{\star}$ (i.e.\ the complexity of the membership test);

\noindent 
--- the complexity of vector space operations (the product by a scalar on the one hand, the sum of a list of vectors on the other);

\noindent 
--- the complexity of calculating the norm (or distance function);

\noindent 
--- the complexity of the set $\big(\wi{f}\,\big)_{f \in Y}$ of rational points of the presentation, as a family of uniformly continuous functions on $[0,1]$;

\noindent 
--- the complexity of the evaluation function $\Ev\colon \czu \times [0,1] \to \RR$: $ (g,x) \mapsto g(x)$.

\noindent 
It goes without saying that the interval $[0,1]$ can be replaced by another interval $[a,b]$ with $a$ and~$b$ in $\DD$ or of low complexity in $\RR$. 

\subsection{Definition of a rational presentation of the space of continuous functions}\label{subsec31}
 
The complexity of the set $\big(\wi{f}\,\big)_{f \in Y}$ of rational points of the presentation, as a family of uniformly continuous functions on $[0,1]$ is nothing other than the complexity of the map $f \mapsto \wi{f}$ from the set of codes of rational points $Y$ to the space $\czu$. 
We must therefore, in accordance with what we said in remark \ref{212}(3), include in the definition of what is a rational presentation of class $\ca$ of $\czu$, the fact that $\big(\wi{f}\,\big)_{f \in Y}$ is a family uniformly of class $\ca$ in the sense of Definition \ref{229}.
The problem of the complexity of the evaluation function is also an important one, since it would be ``immoral'' for the evaluation function not to be a function of class $\ca$ when we have a rational presentation of class $\ca$. 
However, the evaluation function is not uniformly continuous, nor even locally uniformly continuous.

\noindent 
To deal in general with the question of functions which are continuous but not locally uniformly continuous on the space $\czu$ we use the informal Definition \ref{241} with the following family of parts of $\czu$: 

\begin{notation} \label{311}
If $\alpha$ is a nondecreasing function from $\NN_1$ to $\NN_1$ and $r \in \NN_1$, we denote by $F_{\alpha,r}$ the part of $\czu$ formed by all the functions which, on the one hand, accept $\alpha$ as a modulus of uniform continuity, and on the other hand have their norm bounded by $2^r$.
\end{notation}

The ``calculability structure'' on the set of indices 
\[
M := \{(\alpha,r); \; \alpha \; \hbox{is a non-decreasing function} \; \NN_1\to\NN_1 \;{\rm and}\;r\in\NN_1\}
\]
is not a well-defined thing in the literature, but we will only need to resort to perfectly elementary operations such as ``evaluating $\alpha$ on an integer $n$''.\footnote{Note that the classical Ascoli theorem states that every compact part of $\czu$ is contained in a part $F_{\alpha,r}$, and that the parts $F_{\alpha,r}$ are compact. In constructive mathematics the direct part is still valid, but the second part of the statement needs to be refined, cf.\ \cite{BB} Chapter 4 Theorem 4.8, pages 96 to 98.}
The modulus of uniform continuity for the evaluation function is then ``very simple'' (uniformly linear for any reasonable definition of this notion).

\noindent 
Indeed, near the part $F_{\alpha , r} \times [0,1]$ of $\czu \times [0,1]$ a modulus of uniform continuity for the function $\Ev$ is given by 
\[
\mu(n,\alpha,r) = \max (\alpha(n+1),n+1) \; {\rm for} \; n \in \NN_1 \; {\rm and} \; (\alpha,r) \in M,
\] 
as is very easy to check. And the bound on $F_{\alpha ,r}$ is obviously given by $\beta(\alpha,r) = r$.

\noindent 
The whole question of the complexity of the evaluation function in a given presentation is therefore concentrated on the question of the complexity of the evaluation function restricted to the set of rational points 
\[
(f,x) \mapsto \wi{f}(x)\qquad  Y \times \DD_{[0,1]} \to \RR.
\]
Now this complexity is subordinate to that of $\big(\wi{f}\,\big)_{f \in Y}$ as a family of uniformly continuous functions: this is what we specify in the following proposition (the proof of which is immediate).

\begin{proposition} \label{312}
Let us consider on the space $\czu$ the family of parts $(F_{\alpha,r})_{(\alpha,r) \in M}$ in order to control questions of continuity on $\czu$ (cf.\ notation \ref{311} and Definition \ref{241}).

\noindent 
Then if $\big(\wi{f}\,\big)_{f \in Y}$ is a family uniformly of class $\ca$ and if we consider the rational presentation of the metric space $\czu$ attached to this  family when we see it as the set of rational points of the presentation, the evaluation function 
\[
\Ev\colon \czu \times [0,1] \to \RR\colon (g,x) \mapsto g(x)
\]
is itself of class $\ca$.
\end{proposition}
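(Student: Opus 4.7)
The plan is to apply Definition~\ref{241} to the evaluation function, using the family $(F_{\alpha,r})_{(\alpha,r)\in M}$ on $\czu$ (no additional control is needed on $[0,1]$, which is already compact). Three items must therefore be produced: a $\ca$-computable bound on $\Ev$ over each $F_{\alpha,r}\times[0,1]$, a $\ca$-computable modulus of uniform continuity near each such part, and a $\ca$-computable presentation of the restriction of $\Ev$ to the rational points $Y\times \DD_{[0,1]}$.

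The bound is immediate: for $(g,x)\in F_{\alpha,r}\times[0,1]$ one has $\abs{g(x)}\leq \norme{g}_\infty\leq 2^r$, so $\beta(\alpha,r)=r$ works and is trivially in $\ca$. For the modulus, take $(g,x)\in F_{\alpha,r}\times[0,1]$ and $(g',x')\in \czu\times[0,1]$ with $\norme{g-g'}_\infty\leq 1/2^{\mu(n,\alpha,r)}$ and $\abs{x-x'}\leq 1/2^{\mu(n,\alpha,r)}$, where $\mu(n,\alpha,r)=\max(\alpha(n+1),n+1)$. The triangle inequality gives
\[
\abs{g(x)-g'(x')}\;\leq\; \abs{g(x)-g(x')}+\abs{g(x')-g'(x')}\;\leq\; 1/2^{n+1}+1/2^{n+1}\;=\;1/2^n,
\]
the first term controlled by the fact that $\alpha$ is a modulus of uniform continuity for $g$, and the second by $\norme{g-g'}_\infty$. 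The function $\mu$ is again built by an elementary operation (an evaluation of $\alpha$ and a $\max$), hence is of class $\ca$ in the informal sense required by Definition~\ref{241}.

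The remaining ingredient is the $\ca$-computability of the restriction $(f,x)\mapsto \wi{f}(x)$ on $Y\times \DD_{[0,1]}$. But this is precisely what the hypothesis provides: by assumption, $\big(\wi{f}\,\big)_{f\in Y}$ is a family in $\U([0,1],\RR)$ uniformly of class $\ca$ in the sense of Definition~\ref{229}, and unpacking that definition yields a $\ca$-function $\varphi\colon Y\times \DD_{[0,1]}\times \NN_1\to \DD$ such that $\abs{\wi{f}(x)-\varphi(f,x,n)}\leq 1/2^n$, which is exactly the required $\ca$-presentation of $\Ev$ on the rational points. There is no genuine obstacle here; the content of the proposition is simply that Definition~\ref{241} has been designed so that the three pieces of data required to certify a continuous map as being of class $\ca$ are automatically supplied once we know that $\big(\wi{f}\,\big)_{f\in Y}$ is a family uniformly of class $\ca$.
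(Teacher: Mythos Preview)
Your proof is correct and follows exactly the approach the paper takes: the paper in fact states that ``the proof \ldots\ is immediate'' and has already written out the bound $\beta(\alpha,r)=r$ and the modulus $\mu(n,\alpha,r)=\max(\alpha(n+1),n+1)$ in the paragraph preceding the proposition, leaving only the observation that Definition~\ref{229} supplies the $\ca$-presentation of $\Ev$ on rational points---precisely what you spell out.
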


As we also require that the Banach space structure itself be of class $\ca$, this finally gives us the following definition.

\begin{definition}\label{313}
A rational presentation of class $\ca$ of the space $\czu$ is given by a family of functions $\big(\wi{f}\,\big)_{f \in Y}$ which is a family uniformly of class $\ca$, dense in $\czu$ and such that the following calculations are also in class $\ca$:

\noindent 
--- the product by a scalar,

\noindent 
--- the sum of a list of functions chosen from the rational points,

\noindent 
--- the calculation of the norm.
\end{definition}

In what follows, we shall be interested in a precise study of the complexities involved in Definition \ref{313}. Our conclusion is that there is no paradise in polynomial time for continuous functions, at least if $\p \not= \np$. 

\subsection{Two significant examples of rational presentations of the space \texorpdfstring{$\czu$}{C[0,1]}}\label{subsec32}

We now give two significant examples of presentations of $\czu$ (other examples will be given later).

\subsubsection{Presentation by binary semilinear circuits} 
\label{subsubsec321} 

This presentation and the set of codes for rational points will be noted $\csl$ and $\ysl$ respectively. We will call {\em a semilinear map with coefficients in $\DD$} a piecewise linear map which is equal to a combination by $\max$ and $\min$ of functions $x \mapsto ax+b$ with $a$ and $b$ in $\DD$.

\begin{definition}
\label{321}
A binary semilinear circuit is a circuit whose input ports are ``real'' variables~$x_i$ (here, only one will suffice because the circuit calculates a function of a single variable) and the two constants 0 and 1. There is only one output port.

\noindent 
Gates that are not input gates are of one of the following types:

\noindent 
--- single-entry gates of the following types: $x \mapsto 2x$, $x \mapsto x/2$, $x \mapsto -x$;

\noindent 
--- two-entry gates of the following types: 
$(x,y) \mapsto x + y$, $(x,y) \mapsto \max(x,y)$, $(x,y) \mapsto \min(x,y)$.

\noindent 
A binary semilinear circuit with a single input variable defines a semilinear map with coefficients in $\DD$. Such a circuit can be encoded by an evaluation program. The set $\ysl$ is the set of (codes of these) semilinear circuits, it is the set of rational points of the presentation $\csl$.
\end{definition}

We shall see later that this presentation is in some way the most natural, but that it fails to be a presentation of class $\p$ because of the calculation of the norm.

\noindent 
We have an easy bound of a Lipschitz modulus for the map defined by the circuit: 
\[
\abs{\wi{f}(x) - \wi{f}(y)}\;  \leq 2^p \abs{ x- y} \; 
{\rm where} \; p \; \hbox{is the depth of the circuit}
\]
This gives the modulus of uniform continuity $\mu(k) = k+n$. In particular, this implies that we need not restrict ourselves to a controlled precision in $\DD_{[0,1]}$ in the following proposition.
 
\begin{proposition}[complexity of the family of functions $\big(\wi{f}\,\big)_{f \in \ysl}$] \label{322} 

\noindent 
The family of functions $\big(\wi{f}\,\big)_{f \in \ysl }$ is uniformly of class $\p$. To be precise, this family has $\mu(f,k) = k+\depth(f)$ as a modulus of uniform continuity and we can provide a function $\varphi\colon \ysl \times \DD_{[0,1]} \times \NN_1 \to \DD_{[0,1]}$ of class $\DRT(\Lin, \Oo(N^2))$ (where $N$ is the size of the input $(f,x,k)$) with 
\[
\forall (f, x, k) \in \ysl \times \DD_{[0,1]} \times \NN_1 \;\abs{\wi{f}(x) - \varphi(f,x,k)}\; \leq 1/2^k.
\]
More precisely still, as it is not necessary to read all the bits of $x$, the size of $x$ does not come into play, and the function $\varphi$ is in the classes $\DRT(\Lin, \Oo(\sz(f)(\depth(f)+k)))$ and $\DSPA(\Oo(\depth(f) (\depth(f)+k)))$.
\end{proposition}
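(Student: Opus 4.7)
The plan is to analyse the circuit structurally: bound the Lipschitz constant by a simple recursion to obtain the stated modulus $\mu$, and then build $\varphi$ by evaluating the circuit in truncated dyadic arithmetic, choosing the truncation precision so that the accumulated error stays below $1/2^k$.

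First I establish the Lipschitz bound. By induction on the topological order of the gates, let $L(v)$ denote a Lipschitz constant for the partial function computed at gate $v$. One checks gate by gate that $L(\text{input})=1$, $L(0)=L(1)=0$, $L(2u)=2L(u)$, $L(u/2)=L(u)/2$, $L(-u)=L(u)$, $L(u_1+u_2)\leq L(u_1)+L(u_2)$, and $L(\max(u_1,u_2))=L(\min(u_1,u_2))\leq \max(L(u_1),L(u_2))$. In every case the Lipschitz constant of the output is at most twice the maximum of the input Lipschitz constants, so by induction on the depth $L(v)\leq 2^{\depth(v)}$. Hence $\abs{\wi{f}(x)-\wi{f}(y)}\leq 2^{\depth(f)}\abs{x-y}$, which immediately gives $\mu(f,k)=k+\depth(f)$ as a modulus of uniform continuity.

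Next I construct $\varphi$. Fix a working precision $N := k+\depth(f)+\Lg(\sz(f))+c$ for a small constant $c$. By the same recursion one bounds the magnitude $M(v)$ of the value at any gate $v$ by $2^{\depth(v)}\leq 2^{\depth(f)}$. The algorithm reads only the first $N$ bits of $x$ after the binary point, then traverses the circuit in a depth-first way, evaluating each gate exactly and immediately truncating the result to $N$ bits after the point. Every intermediate quantity is then a dyadic of bit-length $O(\depth(f)+N)=O(\depth(f)+k+\Lg(\sz(f)))$. For the error analysis, let $E(v)$ be the absolute error between the truncated value computed at $v$ and the exact value $\wi{v}(x)$. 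Each elementary gate propagates errors at most like the Lipschitz recursion above and introduces a fresh truncation error $\leq 1/2^N$. By induction on the depth, $E(v)\leq (\#\{\text{gates in the cone below }v\}+1)\cdot 2^{\depth(v)}/2^N$, so at the output $E\leq 2\sz(f)\cdot 2^{\depth(f)}/2^N\leq 1/2^k$ by the choice of $c$.

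Finally I read off the complexity. Each gate is a single addition, comparison, or shift on dyadics of bit-length $O(\depth(f)+k)$ (absorbing the $\Lg(\sz(f))$ term), hence takes linear time in that length. Summing over the $\sz(f)$ gates gives running time $\Oo(\sz(f)(\depth(f)+k))$; since $\sz(f)$ and $\depth(f)$ are both bounded by the size $N$ of the input $(f,x,k)$, this is $\Oo(N^2)$, and the output has bit-length $\Oo(\depth(f)+k)$ which is linear in $N$. For the space bound, a depth-first evaluation keeps at most $\depth(f)$ intermediate dyadics of bit-length $\Oo(\depth(f)+k)$ in memory, hence uses space $\Oo(\depth(f)(\depth(f)+k))$. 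The only genuine obstacle is the error bookkeeping: one must verify that the same recursion controls both the Lipschitz constant and the accumulation of truncation errors, so that a single precision parameter $N=k+\depth(f)+\Oo(\Lg\sz(f))$ simultaneously ensures correctness and keeps every intermediate number of linear bit-length.
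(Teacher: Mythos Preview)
Your argument is essentially the paper's: evaluate the circuit in truncated dyadic arithmetic, control error propagation through the depth, and use depth-first recomputation (the paper cites Borodin for this) to obtain the space bound. The only real difference is the precision schedule: the paper truncates the value at gate $\pi$ to $k+2\,\depth(f)-\depth(\pi)$ bits, while you keep a uniform precision $N$ throughout.

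One small point to tighten: the extra $\Lg(\sz(f))$ in your $N$ is unnecessary, and your claim to ``absorb'' it into $\depth(f)+k$ is not justified for wide shallow circuits with small $k$, so as written you do not quite reach the sharp bound $\Oo(\sz(f)(\depth(f)+k))$. A cleaner induction fixes this: since $+$, $\max$, $\min$, $-$ on dyadics with $N$ fractional bits are exact and only $\times 2$ doubles the error while $/2$ halves it before adding a single truncation error, one gets $E(v)\le 2^{\depth(v)}/2^{N}$ directly. Thus $N=k+\depth(f)$ already suffices, and the refined time bound follows without the absorption step.
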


\begin{proof} 
To calculate $\varphi (f,x,k)$ we evaluate the circuit $f$ on the input $x$ for which only the first $k + 2 \depth(f)$ bits are considered, truncating the intermediate result calculated at the $\pi$ gate to  precision $k+2\ \depth(f)-\depth(\pi)$. Finally, only the precision $k$ is kept for the final result.

\noindent 
Such a naively applied method requires us to store all the results obtained at a fixed depth $p$ while we calculate results at depth $p+1$.
We then do $\sz(f)$ elementary calculations 
$(\bullet + \bullet, \bullet - \bullet,\bullet \times 2, \bullet/2, \max(\bullet, \bullet),\min(\bullet, \bullet))$ 
on objects of size $\leq k + 2 \depth(f)$. Each elementary calculation takes  time $\Oo(k+\depth(f))$, and so the global calculation is done in time $\Oo(\sz(f)(\depth(f)+k))$. And it also takes $\Oo(\sz(f)(\depth(f)+k))$ as  calculation space.

\noindent 
There is another method of evaluating a circuit, which is a little less time-consuming but much more economical space-consuming, based on Borodin's idea \cite{Bo}. With such a method we reduce the calculation space which becomes $\Oo(\depth(f)(\depth(f)+k))$.
\end{proof}

\begin{remark}\label{323}
We have not taken into account in our calculation the problem posed by the management of $t = \sz(f)$ objects (here dyadic numbers) of sizes bounded by $s = \depth(f)+k$. In the RAM model this management would a priori be in time $\Oo(t \Lg(t) s)$ which does not significantly increase the $\Oo(t s)$ we have found, and which remains in $\Oo(N^2)$ if we remember that encoding the semilinear circuit by an evaluation program gives it a size of $\Oo(t \Lg(t))$. In the TM model, on the other hand, this management requires a priori time $\Oo(t^2s)$ because the tape where the objects are stored must be traversed $t$ times over a total length $\leq ts$. We have therefore underestimated time to some extent by concentrating on what we consider to be the central problem: estimating the total cost of the arithmetic operations themselves. In the following, we will systematically omit the calculation of the {\em management time} of intermediate values (which is very sensitive to the chosen calculation model) each time we evaluate circuits.
\end{remark}

\subsubsection{Presentation \texorpdfstring{$\crf$}{Crff}
(via controlled rational functions given in formula presentation)} 
\label{subsubsec322}

The previous presentation of the space $\czu$ is not of class $\p$ (unless $\p$ = $\np$ as we shall see in Section 4.3) because the norm is not computable in polynomial time. To obtain a presentation of class $\p$ it is necessary to restrict quite considerably the set of rational points in the presentation, so that the norm becomes computable in polynomial time. A significant example is where the rational points are well-controlled rational functions given in dense presentation.
There are several variants to choose from and we have chosen to give the denominator and numerator in a presentation known as ``by formula''. 
A formula is a tree whose leaves are labelled by the variable~$X$ or by an element of $\DD$ and whose nodes are labelled by an arithmetic operator. 
In the formulae we are considering, the only operators used are $ \bullet + \bullet $, $\bullet - \bullet $ and $\bullet \times \bullet$, so that the tree is a binary tree (each node of the tree is a sub-formula and represents a polynomial of $\DD$[X]).

\begin{lemma} 
\label{324} Let us denote by $\DD[X]_f$ the set of polynomials with coefficients in $\DD$, given in presentation by formula. For a one-variable polynomial with coefficients in $\DD$ the transition from the dense representation to the representation by formula is $\LINT$ and the transition from the representation by formula to the dense representation is polynomial. More precisely, if we proceed in a naive way, we are in $\DTI (\Oo(N^2{\cal M}(N))$).
\end{lemma}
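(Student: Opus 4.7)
\medskip\noindent
\emph{Proof plan.}
The dense-to-formula direction is handled by Horner's scheme: given $P = \sum_{i=0}^{n} a_i X^i$ with the $a_i$ listed in $\DD$, I build the formula tree associated with
\[
(((\cdots(a_n\cdot X + a_{n-1})\cdot X + a_{n-2})\cdot X \cdots )\cdot X + a_0).
\]
Each $a_i$ occurs exactly once as a leaf, the variable $X$ occurs $n$ times as a leaf, and there are $n$ addition and $n$ multiplication internal nodes. The total size of the output tree is $O(n) + \sum_i \Lg(a_i)$, i.e.\ linear in the input size, and a straightforward scan over the coefficient list produces the tree in linear time.

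For formula-to-dense, I traverse the formula tree in postorder. At each node $v$ I maintain and store the dense list of coefficients of the subpolynomial $P_v$. A leaf labelled $a\in\DD$ produces $[a]$; a leaf labelled $X$ produces $[0,1]$. At a $+$ or $-$ node I perform a coefficientwise addition or subtraction (padding the shorter list with zeros); at a $\times$ node I perform the naive convolution of the two children's coefficient lists.

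Let $N$ denote the size of the input formula. Three basic a priori bounds drive the analysis. First, the degree $d_v$ of $P_v$ is at most the number of $X$-leaves in the subtree rooted at $v$, so $d_v \le N$. Second, a simple induction gives that each coefficient of $P_v$ has bit-size $O(N)$ (up to a mild logarithmic factor): the magnitude is bounded at $\pm$ nodes by the maximum plus one, and at $\times$ nodes by $(d_{v_1}{+}1)\,M_{v_1}M_{v_2}$, which yields $m_v \le (\text{total leaf bit-size in the subtree}) + O(N\log N) = O(N\log N)$. Third, and this is the key step, I bound the total cost of all multiplication nodes by
\[
\sum_{\times\text{ nodes }v} d_{v_1}\, d_{v_2} \;\le\; \sum_{\times\text{ nodes }v} X(v_1)\, X(v_2) \;\le\; N^2,
\]
where $X(w)$ is the number of $X$-leaves of the subtree at $w$. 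Indeed, each ordered pair of distinct $X$-leaves has a unique least common ancestor in the tree, and it contributes $1$ to the product $X(v_1)X(v_2)$ precisely when this ancestor is the $\times$ node $v$; the total number of such ordered pairs is at most $N(N-1)$.

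Combining: the addition/subtraction nodes cost $O(N)$ coefficient operations of size $O(N)$ each, over $O(N)$ nodes, totalling $O(N^3)\subseteq O(N^2\M(N))$. The multiplication nodes cost in total $O(N^2)$ coefficient multiplications, each of cost $O(\M(N))$, giving $O(N^2\M(N))$. The main obstacle is precisely this global accounting: a per-node bound on a multiplication at a $\times$ node would only give $O(N^2\M(N))$ per node and thus $O(N^3\M(N))$ overall, and one really needs the telescoping/LCA argument above to get the advertised bound. The Horner direction, by contrast, is essentially immediate.
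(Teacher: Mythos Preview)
Your approach is essentially the same as the paper's: Horner for dense-to-formula, and a postorder evaluation for formula-to-dense with a telescoping bound on the total product work. Your LCA argument for $\sum_{\times} d_{v_1}d_{v_2}\le N^2$ is just the combinatorial unrolling of the paper's induction $T(F)\le t_F^2\,\M(\lambda(F))$, which closes via $(t_{F_1}+t_{F_2}+1)^2\ge t_{F_1}^2+t_{F_2}^2+2t_{F_1}t_{F_2}$.

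The one place where you fall short of the stated bound is the coefficient-size invariant. You track the max coefficient, and at a $\times$ node you pick up a factor $(d_{v_1}{+}1)$, which over the whole tree accumulates to an $O(N\log N)$ bit-size; you then quietly drop this ``mild logarithmic factor'' when you write ``each of cost $O(\M(N))$''. As written, your argument only yields $O(N^2\,\M(N\log N))$. The paper avoids this by choosing a sharper invariant: it controls $\sigma(P)=\log\bigl(\sum_i|a_i|\bigr)$, the log of the $\ell_1$-norm of the coefficient vector. This quantity is genuinely subadditive under \emph{both} operations, $\sigma(P\pm Q)\le\sigma(P)+\sigma(Q)$ and $\sigma(PQ)\le\sigma(P)+\sigma(Q)$, so a straight induction gives $\sigma(P_v)\le\lambda(F)\le N$ with no log loss, and hence each coefficient has bit-size $O(N)$ exactly. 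Swapping your max-coefficient bound for this $\ell_1$ bound repairs the gap and recovers the precise $O(N^2\M(N))$ claim.
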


\begin{proof} First of all, the dense representation can be considered as a special case of representation by formula, according to Horner's scheme.

\noindent 
 Then, moving from the formula representation to the dense representation means evaluating the formula in $\DD[X]$. Let's introduce the following control parameters. A polynomial $P \in \DD[X]$ has a degree denoted by $d_P$ and the size of its coefficients is controlled by the integer $\sigma(P) := \log(\sum_i \abs{a_i})$ where the $a_i$ are the coefficients of $P$. A formula $F \in \DD[X]_f$ contains a number of arithmetic operators denoted by $t_F$ and the size of its coefficients is controlled by the integer $\lambda(F) = \sum_i (\Lg(b_i))$ where the $b_i$ are the dyadics appearing in the formula.

\noindent 
The size $\flo F$ of the formula $F$ is obviously an upper bound of $t_F$ and $\lambda (F)$.

\noindent 
For  $a$ and $b\in\DD$ we always have $\Lg(a \pm b) \leq \Lg(a) + \Lg(b)$ and $ \Lg(ab) \leq \Lg(a)+ \Lg(b)$.

\noindent 
We can then easily check that $\sigma (P \pm Q) \leq \sigma(P) + \sigma(Q)$ and $\sigma (PQ) \leq \sigma (P) + \sigma (Q)$. The (naive) calculation time for $PQ$ is $\Oo(d_Pd_Q {\cal M}(\sigma(P) + \sigma(Q)))$.

\noindent 
We then show by induction on the size of the formula $F$ that the corresponding polynomial $P \in \DD[X]$ satisfies $d_P \leq t_F$ and $\sigma (P) \leq \lambda (F)$. We also show by induction that the time to calculate $P$ from $F$ is bounded by $t_F^2{ \cal M}(\lambda(F))$. 
 \end{proof}

\begin{definition}\label{325}
The set $\yrf \subset \DD[X]_f \times \DD[X]_f$ is the set of rational functions (with one variable) with coefficients in $\DD$, whose denominator is bounded below by $1$ on the interval $[0,1]$. The space $\czu$ provided with the set $\yrf$ as a family of codes of rational points is denoted by $\crf$.
\end{definition} 

\begin{proposition}[complexity of the family of maps $\big(\wi{f}\,\big)_{f \in \yrf }$] \label{326}  
The family of maps $\big(\wi{f}\,\big)_{f \in \yrf }$ is uniformly of class $\p$, more precisely of class $\DRT(\Lin, \Oo({\M}(N)N))$, where $\M(N)$ is the complexity of the multiplication of two integers of size~$N$.
\end{proposition}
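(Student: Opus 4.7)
The plan is to produce the two data required by Definition~\ref{221}bis — a modulus of uniform continuity $\mu(f,n)$ and an approximation function $\varphi(f,y,n)$ — and to check that they meet the announced $\DRT(\Lin, \Oo(\M(N)N))$ bound.

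For the modulus, I would invoke Lemma~\ref{324} applied to the pair $f=(F_P,F_Q)\in\yrf$. It bounds the degree of $P$ and $Q$ by $\flo{f}$ and the sum of absolute values of their coefficients by $2^{\flo{f}}$, hence $\abs{P},\abs{Q}\leq 2^{\flo{f}}$ and $\abs{P'},\abs{Q'}\leq \flo{f}\cdot 2^{\flo{f}}$ on $[0,1]$. Writing
\[
\wi{f}(x)-\wi{f}(y)=\frac{P(x)(Q(y)-Q(x))+Q(x)(P(x)-P(y))}{Q(x)Q(y)}
\]
and combining $Q(x)Q(y)\geq 1$ with the mean-value theorem gives a Lipschitz estimate $\abs{\wi{f}(x)-\wi{f}(y)} \leq 2^{c\flo{f}}\abs{x-y}$ for a universal constant $c$. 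Thus $\mu(f,n):=n+c\flo{f}$ is a suitable $\LINT$ modulus.

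For the evaluation, I would approximate $P(y)$ and $Q(y)$ within $1/2^{n+c'\flo{f}}$ (for a constant $c'$ to be fixed) by a bottom-up traversal of the two formulas, storing at every node a dyadic truncated to a common working precision $1/2^p$, and then divide by a Newton iteration (legitimate since $Q(y)\geq 1$) to the required $1/2^n$ precision. The key estimate is a joint magnitude-and-error induction: defining $B_v$ by $B_v=\max(1,\abs{c_v})$ at constant leaves, $B_v=1$ at the variable leaf, $B_v = B_u+B_w$ at sum nodes, $B_v=B_uB_w$ at product nodes, one has simultaneously $\abs{P_v(y)}\leq B_v$ and an absolute error bound $E_v = \Oo(B_v t_v\cdot 2^{-p})$, where $t_v$ counts the nodes in the subtree of $v$. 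Since $B_{\rm root}\leq 2^{\Oo(\flo{f})}$ and $t_{\rm root}\leq\flo{f}$, fixing $c'$ so that $p=n+c'\flo{f}$ forces $E_{\rm root}\leq 2^{-n-2}$, as wanted.

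For the complexity, set $N:=\flo{f}+\Lg(y)+n$. Every intermediate dyadic has size $\Oo(N)$, each of the $\Oo(\flo{f})=\Oo(N)$ arithmetic nodes costs $\Oo(\M(N))$ (multiplication dominating addition), and the final Newton iteration adds another $\Oo(\M(N))$. Summing yields $\Oo(\M(N)N)$ time, while the output, of magnitude $\leq 2^{\flo{f}}$ and precision $1/2^n$, has size $\Oo(N)$; this places $\varphi$ in $\DRT(\Lin,\Oo(\M(N)N))$. The main obstacle is precisely the joint magnitude-and-error induction on the formula tree: without coupling the magnitude bound $B_v$ to the error bound $E_v$, a worst-case analysis would push the working precision to $\flo{f}^2$ bits and break the target time bound.
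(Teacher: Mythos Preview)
Your proof is correct and follows essentially the same route as the paper: reduce (via $Q\ge 1$) to approximately evaluating a polynomial formula, and establish a joint magnitude-and-error bound by induction on the formula tree. The paper's bookkeeping is slightly slicker --- instead of tracking $B_v$ and $t_v$ separately, it bounds both the magnitude and the accumulated error at a node by $2^{m_v}$ where $m_v=\flo{F_v}$ is the subformula size (the normalisation $m_v=m_u+m_w+2$ makes a single bound work for both $+$ and $\times$), so the uniform working precision $1/2^{n+m}$ already suffices; the modulus of continuity and the final division, which you spell out explicitly, are left implicit there.
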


\begin{proof} We need to compute a modulus of uniform continuity for the family $\big(\wi{f}\,\big)_{f \in \yrf }$. We must also specify a function $\varphi$  of class $\DRT(\Lin,\Oo(\M(N) N))$.
\[ 
\varphi\colon \yrf \times \DD_{[0,1]} \times \NN_1 \to \DD \; ,\;
(f,x,n) \mapsto \varphi(f,x,n)
\]
satisfying $\abs{\wi{f}(x) - \varphi(f,x,n)}\; \leq 1/2^n$.

\noindent In fact, the modulus of uniform continuity can be deduced from the calculation of $\varphi $.

\noindent 
Since the denominator of the fraction is bounded below by $1$, we only need to give a modulus of uniform continuity and a calculation procedure in time $\Oo(\M(N) N)$ to evaluate a formula $F \in \DD[X]_f$ with a precision of $1/2^n$ on the interval $[0,1]$ ($N = n+ \flo F$). We assume without loss of generality that the size $m = \flo F$ of the formula $F = F_1 * F_2$ is equal to $ m_1+m_2+2$ if $m_1 = \flo{F_1}$ and $m_2 = \flo{F_2}$ ( $*$ designates one of the operators $+,\;-$ or $\times$). 
We then establish (by induction on the depth of the formula) the following two facts:

\noindent 
--- when the formula $F \in \DD[X]_f$ is evaluated exactly at $x \in [0,1]$, the result is always bounded in absolute value by $2^m$, 

\noindent 
--- when we evaluate the formula $F$ in an $x \in [0,1]$ in an approximate way, taking $x$ and all the intermediate results with precision  $1/2^{n+m}$, the final result is guaranteed with precision $1/2^n$. 

\noindent 
We then conclude without difficulty.
\end{proof}

\begin{proposition} \label{327}~

\noindent 
a) The family of real numbers $(\Norme{ \wi{f}})_{f \in \yrf }$ is of complexity $\p$.

\noindent 
b) The membership test ``$\,f \in\yrf\; ?\,$'' is of complexity $\p$.
\end{proposition}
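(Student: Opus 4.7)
For part (a), the plan is to reduce the computation of $\Norme{\wi{f}}$ for $f = (P,Q) \in \yrf$ to polynomial-time real root approximation. Since $Q \geq 1$ on $[0,1]$, the rational function $\wi{f} = P/Q$ is smooth there, and its sup norm equals the maximum of $\abs{P/Q}$ taken over $\{0,1\}$ together with the real critical points of $\wi{f}$ in $(0,1)$. These critical points are exactly the real roots in $(0,1)$ of the polynomial $R := P'Q - PQ' \in \DD[X]$, since $Q^2$ does not vanish. I would first convert $P$ and $Q$ from formula presentation to dense presentation by Lemma~\ref{324} (polynomial time) and compute $R$ in dense form, which has polynomially bounded degree and coefficient bit-length. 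I would then invoke a classical polynomial-time algorithm for isolating and approximating the real roots of a dense polynomial (in the style of Sch\"onhage or Pan) to obtain, at any target precision $1/2^m$, dyadic approximations of all real roots of $R$ in $[0,1]$. Evaluating $\abs{P/Q}$ at each such approximation and at $0$, $1$ via Proposition~\ref{326} and keeping the maximum yields a dyadic approximation of $\Norme{\wi{f}}$.

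The error analysis for (a) rests on a Lipschitz bound for $\wi{f}$ on $[0,1]$: the derivative $R/Q^2$ is bounded in absolute value by $\Norme{R}$ since $Q \geq 1$, and $\Norme{R}$ is at most $2^{q(\flo{f})}$ for some fixed polynomial $q$. Choosing $m$ polynomial in $n + \flo{f}$ therefore absorbs both the error from the approximate location of the critical points and the evaluation error coming from Proposition~\ref{326}, delivering the target precision $1/2^n$.

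For part (b), given $(P,Q) \in \DD[X]_f \times \DD[X]_f$, one must decide whether $S := Q - 1$ is nonnegative throughout $[0,1]$. I would convert $Q$ to dense form by Lemma~\ref{324} and apply a polynomial-time real root isolation procedure to $S$ on $[0,1]$, for example via Sturm sequences or Descartes' rule combined with bisection, producing pairwise disjoint rational isolating intervals for the roots $\alpha_1 < \cdots < \alpha_\ell$ of $S$ in $[0,1]$. On each maximal subinterval of $[0,1]$ disjoint from these isolating intervals, $S$ has constant sign, read off by evaluating $S$ at a single rational point of that subinterval. The pair $(P,Q)$ belongs to $\yrf$ iff every such sign is nonnegative, as vanishing at the isolated $\alpha_i$ is consistent with $Q \geq 1$; the whole test runs in time polynomial in $\flo{P} + \flo{Q}$.

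The main obstacle in both parts is relying on the polynomial-time machinery for real root isolation and approximation of dense polynomials, which is classical but technically nontrivial. Once granted, the arguments reduce to controlled bookkeeping: in (a) the lower bound $Q \geq 1$ tames denominator blow-ups and keeps the Lipschitz constant singly exponential in the input size, and in (b) it is a purely combinatorial sign analysis over a polynomially bounded list of subintervals.
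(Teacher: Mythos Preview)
Your proposal is correct and follows essentially the same approach as the paper. For part (a) both you and the paper reduce to computing the zeros of $P'Q-PQ'$ in $[0,1]$ and evaluating $\abs{\wi f}$ there and at the endpoints, using the modulus of continuity from Proposition~\ref{326} to control the precision needed on the root approximations. For part (b) the paper computes $\min_{[0,1]} Q$ via the critical points of $Q$ (zeros of $Q'$) and compares this algebraic number to $1$, whereas you test nonnegativity of $Q-1$ directly by root isolation and sign checks on the complementary subintervals; these are minor variants of the same idea, both relying on the classical fact that real root isolation for dense integer polynomials is in $\p$.
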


\begin{proof} 
a) Let $f = P/Q \in \yrf $. To calculate an approximate value to within $1/2^n$ of the norm of~$\wi{f}$, proceed as follows:

\noindent 
--- calculate $(P'Q - Q'P)$ as an element of $\DD[X]$ (\lemref{324}),

\noindent 
--- calculate  the precision $m$ 
 required on $x$ to be able to evaluate $\wi{f} (x)$ with precision $1/2^n$ (Proposition \ref{326}),

\noindent 
--- calculate the roots $(\alpha_i)_{1 \leq i \leq n})$ of $(P'Q - Q'P)$ on $[0,1]$ to within $2^{-m}$, 

\noindent 
--- calculate $\max \{\wi{f}(0);\wi{f}(1);\wi{f}(\alpha_i)
 \; 1 \leq i \leq s \}$ with precision $1/2^n$ (Proposition \ref{326}).

\noindent 
b) The code $f$ contains the codes of $P$ and $Q$. The point is to see that we can test in polynomial time that the denominator $Q$ is bounded below by $1$ on the interval $[0,1]$. 
This is a classical result concerning calculations with real algebraic numbers: it is a matter of comparing to $1$ the inf of $Q(\alpha_i)$ with $\alpha_i=0,1$ or a zero of $Q'$ on the interval. 
\end{proof}

\begin{remarks}\label{328}~

\noindent 
1) It is well known that calculating the real roots lying in a given rational interval of a polynomial in  $\ZZ[X]$ is a calculation of class $\p$. 
Perhaps the most efficient method is not the one we have indicated, but a slight variant. The search for the complex roots of a polynomial (for a given precision) is now extremely fast (cf.\ \cite {Pa}). 
Rather than looking specifically for the real zeros on the interval $[0,1]$ we could therefore search with precision $2^{-m}$ for the real or complex zeros $(\beta_j)_{1 \leq j \leq t}$ sufficiently close to the interval $[0,1]$ (i.e.\ their imaginary part is in absolute value $\leq 2^{-m}$ and their real part is on $[0,1]$ to within $2^{-m}$) and evaluate $\wi{f}$ at $\Re(\beta_j)$.

\noindent 
2) As follows from Proposition \ref{335} below, any semilinear function with coefficients in $\DD$ is a point of complexity $\p$ in $\crf$. 
The fact that the presentation $\csl$ is not of class $\p$ (if $\p \neq \np$, cf.\ Section \ref{subsec43}) implies on the other hand that the family of functions $\big(\wi{f}\,\big)_{f \in \ysl }$ is not a family of class~$\p$ in $\crf$.

\noindent 
3) It is easy to show that vector space operations are also in polynomial time. 
\end{remarks}

The previous results are summarised as follows.

\begin{theorem} \label{329}
The presentation $\crf$ of $\czu$ is of class $\p$.
\end{theorem}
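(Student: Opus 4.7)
The plan is to verify, one by one, each of the clauses of Definition~\ref{313}, together with the density requirement, by invoking the propositions that have just been established for $\crf$. Since almost all the hard work has been done, the argument will be essentially a bookkeeping exercise, the main subtlety being to check that every clause of the definition (including density and the implicit requirement that the set of codes of rational points be itself decidable in polynomial time) is truly covered.

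First I would handle density. A polynomial $P\in\DD[X]$ presented by Horner's formula is an element of $\DD[X]_f$, and the pair $(P,1)$, where $1$ is the constant polynomial, lies in $\yrf$ (its denominator is $\geq 1$ trivially). By the Weierstrass approximation theorem, the set of such polynomials with dyadic coefficients is dense in $\czu$, so $\{\wi f : f\in \yrf\}$ is dense. Next, the uniform $\p$-complexity of the family $(\wi f)_{f\in\yrf}$, together with the fact that the associated modulus of uniform continuity is polynomially computable, is given directly by Proposition~\ref{326}; in particular this entails, via Proposition~\ref{312}, that the associated evaluation function is uniformly of class $\p$, so the presentation is a legitimate rational presentation of the underlying metric space in the sense of Definition~\ref{211} with $\ca=\p$.

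Then I would address the Banach space operations. If $\lambda\in\DD$ and $f=(P,Q)\in\yrf$, then $\lambda\cdot f$ can be encoded as $(\lambda P,Q)\in\yrf$, and this is computable in linear time using the formula representation (just graft a single multiplication node on top of the numerator formula). For a finite list $f_1=(P_1,Q_1),\dots,f_k=(P_k,Q_k)$ of elements of $\yrf$, the sum admits the explicit rational representative $\bigl(\sum_i P_i\prod_{j\neq i}Q_j,\;\prod_i Q_i\bigr)$, again constructible in polynomial time in the formula representation; the denominator is a product of polynomials each $\geq 1$ on $[0,1]$, hence still $\geq 1$ on $[0,1]$, so the result lies in $\yrf$. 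The norm computation is exactly Proposition~\ref{327}(a), and the decidability in polynomial time of the code set $\yrf$ is Proposition~\ref{327}(b).

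Putting these pieces together, all the clauses of Definition~\ref{313} are satisfied with $\ca=\p$, which completes the proof. The only point that requires any care, and is the closest thing to an obstacle, is the sum of a list: one must check that the naive rational-function construction does not blow the polynomial-time bound, which is clear because in the formula representation both the total size and the depth grow polynomially in the combined input size, and the denominator bound on $[0,1]$ is preserved by products.
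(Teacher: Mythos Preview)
Your proof is correct and follows essentially the same approach as the paper: the theorem is stated there explicitly as a summary of the preceding results (Propositions~\ref{326} and~\ref{327} and Remark~\ref{328}(3)), with no further proof given. You have simply spelled out the verification of each clause of Definition~\ref{313} more explicitly than the paper bothers to, including the density argument via Weierstrass and the explicit rational-function constructions for scalar multiplication and sums, which the paper leaves to the reader.
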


\subsection{Newman's approximation theorem and its algorithmic complexity}\label{subsec33}

Newman's theorem is a fundamental theorem in approximation theory. The following statement is a special case.\footnote{We have taken the bound $n^2/2$ on the degrees so as to obtain a bound in $e^{-n}$.}

\begin{theorem}[Newman's theorem, \cite{Ne}, see for example \cite{PP} p. 73--75] \label{331}~

\noindent Let $n$ be an integer $\geq 6$, define
\[
H_n(x) = \prod_{ 1 \leq k < n^2}(x + e^{-k/n}).
\]
and consider the two polynomials $P_n(x)$ and $Q_n(x)$, of degrees  $\leq n^2/2$, given by
\[
P_n(x^2) = x(H_n(x) - H_n(-x)) \; \; {\rm and } \; \; Q_n(x^2) = H_n(x) + H_n(-x)
\]
Then for all $x \in [-1,1]$ we have 
\[
\abS{ \; \abs{x}\; - \;(P_n/Q_n)(x^2) \; } \; \leq 3e^{-n}\; \leq 2^{-(n+1)}
\]
and 
\[
\abS{Q_n(x^2)}\; \geq \;2H_n(0) = 2/e^{(n^3 - n)/2} \;\geq 1/2^{3(n^3 - 
n)/4} 
\]
\end{theorem}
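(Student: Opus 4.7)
The plan is to separate the two inequalities and to reduce the approximation bound, via a short algebraic manipulation, to the classical Newman estimate on the ratio $\abs{H_n(-y)}/H_n(y)$, which is the genuinely hard ingredient. First I verify the algebraic preliminaries. Since $H_n(x)-H_n(-x)$ is odd in $x$ and $H_n(x)+H_n(-x)$ is even, $x(H_n(x)-H_n(-x))$ and $H_n(x)+H_n(-x)$ are both even polynomials in $x$, and hence legitimately define polynomials $P_n,Q_n$ in the variable $X=x^2$. From $\deg H_n=n^2-1$ one reads off $\deg P_n,\deg Q_n\leq n^2/2$.

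Second, the lower bound on $Q_n$. The coefficients of $H_n$ are elementary symmetric functions in the positive numbers $e^{-k/n}$, hence all non-negative. Consequently $Q_n(x^2)=H_n(x)+H_n(-x)=2\sum_{j\geq 0}c_{2j}x^{2j}\geq 2c_0=2H_n(0)$. Direct computation gives $H_n(0)=\exp(-\tfrac1n\sum_{k=1}^{n^2-1}k)=e^{-(n^3-n)/2}$, and the stated refinement $2/e^{(n^3-n)/2}\geq 1/2^{3(n^3-n)/4}$ then follows from the elementary inequality $e<2^{3/2}$.

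For the approximation estimate, set $y=\abs{x}\in[0,1]$. Substituting into the definitions of $P_n$ and $Q_n$ yields
\[
\abs{x}-\frac{P_n(x^2)}{Q_n(x^2)}\;=\;\frac{y\,Q_n(y^2)-P_n(y^2)}{Q_n(y^2)}\;=\;\frac{2y\,H_n(-y)}{H_n(y)+H_n(-y)}.
\]
The termwise inequality $y+e^{-k/n}\geq \abs{e^{-k/n}-y}$ gives $H_n(y)\geq\abs{H_n(-y)}$. For $y\in[0,e^{-n}]$ we moreover have $ye^{k/n}<1$ for all $1\leq k\leq n^2-1$, so $\abs{H_n(-y)}=H_n(0)\prod_k\abs{1-ye^{k/n}}\leq H_n(0)$; combined with $H_n(y)+H_n(-y)\geq 2H_n(0)$ this already gives $\abs{|x|-P_n/Q_n}\leq y\leq e^{-n}$ in that range. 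For $y\in(e^{-n},1]$ the problem reduces to a sharp bound on $r:=\abs{H_n(-y)}/H_n(y)$.

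The main obstacle is this ratio bound. Parametrising $y=e^{-k^\ast/n}$ (with $k^\ast\in[0,n^2)$ in this range), the identity
\[
\frac{e^{-k/n}-y}{e^{-k/n}+y}\;=\;\tanh\!\left(\tfrac{k^\ast-k}{2n}\right)
\]
turns the product into $r=\prod_{k=1}^{n^2-1}\abs{\tanh((k^\ast-k)/(2n))}$. The classical Newman argument (see \cite{PP}) compares $\log r$ with a Riemann sum for the convergent integral $\int_{-\infty}^{+\infty}\log\abs{\tanh(u/2)}\,du=-\pi^2/4$, isolating the sharp peak near $k=k^\ast$ (where a single factor is already of order $1/n$), using the trivial bound $\abs{\tanh}\leq 1$ on the bulk of the product, and treating separately the endpoints of the summation range; this delivers $r\leq Ce^{-n}$ with a small constant (say $C=\tfrac34$). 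Plugging back into the error formula—using either $H_n(-y)/(H_n(y)+H_n(-y))\leq r$ when $H_n(-y)\geq 0$, or $H_n(y)+H_n(-y)\geq H_n(y)(1-r)$ when $H_n(-y)<0$—gives $\abs{|x|-P_n/Q_n}\leq 3e^{-n}$; finally, $3e^{-n}\leq 2^{-(n+1)}$ for $n\geq 6$ follows from the elementary estimate $(e/2)^6>6$.
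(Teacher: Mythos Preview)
The paper does not prove this theorem: it is stated as a known result with a citation to \cite{Ne} and \cite{PP}, and then used as a black box in the complexity analysis that follows. So there is no ``paper's own proof'' to compare against.

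Your write-up is a correct outline. The algebraic preliminaries (evenness/oddness, degree count) and the lower bound on $Q_n$ via non-negativity of the elementary symmetric functions are clean and complete, as is the reduction of the error to $2y\,H_n(-y)/(H_n(y)+H_n(-y))$ and the trivial estimate on $[0,e^{-n}]$. For the remaining range you correctly identify the crux as the ratio bound $\abs{H_n(-y)}/H_n(y)\leq Ce^{-n}$ and defer it to \cite{PP}, which is exactly what the paper does by citing the same source. Two small remarks: the full-line integral $\int_{-\infty}^{+\infty}\log\abs{\tanh(u/2)}\,du$ equals $-\pi^2/2$, not $-\pi^2/4$ (the value $-\pi^2/4$ is the half-line integral), though this is only a heuristic aside in your sketch; and the argument in \cite{PP} does not in fact proceed via the $\tanh$ parametrisation and a Riemann-sum comparison, but by a direct product-splitting estimate on the factors $(e^{-k/n}-y)/(e^{-k/n}+y)$. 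Your $\tanh$ reformulation is a legitimate and illuminating alternative viewpoint (it makes transparent why equispaced nodes on a logarithmic scale are the right choice), but if you want the sketch to match the cited source you should either carry the $\tanh$ estimate through yourself or describe the splitting argument actually found there.
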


It follows from Newman's theorem that semilinear functions can be individually approximated by rational functions that are easy to write and well controlled. This is made clear by \lemref{333} below and its corollaries, the propositions and theorems that follow. First, we recall the following result (cf.\ \cite{Brent}).

\begin{lemma}[Brent's theorem] \label{332} 
 Let $a \in [-1,1]\cap \DD_m$. The calculation of $\exp(a)$ with precision~$2^{-m}$ can be done in time $\Oo(\M(m)\log(m))$.  
In other words, the exponential function on the interval $[-1,1]$ is of complexity $\DTI(\Oo(\M(m)\log(m)))$ 
\end{lemma}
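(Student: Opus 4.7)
The plan is classical: combine argument reduction with a fast Taylor series evaluation, controlled by a second parameter. Given $a \in [-1,1] \cap \DD_m$, choose an integer $k$ with $k = \Theta(\log m)$ and set $r := a/2^k \in \DD$, so that $|r| \leq 2^{-k} \leq 1/m$. Since $\exp(a) = \bigl(\exp(r)\bigr)^{2^k}$, it suffices to approximate some $y \approx \exp(r)$ and then perform $k$ successive squarings.

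The truncated Taylor sum $S_N(r) = \sum_{n=0}^{N-1} r^n/n!$ has truncation error at most $2\cdot 2^{-kN}$, and all the intermediate quantities $y^{2^j}$ remain in a bounded neighbourhood of $[e^{-1},e]$ (because $|a|\leq 1$), so each squaring inflates the absolute error by at most a constant factor $2e+1$. Consequently, choosing $N = \lceil (m + ck)/k\rceil = O(m/\log m)$ for an appropriate constant $c$, and carrying out all intermediate calculations at working precision $m + O(\log m)$, guarantees that the final answer lies within $2^{-m}$ of $\exp(a)$.

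The core step is to evaluate $S_N(r)$ within the target cost. One route is binary splitting: recursively assemble the common denominator $(N-1)!$ and the numerator of the partial sums along a balanced binary tree, then perform a single division of two integers of size $O(m)$ at the end; the resulting bit cost is $O(\M(m)\log m)$ for the chosen parameters. An equivalent route (the one originally due to Brent) is to compute $\log$ by the arithmetic-geometric mean in time $O(\M(m)\log m)$ and then recover $\exp$ by Newton iteration, which doubles the precision at each step and so has total cost dominated by the last iteration, again $O(\M(m)\log m)$. In either case, the subsequent $k=O(\log m)$ squarings, each a single multiplication at precision $m + O(\log m)$, contribute a further $O(\M(m)\log m)$, which gives the claimed global bound.

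The main obstacle is the error analysis around the squaring tower: one must verify, using the elementary identity $|\tilde y^{\,2} - y^2| \leq 2|y|\,|y-\tilde y| + |y-\tilde y|^2$ together with the uniform bound $|\exp| \leq e$ on $[-1,1]$, that the absolute error grows by at most the constant factor $2e+1$ per squaring, so that a working precision of $m + O(\log m)$ really suffices. Everything else reduces to routine bit-size bookkeeping in the evaluation of $S_N(r)$ and in the series of multiplications needed for the final $k$ squarings.
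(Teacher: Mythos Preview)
The paper does not prove this lemma at all; it is stated as a recalled result with a citation to Brent's 1976 paper. So there is no paper's proof to compare against --- only the question of whether your sketch is internally sound.

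Your second route (AGM for $\log$, then Newton iteration to invert it and obtain $\exp$) is precisely Brent's method and does give $\Oo(\M(m)\log m)$.

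Your first route, however, has a real gap in the bit-size accounting. With $k=\Theta(\log m)$ and $r=a/2^k$, the reduced argument $r$ is still an $m$-bit dyadic: writing $r=p/2^{m+k}$, the numerator $p$ can have up to $m$ bits. When you binary-split $\sum_{n<N} r^n/n!$ over a common denominator, the integers you assemble include $p^{N-1}$, whose bit length is $(N-1)\cdot m = \Oo(m^2/\log m)$, not $\Oo(m)$ as you assert. The resulting cost is therefore $\Oo\bigl(\M(m^2/\log m)\log N\bigr)$, which is nowhere near $\Oo(\M(m)\log m)$. The standard repair is the bit-burst decomposition: write $a=\sum_i a_i$ where $a_i$ carries roughly bits $2^{i-1}$ through $2^i$ of $a$, so that each $a_i$ has a numerator of only $\approx 2^{i-1}$ bits while $|a_i|\le 2^{-2^{i-1}}$; then each $\exp(a_i)$ genuinely binary-splits into $\Oo(m)$-bit integers. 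Even with that fix the total comes out to $\Oo(\M(m)\log^2 m)$, so for the exact bound claimed in the lemma you are in any case forced back onto the AGM route.

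The squaring-tower error analysis you give is fine; the problem is solely the cost claim for the Taylor/binary-splitting branch.
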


We can easily deduce the following result, by noting $\DD[X]_f$ the presentation of $\DD[X]$ by formulae.

\begin{lemma} \label{333}
There exists a sequence 
\[
\NN_1 \rightarrow \DD[X]_f \times \DD[X]_f \; ; \; n \mapsto (u_n, v_n),
\] 
 of class $\DRT(\Oo(n^5), \Oo({\M}(n^3) n^2 \log^3(n)))$, such that for all $x \in [0,1]$ 
\[
\abs{\;\abs{x} - p_n(x^2)/q_n(x^2) \;} \;\;\leq \;2^{-n}.
\]
The degrees of the polynomials $p_n$ and $q_n$ are bounded by $n^2/2$, their sizes in presentation by formula are bounded by $\Oo(n^5)$, and $q_n(x^2)$ is bounded below by $1/e^{(n^3-n)}$.
\end{lemma}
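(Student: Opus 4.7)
The plan is to construct $p_n,q_n$ as carefully rounded versions of Newman's polynomials $P_n,Q_n$ from Theorem~\ref{331}, using Brent's theorem (Lemma~\ref{332}) to produce dyadic approximations of the transcendental constants $e^{-k/n}$ appearing in Newman's construction, and then to round coefficients so that sizes stay polynomially bounded.

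First, I would compute, for each $k \in \{1,\ldots,n^2-1\}$, a dyadic $a_k \in \DD$ with $|a_k - e^{-k/n}| \leq 2^{-m}$, where $m = \Theta(n^3)$. A convenient route is to compute $u:=e^{-1/n}$ once by Brent and then obtain each $a_k$ as a truncated approximation of $u^k$ via repeated squaring: each $a_k$ has size $\Oo(n^3)$ and is produced in $\Oo(\M(n^3)\log n)$ time, for a total of $\Oo(n^2\M(n^3)\log n)$ over all $k$.

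Next I would assemble the polynomial $\wi{H}_n(X) := \prod_{k=1}^{n^2-1}(X + a_k)$ in dense representation, with all intermediate coefficients rounded to $m' = \Theta(n^3)$ bits. To meet the stated time bound I use a balanced binary product tree: at level $\ell$ there are $\Oo(n^2/2^\ell)$ products of polynomials of degree $\Oo(2^\ell)$ with $\Oo(n^3)$-bit coefficients, each performed by Kronecker substitution (one integer multiplication of size $\Oo(2^\ell n^3)$ at cost $\Oo(\M(2^\ell n^3))$) followed by re-truncation to $m'$ bits; summing over $\Oo(\log n)$ levels yields $\Oo(n^2\M(n^3)\log^3 n)$. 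From the dense form I read off even- and odd-indexed coefficients to split uniquely $\wi{H}_n(X) = \wi{A}(X^2) + X\,\wi{B}(X^2)$, and I set
\[
q_n(Y) := 2\wi{A}(Y), \qquad p_n(Y) := 2Y\,\wi{B}(Y);
\]
both have degree $\leq n^2/2$ and dense size $\Oo(n^5)$, and Lemma~\ref{324} then supplies the required formula presentation in linear time.

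It remains to control the errors. A telescoping argument using $|X+e^{-k/n}|\leq 2$ and $|X+a_k|\leq 3$ on $[-1,1]$ gives $\norme{\wi{H}_n - H_n}_\infty \leq C\cdot 3^{n^2}\cdot 2^{-m}$, so $\wi{Q}_n(X^2) := \wi{H}_n(X)+\wi{H}_n(-X)$ and $\wi{P}_n(X^2) := X(\wi{H}_n(X)-\wi{H}_n(-X))$ lie within $\Oo(3^{n^2}\cdot 2^{-m})$ of Newman's $Q_n,P_n$ on $[0,1]$, while rounding the $\Oo(n^2)$ dense coefficients contributes an extra $\Oo(n^2\cdot 2^{-m'})$. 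With $m = m' = \Theta(n^3)$ both contributions are $\leq e^{-(n^3-n)/2}$, which preserves the lower bound $q_n(x^2)\geq e^{-(n^3-n)}$ (half of Newman's $2e^{-(n^3-n)/2}$) and yields $|P_n/Q_n - p_n/q_n|\leq 2^{-(n+1)}$ on $[0,1]$; adding Newman's own $|\,|x|-P_n(x^2)/Q_n(x^2)\,|\leq 2^{-(n+1)}$ gives the claimed bound $2^{-n}$. The main obstacle is the apparently catastrophic factor $3^{n^2}$ in the telescoping estimate, absorbed by choosing $m$ of order $n^3$; the secondary difficulty is the bookkeeping of the divide-and-conquer product, which needs aggressive coefficient truncation at each level together with fast integer multiplication in order to fit inside $\Oo(n^2\M(n^3)\log^3 n)$.
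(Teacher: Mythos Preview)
Your argument is correct, but it takes a noticeably more elaborate route than the paper. The paper never expands the product at all: since $\DD[X]_f$ is the \emph{formula} presentation, the product $h_n(x):=\prod_{k}(x+c_{n,k})$ is already a legal element of $\DD[X]_f$, and the desired numerator/denominator are simply the formulae $x\,(h_n(x)-h_n(-x))$ and $h_n(x)+h_n(-x)$, each of algebraic size $\Oo(n^2)$ with $\Oo(n^2)$ dyadic leaves of size $\Oo(n^3)$, hence boolean size $\Oo(n^5)$. No product tree, no dense expansion, no reconversion via Lemma~\ref{324} is needed; the entire time budget is spent computing the constants $c_{n,k}\approx e^{-k/n}$ to precision $2^{-n^3}$ via Brent. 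Your perturbation estimate and the resulting choice $m=\Theta(n^3)$ match the paper's (the paper gets $(n^2-1)2^{n^2}\varepsilon$ instead of your looser $3^{n^2}\varepsilon$, but both are swallowed by $2^{-n^3}$), and your handling of $e^{-k/n}$ for $k>n$ via powers of $e^{-1/n}$ is a detail the paper glosses over.

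What your approach buys is a clean separation of even/odd parts, so that $p_n(Y),q_n(Y)$ are honestly produced as polynomials in the variable $Y=x^2$ with degree $\leq n^2/2$; the paper's formulae are literally in the variable $x$ and only implicitly define $p_n,q_n$. What the paper's approach buys is simplicity: no balanced-tree bookkeeping, no per-level truncation analysis, and the time bound becomes a one-liner (``most of the time is spent computing the $c_{n,k}$'').
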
 

\begin{proof} We define $p_n$ and $q_n$ as $P_n$ and $Q_n$ by replacing in the definition the real $e^{-k/n}$ by a sufficient dyadic approximation $c_{n,k}$  calculated by means of \lemref{332}. 

\noindent 
If $\abs{e^{-k/n}-c_{n,k}}\;\leq \varepsilon$ we check that for all $x\in [0,1]$ 
\[
\abs{P_n(x) - p_n(x)}\;  \leq  (n^{2}-1) 2^{n^2} \varepsilon  \quad  
{\rm  and}   \quad  \abs{Q_n(x)- q_n(x)}\; \leq  (n^2-1) 2^{n^2} \varepsilon  = \varepsilon_1. 
\]
For the difference between rational functions we use
\[
\abs{A/B - a/b};  \leq \;  
\abs{A/B} \;  \abs{b - B}  / b + \; \abs{A - a}  / b 
\leq   3 \varepsilon_1/ b. 
\]
As $ B \geq 1/2^{3(n^3-n)/4}$ we have $1/b \leq 2.2^{3(n^3-n)/4}$ if 
$\abs{b - B}\;\leq  B/2$, in particular if 
\[
(n^{2}-1) 2^{n^2} \varepsilon \leq (1/2) \cdot 1/2^{3(n^3-n)/4}.
\]
We are then led to take an $\varepsilon $ such that 
\[
3\varepsilon_1/b\leq 6(n^{2}-1)2^{n^2}2^{3(n^3-n)/4}\varepsilon 
\leq 1/2^{n+1}.
\]
It is therefore sufficient to take $\varepsilon \leq 2 ^{-n^3}$ (for $n$ large enough). 
large).

\noindent 
We will therefore have to describe $p_n$ and $q_n$ using formulae of 
``algebraic'' size $\Oo(n^2)$ involving base terms $(x+c_{n,k})$ 
where $c_{n,k}$ is a dyadic of size $\Oo(n^3)$. 
The (boolean) size of the formula is therefore $\Oo(n^5)$. 
Most of the calculation time is taken up by calculating $c_{n,k}$ using \lemref{332}. 
\end{proof}

Note that the increase in computation time is only slightly less than the 
size of the result.

We can immediately deduce the following results.

\begin{theorem} \label{334}
The function $x \mapsto \abs{ x - 1/2 }$ is a point of complexity $\p$ (more precisely $\DRT(\Oo(n^5), \Oo({\M}(n^3) n^2 \log^3(n)))$ 
in the space $\crf$. 
In other words, there is a sequence 
$\NN_1 \to \yrf \; , \; n \mapsto (u_n,v_n)$, of class
 $\DRT(\Oo(n^5), \Oo({\M}(n^3) n^2 \log^3(n)))$, such that 
\[
\Norme{ \; \abs{ x-1/2 } - u_n(x)/v_n(x) \;} \; \leq 2^{-n}
\]
The degrees of the polynomials $u_n$ and $v_n$ are bounded by $n^2$.
\end{theorem}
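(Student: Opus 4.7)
The plan is to reduce directly to \lemref{333} via the identity $|x - 1/2| = \sqrt{(x - 1/2)^2}$. Read as $|\sqrt{y} - p_n(y)/q_n(y)| \leq 2^{-n}$ for $y \in [0,1]$ (via the substitution $y = x^2$ in the statement of \lemref{333}), the estimate applies at $y = (x - 1/2)^2 \in [0, 1/4] \subset [0,1]$ and yields, for all $x \in [0,1]$,
\[
\Abs{\,|x - 1/2|\; -\; p_n\big((x - 1/2)^2\big)\big/q_n\big((x - 1/2)^2\big)\,} \;\leq\; 2^{-n}.
\]

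Starting from the pair of formulas $(p_n, q_n) \in \DD[X]_f \times \DD[X]_f$ delivered by \lemref{333}, I would build
\[
u_n(X) \;=\; M_n \cdot p_n\big((X - 1/2)^2\big), \qquad v_n(X) \;=\; M_n \cdot q_n\big((X - 1/2)^2\big),
\]
where $M_n = 2^{\lceil (n^3 - n) \log_2(e) \rceil}$ is a dyadic constant of bit-size $\Oo(n^3)$ chosen so that $M_n \geq e^{n^3 - n}$. At the formula level, the substitution $X \leftarrow (X - 1/2)^2$ is realised by replacing each leaf labeled $X$ with a copy of the constant-size subformula $(X - 1/2)(X - 1/2)$; the factor $M_n$ is then appended as a single multiplicative node at the root. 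The lower bound $q_n((x - 1/2)^2) \geq e^{-(n^3 - n)}$ from \lemref{333} gives $v_n(x) \geq 1$ on $[0,1]$, so $(u_n, v_n) \in \yrf$. The error estimate is inherited unchanged, and the degrees in $X$ are at most twice those of $p_n$ and $q_n$, hence bounded by $n^2$.

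What remains is complexity bookkeeping. Each leaf substitution adds $\Oo(1)$ nodes, and the number of $X$-labeled leaves in $p_n$ (resp.\ $q_n$) is bounded by its formula size, which is $\Oo(n^5)$ by \lemref{333}; together with the $\Oo(n^3)$-sized multiplicative constant, the formulas $u_n$ and $v_n$ remain of size $\Oo(n^5)$. The extra construction time on top of \lemref{333} is linear in the output size, hence $\Oo(n^5)$, which is absorbed into $\Oo(\M(n^3)\, n^2 \log^3(n))$ since $\M(n^3) \geq n^3$. So the map $n \mapsto (u_n, v_n)$ lies in $\DRT(\Oo(n^5), \Oo(\M(n^3)\, n^2 \log^3(n)))$. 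The only mild subtlety is the tree (rather than DAG) structure of $\DD[X]_f$, which forces independent copies of the inserted subformula at each leaf; but since that subformula has constant size, this inflation is harmless.
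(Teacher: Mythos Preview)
Your argument is correct and matches the paper's approach: the paper states Theorem~\ref{334} as an immediate consequence of \lemref{333}, and your proof is precisely the routine verification that the affine change of variable $t = x - 1/2$ (equivalently, your substitution $y = (x-1/2)^2$) together with the rescaling by a dyadic constant $M_n$ to force the denominator above~$1$ yields an element of $\yrf$ with the stated bounds. One small remark: the exact choice $M_n = 2^{\lceil (n^3-n)\log_2 e\rceil}$ presupposes computing $\log_2 e$; it is simpler and equally sufficient to take any crude dyadic overestimate such as $M_n = 2^{2n^3}$, which is trivially $\geq e^{n^3-n}$ and still of bit-size $\Oo(n^3)$.
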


\begin{proposition} \label{335}
The function $x \mapsto \abs{x}$ on the interval $[-2^m , 2^m]$ can be approximated to within $1/2^n$ by a rational function $p_{n,m}/q_{n,m}$ whose denominator is less than $1$ (over the same 
interval), and the calculation 
\[
(n,m) \mapsto (p_{n,m},q_{n,m}) \; \; \NN_1 \times \NN_1 \to
 \DD[X]_f \times \DD[X]_f 
\] 
is of complexity $\DRT(\Oo(N^5), \Oo({\M}(N^3) N^2 \log^3(N)))$ 
where $N = n+m$. The degrees of the polynomials 
$p_{n,m}$ and $q_{n,m}$ are bounded by $N^2$.
Similarly the function $(x,y) \mapsto \max(x,y)$ 
(resp. $(x,y) \mapsto \min(x,y)$) on the square 
$[-2^m,2^m] \times [-2^m,2^m]$ can be approximated to within $1/2^n$  
by a rational function of the same type and complexity 
as the previous ones.
\end{proposition}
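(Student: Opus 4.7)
The strategy is to reduce to Lemma~\ref{333} by a single dyadic change of variable. For the given $(n,m)$, set $N = n+m$ and call Lemma~\ref{333} at rank $N$ to obtain formulas $u_N, v_N \in \DD[X]_f$ of size $\Oo(N^5)$ representing polynomials of degree $\leq N^2/2$, with $\bigl|\,|y| - u_N(y^2)/v_N(y^2)\,\bigr| \leq 2^{-N}$ and $v_N(y^2) \geq 1/e^{N^3-N}$ for all $y \in [-1,1]$. Since $x/2^m \in [-1,1]$ when $x \in [-2^m, 2^m]$, and $|x| = 2^m|x/2^m|$, we get
\[
\Bigl|\,|x| - 2^m\, u_N(x^2/2^{2m})/v_N(x^2/2^{2m})\,\Bigr| \;\leq\; 2^m \cdot 2^{-N} = 2^{-n}.
\]

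To put this in the form demanded, I substitute $X \mapsto X^2/2^{2m}$ into $u_N$ and $v_N$: each of the $\Oo(N^2)$ leaves labelled $X$ is replaced by a three-node subtree $X \times X \times 2^{-2m}$, whose dyadic constant $2^{-2m}$ has boolean size $\Oo(N)$. The resulting formulas $\tilde u, \tilde v$ encode polynomials of degree $\leq N^2$ in $X$ and have boolean size $\Oo(N^5)$, still dominated by the boolean size of the original dyadic constants of $u_N, v_N$. To force the denominator above $1$ on $[-2^m, 2^m]$, pick a power of two $K \geq e^{N^3 - N}$, so $\flo{K} = \Oo(N^3)$, and set $p_{n,m} = 2^m K\,\tilde u$, $q_{n,m} = K\,\tilde v$. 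Then $q_{n,m}(x) \geq 1$ on $[-2^m, 2^m]$, the approximation bound is preserved, the degrees stay $\leq N^2$, and the formula sizes stay $\Oo(N^5)$. All manipulations beyond the initial call to Lemma~\ref{333} are linear-time formula transformations, so the global complexity remains $\DRT(\Oo(N^5), \Oo(\M(N^3)N^2\log^3 N))$.

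For $\max$ and $\min$ on $[-2^m,2^m]^2$ I use the identities $\max(x,y) = (x+y)/2 + |x-y|/2$ and $\min(x,y) = (x+y)/2 - |x-y|/2$. Since $x-y \in [-2^{m+1}, 2^{m+1}]$, I apply the construction above with parameters $(n+1, m+1)$ to the variable $X-Y$, producing a formula $P(X-Y)/Q(X-Y)$ approximating $|x-y|$ to within $2^{-(n+1)}$, with $Q \geq 1$ on the square. Combining over a common denominator yields the two-variable rational expression $\bigl((X+Y)\,Q(X-Y) \pm P(X-Y)\bigr)/\bigl(2\,Q(X-Y)\bigr)$, whose denominator is $\geq 2$ and whose error is $\leq 2^{-n}$. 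Size, degree, and time bounds are preserved up to constants in $N$.

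The only real subtlety is keeping the boolean formula size at $\Oo(N^5)$ throughout: the constant $K$ that normalises the denominator has boolean size $\Oo(N^3)$, and the new leaves arising from the substitution $X \mapsto X^2/2^{2m}$ each carry a dyadic of size $\Oo(N)$; both contributions are absorbed by the dominant $\Oo(N^5)$ cost inherited from Lemma~\ref{333}, so no step inflates the complexity beyond the announced one.
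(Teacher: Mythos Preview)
Your proof is correct and follows the same route as the paper: rescale via $|x| = 2^m|x/2^m|$, invoke Lemma~\ref{333} at rank $N = n+m$, then derive $\max$ and $\min$ from $\max(x,y) = \tfrac{1}{2}(x+y+|x-y|)$ and $\min(x,y) = \tfrac{1}{2}(x+y-|x-y|)$. You are in fact more careful than the paper's terse argument, which sets $q_{n,m}(x) = 2^m q_{n+m}(x/2^m)$ without explicitly arranging the denominator to be $\geq 1$; your normalisation by $K \geq e^{N^3-N}$ fills that gap cleanly and stays within the stated size bounds.
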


\begin{proof} For the absolute value function:

\noindent 
If $x \in [-2^m , 2^m]$ we write 
$
\abs{x}\;  = 2^m \abs{ x/2^m } 
$
with $x/2^m \in [-1 , 1]$. So, using the notations from the proof of  Lemma \ref{333}, it suffices to take $p_{n,m}(x) = 2^m p_{n+m}(x/2^m)$ and $q_{n,m}(x) = 2^m q_{n+m}(x/2^m)$.

\noindent 
For the functions $\max$ and $\min$, simply use the formulae:
\[
\max(x,y) = \frac{x+y\; + \abs{x-y}} {2}\quad  {\rm and} \quad  
\min (x,y) = \frac{x+y\; - \abs{x-y}}{2} 
\]
\end{proof}

In the following, we shall need to ``approximate'' the discontinuous function
\[ 
C_a(x) = \left\{
\begin{array}{cl}
1, & {\rm if } \; x \geq a 
\\
0, & {\rm otherwise} 
\end{array}
\right.
\]
Such an ``approximation'' is given by the continuous semilinear function $C_{p,a}$:
\[
 C_{p,a} = \min(1, \max(0, 2^p(x - a))) \; {\rm where} \; p \in \NN_1\; {\rm and}\; a \in \DD_{[0,1]}
\] 
\begin{figure}[htbp] 
\begin{center}
\includegraphics*[width=14cm]{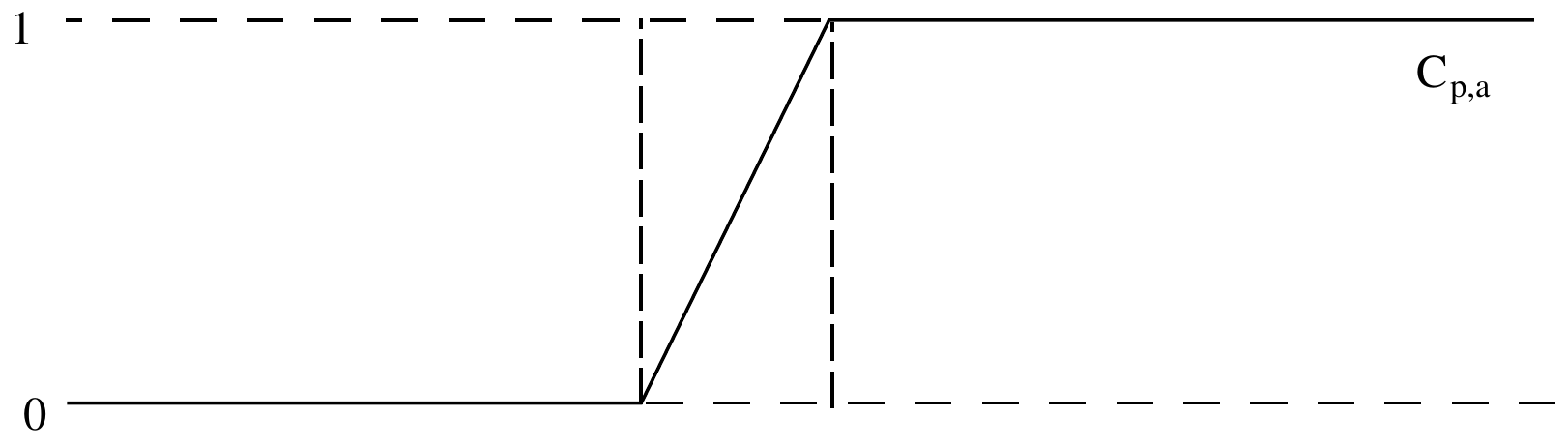}
\end{center}
\caption[representative curve of the function $C_{p,a}$]{\label{fi336} 
the function $C_{p,a}$} 
\end{figure} 

\noindent 
The complexity of the family of functions $(p,a) \mapsto C_{p,a}$ is given in the following proposition. 

\begin{proposition} \label{337}
The family of functions 
$
 \NN_1 \times \DD_{[0,1]} \;: \; (p,a) \mapsto C_{p,a}
$ 
defined as:
\[
C_{p,a} = \min(1,\max(0, 2^p(x - a)))
\] 
is a family of class $\p$. 
More precisely, it has complexity 
$\DRT(\Oo(N^5), \Oo({\M}(N^3) N^2 \log^3(N)))$ where $N = \max(\Lg(a), n+p)$.
\end{proposition}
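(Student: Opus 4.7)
The plan is to reduce the problem to approximating the absolute value function, where Proposition~\ref{335} already provides the needed rational approximants. This reduction uses the identity
\[
\min(1,\max(0,y)) \;=\; \tfrac{1}{2}\bigl(1 + \abs{y} - \abs{y-1}\bigr),
\]
valid on all of $\RR$ (check the three regions $y\le 0$, $0\le y\le 1$, $y\ge 1$). Setting $g(x) = 2^{p}(x-a)$, an affine formula of size $\Oo(\Lg(a)+p) = \Oo(N)$, we obtain $C_{p,a}(x) = \tfrac{1}{2}\bigl(1 + \abs{g(x)} - \abs{g(x)-1}\bigr)$. For $x\in[0,1]$ both $g(x)$ and $g(x)-1$ lie in $[-2^{p+1},2^{p+1}]$.

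I apply Proposition~\ref{335} with $m = p+1$ and precision $1/2^{n+1}$, obtaining a rational function $A(y) = P(y)/Q(y)$, computable in time $\Oo(\M(N^{3})N^{2}\log^{3}(N))$ as a pair of formulas in $\DD[X]_f$ of size $\Oo(N^{5})$, satisfying $\abs{A(y) - \abs{y}} \le 1/2^{n+1}$ and $Q(y)\ge 1$ on $[-2^{p+1},2^{p+1}]$. Substituting the affine formulas $g(x)$ and $g(x)-1$ into $A$ yields $A(g(x)) = P_{1}(x)/Q_{1}(x)$ and $A(g(x)-1) = P_{2}(x)/Q_{2}(x)$, and I define
\[
\tilde{C}_{n,p,a}(x) \;=\; \frac{Q_{1}Q_{2} + P_{1}Q_{2} - P_{2}Q_{1}}{2\,Q_{1}Q_{2}}.
\]

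Three verifications remain. For the error: since $y\mapsto\abs{y}$ is $1$-Lipschitz, each substitution carries only its own error $1/2^{n+1}$, so $\norme{C_{p,a} - \tilde{C}_{n,p,a}}_{\infty}\le 1/2^{n+1}\le 1/2^{n}$. For membership in $\yrf$: since $Q(y)\ge 1$ for every relevant $y$, we have $Q_{1}(x),Q_{2}(x)\ge 1$ on $[0,1]$, hence the denominator $2Q_{1}Q_{2}$ is $\ge 1$ there, as required by Definition~\ref{325}. The main subtlety is the formula-size accounting after substitution: the Newman-style formula for $A$ has $\Oo(N^{2})$ variable-leaves (one per factor of the underlying Newman product) and $\Oo(N^{2})$ constant-leaves of individual size $\Oo(N^{3})$, so replacing each variable-leaf by an affine formula of size $\Oo(N)$ keeps the total size in $\Oo(N^{5})$; the final arithmetic assembly contributes only a constant factor. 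The computation time is dominated by the production of $A$, yielding the announced class $\DRT(\Oo(N^{5}),\Oo(\M(N^{3})N^{2}\log^{3}(N)))$.
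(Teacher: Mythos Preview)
Your proof is correct. The paper gives no explicit proof for this proposition, clearly intending it as a direct consequence of Proposition~\ref{335}; your reduction via the identity $\min(1,\max(0,y)) = \tfrac12(1+|y|-|y-1|)$ is exactly the kind of one-step reduction to the absolute-value approximation that the paper has in mind (it is in the same spirit as the paper's own derivation of the $\max$/$\min$ case in Proposition~\ref{335}), and your size and time accounting is accurate. One cosmetic remark: the sentence ``since $y\mapsto|y|$ is $1$-Lipschitz'' is not the relevant fact for the error bound---what you actually use is that $A$ approximates $|\cdot|$ to within $1/2^{n+1}$ on $[-2^{p+1},2^{p+1}]$ and that the outer combination has coefficients $\pm\tfrac12$, which indeed gives total error $\le 1/2^{n+1}$.
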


The following proposition concerns the square root function.

\begin{proposition} \label{338}
The function $ x \mapsto \sqrt {\abs { x - 1/2 }} $ on $[0,1]$ is a 
$\p$-point of $\crf$.
\end{proposition}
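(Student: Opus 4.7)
The plan is to compose two applications of Newman's rational approximation. Indeed, \lemref{333} really provides a polynomial-time rational approximation of the square-root function: for $x\in[0,1]$, $\abs{x}=\sqrt{x^2}$, so setting $y=x^2\in[0,1]$ the polynomials $p_m,q_m$ of that lemma satisfy $\abs{\sqrt{y}-p_m(y)/q_m(y)}\leq 2^{-m}$ on $[0,1]$, while by \thref{331} the denominator $q_m(y)$ is bounded below by $2H_m(0)>0$ on $[0,1]$. The idea is then to feed into this square-root approximation a rational-in-$x$ approximation of $\abs{x-1/2}$ obtained from \thref{334}.

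Concretely, I first invoke \thref{334} with precision $2^{-2n-3}$ to get $(a_n,b_n)\in\yrf$ satisfying $\Norme{\,\abs{x-1/2}-a_n/b_n\,}_\infty\leq 2^{-2n-3}$ and $b_n\geq 1$ on $[0,1]$. To ensure that the argument passed to the square-root approximation stays safely inside $[0,1]$, I shift upwards and set
\[
\rho_n(x)\;:=\;\frac{a_n(x)+2^{-2n-3}\,b_n(x)}{b_n(x)}\;\in\;\yrf,
\]
which lies in $[0,1]$ on $[0,1]$ and satisfies $\abs{\rho_n(x)-\abs{x-1/2}}\leq 2^{-2n-2}$. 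Taking $m=n+2$, I define
\[
f_n(x)\;:=\;\frac{p_m(\rho_n(x))}{q_m(\rho_n(x))}
\]
and rewrite it as a rational function of $x$ in $\yrf$ by clearing the inner denominators (multiplying top and bottom by $b_n(x)^{\deg q_m}$) and then multiplying numerator and denominator by a dyadic constant of bit-size $\Oo(n^3)$ chosen to push the new denominator above $1$ on $[0,1]$. The whole construction runs in polynomial time: substituting a formula of size $\Oo(n^5)$ for $\rho_n$ into each of the $\Oo(m^2)$ occurrences of $y$ in formulas for $p_m,q_m$ of size $\Oo(m^5)$ yields formulas of total size $\Oo(n^7)$.

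For the error, the elementary inequality $\abs{\sqrt{a}-\sqrt{b}}\leq\sqrt{\abs{a-b}}$ (valid for $a,b\geq 0$) combined with the two approximation bounds gives
\[
\Norme{\,\sqrt{\,\abs{x-1/2}\,}-f_n\,}_\infty \;\leq\; \sqrt{2^{-2n-2}}+2^{-m}\;=\;2^{-n-1}+2^{-n-2}\;<\;2^{-n}.
\]
The only real obstacle in this plan is the $\yrf$-membership condition, i.e.\ forcing the composite denominator above $1$ on $[0,1]$, since the natural lower bound coming from Newman is the doubly-exponentially small quantity $2H_m(0)$. The fix is just the explicit rescaling described above, which adds only $\Oo(n^3)$ bits and does not change the function; so polynomial-time complexity is preserved.
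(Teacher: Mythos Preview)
Your approach is essentially the same as the paper's: both compose two Newman approximations, first observing that \lemref{333} yields $p_m(y)/q_m(y)\approx\sqrt{y}$ on $[0,1]$, then feeding in a rational approximation of the inner absolute value. The paper's sketch works with $\sqrt{\abs{x}}$ and leaves the translation by $1/2$ implicit, while you apply \thref{334} directly to $\abs{x-1/2}$; this is cosmetic. Your write-up is in fact more careful than the paper's sketch, since you make explicit the shift ensuring $\rho_n\in[0,1]$ and the rescaling needed to force the composite denominator above~$1$, both of which the paper glosses over.

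Two minor quibbles. First, your phrase ``doubly-exponentially small'' for $2H_m(0)$ is an overstatement: it is $e^{-\Theta(m^3)}$, so its reciprocal has bit-size $\Oo(m^3)$, which is exactly why the rescaling stays polynomial --- your argument is right, only the adjective is off. Second, the size bound $\Oo(n^7)$ relies on there being $\Oo(m^2)$ occurrences of the variable in the formulas for $p_m,q_m$; \lemref{333} only states the total formula size is $\Oo(m^5)$, so a priori there could be up to $\Oo(m^5)$ occurrences. Either way the composite formula is polynomial-size, so the conclusion is unaffected; if you want the sharper bound you should say a word about the product structure of $h_m$.
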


\begin{figure}[htbp] 
\begin{center}
\includegraphics*[width=10cm]{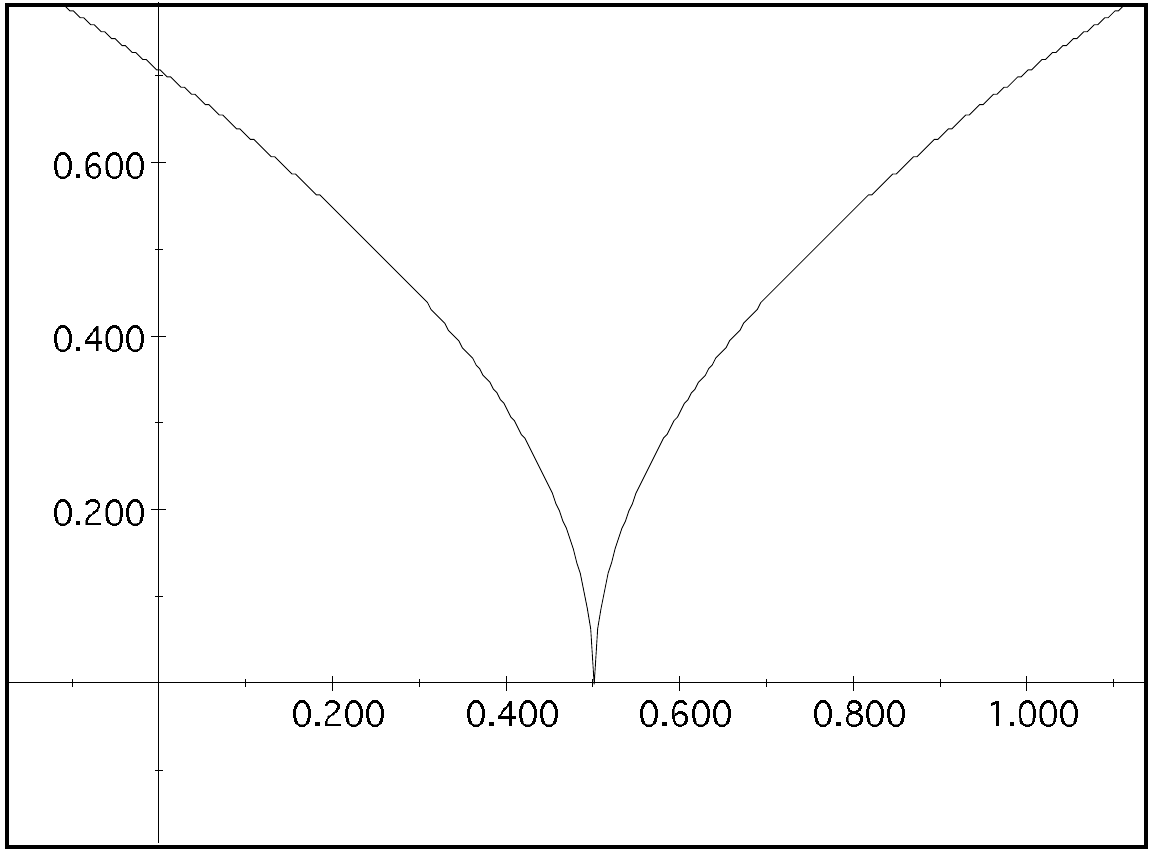}
\end{center}
\caption[Curve of the function $\sqrt {\abs{ x - 1/2 }} $]{\label{fi339} 
the function $ x \mapsto \sqrt {\abs { x - 1/2}} $} 
\end{figure} 

\begin{proof} Sketch. 
In the polynomials $P_n(x^2)$ and $Q_n(x^2)$ from Newman's theorem, if we 
replace $x^2$ by a good approximation of $\abs{x}$ on $[0,1]$, then 
by Newman's theorem, we obtain a rational function 
 $R_n(x)/S_n(x)$ such that 
\[
\forall x \in [0,1] \abs{\sqrt{ \abs{x}} - (R_n/S_n)(x) }\;  \leq 2^{-
n}.
\]
The degrees of $R_n(x)$ and $S_n(x)$ will be  $\Oo(n^4)$. 

\noindent 
Note that the fraction $(P_n/Q_n)(x)$ is not odd, so it provides a 
good approximation to the function~$\sqrt{ \abs{x}}$ only on the interval 
the interval $[0,1]$. 
\end{proof}

\section[A natural presentation of the space \texorpdfstring{$\czu$}{C[0,1]}]{A natural presentation of the space \texorpdfstring{$\czu$}{C[0,1]} 
and some equivalent 
equivalent presentations}\label{sec4}

A natural notion of complexity for points and sequences of points 
of the space $\czu$ is given by Ker-I.\ Ko and H.~Friedman in \cite{KF82}. 
In this section we study a rational presentation of the space $\czu$ which gives (more or less) the same notions of complexity. We also study 
other presentations using circuits, which turn out to be equivalent from the point of view of complexity. 
The proof of these equivalences is based on the proof of an analogous but less general result given in \cite{Ho87,Ho90}. 

\subsection {Definitions of some presentations of the space 
 \texorpdfstring{$\czu$}{C[0,1]}}\label{subsec41}
 
\subsubsection{KF presentation (KF as in Ko-Friedman)}
Recall that $\DD_n = \{ k/2^n ; k \in \ZZ \}$ and $\DD_{n,[0,1]} = 
\DD_n \cap [0,1]$. 

\noindent 
In the definition given by Ko and Friedman, for $f \in \czu$, an Oracle Turing Machine (OTM) ``computes'' the function function $f$ in the following sense. 
For the question ``$\,m\;?\,$'', where $m\in\NN_1$, the oracle delivers an approximation $\xi \in\DD_{m,[0,1]}$ of $x$ to within $2^{-m}$. 
For the input $n \in \NN_1$, the machine calculates, with the help of the previous oracles, a
$\zeta \in \DD_n$ approximating $f(x)$ to within $1/2^n$.

\noindent 
Reading the OTM program alone is obviously no way of knowing whether the OTM calculates~a function of $\czu$. 
Consequently, since we wish to have objects clearly identified as ``rational points'' of the presentation we want to define, we introduce control parameters. 
But for such parameters to be truly effective, it is necessary to restrict the execution of the machine to an output precision given a priori. 
As a result, the required input accuracy is also limited a priori. 
Thus, the OTM is approximated by a sequence of ordinary machines (without oracle), each executing only a finite number of computations. 
We are therefore led to define a rational presentation of $\czu$ which will be denoted by $\ckf$ (the set of codes for rational points will be denoted by $\ykf$) as follows.  
 
\begin{definition} \label{411}
We consider a language, chosen once and for all, to describe TM programs with a single input, in $\DD_{[0,1]}$ (denoted by $x$), and with a single output, in $\DD$ (denoted by $y$). Let ${\bf Prog}$ be the part of this language formed by well-written programs 
(according to a syntax specified once and for all).
Let $f = (Pr, n, m, T) \in {\bf Prog} \times \NN_1 \times \NN_1 
\times \NN_1$ where we have:

\noindent 
--- $n$ is the precision required for $y$,

\noindent 
--- $m$ is the precision with which $x$ is given.

\noindent 
The quadruplet $(Pr,n,m,T)$ is said to be {\em correct} when the following conditions are satisfied.

\noindent 
--- the program $Pr$ calculates a function from $\DD_{m,[0,1]}$ to 
$\DD_n$, i.e.\ for an entry in $\DD_{m,[0,1]}$ it obtains an output 
an element of $\DD_n$;

\noindent 
--- $T$ bounds the maximum execution time for all $x$ in 
$\DD_{m,[0,1]}$; 

\noindent 
--- for two consecutive elements ($1/2^m$ apart) of 
$\DD_{m,[0,1]}$ as input, the program gives two elements of 
$\DD_n$ no more than $1/2^n$ apart.

\noindent \rdb 
The set of correct quadruplets $(Pr,n,m,T)$ is denoted by $\ykf$\label{ykf}.
 When the data $f$ is correct, it defines the following rational point  
$\wi{f}$.
It is the piecewise linear function which joins the points of the graph given on the grid $\DD_{m,[0,1]} \times \DD_n$ by executing the program $Pr$ for all possible inputs in $\DD_{m,[0,1]}$.
So $\ckf=(\ykf,\eta,\delta)$ where $\eta(f)=\wi f$ and $\delta$ must be defined as required in Definition \ref{211}. The complexity of $\eta$ and $\delta$ is studied in Proposition \ref{413}.
\end{definition} 

\begin{remark} \label{412}
 Note that the first two conditions could be fulfilled 
by syntactic constraints that are easy to implement. 
 On the other hand, as we shall see in Proposition \ref{441}, 
the third is uncontrollable in polynomial time (unless $\p$ = $\np$) (this correctness condition defines a $\cnp$-complete problem). Note also that 
if we had not imposed the correctness condition for the points of $\ykf$ 
we would have had no a priori control of the modulus of uniform continuity for the piecewise linear function defined by the data, and the family $\big(\wi{f}\,\big)_{f \in \ykf}$ would not have been uniformly of class~$\p$.
\end{remark}

\begin{proposition}[complexity of the family of functions attached to $\ykf$] \label{413}~\\
The family of continuous functions $\big(\wi{f}\,\big)_{f \in \ykf}$ 
is uniformly of class $\DSRT(\Lin, \Lin, \Oo(N^2))$. 
\end{proposition}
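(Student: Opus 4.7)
The plan is to exhibit, for each $f = (Pr,n,m,T) \in \ykf$, a modulus of uniform continuity $\mu(f,k)$ and an evaluation function $\varphi(f,x,k)$, and to verify that both are in $\DSRT(\Lin,\Lin,\Oo(N^2))$ where $N$ is the size of the input $(f,x,k)$. The underlying observation is that the third correctness condition in Definition~\ref{411} forces $\wi{f}$ to be Lipschitz with a computable constant, and then evaluating $\wi{f}(x)$ reduces to two simulations of $Pr$ plus a linear interpolation.

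First, I would observe that, by definition, $\wi{f}$ is piecewise affine with nodes on the grid $\DD_{m,[0,1]}\times\DD_n$, and the correctness condition ensures that consecutive node-values differ by at most $1/2^n$ for abscissas $1/2^m$ apart. Hence $\wi{f}$ is $2^{m-n}$-Lipschitz on $[0,1]$ (taken to be $1$ if $n\geq m$), and one may safely take
\[
\mu(f,k) \;=\; k+m,
\]
which is manifestly in $\LINT$ in the input size, hence in $\DSRT(\Lin,\Lin,\Oo(N^2))$.

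Next, I would describe $\varphi(f,x,k)$ for $f=(Pr,n,m,T)$, $x\in\DD_{[0,1]}$ and $k\in\NN_1$. Truncate $x$ to its first $m+k+1$ bits to obtain $\wi x\in\DD_{m+k+1,[0,1]}$, let $a=\Flo{2^m\wi x}/2^m\in\DD_{m,[0,1]}$ and $b=a+1/2^m$ (or adjust at the right endpoint). Using the universal Turing machine $MU$ of Lemma~\ref{131}, simulate $Pr$ on $a$ and on $b$; by the correctness of $f$, both computations halt within time $T$ and deliver values $u=\wi f(a)$, $v=\wi f(b)$ in $\DD_n$. Finally return
\[
\varphi(f,x,k) \;=\; u + (\wi x - a)\cdot 2^m\cdot(v-u),
\]
rounded to $\DD_k$. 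Because $v-u$ has absolute value $\leq 1/2^n$, the error introduced by replacing $x$ with $\wi x$ is at most $2^m\cdot 2^{-(m+k+1)}\cdot 2^{-n}\leq 1/2^{k+1}$, and the final rounding contributes another $1/2^{k+1}$, so $|\wi f(x)-\varphi(f,x,k)|\leq 1/2^k$.

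The central step is then the complexity bookkeeping, which I expect to be the main point requiring care. The size of $f$ is at least $n+m+T$ (the unary parameters), so $T,n,m,|Pr|\leq N$. Each simulation of $Pr$ on $a$ (resp.\ $b$) by $MU$ costs time $\Oo(T\cdot (T+|Pr|))=\Oo(N^2)$ and space $\Oo(T+|Pr|)=\Oo(N)$ by Lemma~\ref{131} (noting $S\leq T$ since $Pr$ halts in time $T$). The truncation, the interpolation (one subtraction and one multiplication of dyadics of size $\Oo(N)$) and the final rounding add at most $\Oo(N^2)$ time and $\Oo(N)$ space. The output belongs to $\DD_k$, hence has size $\Oo(N)$, so the whole evaluation lies in $\DSRT(\Lin,\Lin,\Oo(N^2))$. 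Combined with the linear-time computation of $\mu(f,k)$, this gives that the family $(\wi f)_{f\in\ykf}$ is uniformly of class $\DSRT(\Lin,\Lin,\Oo(N^2))$ in the sense of Definition~\ref{229}, as claimed.
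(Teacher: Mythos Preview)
Your proof is correct and follows essentially the same approach as the paper: exhibit the Lipschitz constant $2^{m-n}$ to get the modulus of uniform continuity, then evaluate $\wi f(x)$ by locating the surrounding grid points $a,b\in\DD_{m,[0,1]}$, simulating $Pr$ on each via the universal machine of Lemma~\ref{131}, and interpolating linearly. The only cosmetic difference is that the paper takes the tighter modulus $\mu(f,k)=k+m-n$ rather than your $k+m$, and does not bother with an explicit truncation of $x$; neither affects the complexity bound.
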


\begin{proof} 
First of all, we note that the function $\wi{f}$ corresponding to 
$f = (Pr, n, m, T)$ is piecewise linear and that, since the data is 
correct, the slope of each piece is bounded in absolute value by 
$2^{m-n}$, which gives the modulus of uniform continuity $\mu(f,k) = k+m-n$ for the family 
$\big(\wi{f}\,\big)_{f \in \ykf }$. 

\noindent 
We need to find a map 
$\varphi\colon \ykf \times \DD_{[0,1]} \times \NN_1 \to \DD$ of complexity 
$\DSRT(\Lin, \Lin, N^2)$ such that
\[
\forall (f,x,k) \in \ykf  \times \DD_{[0,1]} \times \NN_1 \; \; 
\abs{ \varphi(f,x,k) - \wi{f}(x) }\;  \leq 2^{-k}.
\]
Let $z=(f,x,k)=((Pr,n,m,T),x,k) \in \ykf \times \DD_{[0,1]} \times \NN_1$.
We will assume without loss of generality that $k \geq n$ and that $x$ 
is given with at least $m$ bits.

\noindent 
By simply reading $x$ we can identify two consecutive elements $a$ and 
$b$ of $\DD_{m,[0,1]}$ such that $a \leq x \leq b$ and we find the 
dyadic $r \in \DD_{[0,1]}$ such that $x=a+r/2^n$. 

\noindent 
Let $\varepsilon \in \{-1,0,1\}$ be the integer satisfying 
$\wi{f}(a)=\wi{f}(b)+ \varepsilon/2^n$. 

\noindent 
Then $\wi{f}(x) = \wi{f}(a) + \varepsilon r/2^n$. In addition 
$\wi{f}(a)= \Exec (Pr,a)$ is the result of executing the program 
program $Pr$ for input $a$.
The complete calculation therefore essentially consists of:

\noindent 
--- reading $x$ (and deducing $a$, $b$ and $r$),

\noindent 
--- calculating $\Exec (Pr,a)$ and $\Exec (Pr,b)$.

\noindent 
The complexity is therefore bounded by the complexity of the Universal Turing Machine that 
we use to run the program $Pr$. According to \lemref{131} 
this is done in time $\Oo(T(T+ \flo{Pr}))$ and space $\Oo(T+ \flo{Pr})$. And the size of the output is bounded by $T$. 
\end{proof}

The fact that the presentation $\ckf$ which results from Definition \ref{411} is the Ko-Friedman presentation is justified by the following proposition.

\begin{proposition} \label{414}
A function $f\colon [0,1] \to \RR$ is computable in polynomial time in the sense of Ko-Friedman if and only if it is a $\p$-point of $\ckf$. More 
precisely,
\begin{itemize}
\item [a)] if the function $f$ has time complexity $T(n)$ in the sense of Ko-Friedman then it is a $\DTI (T)$-point of $\ckf$;
\item [b)] if the function $f$ is a $\DTI (T)$-point of $\ckf$ then it has complexity $T^2(n)$ in the Ko-Friedman sense. 
\end{itemize}
\end{proposition}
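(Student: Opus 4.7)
The plan is to treat both directions as straightforward simulations: in (a) a Ko--Friedman OTM is packaged into a sequence of valid quadruplets of $\ykf$, and in (b) such a sequence is decoded into a single OTM that queries the oracle at the precision prescribed by the current quadruplet.

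For direction (a), given an OTM $M$ computing $f$ in time $T(n)$, I first extract a modulus of uniform continuity from the time bound: since $M$ on input $n$ can only issue oracle queries of precision $\leq T(n)$ and read at most $T(n)$ bits of any reply, two reals agreeing to precision $T(n+1)$ force identical oracle transcripts and hence identical outputs, giving the modulus $\mu(n) := T(n+1)$. Next I fix once and for all a program skeleton that, on input $a \in \DD_{m,[0,1]}$, simulates $M$ on input $n+3$, answering every oracle query of precision $k$ with the $k$-bit prefix of $a$; instantiated with $m := T(n+3)$ and with $T_0$ the naive simulation bound, this yields a quadruplet $f_n := (Pr_n,n,m,T_0)$. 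The three correctness conditions of Definition~\ref{411} are routine: the first two are built in by construction, and the third follows from the modulus $\mu$ applied to consecutive grid points $a,b$ at distance $2^{-m}$, plus the two $2^{-(n+3)}$ output errors. A triangle inequality, using that $\wi{f_n}$ interpolates linearly between grid points, then yields $\norme{\wi{f_n} - f}_\infty \leq 2^{-n}$, and the time needed to write $f_n$ is dominated by the computation of $T(n+3)$.

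For direction (b), suppose $n \mapsto f_n = (Pr_n, n, m_n, T_{0,n})$ is a sequence in $\DTI(T)$ with $\norme{f - \wi{f_n}}_\infty \leq 2^{-n}$. The OTM, on input $n$, first computes $f_{n+c}$ for a suitable small constant $c$ in time $T(n+c)$; then issues a single oracle query at precision $m_{n+c}$ and rounds the answer to the nearest $a \in \DD_{m_{n+c},[0,1]}$ (so $|a - x| \leq 2^{-m_{n+c}}$); finally simulates $Pr_{n+c}$ on $a$ using the universal machine of Lemma~\ref{131} and truncates the output to $\DD_n$. The total error decomposes as $|f(x) - \wi{f_{n+c}}(x)|$ plus $|\wi{f_{n+c}}(x) - \wi{f_{n+c}}(a)|$ plus the truncation error, and using the explicit slope bound $2^{m_{n+c}-(n+c)}$ that appears in the proof of Proposition~\ref{413} each piece is absorbed, keeping the total below $2^{-n}$. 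The running time is $T(n+c)$ for the construction plus $\Oo(T_{0,n+c}(T_{0,n+c} + |Pr_{n+c}|))$ from Lemma~\ref{131} for the simulation; since every field of $f_{n+c}$ has size bounded by the $\DTI(T)$ budget $T(n+c)$, the total is $\Oo(T(n)^2)$, matching the claim.

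The main obstacle is not any single step but the bookkeeping that locks down the $T^2$ exponent in (b): one has to know that the value of $T_{0,n+c}$, which enters the universal simulator's cost multiplicatively, is controlled by $T(n+\Oo(1))$ after at most a harmless normalisation of the input sequence, and this is exactly why the slack of writing $n+c$ rather than $n$ throughout, and of absorbing additive constants in the modulus, is essential. All other ingredients -- modulus extraction, the fixed program skeleton, and the triangle inequalities controlling the uniform error -- follow directly from the time-bounded oracle simulation idea already present in \cite{KF82}.
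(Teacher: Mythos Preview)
Your two simulations are the same as the paper's: for (a) the OTM is packaged into quadruplets with the input precision and time bound read off from $T$, and for (b) the sequence is unrolled into a single OTM whose $\Oo(T^2)$ cost comes from the universal simulation (Lemma~\ref{131} via Proposition~\ref{413}). The minor variants---your extra padding by $n+3$ in (a), and evaluating at a single grid point rather than interpolating in (b)---do not change the argument.

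There is however one unjustified step in your (b). You write $f_n = (Pr_n, n, m_n, T_{0,n})$, silently assuming that the output-precision parameter of each quadruplet equals the index $n$. A general element of $\ykf$ has the form $(Pr_n, q_n, m_n, T_n)$ with $q_n$ unrelated to $n$, and the slope bound established in the proof of Proposition~\ref{413} is $2^{m_n - q_n}$, not $2^{m_n - n}$. Consequently your estimate of $|\wi{f_{n+c}}(x) - \wi{f_{n+c}}(a)|$ does not follow from $|x-a| \le 2^{-m_{n+c}}$ alone: if $q_{n+c}$ happens to be much smaller than $n+c$, the piecewise-linear function can be much steeper than you allow. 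The paper handles this by asking the oracle at a shifted precision that incorporates $q(n{+}1)$, so that the slope bound produces exactly $2^{-(n+1)}$, and then computing $\wi{f_{n+1}}(d)$ by linear interpolation. You could instead first normalise each $f_n$ to satisfy $q_n \ge n$ by the interpolation trick (an analogue of Lemma~\ref{425} for $\ykf$), but either way this reduction has to be made explicit before your slope bound is valid.
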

We will first prove a characterisation of the complexity of a function in the sense of Ko-Friedman, for a class $\ca$ of complexity in time or space. We have already stated this without proof in  Example  \ref{223}. We give a more precise statement here.

\begin{proposition} \label{415}
Let $\ca$ be an elementarily stable class of type $\DTI(\bullet)$ 
or $\DSPA(\bullet)$ or $\DSRT(\bullet, \bullet, \bullet)$ or $\DRT(\bullet, 
\bullet)$ or $\DSR(\bullet, \bullet)$ and consider a continuous function 
$f\colon [0,1] \to \RR$. The following properties are equivalent:
\begin{itemize}
\item  [1)] the function $f$ is computable in the Ko-Friedman sense in the class $\ca$.
\item  [2)] the function $f$ is uniformly of class $\ca$.
\end{itemize}
{\bf NB}. 
As far as the complexity of an OTM is concerned, we mean here that the questions to the oracle must be counted among the machine's outputs. 
In other words, the size of the $m$ integers which are the questions to the oracle must have the bound required for the output in classes of the type $\DSRT(\bullet, \bullet, \bullet)$ or $\DRT(\bullet, \bullet, \bullet)$ or $\DSR(\bullet, \bullet, \bullet)$.
\end{proposition}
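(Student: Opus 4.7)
The plan is to produce, in each direction, an explicit translation between an oracle Turing machine $M$ KF-computing $f$ in class $\ca$ and the pair $(\varphi,\mu)$ of Definition~\ref{221}. The translations are conceptually straightforward; the real work is in bookkeeping the complexity classes, and the NB convention on oracle query sizes plays a decisive role in direction (1) $\Rightarrow$ (2).

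For (1) $\Rightarrow$ (2), let $M$ be an OTM KF-computing $f$ within class $\ca$. I define $\varphi\colon\DD_{[0,1]}\times\NN_1\to\DD$ by simulating $M$ on input $n$, replying to every oracle query of precision $m$ with the element of $\DD_m$ closest to the dyadic input $d$. Such a reply is a valid oracle answer for the real $x=d$, hence $|f(d)-\varphi(d,n)|\le 2^{-n}$; the rounding overhead is dominated by $M$'s own work, so $\varphi\in\ca$. For the modulus, let $\tau(k)$ denote the bound imposed by $\ca$ on $M$'s output size on input $k$; by the NB, every query precision $m$ issued by $M$ on input $k$ satisfies $m\le\tau(k)$. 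I claim $\mu(n):=\tau(n+1)+2$ is a modulus of uniform continuity in $\ca$. Indeed, for $x,y\in[0,1]$ with $|x-y|\le 2^{-\mu(n)}$, run $M$ on input $n+1$: for each query of precision $m\le\tau(n+1)$, a single dyadic in $\DD_m$ lies within $2^{-m}$ of both $x$ and $y$, so feeding $M$ the same oracle transcript for the two reals yields identical outputs $z$, and from $|z-f(x)|\le 2^{-(n+1)}$ and $|z-f(y)|\le 2^{-(n+1)}$ we conclude $|f(x)-f(y)|\le 2^{-n}$.

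For (2) $\Rightarrow$ (1), given $\varphi,\mu\in\ca$, the OTM $M$ that I construct does, on input $n$: (i) compute $\mu(n+2)$; (ii) write $\mu(n+2)$ in unary on the oracle tape and receive $d\in\DD_{\mu(n+2)}$ with $|d-x|\le 2^{-\mu(n+2)}$; (iii) compute $\varphi(d,n+2)$; (iv) round the result to the nearest element of $\DD_n$ and output it. The triangle inequality gives total error $\le 2^{-(n+2)}+2^{-(n+2)}+2^{-(n+1)}=2^{-n}$. Elementary stability of $\ca$ under composition keeps $M$ in $\ca$, and the oracle query of size $\mu(n+2)$ fits the output-size budget precisely because of the NB.

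The main obstacle is direction (1) $\Rightarrow$ (2): extracting from $M$ a modulus of uniform continuity \emph{in the same class} $\ca$. Without the NB convention, one would only know that the maximum precision queried on input $n$ is some recursive function of $n$, and a constructivised Heine--Borel argument along the lines of Remark~\ref{225} would be needed to bound it, with no transparent control on its complexity class. The NB bypasses this cleanly by forcing every query size to be controlled by the very parameters defining $\ca$, so that $\mu$ inherits the complexity of $M$ automatically. A minor secondary point for the space-bounded classes $\DSPA$, $\DSR$, $\DSRT$ is that the simulation of $M$ in direction (1) $\Rightarrow$ (2) must be performed in place rather than by first recording a full transcript; this is standard and preserves the space bound.
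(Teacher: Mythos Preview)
Your proof is correct and follows essentially the same approach as the paper's: in both directions the translation is the obvious one (simulate the OTM with the dyadic input playing the role of the oracle; conversely, query the oracle at precision $\mu(n+\text{const})$ and evaluate $\varphi$), and the modulus of uniform continuity is extracted from the bound on oracle-query sizes guaranteed by the NB convention. Your write-up is in fact slightly more explicit than the paper's, which simply asserts that the bound on query sizes ``is the modulus of uniform continuity we are looking for'' without spelling out the common-transcript argument you give.
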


\begin{proof}[Proof of Proposition \ref{415}] 
Recall that a function is uniformly of class $\ca$ (cf.\ definition 
\ref{211}) when:

\noindent 
a) the function $f$ has a modulus of uniform continuity in the class $\ca$.

\noindent 
b) the family $(f(a))_{a \in \DD}$ is a $\ca$-family of 
real numbers.

\noindent 
Let $f \in \czu$ and $M$ be an OTM which, for any input $n \in \NN_1$ and any $x \in [0,1]$ (given as an oracle) calculates $f(x)$ to within $2^{-n}$ in the $\ca$ class. If we replace the oracle, which 
gives an approximation of $x$ to within $2^{-m}$ in the class by reading an arbitrary dyadic number $a$, we obtain a usual TM that calculates $(f(a))_{a \in \DD}$ as a family of real numbers, in the 
family of real numbers, in the class $\ca$. 

\noindent 
Let's look at the question of the modulus of uniform continuity. On the input $n$ (precision required for the output) and for any oracle for any $x \in [0,1]$, the OTM will calculate $y=f(x)$ to within $2^{-n}$ by querying the oracle for certain precision $m$. Since $\ca$ is a complexity class of the type expected, the largest of the integers $m$ used on the input $n$ can be  augmented by $\mu(n)$ where $\mu\colon \NN_1 \to \NN_1$ is a function in the class $\ca$.\footnote{For example, for a class of complexity class where we specify that the outputs are of linear size, the size of $m$ depends linearly on $n$ (independently of the oracle) since $m$, as a question put to the oracle, is one of the outputs.} This is the modulus of uniform continuity we are looking for.

\noindent 
Suppose now that the function $f$ is uniformly of class $\ca$.
 Let $M$ be the TM (without oracle) that calculates $(f(x))_{x \in \DD}$ as a family of real numbers in class class. 
Let $M'$ be a TM that computes a modulus of uniform continuity $\colon \NN_1 \to \NN_1$ in the class $\ca$. The Ko-Friedman OTM is then as follows. 
On the input $n$ it calculates $m = \mu(n+1)$ (using $M'$), it then queries the oracle with precision $m$, the oracle gives an element $a \in \DD_m$. The OTM then uses $M$ to calculate $f(a)$ to within $1/2^{n+1}$, which, deprived of its last bit, constitutes the output of the OTM. 
\end{proof}

Proposition \ref{415} shows, by contrast, that the non-perfect match obtained in Proposition \ref{414} is only due to the difficulty of putting exactly and every time the notion of complexity of a uniformly continuous map (as a function) into the mould of the notion of complexity of a point in a metric space. This perfect match occurs for classes such as $\p$, $\Prim $ or $\Rec $ but not for $\DRT(\Lin, \Oo(n^k))$. 
\\ 
Finally, note the great similarity between the proofs of the propositions \ref{415} and \ref{414}, with a slightly greater complication for the latter. 

\begin{proof}[Proof of Proposition \ref{414}] Let $f \in \czu$ and $M$ an OTM which, for any input $n \in \NN_1$ and any $x \in [0,1]$ (given as an oracle) calculates $f(x)$ to within $2^{-n}$, in time bounded by $T(n)$. 

\noindent 
Consider the sequence $f_n = (Pr_n, n, T(n), T(n))_{n \in \NN_1}$ where we have:

\noindent 
$Pr_n$ is obtained from the $Pr$ program of the $M$ machine by replacing the answer $(x_m)$ given by the oracle to the question ``$\,m\;?\,$'' by the instruction ``read $x$ with precision $m$''. 

\noindent 
The correction of $f_n$ is clear. 
Let $\wi{f_n}$ be the piecewise linear function corresponding to $f_n$. 
We have for $\xi \in [0,1]$ and $d$ an approximation of $\xi$ to within $1/2^{T(n)}$
\[ 
{\Exec}(Pr_n, d) = {\Exec} (Pr, n, \hbox{Oracle  for } \xi) = M^{\xi}(n)
\]
and therefore
\[
\abs{\wi{f_n}(\xi) - f(\xi)}\; \leq 
\abs{f(d) - {\Exec}(Pr_n, \xi)} + 
\abs{\wi{f_n}(d) - {\Exec} (Pr_n, \xi) }
\; \leq 2^{-n} + 2^{-n} \leq 2^{-(n-1)}
\] 
Finally, the sequence $n \mapsto f_{n+1}$ is of complexity $\DTI (\Oo(n))$.

\noindent 
Conversely, suppose $\norme{ f_n - f } \leq 2^{-n}$ with 
$n \mapsto f_n$ of class $\DTI (T(n))$. 

\noindent 
We have $f_n = (Pr_n, q(n), m(n), t(n))$. Consider the OTM $M$ which, for any integer $n \in \NN_1$ and any $xi \in [0,1]$, performs the following tasks 
as follows: 

\noindent 
--- take the element $f_{n+1} = (Pr_{n+1}, q(n+1), m(n+1), t(n+1))$, from the sequence, corresponding to $n+1$. 
sequence, corresponding to $n+1$. 

\noindent 
--- ask the oracle the question $Q(n) = m(n+1) + n+1 - q(n)$ (we obtain 
an approximation $d$ of~$\xi$ to within $2^{-Q(n)}$). 

\noindent 
--- calculate $\wi{f_{n+1}}(d)$ by linear interpolation (cf.\ Proposition \ref{413}): this is the output of $M$.

\noindent 
We then have for all $\xi \in [0,1]$
\[
\abs{\wi{f_n}(\xi) - M^{\xi}(n)} \; \leq 
\abs{\wi{f}_n(\xi) - \wi{f}_{n+1}(\xi)} + 
\abs{\wi{f}_{n+1}(\xi) - \wi{f}_{n+1}(d)}\; \leq 2^{-(n+1)} + 2^{-(n+1)} = 2^{-n}  
\]
so 
\[
\norme{ f - M^{\xi}(n) } \; \leq\; \norme{ f - \wi{f_n} } + \Norme{ \wi{f_n} - M^{\xi}(n) } \; \leq\; 2^{-n} + 2^{-n}
\]
Moreover the machine $M$ has complexity $\Oo(T^2(n))$ (cf.\ Proposition \ref{413}). 
\end{proof}

In the same style as Proposition \ref{415}, we obtain two natural characterisations, to within $\p$-equivalence, of the rational presentation $\ckf$ of $\czu$.
 
\begin{proposition} \label{416}
Let $\sC^{\star}$ be a rational presentation of the space $\czu$. 
The two following assertions are equivalent. 
\begin{enumerate}
\item The presentation $\sC^{\star}$ is $\p$-equivalent to $\ckf$.
\item A family $\big(\wi{f}\,\big)_{f \in Z}$ in $\czu$ is uniformly of class $\p$ if and only if it is a $\p$-family of points in~$\sC^{\star}$.
\end{enumerate}
\end{proposition}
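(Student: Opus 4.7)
The plan is to reduce the proposition to two earlier ingredients: Remark~\ref{228}, which characterises $\p$-equivalence of two rational presentations of a common metric space as equality of their $\p$-families of points; and a family version of Propositions~\ref{414} and~\ref{415}, asserting that a family $\big(\wi{f}\,\big)_{f \in Z}$ in $\czu$ is uniformly of class $\p$ if and only if it is a $\p$-family of points in $\ckf$.

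First I would establish this family version. For the direct implication, suppose $\big(\wi{f}\,\big)_{f \in Z}$ is uniformly $\p$, with modulus $\mu\colon Z \times \NN_1 \to \NN_1$ and approximation map $\varphi\colon Z \times \DD_{[0,1]} \times \NN_1 \to \DD$ both of class $\p$. Imitating the proof of Proposition~\ref{414}, for each pair $(f,n) \in Z \times \NN_1$ one builds a TM program $Pr_{f,n}$ which, on input $d \in \DD_{m,[0,1]}$ with $m = \mu(f,n+1)$, reads $d$, evaluates $\varphi(f,d,n+1)$, and truncates to $\DD_n$; the quadruple $(Pr_{f,n},\,n,\,m,\,T)$ is correct (the slope condition follows from $m$ being a modulus), lies in $\ykf$, and can be produced from $(f,n)$ in polynomial time since $\varphi$, $\mu$ and program assembly are all in $\p$. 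This provides the desired $\p$-family presentation in $\ckf$. Conversely, if $\psi\colon Z \times \NN_1 \to \ykf$ is a $\p$-computable approximation in $\ckf$, Proposition~\ref{413} says that evaluation $(g,x,k) \mapsto \wi{g}(x)$ with modulus $k + m(g) - q(g)$ is uniformly $\p$ over $\ykf$; composing with $\psi$ yields $\p$-computable approximation and modulus maps for the original family, so it is uniformly of class~$\p$.

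Given this family version, the proposition is almost immediate. For (1)$\Rightarrow$(2), assume $\sC^\star \equiv_\p \ckf$. By Remark~\ref{228} (used for $\ca = \p$), the $\p$-families of points of $\sC^\star$ and of $\ckf$ coincide; combining with the family version, these are exactly the families in $\czu$ that are uniformly of class $\p$, which is assertion~(2). For (2)$\Rightarrow$(1), the family version identifies uniform $\p$-families with $\p$-families in $\ckf$, while assertion (2) identifies them with $\p$-families in $\sC^\star$; hence the $\p$-families of the two presentations coincide, and Remark~\ref{228} gives $\sC^\star \equiv_\p \ckf$.

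The only nontrivial step is the family version of Propositions~\ref{414}--\ref{415}; the subtlety lies not in any new idea but in verifying that the assembly of the Turing machine program $Pr_{f,n}$ from $(f,n)$ remains polynomial-time uniformly in $f$, and that the correctness parameters $m$ and $T$ can be computed within $\p$ from the given data. Once this bookkeeping is done, the rest of the argument is a formal consequence of Remark~\ref{228}.
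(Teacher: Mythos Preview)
Your proposal is correct and follows essentially the same approach as the paper: reduce to Remark~\ref{228} (two presentations are $\p$-equivalent iff they have the same $\p$-families of points), and then verify that $\ckf$ itself satisfies condition~(2) via a family version of Proposition~\ref{414}. The paper's proof is terser---it simply says ``the proof is identical to that of Proposition~\ref{414}''---but your expanded sketch of that family version (building $Pr_{f,n}$ uniformly in $f$, and conversely composing with the evaluation of Proposition~\ref{413}) is exactly what is meant.
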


\begin{proof} 
For any $\ca$-elementarily stable class, two rational presentations of an arbitrary 
metric space are $\ca$-equivalent if and only if they 
define the same $\ca$-families of points (cf.\ remark \ref{228}). It is then sufficient to
check that the rational presentation $\ckf$ satisfies condition (2). The 
proof is identical to that of Proposition \ref{414}. 
 \end{proof}

\begin{theorem} \label{417}
The presentation $\ckf$ is universal for evaluation in the following sense. \\
If $\sC_Y=(Y,\delta,\eta)$ is a rational presentation of $\czu$ for which the family $\big(\wi{f}\,\big)_{f \in Y}$ is uniformly of class $\p$,  then the function $\Id_\czu$ from $\sC_Y$ to $\ckf$ is uniformly of class $\p$.

\noindent The result generalises to any elementarily stable class of complexity $\ca$ which verifies the following property:
if $F \in \ca$ then the computation time of $F\colon \NN_1 \to \NN_1,\; N 
\mapsto F(N)$, is bounded by a function of class~$\ca$.
\end{theorem}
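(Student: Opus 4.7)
The plan is to exhibit, for each $(f,k) \in Y \times \NN_1$, a correct quadruple $\Phi(f,k) = (Pr_{f,k}, n, m, T) \in \ykf$ whose associated piecewise linear function $\wi{\Phi(f,k)}$ is within $1/2^k$ of $\wi{f}$ in the sup norm, and then to check that the map $\Phi$ itself lies in $\p$. By Remark~\ref{228}, this is exactly what is needed to show that $\Id_\czu \colon \sC_Y \to \ckf$ is uniformly of class $\p$ (its modulus of uniform continuity is the identity).

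The construction uses the hypothesis on $\sC_Y$ in the explicit form of Definition \ref{221}bis and the evaluation principle of Proposition \ref{312}: we have access to a $\p$-map $\mu\colon Y \times \NN_1 \to \NN_1$ (a modulus of uniform continuity uniform in $f$) and a $\p$-map $\varphi\colon Y \times \DD_{[0,1]} \times \NN_1 \to \DD$ satisfying $\abs{\wi{f}(a) - \varphi(f,a,p)} \le 1/2^p$. Given $(f,k)$, set
\[
n := k+2, \qquad m := \max\bigl(n+1,\; \mu(f, n+1)\bigr),
\]
and let $Pr_{f,k}$ be the program that, on input $a \in \DD_{m,[0,1]}$, computes $\varphi(f, a, n+1)$ and then rounds to the nearest element of $\DD_n$; concretely $Pr_{f,k}$ is obtained from a fixed TM for $\varphi$ by hard-coding $f$ and $n+1$ on the input tape, whose description has size $\Oo(\flo f + \log k)$. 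Finally, take $T$ to be an explicit polynomial upper bound on the running time of $\varphi(f,\cdot,n+1)$ on inputs in $\DD_{m,[0,1]}$.

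The verification of correctness has three parts. First, by construction $Pr_{f,k}$ returns an element of $\DD_n$ within the time bound $T$. Second, for two consecutive points $a, a' \in \DD_{m,[0,1]}$ we have $\abs{a-a'} = 1/2^m \le 1/2^{\mu(f,n+1)}$, hence $\abs{\wi{f}(a) - \wi{f}(a')} \le 1/2^{n+1}$; since each output of $Pr_{f,k}$ is within $1/2^{n+1}$ of the true value of $\wi{f}$ at the corresponding grid point, the two outputs differ by at most $1/2^n$, which is the third requirement of Definition \ref{411}. Thus $\Phi(f,k) \in \ykf$. Third, the piecewise linear interpolation $\wi{\Phi(f,k)}$ differs from $\wi f$ by at most $1/2^{n+1}$ at grid points; between two consecutive grid points $a,a'$, both functions deviate from their value at $a$ by at most $1/2^{n+1}$ (for $\wi f$ by the modulus of uniform continuity, for $\wi{\Phi(f,k)}$ by linearity and the previous estimate), so $\norme{\wi f - \wi{\Phi(f,k)}}_\infty \le 4/2^{n+1} \le 1/2^k$.

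The main obstacle is the last clause, namely that $\Phi$ itself is a $\p$-map. Computing $n$, $m$, and the code of $Pr_{f,k}$ is clearly polynomial in $\flo f + k$. The delicate point is the time bound $T$: we need a polynomial in $\flo f + m + n$ that provably majorises the running time of the fixed $\p$-machine for $\varphi$ on inputs of that size, and the integer $T$ must be produced in polynomial time from $(f,k)$. This is exactly where the extra hypothesis on $\ca$ enters: knowing that the running-time function of a $\ca$-algorithm is bounded by a $\ca$-function lets us compute $T$ itself in class $\ca$ (trivial for $\p$, where one simply evaluates a fixed polynomial). Once $T$ is produced, assembling the quadruple is immediate, and the whole construction $\Phi$ lies in $\p$; the generalisation to any suitable $\ca$ follows verbatim from the same argument.
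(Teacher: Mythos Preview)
Your construction is essentially the paper's: build the quadruple $(Pr,n,m,T)$ by hard-coding $f$ and the precision into the fixed evaluation machine, take $m$ from the uniform modulus $\mu$, and use the extra hypothesis on $\ca$ to produce a computable time bound $T$. The paper's proof is terser (it does not spell out the correctness check for Definition~\ref{411}), but the idea is identical.

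One arithmetic slip: after rounding $\varphi(f,a,n+1)$ to the nearest point of $\DD_n$ you incur an extra error of $1/2^{n+1}$, so each output is only within $1/2^n$ of $\wi f(a)$, not $1/2^{n+1}$ as you wrote. With your choices the bound on the difference of two consecutive outputs becomes $2\cdot 1/2^{n}+1/2^{n+1}=5/2^{n+1}$, which can exceed $1/2^n$ since the outputs lie in $\DD_n$; the quadruple may then fail to be correct. The fix is trivial---use $m=\max(n+1,\mu(f,n+2))$ and compute $\varphi$ at precision $n+2$ (or simply set $n:=k+3$)---and does not affect the polynomial-time claim.
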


\begin{proof} 
This follows from \ref{416}. However, let us give the proof. We must show that the family $\big(\wi{f}\,\big)_{f \in Y}$ is a family of points of class $\p$ for the presentation $\ckf$. Since the family $\big(\wi{f}\,\big)_{f \in Y}$ is uniformly of class $\p$, we have two functions $\psi$ and $\mu$ of class 
$\p$:
\[
 \psi\colon Y \times \DD_{[0,1]} \times \NN_1 \to \DD \qquad  
(f,d,n) \mapsto \psi(f,d,n) = \wi{f}(d) \; \hbox{to within} \; 1/2^n
\]
and 			
\[
\mu: Y \times \NN_1 \to \DD \qquad (f,n) \mapsto \mu(f,d,n) = m 
\]
where $\mu$ is a modulus of uniform continuity for the family. 
Let $S\colon \NN_1 \to \NN_1$ be a map of class~$\p$ which gives a bound for the computation time of $\psi$. 

\noindent 
Let us then consider the function $\varphi\colon Y \times \NN_1 \to \ykf $ defined by:
\[
\varphi(f,n) = (Pr(f,n), n, \mu(f,n), S(\flo{f} + \mu(f,n) + n))
\]
where $\flo{f}$ is the size of $f$ and $Pr$ is the program calculating 
$\psi$, barely modified: 
it takes as input only the $d \in \DD_{m,[0,1]}$ and retains for output only the significant digits in $\DD_n$.

\noindent 
The $\varphi$ function is also in  class $\p$. 

\noindent 
Furthermore, it is clear that the rational point coded by $(Pr,n,m,S(\flo{f} +m+n))$ approximates~$\wi  f$ to within~$1/2^n$. 	 
\end{proof}

Note that this universal property of the presentation $\ckf$ is obviously shared by all presentations $\p$-equivalent to $\ckf$.

\subsubsection{Presentation by boolean circuits} \label{subsubsec412}

Boolean circuit presentation is essentially the same as Ko-Friedman presentation. It is a little more natural within our framework of rational 
presentations. 

Let us denote by $\CB$ the set of (codes for) boolean circuits. A boolean circuit $C$ is simply given by a boolean evaluation program (boolean straight-line program) which represents the execution of the circuit $C$.
A rational point of the presentation by boolean circuits is a map $\wi{f}$ encoded by a quadruplet $f = (C, n, m, k)$ where $C$ is (the code of) a boolean circuit, $n$ is the required precision for $f(x)$, $m$ is the required precision input, and $2^{k+1}$ is an upper bound for the norm of $\wi{f}$. 

\begin{definition} \label{418}
A rational point of the presentation by boolean circuits $\cbo$ is encoded by a quadruplet $f = (C, n, m, k) \in \CB \times \NN_1 \times \NN_1 \times 
\NN_1$ where $C$ is (the code of) a boolean circuit with~$m$ input gates and $k+n+1$ output gates: the $m$ input gates are used to code an element of $\DD_{m,[0,1]}$ and the $k+n+1$ output gates are used to code elements of $\DD$ of the form $\pm 2^k\left( \sum_{i=1}^{n+k}b_i2^{-i}\right)$ ($b_i\in\{0,1\}$, $b_0$ codes $\pm$).

\noindent 
The data $f = (C, n, m, k)$ is said to be {\em correct} when two entries 
encoding elements  $1/2^m$ apart of $\DD_{m,[0,1]}$ give outputs 
encoding elements at most $1/2^n$ apart of $\DD$. The set of correct data is $\ybc$.

\noindent 
When a given $f$ is correct, it defines a piecewise linear function function $\wi{f}$ which is well controlled (it is the ``rational point'' defined by the data).
\end{definition}

It should be noted that the control parameters $n$, $m$ and $k$ are mainly indicated for the user's convenience. In fact $n+k$ and $m$ are 
directly readable on the $C$ circuit. 

From this point of view, the situation is slightly improved compared to the presentation for which the control parameters are absolutely essential. 
The size of $C$ alone controls the execution time of the circuit 
(i.e.\ the function which calculates, from the code of a boolean circuit and the list of its inputs, the list of its outputs is a function of very low complexity).

\begin{proposition}[complexity of the family of functions attached to $\ybc$] \label{419} 
The family of continuous functions $\big(\wi{f}\,\big)_{f \in \ybc }$ is uniformly of 
class $\DSRT(\Lin, \Lin, \QLin)$.
\end{proposition}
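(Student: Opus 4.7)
The plan closely mirrors Proposition \ref{413}. A point $f=(C,n,m,k)\in\ybc$ encodes a piecewise linear function $\wi f$ whose pieces have slope at most $2^{m-n}$ in absolute value, since the correctness condition asserts that two consecutive inputs in $\DD_{m,[0,1]}$ produce outputs at most $1/2^n$ apart. From this I read off a modulus of uniform continuity $\mu(f,\ell)=\ell+m$ (the integer $m$ being directly readable on $f$); this $\mu$ is trivially of class $\DSRT(\Lin,\Lin,\QLin)$.

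The approximation function $\varphi\colon \ybc\times\DD_{[0,1]}\times\NN_1\to\DD$ is built exactly as for $\ckf$. Given $(f,x,\ell)$ with $\ell\geq n$ and $x$ given with at least $m$ bits, read off the two consecutive elements $a,b\in\DD_{m,[0,1]}$ bracketing $x$, together with the dyadic $r\in\DD_{[0,1]}$ such that $x=a+r/2^m$. Evaluate the circuit $C$ on the $m$-bit codes of $a$ and $b$ to obtain dyadic approximations $\wi f(a)$ and $\wi f(b)$, each of size $\Oo(n+k)$, and return the linear interpolation $\wi f(a)+(\wi f(b)-\wi f(a))\,r$. Correctness to within $1/2^\ell$ follows from the bound on the slope exactly as in Proposition \ref{413}.

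It remains to verify the complexity bound, and this is where the essential work lies. The dominant cost is the evaluation of the boolean circuit $C$ encoded by an evaluation program. Each instruction of the program references previously computed bits by index, so on a Turing machine a naive execution over a program of size $s=\flo{C}$ requires $\Oo(s)$ look-ups, each costing $\Oo(\log s)$ time for pointer management, for a total of $\Oo(s\log s)=\QLin$ in the input size. The working tape stores one bit per gate plus the $m$ input bits and the $n+k+1$ output bits, giving linear space $\Oo(s+m+n+k)$; the output has size $n+k+1\leq\flo f$, hence linear. The linear interpolation step involves only a constant number of arithmetic operations on dyadics of size $\Oo(n+k+m)$, each computable in quasi-linear time and linear space, so it does not dominate.

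The main obstacle is thus the quasi-linear time bound on boolean circuit evaluation on the Turing machine, which requires arguing that the overhead for pointer lookups in the evaluation program is logarithmic rather than linear in $s$; once this is established, the composition with the linear interpolation step yields the desired class $\DSRT(\Lin,\Lin,\QLin)$ for the family $(\wi f)_{f\in\ybc}$.
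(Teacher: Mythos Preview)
Your approach is correct and essentially identical to the paper's: mirror Proposition~\ref{413}, replacing $\Exec(Pr,a)$ by boolean-circuit evaluation, and claim the latter is quasilinear. One caveat: your sentence that each look-up costs $\Oo(\log s)$ on a Turing machine is not right as stated—scanning a tape to a given index costs time proportional to the distance travelled, not merely the length of the index. The paper does not attempt to justify the $\Oo(t\log t)$ bound on a TM either; it simply asserts it under the convention of Remark~\ref{323} (management time omitted when evaluating circuits), and adds in a Nota Bene that with management time included the complexity becomes $\DSRT(\Lin,\Lin,\Oo(N^2))$, the same as for $\ckf$. So your argument is sound at the level of detail the paper works at, but you should either drop the $\Oo(\log s)$ look-up claim or replace it by an appeal to Remark~\ref{323}.
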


\begin{proof}
We copy the proof of Proposition \ref{413} concerning the family $\big(\wi{f}\,\big)_{f \in \ykf}$. The only difference is the replacement of the function $\Exec(Pr,a)$ by the evaluation of a boolean circuit. 
The function which calculates, from the code of a boolean circuit and the list of its inputs, the list of its outputs is a function of time complexity $\Oo(t\log(t))$ (where $t = \sz(C)$), 
so of class $\DSRT(\Lin,\Lin,\QLin)$. 	 
\end{proof}

\noindent {\bf NB}. If we had taken ``management time'' into account (cf.\ remark 
\ref{323}) we would have found a complexity $\DSRT(\Lin, \Lin, \Oo(N^2))$, i.e.\ the 
same result as for the family $\big(\wi{f}\,\big)_{f \in \ykf}$ (the complexity of $\Exec(Pr,a)$ precisely takes into account ``management time'').

 \subsubsection{Presentation by arithmetic circuits (with 
magnitude)}\label{subsubsec 413}
Remember that an {\em arithmetic circuit} is, by definition a circuit whose input gates are ``real'' variables $x_i$ and constants in $\QQ$. Obviously, we could tolerate only the two constants $0$ and $1$ without significant change. 

\noindent 
The other gates are:

\noindent 
--- single-input gates of the following types: $x \mapsto x^{-1}, \; x \mapsto -x$, 

\noindent 
--- two-input gates of the following two types: $x+y, \; x \times y$. 

\noindent 
An arithmetic circuit calculates a rational function, (or possibly gives error if you ask it to invert the identically zero function).
We shall consider arithmetic circuits with a single input variable and a single output gate, and we shall denote by $\CA$ the set of (codes of) arithmetic circuits with a single input. 
The code of a circuit is the text of an evaluation program (in a language and with a syntax) corresponding to the arithmetic circuit in question.

\begin{definition} \label{4110}
An integer $M$ is called {\em a coefficient of magnitude} for an 
arithmetic circuit~$\alpha$ with a single input when $2^M$ bounds in absolute value 
all the rational functions in the circuit (i.e.\ those calculated at all  the gates of the circuit) at any point in the interval~$[0,1]$. 

\noindent 
A pair $f = (\alpha, M) \in \CA \times \NN_1$ is a  \emph{correct}  datum when $M$ is  a magnitude coefficient for the circuit~$\alpha$. 
This defines the elements of $\yaf$. An element $f$ of $\yaf$ 
defines the rational function $\wi{f}$ when it is  
seen as an element of $\czu$. 
The family $\big(\wi{f}\,\big)_{f \in \yaf }$ is the family of rational points of a presentation of $\czu$ which will be noted $\caf$.
\end{definition} 

Note that, because of the presence of multiplications and the transition  to the inverse, the size of $M$ can be exponential compared to that of $\alpha$, even when no rational function of $\alpha$ has a pole on $[0,1]$. This kind of mishap did not occur with binary semilinear circuits.
It even seems unlikely that the correction of a pair $(\alpha, M) \in \CA \times \NN_1$ could be tested in polynomial time. 

\begin{proposition}[complexity of the family of functions attached to $\yaf$] \label{4111} 
The family of continuous functions $\big(\wi{f}\,\big)_{f \in \yaf }$ is uniformly of class  $\p$, and more precisely of class $\DSRT(\Oo(N^3),\Lin,\Oo(N{\M}(N^2)))$.
\end{proposition}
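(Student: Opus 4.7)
The plan is to combine a simple derivative bound, which yields the modulus of uniform continuity, with a backward error analysis of dyadic evaluation of the circuit, controlled by the magnitude parameter $M$.

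First I would establish a modulus of uniform continuity for $\wi{f}$ by inductively bounding, on $[0,1]$, the derivative $L_g$ of the rational function computed at each gate $g$ of $\alpha$. For input gates (variable or constant), $L_g\le 1$. For an addition gate, $L_g = L_{g_1}+L_{g_2}$. For a multiplication gate, the magnitude bound gives $|R_{g_i}(x)|\le 2^M$, hence $L_g \le 2^M(L_{g_1}+L_{g_2})$. For an inversion gate $g=1/g_1$, the fact that $|R_g(x)|\le 2^M$ forces $|R_{g_1}(x)|\ge 2^{-M}$ on $[0,1]$, whence $L_g\le 2^{2M}L_{g_1}$. Setting $s=\sz(\alpha)$, iteration over the circuit gives $L_{\rm out}\le 2^{cs M}$ for some absolute constant $c$, so the map $\mu(f,n)=n+c\,sM$ is a modulus of uniform continuity for the family and is computable in polynomial time from $(f,n)$.

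Next I would describe the evaluation of $\wi{f}(x)$. Given $(f,x,n)$ of total size $N$, set the working precision at $p := n + c'sM$ with $c'$ chosen below. Evaluate the gates in topological order, replacing the exact value $v_g$ at gate $g$ by a dyadic $\tilde v_g\in\DD$ with $|v_g-\tilde v_g|\le 2^{-p_g}$. A direct error analysis, using the magnitude bound $2^M$ on each $v_{g_i}$ and the lower bound $|v_{g_1}|\ge 2^{-M}$ at inversion gates, shows that precision degrades by at most one bit at additions, by at most $M+1$ bits at multiplications, and by at most $2M+1$ bits at inversions, so a constant $c'$ (independent of $\alpha$, $M$, $n$) ensures that $|v_{\rm out}-\tilde v_{\rm out}|\le 2^{-n}$. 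Each intermediate dyadic $\tilde v_g$ has size $\Oo(M+p)=\Oo(N^2)$, so each of the $s=\Oo(N)$ arithmetic operations on such dyadics costs $\Oo(\M(N^2))$ time with fast arithmetic. The total time is $\Oo(s\,\M(N^2))=\Oo(N\,\M(N^2))$; storing the $s$ intermediate values of size $\Oo(N^2)$ uses space $\Oo(N^3)$; the output has only $n$ bits, hence is of linear size. This yields the class $\DSRT(\Oo(N^3),\Lin,\Oo(N\,\M(N^2)))$.

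The main obstacle is the control of inversion gates: a small error in the denominator gets amplified by its squared inverse, which could be disastrous without further assumptions. This is precisely where the magnitude coefficient $M$ is essential: because $M$ bounds the value at the \emph{inversion gate itself}, the denominator is automatically bounded below by $2^{-M}$ on $[0,1]$, and both the derivative estimate and the precision-propagation estimate pick up only a factor $2^{2M}$ rather than becoming uncontrollable. A secondary technical point is bookkeeping: I would overapproximate depth by $s$ throughout, which is harmless for the claimed bounds; a Borodin-style traversal (as in the proof of Proposition~\ref{322}) could further reduce the space constant but is not needed to reach $\Oo(N^3)$.
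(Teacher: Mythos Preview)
Your argument is essentially the paper's: an inductive derivative bound driven by the magnitude constraint gives the modulus of uniform continuity, and a per-gate error-propagation analysis (again controlled by $M$) shows that a fixed working precision of order $n+\Oo(sM)$ suffices, yielding the stated $\DSRT$ class. The only cosmetic difference is that the paper organises the induction by the depth $p$ of the circuit (derivative $\le 2^{2M\pi}$ at depth $\pi$, working precision $m=k+2Mp+p$) rather than by the size $s$; since $p\le s$, your overapproximation is harmless for the claimed bounds, as you yourself note.
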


\begin{proof} Let $ f = (\alpha,M) $ be an element of $\yaf$. Let $ p $ be the 
depth of the product $\alpha$. We show by induction on $\pi$ that, for 
a gate of depth $\pi$ the corresponding 
has a derivative bounded by $2^{2M \pi}$ on the interval 
the interval $[0,1]$. This gives the modulus of uniform continuity $\mu(f,k) = k + 2Mp$ (which is $\Oo(N^2)$) for the family $\big(\wi{f}\,\big)_{f \in \yaf }$. 

\noindent 
We now need to show a function 
$\psi\colon \yaf \times \DD_{[0,1]} \times \NN_1 \to \DD$ 
of complexity $ \DRT(\Lin,\Oo(N\M(N^2))) $ such that
\[
\forall (f, x, k) \in \yaf  \times \DD_{[0,1]} \times \NN_1 \; \;
\abs{ \psi(f,x,k) - \wi{f}(x)} \;  \leq 2^{-k}.
\]
Let $(f, x, k) \in \yaf \times \DD_{[0,1]} \times \NN_1$ where 
$f = (\alpha,M)$. Let $ t = \sz(\alpha) $ be the number of gates in the circuit $\alpha$. The size of the inputs is $N = t+M+k$. 

\noindent 
To calculate $\psi((\alpha,M), x, k)$ proceed as follows. 
Read $ m = k+2Mp+p $ bits of $x$ which gives the two consecutive points
 $a$ and $b$ of $\DD_{m,[0,1]}$ such that $a \leq x \leq b$. 
We execute the $\alpha$ circuit at input~$a$, truncating the calculation performed on each gate to the first $m$ significant bits. The value obtained at the output, truncated to the first $k$ significant bits, is the element $\psi((\alpha,M), x, k)$. 
As multiplication (or division) takes place in time ${\M}(m)$, the time of the arithmetic operations themselves is an $\Oo(t\,\M(k+2Mp+p))= \Oo(N\M(N^2))$ and the used space is in $\Oo(N^3)$. 
\end{proof}

\noindent {\bf NB}. If we had taken ``management time'' into account (cf.\ Remark \ref{323}) we would have said: 

\noindent 	
the time for the arithmetic operations themselves is $\Oo(N \M(N^2)),$ 
and the management of objects is $\Oo(N^4).$ So if we consider the 
fast (respectively naive) multiplication, the evaluation function 
is in $ \DRT(\Lin,\Oo(N^4))$ (respectively $\DRT(\Lin,\Oo(N^5))$) where $N$ is the size of the data.

\subsubsection{Presentation by polynomial arithmetic circuits (with 
magnitude)}\label{subsubsec414}

The following presentation of $\czu$ will be denoted by $\capo$. The set of codes for rational points will be denoted by~$\yap$. 

\noindent 
This is the same as the arithmetic circuits, except that 
the ``inverse pass'' gates are removed. We therefore do not give 
the definition in detail.

\noindent 
We find the same difficulties with magnitude, but a little less serious. 
 It does not seem possible to obtain for polynomial circuits significantly better complexity bounds than those obtained in Proposition 
\ref{4111} for circuits with divisions.

\subsection{Comparisons of previous presentations}\label{subsec42}

In this section we show an important result, the equivalence between 
the Ko-Friedman presentation and the four circuit presentations in the previous section, from the point of view of polynomial time complexity. 
 In particular we obtain an  algorithmically fully controlled formulation 
for the Weierstrass approximation theorem. 

\noindent 
To show the equivalence between these five presentations from the point of view of complexity, we will follow the plan below.

\begin{figure}[htbp]
\begin{center}
$$
\xymatrix @R=15pt{
  \ckf\ar[r] &  \cbo\ar[r] &  \csl\ar[r] &  \caf\ar[r] & \capo\ar@/^25pt/[llll] 
}
$$ 
\end{center}
\medskip\caption[Proof diagram]{\label{fi421}proof diagram  
}  
\end{figure}
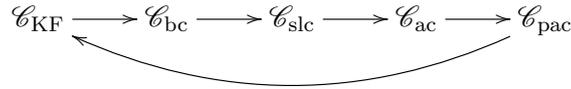

To give an equivalence between two different presentations, we need to 
construct maps that transform a rational point of the first presentation into a rational point of the other presentation, which approximates conveniently the first rational point. And vice versa.

\begin{proposition} \label{421}
The identity of $\czu$ from $\ckf$ to $\cbo$ is uniformly of class $\p$, in fact of class  $\DTI (N^{14})$.
\end{proposition}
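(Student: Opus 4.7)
The plan is the following. Given input $(f,k)$ with $f=(Pr,n,m,T)\in\ykf$, I want to produce $g=(C,n,m,k'')\in\ybc$ with $\wi{g}=\wi{f}$ (exactly), so approximation to precision $1/2^{k}$ is trivial. The only real work is converting the program $Pr$ into a boolean circuit $C$ that computes the same function on $\DD_{m,[0,1]}$; then the piecewise-linear interpolation defining $\wi{g}$ coincides with that defining $\wi{f}$, and the correctness condition for $g\in\ybc$ (two inputs $1/2^m$ apart yield outputs at most $1/2^n$ apart) inherits directly from the correctness of $f\in\ykf$.

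For the TM-to-circuit simulation I would use the classical tableau construction. Since $Pr$ runs in at most $T$ steps on any input of size $m$, its computation visits at most $T+m = O(T+m)$ tape cells. One builds a tableau with $T$ rows and $O(T+m)$ columns, each cell encoding the contents of a tape cell together with the local state/head information; each cell of row $i+1$ is a boolean function of a constant number of cells of row $i$, described by the transition table of $Pr$ (which is read off the code of $Pr$ once and for all). This gives a boolean circuit $C$ of size $O((T+m)\cdot T)=O(N^2)$, whose input gates receive the $m$ bits of a point $a\in\DD_{m,[0,1]}$ and whose output gates deliver the $n+k''+1$ bits encoding $Pr(a)\in\DD_n$. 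Taking $k''=T$ is safe, since the output of $Pr$ has at most $T$ significant bits and so fits in the magnitude bound $2^{k''+1}$.

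Once $C$ is defined as a circuit, I would encode it as an evaluation program (straight-line program) in the language fixed in Section~\ref{subsubsec412}: this is just a concrete serialization of the tableau, row by row, with pointers of $O(\log N)$ bits linking each gate to its inputs. The key step is then to verify that this entire construction is carried out in polynomial time on a Turing machine. The arithmetic of the construction is cheap (reading the transition table of $Pr$, indexing into the tableau, writing out instructions), and a straightforward pessimistic accounting — reading $Pr$ afresh at each gate creation, naively managing pointers between tapes, and allowing for the $\Oo(N^2\log N)$ size of the output program — gives an upper bound that is polynomial in $N$; the bound $\Oo(N^{14})$ is a very generous envelope well above the actual cost. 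Finally I would check that the function $k\mapsto(C,n,m,T)$ is independent of $k$, so no extra cost arises with precision.

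The main obstacle I anticipate is not the construction itself, which is essentially textbook, but the bookkeeping: one has to be careful that the tableau is built once, that references between gates are produced in the right order so that the serialization is a valid evaluation program, and that the correctness condition of $\ybc$ is preserved through the encoding changes (bit representations of $\DD_m$-points and of $\DD_n$-values). These are routine but tedious checks; once they are done, the polynomial (indeed $\Oo(N^{14})$) time bound comes from a coarse estimate of the tableau-building loop on a multitape Turing machine.
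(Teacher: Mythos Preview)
Your approach is correct and is a genuine alternative to the paper's argument. The paper does \emph{not} apply the tableau construction directly to the program $Pr$: instead it first passes through the Universal Turing Machine $MU$ of Lemma~\ref{131}, feeding it $(Pr,x,T)$ as input. Since $MU$ is a \emph{fixed} machine, it can then invoke Lemma~\ref{422} (the Stern tableau lemma), which is stated only for a fixed $M$; the running time of $MU$ being $\Oo(N^2)$ and the tableau costing $\Oo((T+m)^7)$, this yields $\Oo((N^2)^7)=\Oo(N^{14})$, which is where the exponent $14$ actually comes from.

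Your direct tableau on $Pr$ bypasses the universal machine and gives a tighter bound, but it requires a uniform version of Lemma~\ref{422}: the per-cell update is no longer a fixed boolean function but one that depends on the transition table of $Pr$, so each tableau cell expands into a sub-circuit of size polynomial in $\lfloor Pr\rfloor$, and the construction must read $Pr$ to produce these sub-circuits. This is entirely routine, as you note, but it is the step that the paper avoids by normalising to a single fixed machine. A small point of care: the programs in $\ykf$ may be multi-tape (cf.\ Lemma~\ref{131}), so ``visits at most $T+m$ tape cells'' and the three-neighbour locality need to be adapted accordingly; this only affects constants. In short, the paper buys modularity (two clean lemmas, one about $MU$, one about a fixed-machine tableau) at the price of the loose exponent $14$; your approach is more economical but folds the uniformity argument into the proof.
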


The proof of this proposition is based on \lemref{131} which describes 
the complexity of a Universal Turing Machine and on the following lemma.
\begin{lemma} \label{422}
{\rm (cf.\ \cite{St} and \cite{Mo}) } 

\noindent 
Let $ M $ be a fixed TM, and $ T $ and $ m $ be elements of 
$\NN_1$. Then we have a function computable in time $\Oo((T+m)^7)$ which computes 
$\gamma_{T,m}$: a boolean circuit simulating the first $ T $ configurations of~$M$ for any entry in $\{0,1\}^m$.
\end{lemma}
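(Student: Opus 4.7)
The plan is to carry out the standard Cook--Levin style ``space--time diagram'' simulation of $M$, and then observe that its encoding as an evaluation program is so uniform that writing it down takes time quasi-quadratic in $T+m$, which sits very comfortably inside $\Oo((T+m)^7)$. The key quantitative input is the fact that $M$ is \emph{fixed}: its alphabet $\Sigma$, state set $Q$ and transition function $\delta$ have sizes that do not depend on $T$ or $m$.

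First, I would fix an encoding of configurations. Since $M$ starts from a tape of length $m$ and runs for at most $T$ steps, the head visits at most $T+m$ cells; I restrict attention to that window. A configuration is then encoded by a boolean vector of length $\Oo(T+m)$ consisting of: a block $(c_{i,s})$, $i\in\{1,\dots,T+m\}$, $s\in\Sigma$, giving the symbol at cell $i$; a one-hot block $(h_i)$ indicating the head position; and a one-hot block $(q_k)$, $k\in Q$, indicating the state. Because $|\Sigma|$ and $|Q|$ are constants, this totals $\Oo(T+m)$ bits per configuration.

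Second, I would build a single ``transition circuit'' $\tau$ of size $\Oo(T+m)$ realising one step of $M$. For each cell $i$, the new value of $(c_{i,s})_{s\in\Sigma}$ depends only on the triple $\bigl((c_{i,s})_s,\,h_i,\,(q_k)_k\bigr)$: if $h_i=0$ the cell is unchanged, and if $h_i=1$ the new symbol is read off from $\delta$. The new head position bit $h'_i$ is determined by $(h_{i-1},h_i,h_{i+1})$, the old state, and the symbol read; the new state vector is likewise a constant-size boolean function of the single ``active'' triple summed (via a big OR) over all cells. Each such per-cell gadget has size $\Oo(1)$ (with the constant depending only on $M$), so $\sz(\tau)=\Oo(T+m)$. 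The initial configuration is obtained from the $m$-bit input $x$ by a trivial wiring gadget of size $\Oo(m)$.

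Third, I concatenate $T$ copies of $\tau$ in series, obtaining $\gamma_{T,m}$ of size $\sz(\gamma_{T,m})=\Oo(T(T+m))$. Its output gates are read off from the final configuration block, and a short decoding subcircuit extracts whatever the user wants from it.

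The remaining work is the complexity of the \emph{constructor algorithm} that writes the evaluation program encoding $\gamma_{T,m}$. The construction is extremely regular: for each layer $t\in\{0,\dots,T-1\}$ one emits a fixed block of $\Oo(T+m)$ instructions, identical in shape across layers up to an offset in the gate indices. Each instruction references pointers of bit-length $\Oo(\log\sz(\gamma_{T,m}))=\Oo(\log(T+m))$, so the whole program has boolean size $\Oo(T(T+m)\log(T+m))$ and can be produced in the same order of time on a Turing machine (two nested loops over the layer index and the cell index, with simple arithmetic on the running gate counter). This is well within $\Oo((T+m)^7)$.

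The only step requiring genuine care is checking that $\tau$ preserves the invariants of the encoding --- in particular, that the one-hot head position block stays one-hot, using the fact that $\delta$ shifts the head by at most one cell, and that the blocks $(c_{i,\cdot})$ and $(q_k)$ remain one-hot. This is a straightforward case analysis on $\delta$, and once done the correctness of $\gamma_{T,m}$ follows by induction on the number of layers. The loose exponent $7$ in the statement simply absorbs any overhead from the actual byte-level layout of the evaluation program; the ``real'' construction complexity is essentially $\widetilde{\Oo}((T+m)^2)$.
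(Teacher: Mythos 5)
Your construction is the same one the paper relies on: it simply defers to Stern's layered ``space--time diagram'' circuit of size $\Oo(T^2)$ (cf.\ Remark~\ref{423}) together with Moutai's analysis showing the constructor runs within $\Oo((T+m)^7)$, which is exactly the tableau-of-configurations simulation you spell out. Your sharper estimate that the program can in fact be emitted in time $\Oo\bigl(T(T+m)\log(T+m)\bigr)$ is consistent with the paper, whose exponent $7$ is just a safe polynomial bound, so the proposal is correct and essentially identical in approach.
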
 

\begin{remark} \label{423}
J. Stern has given in \cite {St}, for any integer $T \in \NN_1$, a fairly simple construction of a circuit $\gamma_T$ which calculates the configuration of $M$ obtained after $T$ calculation steps from an initial configuration of size $\leq T$ (we can assume that $T \geq m$ or take $\max(T,m)$). The size of the circuit $\gamma_T$ is $\Oo(T^2)$. The author also mentions that this construction is done in polynomial time. A more precise bound ($\Oo(T^7)$) is given in \cite{Mo}.
\end{remark}

\begin{proof}[Proof of Proposition \ref{421}] Consider the Turing Machine 
 MU from \lemref{131}.\\ Let $f=(Pr,n,m,T)$ be an element of 
$\ykf$. Let $ p $ be the size of the program $Pr$. 
The size of $f$ is $N = p+n+m+T.$ The machine $ MU $ takes as inputs the program $ Pr $, an input $ x \in \DD_{m,[0,1]}$ and the number of steps $T$. 
It executes in $\Oo(T (\max(T,m)+p)) = \Oo(N^2)$ steps (cf.\ \lemref{131}) the task of calculating the output for the program $Pr$ on the same input $x$ after $T$ calculation steps. 
By applying \lemref{422} we obtain time $\Oo(N^{14})$ for 
calculating an element $g$ of $\ybc$ from the input $f$ 
(a boolean circuit and its control parameters) 
for which we have $\wi{g} = \wi{f}$. 
\end{proof}

\begin{proposition} \label{424}
The identity of $\czu$ from $\cbo$ to $\csl$ is uniformly of class 
$\LINT$. More precisely, we have a discrete function which, starting from an element $ f = (\gamma,n,m,k) $ of $\ybc$ and an integer 
$ q \in \NN_1$, calculates in time $ \Oo(N) $ (where $ N $ is the size of the the input $ ((\gamma,n,m,k),q)$, a binary semilinear circuit $ g $ such that 
\[
\forall x \in [0,1]  \;\;
\abs{ \wi{f}(x) - \wi{g}(x)} \;  \leq 2^{-q}    \eqno(\F4.2.4).
\]  
\end{proposition}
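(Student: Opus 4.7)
The plan is to construct the semilinear circuit $g$ in three stages: digitise the real input $x$ into approximate bits, simulate $\gamma$ gate-by-gate by semilinear operations, then reassemble the numerical output.

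For the bit extraction, set $x_0 = x$ and iteratively define $\tilde b_i(x) = \max(0, \min(1,\, 2^{m}(x_{i-1} - 1/2 + 2^{-m})))$ and $x_i = 2 x_{i-1} - \tilde b_i$ for $i = 1, \ldots, m$. This uses only the gates $x \mapsto 2x$, $x \mapsto -x$, $+$, $\max$, $\min$, at total cost $\Oo(m)$; at every dyadic $x = j/2^m$ the values $\tilde b_i(x)$ are exactly the binary digits of $j$. For the simulation, replace each boolean gate of $\gamma$ by its semilinear analogue: $\lnot x \mapsto 1-x$, $x \wedge y \mapsto \min(x,y)$, $x \vee y \mapsto \max(x,y)$; on $\{0,1\}$-valued inputs this reproduces the original boolean outputs, while on $[0,1]$-valued inputs it stays in $[0,1]$, at cost $\Oo(\sz(\gamma))$. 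For the assembly, output $\tilde v(x) = \pm 2^k \sum_{i=1}^{n+k} \tilde c_i(x)\, 2^{-i}$ following the encoding of $\cbo$, at cost $\Oo(n+k)$.

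By construction $\tilde v(j/2^m) = v_j = \wi f(j/2^m)$ at every dyadic of order $m$. When $q \leq n$, the bound $\abs{\wi g(x) - \wi f(x)} \leq 2^{-q}$ between dyadic points follows from the correctness condition $\abs{v_{j+1}-v_j} \leq 2^{-n} \leq 2^{-q}$ combined with the $1$-Lipschitz property of every semilinear gate, which keeps $\tilde v$ within an $\Oo(2^{-n})$-neighbourhood of $\{v_j,v_{j+1}\}$ on each interval $[j/2^m,\,(j+1)/2^m]$. When $q > n$ the step-function approximation is too crude; we refine by first augmenting $\gamma$ to a boolean circuit $\gamma'$ on $m' = m + (q-n)$ input bits that computes $\wi f(j'/2^{m'})$ for $0 \leq j' < 2^{m'}$: split $j'$ into a high part $j = \lfloor j'/2^{m'-m}\rfloor$ and low part $\lambda$, run $\gamma$ on $j$ and on $j+1$ (via an $\Oo(m)$ adder), and compute the interpolation $v_j + \lambda(v_{j+1}-v_j)$ by boolean arithmetic. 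Since $\sz(\gamma') = \Oo(\sz(\gamma) + m + n + k)$, feeding $\gamma'$ into the three-stage construction with $m$ replaced by $m'$ still yields a circuit of size $\Oo(N)$.

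The main obstacle is the error analysis between dyadic points (of order $m'$ in the refined case): there the extracted bits $\tilde b_i$ are only approximately boolean, with deviation concentrated in narrow ``carry zones'' of width $\Oo(2^{-m'})$ around each dyadic, and the semilinear simulation of $\gamma'$ on such fuzzy inputs produces outputs deviating from the boolean truth. Pinning down the error requires combining the $1$-Lipschitz property of each gate with the correctness of $\gamma'$ (inherited from that of $\gamma$), which bounds the output discrepancy between consecutive boolean inputs by $2^{-q}$, so that the accumulated deviation across the $\Oo(N)$ gates of the circuit remains below $2^{-q}$.
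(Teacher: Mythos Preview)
Your three-stage architecture (digitise, simulate, reassemble) and your use of Lemma~\ref{425}-style refinement for $q>n$ match the paper's approach. But the error analysis in your final paragraph is wrong, and the missing ingredient is exactly what the paper calls the crucial problem.

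The $1$-Lipschitz property of $\min$, $\max$, $1-x$ tells you that if the fuzzy bit vector $(\tilde b_1,\ldots,\tilde b_m)$ is within $\delta$ (in sup-norm) of a boolean vector, then the simulated outputs $\tilde c_i$ are within $\delta$ of the boolean outputs. It does \emph{not} tell you anything when a bit sits at $1/2$, and the correctness condition on $\gamma$ constrains only the boolean function, not the circuit's behaviour on non-boolean inputs. Concretely: take $m=1$, $n=1$, $k=3$, and let $\gamma$ output sign bit $c_0=0$ and $c_i = b_1 \wedge \neg b_1$ for $i=1,\ldots,4$. The boolean function is identically zero, so $\wi f \equiv 0$ and correctness is trivial. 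Your semilinear simulation gives $\tilde c_i = \min(\tilde b_1, 1-\tilde b_1)$, which equals $1/2$ when $\tilde b_1 = 1/2$, so $\tilde v = 2^3\sum_{i=1}^4 (1/2)2^{-i} = 15/4$ at that point---nowhere near $2^{-q}$ for any $q\geq 0$. The same phenomenon occurs after your refinement step, since $\gamma'$ is again an arbitrary circuit for a correct boolean function.

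The paper's fix (due to Hoover) is to evaluate the whole pipeline at three shifted inputs $x_\sigma = x + \sigma/2^{m+2}$ for $\sigma\in\{-1,0,1\}$, using a steeper threshold (slope $2^{m+2}$) so that the three transition zones are disjoint; then at most one of the three bit vectors is non-boolean, hence at least two of the three outputs $y_\sigma$ equal $\wi f(z_\sigma)$ exactly for some $z_\sigma$ within $2^{-(m-1)}$ of $x$. Taking the median $\theta(y_{-1},y_0,y_1)$ (itself a small semilinear circuit) discards the one possibly wild value and yields the required bound. This trick is what your proposal is missing.
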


The proof of this proposition uses the following lemma.

\begin{lemma} \label{425} 
There exists a function computable in time $\Oo(N)$ which transforms any 
element $f = ((\gamma,n,m,k),h)$ of $\ybc \times \NN_1$ into an element 
$f' = (\gamma',n+h,m+h,k)$ of $\ybc$ corresponding to the same function.
\end{lemma}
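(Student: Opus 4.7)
The plan is to construct $\gamma'$ so that at every grid point $y = a + r/2^{m+h}$ of $\DD_{m+h,[0,1]}$ (with $a \in \DD_{m,[0,1]}$ and $0 \le r < 2^h$), the circuit outputs the linear interpolant $\wi{\gamma}(y) = \gamma(a) + (r/2^h)\bigl(\gamma(a+1/2^m)-\gamma(a)\bigr)$. Once that is achieved, $\wi{\gamma'}=\wi{\gamma}$ follows immediately: on each coarse interval $[a,a+1/2^m]$ the values of $\gamma'$ at the $2^h{+}1$ intermediate points of $\DD_{m+h}$ all lie on the straight segment joining $(a,\gamma(a))$ and $(a+1/2^m,\gamma(a+1/2^m))$, so the fine and coarse piecewise linear interpolants coincide.

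The apparent obstruction is that such an interpolation looks like it demands multiplying an $h$-bit integer by an $(n+k)$-bit integer, which cannot be done by a boolean circuit of size $\Oo(N)$. The decisive observation that removes this difficulty is that the correctness of $\gamma$ forces $\gamma(a+1/2^m)-\gamma(a) \in \{-1/2^n,\,0,\,+1/2^n\}$: both values lie in $\DD_n$ and are within distance $1/2^n$. Writing this difference as $\epsilon/2^n$ with $\epsilon \in \{-1,0,+1\}$, the interpolant collapses to the trivial expression $\gamma(a) + \epsilon\,r/2^{n+h}$.

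Concretely I would assemble $\gamma'$ as follows. Split the $m+h$ input bits into a high block encoding $a$ and a low block encoding $r$; duplicate $\gamma$ to compute in parallel $u := \gamma(a)$ and $v := \gamma(a+1/2^m)$ (the second copy is fed through an $m$-bit incrementer); determine $\epsilon$ from the sign of $v-u$ via an $(n+k+1)$-bit subtractor; finally emit the $(n+h+k+1)$-bit number whose representation is that of $u$ shifted left by $h$ bits and adjusted by $\pm r$ according to $\epsilon$ (a multiplexed adder/subtractor of size $\Oo(n+k+h)$). The total size is $2\sz(\gamma) + \Oo(m+n+k+h) = \Oo(N)$, and the corresponding evaluation program is emitted in the same linear time by concatenating two renamed copies of the program for $\gamma$ with the small arithmetic block.

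To finish I would check two things. First, correctness of $f'$ in the sense of Definition~\ref{418}: for two $\DD_{m+h}$-adjacent inputs $y_1,y_2$, either they share the same $a$ (then the outputs differ by exactly $\epsilon/2^{n+h}$) or they straddle a coarse boundary $a+1/2^m$, in which case a direct calculation using the identity $\gamma(a+1/2^m)-\gamma(a) = \epsilon\cdot 2^h/2^{n+h}$ again gives difference $\epsilon/2^{n+h}$; in both cases the bound $1/2^{n+h}$ holds. Second, as noted above, the values of $\gamma'$ at $\DD_{m+h}$ lie on the segments of $\wi{\gamma}$, hence $\wi{\gamma'} = \wi{\gamma}$. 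The mild edge case $a=1$ is harmless because it forces $r=0$, so one may let the incrementer wrap mod $2^m$ without affecting the output. The main (and only) non-routine point is the reduction of the feared $h$-bit $\times$ $(n+k)$-bit multiplication to a mere conditional $\pm r$ increment — the observation that makes the whole lemma true in linear time.
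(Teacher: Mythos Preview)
Your proposal is correct and follows essentially the same approach as the paper: the paper's proof is a terse three-line sketch noting that $|\ell-\ell'|\le 1$ (your $\epsilon\in\{-1,0,1\}$) so that $\gamma'$ outputs $y+(y'-y)r\,2^{-h}$, with $\sz(\gamma')=\Oo(\sz(\gamma)+h)$. You have supplied the implementation details (splitting bits, duplicating $\gamma$, incrementer, conditional $\pm r$) and the correctness/edge-case checks that the paper leaves implicit, but the core idea and construction are identical.
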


\begin{proof} 
Suppose that the circuit $\gamma$ calculates, for the input $x = u/2^m$ where $0 \leq u \leq 2^m - 1$, the value $y = \ell/2^n$, and for $x' = (u+1)/2^m$, the value $y' = \ell'/2^n$ with $\ell , \ell' \in \ZZ$ and $\abs{ \ell - \ell'} \;  \leq 1$.
 Then the circuit $\gamma'$ calculates, for the input $x+r2^{-h}2^{-m}$, 
$0 \leq r \leq 2^h$, the value $y+(y'-y)r2^{-h}$. 

\noindent 
Note also that $ \sz(\gamma') = \Oo(\sz(\gamma )+h) $ and $\depth(\gamma') = \Oo(\depth(\gamma)+h)$.	 
\end{proof}

\begin{proof}[Proof of Proposition \ref{424}] 
According to \lemref{425}, the condition (\F4.2.4) of the 
can be replaced by the condition: 
\[\forall x \in [0,1]  \;\;
 \abs {\wi{f}(x) - \wi{g}(x)} \;  \leq 2^{-(n-1)}  
   \eqno(\F4.2.5)
\] 
Indeed: if $q < n$ then (\F4.2.4) follows from (\F4.2.5), otherwise, we use linear interpolation given in the proof of \lemref{425}.

\noindent 
We now seek to simulate the boolean circuit $f = (\gamma,n,m,k)$ by a binary semilinear circuit $g$ to within $ 1/2^n$. 

\noindent 
First of all, let's note that it's easy to simulate all the gates exactly, ``except for the input'', by a simple $\lambda$ semilinear circuit which consists of:

\noindent 
1) replacing the boolean constants $ 0$ and $ 1$ of $ \gamma$ by the rational rational constants $0$ and $1$.

\noindent 
2) replacing each vertex of $\gamma$ calculating $\neg u$ by a vertex of 
$\lambda$ calculating $1-u$. 

\noindent 
3) replacing each vertex of $ \gamma $ calculating $u \land v$ by a vertex 
of $ \lambda $ calculating $\min(u,v)$.

\noindent 
4) replacing each vertex of $ \gamma $ calculating $u \lor v$ by a vertex
 of $ \lambda $ calculating $\max(u,v)$. 

\noindent 
5) calculating, from the outputs $c_0, c_1, \ldots,c_{n+k}$ of the circuit 
$\gamma$, the rational point:
\[
\pm 2^k \sum_{i=1,\ldots,n+k}c_i2^{-i} \; (c_0 \;\rm{ codes \;the \;sign})
\]
It is clear that the circuit $\lambda$ is constructed in linear time and 
has a size $\sz(\lambda) = \Oo(\sz(\gamma))$ and a depth $\depth(\lambda) = \Oo(\depth(\gamma))$.

\noindent 
Now we want to ``simulate the input'' of the circuit $\gamma$, 
i.e.\ the bits encoding $x$, by a binary semilinear circuit.

\noindent 
Usually, to determine the binary expansion of a real number $x$, we use the following pseudo-algorithm which uses the discontinuous function function $ C $ defined by:
\[
C(x) = \left\{
\begin{array}{cl} 
1 & {\rm if } \; x \geq 1/2 
\\
0 &
{\rm otherwise} 
\end{array}
\right.
\]

\sni {\bf Algorithm 1} (calculation of the first $m$ bits of a real number $x\in [0,1]$)
\begin{itemize}

\item [] Inputs: $ x \in [0,1], \; m \in \N$ 

\noindent 
 Output: the list $(b_1, b_2,\ldots, b_m) \in \{ 0,1 \}^m$ 
	\begin{itemize}
	
\item [] 

For $j:=1$ to $m$ Do
		\begin{itemize}
		
\item [] 

$ b_j \leftarrow C(x)$ 

\noindent 
		$	 x \leftarrow 2x - b_j$ 
		\end{itemize}
	 Done 

\noindent 
 	End.
\end{itemize}
\end{itemize}

\sni The map $C$ is discontinuous, so Algorithm 1 {\em is not really 
an algorithm}; and it cannot be simulated by a binary semilinear 
binary circuit. We then consider the continuous function
\[
C_p(x) := C_{p,1/2} = \min(1, \max(0,2^p(x-1/2))) \eqno\hbox{(cf.\ Figure \ref{fi336})}
\]
which ``approximates'' the 
function $C$. And we consider the following algorithm:

\mni {\bf Algorithm 2} (approximate calculation of the first $ m $ bits which 
encode a real number $x$)
\begin{itemize}

\item [] 

Inputs: $ x \in [0,1], \; m \in \NN_1$ 

\noindent 
 Output: a list $(b_1, b_2,\ldots, b_m) \in [ 0,1 ]^m$ 
	\begin{itemize}
	
\item [] 

$p \leftarrow m+2$ 

\noindent 
			 For $j:=1$ to $m$ Do
		\begin{itemize}
		
\item [] 

$ b_j \leftarrow C_p(x)$ 

\noindent 
			 $x \leftarrow 2x - b_j$ 

\noindent 
				$p \leftarrow p-1$
	 \end{itemize}
	 Done 

\noindent 
 	End.
\end{itemize}
\end{itemize}

\sni This is achieved by a circuit of size and depth $\Oo(m)$ in the following form.

\mni {\bf Algorithm 2bis} (semilinear circuit form of algorithm 2)
\begin{itemize}

\item [] 

Inputs: $ x \in [0,1], \; m \in \NN_1$ 

\noindent 
 Output: a list $(b_1, b_2,\ldots, b_m) \in [ 0,1 ]^m$ 
	\begin{itemize}
	 
\item [] 

$p \leftarrow m+2 , \; q \leftarrow 2^p$ 

\noindent 
			 For $j:=1$ to $m$ Do
				\begin{itemize}
			 
\item [] 

$y \leftarrow q(x-1/2)$ 

\noindent 
			 		 $ b_j \leftarrow \min (1, \max(0,y))$ 

\noindent 
					$x \leftarrow 2x - b_j$ 

\noindent 
 				 $q \leftarrow q/2$
				 \end{itemize}
	 Done 

\noindent 
 	End
	 \end{itemize}
\end{itemize}

\sni But a crucial problem arises when the input real $x$ is in 
an interval of type $\;  (k/2^m, k/2^m + 1/2^p) \; $ where 
$0 \leq k \leq 2^m-1$. In this case, at least one bit calculated in algorithm 2 
is in $(0,1)$. Consequently, the final result of the 
boolean circuit simulation may be inconsistent. 
To get round this difficulty, we use a technique introduced by 
Hoover \cite{Ho87,Ho90}. We make the following remarks.

\noindent 
--- For all $x \in [0,1]$ at most one of the values 
$x_{\sigma} = x + \frac \sigma   {2^{m+2}}$ (where $ \sigma \in \{-1, 0, 1 \} $) is in an interval of the previous type.

\noindent 
--- Let $z_{\sigma} = \sum_{j=1,\ldots,m}b_j2^{-j}$ where the $b_j$ are 
provided by Algorithm 2bis on the input $x_{\sigma}$. According to the 
previous remark, at least two values 
$\wi {\lambda}(z_{\sigma})$ $(\sigma \in \{-1,0,1\})$, 
correspond exactly to the output of the arithmetic circuit $\gamma$ 
when the first $m$ bits of $x_{\sigma}$ are input. 
In other words, for at least two values of $ \sigma $, we have $ z_{ \sigma } \in 
\DD_m $ and 
$\wi \lambda(z_{\sigma}) = \wi{f}(z_{\sigma})$ with in addition
 $\abs{ z_{\sigma} - x }\;  \leq 1/2^m + 1/2^{m+2} \leq 1/2^{m-1}$ 
	(hence $\abS{ \wi{f}(z_{\sigma}) - \wi{f}(x) }\;  \leq 1/2^{n-1}$).

\noindent 
--- Therefore, any incorrect value calculated by the semilinear 
semilinear circuit can only be the first or the third of the values if 
we order them in ascending order.
Thus, if $y_{-1}$, $y_0$ and $y_1$, are the three values respectively 
calculated by the circuit at the points $x_{\sigma}$ ($\sigma=-1,0,1$), 
then the second of the 3 values correctly approaches $\wi{f}(x)$. 
Given three real numbers $y_{-1}$, $y_0$ and $y_1$, the 2nd one in ascending order, $\theta(y_{-1},y_0,y_1)$, is calculated by one of the two semilinear circuits represented by the second member in the equations:
\[
\theta (y_{-1},y_0,y_1) = y_{-1}+y_0+y_1 - \min (y_{-1},y_0,y_1) - \max (y_{-1},y_0,y_1)
\]
\[
\theta (y_{-1},y_0,y_1) = \min (\max (y_0,y_1), \max (y_1,y_{-1}), \max (y_0,y_{-1}))
\]
To summarise: from the input $x$ we calculate the $x_{\sigma} = x +\frac \sigma {2^{m+2}}$ (where $\sigma \in \{-1,0,1 \}$), and we apply successively:

\noindent 
--- the circuit $\varepsilon$ (of size $\Oo(m) = \Oo(N)$) which decodes 
correctly the first $m$ digits of $x_{\sigma}$ for at least two of the three $x_{\sigma}$; 

\noindent 
--- the circuit $\lambda$ which simulates the boolean circuit proper and 
recodes the output digits in the form of a dyadic, this circuit is of size 
$\Oo(N)$;

\noindent 
--- then the circuit $\theta$ which chooses the 2nd calculated value in 
ascending order.

\noindent 
We therefore obtain a circuit which calculates the function:
\[
\wi{g}(x) = \theta (\lambda(\varepsilon (x-1/2^p)),\lambda(\varepsilon (x)), \lambda(\varepsilon (x+1/2^p))) \; {\rm with} \; p = m+2
\]
of size and depth $\Oo(N)$ and such that
\[
\abs{ \wi{f}(x) - \wi{g}(x) }\;  \leq 2^{-n} \; \forall x \in 
[0,1].
\]
\begin{figure}[htbp] 
\begin{center}
\includegraphics*[width=3cm]{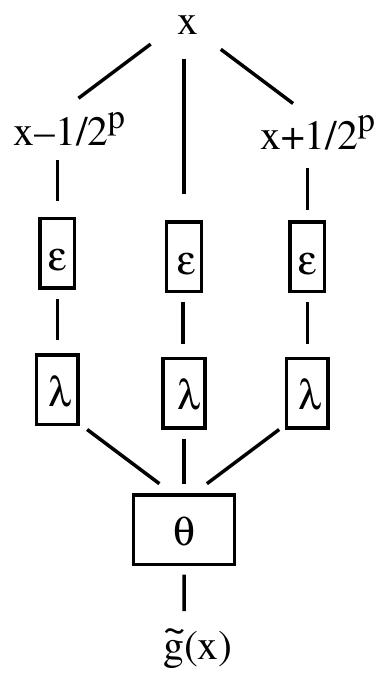}
\end{center}
\caption[global view of the calculation]{\label{fi422} 
global view of the calculation} 
\end{figure} 
\noindent In addition, the time taken to calculate (the code for) $g$ from the 
input $f$ is also in $\Oo(N)$. 
\end{proof}

We now move on to the simulation of a binary semilinear circuit by an arithmetic circuit (with divisions). First we give a circuit version of  Proposition \ref{335}.

\begin{proposition} \label{426}
The functions $(x,y) \mapsto \max(x,y)$ and $(x,y) \mapsto \min(x,y)$ on the square $[-2^m, 2^m] \times [-2^m, 2^m]$ can be approximated to within $1/2^n$ by arithmetic circuits of size $\Oo(N^3)$, of magnitude $\Oo(N^3)$ and which can be calculated in the class $\DTI (\Oo(N^3))$ where  $(N = n+m)$.
\end{proposition}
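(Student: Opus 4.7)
My plan is to reduce the problem to approximating the absolute value on a scaled interval, via the identities $\max(x,y) = (x+y+\abs{x-y})/2$ and $\min(x,y) = (x+y-\abs{x-y})/2$. It suffices to approximate $z \mapsto \abs{z}$ on $[-2^{m+1},2^{m+1}]$ to within $1/2^n$, and after the change of variable $w = z/2^{m+1}$ it suffices to approximate $\abs{w}$ on $[-1,1]$ to within $2^{-(n+m+2)}$. By Theorem~\ref{331}, taking $k$ to be the least integer with $3e^{-k} \leq 2^{-(n+m+2)}$ (so $k = \Oo(N)$ with $N = n+m$), Newman's rational function $(P_k/Q_k)(w^2) = w(H_k(w)-H_k(-w))/(H_k(w)+H_k(-w))$, with $H_k(u) = \prod_{j=1}^{k^2-1}(u+e^{-j/k})$, provides such an approximation.

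The key idea for keeping the arithmetic circuit small is to avoid storing one precomputed constant per factor of $H_k$. Instead I would use a single dyadic constant $\alpha$ approximating $e^{-1/k}$, produced by Brent's theorem (Lemma~\ref{332}) with precision $2^{-k^3}$ in time $\Oo(\M(k^3)\log k)$. The circuit would then (i) form $w = (x-y)/2^{m+1}$, (ii) compute the powers $\alpha^2,\alpha^3,\ldots,\alpha^{k^2-1}$ by $k^2-2$ successive multiplications, (iii) form the linear factors $(w+\alpha^j)$ and $(-w+\alpha^j)$ and combine them by two cascades of multiplications to obtain $H_k(w)$ and $H_k(-w)$, (iv) compute $w\cdot(H_k(w)-H_k(-w))/(H_k(w)+H_k(-w))$ and rescale by $2^{m+1}$, and (v) combine with $x+y$ and a division by $2$ to output both $\max$ and $\min$.

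Counting the gates gives $\Oo(k^2) = \Oo(N^2)$; the only constants of nontrivial bit-size are $\alpha$ (size $\Oo(N^3)$) together with $2^{m+1}$ and $1/2^{m+1}$ (size $\Oo(N)$), so the encoded evaluation program has total size $\Oo(N^3)$. For the magnitude, one checks that $\abs{\alpha^j} \leq 1$, $\abs{w\pm\alpha^j} \leq 2$ and $\abs{H_k(\pm w)} \leq 2^{k^2}$, while the only genuinely large value at any gate is $1/Q_k(w^2)$, which by Theorem~\ref{331} is bounded by roughly $e^{k^3/2}/2$, yielding magnitude $\Oo(N^3)$. Constructing the circuit takes time $\Oo(N^3)$, dominated by Brent's computation of $\alpha$ and by writing out its $\Oo(N^3)$-bit description.

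The main technical point will be the error analysis: we must check that replacing the exact Newman constants $e^{-j/k}$ by iterated powers $\alpha^j$ does not spoil the bound. Since $\alpha, e^{-1/k} \in (0,1)$ and $\abs{\alpha-e^{-1/k}} \leq 2^{-k^3}$, a mean-value argument gives $\abs{\alpha^j-e^{-j/k}} \leq j\cdot 2^{-k^3}(1+o(1)) \leq k^2\cdot 2^{-k^3}$ for $j \leq k^2-1$; by taking $\alpha$ with a few more bits one may assume $\abs{\alpha^j-e^{-j/k}} \leq 2^{-k^3}$. This is exactly the hypothesis under which the proof of Lemma~\ref{333} shows that the perturbed rational function approximates $\abs{w}$ on $[-1,1]$ to within $2^{-(k+1)}$; after rescaling by $2^{m+1}$ and combining with $x+y$, the resulting circuit approximates $\max(x,y)$ and $\min(x,y)$ to within $1/2^n$ on $[-2^m,2^m]^2$.
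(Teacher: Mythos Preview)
Your argument is essentially the paper's: reduce $\max,\min$ to $|x|$ via $(x+y\pm|x-y|)/2$, rescale to $[-1,1]$, apply Newman with parameter $k=\Oo(N)$, and---the key economy---build $H_k$ from successive powers of a \emph{single} approximation to $e^{-1/k}$ rather than storing $k^2$ independent constants. The one place you diverge is in how that base value is produced: you precompute a dyadic $\alpha$ externally via Brent's theorem and hard-wire it as one large constant, whereas the paper has the arithmetic circuit itself compute $d_k:=F_{k^3}(1/k)=\sum_{j\le k^3}(-1)^j/(j!\,k^j)$ via a Taylor-series subcircuit of $\Oo(k^3)$ gates. This matters only for the stated construction-time bound: Brent on precision $2^{-k^3}$ runs in $\Oo(\M(k^3)\log k)$, which exceeds $\Oo(N^3)$ by polylog factors that you have silently dropped; the in-circuit Taylor approach is precisely what lets the paper claim a clean $\DTI(\Oo(N^3))$ (and is the point of its parenthetical contrast with the external-dyadic method of Lemma~\ref{333}). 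Otherwise your size, magnitude, and error analyses are correct and match the paper's.
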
 

\begin{proof} 
The proof of \lemref{333} is repeated. The representation of polynomials 
approximating $ P_n $ and $ Q_n $ by means of circuits is more economical in space. First of all, we need to construct a circuit that calculates an 
approximation of $ e^{-1/n} $ to within $1/2^{n^3}$. 
Consider the Taylor expansion 
\[
F_m(z) = \sum_{0\leq k\leq m} \frac{(-1)^k}{k!} z^k
\] 
which approaches $ e^{z}$ to within $1/2^{m+2}$ if $m\geq 5$ and $0\leq z\leq 1$. 
Here, we can simply give a circuit which calculates $d_n = F_{n^3}(1/n)$, 
whose size and calculation time are a priori in $ \Oo(n^3) $ (whereas we 
had to explicitly give a dyadic approximation $c_{n,1}$ of $d_n$). 
We then construct a circuit that calculates a good approximation of $H_n(x) = \prod_{1 \leq k < n^2} (x+e^{-k/n})$ in the form $h_n(x) = (x+d_n)(x+d_n^2)\cdots(x+d_n^{n^2-1})$.
This requires a calculation time (and circuit size) in $\Oo(n^2)$. 
This means that the arithmetic circuit which calculates an approximation to within 
$1/2^n$ of $\abs{x}$ on $[0,1]$ 
is calculated in time $\Oo(n^3)$. 
 
\noindent It is also easy to see that the coefficient of magnitude is bounded by the size of $1/h_n(0)$, i.e.\ also an $\Oo(n^3)$. 	 
\end{proof}

Note that the magnitude coefficient can hardly be improved. 
On the other hand, it does not seem impossible that $ d_n $ could be 
 calculated by a circuit of smaller size than the naive size in $\Oo(n^3)$.

\begin{proposition} \label{427}
The identity of $\czu$ from $\csl$ to $\caf$ is uniformly of class $\p$, in fact of class $\DTI (N^4)$. More precisely, we have a discrete function which, starting from an element~$g$ of $\ysl$ of size 
$\sz(g) = t $ and depth $ \depth(g) = p $, and an integer 
$n \in \NN_1$, calculates in time $ \Oo( t (n+p)^3 ) $ an element 
$f = (\alpha,M) \in \yaf$

\noindent 
\spa of size $ \sz(\alpha) = \sz(g) \Oo((n+p)^3) $, 

\noindent 
\spa of depth $ \depth(\alpha) = \Oo(p(n+p)^3) $, 

\noindent 
\spa with magnitude coefficient $ M = \Mag(\alpha) = \Oo((n+p)^3) $, 

\noindent 
and such that $\abS{\wi{f}(x)-\wi{g}(x)}\; \leq 2^{-n} \; \; \forall x \in [0,1]$. 
\end{proposition}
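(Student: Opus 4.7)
The plan is to replace each gate of the binary semilinear circuit $g$ by an equivalent (or approximating) sub-circuit built from arithmetic gates, accumulating errors carefully. The affine gates are trivial: $x+y$ is already arithmetic, while $x \mapsto 2x$, $x \mapsto x/2$, $x \mapsto -x$ are implemented using the constants $2$, $1/2$, $-1$ and one addition or multiplication. The only nontrivial gates are $\max$ and $\min$, for which I will invoke Proposition~\ref{426}, which produces, for prescribed precision $n'$ and range bound $m'$, an arithmetic circuit approximating $\max$ (resp.\ $\min$) on $[-2^{m'},2^{m'}]^2$ to within $1/2^{n'}$, of size, depth and magnitude $\Oo((n'+m')^3)$ and computable in time $\Oo((n'+m')^3)$.

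First I would bound the intermediate values of $g$. By an easy induction on the depth of a gate $\pi$, its value at any $x \in [0,1]$ is bounded in absolute value by $2^{\depth(\pi)} \leq 2^p$; so at every gate I only need $\max$/$\min$ approximations valid on the square $[-2^p,2^p]^2$. Next I would analyse error propagation through the substituted circuit. Call $\varepsilon_\pi$ the maximum error between the exact semilinear value at $\pi$ and the value produced by the substituted arithmetic sub-circuit. For affine gates the error transforms as $\varepsilon_\pi \leq 2(\max_{\pi'<\pi}\varepsilon_{\pi'})$ at worst; for an approximated $\max$/$\min$ gate, using $|\max(a,b)-\max(a',b')| \leq |a-a'|+|b-b'|$ combined with the $1/2^{n'}$ approximation of $\max$ itself, one gets $\varepsilon_\pi \leq 2(\max_{\pi'<\pi}\varepsilon_{\pi'}) + 1/2^{n'}$. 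A straightforward induction on depth then yields a final error bounded by $\Oo(2^p \cdot 2^{-n'})$. Choosing $n' = n + cp$ for a small explicit constant $c$, the total error is at most $2^{-n}$.

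With this choice, $n' + p = \Oo(n+p) = \Oo(N)$ (where $N$ is the size of the input $(g,n)$), so each substituted $\max$/$\min$ sub-circuit has size, magnitude and construction time $\Oo((n+p)^3)$, and depth $\Oo((n+p)^3)$. Summing over the $t = \sz(g)$ gates of $g$ gives total size $\Oo(t(n+p)^3)$ and construction time $\Oo(t(n+p)^3)$. For depth, one must note that sub-circuits substituted at different depths of $g$ stack: the longest path in the substituted circuit traverses at most $p$ layers, each of depth $\Oo((n+p)^3)$, giving depth $\Oo(p(n+p)^3)$. The global magnitude is the maximum over sub-circuits, which is $\Oo((n+p)^3)$; and the correctness of $M$ as a magnitude bound on the full circuit follows from the bound $2^p$ on semilinear values combined with the magnitude bound on each sub-circuit.

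The main obstacle is the error propagation analysis in the second paragraph: one must check that the bound $\varepsilon_\pi \leq 2\cdot(\text{predecessor error}) + 1/2^{n'}$ really holds at an approximated $\max$/$\min$ gate (this uses only the 1-Lipschitz property of the exact $\max$ and $\min$ in each argument), and that iterating this recurrence over $p$ layers gives the announced $\Oo(2^p \cdot 2^{-n'})$ rather than something worse. Once this is nailed down, the choice $n' = n+cp$ with an explicit small $c$ makes all the complexity bounds fall into place, and the announced $\DTI(N^4)$ bound follows since $t(n+p)^3 = \Oo(N^4)$.
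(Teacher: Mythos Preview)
Your proposal is correct and follows essentially the same approach as the paper: bound all intermediate semilinear values by $2^p$, replace each $\max$/$\min$ gate by the arithmetic sub-circuit of Proposition~\ref{426} with precision $n' = n + \Oo(p)$ on the square $[-2^p,2^p]^2$, and read off the size, depth and magnitude bounds. The paper's own proof is terser---it simply states that simulating each $\max$/$\min$ gate to within $2^{-(n+p)}$ suffices---whereas you spell out the error-propagation recurrence $\varepsilon_\pi \le 2\varepsilon_{\pi-1} + 2^{-n'}$ and solve it; both arrive at the same choice $n' = n+p$ (your constant $c$ can be taken equal to~$1$). One small point worth tightening: the inputs to each substituted $\max$/$\min$ sub-circuit are the \emph{approximated} values, not the exact semilinear ones, so strictly speaking you should take $m' = p+1$ (or argue that the accumulated error keeps the inputs inside $[-2^{p+1},2^{p+1}]$), but this changes nothing in the $\Oo((n+p)^3)$ bounds.
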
 

\begin{proof}
Let $ p = \depth(g). $ The gates of $ g $ for an input $x \in [0,1]$ take values in the interval 
$[-2^p,2^p]$. 
We want to simulate to within $2^{-n}$ the circuit $g$ by an arithmetic circuit. 
 Simply simulate the $\max$ and $\min$ gates of the 
semilinear circuit to within $2^{-(n+p)}$ on the interval $[-2^p,2^p]$. 
According to Proposition \ref{426}, each simulation requires an 
arithmetic circuit (with division), all the characteristics of which are 
bounded by an $\Oo(n+p)^3$.	
\end{proof}

We move on to the simulation of an arithmetic circuit with divisions by a polynomial arithmetic circuit.

\begin{proposition} \label{428}
The identity of $\czu$ from $\caf$ to $\capo$ is uniformly of class $\p$, in fact of class $\DTI (N^2)$. 
More precisely, we have a discrete function which, starting from an element 
 $f=(\alpha,M)$ of $\yaf$ and an integer 
$n\in\NN_1$, calculates in time $\Oo(N^2)$ an arithmetic polynomial circuit $(\Gamma, M') \in \yap $ ($N$ is the size of the input $((\alpha,M),n)$ 

\noindent 
--- of size $ \sz(G) = \Oo( \sz(\alpha) (n + M) ) $, 

\noindent 
--- of depth $ \depth(G) = \Oo( \depth(\alpha) (n + M) ) $, 

\noindent 
--- of magnitude $ \Mag(G) = M'= \Oo( n + M ) $, 

\noindent 
--- and such that $\abS{ \wi{f}(x) - \wi{g}(x) }\;  \leq 2^{-n}$ 
for all $x \in [0,1]$.
\end{proposition}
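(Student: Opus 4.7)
My plan is to substitute each inversion gate $1/u$ in $\alpha$ by a division-free subcircuit that approximates $1/u$ by Newton iteration. A key preliminary observation is that the magnitude hypothesis on $(\alpha,M)$ automatically forces $2^{-M} \leq |u| \leq 2^M$ at every inversion gate: the upper bound is the hypothesis applied to $u$, and the lower bound is the hypothesis applied to $1/u$. In particular $u$ does not vanish on $[0,1]$.

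For each inversion gate I would substitute the following subcircuit. Compute $w := u^2$, which is positive and satisfies $w \in [2^{-2M}, 2^{2M}]$. Starting from the constant $v_0 := 2^{-2M}$, iterate
\[
v_{k+1} := v_k(2 - w v_k) \qquad (k = 0,1,\ldots,K-1)
\]
and output $\widetilde v := u \cdot v_K$. The identity $1 - wv_{k+1} = (1 - wv_k)^2$, together with $1 - wv_0 \in [0,\ 1-2^{-4M}]$, yields $|1 - wv_K| \leq (1 - 2^{-4M})^{2^K} \leq \exp(-2^{K-4M})$. Hence $|1/u - \widetilde v| = |1-wv_K|/|u| \leq 2^M|1-wv_K|$, and $K = \Oo(M + \log n')$ iterations suffice to bring $|1/u - \widetilde v|$ below $2^{-n'}$. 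Note that $\widetilde v$ is computed purely from $+$, $-$ and $\times$ gates plus one dyadic constant, so the enlarged circuit $G$ is polynomial.

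Next I would propagate the single-gate errors through the entire circuit. Additions, subtractions and negations amplify absolute error by at most $2$, and a multiplication gate amplifies it by at most $2\cdot 2^M$, because each operand is bounded by $2^M$ in $\alpha$ and the approximations in $G$ are kept to within $1$. Over $\depth(\alpha)$ levels the amplification is at most $2^{\Oo(\depth(\alpha)\cdot M)}$, so the choice $n' = n + \Oo(\depth(\alpha)\cdot M) = \Oo(n + M\cdot \sz(\alpha))$ ensures that the output of $G$ approximates $\widetilde f(x)$ to within $2^{-n}$ uniformly in $x \in [0,1]$; since $\log n' = \Oo(\log n + \log N)$, this only requires $K = \Oo(M+n)$ iterations per inversion gate. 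Consequently each inversion gate is replaced by a subcircuit of size and depth $\Oo(M+n)$, giving $\sz(G) = \Oo(\sz(\alpha)(n+M))$ and $\depth(G) = \Oo(\depth(\alpha)(n+M))$; the iterates $v_k$ stay bounded by $\Oo(2^{2M})$ and the values of $G$ at gates corresponding to gates of $\alpha$ stay within a factor $2$ of those of $\alpha$, so $M' = \Mag(G) = \Oo(M+n)$; the substitution itself is syntactic and linear in the output size, hence runs in time $\Oo(N^2)$.

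The main obstacle I expect is the combined error analysis. The Newton precision $n'$ imposed on each subcircuit must simultaneously absorb all downstream error amplifications and keep the values computed by $G$ close enough to those computed by $\alpha$ that the lower bound $|u|\geq 2^{-M}$ remains valid when we evaluate the subsequent inversion subcircuits in $G$. This forces an inductive argument along the topological order of gates of $\alpha$ in which the Newton precision, the bounds on the iterates $v_k$, and the magnitude bound for $G$ are all established together.
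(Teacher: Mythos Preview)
Your approach is essentially the paper's: replace each inversion gate by a Newton-iteration subcircuit, exactly as in the paper's Lemma~\ref{429}, and then argue that the resulting division-free circuit $G$ has the claimed size, depth and magnitude bounds. The paper's write-up is terser than yours---it states the lemma and leaves the global error propagation implicit---so your more explicit bookkeeping along the topological order of gates is a welcome addition rather than a departure.

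Two small remarks. First, your squaring trick $w:=u^2$ to handle a possible negative sign of $u$ is a genuine improvement over the paper's presentation: Lemma~\ref{429} is stated only for $z\in[2^{-m},2^m]$ and the paper never says what to do when the gate value is negative (although $u$, being a nonvanishing rational function on $[0,1]$, has constant sign). An equally cheap alternative is to take $y_0:=u\cdot 2^{-2M}$ as the Newton seed; then $1-uy_0=1-u^2 2^{-2M}\in[0,1-2^{-4M}]$ and no squaring is needed. Second, your claim that $K=\Oo(M+n)$ iterations suffice is slightly loose as written: what you actually get is $K=\Oo(M+\log n')$ with $n'=\Oo(n+M\cdot\depth(\alpha))$, hence $K=\Oo(M+\log N)$, and the term $\log\depth(\alpha)$ in $\log n'$ is not obviously $\Oo(M+n)$. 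This does not affect the headline $\DTI(N^2)$ conclusion, and the same imprecision is arguably present in the paper's stated size bound for $G$; just be aware of it when you write the final version.
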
 

The problem only arises at the level of the ``pass to inverse'' gates. 
We therefore try to simulate them using polynomial circuits while 
keeping the magnitude well bounded.

\begin{lemma} \label{429}
The map $ x \mapsto 1/x $ on the interval $[2^{-m}, 2^m]$ can be 
realised to within $ 1/2^n $ by a polynomial circuit of 
magnitude $ \Oo(m)$, of size $ \Oo(m+n) $ and which is constructed in 
linear time.
\end{lemma}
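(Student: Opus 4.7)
The plan is to realise $1/x$ by Newton iteration, $y_{k+1} = 2y_k - xy_k^2$, which uses only sums, differences, products and the constant $1/2^m$ --- hence fits a polynomial arithmetic circuit. I start from $y_0 = 2^{-m}$, a rational constant allowed by the circuit model. Writing $e_k := 1 - xy_k$, the elementary identity $1 - xy_{k+1} = 1 - xy_k(2 - xy_k) = (1 - xy_k)^2$ gives $e_{k+1} = e_k^2$, so $e_k = e_0^{2^k}$.

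For $x \in [2^{-m}, 2^m]$ we have $xy_0 \in [2^{-2m}, 1]$, so $e_0 \in [0, 1 - 2^{-2m}]$. Since $|y_N - 1/x| = e_N/x$, the worst case $x = 2^{-m}$ demands $e_N \leq 2^{-(m+n)}$. Using $1 - u \leq e^{-u}$ I get $e_N \leq (1 - 2^{-2m})^{2^N} \leq \exp(-2^{N-2m})$, so choosing $N = 2m + \lceil \log_2((m+n)\ln 2) \rceil + 1$ makes this $\leq 2^{-(m+n)}$. This $N$ is $\Oo(m + \log n) \subseteq \Oo(m+n)$.

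The circuit is a chain of $N$ iteration blocks, each consisting of three arithmetic gates (compute $xy_k$, then $y_k \cdot xy_k$, then $2y_k - xy_k^2$), preceded by the input constant $1/2^m$. Its size and depth are both $\Oo(m+n)$ and its code is emitted in linear time by a trivial loop. The magnitude bound relies on the invariant $e_k \in [0,1]$ for every $k$, established by induction from $e_0 \in [0,1]$ and $e_{k+1} = e_k^2$: this forces $y_k = (1-e_k)/x \in (0, 2^m]$, $xy_k = 1 - e_k \in [0,1]$, $xy_k^2 \leq y_k \leq 2^m$ and $2y_k \leq 2^{m+1}$, so every gate value stays in $[0, 2^{m+1}]$ and the magnitude is $\Oo(m)$.

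The main obstacle is the convergence analysis: although Newton iteration is famously quadratically convergent, here the initial relative error $e_0$ can be as close to $1$ as $1 - 2^{-2m}$, so a linear ``warm-up'' phase of length roughly $2m$ is needed before $e_k$ drops below $1/2$ and genuine quadratic convergence kicks in. The clean bound $e_N \leq \exp(-2^{N-2m})$ captures both phases in one formula and shows that the warm-up is absorbed into an $\Oo(m + \log n)$ total iteration count --- the tight quantity which simultaneously controls size, depth, construction time and, via the identity $y_k = (1-e_k)/x$, the magnitude.
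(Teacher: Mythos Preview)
Your proof is correct and follows the same approach as the paper: Newton iteration $y_{k+1}=y_k(2-xy_k)$ starting from $y_0=2^{-m}$, with the same quadratic error recurrence $e_{k+1}=e_k^2$. Your analysis is in fact more careful than the paper's---you make explicit the warm-up phase and obtain the sharper iteration count $2m+\Oo(\log(m+n))$ versus the paper's $3m+\log(m+n)$, and your gate-by-gate invariant $y_k\in(0,2^m]$, $xy_k\in[0,1]$ cleanly justifies the magnitude bound $\Oo(m)$ that the paper states in the lemma but only loosely addresses in its proof.
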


\begin{proof}
Like Hoover, we use Newton's method to calculate 
the inverse of a real $z$ to within $2^{-n}$ in a reasonable number of additions and multiplications w.r.t.\ to $n$.

\smallskip \noindent 
{\bf Newton's method} (for calculating the inverse of $z$)

\noindent 
 For $2^{-m} < z < 2^m$ we define 
\[
C(x) = \left\{
\begin{array}{l} 
y_0 = 2^{-m} 
\\
y_{i+1} = y_i (2- zy_i) 
\end{array}
\right.
\]
It is easy to check that, for $i \geq 3m + \log(m+n)$, we have
\[
\abs{ z^{-1} - y_i }\;  \leq 2^{-n}.
\]
So for the input $(n,m) \in \NN_1 \times \NN_1$, the application of Newton's method up to iteration $i = 3m + \log(m+n)$ is represented by a polynomial circuit of size $\Oo(m+n)$ and magnitude $\Oo(m+n)$. It is also easy to check that the circuit is constructed in linear time.
\end{proof}

\begin{proposition} \label{4210}
The identity of $\czu$ from $\capo$ to $\ckf$ is uniformly of class $\p$, in fact of class $\DRT(\Lin, \Oo(N^4))$.
\end{proposition}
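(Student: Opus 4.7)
Given an input $(f, n)$ with $f = (\Gamma, M) \in \yap$ and $n \in \NN_1$, I plan to construct a Ko-Friedman code $g = (Pr, n', m, T) \in \ykf$ satisfying $\Norme{\wi g - \wi f}_\infty \leq 1/2^n$, and show that the construction $\varphi\colon (f, n) \mapsto g$ is in polynomial time. The program $Pr$ will be a textual transcription of the polynomial arithmetic circuit $\Gamma$ into a TM routine performing fixed-point arithmetic on inputs $a \in \DD_{m,[0,1]}$, with a rounding to $\DD_{n'}$ at the end.

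The first ingredient is a modulus of uniform continuity for $\wi f$: the induction on gate depth in the proof of Proposition~\ref{4111} never uses the inverse gates, so it applies verbatim to $\Gamma$ and gives $\mu(k) = k + 2Mp$, where $p = \depth(\Gamma)$. I fix $n' = n + 2$, $m = n + 4 + 2Mp$, and I run $\Gamma$ on each $a \in \DD_{m,[0,1]}$ with fixed-point intermediate precision $\pi = \Oo(n + Mp)$, chosen by the same error-propagation analysis (each truncation of $1/2^\pi$ at a gate is amplified by at most $2^{M+1}$ per multiplicative layer) so that the raw output $c_a$ differs from $\wi f(a)$ by at most $\varepsilon = 1/2^{n+5}$; the program $Pr$ then rounds $c_a$ to the nearest element $r_a \in \DD_{n'}$.

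To see that $g = (Pr, n', m, T)$ is a legitimate element of $\ykf$, the non-trivial condition to check is that for consecutive grid points $a, b \in \DD_{m,[0,1]}$ the outputs $r_a, r_b$ lie at most $1/2^{n'}$ apart. This follows from
\[
|c_a - c_b| \;\leq\; 2\varepsilon + |\wi f(a) - \wi f(b)| \;\leq\; 2/2^{n+5} + 1/2^{n+4} \;=\; 1/2^{n+3} \;<\; 1/2^{n'},
\]
since roundings to nearest of two reals less than $1/2^{n'}$ apart are themselves at most $1/2^{n'}$ apart. A further triangle inequality involving the piecewise linear interpolation performed by $\ckf$ between grid values yields $\Norme{\wi g - \wi f}_\infty \leq 1/2^n$.

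The construction of $Pr$ from $\Gamma$ is a linear-time textual copy with a fixed-length arithmetic subroutine inserted at each gate. With $s = \sz(\Gamma)$ and $s, M, p, n \leq N := \flo f + \Lg n$, each intermediate value has bit size $M + \pi = \Oo(n + Mp) = \Oo(N^2)$, each gate costs $\Oo(\M(N^2))$ elementary steps, so one may take $T = \Oo(s \cdot \M(N^2)) = \Oo(N^4)$ with the multiplication bound used throughout the paper. Since $T$ is written in unary, this same bound controls the output size, giving the class $\DRT(\Lin, \Oo(N^4))$. The only delicate point is the coordination of the precision parameters $(\pi, m, n')$ so that both the step-$1/2^{n'}$ correctness condition of $\ykf$ and the target approximation quality $1/2^n$ are preserved after the final rounding; everything else is routine bookkeeping for a TM that runs a fixed-point evaluator of a polynomial arithmetic circuit.
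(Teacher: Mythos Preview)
Your approach is correct and is essentially the paper's proof unfolded. The paper simply invokes Theorem~\ref{417} (the universal property of $\ckf$: any presentation whose family of rational points is uniformly of class $\p$ maps in class $\p$ into $\ckf$) together with Proposition~\ref{4111} (the family attached to $\yaf$, hence also $\yap$, is uniformly of class $\p$), and then says that a ``precise reading'' of those two proofs, plus the Nota bene on management time after \ref{4111}, yields the bound $\DRT(\Lin,\Oo(N^4))$. What you have written is exactly that precise reading: you take the evaluation procedure of Proposition~\ref{4111} (circuit evaluation with truncated fixed-point arithmetic and modulus $\mu(k)=k+2Mp$) and package it as the program $Pr$ in a KF quadruplet, which is precisely the recipe of Theorem~\ref{417}. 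Your explicit check of the consecutive-grid-point condition for $\ykf$ is a detail the paper leaves implicit in the phrase ``it is clear that the rational point \ldots\ approximates $\wi f$ to within $1/2^n$'' in the proof of~\ref{417}.

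One small remark on the complexity bookkeeping: your line $T=\Oo(s\cdot\M(N^2))=\Oo(N^4)$ is a bit loose, since with fast multiplication $s\M(N^2)=\Oo(N^3\log N\log\log N)$ and with naive multiplication it is $\Oo(N^5)$; the paper's $\Oo(N^4)$ actually comes from the \emph{management time} term discussed in the Nota bene after~\ref{4111}, which dominates the arithmetic cost when fast multiplication is used. Also note that since $T$ is written in unary and governs the output size, the ``$\Lin$'' output bound in $\DRT(\Lin,\Oo(N^4))$ is already a point the paper treats rather casually; your handling of it is no worse than theirs.
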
 

\begin{proof}
The fact of being of class $\p$ follows from \thref{417} and Proposition \ref{4111}. A precise reading of the proofs of \thref{417} and Proposition \ref{4111} gives the result $\DRT(\Lin,\Oo(N^4))$ (taking into account the Nota bene after \ref{4111}). 
\end{proof}

Summarising the previous results we obtain the following theorem.

\begin{theorem} \label{4211}
The five presentations $\ckf$, $\cbo$, $\csl$, $\caf$ and $\capo$ of $\czu$ are $\p$-equivalent.
\end{theorem}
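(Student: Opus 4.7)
The plan is simply to assemble the five propositions just proved into a closed cycle of uniformly $\p$ identity maps, and then extract $\p$-equivalence between any pair from that cycle. First, I would recall Proposition~\ref{226}, which says that the composite of two maps uniformly of class $\p$ is uniformly of class $\p$. Applied to the arrows
\[
\ckf \longrightarrow \cbo \longrightarrow \csl \longrightarrow \caf \longrightarrow \capo \longrightarrow \ckf
\]
furnished respectively by Propositions~\ref{421}, \ref{424}, \ref{427}, \ref{428} and \ref{4210}, this immediately yields that the identity of $\czu$ going from any one of these five presentations to any other one (following the cycle in the forward direction) is uniformly of class $\p$.

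Next, I would invoke the definition of $\ca$-equivalence of two presentations given in Definition~\ref{221}: two rational presentations of the same complete metric space are $\p$-equivalent precisely when the identity of the space is uniformly of class $\p$ when viewed as a map in both directions. For any ordered pair $(\sC_i,\sC_j)$ chosen from $\{\ckf,\cbo,\csl,\caf,\capo\}$, the cycle above contains a directed path from $\sC_i$ to $\sC_j$ and another from $\sC_j$ to $\sC_i$; composing the corresponding arrows gives both identities uniformly of class $\p$. Hence the five presentations are pairwise $\p$-equivalent.

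There is essentially no obstacle left at this stage, since the substantive work has already been done in the preceding propositions: the main difficulties were the simulation of a TM by a boolean circuit via Stern's construction together with \lemref{131} (Proposition~\ref{421}), the simulation of a boolean circuit by a binary semilinear circuit using Hoover's three-shifts trick to bypass the discontinuity of the bit-extraction function (Proposition~\ref{424}), the approximation of $\max$ and $\min$ by arithmetic circuits via Newman's construction with a magnitude control (Propositions~\ref{426} and \ref{427}), and finally the replacement of inverse gates by Newton's iteration (Proposition~\ref{428} via \lemref{429}). The closing proposition~\ref{4210} is obtained from the universal property \thref{417} combined with the complexity bound in Proposition~\ref{4111}, so the cycle is indeed complete, and \thref{4211} is a direct consequence of Proposition~\ref{226}.
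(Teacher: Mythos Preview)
Your proposal is correct and is exactly the approach of the paper: the theorem is stated there as a direct summary of Propositions~\ref{421}, \ref{424}, \ref{427}, \ref{428} and \ref{4210}, assembled into the cycle displayed in Figure~\ref{fi421}, with $\p$-equivalence following by composition (Proposition~\ref{226}) and Definition~\ref{221}. Your recap of where the substantive work lies in each of the five steps is accurate as well.
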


\begin{notation} \label{4212}
As long as we are at a sufficiently high level of complexity to make the comparison theorem valid (in particular the class $\p$ suffices) there is no reason to make any difference between the five presentations $\ckf$, $\cbo$, $\csl$, $\caf$ and $\capo$ of $\czu$. 
Consequently, from now on the notation $\czu$ will mean that we consider the space $\czu$ with the calculability structure~$\csl$.
\end{notation}

\subsection{Complexity of the norm problem}\label{subsec43}
We know that determining the maximum, on an interval $[0,1]$ of a function computable in polynomial time $f\colon \NN \to \{ 0,1 \}$ is more or less the same thing as the most classic $\np$-complete problem: SAT.

It is therefore not surprising to find as an $\np$-complete problem a problem related to the calculation of the norm for a continuous function. First we need to formulate the norm problem attached to a given rational presentation of the space $\czu$ in a sufficiently precise and invariant way. 
 
\begin{definition}\label{431}~\\
We call ``the norm problem'', relative to a presentation $\ca_1 = (Y_1, \delta_1, \eta_1)$ of $\czu$) the problem:

\noindent 
--- Solve \emph{approximately} the question ``$\, a \leq \norme{f}_{\infty}\;?\,$'' in the presentation $\ca_1$ of $\czu$.

\noindent 
The precise formulation of this problem is as follows:

\noindent 
--- Inputs: $(f,a,n) \in Y_1 \times \DD \times \NN_1$ 

\noindent 
--- Output: Correctly provide one of the two answers:

\noindent 
~~~~~~~~~~~~--- I give you an $x \in \DD$ such that 
$\abs{f(x)} \geq a - 1/2^n$, i.e.\  $\norme{f}_{\infty}\geq a - 1/2^n$,

\noindent 
~~~~~~~~~~~~--- there is no $x \in \DD$ satisfying 
$\abs{f(x)} \geq a$,  i.e.\  $\norme{f}_{\infty}\leq a$.
\end{definition}

This definition is justified by the following lemma.

\begin{lemma} \label{432}
For two polynomially equivalent rational presentations $\ca_1$ and $\ca_2$ of $\czu$, the corresponding norm problems are also polynomially equivalent.
\end{lemma}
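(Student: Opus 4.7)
My plan is to show each direction of the polynomial reduction separately and by symmetry do just one. Let $\ca_1=(Y_1,\delta_1,\eta_1)$ and $\ca_2=(Y_2,\delta_2,\eta_2)$ be the two $\p$-equivalent presentations of $\czu$. By Definition \ref{221} (or its \emph{bis} form), the $\p$-equivalence supplies polynomial-time functions
\[
\varphi_{12}\colon Y_1\times\NN_1\to Y_2 \qquad\hbox{and}\qquad \varphi_{21}\colon Y_2\times\NN_1\to Y_1
\]
such that $\norme{\,\wi{f}-\wi{\varphi_{12}(f,k)}\,}_{\infty}\leq 1/2^k$ and symmetrically. I will use $\varphi_{12}$ to reduce the norm problem for $\ca_1$ to that of $\ca_2$.

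Given an instance $(f,a,n)\in Y_1\times\DD\times\NN_1$ for $\ca_1$, the reduction first computes $g:=\varphi_{12}(f,n+2)\in Y_2$, which takes polynomial time in $\flo f+\flo a+n$ and yields $\norme{\,\wi f-\wi g\,}_\infty\leq 1/2^{n+2}$. It then calls the oracle for the $\ca_2$-norm problem on the instance $(g,\,a-1/2^{n+2},\,n+2)$, which is again of polynomially bounded size. Finally it translates the oracle answer back as follows: if the oracle returns a dyadic $x$ with $\abS{\wi g(x)}\geq (a-1/2^{n+2})-1/2^{n+2}$, return the same $x$; if the oracle returns ``$\norme{\wi g}_\infty\leq a-1/2^{n+2}$'', return ``$\norme{\wi f}_\infty\leq a$''.

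The correctness check is a short triangle-inequality computation. In the first case,
\[
\abS{\wi f(x)}\;\geq\;\abS{\wi g(x)}-\norme{\,\wi f-\wi g\,}_\infty\;\geq\;a-\tfrac{2}{2^{n+2}}-\tfrac{1}{2^{n+2}}\;=\;a-\tfrac{3}{2^{n+2}}\;>\;a-\tfrac{1}{2^n},
\]
so the answer of type (i) is valid. In the second case,
\[
\norme{\wi f}_\infty\;\leq\;\norme{\wi g}_\infty+\norme{\,\wi f-\wi g\,}_\infty\;\leq\;\bigl(a-\tfrac{1}{2^{n+2}}\bigr)+\tfrac{1}{2^{n+2}}\;=\;a,
\]
so the answer of type (ii) is valid. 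The whole reduction (preparation of the query, single oracle call, trivial translation of the answer) runs in polynomial time. The reverse reduction is obtained in exactly the same way using $\varphi_{21}$.

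There is no real obstacle here; the only point that requires care is the bookkeeping of the three error budgets (approximation error $1/2^{n+2}$ between $\wi f$ and $\wi g$, slackness $1/2^{n+2}$ on the threshold $a$, and resolution $1/2^{n+2}$ of the oracle answer), chosen so that $3/2^{n+2}<1/2^n$ in case (i) and so that the two halves of the threshold shift absorb each other in case (ii). Any triple summing to less than $1/2^n$ would do; the choice above is the most symmetric one.
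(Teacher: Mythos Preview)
Your proof is correct and follows essentially the same approach as the paper: approximate $f$ by $g$ in the other presentation to precision $1/2^{n+2}$, shift the threshold $a$ slightly, and call the oracle with increased precision, then recover the answer by the triangle inequality. The only difference is cosmetic bookkeeping---the paper shifts the threshold by $1/2^{n+1}$ rather than your $1/2^{n+2}$---but the idea and the verification are identical.
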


\begin{proof}
The transformation from the problem corresponding to one presentation $\ca_1$ to the problem corresponding to another presentation $\ca_2$ is carried out by an algorithm with the same complexity as the algorithm used to present the identity function between $\ca_1$ and $\ca_2$. For the data $(f,a,n) \in Y_1 \times \DD \times \NN_1$, we look for $g \in Y_2$ such that $\norme{ f-g }_{\infty} \leq 2^{-(n+2)}$, then solve the problem with the inputs $(g,a-1/2^{n+1}, n+2)$. 

\noindent 
If we find $x \in \DD$ such that $\abs{g(x)}  \geq a - 2^{-(n+1)}- 2^{-(n+2)}$, then $\abs{f(x)}  \geq a - 2^{-n}$. 

\noindent 
If we declare a forfeit, it means that there is no $x \in \DD$ such that $\abs{g(x)}\;  \geq a - 2^{-(n+1)}$. 
A fortiori, there is no $x \in \DD$ such that $\abs{f(x)} \geq a$. 
\end{proof}

\begin{theorem} \label{433}~\\
The norm problem is $\np$-complete for the presentations $\ckf$, $\cbo$, $\csl$, $\caf$ and $\capo$.
\end{theorem}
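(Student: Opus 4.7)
By \lemref{432} and \thref{4211}, the norm problems for any two of the presentations $\ckf$, $\cbo$, $\csl$, $\caf$, $\capo$ are polynomially reducible to each other, so it suffices to establish the result for a single presentation; I choose $\cbo$, for which the link with \textsc{Sat} is the most direct. Throughout, I read Definition~\ref{431} as the decision problem ``does there exist $x \in \DD$ with $\abs{f(x)} \geq a - 1/2^n$?'', on which $\np$-completeness is well-defined; the associated witness-or-``no'' output is easy to extract from a non-deterministic accepting branch (or its absence).

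For membership in $\np$: let $(f,a,n) \in \ybc \times \DD \times \NN_1$ with $f = (C,n',m,k)$. Since $\wi f$ is piecewise linear on $[0,1]$ with all breakpoints inside $\DD_{m,[0,1]}$, its maximum is attained at a grid point. A non-deterministic procedure guesses $x \in \DD_{m,[0,1]}$ (a bit-string of length $m$, polynomial in the input size because $m$ is encoded in unary), evaluates $\wi f(x)$ in polynomial time by Proposition~\ref{419}, and accepts iff $\abs{\wi f(x)} \geq a - 1/2^n$. If some branch accepts, the witnessing $x$ is a valid output; if no branch accepts, no dyadic (hence no real, by the attainment at a grid point) satisfies $\abs{\wi f(x)} \geq a$, so the ``no'' alternative of Definition~\ref{431} is correct.

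For $\np$-hardness, reduce \textsc{Sat}. Given a boolean formula $\phi(x_1,\ldots,x_m)$, build in polynomial time a boolean circuit $C_\phi$ with $m$ input gates and two output gates: the first outputs the constant $0$ (the sign bit) and the second computes the truth value of $\phi$. Set $f = (C_\phi, 0, m, 1) \in \ybc$. The correctness condition of Definition~\ref{418} holds since the outputs lie in $\{0,1\}$ and the allowed tolerance is $1/2^0 = 1$. Under the encoding of Definition~\ref{418}, on an input $x \in \DD_{m,[0,1]}$ coding an assignment $\alpha$, the circuit outputs $+\,2^1 \cdot \phi(\alpha) \cdot 2^{-1} = \phi(\alpha) \in \{0,1\}$. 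Hence $\Norme{\wi f}_\infty = 1$ if $\phi$ is satisfiable and $0$ otherwise. Applied to $(f, 1, 1)$, the norm problem returns a dyadic $x$ with $\abs{\wi f(x)} \geq 1/2$ precisely when $\phi$ is satisfiable, and its ``no'' alternative precisely when $\phi$ is unsatisfiable, so any polynomial-time solver decides \textsc{Sat}.

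The only delicate point is the bookkeeping inside $\cbo$: choosing the control parameters $(n',m,k)$ so that the correctness condition holds automatically while the circuit still faithfully encodes the $\{0,1\}$-valued indicator of satisfying assignments. Once these are fixed, both directions are immediate.
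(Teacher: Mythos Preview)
Your proof is correct and follows essentially the same approach as the paper's: reduce to $\cbo$ via \lemref{432}, note that $\np$-membership is immediate from the piecewise-linear structure (the paper simply says ``immediate''), and for hardness encode a boolean circuit's satisfiability as the question whether $\Norme{\wi f}_\infty$ exceeds a threshold. The only differences are cosmetic: the paper uses the parameters $((\gamma,1,m,1),3/4,2)$ (so the effective threshold $a-1/2^n$ is again $1/2$) and reduces circuit-\textsc{Sat} directly rather than formula-\textsc{Sat}, while your choice $(n',k)=(0,1)$ with $(a,n)=(1,1)$ is an equally valid bookkeeping.
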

\begin{proof}
According to \lemref{432} it suffices to do the proof for the presentation $\cbo$. 
The $\np$ character of the problem is immediate. 
To see the $\np$-hardness, we consider the norm problem limited to the inputs $((\gamma,1,m,1), 3/4, 2)$ where $\gamma$ is an arbitrary boolean circuit with $m$ inputs and one output (the quadruplet is then obviously correct), and the answer yes corresponds to the satisfiability of the  circuit $\gamma$.
\end{proof}

We also have the following result, which is essentially negative,\footnote{Because $\p \neq \np$ !.} and therefore less interesting.

\begin{proposition} \label{434}
For the considered presentations, the norm function $f \mapsto \norme{f}_{\infty}$ from $\czu$ to $\R^{+}$ is uniformly of class $\p$ if and only if $\p = \np$.
\end{proposition}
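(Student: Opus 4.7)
The plan is to derive the equivalence directly from the $\np$-completeness of the norm problem (\thref{433}). By \lemref{432} combined with \thref{4211}, I may work within a single fixed presentation, say $\cbo$. A modulus of uniform continuity is not an issue, since $\norme{\cdot}_\infty$ is $1$-Lipschitz on $\czu$ (because $\big|\norme{f}_\infty - \norme{g}_\infty\big|\leq \norme{f-g}_\infty$), so $\mu(n)=n$ suffices in the sense of Definition~\ref{221}; everything reduces to approximating $\norme{\wi f}_\infty$ in polynomial time on the rational points $f \in \ybc$.

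For the direction ``norm uniformly of class $\p$'' $\Rightarrow \p = \np$, I would reuse the SAT reduction from the proof of \thref{433}. Given any boolean circuit $\gamma$ with $m$ inputs and one output, I form $f := (\gamma,1,m,1) \in \ybc$; the associated piecewise linear function $\wi f$ takes value $1$ at some grid point of $\DD_{m,[0,1]}$ when $\gamma$ is satisfiable and value $0$ at every grid point otherwise, so $\norme{\wi f}_\infty \in \{0,1\}$ with the value witnessing satisfiability. A polynomial-time $1/4$-approximation of $\norme{\wi f}_\infty$ (applied to the rational point $f$) therefore decides $\mathrm{SAT}$, whence $\np \subseteq \p$.

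For the converse, assume $\p = \np$. By \thref{433} the norm problem of Definition~\ref{431} lies in $\np$, hence by hypothesis in $\p$: one has in polynomial time a procedure that on input $(f,a,n)$ either exhibits $x\in\DD$ with $\abs{\wi f(x)} \geq a - 1/2^n$ or declares $\norme{\wi f}_\infty \leq a$. To compute $\norme{\wi f}_\infty$ within $1/2^n$ I would binary-search on $a$ in the a priori interval $[0,2^{k+1}]$ dictated by $f = (\gamma,n_0,m,k) \in \ybc$: $O(n+k)$ bisection queries of this solver, each step shrinking the window by a factor of two, pin $\norme{\wi f}_\infty$ down to precision $1/2^n$. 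The one mild subtlety worth flagging is that Definition~\ref{431} formulates the norm problem with a witness clause rather than as a pure decision problem; I would handle this via the standard self-reducibility of $\np$ (binary search for the bits of $x$ through the decision oracle), or equivalently by dichotomising $[0,1]$ and exploiting that every restriction $f|_{[c,d]}$ with $c,d\in\DD$ is itself encodable as a $\cbo$-rational point in polynomial time by pre-composing the boolean circuit with a trivial affine map.
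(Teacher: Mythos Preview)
Your proof is correct and follows the same approach as the paper: fix the presentation $\cbo$, invoke the SAT reduction of \thref{433} for one direction, and binary-search on $a$ (starting from the a priori bound $2^{k+1}$) with the polynomial-time norm-problem solver for the other. Your final worry about the witness clause is unnecessary, since the binary search only requires the \emph{decision} version ``$\exists x\in\DD$ with $|\wi f(x)|\ge a$?'', which is plainly in $\np$ and hence in $\p$ under the hypothesis---no self-reducibility or subinterval trick is needed.
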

\begin{proof}
It suffices to reason with the presentation $\cbo$. If the norm function is $\p$-computable,\footnote{We sometimes use the terminology $\p$-computable as an abbreviation for computable in polynomial time.} the norm problem is solved in polynomial time, and therefore $\p$ = $\np$.

\noindent 
If $\p = \np$, the norm problem is solved in polynomial time, which means that the norm can be calculated by dichotomy by initialising with the $2^k$ upper bound, until the accuracy of $1/2^q$ is obtained. This requires $k+q$ dichotomy steps. 
The whole calculation is in polynomial time on the input $(g,q) \in \ybc \times \NN_1$. 
\end{proof}

\begin{corollary} \label{435}
The identity of $\czu$ from $\ckf$ to $\crf$ is not computable in polynomial time, at least if $\p \neq \np$.
\end{corollary}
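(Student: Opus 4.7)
The plan is to argue by contradiction: if $\Id_\czu \colon \ckf \to \crf$ were uniformly of class $\p$, we would be able to solve the norm problem for $\ckf$ in polynomial time, contradicting Theorem~\ref{433} under the assumption $\p \neq \np$.

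More precisely, I would first recall that by Proposition~\ref{327}(a), the family $(\Norme{\wi{g}}_\infty)_{g \in \yrf}$ is of complexity $\p$, i.e.\ there is a polynomial-time algorithm which, given $g \in \yrf$ and $n \in \NN_1$, outputs a dyadic approximation of $\Norme{\wi{g}}_\infty$ to within $1/2^n$. Suppose now for contradiction that $\Id_\czu$ from $\ckf$ to $\crf$ is uniformly of class $\p$; this means there is a polynomial-time map $\Phi \colon \ykf \times \NN_1 \to \yrf$ such that $\Norme{\wi{f} - \wi{\Phi(f,n)}}_\infty \leq 1/2^n$ for all $(f,n) \in \ykf \times \NN_1$.

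The combination would give a polynomial-time procedure computing $\Norme{\wi{f}}_\infty$ to arbitrary precision for $f \in \ykf$: on input $(f,n)$, compute $g = \Phi(f,n+1)$ and then a $1/2^{n+1}$-approximation of $\Norme{\wi{g}}_\infty$; by the triangle inequality this gives a $1/2^n$-approximation of $\Norme{\wi{f}}_\infty$. Following the argument in Proposition~\ref{434}, this polynomial-time computability of the norm in the presentation $\ckf$ allows one to solve in polynomial time the associated norm problem of Definition~\ref{431} (answer ``$\Norme{\wi f}_\infty \geq a - 1/2^n$'' versus ``$\Norme{\wi f}_\infty \leq a$'' is decided by comparing $a$ with a sufficiently precise approximation of the norm, and one can exhibit an explicit dyadic witness $x$ by dichotomy on $[0,1]$ using the fact that the evaluation on $\DD_{[0,1]}$ is polynomial-time by Proposition~\ref{413}). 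By Theorem~\ref{433}, the norm problem for $\ckf$ is $\np$-complete, so we would have $\p = \np$, contradicting the hypothesis.

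The main step is really the reduction from the claimed polynomial-time identity map to polynomial-time solvability of the norm problem; everything else is a direct invocation of earlier results (Propositions~\ref{327}, \ref{413} and \ref{434}, and Theorem~\ref{433}). There is no serious obstacle, the only delicate point being to produce a dyadic witness $x$ (and not just an approximation of the norm) to match the exact formulation of the norm problem in Definition~\ref{431}; this is handled by the standard dichotomy on $[0,1]$ combined with the polynomial-time evaluation of rational points of $\ckf$.
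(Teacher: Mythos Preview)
Your proposal is correct and follows essentially the same route as the paper: combine the polynomial-time norm computation in $\crf$ (Proposition~\ref{327}) with the assumed polynomial-time identity $\ckf \to \crf$ to get a polynomial-time norm in $\ckf$, then conclude via Proposition~\ref{434}. The paper's proof is just these two lines; your additional work on producing an explicit dyadic witness by dichotomy is unnecessary, since Proposition~\ref{434} already packages the passage from ``norm computable in $\p$'' to ``$\p = \np$'' and you can simply invoke it rather than re-derive its content.
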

\begin{proof}
The norm function is computable in polynomial time for the presentation $\crf$ according to Proposition \ref{327}. We conclude with the previous proposition. 
\end{proof}

\begin{proposition} \label{436}
For the five previous presentations of $\czu$, if the evaluation is in $\DSPA(S(N))$ with $S(N) \geq N$, then the norm function is also in $\DSPA(S(N))$.
\end{proposition}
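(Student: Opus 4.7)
The strategy is to reduce the norm computation to a simple loop of evaluations at grid points, with the evaluation workspace reused across iterations. By \lemref{432} it would suffice to handle one presentation, but the argument is uniform and I will present it simultaneously for all five; the only presentation-dependent ingredient is a polynomial bound on the modulus of uniform continuity as a function of the input size.

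Given input $(f,n)$ of size $N$ for the norm function, the plan is to first compute a modulus of uniform continuity $m = \mu(f,n+2)$. For each of the five presentations this is computable in polynomial time and is bounded by a polynomial in $N$ (linearly in $N$ for $\csl$, $\cbo$, $\ckf$; at most quadratically for $\caf$, $\capo$, on account of the magnitude coefficient, cf.\ Proposition \ref{4111}). By uniform continuity, the real number $\norme{\wi f}_\infty$ differs from $M_f := \max\{\abs{\wi f(x)} : x \in \DD_{m,[0,1]}\}$ by at most $1/2^{n+2}$, so it suffices to compute $M_f$ to within $1/2^{n+2}$ and then round. The algorithm is then a single loop: enumerate $x \in \DD_{m,[0,1]}$ by means of an $(m+1)$-bit counter; for each $x$, call the evaluation subroutine on input $(f,x,n+2)$ to produce a $v_x \in \DD$ with $\abs{v_x - \abs{\wi f(x)}} \leq 1/2^{n+2}$; update a running maximum $V \in \DD$ accordingly; free the evaluation workspace and advance the counter. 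At the end, output $V$ truncated to precision $1/2^n$.

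The space accounting is then direct. The counter and the running maximum together use $\Oo(m + n + \log\norme{\wi f}_\infty)$ bits, which is polynomial in $N$ and therefore bounded by $S(N)$ since $S(N) \geq N$. Each evaluation call receives an input of size $N' = \Oo(\abs{f} + m + n)$, which is polynomial in $N$, uses space $S(N')$, and this workspace is freed before the next iteration. The only delicate point, and the main obstacle to a fully formal statement, is the closure of $\DSPA(S(N))$ under polynomial substitution in the argument of $S$: for $\csl$, $\cbo$, $\ckf$ one has $N' = \Oo(N)$, and no issue arises; for $\caf$, $\capo$ the modulus is quadratic and one needs the (natural) convention that the space bounds $S$ under consideration are closed under $N \mapsto \Oo(N^c)$. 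This closure is automatic for the standard space classes of interest (polynomial, exponential, elementary, recursive), and under it the total space stays within $S(N)$, as required.
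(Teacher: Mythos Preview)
Your argument is correct and follows the same line as the paper's: compute the modulus $m$, enumerate the grid $\DD_{m,[0,1]}$, call the evaluation routine at each point while reusing its workspace, and keep a running maximum. The paper's proof is three lines and absorbs the counter space via the inequality $S(N)\geq N\geq m$; you are right to flag that for $\caf$ and $\capo$ the modulus is quadratic in $N$, so strictly speaking one needs $S(N)\geq m$ (which does hold for the concrete space bounds in the summary table, since the evaluation there already manipulates intermediate values of size $m$) or the closure assumption you state. One small aside: \lemref{432} concerns polynomial-\emph{time} equivalence of the norm \emph{problem} and would not by itself transfer a $\DSPA(S(N))$ bound between presentations---but since you do not actually rely on it, this does not affect your proof.
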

\begin{proof}
If $k \mapsto \mu(k)$ is a modulus of uniform continuity for a rational point $\wi{f}$ of a given presentation of $\czu$, then to compute the norm with precision $n$ it suffices to evaluate $\wi{f}$ over the elements of $\DD_{m,[0,1]}$ (where $ m = \mu(n)$) and take the maximum value.
Since useless intermediate results are immediately deleted, and since $S(N) \geq N \geq m$ the computation space of the norm function is the same as that of the evaluation function. 
\end{proof}

\subsection{Complexity of the considered rational presentations}\label{subsec44}
In this section, we briefly present the complexity of the membership test and the vector space operations for the five considered presentations, and we summarise all the results obtained. 

\begin{proposition} \label{441}
The membership test (for the set of codes of rational points) is:

\noindent 
--- $\LINS$ and $\cnp$-complete for  presentations $\ckf$ and $\cbo$; 

\noindent 
--- $\LINT$ for the presentation $\csl$.
\end{proposition}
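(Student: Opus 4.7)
The plan is to verify each of the three claims in turn. For $\csl$, recall that an element of $\ysl$ is just (the code of) an evaluation program using the prescribed semilinear gate types, so membership reduces to a well-formedness check on the syntax of this evaluation program, which is a left-to-right scan in linear time.

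For $\ckf$ and $\cbo$, the first two correctness conditions of Definition \ref{411} are purely syntactic and can be ensured by the encoding, as observed in Remark \ref{412}; the only non-trivial requirement is the Lipschitz-type third condition: for every pair of consecutive elements $a,b \in \DD_{m,[0,1]}$, the executions of $Pr$ on inputs $a$ and $b$ produce outputs at most $1/2^n$ apart (and analogously for $\cbo$ with $C$ in place of $Pr$). To obtain the $\LINS$ bound, I would iterate over the $2^m$ values $k \in \{0,\ldots,2^m-1\}$ with an $O(m)$-bit counter, and for each $k$ simulate $Pr$ on $k/2^m$ and $(k+1)/2^m$ using the universal machine of Lemma \ref{131}, which requires space $O(T+\flo{Pr})$ (respectively $O(\sz(C))$ for $\cbo$); the working space being released between iterations, the total space is $O(N)$ where $N$ is the size of the input $f$.

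The $\cnp$ upper bound is then immediate: a certificate of non-membership is a pair of consecutive inputs $k/2^m, (k+1)/2^m$ together with traces of $Pr$ (of length $\leq T$) producing outputs differing by more than $1/2^n$, which is polynomial-time verifiable. For $\cnp$-hardness I would reduce from $\mathrm{UNSAT}$. Given a boolean formula $\varphi$ on variables $x_1,\ldots,x_{m'}$, set $\psi := \varphi \wedge y$ with $y$ a fresh variable, let $m := m'+1$, and construct in polynomial time a program $Pr_\psi$ (respectively a boolean circuit $C_\psi$) which on input $k/2^m$ reads the $m$ bits of $k$, evaluates $\psi$ on the corresponding assignment, and outputs $1$ or $0$ accordingly. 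Take $n := 1$ and $T$ a polynomial bound on the running time. If $\psi$ is unsatisfiable, all outputs are $0$ and the correctness condition holds; if $\psi$ is satisfiable, the smallest satisfying integer $k^*$ is strictly positive (because $y=0$ at $k=0$), so $Pr_\psi((k^*-1)/2^m) = 0$ while $Pr_\psi(k^*/2^m) = 1$, and the jump $1 > 1/2^n$ breaks the correctness condition. Thus $\varphi \in \mathrm{UNSAT}$ iff $(Pr_\psi, 1, m, T) \in \ykf$, and analogously for $\ybc$.

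The only real subtlety is arranging the reduction so that satisfiability of $\varphi$ really produces a visible $0$-to-$1$ jump between consecutive integer inputs: this is what the fresh variable $y$ guarantees, by forcing the smallest satisfying integer to be strictly positive and hence to have a predecessor yielding output $0$. Without this trick, a formula satisfied only by the all-zero assignment would yield a correct quadruplet despite $\varphi$ being satisfiable, breaking the reduction.
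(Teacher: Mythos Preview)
Your proof is correct and follows essentially the same approach as the paper. For $\csl$ and for the $\LINS$ upper bound you do exactly what the paper does (the paper's loop over $i=1,\dots,2^m$ is your iteration with an $m$-bit counter). For the $\cnp$-hardness the paper argues for $\cbo$ with parameters $(\gamma,2,m,0)$, observing that correctness then forces $\gamma$ to be constant and that testing non-constancy ``involves satisfiability''; your explicit reduction from $\mathrm{UNSAT}$ via the fresh variable $y$ is precisely the standard way to fill in that sketch (it is what turns ``non-constant'' into ``satisfiable''), so your version is in fact more complete than the paper's on this point.
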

 
\begin{proof}
For the presentation $\csl$, this is obvious. The proofs are essentially the same for the two presentations $\ckf$ and $\cbo$. 
We only give one for each.

\noindent 
Let us see that the membership test is $\LINS$ for $\ykf$. 
For an entry $(Pr,n,m,T)$ we do the following calculation: 

\noindent 
 for $i = 1,\ldots,2^m$ check that
\[
Pr(i/2^m) \in \DD_n \quad  {\rm and} \quad  \Abs{ Pr((i-1)/2^m) - Pr(i/2^m) }\;  
\leq 1/2^n.
\]
This calculation is $\LINS$.\\
For the $\cnp$-completeness of the membership test, we give the proof for boolean circuits. We can restrict ourselves to the inputs $(\gamma,2,m,0)$ where $\gamma$ is a circuit which only calculates an output corresponding to the first bit, the other two bits being zero. Consistency is required at two consecutive points on the grid. Only constant functions are therefore tolerated.

\noindent 
The problem opposite to the membership test is to find out whether a boolean circuit is non-constant, which involves solving the satisfiability problem. 
\end{proof}

\begin{proposition} \label{442}
 The vector space operations (on the set of of codes for rational points) are in $\LINT$ for the five presentations $\ckf$, $\cbo$, $\csl$, $\caf$ and $\capo$ of $\czu$.
\end{proposition}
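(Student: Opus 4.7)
The strategy is uniform across the five presentations: in each case, a code for $\lambda \cdot \wi{f}$ is obtained from the code of $f$ by appending a small post-processor that performs scalar multiplication, and a code for $\wi{f_1}+\cdots+\wi{f_r}$ is obtained by concatenating the codes of the $f_i$ together with a summation tree of size $O(r)$. In each case the only nontrivial work is to re-adjust the control parameters so that the output code satisfies the correctness requirements of the given presentation; this adjustment is arithmetic on integers of size bounded by the input, so it fits in $\LINT$.

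For $\csl$, multiplication of a rational point by $\lambda = \pm k/2^s \in \DD$ is realised by a semilinear sub-circuit of size $O(\Lg(\lambda))$: a chain of doubling gates produces the values $2^i y$ for $i = 0,\ldots,\Lg(k)-1$, the binary expansion of $k$ selects which of these to sum (using at most $\Lg(k)$ addition gates), and $s$ halving gates finish the job; for a list, one concatenates the circuits for the $f_i$ and feeds their outputs into a binary tree of $+$-gates. For $\caf$ and $\capo$, the rational $\lambda$ is itself an admissible constant input, so scalar multiplication is a single $\times$-gate attached after the output of the circuit and the magnitude parameter is incremented by $\Lg(\lambda)$; summation uses a binary tree of $+$-gates and the new magnitude is the max plus $O(\log r)$.

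For $\ckf$ and $\cbo$ the post-processor must be realised at the bit level. In $\ckf$, from $f=(Pr,n,m,T) \in \ykf$ we produce the new program $Pr' = Pr$; multiply the output by $\lambda$, truncating to the appropriate precision; its size is $\Oo(\flo{Pr}+\Lg(\lambda))$, and the control parameters are updated to $(n-\lceil \log_2|\lambda|\rceil, m, T + \Oo(\Lg(\lambda)))$, which restores the Lipschitz-type consistency condition. Sum is handled by running the $Pr_i$ sequentially and adding the outputs (ripple-carry style), which is clearly linear time in the sizes of the $Pr_i$ together. For $\cbo$, the code $(C,n,m,k)$ is extended by a boolean summation circuit (ripple-carry adder, linear size) for sums, and by a boolean multiplier for scalar multiplication; the factor $2^s$ of $\lambda$ is absorbed by updating the scale parameter $k$ (no gates added), while multiplication by the integer part of $\lambda$ is a shift-and-add network built in linear time.

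The main point to verify, and the only real obstacle, is that the correctness condition of each presentation is preserved. After multiplication by $\lambda$, two consecutive grid inputs give outputs differing by at most $|\lambda|/2^n$ in absolute value instead of $1/2^n$, and for a sum of $r$ rational points the differences accumulate to $r/2^{\min n_i}$; in each case the precision parameter $n$ is decreased by $\lceil \log_2 |\lambda| \rceil$ (respectively $\lceil \log_2 r \rceil$), which amounts to a linear-time bit-arithmetic operation on the control word. The resulting code is then a bona fide element of the relevant $\Y_\star$, and all of these transformations — circuit concatenation, appending a fixed-shape sub-circuit or program block, and updating the integer control parameters — run in $\LINT$ in the total size of the inputs.
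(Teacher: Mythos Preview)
Your proposal is correct and follows essentially the same approach as the paper: combine the given codes (by concatenation or by appending a post-processing block) and adjust the control parameters so that the correctness condition of the presentation is preserved. The paper's own proof is much terser—it simply declares the calculations ``obvious'' and offers a one-line hint for the sum in $\ckf$: to approximate $\sum_i \wi{f_i}$ within $1/2^n$ it suffices to know each $\wi{f_i}$ with precision $1/2^{n+\log s}$, which is precisely your observation that the output precision parameter must be decreased by $\lceil\log_2 s\rceil$. Your explicit gate-level constructions for each of the five presentations, and your identification of the parameter adjustment as the only nontrivial step, spell out exactly what the paper leaves implicit.
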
 
\begin{proof}
The calculations are obvious. For example if $(f_1,\ldots,f_s) \in \lst(\ykf )$ and $n \in \NN_1$, we can easily calculate $f \in \ykf $ such that: 
\[
\NOrme{ \wi{f} - \sum_{i=1,\ldots,s} \wi{f_i} } \leq 
1/2^n
\] 
because it is sufficient to know each $\wi{f_i}$ with the precision $1/2^{n+\log(s)}$. 
\end{proof}

The only ``drama'' is obviously that the presentations $\ckf$ and $\cbo$ are not $\p$-presentations of $\czu$ (unless $\p = \np$ cf.\ Proposition \ref{434}.).

To finish this section we give a summary table in which we group together almost all the complexity results established for the five rational presentations $\ckf$, $\cbo$, $\csl$, $\caf$ and $\capo$ of~$\czu$.

\bni {
\begin{tabular}{ c l p{3cm} c c } 
& Evaluation & Norm & Membership 
& Vectorial \cr
& & function 
& test 
& space \cr
&&&&operations\cr\cr
 $\ckf$ & $\DSRT(\Lin,\Lin,\Oo(N^2))$ & $\LINS$ and & $\LINS$ and & $\LINT $\cr
 and $\cbo$ & &$\np$-complete 
& $\cnp$-complete & \cr\cr
 $\csl$ & $\DSRT(\Oo(N^2),\Lin,\Oo(N^2))$ & $\DSPA(\Oo(N^2))$ 
& $\LINT $ & $\LINT $\cr
 & &and $\np$-complete\cr\cr
$\caf$ & $\DSRT(\Oo(N^3),\Lin,\Oo(N^4))$ & 
$\DSPA(\Oo(N^3))$ 
& $\PSP$ & $\LINT $\cr
 and $\capo$ & & and $\np$-complete
\end{tabular}
}

\bni For the $\PSP$ membership test, its complexity is probably much less.

\begin{remark}\label{444}
Despite the ease with which the evaluation function can be calculated for the presentations $\ckf$ and $\cbo$, it is still the presentation using binary semilinear circuits that seems to be the simplest. 
Its consideration has also shed light on the comparison \thref{4211}, which is a strengthened, uniform version of the results established by Hoover. 
\\ 
The unavoidable flaw (if $\p \neq \np$) of the presentations defined so far is the non-feasibility of calculating the norm. 
This prevents us from having a feasible control procedure for Cauchy sequences of rational points. 
This reduces the interest in $\p$-points of $\csl$. 
This underlines the fact that it is somewhat artificial to study the $\p$-points of a space that is given in a presentation of non-polynomial complexity.\\
In addition, problems that are at least as difficult a priori as the calculation of the norm, such as the calculation of a primitive or the solution of a differential equation, are also without hope of a reasonable solution within the framework of the presentations we have just studied.
\\ 
It is therefore legitimate to turn to other rational presentations of $\czu$  to see to what extent they are better suited to the aims of numerical analysis.
\end{remark}

\section{Some  \texorpdfstring{$\p$}{P}-presentations 
for the space \texorpdfstring{$\czu$}{C[0,1]}}\label{sec5}

In this section we address the question of how far rational $\p$-presentations  of the space $\czu$ provide a suitable framework for numerical analysis. This is only a first study, which should be seriously developed.

\subsection{Definitions of some $\p$-presentations}\label{subsec51}

\subsubsection{Presentation \texorpdfstring{$\cw$}{Cw}
 (à la Weierstrass)}\label{subsubsec511}

The set $\yw$ of codes for rational points of the presentation $\cw$ is simply the set $\DD[X]$ of (one-variable) polynomials with coefficients in $\DD$ given in dense presentation.

\smallskip So $\cw=(\yw,\eta,\delta)$ where the reader will give precisely $\eta$ and $\delta$ as required in Definition \ref{211}.

\smallskip A $\p$-point $f$ of $\cw$ is therefore given by a $\p$-computable sequence: 
\[
m \mapsto u_m \; : \; \NN_1 \to \DD[X] \quad {\rm with }\quad \forall m \, \norme{ u_m - f }_{\infty} \leq 1/2^m.
\]

And a $\p$-sequence $f_n$ of $\cw$ is given by a $\p$-computable double sequence: 
\[
(n,m) \mapsto u_{n,m} \; : \; \NN_1 \times \NN_1 \to \DD[X] \quad {\rm with }\quad  \forall n,m  \; \norme{ u_{n,m} - f_n }_{\infty} 
\leq 1/2^m
\]

\begin{remark} \label{511}
An equivalent definition for a $\p$-point $f$ of $\cw$ is obtained by requiring that $f$ be written as the sum of a series $\sum_ms_m$, where $(s_m)_{m\in\NN_1}$ is a $\p$-computable sequence in $\DD[X]$ satisfying: 
$\norme{ s_m }_{\infty} \leq 1/2^m$. 
This gives a nice way of presenting the $\p$-points of $\cw$. Indeed, we can check in polynomial time (with respect to $m$) that the sequence is correct for the terms from~$1$ to~$m$. Moreover, from the point of view of lazy computation, we can control the sum of the series as the need for precision increases. This remark is valid for any other rational presentation of class~$\p$ whereas it would not be valid for the presentations studied in Section \ref{sec4}.
\end{remark}

The following result is immediate.

\begin{proposition} \label{512}
The presentation $\cw$ of $\czu$ is of class $\p$.
\end{proposition}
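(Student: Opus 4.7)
The plan is to verify each of the four requirements of Definition \ref{313} in turn: that the family $(\wi{P})_{P\in\DD[X]}$ is uniformly of class $\p$ and dense in $\czu$, and that scalar multiplication, the sum of a list of rational points, and the norm function are all $\p$-computable.

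Density is the classical Weierstrass approximation theorem: every $f\in\czu$ is a uniform limit of a sequence in $\QQ[X]\subset\DD[X]$ (polynomials with rational, hence dyadic after truncation, coefficients). For the uniform $\p$-class of the family, let $P=\sum_{i=0}^d a_iX^i\in\DD[X]$ be given in dense presentation of total size $\flo{P}$. On $[0,1]$ we have
\[
\abs{P'(x)}\;\leq\sum_{i=1}^d i\abs{a_i}\;\leq d\sum_{i=0}^d \abs{a_i}\;\leq d\cdot 2^{\flo{P}},
\]
so $\mu(P,n)=n+\log d+\flo{P}$ is a $\p$-computable modulus of uniform continuity for the family. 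Evaluation is then carried out by Horner's scheme: the recurrence $u\leftarrow u\cdot x+a_i$ starting from $u=a_d$ produces $P(x)$ in $d$ steps; since all intermediate values are bounded by $\sum\abs{a_i}\leq 2^{\flo{P}}$ on $[0,1]$, keeping $n+\flo{P}+\log d$ guard bits yields an approximation to within $1/2^n$, with overall cost polynomial in $\flo{P}$ and $n$.

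Scalar multiplication $(c,P)\mapsto cP$ and the sum of a list $(P_1,\ldots,P_s)\mapsto\sum P_i$ are performed exactly in $\DD[X]$ by termwise operations on coefficients, hence in time linear in the total input size (up to management overhead); a fortiori they are in $\p$.

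The delicate point, and the main obstacle, is the norm function. I adapt the argument of the proof of Proposition \ref{327}(a) to the simpler case of a polynomial: given $P\in\DD[X]$ and $n\in\NN_1$, compute $P'\in\DD[X]$ termwise; determine the precision $m=\mu(P,n+1)$ needed to evaluate $\wi{P}$ to within $1/2^{n+1}$; find the real roots $\alpha_1,\ldots,\alpha_s$ of $P'$ lying in $[0,1]$ to within $2^{-m}$ by means of the classical polynomial-time real root-isolation algorithm for integer polynomials (after clearing the $2$-adic denominators of $P'$); finally evaluate $\abs{P}$ by Horner's scheme with precision $1/2^{n+1}$ at $0$, at $1$, and at each $\alpha_i$, and return the maximum. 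Since $\norme{P}_\infty$ is attained either at an endpoint or at an interior critical point of $\wi{P}$, the returned value differs from $\norme{P}_\infty$ by at most $1/2^n$, and the whole computation is polynomial in $\flo{P}+n$.
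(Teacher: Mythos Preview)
Your proof is correct and supplies the details that the paper deliberately omits: the paper's own proof of this proposition is simply the remark that the result is immediate. Your verification of each clause of Definition~\ref{313} is sound, and in particular your treatment of the norm function mirrors exactly the argument the paper gives in the more general setting of Proposition~\ref{327}(a) (compute the derivative, isolate its real roots on $[0,1]$ in polynomial time, evaluate at endpoints and critical points).

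One small notational slip: you write $\QQ[X]\subset\DD[X]$, but the inclusion goes the other way, since $\DD\subset\QQ$. Your parenthetical remark shows you understand the actual point (approximate rational coefficients by dyadic ones), so this is cosmetic rather than substantive.
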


\smallskip Here is a result comparing the rational presentations $\crf$ and $\cw$.

\begin{proposition} \label{513}~

\noindent 
--- The identity of $\czu$ from $\cw$ to $\crf$ is $\LINT$.

\noindent 
--- The identity of $\czu$ from $\crf$ to $\cw$ is not of class $\p$.
\end{proposition}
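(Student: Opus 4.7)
The plan is to prove the two assertions separately.

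For the first assertion (identity from $\cw$ to $\crf$ is $\LINT$), I would simply observe that a polynomial $P$ given in dense presentation can be transformed in linear time, via Horner's scheme (cf.\ \lemref{324}), into a formula representation $P_f \in \DD[X]_f$. The constant polynomial $1$ is trivially written as a formula and is obviously bounded below by $1$ on $[0,1]$. So the map $P \mapsto (P_f, 1) \in \yrf$ is $\LINT$, and $\wi{(P_f,1)} = P = \wi{P}$, which shows that $\Id_\czu\colon \cw \to \crf$ is uniformly of class $\LINT$ (the modulus of uniform continuity being trivially preserved since we get equality, not just approximation).

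For the second assertion, the strategy is to produce a specific $\p$-point of $\crf$ which cannot be a $\p$-point of $\cw$. A natural candidate is $f(x) = \abs{x-1/2}$: by \thref{334} (Newman's theorem as stated in Section \ref{subsec33}), $f$ is a $\p$-point of $\crf$. If the identity from $\crf$ to $\cw$ were $\p$-computable, then, by composition of polynomial-time computable maps (Proposition \ref{227}), $f$ would be a $\p$-point of $\cw$.

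The core obstruction is a quantitative lower bound on polynomial approximation of $\abs{x-1/2}$. Suppose $f$ were a $\p$-point of $\cw$, given by a $\p$-computable sequence $m \mapsto u_m \in \DD[X]$ with $\norme{u_m - f}_\infty \leq 1/2^m$. Since $u_m$ is written in dense presentation, its degree $d_m$ is bounded by its output size $\flo{u_m}$, which is polynomial in $m$: say $d_m \leq m^c$ for some constant $c$ and all sufficiently large $m$. On the other hand, a classical theorem of Bernstein asserts that the error of best uniform polynomial approximation of degree $n$ to $\abs{x}$ on $[-1,1]$ satisfies $E_n(\abs{x}) \geq K/n$ for some absolute constant $K > 0$; by the affine change of variable $x \mapsto 2(x-1/2)$, the same lower bound (up to a constant factor) holds for $\abs{x-1/2}$ on $[0,1]$. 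Hence
\[
\frac{K'}{m^c} \leq \frac{K'}{d_m} \leq E_{d_m}(f) \leq \norme{u_m - f}_\infty \leq \frac{1}{2^m},
\]
which is impossible for $m$ large. This contradiction shows that $\Id_\czu\colon \crf \to \cw$ is not of class $\p$.

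The main obstacle is the use of Bernstein's lower bound; one could instead invoke the (stronger) characterisation announced in \thref{527}, which asserts that every $\p$-point of $\cw$ belongs to the Gevrey class and is in particular $C^\infty$, from which the same conclusion follows since $\abs{x-1/2}$ is not differentiable at $1/2$. Either route yields the desired result.
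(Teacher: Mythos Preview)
Your proof is correct. The first assertion is handled exactly as in the paper (which simply calls it ``trivial''); your explicit mention of Horner's scheme and the denominator $1$ just spells out that triviality.

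For the second assertion, the paper's own proof is precisely your alternative route: it invokes \thref{334} to see that $x\mapsto\abs{x-1/2}$ is a $\p$-point of $\crf$, and then \thref{527} to conclude that it cannot be a $\p$-point of $\cw$ since every such point is $C^\infty$. Your primary route via Bernstein's lower bound $E_n(\abs{x})\geq K/n$ is a genuinely different argument: it is more elementary and self-contained, requiring only the classical approximation-theoretic fact rather than the full Gevrey-class characterisation of \thref{527} (which rests on Jackson's theorems, Markov's inequalities, and the Chebyshev-series analysis of Section~\ref{subsubsec521}). The trade-off is that the paper's route, once \thref{527} is available, gives a stronger qualitative obstruction (non-smoothness versus merely slow polynomial approximation) and fits more naturally into the paper's overall narrative about $\cw$; your Bernstein argument, on the other hand, is logically independent of the later material and would work even if \thref{527} were unavailable.
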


\begin{proof}
The first statement is trivial. 
The second results from the fact that the function $x \mapsto \abs {x-1/2}  $ is a $\p$-point of $\crf$ (\thref{334}) whereas all the $\p$-points of $\cw$ are infinitely differentiable functions (cf.\ \thref{527} below). 
\end{proof}

The interest of the presentation $\cw$ is underlined in particular by the characterisation theorems (cf.\ Section \ref{subsec52}) which specify  ``well-known''  phenomena in numerical analysis, with Chebyshev polynomials as a method of attacking the problems.

\subsubsection{Presentation \texorpdfstring{$\csp$}{Csp}
(via semipolynomials in dense presentation)}\label{subsubsec512}

This is a presentation which significantly increases the set of $\p$-points (compared with $\cw$). An element of $\ysp$ represents a piecewise polynomial function (also called a semi-polynomial) given in dense presentation. 

More precisely $\ysp \subset \lst(\DD) \times \lst(\DD[X])$, and the two lists in $\DD$ and $\DD[X]$ are subject to the following conditions:

\noindent 
--- the list $(x_i)_{0 \leq i \leq t}$ of dyadic rational numbers is ordered in ascending order: 
\[
0 = x_0 < x_1 < x_2 <\cdots< x_{t-1} < x_t =1;
\]
--- the list $(P_i)_{1 \leq i \leq t}$ in $\DD[X]$ verifies $P_i(x_i) = P_{i+1}(x_i)$ for $ 1 \leq i \leq t-1$.

\noindent 
The code $f = ((x_i)_{0 \leq i \leq t},(P_i)_{1 \leq i \leq t})$ defines the rational point $\wi{f}$, that is the function which coincides with  $\wi{P_i}$ on each interval $[x_{i-1},x_i]$. 

\smallskip So $\csp=(\ysp,\eta,\delta)$ where the reader will give precisely $\eta$ and $\delta$ as required in Definition \ref{211}.

\smallskip 
The presentation $\csp$ of $\czu$ is clearly of class $\p$.
The following proposition is proved in the same way as Proposition \ref{513}.

\begin{proposition} \label{514} ~

\noindent 
--- The identity of $\czu$ from $\cw$ to $\csp$ is $\LINT$.

\noindent 
--- The identity of $\czu$ from $\csp$ to $\cw$ is not of class $\p$.
\end{proposition}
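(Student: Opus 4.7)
The plan is to mimic exactly the proof of Proposition \ref{513}, using the function $x\mapsto \abs{x-1/2}$ as the witness for the second assertion.

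For the first assertion, I would observe that a polynomial $P\in\DD[X]$ is itself a semipolynomial in the sense of the definition of $\ysp$: from $P$ we simply build the code $f = ((0,1),(P))$ consisting of the trivial subdivision $0 = x_0 < x_1 = 1$ and the single polynomial $P_1 = P$. The boundary compatibility condition $P_i(x_i)=P_{i+1}(x_i)$ is vacuous (only one piece), so this is a correct element of $\ysp$, and $\wi f = P$ on $[0,1]$. The transformation $P\mapsto f$ is obviously $\LINT$ (it essentially consists in prepending the two dyadics $0$ and $1$). Hence a $\p$-sequence of polynomials in $\DD[X]$ approximating $g\in\czu$ yields, in polynomial time, a $\p$-sequence in $\ysp$ approximating $g$, so $\Id_{\czu}\colon\cw\to\csp$ is uniformly of class $\p$, and in fact $\LINT$.

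For the second assertion, consider the function $g(x) = \abs{x-1/2}$ on $[0,1]$. It is a piecewise linear function that can be coded exactly in $\ysp$ by $f = ((0,1/2,1), (1/2-X, X-1/2))$, so it is trivially a $\p$-point of $\csp$ (the constant sequence $f_n = f$ approximates $g$ to within $0$). On the other hand, by \thref{527} (to be proved later in Section~\ref{subsec52}), every $\p$-point of $\cw$ is infinitely differentiable (in fact in the Gevrey class). Since $g$ fails to be differentiable at $x=1/2$, it cannot be a $\p$-point of $\cw$. Therefore $\Id_{\czu}\colon \csp\to\cw$ is not of class $\p$.

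The only potential obstacle is the logical dependence on \thref{527}, which has not yet been proved at this point in the exposition; however, since that theorem's proof does not use Proposition~\ref{514}, there is no circularity and the argument is the same as the one already used for Proposition~\ref{513}. The rest is just bookkeeping to check that the semipolynomial code for $\abs{x-1/2}$ is indeed a correct element of $\ysp$, which is immediate from the definition.
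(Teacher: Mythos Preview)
Your proof is correct and follows exactly the approach the paper intends: the paper simply says this proposition ``is proved in the same way as Proposition~\ref{513}'', and you have spelled out precisely that argument, using the trivial embedding $\DD[X]\hookrightarrow\ysp$ for the first part and the witness $x\mapsto\abs{x-1/2}$ together with \thref{527} for the second.
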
 

\subsubsection{Presentation $\csr$ (via controlled semirational function s given in presentation by formula)}\label{subsubsec513}

The set $\ysr$ of codes of rational points is now the set of codes of piecewise rational functions (also called semi-rational function s) with dyadic coefficients. 

More precisely, $\ysr \subset \lst(\DD) \times \lst(\DD[X]_f \times \DD[X]_f)$, and the two lists in $\DD$ and $\DD[X]_f \times \DD[X]_f$ are subject to the following conditions:

\noindent 
--- the list $(x_i)_{0 \leq i \leq t}$ of dyadic rational numbers is ordered in ascending order: 
\[
0 = x_0 < x_1 < x_2 <\cdots< x_{t-1} < x_t =1;
\]
--- each pair $(P_i,Q_i) \; (1 \leq i \leq t$ from the 2nd list represents a rational function $R_i = P_i/Q_i$ with its denominator $\geq  1$ on the interval $[x_{i-1} , x_i]$;

\noindent 
--- the list $(R_i)_{1 \leq i \leq t}$ verifies $R_i(x_i) = R_{i+1}(x_i)$
 for $1 \leq i < t$.
\\
The code  $f = ((x_i)_{0 \leq i \leq t},(P_i,Q_i)_{1 \leq i \leq t})$ defines the rational point  \smash{$\wi{f}$}, that is the function which coincides with $\wi{R_i}$ on each interval $[x_{i-1} , x_i]$.

\smallskip So $\csr=(\ysr,\eta,\delta)$ with $\eta(f)=\wi f$ and the reader will give $\delta$ as required in Definition~\ref{211}.

\smallskip The presentation $\csr$ of $\czu$ is clearly of class $\p$.

\subsection {Results concerning the Weierstrass presentation}\label{subsec52}

This section is mainly a  devlopment of Section C-c in the paper \cite{Lo89} which is the third part of the PhD thesis of the second author.

We use the classical references for the theory of approximations 
\cite[Bakhvalov, 1973]{Ba},
\cite[Cheney, 1966]{Ch} et \cite[Rivlin, 1974]{Ri}. Concerning the Gevrey class we refer to \cite[Hörmander, 1983]{Ho}. Our notations are as in \cite{Ch}.   

\smallskip  
Before characterising the $\p$-points of $\cw$, we need to recall some classical results from the theory of uniform approximation by polynomials.

\medskip\noindent  {\bf Warning!} Given the usual way in which the theory of approximation is formulated, we will use the interval $[-1,1]$ to give the results and proofs concerning $\cw$.

\subsubsection{Some definitions and results in the theory of uniform approximation by polynomials}\label{subsubsec521}

See for example \cite{Ba}, \cite{Ri} and \cite{Ch}. 

\begin{notation} \label{521}~ 
\begin{itemize}\itemsep2pt

\item [] 
$\cab$ is the space of continuous real functions on the segment $[a,b]$.

\item [] 
$\C$ is the space $\cuu$, the uniform norm on this interval is denoted by $\norme{f}_{\infty}$ and the corresponding distance $d_{\infty}$. 

\item []
 $\C^{(k)}$ is the space of functions $k$ times continuously differentiable on $[-1,1]$. 

\item [] 
$\Ci$ is the space of indefinitely differentiable functions on $[-1,1]$. 

\item [] 
$\po_n$ is the space of polynomials of degree $\leq n$. 

\item [] 
$\Tch_n$ is the Chebyshev polynomial of degree $n$: 
\[
\Tch_n\big(\varphi (z)\big)= \varphi(z^n) \; {\rm with } \; \varphi(z) = \frac {1} {2} (z + 1/z)
\]
they can also be defined by $\Tch_n\big(\cos (x)\big) = \cos (nx)$ or by 
\[
F(u,x) = \frac {1-u\,x} {1-u^2-2u\,x} = \sum_{n=0}^{\infty} \Tch_n(x)u^n
\]
\item []
We note \fbox{$E_n(f) = d_{\infty} (f, \po_n)$} for $f \in \C=\cuu$. 

\item [] 
We consider on $\C$ the scalar product 
\[
\left < g,h \right > := \int_{-1}^1 \frac {g(x)\,h(x)} {\sqrt {1-x^2}}\, dx 
= \int_0^{\pi} g\big(\cos (x)\big)\,h \big(\cos (x)\big)dx. 
\]
Let $\norme{f}_2$ be the norm in the sense of this scalar product. 
The polynomials $(\Tch_i)_{0 \leq i \leq n}$ form an orthogonal $\po_n$-basis for this scalar product, with 
\[
\left < \Tch_0,\Tch_0 \right > = \pi \quad  {\rm and}  \quad  \left < \Tch_i,\Tch_i \right > = 
\pi /2 \; {\rm for } \; i>0.
\]

\item [] 
We note
\[
A_k = A_k(f):= \frac {2} {\pi} \int_{-1}^{1}  \frac {f(x)\,\Tch_k(x)} {\sqrt {1-x^2}}\, {dx}= \frac {2} {\pi} \int_{0}^{\pi} \cos (kx) f(\cos (x))dx
\]
The $A_k$ are called the \emph{Chebyshev  coefficients} of $f$.

\item [] 
The function 
\[
s_n(f) := A_0/2 + \sum_{i=1}^{n}A_i\Tch_i  \hbox{ also denoted by } 
 {\sum_{i=0}^n}\,'\, A_i\Tch_i
\]  
is the orthogonal projection of $f$ onto $\po_n$ in the sense of the above scalar product. 

\item [] 
The corresponding series is called {\em the Chebyshev series} of $f$.\footnote{It converges in the sense of $L^2$ for the scalar product. The Chebyshev series are to continuous functions on $[-1,1]$ what the Fourier series are to periodic continuous functions, which is easy to understand when you consider the change of variable $z\mapsto 1/2( z + 1/z )$ which transforms the unit circle of the complex plane into the segment $[-1,1]$ and the function $z\mapsto z^n$ into the polynomial $\Tch_n$.} 

\item [] 
We note \fbox{$S_n(f) := \norme{ f - s_n(f) }_{\infty}$}.
We immediately have $\abs {A_{n+1}(f)}\; \leq S_n(f) + S_{n+1}(f)$. 

\item [] 
The zeros of $\Tch_n$ are the points 
\[
\xi_i^{[n]} = \cos \left(\frac{2i-1} {n}\cdot \frac {\pi} {2}\right) \; \; \; i =1,\ldots,n
\]
and we have 
\[
\Tch_n(x) = 2^{n-1} \prod_{i=1}^{n} (x - \xi_i^{[n]}) \; \; ({\rm for}\; n \geq 1).
\] 

\item [] 

The extrema of $\Tch_n$ on $[-1,1]$ are equal to $\pm 1$ and obtained at the points
\[
\eta_i^{[n]} = \cos (\frac{i} {n}\cdot \pi) \; \; \; i =0,1,\ldots,n.
\] 

\item [] 

Approximate values of $s_n(f)$ can be calculated by means of interpolation formulae. We let
\[
\alpha_k^{[m]} = \frac {2} {m} \;
{\sum_{i=0}^m}\,'f(\xi_i^{[m]})\Tch_k(\xi_i^{[m]}), \; \; \; \; u_n^{[m]} = \sum_{k=1}^{n} \alpha_k^{[m]}\Tch_k(x)
\]
and we have: $u_n^{[n+1]}$ is the polynomial which interpolates $f$ at the zeros of $\Tch_{n}$
\end{itemize}
\end{notation}

The theory of uniform approximation by polynomials establishes close links between ``being sufficiently well approximated by polynomials'' and ``being sufficiently regular''.

\subsubsection{Some classical results}\label{subsubsec522}

You find most of these results in \cite{Ch}.

\smallskip  In this subsection the functions are in $\C=\cuu$.

\smallskip \noindent {\bf Evaluation of a polynomial} $P=\sum_{k=0}^{n}a_k\Tch_k$

\noindent 
 The recursive formulae $\Tch_{m+1}(x) = 2x\Tch_m(x) - \Tch_{m-1}(x)$ lead to a Horner-style algorithm:
\[
B_{n+1} = B_{n+2} = 0, \; B_k = 2xB_{k+1} - B_{k+2} + a_k, \; P(x) = \frac 
{B_0 - B_2} {2}
\]

\smallskip\noindent {\bf Markov's inequalities} 

\noindent 
If $g \in \po_n$ then (A.A. Markov, \cite[page 91]{Ch})
\begin{equation} \label{F 5.2.1}
\Norme{ g' }_\infty \leq n^2 \norme{ g }_{\infty}
\end{equation}
and for $k \geq 2$ (V.A. Markov, \cite[Theorem 2.24]{Ri})
\begin{equation} \label{F 5.2.2}
\Norme{ g^{(k)} }_\infty \leq \Tch_n^{(k)}(1) \norme{ g }_{\infty} = \frac {n^2(n^2-1)\cdots(n^2-(k-1)^2)} {1.3.5\cdots(2k-1)} \,\norme{ g }_\infty
\end{equation}

\smallskip\noindent {\bf Comparison of $E_n(f)$ and $S_n(f)$}
\begin{equation} \label{F 5.2.3}
E_n(f) \leq S_n(f) \leq \left(4+ \frac {4} {\pi^{2}} \log (n)\right)\,E_n(f)
\end{equation}

\smallskip\noindent {\bf Comparison of $E_n(f)$ and $A_{n+1}(f)$}

\noindent 
For $n \geq 1$ we have	
\[
\int_{-1}^{1} \frac {\abs {\Tch_n(x)}} {\sqrt {1-x^2}}\,dx = 2
\]
from which we deduce 
\begin{equation} \label{F 5.2.4}
(\pi /4) \abs {A_{n+1}(f)}  \leq E_n(f)
\end{equation}

\smallskip\noindent {\bf Jackson's Theorems}

\noindent 
Let $f \in \C$. For any integer $n \geq 1$ we have 
\begin{equation} \label{F 5.2.5}
E_n(f) \leq \pi \lambda /(2n+2) \; \; \; \hbox{if} \; \; \; \abs{f(x)-f(y)}\;  \leq \lambda \abs{x-y}
\end{equation}
\begin{equation} \label{F 5.2.6}
E_n(f) \leq (\pi /2)^k \Norme{ f^{(k)} }_{\infty} \big/ \big((n+1)(n)(n-
1)\cdots(n-k+2)\big) \; \; \hbox{if} \; f \in \C^{(k)} \; \hbox{and} \; n\geq k
\end{equation}

\smallskip\noindent {\bf Convergence of the Chebyshev series of a function}

\noindent 
The Chebyshev series of a function $f \in \C^{(k)}$ converges uniformly to $f$ if $k \geq 1$, and is absolutely convergent (for the norm $\norme{f}_{\infty}$) if $k \geq 2$. 
\begin{equation} \label{F 5.2.7}
S_n(f) = \norme{ s_n(f)-f }_{\infty}\; \leq \sum_{j=n+1}^{\infty} \abs {A_j}
\end{equation}
and (cf.\ \cite{Ri} Theorem 3.12 p. 182)
\begin{equation} \label{F 5.2.8}
\NOrme{ s_n(f)- u_n^{[n+1]} }_\infty\; \leq \sum_{j=n+2}^\infty \abs {A_j} 
\end{equation}

\smallskip\noindent {\bf Uniform approximation of functions in $\Ci$ by polynomials} 

\noindent 
The following properties are equivalent. 
\begin{itemize}\itemsep2pt

\item [(i)] 
$\forall k \; \; \exists M > 0 \; \; \forall n> 0, \; \; E_n(f) \leq M/n^k$. 

\item [(ii)]
$\forall k \; \; \exists M > 0 \; \; \forall n> 0, \; \; S_n(f) \leq M/n^k$.

\item [(iii)]
$\forall k \; \exists M > 0 \; \forall n> 0, \; \abs {A_n(f)}\; \leq M/n^k$.

\item [(iv)] 
$\forall k \; \exists M > 0 \; \forall n> 0, \; \NOrme{ u_n^{[n+1]} - f }_{\infty} \leq M/n^k$.

\item [(v)] 
The function $f$ is of class $\ca^{\infty}$ (i.e., $f \in \Ci$).
\end{itemize}

%
\begin{proof}
(i) and (ii) are equivalent according to (\ref{F 5.2.3}). 

\noindent 
(iv) $\Rightarrow$ (i) trivially. 

\noindent 
(ii) $\Rightarrow$ (iii) because $\abS{A_n(f)}\;  \leq S_n(f) + S_{n-
1}(f)$. 

\noindent 
(iii) $\Rightarrow$ (iv) according to (\ref{F 5.2.7}) and (\ref{F 5.2.8}). 

\noindent 
(iii) $\Rightarrow$ (v). The series $\sum'A_i\Tch_i^{(h)}$ is absolutely convergent according to (\ref{F 5.2.2}) and the inequalities~(iii); therefore we can derive $h$ times term by term the Chebyshev series. 

\noindent 
(v) $\Rightarrow$ (i) from (\ref{F 5.2.6}). 
\end{proof}

\smallskip\noindent {\bf Analyticity and uniform approximation by polynomials} 

\noindent 
The following properties are equivalent 
\begin{itemize}\itemsep2pt

\item [(i)] 

$ \exists M > 0, \;  r < 1 \; \; \forall n> 0  \;\;\, E_n(f) \leq Mr^n$.

\item [(ii)] 

$\exists M > 0, \;  r < 1 \; \; \forall n> 0  \;\;\, S_n(f) \leq Mr^n$. 

\item [(iii)] 

$\exists M >0, \;  r < 1 \;\; \forall n> 0 \; \abS{A_n(f)} \leq Mr^n$.

\item [(iv)] 

$ \exists M > 0, \;  r < 1 \; \; \forall n> 0  \; \NOrme{ u_n^{[n+1]} - f }_{\infty}   \leq Mr^n$.

\item [(v)] 

$\exists r < 1$ such that $f$ is analytic in the complex plane inside the ellipse $\sE_\rho$ of foci $1$, $-1$ and whose half-sum of principal diameters is equal to $\rho = 1/r$.

\item [(vi)] 

$ \exists M > 0, \;  R > 1 \; \; \forall n \;  \NOrme{f^{(n)} }_{\infty}  \leq MR^nn!$\,.

\item [(vii)] 

$f$ is analytic on the interval $[-1,1]$.
\end{itemize}
Furthermore the lower bound on the possible values of $r$ is the same in the first 5 cases.\footnote{The equivalences (i) \ldots (iv) are shown as for the previous proposition. For the equivalence with (v) see for example \cite{Ri}. Condition (vi) very nearly represents analyticity in the open $U_R$ formed by points whose distance from the interval is less than $1/R$.}

\begin{figure}[htbp] 
\begin{center}
\includegraphics*[width=10cm]{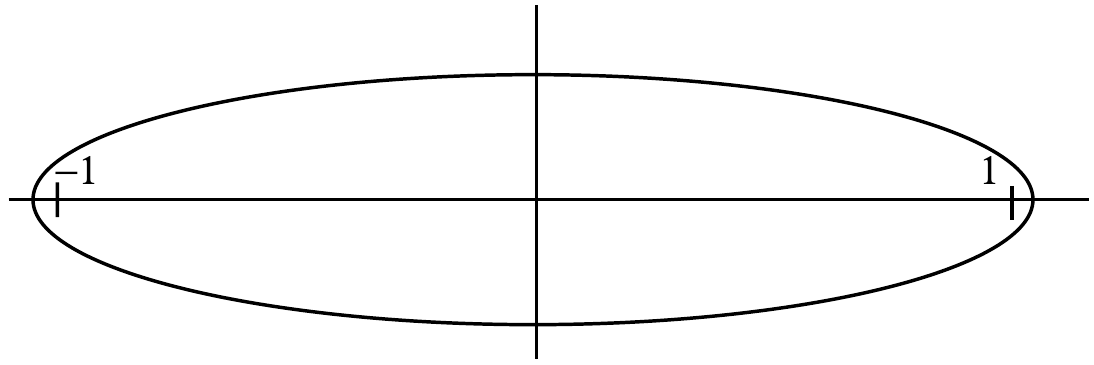}
\end{center}
\caption[The ellipse $\sE_\rho$]{\label{fi521} 
the ellipse $\sE_\rho$} 
\end{figure} 

\begin{remarks}\label{rem-anal}~

\noindent 
1) The space of analytic functions on a compact interval therefore has a good constructive description, in terms of Chebyshev series for example. It appears as a nested countable union of complete metric spaces (those obtained using definition (iii) and setting the integer $M$ and the rational $r$, for example). 
The space of $\Ci$-functions is much more difficult to describe constructively, mainly because there is no pleasant way of generating rapidly decaying sequences of rationals, due to the $\forall k\; \exists M$ in the definition of rapid decay.\footnote{This alternation of quantifiers takes an explicit form when we give explicitly $M$ as a function of $k$. But, by virtue of Cantor's diagonal argument, there is no effective way of generating effective functions from $\NN$ to $\NN$.} 

\noindent 
2) Condition (i) can also be read as follows: the function $f$ can be approximated to within $1/2^n$ (for the uniform norm) by a polynomial of degree $\leq c.n$, where $c$ is a fixed constant, i.e.\ again: there exists an integer $h$ such that
$E_{hn}(f) \leq 1/2^n$. 

\noindent 
 The same applies to conditions (ii), (iii) and (iv). This implies that the function $f$ can be approximated to within $1/2^n$ by a polynomial with dyadic coefficients whose size (in dense presentation on the basis of $X^n$ or on the basis of $\Tch_n$) is in $\Oo(n^2)$. The size of the sum of the absolute values of the coefficients is $\Oo(n)$. 
Bakhvalov (cf.\ \cite {Ba} IV-8 Th.\ p.\ 233) gives a sufficient condition of the same kind for a function $f$ to be analytic in a lens with extremities $-1$ and $1$ of the complex plane (and no longer in a neighbourhood of the segment): it is sufficient that the sum of the absolute values of the coefficients of a polynomial giving $f$ to within $1/2^n$ is bounded by $M2^{qn}$ (where $M$ and $q$ are fixed constants). In other words
the size of the sum of the absolute values of the coefficients of a polynomial approximating $f$ to within $1/2^n$ is $\Oo(n)$.
\begin{figure}[htbp] 
\begin{center}
\includegraphics*[width=12cm]{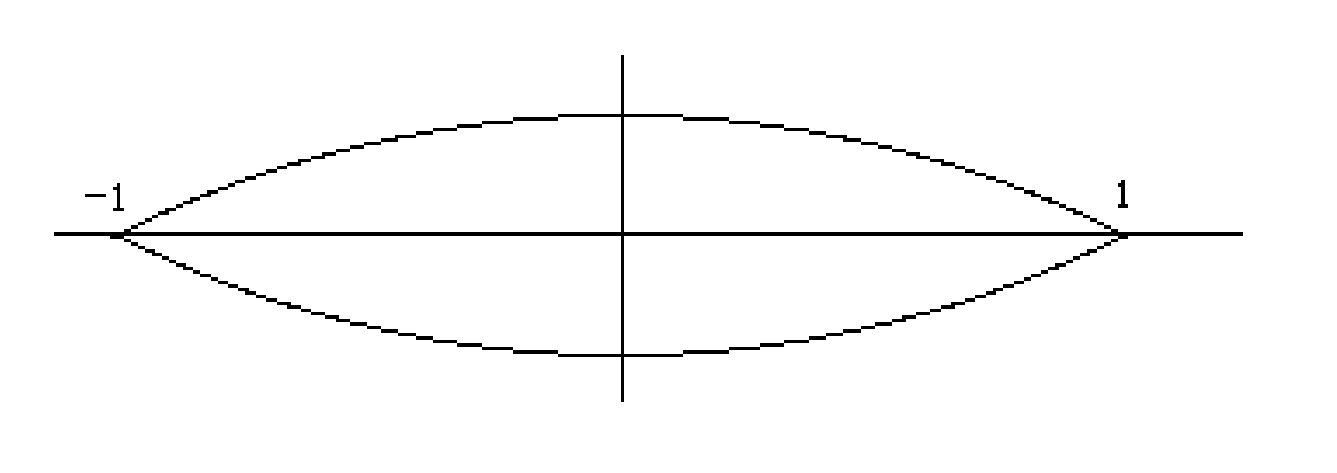}
\end{center}
\caption[Bakhvalov's lens]{\label{fi522} 
Bakhvalov's lens} 
\end{figure} 
\end{remarks}

\smallskip\noindent {\bf Gevrey class and uniform approximation by polynomials} 

\noindent 
If $f$ is a $\p$-point of $\cw$ given by a $\p$-sequence $\; m \mapsto P_m\; $
 (with $\Norme{ f - P_m}_{\infty} \leq 2^m$), then the degree of $P_m$ is bounded by a polynomial in $m$, so there exists an integer $ k$ and a constant $ B$ such that the degree of $ P_m$ is bounded by $(Bm)^k$. 
Let $ n $ be arbitrary, and consider the largest integer~$ m $ such that $(Bm)^k \leq n$, i.e.\ $m:= \Flo{\sqrt[k]{n}/B}$. 
We therefore have $m+1 \geq \sqrt[k]{n}/B$. By letting $r := 1/2^{1/B}$
 and $\gamma := 1/k$, we obtain: 
\[
E_n(f) \leq 1/2^m \leq 2.r^{n^{\gamma}}, \; {\rm with} \; r \in  (0,1) , \; 
\gamma > 0.
\]
In particular, the sequence $E_n(f)$ decreases rapidly and $f\in\Ci$.
This leads us to study the functions $f$ for which this kind of bound is obtained.

\begin{definition}[Gevrey class\footnote{Cf. for example Hörmander \cite{Ho}: The Analysis of Linear Partial Differential Operators I p 281 (Springer 1983). 
A function is Gevrey of order $ 1 $ if and only if it is analytic.}] \label{522}

\noindent 
A function $f\in \Ci$ is said to be in the Gevrey class of order $\alpha > 0 $ if its derivatives satisfy a majorisation: 
\[
\NOrme{f^{(n)} }_{\infty}\leq  MR^n n^{\alpha n}
\]
The Gevrey class is obtained when the order $\alpha$ is not specified. 
\end{definition}

\begin{theorem} \label{523}
Let $f\in \C$. The following properties are equivalent. 
\begin{itemize}

\item [(i)] 

$ \exists M > 0, \; \; r < 1, \; \; \gamma > 0 \; \; \forall n> 0, \; \; E_n(f) \leq Mr^{n^{\gamma}}$,

\item [(ii)]
 
$ \exists M > 0, \; \; r < 1, \; \; \gamma > 0 \; \; \forall n> 0, \; \; S_n(f) \leq Mr^{n^{\gamma}}$,

\item [(iii)]
 
$ \exists M > 0, \; \; r < 1, \; \; \gamma > 0 \; \; \forall n> 0, \; \; \abS{A_n(f)}\; \leq Mr^{n^{\gamma}}$, 

\item [(iv)]
 
$ \exists M > 0, \; r < 1 \; \forall n> 0, \; \NOrme{ u_n^{[n+1]} - f }_{\infty} \leq Mr^{n^{\gamma}}$,

\item [(j)] 

$ \exists c, \beta > 0 \; \forall n> 0\; \forall m \geq cn^{\beta}, \; E_m(f) \leq 1/2^n $,

\item [(jj)]

$ \exists c, \beta > 0 \; \forall n> 0,\; \forall m \geq cn^{\beta}, \; S_m(f) \leq 1/2^n $,

\item [(jjj)] 

$ \exists c, \beta > 0 \; \forall n> 0,\; \forall m \geq cn^{\beta}, \; 
\abS{A_m(f)}\; \leq 1/2^n $,

\item [(jw)]
 
$ \exists c, \beta > 0 \; \forall n> 0\;\forall m \geq cn^{\beta}, \NOrme{ u_m^{[m+1]} - f }_{\infty} \leq 1/2^n $,

\item [(k)] 

$f$ is in the Gevrey class.
\end{itemize}
\end{theorem}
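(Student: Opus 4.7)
The strategy is to decouple the theorem into three kinds of work: intra-group equivalences within (i)--(iv) and within (j)--(jw); the algebraic reparametrisation linking one group to the other; and the substantive equivalence (i)$\,\Leftrightarrow\,$(k) relating approximation rate to Gevrey regularity.

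\emph{Intra-group equivalences and the dictionary between the two groups.} All the sandwich inequalities recalled in Section~\ref{subsubsec522}, namely $E_n\le S_n\le(4+(4/\pi^{2})\log n)E_n$, $(\pi/4)|A_{n+1}|\le E_n$, $|A_{n+1}|\le S_n+S_{n+1}$, $S_n\le\sum_{j>n}|A_j|$, and $\|s_n-u_n^{[n+1]}\|_\infty\le\sum_{j\ge n+2}|A_j|$, propagate under super-polynomial decay: the $\log n$ factor and the tail sums $\sum_{j>n}Mr^{j^\gamma}$ are each absorbed at the cost of a harmless adjustment of the constants $M$ and $r$. This yields (i)$\Leftrightarrow$(ii)$\Leftrightarrow$(iii)$\Leftrightarrow$(iv) and, identically, (j)$\Leftrightarrow$(jj)$\Leftrightarrow$(jjj)$\Leftrightarrow$(jw). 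The dictionary between the two groups is algebraic: $Mr^{m^\gamma}\le 1/2^n$ rearranges to $m\ge c\,n^{1/\gamma}$, so (i)$\,\Leftrightarrow\,$(j) with $\beta=1/\gamma$, and likewise for the other three pairs.

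\emph{Direction (k)$\,\Rightarrow\,$(i).} Assume $\|f^{(k)}\|_\infty\le MR^k k^{\alpha k}$. Jackson's theorem combined with the elementary bound $(n+1)n\cdots(n-k+2)\ge(n/2)^k$ valid for $n\ge 2k$ gives
\[
E_n(f)\le M\,(2\pi R)^k\,k^{\alpha k}/n^k.
\]
Take $k:=\lfloor n^\delta\rfloor$ with any $\delta<1/\alpha$ (or any $\delta<1$ if $\alpha=0$); the dominant term in $\log E_n(f)$ is then $(\alpha\delta-1)n^\delta\log n$, a definite negative multiple of $n^\delta\log n$ for large~$n$. Hence $E_n(f)\le M'r'^{n^\delta}$ for some $r'<1$, which is (i) with $\gamma:=\delta$.

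\emph{Direction (i)$\,\Rightarrow\,$(k).} Let $P_n\in\po_n$ realise the best uniform approximation of $f$ and set $g_0:=P_0$, $g_n:=P_n-P_{n-1}$, so that $f=\sum_{n\ge 0}g_n$ uniformly and $\|g_n\|_\infty\le 2Mr^{n^\gamma}$ for $n\ge 1$. V.A.~Markov's inequality gives $\|g_n^{(k)}\|_\infty\le n^{2k}\|g_n\|_\infty/(2k-1)!!$, and termwise differentiation of $\sum g_n$---justified by the super-polynomial decay of $\|g_n^{(k)}\|_\infty$ in $n$ at fixed $k$---yields
\[
\|f^{(k)}\|_\infty\le\frac{2M}{(2k-1)!!}\sum_{n\ge 1}n^{2k}r^{n^\gamma}.
\]
A Laplace estimate locates the maximum of $n\mapsto n^{2k}r^{n^\gamma}$ near $n^\star=(2k/(\gamma|\log r|))^{1/\gamma}$, with value of order $\exp((2k/\gamma)(\log n^\star-1))$; the full sum differs from this only by a polynomial factor in $k$ coming from the width of the peak. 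Coupled with Stirling's $(2k-1)!!\sim\sqrt 2\,(2k/e)^k$, this produces $\|f^{(k)}\|_\infty\le M'R'^{k}k^{\alpha k}$ with $\alpha:=\max(1,\,2/\gamma-1)$, giving (k). The hard part will be precisely this saddle-point estimate balanced against Stirling: one must track the coupling $\alpha\leftrightarrow 2/\gamma-1$ carefully enough to exhibit a definite Gevrey order, the remaining reductions being essentially reparametrisations.
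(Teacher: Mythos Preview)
Your plan is correct and its architecture matches the paper's: the same sandwich inequalities drive the (i)--(iv) and (j)--(jw) equivalences, the (i)$\,\leftrightarrow\,$(j) link is the same reparametrisation $\beta=1/\gamma$, and the substantive work is the pair (k)$\,\Leftrightarrow\,$(i). Two points of comparison are worth recording.

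For (k)$\,\Rightarrow\,$(i) the paper makes the sharper choice $k\approx (n/2C)^{1/\alpha}$, which hits $\gamma=1/\alpha$ exactly; your $k=\lfloor n^\delta\rfloor$ with $\delta<1/\alpha$ gives only $\gamma<1/\alpha$. Either suffices for the theorem, but the paper's choice is the natural optimum of $k\mapsto (Ck^{\alpha}/n)^k$.

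For the converse the paper takes a different and more elementary route: it proves (j)$\,\Rightarrow\,$(k) rather than (i)$\,\Rightarrow\,$(k), expands $f^{(k)}=\sum' A_m T_m^{(k)}$, uses only the crude Markov bound $\|T_m^{(k)}\|_\infty\le m^{2k}$ (dropping the $(2k-1)!!$), and groups the indices $m$ into blocks $[cn^\beta,c(n+1)^\beta)$ so that the sum collapses to $\sum_n n^{h}/2^n$ with $h=\beta(2k+1)$, estimated by differentiating $\sum x^n$. This avoids any saddle-point analysis but yields only Gevrey order $2\beta=2/\gamma$; the paper itself remarks that this order is probably not sharp. Your telescoping $f=\sum(P_n-P_{n-1})$ together with the full Markov constant and the Laplace estimate gives the better order $2/\gamma-1$ (the $\max$ with $1$ in your formula is not needed---for $\gamma>1$ the argument genuinely produces orders below $1$, consistent with the entire-function regime noted in the paper's remarks). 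So your approach is a bit more analytic but buys a sharper Gevrey exponent; the paper's is cruder but entirely elementary.
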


\begin{proof}
(i) $\Leftrightarrow$ (ii) from Equation (\ref{F 5.2.3}). 

\noindent 
(i) $\Rightarrow$ (iii) from Equation (\ref{F 5.2.4}). 

\noindent 
(iv) $\Rightarrow$ (i) is trivial.

\noindent 
The 4 equivalences of type (i) $\Leftrightarrow$ (j) result from the same kind of calculation as the one done before the theorem.

\noindent 
The implication (jjj) $\Rightarrow$ (jw) results from a simple inequality  using the equations (\ref{F 5.2.7}) and (\ref{F 5.2.8}). 

\noindent 
Assume (k), i.e.\ that $f$ is Gevrey of order $\alpha$, and show (i). 
Finding an upper bound is only tricky for $\alpha \geq 1$, which we will now assume. Applying Jackson's theorem, we obtain an inequality $E_n(f) \leq \pi^{k} \Norme{f^{(k)} }_{\infty} /n^k$
 as soon as $n \geq 2k$, which with Gevrey's bound gives $E_n(f) \leq A(Ck^{\alpha}/n)^k$. 
We can assume $C^{1/ \alpha} \geq 2$ and take for $k$ an integer close to $(n/2C)^{1/ \alpha}$ ($ \leq n/2$), hence to a very close approximation:
\[
E_n(f) \leq A(1/2)^{(n/2C)^{1/ \alpha}} = Ar^{n^{\gamma}}, \; {\rm with} \; \gamma = 1/ \alpha.
\]
Now we suppose that $f$ verifies (j) and we show that $f$ is Gevrey. 

\noindent 
Finding an upper bound is only tricky for $\beta \geq 1$, which we will now assume. We write $f^{(k)} = \sum' A_m \Tch_m^{(k)}$. 
Hence $\Norme{f^{(k)} }_{\infty} \leq \sum' \abS{A_m} m^{2k}$ according to V.A.\ Markov's inequality (\ref{F 5.2.2}). 
We now use the bound (jjj). We take $c$ and $\beta$ to be integers for simplicity (this is not a restriction). In the sum above, group the terms for $ m $ between $cn^{\beta}$ and $c(n+1)^{\beta}$. In the packet obtained, each term is bounded by $ (1/2^n) m^2k$, and the number of terms is bounded by $ c(n+1)b,$ hence:
\[
\Norme{f^{(k)} }_\infty \leq \sum_n (c(n+1)^{\beta}/2^n)(c(n+1)^{\beta})^{2k} \leq 2c^{2k+1} \sum_n (n+1)^{\beta (2k+1)}/2^n
\]
\[
\leq 4c^{2k+1} \sum_n n^h/2^n, \; {\rm where} \; h = \beta(2k+1).
\] 
We bound this series by the series obtained by deriving $h$ times the series $\sum_n x^n$ (then making $x=1/2$) and we obtain that $f$ is Gevrey of order $2\beta$. 
\end{proof}

\begin{remarks}\label{524}~

\noindent 
1) The space of Gevrey functions therefore has a pleasant constructive presentation.

\noindent 
2) For $\gamma = 1$ we obtain the analytic functions. For $\gamma > 1$, we obtain the entire functions.

\noindent 
3) For $\gamma \leq 1$, the upper limit of possible $\gamma$ is the same in (i), (ii), (iii) and (iv), the lower limit of possible $\beta$ is the same in (j), (jj), (jjj) and (jw), with $\gamma = 1/ \beta$. 

\noindent 
4) Based on the case of analytic functions ($\alpha = \beta = \gamma = 1$), we can hope, for the implication (j) $\Rightarrow$ (k), to obtain that $f$ is Gevrey of order $\beta$ by means of a more sophisticated majorisation calculation.

\noindent 
5) In (j), (jj), (jw) we can remove the quantifier $\forall m$ if we take $ c$ and $\beta$ to be integers and $m = cn^{\beta}$.
\end{remarks}

\subsubsection{Returning to questions of complexity in the space $\cw$}
\label{subsubsec 523}

We start with an

\smallskip \noindent 
{\bf Important remark}. With respect to $\DD[X]$, the ordinary dense presentation (based on $X^n$) and the dense presentation based on Chebyshev polynomials $\Tch_n$, are equivalent in polynomial time. We will use either of the two bases, depending on the convenience of the moment.

\smallskip  Recall also that the norm $\; P \to \norme{ P }_{\infty}\; $ is a $\p$-computable function from $\DD[X]$ to $\RR$.

The proof of the following proposition is immediate. In fact, any functional defined on $\cw$ which has a polynomial modulus of uniform continuity and whose restriction to $\DD[X]$ is easy to compute is itself easy to compute. This proposition takes on its full value in view of the characterisation \thref{527}.

\begin{proposition}[good behaviour of usual functionals] \label{526}~ 

\noindent 
The functionals:
\[
\cw \to \RR \quad  f \mapsto \norme{f}_{\infty}, \; \norme{f}_2, \; \norme{f}_1
\]
are  uniformly of class $\p$.\\
The functionals:
\[
\cw \times [0,1] \times [0,1] \to \RR \quad  (f,a,b) \mapsto \sup_{x 
\in [a,b]} (f(x)), \; \int_{a}^{b} f(x) dx
\]
are uniformly of class $\p$.
\end{proposition}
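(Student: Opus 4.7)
The strategy is exactly the one signalled by the authors in the sentence preceding the statement: for a functional $F$ on $\cw$ (or on $\cw \times [0,1]^2$) to be uniformly of class $\p$ it suffices to exhibit (a) a polynomial-time modulus of continuity (if necessary only on bounded parts, in the sense of Definitions~\ref{231} or~\ref{241}) and (b) a $\p$-computable restriction to the set of codes of rational points $\DD[X]$ (respectively $\DD[X]\times \DD_{[0,1]}\times \DD_{[0,1]}$); the usual polynomial-time manipulation of approximations and of the already known $\p$-computable norm $P\mapsto \norme{P}_\infty$ on $\DD[X]$ then delivers~$F$.

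\smallskip
For the three norm functionals, condition (a) is trivial because each of them is $1$-Lipschitz with respect to $\norme{\cdot}_\infty$: one has $\bigl|\norme{f}_\infty-\norme{g}_\infty\bigr|\le\norme{f-g}_\infty$ and, since $[0,1]$ has measure one, also $\norme{f-g}_1\le\norme{f-g}_\infty$ and $\norme{f-g}_2\le\norme{f-g}_\infty$, giving the modulus $\mu(n)=n$. Condition (b) is routine polynomial-time computation on $\DD[X]$: for $\norme{P}_\infty$ one applies the root-finding procedure for $P'$ on $[0,1]$ already used in Proposition~\ref{327}(a) and evaluates $P$ at the approximate critical points and endpoints; for $\norme{P}_2$ one computes the rational number $\int_0^1P(x)^2\,dx$ exactly via the antiderivative of $P^2\in\DD[X]$, then extracts the square root to precision $2^{-n}$; for $\norme{P}_1$ one locates the real roots of $P$ in $[0,1]$ to sufficient precision, splits the integration domain at these roots, and sums the signed exact rational integrals $\pm\int P$ on the resulting subintervals.

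\smallskip
For the two functionals $(f,a,b)\mapsto \sup_{[a,b]}f$ and $(f,a,b)\mapsto \int_a^b f$, condition (b) is immediate on $\DD[X]\times\DD_{[0,1]}\times\DD_{[0,1]}$: the integral is computed exactly from the antiderivative of $P$, and the supremum is obtained exactly as for $\norme{P}_\infty$ but restricted to $[a,b]$. For (a) the elementary inequalities
\[
\Bigl|\textstyle\int_{a_1}^{b_1}f_1-\int_{a_2}^{b_2}f_2\Bigr|\le (|a_1-a_2|+|b_1-b_2|)\norme{f_1}_\infty+\norme{f_1-f_2}_\infty
\]
and
\[
\Bigl|\sup_{[a_1,b_1]}f_1-\sup_{[a_2,b_2]}f_2\Bigr|\le \omega_{f_1}(|a_1-a_2|)+\omega_{f_1}(|b_1-b_2|)+\norme{f_1-f_2}_\infty,
\]
where $\omega_{f_1}$ is the modulus of uniform continuity of $f_1$, show that on any bounded part of $\cw$ (where $\norme{f}_\infty$ and a polynomial-time modulus of continuity for $f$ are available from the Weierstrass data) one obtains a polynomial modulus of continuity in $(f,a,b)$.

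\smallskip
The only place where anything non-trivial happens is in step (b) for $\norme{P}_\infty$, $\norme{P}_1$ and $\sup_{[a,b]}P$: this is the classical polynomial-time computation of real roots of a polynomial with dyadic coefficients and of its values at those roots. This step has been invoked already and will be the main (though well-known) ingredient; everything else reduces to elementary polynomial-time arithmetic on $\DD[X]$.
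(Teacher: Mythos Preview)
Your proposal is correct and follows exactly the strategy the paper itself indicates (the paper declares the proof ``immediate'' and states only the general principle you spell out: polynomial modulus of uniform continuity plus $\p$-computability of the restriction to $\DD[X]$). One small point: in this section the authors work on $[-1,1]$ and define $\norme{\cdot}_2$ via the Chebyshev weight $(1-x^2)^{-1/2}$, not the Lebesgue measure, so the computation of $\norme{P}_2$ is most naturally done from the Chebyshev coefficients of $P$ rather than by antidifferentiating $P^2$; the Lipschitz constant becomes $\sqrt{\pi}$ instead of $1$, but nothing else changes.
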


\begin{theorem}[characterisation of the $\p$-points of $\cw$] \label{527}~ 

\noindent 
Let $f \in \C$. The following properties are equivalent.

\begin{itemize}
\item [a)]
 The function $f$ is a $\p$-point of $\ckf$ and is in the Gevrey class.

\item  [b)]  The sequence $A_n(f)$ is a $\p$-sequence in $\RR$ and verifies an inequality $\abS{A_n(f)}\; \leq Mr^{n^{\gamma}}$ with $M > 0$, $\gamma > 0$ and $0 < r < 1$. 

\item  [c)] The function $f$ is a $\p$-point of $\cw$.
\end{itemize}
\end{theorem}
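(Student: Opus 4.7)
The plan is to establish the cycle (c) $\Rightarrow$ (b) $\Rightarrow$ (c), then close the loop with (c) $\Rightarrow$ (a) and (a) $\Rightarrow$ (b). All the approximation-theoretic ingredients are collected in Section~\ref{subsubsec522}; the work is to turn them into polynomial-time algorithms.

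For (c) $\Rightarrow$ (b), start from a $\p$-sequence $(P_m)$ in $\DD[X]$ with $\norme{f - P_m}_\infty \leq 2^{-m-2}$. Since the degree of $P_m$ is bounded by a polynomial in $m$, the computation of $P_m$ expressed in the Chebyshev basis, $P_m = \sum_k b_k^{(m)}\Tch_k$, is still in polynomial time. The standard majorisation $\abs{A_k(g)} \leq 2\norme{g}_\infty$ applied to $g = f - P_m$ gives $\abS{A_k(f) - b_k^{(m)}} \leq 2^{-m-1}$ (with $b_k^{(m)} = 0$ for $k$ beyond the degree), so the family $(k,m) \mapsto b_k^{(m)}$ witnesses that $A_n(f)$ is a $\p$-sequence of reals. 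The decay bound $\abS{A_n(f)} \leq Mr^{n^\gamma}$ follows from the remark preceding Definition~\ref{522} (degrees are polynomial in $m$) combined with $\abs{A_{n+1}(f)} \leq (4/\pi)E_n(f)$ from~(\ref{F 5.2.4}).

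For (b) $\Rightarrow$ (c), given the decay bound, choose $N(m) = O(m^{1/\gamma})$ so that $\sum_{k>N(m)} Mr^{k^\gamma} \leq 2^{-m-1}$ (the geometric-like tail), and approximate each $A_k$ for $k \leq N(m)$ by a dyadic $\tilde A_k$ to within $2^{-m-1}/(N(m)+1)$ using the hypothesis of (b). The truncated polynomial ${\sum_{k=0}^{N(m)}}' \tilde A_k\Tch_k$ belongs to $\DD[X]$, is constructed in polynomial time, and approximates $f$ to within $2^{-m}$ thanks to $\norme{\Tch_k}_\infty = 1$ and~(\ref{F 5.2.7}).

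For (c) $\Rightarrow$ (a), the Gevrey conclusion is immediate: as noted above, (c) yields $E_n(f) \leq Mr^{n^\gamma}$, and Theorem~\ref{523} ((i) $\Rightarrow$ (k)) gives Gevrey. To see that $f$ is a $\p$-point of $\ckf$, convert the $\p$-sequence $(P_m)$ in $\DD[X]$ into a $\p$-sequence of correct quadruplets $(Pr_m, n_m, \mu_m, T_m)$ in $\ykf$: $Pr_m$ is a Horner evaluator for $P_m$ (polynomial size in $m$), and V.A.~Markov's inequality~(\ref{F 5.2.1}) applied to $P_m$ gives a Lipschitz constant $(\deg P_m)^2 \norme{P_m}_\infty$, polynomial in $m$, which fixes $\mu_m$; correctness is then checkable by construction and does not require a test.

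The main obstacle is the remaining implication (a) $\Rightarrow$ (b), i.e.\ producing a $\p$-sequence of approximations of $A_n(f)$ from a $\ckf$-presentation of $f$ combined with Gevrey regularity. The idea is to use the discrete Chebyshev coefficients $\alpha_k^{[m]}$ from Notations~\ref{521}: a classical aliasing argument (orthogonality of $\Tch_j$ at the nodes $\xi_i^{[m]}$) yields
\[
\alpha_k^{[m]} - A_k(f) \;=\; \sum_{j\geq 1}\big(A_{2jm-k}(f) + A_{2jm+k}(f)\big),
\]
so that $\abS{\alpha_k^{[m]} - A_k(f)} \leq 2\sum_{\ell\geq 2m-k}\abS{A_\ell(f)}$. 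The Gevrey bound (Theorem~\ref{523}(iii), which follows from (a) via the implication (k) $\Rightarrow$ (iii)) then shows that choosing $m = O((n + k)^{1/\gamma})$ forces this error below $2^{-n-1}$. The values $f(\xi_i^{[m]})$ needed to compute $\alpha_k^{[m]}$ are obtained from the $\ckf$-presentation in polynomial time, and the finite sum defining $\alpha_k^{[m]}$ is polynomial in $m$ and $k$; taking a suitable dyadic truncation of this sum delivers the required $\p$-computable approximation of $A_k(f)$.
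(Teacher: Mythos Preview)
Your proof is correct and close in spirit to the paper's, but you organise the implications differently. The paper shows (c)$\Rightarrow$(a), (c)$\Rightarrow$(b), (b)$\Rightarrow$(c) and closes the cycle with (a)$\Rightarrow$(c) directly: it takes the interpolation polynomial $u_m^{[m+1]}$ at the Chebyshev nodes as the approximant of $f$ in $\DD[X]$, and bounds the error by the ready-made inequalities~(\ref{F 5.2.7}) and~(\ref{F 5.2.8}). You instead prove (a)$\Rightarrow$(b) by extracting each $A_k(f)$ from the discrete coefficients $\alpha_k^{[m]}$ via the aliasing identity, and then feed (b) back into (c). The underlying computation is identical---evaluate $f$ at the $m$ Chebyshev nodes and perform a discrete cosine transform---so the two routes are really the same algorithm viewed at two different levels of packaging: the paper's is shorter because it quotes~(\ref{F 5.2.8}) as a black box, while yours unpacks that inequality into its aliasing content. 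For (c)$\Rightarrow$(a) the paper simply says ``easy from Theorem~\ref{523}'', implicitly relying on the already-established fact that $\Id_{\czu}$ from $\cw$ to $\ckf$ is uniformly of class $\p$ (Theorem~\ref{417}); your explicit construction of the $\ykf$ quadruplets via Horner and a Markov-derived Lipschitz bound is a valid way to make that step self-contained.
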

\begin{proof}
The implications (c) $\Rightarrow$ (a) and (c) $\Rightarrow$ (b) are easy from \thref{523}. 

\noindent 
(b) $\Rightarrow$ (c). A polynomial (in dense presentation on the basis of $\Tch_n$) approximating $f$ with precision $1/2^{n+1}$ is obtained with the partial sum extracted from the Chebyshev series of $f$ stopping at the index $(Bn)^h$ (where $B$ and $h$ are calculated from $M$ and $\gamma$). It remains to replace each Chebyshev coefficient by a dyadic approximating it with precision 
\[
1/ \Flo{(Bn)^h2^{n+1}}  = 1/2^{n+1+h \log(Bn)}.
\]

\noindent 
(a) $\Rightarrow$ (c). A polynomial approximating $f$ to within $1/2^{n+1}$ is obtained with $u_m^{[m+1]}$ (where $m = (Cn)^k$), $C$ and $k$ are calculated from $M$ and $\gamma$, taking into account equations (\ref{F 5.2.7}) and (\ref{F 5.2.8})). 
The formula defining $u_m^{[m+1]}$ provides its coefficients on the basis of $\Tch_n$ and we can calculate (in polynomial time) an approximation to within $1/2^{n+1+k \log(Cn)}$ of these coefficients by taking advantage of the fact that the double sequence $\xi_i^{[n]}$ is a $\p$-sequence of reals and that the function~$f$ is a $\p$-point of $\ckf$. 
\end{proof}

An immediate consequence of the previous theorem is obtained in the case of analytic functions.

\begin{theorem} \label{528}
Let $f \in \C$. The following properties are equivalent. 
\begin{itemize}

\item [(a)] 

The function $f$ is an analytic function and it is a $\p$-point of $\ckf$.

\item [(b)] 

The sequence $A_n(f)$ is a $\p$-sequence in $\RR$ and verifies an inequality
\[
 \abS{A_n(f)}\; \leq Mr^n \; (M > 0, \; r < 1).
\]

\item [(c)] 

The function $f$ is an analytic function and is a $\p$-point of $\cw$.
\end{itemize}
\end{theorem}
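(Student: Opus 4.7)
The plan is to derive Theorem \ref{528} as a direct specialization of Theorem \ref{527} to the analytic case, using the classical equivalence (iii) $\Leftrightarrow$ (vii) from the ``Analyticity and uniform approximation by polynomials'' paragraph, namely that a function $f\in\C$ is analytic on $[-1,1]$ if and only if there exist $M>0$ and $r<1$ with $\abS{A_n(f)}\leq Mr^n$ for all $n$. In particular, every analytic function is Gevrey (of order $1$), so Theorem \ref{527} always applies to an analytic $f$.

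First I would establish (a) $\Leftrightarrow$ (c). Both conditions explicitly include analyticity, so the content reduces to the equivalence between being a $\p$-point of $\ckf$ and being a $\p$-point of $\cw$, \emph{under the hypothesis of analyticity}. Since analytic $\Rightarrow$ Gevrey, this is exactly the equivalence (a) $\Leftrightarrow$ (c) of Theorem \ref{527}.

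For (c) $\Rightarrow$ (b), the analyticity of $f$ yields the bound $\abS{A_n(f)}\leq Mr^n$ from the classical characterization recalled above. Since $f$ is moreover a $\p$-point of $\cw$ (hence Gevrey), Theorem \ref{527} (c) $\Rightarrow$ (b) gives that $(A_n(f))_{n\in\NN_1}$ is a $\p$-sequence of real numbers, which is the complexity half of (b). For (b) $\Rightarrow$ (c), the bound $\abS{A_n(f)}\leq Mr^n$ forces $f$ to be analytic. To get that $f$ is a $\p$-point of $\cw$, I would, for required precision $1/2^{n+1}$, choose an integer $m$ of size $\Oo(n)$ such that $\sum_{k>m} Mr^k \leq 1/2^{n+2}$ (this requires only $m\geq c(n+2)$ for $c$ depending on $\log(1/r)$ and $M$), then approximate each coefficient $A_k(f)$ for $k\leq m$ by a dyadic to precision $1/(2(m+1) 2^{n+2})$ using the $\p$-sequence hypothesis on $(A_k(f))$, and output $\sum_{k=0}^{m}\,' \wi{A_k}\,\Tch_k$ in dense presentation on the Chebyshev basis. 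The resulting double sequence lives in $\DD[X]$, is $\p$-computable, and approximates $f$ in the uniform norm to within $1/2^n$ thanks to Equation (\ref{F 5.2.7}).

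There is essentially no hard step: the theorem is the ``analytic sharpening'' of Theorem \ref{527}, obtained by replacing the Gevrey class by the analyticity class on the hypothesis side and the subgeometric bound $Mr^{n^\gamma}$ by the true geometric bound $Mr^n$ on the coefficient side. The only technical care needed is checking that in passing from (b) to (c), the degree $m$ of the truncation is linear in $n$ and the coefficient precision is polynomial in $n$, so that the output polynomial has size polynomial in $n$ and is produced in polynomial time.
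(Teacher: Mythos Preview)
Your proposal is correct and takes essentially the same approach as the paper, which simply states that the theorem is ``an immediate consequence of the previous theorem'' (Theorem~\ref{527}) in the case of analytic functions. Your write-up merely unpacks this in more detail, in particular spelling out the (b) $\Rightarrow$ (c) construction explicitly, but the route is identical: specialize Theorem~\ref{527} using the classical equivalence between analyticity and the geometric decay $\abS{A_n(f)}\leq Mr^n$.
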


\begin{definition}[$\p$-analytic functions]\label{529} 

\noindent 
When these properties are verified, we say that the function $f$ is $\p$-analytic on the interval $[-1,1]$.
\end{definition}

\begin{theorem}[fairly good behaviour of derivation with respect to complexity] \label{5210} 
~

\noindent 
Let $f$ be a $\p$-point of $\cw$. Then the sequence $k \mapsto f^{(k)}$ is a $\p$-sequence of $\cw$. 
More generally, if $(f_p)$ is a $\p$-sequence of $\cw$ then the double sequence $(f_p^{(k)})$ is a $\p$-sequence of $\cw$.
\end{theorem}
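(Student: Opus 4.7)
The plan is to exploit V.A.~Markov's inequality (\ref{F 5.2.2}) on a telescoping sum of dyadic polynomial approximations of $f$. First, let us concentrate on a single $\p$-point $f$; the generalisation to a $\p$-sequence $(f_p)$ will be routine.

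By hypothesis we have a $\p$-computable map $m \mapsto P_m \in \DD[X]$ such that $\Norme{P_m - f}_\infty \leq 1/2^m$. Since the total computation time for $P_m$ is bounded by a polynomial in $m$, the dense size of $P_m$, and in particular its degree $d_m$, is bounded by $(Bm)^h$ for suitable integer constants $B, h$. Setting $Q_m := P_{m+1}-P_m \in \DD[X]$, we have $\Norme{Q_m}_\infty \leq 2^{-m+1}$ and $\deg Q_m \leq (B(m+1))^h$. V.A.~Markov (inequality (\ref{F 5.2.2})) applied to $Q_m$ gives, after the elementary bound $\Tch_n^{(k)}(1)\leq n^{2k}$,
\[
\NOrme{Q_m^{(k)}}_\infty \; \leq \; (B(m+1))^{2hk} \cdot 2^{-m+1}.
\]
For each fixed $k$ this is the general term of a normally convergent series, so $f^{(k)}$ exists and
\[
f^{(k)} \; = \; P_1^{(k)} + \sum_{m \geq 1} Q_m^{(k)}, \qquad f^{(k)} - P_{M}^{(k)} \;=\; \sum_{m \geq M} Q_m^{(k)}.
\]

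Now fix $n,k\in\NN_1$. The first step is to choose $M=M(n,k)$ so that the tail is below $1/2^{n+1}$: it is enough that $(B(m+1))^{2hk} \cdot 2^{-m+1}\le 2^{-(m/2)}$ for $m\ge M$ and $M/2\ge n+2$, which is achieved by
$M := 2(n+2) + c\,k\log(k+n+2)$ for a suitable constant $c$ depending only on $B,h$. In particular $M$ is polynomial in $n$ and $k$. The second step is to compute, up to precision $1/2^{n+1}$, the polynomial $P_M^{(k)} \in \DD[X]$. The polynomial $P_M$ has degree $\le (BM)^h$ and is obtained in time polynomial in $M$ (hence in $n,k$); formal derivation $k$ times in $\DD[X]$ costs only an additional polynomial factor, and the resulting polynomial has coefficients of polynomial size. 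Finally we truncate each coefficient of $P_M^{(k)}$ to the nearest element of $\DD$ with enough precision so that the uniform error on $[-1,1]$ stays below $1/2^{n+1}$ (using that the $X^j$ are bounded by $1$ on $[-1,1]$; one needs about $n + O(\log \deg P_M^{(k)})$ bits per coefficient, a polynomial quantity). The result is a polynomial $R_{n,k}\in\DD[X]$ with $\Norme{R_{n,k} - f^{(k)}}_\infty \le 1/2^n$, and the full computation $(n,k)\mapsto R_{n,k}$ is of class $\p$. This proves the first statement.

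For a $\p$-sequence $(f_p)$, the input data is a $\p$-computable double sequence $(p,m)\mapsto P_{p,m}\in\DD[X]$ with $\Norme{P_{p,m}-f_p}_\infty\le 1/2^m$. The degree bound takes the form $d_{p,m}\le (B(p+m))^h$, polynomial in $p+m$. Repeating the argument above, with $Q_{p,m}=P_{p,m+1}-P_{p,m}$, V.A.~Markov gives $\Norme{Q_{p,m}^{(k)}}_\infty \le (B(p+m+1))^{2hk}\cdot 2^{-m+1}$. Choosing $M = M(p,n,k) := 2(n+2) + c\,k\log(k+n+p+2)$, which is polynomial in $p,n,k$, makes the tail smaller than $1/2^{n+1}$, and the rest of the computation (formal derivation of $P_{p,M}$ and truncation of coefficients) proceeds exactly as before in time polynomial in $(p,n,k)$, producing a polynomial $R_{p,n,k}\in\DD[X]$ with $\Norme{R_{p,n,k} - f_p^{(k)}}_\infty \le 1/2^n$.

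The main obstacle is simply the delicate bookkeeping of the polynomial dependencies: one must check that both the index $M$ and the working precision remain polynomial in all parameters $(p,n,k)$ simultaneously, which is ensured by the logarithmic factor $\log(k+n+p+2)$ in the definition of $M$ and by the fact that V.A.~Markov's inequality produces a factor $d^{2k}$ that is only polynomial in the degree once $k$ is counted in the input size.
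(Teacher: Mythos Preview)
Your proof is correct and follows essentially the same route as the paper: apply V.A.~Markov's inequality (\ref{F 5.2.2}) to the telescoping differences $P_{m+1}-P_m$, use the polynomial degree bound coming from the $\p$-computability of $m\mapsto P_m$, and choose a cutoff index $M(n,k)$ that is polynomial in $(n,k)$ so that $P_M^{(k)}$ approximates $f^{(k)}$ to within $1/2^n$. One small remark: your final truncation step is unnecessary, since differentiation of a polynomial in $\DD[X]$ only multiplies each coefficient by an integer and hence stays in $\DD[X]$; so $P_M^{(k)}\in\DD[X]$ already, and the paper simply outputs $P_{\nu(n+2)}^{(k)}$ without any rounding.
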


\begin{proof} We give the proof for the first part of the proposition. It would apply without change for the case of a $\p$-sequence of $\cw$. 

\noindent 
The function $f$ is a $\p$-point of $\cw$ given as the limit of a  $\p$-computable sequence $n \mapsto  P_n$. 
The double sequence $P_n(k)$ is $\p$-computable (unary entries). There are two integers $a$ and $b$ such that the degree of $\po_n$ is bounded by $2^a n^b$. Therefore, according to V.A.\ Markov's inequality (\ref{F 5.2.2}) we have the bound
\[
\NOrme{ P_n^{(k)} - P_{n-1}^{(k)} }_\infty \leq (2^an^{2b})^k \Norme{ P_n - P_{n-1} }_{\infty} \leq (2^an^{2b})^k /2^{n-2} = 1/2^{n-(k.(a+2b \log(n))+2)}.
\]
We can then easily determine a constant $n_0$ such that, for $n \geq 2n_0k$, we have 
\[
n \geq 2(k.(a+2b \log(n))+2),
\] 
and therefore 
\[
\NOrme{ P_n^{(k)} - P_{n-1}^{(k)} }_\infty \leq 1/2^{n/2},
\] 
so that by posing $\nu(n) := 2 \sup(n_0k,n)$, we have, for $q \geq \nu(n),$ 
\[
\NOrme{ P_q^{(k)} - P_{q+1}^{(k)} }_\infty \leq 1/2^n,
\]
and therefore, since $\nu(n+1) = \nu(n)$ or $\nu(n)+2,$ 
\[
\NOrme{ P_{\nu(n)}^{(k)} - P_{\nu(n+1)}^{(k)} }_\infty \leq 1/2^{n-1},
\]
 hence finally 
\[
\NOrme{ P_{\nu(n)}^{(k)} - f^{(k)} }_\infty \leq 1/2^{n-2}.
\]
We end by noting that the double sequence $(n,k) \mapsto P_{\nu(n+2)}^{(k)}$ is $\p$-computable. 
\end{proof}

\begin{corollary} \label{5211}
If $f$ is a $\p$-point of $\cw$ and $a, b$ two $\p$-points of $[-1,1]$, then the sequences 
\[
\NOrme{f^{(n)} }_{\infty}, \; \NOrme{f^{(n)} }_2, \; \NOrme{f^{(n)} }_1, \; \NOrme{f^{(n)}(a)} \;{\rm and} \; \sup\nolimits_{x \in [a,b]} (f^{(n)}(x))
\]
are $\p$-sequences in $\RR$.

\noindent
More generally, if $(f_p)$ is a $\p$-sequence of $\cw$ then the double sequences 
\[
\NOrme{f_p^{(n)}}_\infty, \; \NOrme{f_p^{(n)} }_2, \; \NOrme{f_p^{(n)} }_1, \; \NOrme{f_p^{(n)}(a)} \; {\rm and} \; \sup\nolimits_{x \in [a,b]} (f_p^{(n)}(x))
\]
are $\p$-sequences in $\RR$.
\end{corollary}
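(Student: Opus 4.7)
The plan is to derive the corollary as a direct composition of Theorem \ref{5210} with Proposition \ref{526}, via the functoriality stated in Propositions \ref{227} and \ref{2211}. The corollary is essentially a ``plug-in'' statement: each of the five quantities on the list is the value of some uniformly $\p$-computable functional applied to the $\p$-sequence (resp.\ $\p$-double-sequence) of derivatives produced by \thref{5210}, so the conclusion will follow automatically.

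First I would invoke \thref{5210} to obtain, in case (i), a $\p$-sequence $(f^{(n)})_{n\in\NN_1}$ in $\cw$, and in case (ii), a $\p$-double sequence $(f_p^{(n)})_{(p,n)\in\NN_1\times\NN_1}$ in $\cw$. Next, I would invoke Proposition \ref{526}, noting that although stated on $[0,1]$ it applies verbatim after affine change of variable to $[-1,1]$: the functionals
\[
g\mapsto \norme{g}_\infty,\quad g\mapsto \norme{g}_2,\quad g\mapsto\norme{g}_1
\]
from $\cw$ to $\RR$, and
\[
(g,x)\mapsto g(x)\colon \cw\times[-1,1]\to\RR,\qquad (g,a,b)\mapsto \sup\nolimits_{x\in[a,b]}g(x)\colon \cw\times[-1,1]^2\to\RR,
\]
are uniformly of class $\p$. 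Here $\cw\times[-1,1]$ and $\cw\times[-1,1]^2$ are equipped with the natural product rational presentation of class $\p$ (cf.\ Examples \ref{213}).

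Now for case (i), Proposition \ref{227} (image of a $\p$-family by a uniformly $\p$ map) applied to the $\p$-sequence $(f^{(n)})_n$ in $\cw$ directly yields that $(\norme{f^{(n)}}_\infty)_n$, $(\norme{f^{(n)}}_2)_n$ and $(\norme{f^{(n)}}_1)_n$ are $\p$-sequences in $\RR$. For the evaluation $f^{(n)}(a)$ and the bounded sup, I would observe that since $a,b$ are $\p$-points of $[-1,1]$, the pair-family $(f^{(n)},a)_n$ is a $\p$-family in $\cw\times[-1,1]$ and $(f^{(n)},a,b)_n$ is a $\p$-family in $\cw\times[-1,1]^2$; Proposition \ref{227} (or equivalently Proposition \ref{2211}) then gives that $(f^{(n)}(a))_n$ and $(\sup_{x\in[a,b]}f^{(n)}(x))_n$ are $\p$-sequences in $\RR$.

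For case (ii), the argument is identical, now starting from the $\p$-double-sequence $(f_p^{(n)})_{(p,n)}$ of \thref{5210}, and pairing with the constant families $a$, $b$ in $[-1,1]$ when necessary. Since there is no nontrivial obstacle at any step, I expect no real difficulty; the only point to double-check is the routine affine-change-of-variable remark that makes Proposition \ref{526} applicable on $[-1,1]$ rather than $[0,1]$, and the fact that a product of two $\p$-presentations is again a $\p$-presentation so that pairing a $\p$-sequence in $\cw$ with a $\p$-point of $[-1,1]$ produces a $\p$-family in the product.
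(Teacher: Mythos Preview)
Your proposal is correct and matches the paper's intended argument: the paper gives no explicit proof of this corollary, treating it as an immediate consequence of \thref{5210} (and tacitly Proposition~\ref{526}), which is exactly the composition you spell out. The only minor remark is that the evaluation functional $(g,x)\mapsto g(x)$ is not listed verbatim in Proposition~\ref{526}, but it is covered either by Proposition~\ref{312} (since $\cw$ is a $\p$-presentation) or by specialising the $\sup$ functional to $a=b$, so your invocation is unproblematic.
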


The proof of \thref{5210} (and therefore of Corollary \ref{5211}) is somewhat uniform and has a more general meaning. We will now define the natural framework in which this theorem applies and give a new, more general and more satisfactory statement.

\begin{definition}\label{5212}
For $c$ and $\beta > 0$ let $\Gv_{c,\beta}$ be the class of Gevrey functions satisfying the inequality (of the kind (jjj) in \ref{523})
\[
 \forall m > cn^{\beta} \; \; \abS{A_m(f)}\; \leq 1/2^n
\] 
This is a closed convex part of $\C$. For any $c$ and $\beta$, 
let $\Y_{\Gv_{c, \beta}}$ be the elements of $\DD[X]$ which are in the class $\Gv_{c, \beta}$. 
This set $\Y_{\Gv_{c, \beta}}$ can be taken as the set of rational points of a rational presentation $\sC_{\Gv,c,\beta}$ of $\Gv_{c,\beta}$.
\end{definition}

Note that the test for membership of the subset $\Y_{\Gv_{c, \beta}}$  of 
$\DD[X]$ is in polynomial time, since the $A_m(f)$'s for a polynomial $f$ are its coefficients on the Chebyshev basis.
In this new framework \thref{528} has a more uniform and efficient formulation.

\begin{theorem} \label{5213}
Each functional $f \mapsto f^{(n)}$ is a function uniformly of class $\p$ from $\sC_{\Gv_{c, \beta}}$ to $\cw$. 
More precisely, the sequence of functions 
\[
(k,f) \mapsto f^{(k)} \; : \; \NN_1 \times \sC_{\Gv_{c, \beta}} \to \cw 
\]
is uniformly of class $\p$ (in the sense of Definition \ref{229}).
\end{theorem}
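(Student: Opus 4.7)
The plan is to verify the two requirements of Definition~\ref{229} (as recast in \ref{221}bis): a polynomial-time computation $\varphi\colon \NN_1 \times \Y_{\Gv_{c,\beta}} \times \NN_1 \to \yw$ approximating $f^{(k)}$ to within $1/2^n$, and a polynomial modulus of uniform continuity $\mu\colon \NN_1\times\NN_1 \to\NN_1$ such that $\norme{f_1-f_2}_\infty \leq 1/2^{\mu(k,n)}$ implies $\norme{f_1^{(k)}-f_2^{(k)}}_\infty \leq 1/2^n$ for all $f_1,f_2 \in \Gv_{c,\beta}$. The computation part is immediate: a rational point $f$ is a polynomial in $\DD[X]$, so $f^{(k)}$ can be obtained exactly by formal differentiation. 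If $f=\sum_j a_jX^j$ with $a_j\in\DD$ then the coefficients of $f^{(k)}$ are integer multiples $j(j-1)\cdots(j-k+1)a_j$, which remain in $\DD$ with bit-size bounded polynomially in the size of the input $(k,f,n)$; we set $\varphi(k,f,n):=f^{(k)}$.

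The real work is the modulus of uniform continuity, which cannot be derived by applying V.A.~Markov's inequality (\ref{F 5.2.2}) to the polynomial $g=f_1-f_2$, since the resulting bound would depend on $\deg g$, and this degree is not a priori controlled. Instead, I would exploit the Gevrey hypothesis to control the Chebyshev expansion of $g$. Since both $f_1, f_2 \in \Gv_{c,\beta}$, one has $\abs{A_m(g)} \leq \abs{A_m(f_1)}+\abs{A_m(f_2)} \leq 2/2^n$ for every $m > cn^\beta$, while trivially $\abs{A_m(g)} \leq 2\norme{g}_\infty$ for all $m$. Writing $g^{(k)} = {\sum_m}\,' A_m(g)\,\Tch_m^{(k)}$ and using V.A.~Markov's $\Norme{\Tch_m^{(k)}}_\infty \leq m^{2k}$, we bound
\[
\Norme{g^{(k)}}_\infty \;\leq\; \sum_{m\leq M}2\norme{g}_\infty\, m^{2k} \;+\; \sum_{m>M}\abs{A_m(g)}\,m^{2k},
\]
splitting at a threshold $M=cn_0^\beta$ to be chosen.

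The tail is estimated by the packet argument already used in the proof of \thref{523}: regrouping the indices $m\in(cn^\beta,c(n+1)^\beta]$ for $n\geq n_0$ gives
\[
\sum_{m>M}\abs{A_m(g)}\,m^{2k} \;\leq\; 2c^{2k+1}\sum_{n\geq n_0}\frac{(n+1)^{\beta(2k+1)}}{2^n},
\]
which is $\leq 1/2^{N+1}$ provided $n_0 \geq C_1(k\log(k+N)+N)$ for a constant $C_1$ depending only on $c$ and $\beta$; one checks this by comparison with the series obtained from $h$-fold differentiation of $\sum x^n$ at $x=1/2$. The head is bounded crudely by $2\norme{g}_\infty\cdot M^{2k+1} \leq 2\norme{g}_\infty\cdot(cn_0^\beta)^{2k+1}$, so requiring it to be $\leq 1/2^{N+1}$ leads to
\[
\mu(k,N) \;:=\; N+2+(2k+1)\bigl(\log c+\beta\log n_0\bigr),
\]
which is polynomial in $k+N$. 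The main obstacle, and the reason the head/tail split is essential, is thus the balancing act: $M$ must be large enough that the tail is controlled by the Gevrey decay of the $A_m$'s, yet small enough that the head, estimated only via $2\norme{g}_\infty$, remains tolerable after multiplication by the huge factor $M^{2k+1}$ coming from Markov. Combining $\varphi$ and $\mu$ gives the statement, and the same construction, seen as a function of the pair $(k,f)$, yields the uniform-in-$k$ version demanded by Definition~\ref{229}.
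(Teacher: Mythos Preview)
Your proof is correct and follows essentially the same head/tail Chebyshev-series decomposition as the paper. The only difference is in how the head term is handled: the paper applies Markov's inequality to the single polynomial $s_n(f-g)$ (getting a factor $n^{2k}$) and then controls $\norme{s_n(f-g)}_\infty$ via the Lebesgue-constant bound~(\ref{F 5.2.3}), whereas you bound each coefficient by $\abs{A_m(g)}\leq 2\norme{g}_\infty$ and sum, getting a factor $M^{2k+1}$. Your route is slightly more elementary (it avoids invoking~(\ref{F 5.2.3})), at the cost of one extra power of $M$ in the head estimate; both yield a polynomial modulus and the arguments are otherwise identical.
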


\begin{proof} 
The double sequence $(k,f) \mapsto f^{(k)}$ is of low complexity as a function from $\NN_1 \times \DD[X]$ to~$\DD[X]$ and therefore also as a function from $\NN_1 \times \Y_{\Gv, \beta}$ to $\DD[X]$.

\noindent 
The whole problem is therefore to show that we have a polynomial modulus of uniform continuity (in the sense of \ref{229}). 
We need to calculate a function $\mu(k,h)$ such that for all $f$ and $g$ in $\Y_{\Gv, \beta}$:
\[
\norme{ f - g }_\infty \leq 1/2^{\mu (k,h)} \Rightarrow \NOrme{f^{(k)} - g^{(k)} }_\infty \leq 1/2^h. 
\] 
This calculation is rather similar to the one used in the proof of Theorem \ref{528}. We write 
\[
\NOrme{f^{(k)} - g^{(k)} }_\infty \leq \NOrme{f^{(k)} - s_n(f)^{(k)} }_{\infty} + \NOrme{ g^{(k)} - s_n(g)^{(k)} }_{\infty} + \NOrme{ s_n(f-g)^{(k)} }_{\infty}.
\]
In the sum of the second member, the first two terms are bounded as follows
\[
\NOrme{f^{(k)} - s_n(f)^{(k)} }_{\infty}\leq  \sum_{q>n} \abS{A_q(f)} \NOrme{ T_q^{(k)} }_\infty \leq \sum_{q>n} \abS{A_q(f)} q^{2k}.
\]
Since we have: $\forall q > cn^{\beta} \; \abS{A_q(f)}\; \leq 1/2^n$, $\sum_{q>n} \abS{A_q(f)} q^{2k}$ is convergent and we can give exlicitly a polynomial $\alpha(k,h)$ such that (see explanation at the end of the proof),
\[
{\rm with} \; n = \alpha(k,h) \; \; \forall f \in \Y_{\Gv,c,\beta} \; : \; \; 
\sum_{q>n} \abS{A_q(f)} q^{2k} \leq 1/2^{h+2}.
\]
Once we have set $n = \alpha(k,h)$ we need to make small the term $\NOrme{ s_n(f-g)^{(k)} }_{\infty}$.
V.A.\ Markov's inequality (\ref{F 5.2.2}) implies that 
\[
\NOrme{ s_n(f-g)^{(k)} }_\infty \leq \Norme{ s_n(f-g) }_{\infty} n^{2k}.
\]
All that remains is to obtain a suitable bound for 
$\norme{s_n(f-g)}_{\infty}$ from $\norme{ f-g }_{\infty}$.
For example, we can use the inequality $S_n(f) \leq (4+ \log (n)) E_n(f)$ (from Formula (\ref{F 5.2.3})) whence
\[
\Norme{f^{(k)} }_\infty \leq \norme{f}_\infty + S_n(f) \leq \norme{f}_\infty + (4+ \log (n)) E_n(f) \leq (5+ \log (n)) \norme{f}_\infty.
\]
Finally, let's explain $\alpha(k,h)$. We have the inequalities
\[
\sum_{q \geq cn_0^{\beta}} \abS{A_q(f)} q^{2k} \leq \sum_{n \geq n_0}\sum_{q \leq c(n+1)^{\beta}}q^{2k}/2^n \leq \sum_{n \geq n_0} c(n+1)^{\beta}(c(n+1)^{\beta})^(2k)/2^n,
\]
so
\[
\sum_{q \geq cn_0^{\beta}} \abS{A_q(f)} q^{2k} \leq \sum_{n \geq n_0} (c(n+1)^{\beta})^{2k+1}/2^n \leq \sum_{n \geq n_0} 1/2^{\varphi (n, \beta ,c, k)}
\]
with, if $2^a \geq c$,
\[
\varphi (n, \beta ,c, k) \geq n- (2k+1)a-(2k+1) \beta \log(n+1).
\]
If we have 
\[
{\rm for} \; n \geq n_0 \; \; \varphi (n, \beta ,c, k) \geq h+n/2+4, \eqno (\star)
\]
we obtain
\[
\sum_{q \geq cn_0^{\beta}} \abS{A_q(f)} q^{2k} \leq \sum_{n \geq n_0} 1/2^{\varphi (n, \beta ,c, k)} \leq (1/2^{h+2}(1/4) \sum_{n \geq n_0} 1/2^{n/2} \leq 1/2^{h+2}.
\]
And we can take $\alpha (k,h) = cn_0^{\beta}$. 

\noindent 
It remains to be seen how we can achieve the $(\star)$ condition. 

\noindent 
For any integer $b$ we have an integer $\nu(b)\leq \max(8,b^2)$ for which 
\[
n > \nu (b) \; \Rightarrow\; n \geq b \log (n+1).
\]
If therefore $n \geq \nu (4(2k+1) \beta)$ we obtain
\[
\varphi (n, \beta ,c, k) \geq n-(2k+1)a-(2k+1)\beta \log(n+1) \geq (3n/4)-(2k+1)a
\]
and the condition $\varphi (n, \beta ,c, k)\geq h+n/2+4$ is fulfilled if $n/4 \geq (2k+1)a +h +4$.
Hence $\alpha(h,k) = c \max(\nu (4(2k+1)\beta ),4((2k+1)a + h +4))^{\beta}$. 
\end{proof}
 
Applying Proposition \ref{526}, we obtain:

\begin{corollary} \label{5214}~\\
i) The three sequences of functionals 
\[
(n,f) \mapsto \Norme{f^{(n)} }_{\infty} , \;  \Norme{f^{(n)} }_2, \; \Norme{f^{(n)} }_1 \qquad  \NN_1 \times \sC_{\Gv,c, \beta} \to \RR
\]
are uniformly of class $\p$ (in the sense of Definition \ref{229}).

\noindent 
ii) The sequence of functionals
\[(n,f,x) \mapsto f^{(n)}(x) \qquad 
\NN_1 \times \sC_{\Gv,c, \beta} \times [-1,1] \to \RR  
\] 
is uniformly of class $\p$.

\noindent 
iii) The sequence of functionals
\[  (n,f,a, b) \mapsto \sup_{x \in [a,b]} (f^{(n)}(x)) \qquad 
\NN_1 \times \sC_{\Gv,c, \beta} \times [-1,1] \times [-1,1] \to \RR 
\] 
is uniformly of class $\p$. 
\end{corollary}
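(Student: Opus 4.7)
The plan is to present each of the functionals of the corollary as a composition of two maps that are already known to be uniformly of class $\p$: first the derivation family
\[
\Phi\colon \NN_1 \times \sC_{\Gv,c,\beta} \to \cw, \qquad (n,f) \mapsto f^{(n)},
\]
which is uniformly of class $\p$ by \thref{5213}; and second one of the functionals on $\cw$ supplied by Proposition~\ref{526}. The composition will be handled by Propositions~\ref{2211} and~\ref{2212}, which ensure that composing uniformly-$\p$ families of uniformly continuous maps yields again a uniformly-$\p$ family.

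For part (i), I would simply compose $\Phi$ with each of the norm functionals $N_\star\colon g\mapsto \norme{g}_\star$ for $\star\in\{\infty,1,2\}$, each of which is uniformly of class $\p$ from $\cw$ to $\RR$ by Proposition~\ref{526}. Regarding $N_\star$ as a family indexed by a singleton, Proposition~\ref{2212} then gives directly that $(n,f)\mapsto N_\star(f^{(n)})$ is uniformly of class $\p$ as a family indexed by $\NN_1$ on $\sC_{\Gv,c,\beta}$.

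For parts (ii) and (iii), I would use the fact that the sup functional $(g,a,b) \mapsto \sup_{x\in[a,b]} g(x)$ is uniformly of class $\p$ from $\cw\times[-1,1]\times[-1,1]$ to $\RR$ by Proposition~\ref{526}; specialising to $a=b=x$ yields the evaluation map $(g,x)\mapsto g(x)$, which settles the ingredient needed for (ii). Composing with $\Phi$ in the $\cw$-coordinate and the identity on the $[-1,1]$ factor(s), and invoking Propositions~\ref{2211}--\ref{2212}, delivers the conclusions of (ii) and (iii).

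The only mildly delicate point — which I expect to be the main, though still modest, obstacle — is that in (ii) and (iii) the index set is mixed: a discrete factor $\NN_1$ together with one or two continuous factors $[-1,1]$. The composition framework of Proposition~\ref{2211} treats the $\NN_1$-index as discrete, so I would reorganise the composition by first regarding the discrete parameter $n$ as indexing a uniformly-$\p$ family of maps $\sC_{\Gv,c,\beta}\times[-1,1]^{\bullet}\to\cw\times[-1,1]^{\bullet}$ (via $\Phi(n,\cdot)$ on the first coordinate and the identity elsewhere), and then composing this $\NN_1$-indexed family with the fixed functional from Proposition~\ref{526}. The polynomial control in $n$ for both the modulus of uniform continuity and the evaluation of rational points is exactly what \thref{5213} provides, so Proposition~\ref{2212} applies with no further estimation required.
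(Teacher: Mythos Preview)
Your proposal is correct and follows essentially the same approach as the paper: the paper's proof is the single sentence ``Applying Proposition~\ref{526}, we obtain,'' which tacitly composes the derivation family of \thref{5213} with the functionals of Proposition~\ref{526}. You have simply made explicit the composition framework (Propositions~\ref{2211}--\ref{2212}) and the handling of the mixed discrete/continuous index sets that the paper leaves implicit.
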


\begin{remark} 
Theorems \ref{527}, \ref{528}, \ref{5210}, \ref{5213}, Proposition \ref{526} and Corollaries \ref{5211} and \ref{5214} significantly improve the results of \cite{KF82}, \cite{KF88} and \cite{Mu87} on analytic functions computable in polynomial time (in the Ko-Friedman sense).
\end{remark}

\subsection {Comparisons of different presentations of class $\p$}\label{subsec53}

In this section, we obtain the following chain of functions uniformly of class $\p$ for the identity of $\czu$. 
\[
\cw \to \csp \to \crf \equiv \csr \to \ckf 
\] 
and none of the $\to$ arrows in the line above is a $\p$-equivalence except perhaps $\csp \to \crf$ and very possibly $\crf \to \ckf$ (it would imply $\p = \np$).
First of all, it is clear that the identity of $\czu$ is of class $\LINT$ for the following cases: 
\[
\cw \to \csp ; \quad  \csp \to \csr; \quad  \crf \to \csr .
\]
Furthermore, the identity of $\czu$ is of class $\p$ in the following case 
\[
\crf \to \caf 
\] 
(in fact, only the calculation of the magnitude is not completely trivial, and it is surely in $\DTI (\Oo(N^2))$). 

We still have to show that the identity of $\czu$ from $\csp$ to $\crf$ and the one from $\csr$ to $\crf$ are of class~$\p$.

\begin{theorem} \label{531}
The identity of $\czu$ from $\csp$ to $\crf$ is uniformly of class $\p$.
\end{theorem}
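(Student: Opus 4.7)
The plan is to use Newman's approximation theorem (in the form of Proposition \ref{337}) to replace the Heaviside steps implicitly present in a piecewise polynomial function by well-controlled rational functions.

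Given $f = ((x_i)_{0 \le i \le t}, (P_i)_{1 \le i \le t}) \in \ysp$ and a precision $n \in \NN_1$, I would first exploit the compatibility condition $P_i(x_{i-1}) = P_{i-1}(x_{i-1})$ to rewrite
\[
\wi{f}(x) = P_1(x) + \sum_{i=2}^{t} \big(P_i(x) - P_{i-1}(x)\big)\,C_{x_{i-1}}(x),
\]
where $C_a$ denotes the Heaviside step at $a$. Crucially, since $(P_i - P_{i-1})(x_{i-1}) = 0$, a Lipschitz bound $L$ valid for all differences $P_i - P_{i-1}$ on $[0,1]$ (readable in polynomial time from the coefficients of the $P_i$) yields $|(P_i - P_{i-1})(x)| \le L\,\delta$ on any interval $[x_{i-1}, x_{i-1} + \delta]$. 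Let also $M := \max_i \Norme{P_i - P_{i-1}}_\infty$, which is similarly polynomial in $\flo{f}$.

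I would then fix $p := n + \Lg(t) + \Lg(L) + 3$ and substitute the continuous semilinear function $C_{p, x_{i-1}}$ from Proposition \ref{337} for each $C_{x_{i-1}}$: this changes the value only on the transition interval $[x_{i-1}, x_{i-1} + 1/2^p]$, where the local vanishing of $P_i-P_{i-1}$ bounds the per-term contribution by $L/2^p$, and hence globally by $tL/2^p \le 1/2^{n+2}$. Next, by Proposition \ref{337}, each $C_{p, x_{i-1}}$ is $\p$-approximable to precision $1/2^{n + \Lg(t) + \Lg(M) + 3}$ by a rational function $A_i/B_i \in \yrf$ (so in particular $B_i \ge 1$ on $[0,1]$), with formulas of size polynomial in the input size; the total additional error this introduces is at most $1/2^{n+2}$. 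Setting
\[
\wi{g} := P_1 + \sum_{i=2}^{t} (P_i - P_{i-1})\,\frac{A_i}{B_i},
\]
one obtains $\Norme{\wi{f} - \wi{g}}_\infty \le 1/2^n$.

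Finally, to produce a single element of $\yrf$, I would collect this sum over the common denominator $B := \prod_{i=2}^{t} B_i$, which still satisfies $B \ge 1$ on $[0,1]$ as a product of functions $\ge 1$; the numerator $P_1 B + \sum_i (P_i - P_{i-1})\, A_i \prod_{j \ne i} B_j$ is polynomial-time computable in $\DD[X]_f$ by Lemma \ref{324}, so the resulting pair belongs to $\yrf$ and is produced in polynomial time. The main obstacle is the bookkeeping for the two precision parameters — the semilinear smoothing scale $p$ and the accuracy of the Newman-type rational approximation — so that the cumulative error meets the target $1/2^n$; the vanishing of $P_i - P_{i-1}$ at each breakpoint is precisely what prevents an a priori blow-up in the transition regions and makes a polynomial $p$ sufficient.
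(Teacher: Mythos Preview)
Your proposal is correct and matches the paper's approach: both smooth the implicit step functions via the Newman-based family $C_{p,a}$ (Proposition~\ref{337}), use the vanishing $P_i(x_{i-1})=P_{i-1}(x_{i-1})$ together with a Lipschitz bound to control the transition error, and finish by reducing to a common denominator in $\yrf$. The only cosmetic difference is that the paper writes $\wi f \approx \sum_i h_i P_{i+1}$ with a partition of unity $h_i = C_{p,z_i}-C_{p,z_{i+1}}$ centred at the breakpoints, whereas you use the summation-by-parts dual $P_1+\sum_i (P_i-P_{i-1})\,C_{p,x_{i-1}}$; the error analyses and complexity bounds are the same.
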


\begin{proof} 
Let $n \in \NN_1$ and $f \in \ysp$. We need to calculate an element $g \in \yrf $ such that 
\[
\Norme{ \wi{f} - \wi{g} }_{\infty} \leq 1/2^n.
\] 
We have $f = ((x_0,x_1,\ldots,x_t),(P_1,P_2,\ldots,P_t))$ with $x_0=0$, $x_t=1$ and $P_i(x_i) = P_{i+1}(x_i)$ for $i = 1,\ldots,t-1$.
 We calculate $m \in \NN_1$ such that $2^m$ bounds  
$\norme{P_i}_{\infty}$ and  $\norme{P_i'}_{\infty}$ for each $i$. 

\noindent 
We let $p=m+n+1$ and $z_i = x_i - 1/2^{p+1}$ for $i=0,\dots,t-1$. For $i = 0$  we let $h_0 := -C_{p,z_1}$. For $i = t-1$ we let $h_{t-1}:=C_{p,z_{t-1}}$. 
For $i = 1,\ldots,t-2$ we let $h_i := C_{p,z_i} - C_{p,z_{i+1}}$  (see Figure \ref{fi531}). 

\noindent 
\begin{figure}[htbp] 
\begin{center}
\includegraphics*[width=12cm]{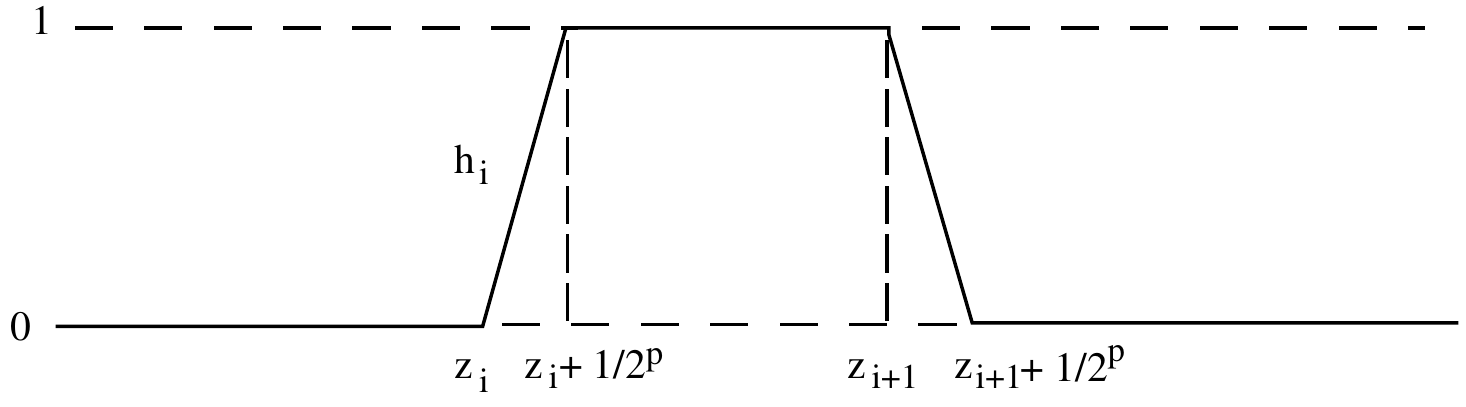}
\end{center}
\caption[The $h_i$ function]{\label{fi531} 
the function $h_i$} 
\end{figure} 

\noindent 
The function $\wi{f}$ is roughly equal to 
$h = \sum_i h_iP_{i+1}$: on the intervals $[z_i+1/2^p, z_{i+1} ]$ 
we get $h =\wi{f}$, while on an interval 
\[
[z_i, z_i +1/2^p] = [x_i - 1/2^{p+1}, x_i +1/2^{p+1}]
\]
we obtain $h = h_{i-1}P_i + h_i P_{i+1}$ which is a weighted average of $P_i$ and $P_{i+1}$, instead of $P_i$ or $P_{i+1}$. 
At a point $x$ on this interval, we have $\abs{x-x_i}\; \leq 1/2^{p+1}$, 
we apply the mean value theorem and using $P_i(x_i) =P_{i+1}(x_i)$ we obtain
\[
\abs{P_i(x) - P_{i+1}(x)}\; \leq \abs{P_i(x) - P_i(x_i)} + \abs{P_{i+1}(x) - P_{i+1}(x_i)}\; \leq 2^{m+1}/2^{p+1} \leq 1/2^{n+1}
\]
and therefore
\[
\Norme{ \wi{f} - h }_{\infty} \leq 1/2^{n+1}.
\]
It remains to replace each $h_i$ by an element $g_i$ of $\yrf$ satisfying $\Norme{ \wi{g_i} - h_i }_{\infty} \leq 1/(t2^{p})$
 (so that
$\Norme{\wi{g_i}P_{i+1}-h_iP_{i+1}}_{\infty}\leq 1/(t2^{n+1})$
and therefore
\[
\NOrme{ h - \sum\nolimits_i \wi{g_i}P_{i+1} }_\infty \leq 1/2^{n+1}.
\]
In view of Proposition \ref{337} concerning the approximation of functions $C_{p,a}$ by rational functions, the calculation of $g_i$ is done in polynomial time from the data $(f,n)$. It remains to express $\sum_i\wi{g_i}P_{i+1}$ in form $\wi{g}$ with $ g \in \yrf$, which is not difficult, to obtain $\Norme{ \wi{f} - \wi{g} }_{\infty} \leq 1/2^n.$ 
\end{proof}

\begin{theorem} \label{532}
The representations $\crf$ and $\csr$ of $\czu$ are $\p$-equivalent.
\end{theorem}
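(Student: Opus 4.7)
The easy direction is $\crf\to\csr$: every element of $\yrf$ is already (the code of) an element of $\ysr$ with a single piece ($t=1$, $x_0=0$, $x_1=1$), so the identity map is of complexity $\LINT$. All the work lies in showing that the identity $\csr\to\crf$ is uniformly of class $\p$. My plan is to imitate the argument used for \thref{531} (from $\csp$ to $\crf$), replacing piecewise polynomials by piecewise rational functions and taking care that everything stays inside $\yrf$.

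Let $f=\big((x_0,\ldots,x_t),(P_1/Q_1,\ldots,P_t/Q_t)\big)\in\ysr$ and $n\in\NN_1$. I first compute, in polynomial time, an integer $m$ such that $2^m$ bounds both $\norme{R_i}_\infty$ and $\norme{R_i'}_\infty$ on $[x_{i-1},x_i]$ for every $i$, where $R_i=P_i/Q_i$. Since $Q_i\ge 1$ on $[0,1]$ we have $\abs{R_i}\le\abs{P_i}$ and $\abs{R_i'}=\abs{(P_i'Q_i-P_iQ_i')/Q_i^2}\le\abs{P_i'Q_i-P_iQ_i'}$ on the interval, and all these quantities are bounded by $2^{c\flo f}$ for a small constant $c$ by the same formula-size argument used in the proof of Proposition~\ref{326} (the symbolic derivative of a formula is computable in polynomial time and its size is linear in the original). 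Next I set $p=m+n+2$ and $z_i=x_i-1/2^{p+1}$ for $i=1,\dots,t-1$, and I form the semilinear ``partition of unity''
\[
h_0=1-C_{p,z_1},\quad h_i=C_{p,z_i}-C_{p,z_{i+1}}\ (1\le i\le t-2),\quad h_{t-1}=C_{p,z_{t-1}},
\]
using the smoothed characteristic functions of Proposition~\ref{337}. Exactly as in the proof of \thref{531}, the function $h=\sum_{i=0}^{t-1} h_i\,R_{i+1}$ coincides with $\wi f$ outside the small windows $[z_i,z_i+1/2^p]$, and inside such a window the mean value theorem applied to $R_i$ and $R_{i+1}$, combined with $R_i(x_i)=R_{i+1}(x_i)$ and the bound $2^m$ on the derivatives, gives $\abs{R_i(x)-R_{i+1}(x)}\le 2^{m+1}/2^{p+1}\le 1/2^{n+1}$. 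Hence $\norme{\wi f-h}_\infty\le 1/2^{n+1}$.

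I then use Proposition~\ref{337} to replace each $h_i$ by a rational function $g_i\in\yrf$ with $\norme{\wi{g_i}-h_i}_\infty\le 1/(t\,2^{m+n+2})$; the cost is polynomial in $(\flo f,n)$, and the approximation is constructed in such a way that the denominator is globally $\ge 1$ on $[0,1]$. Multiplying $\wi{g_i}$ by $R_{i+1}=P_{i+1}/Q_{i+1}$ gives another rational function whose denominator is a product of factors each $\ge 1$ on $[0,1]$, so it is still $\ge 1$ on $[0,1]$. Reducing the sum $\sum_i \wi{g_i}\,R_{i+1}$ to a single quotient by taking a common denominator yields an element $g\in\yrf$: the common denominator is the product of at most $2t$ polynomials coming from $\yrf$-elements, so it is still $\ge 1$ on $[0,1]$, and the resulting formulae for numerator and denominator have sizes polynomial in $(\flo f,n)$. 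The triangle inequality gives
\[
\Norme{\wi f-\wi g}_\infty\le \Norme{\wi f-h}_\infty+\sum_i\Norme{(\wi{g_i}-h_i)R_{i+1}}_\infty\le 1/2^{n+1}+t\cdot\tfrac{1}{t\,2^{m+n+2}}\cdot 2^m\le 1/2^{n},
\]
which is the required precision.

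The step to watch carefully is the bookkeeping on formula sizes when one multiplies and adds up to $t$ rational functions to obtain the single element $g\in\yrf$: the sizes of numerator and denominator are $\Oo(t\cdot s)$ where $s$ is a uniform bound on the formula sizes of the factors. Since $s$ itself is polynomial in $(\flo f,n)$ (by Proposition~\ref{337} and the formula-arithmetic estimates of \lemref{324}), the whole construction stays polynomial, and the denominator condition ``$\ge 1$ on $[0,1]$'' is preserved throughout. This gives that the identity $\csr\to\crf$ is uniformly of class $\p$, completing the $\p$-equivalence.
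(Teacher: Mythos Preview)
Your argument has a genuine gap, and it stems from a misreading of the definition of $\ysr$. You write ``Since $Q_i\ge 1$ on $[0,1]$'', but the definition only guarantees $Q_i\ge 1$ on the piece $[x_{i-1},x_i]$; nothing prevents $Q_i$ from being small, negative, or even vanishing elsewhere on $[0,1]$. This breaks two steps of your proof. First, the error bound $\Norme{(\wi{g_i}-h_i)R_{i+1}}_\infty\le \tfrac{1}{t\,2^{m+n+2}}\cdot 2^m$ uses the global sup-norm of $R_{i+1}$ on $[0,1]$, whereas your $2^m$ only controls $R_{i+1}$ on its own subinterval; since $\wi{g_i}-h_i$ is not identically zero outside the support of $h_i$ (only small), the product $(\wi{g_i}-h_i)R_{i+1}$ can blow up near a zero of $Q_{i+1}$ lying in another piece. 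Second, and for the same reason, the claim that the common denominator of $\sum_i \wi{g_i}R_{i+1}$ is ``a product of factors each $\ge 1$ on $[0,1]$'' is false: the factor $Q_{i+1}$ need not satisfy this globally, so the resulting quotient need not lie in $\yrf$, and may not even define a continuous function on $[0,1]$.

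The paper sidesteps exactly this difficulty by a different reduction: it first rescales the pieces, choosing dyadics $d_1,\ldots,d_t\ge 1$ with $d_iQ_i(x_i)=d_{i+1}Q_{i+1}(x_i)$, so that the piecewise functions $g=(d_iP_i)_i$ and $h=(d_iQ_i)_i$ are genuine elements of $\ysp$ with $\wi f=\wi g/\wi h$ and $\wi h\ge 1$ on all of $[0,1]$. Then \thref{531} is applied to $g$ and $h$ separately to get global rational approximants in $\yrf$, and the final quotient is formed from these. The point is that the troublesome local denominators $Q_i$ never have to be treated as global denominators; they are absorbed into a single continuous piecewise-polynomial object before any rational approximation is performed. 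If you want to rescue your partition-of-unity route, you would need to replace each $R_{i+1}$ by a rational function whose denominator is $\ge 1$ on the whole of $[0,1]$ while still agreeing with $R_{i+1}$ on (a neighbourhood of) $[x_i,x_{i+1}]$; the paper's rescaling-and-gluing trick is precisely an efficient way to achieve that.
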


\begin{proof}
From an arbitrary element $f=((x_0,x_1,\ldots,x_t),((P_1,Q_1),\ldots,(P_t,Q_t)))$ of $\ysr$ we can calculate in polynomial time dyadic numbers $d_1, d_2,\ldots,d_t \geq 1$  satisfying $d_iQ_i(x_i) = d_{i+1}Q_{i+1}(x_i)$ (and therefore also $d_iP_i(x_i) = d_{i+1}P_{i+1}(x_i)$) for $i = 1, \ldots, t-1$.
Then $\wi{f} = \wi{g} / \wi{h}$ where $g,h \in \Y_{Sp}$ are given by:
\[
g = ((x_0,x_1,\ldots,x_t),(d_1P_1,\ldots,d_tP_t)) \quad  {\rm and} \quad  h = ((x_0,x_1,\ldots,x_t),(d_1Q_1,\ldots,d_tQ_t)).
\]
We conclude by using  \thref{531} which allows us to approximate $\wi g$ and $\wi h$ by rational functions. 
\end{proof}

\section*{Conclusion}\label{sec Conclusion}
\addcontentsline{toc}{section}{Conclusion}
Hoover has linked in an interesting way the natural notion of complexity of continuous real functions given by Ko and Friedman to another notion, based on arithmetic circuits. In this way he gave a certain ``polynomial time'' version of the Weierstrass approximation theorem. 

In this paper we have generalised Hoover's approach, introducing a uniform  point of view to the notion of rational presentation of a metric space.
This provides a satisfactory general framework for the study of many algorithmic complexity problems in analysis. 
We have also generalised to this approach the results of Ko, Friedman and Müller concerning analytic functions computable in polynomial time. 
The rational presentation $\csl$ $\p$-equivalent to $\ckf$ is the most natural from the point of view of theoretical computer science. 
However, it is not a $\p$-presentation and is ill-suited to numerical analysis as soon as problems more complicated than evaluation arise (the calculation of the norm, or of a primitive, for example).

Among the representations we have studied, the presentation $\cw$ seems to be the easiest to use for many numerical analysis problems. 

As for the presentation by rational functions, it deserves further study. We would like to obtain an analogue of the theorem (given for the presentation $\cw$) concerning the characterisation of $\p$-points.
It would also be interesting to obtain for $\crf$ the $\p$-calculability of certain usual operations in numerical analysis, such as the calculation of a primitive or more generally the calculation of the solution of an ordinary differential equation. 

\bni
{\bf Acknowledgements.}
We would like to thank Maurice Margenstern and the referee for their comments.

\addcontentsline{toc}{section}{Bibliographic references}

\normalsize
\endgroup
\stopcontents[english]

\clearpage
\newpage
\thispagestyle{empty}


~

\clearpage
\newpage

\renewcommand\thepage{F\arabic{page}}\renewcommand\theHsection{F\arabic{section}}

\selectlanguage{french}
\def\frenchproofname{\textsl{Démonstration}}

\setcounter{page}{1} 
\setcounter{section}{0}
\setcounter{subsection}{0}
\setcounter{equation}{0}

\selectlanguage{french}
\def\frenchproofname{\textsl{Démonstration}}

\FrenchFootnotes


\theoremstyle{plain}
\newtheorem{ftheorem}{Théorème}[subsection]
\newtheorem{fproposition}[ftheorem]{Proposition}
\newtheorem{fpropdef}[ftheorem]{Proposition et définition}
\newtheorem{flemma}[ftheorem]{Lemme}
\newtheorem{fcorollary}[ftheorem]{Corollaire}

\theoremstyle{definition}
\newtheorem{fdefinition}[ftheorem]{Définition}
\newtheorem{fnotation}[ftheorem]{Notation}
\newtheorem{fexample}[ftheorem]{Exemple}
\newtheorem{fexamples}[ftheorem]{Exemples}

\theoremstyle{remark}
\newtheorem{fremark}[ftheorem]{Remarque}
\newtheorem{fremarks}[ftheorem]{Remarques}


\renewcommand\csl{{\sC_{\rm csl}}}
\renewcommand\cbo{{\sC_{\rm cb}}}
\renewcommand\ckf{{\sC_{\rm KF}}}
\renewcommand\caf{{\sC_{\rm ca}}}
\renewcommand\capo{{\sC_{\rm cap}}}
\renewcommand\cw{{\sC_{\rm W}}}
\renewcommand\crf{{\sC_{\rm frf}}}
\renewcommand\csr{{\sC_{\rm sfrf}}}
\renewcommand\csp{{\sC_{\rm sp}}}

\renewcommand\CB{{\rm CB}}
\renewcommand\CA{{\rm CA}}

\renewcommand\ysl{{\Y_{\rm csl}}}
\newcommand\ybo{{\Y_{\rm cb}}}
\renewcommand\ykf{{\Y_{\rm KF}}}
\renewcommand\yaf{{\Y_{\rm ca}}}
\renewcommand\yap{{\Y_{\rm cap}}}
\renewcommand\yw{{\Y_{\rm W}}}
\renewcommand\yrf{{\Y_{\rm frf}}}
\renewcommand\ysr{{\Y_{\rm sfrf}}}
\renewcommand\ysp{{\Y_{\rm sp}}}

\renewcommand \loca {localement\xspace}
\renewcommand \uni {uniformément\xspace}
\newcommand \uniz {uniformément}
\renewcommand \mcu {module de continuité uniforme\xspace}
\newcommand \mcuz {module de continuité uniforme}
\renewcommand \unico {\uni continue\xspace}
\newcommand \unicoz {\uni continue}
\newcommand \unicos {\uni continues\xspace}
\newcommand \unicosz {\uni continues}

\renewcommand \com {complexité\xspace}
\newcommand \comz {complexité}
\newcommand \pres {présentation\xspace}
\newcommand \equivas {équivalentes\xspace}
\renewcommand \equiva {équivalente\xspace}
\renewcommand \etpo {en temps \poll}
\newcommand \elem {élément\xspace}

\newcommand \pol{polynôme\xspace}
\newcommand \pols{polynômes\xspace}

\newcommand \polt{polynomialement\xspace}
\newcommand \poll{polynomial\xspace}
\newcommand \polle{polynomiale\xspace}
\newcommand \polles{polynomiales\xspace}

\renewcommand\rp{\pres rationnelle\xspace}
\renewcommand\rps{présentations rationnelles\xspace}
\renewcommand\rapr{rationnellement présenté\xspace}
\newcommand\rapre{rationnellement présentée\xspace}
\newcommand\raprs{rationnellement présentés}

\newcommand\ta{{\rm t}}
\newcommand\prof{{\rm prof}}

\renewcommand\ni{\noindent }

\thickmuskip = 7mu plus 2mu

\pagestyle{headings}
\patchcmd{\sectionmark}{\MakeUppercase}{}{}{}

\title{Espaces métriques rationnellement présentés et complexité, le cas 
de l'espace des fonctions réelles \unicos sur un intervalle compact}
\author{S. Labhalla \\ Dépt. de Mathématiques \\ Univ. de Marrakech, Maroc 
\\{\tt labhalla@ucam.ac.ma} 
\and H. Lombardi \\ Laboratoire de Mathématiques de Besançon \\
Université Marie et Louis Pasteur, France \\  {\tt Henri.Lombardi@univ-fcomte.fr}
\and  E. Moutai\\ Dépt. de Mathématiques \\ Univ. de Marrakech, Maroc}

\maketitle

\begin{quotation} 
\small Cet article est paru en 2001 dans \emph{Theoretical Computer Science.} {\bf 250}, \num 1-2, 265--332  (reçu en avril 1997; révisé en mars 1999). 
Nous avons corrigé quelques erreurs de détail.

\end{quotation}

\rdb
\label{beginfrench}

\begin{abstract}
 Nous définissons la notion de {\em \rp d'un
espace métrique complet} comme moyen d'étude des espaces métriques et des
fonctions continues du point de vue de la complexité algorithmique. Nous
étudions dans ce cadre différentes manières de présenter l'espace $\czu$ 
des fonctions réelles \unicos sur l'intervalle $[0,1]$, muni de la norme 
usuelle:
 $\norme{f}_{\infty} = {\bf Sup} \{ \abs{f(x)}  \;
 0 \leq x \leq 1\}.$ Ceci nous permet de
faire une comparaison de nature globale entre les notions de complexité
attachées à ces présentations. En particulier, nous obtenons une
généralisation des résultats de Hoover concernant le {\em théorème 
d'approximation de Weierstrass en temps \poll}. Nous obtenons également une
généralisation des résultats de Ker-I. Ko, H. Friedman et N. Müller 
concernant les fonctions analytiques calculables en temps \poll.
\end{abstract}

\newpage 
\startcontents[french]

\setcounter{tocdepth}{4}
\markboth{Table des matières}{Table des matières}

\printcontents[french]{}{1}{}
\normalsize
\newpage

\section*{Introduction}\label{fsec0}
\addcontentsline{toc}{section}{Introduction}
\markboth{Introduction}{Introduction}
Notons   $\czu$ l'espace des fonctions réelles \unicos sur l'intervalle 
$[0,1]$.\\
Dans \cite{fKF82}, Ker-I. Ko et Friedman ont introduit et étudié la notion de 
\com des fonctions réelles, définie via une machine de Turing à oracle. \\
Dans l'article  \cite{fHo90}, Hoover a étudié les présentations de l'espace 
$\czu$ via les circuits booléens et les circuits arithmétiques.\\
Dans ces deux articles la \com dans l'espace  $\czu$  est étudiée ``point 
par point''.  \\
Autrement dit, sont définies des phrases comme : 
\begin{itemize}
\item $f$ est un $\p$-point au sens de Ker-I. Ko et Friedman.
\item $f$ est un $\p$-point au sens des circuits booléens.
\item $f$ est un $\p$-point au sens des circuits arithmétiques.
 \end{itemize}
Ko et Friedman ont étudié les propriétés des  $\p$-points  (au sens de 
Ker-I. Ko et Friedman).   
Hoover~a démontré que les  $\p$-points au sens des circuits arithmétiques sont 
les mêmes que les $\p$-points au sens de Ker-I. Ko et Friedman.

\smallskip  Dans cet article, nous étudions différentes présentations de 
l'ensemble  $\czu$  en nous basant sur la notion de {\em présentation 
rationnelle d'un espace métrique complet}. Ceci nous permet de faire une 
comparaison de nature globale entre les notions de \com attachées à 
différentes présentations.

\smallskip  Après quelques préliminaires dans la section~\ref{fsec1}, 
la section ~\ref{fsec2} contient  essentiellement une exposition de ce 
qui nous semble être une problématique naturelle concernant les 
questions de \com relatives aux espaces métriques complets séparables. 

\smallskip 
Usuellement un espace métrique contient des objets de nature infinie (le 
paradigme étant un nombre réel défini à la Cauchy), 
ce qui exclut une présentation informatique directe (c.-à-d. codée sur un 
alphabet fini) de ces objets. 
Pour contourner cette difficulté, on procède comme il est usuel pour 
l'espace  $\RR$. On considère une partie dense  Y  de l'espace métrique 
considéré  X, qui soit suffisamment simple pour que 
\begin{itemize}
\item ses éléments puissent être codés comme (certains) mots sur un 
alphabet fini fixé.
\item la fonction distance restreinte à  $Y$  soit calculable, c.-à-d. 
donnée par une fonction calculable:
\[
\delta\colon  Y\times Y\times \NN_1\rightarrow \QQ \; \qquad \hbox{avec}\qquad \; 
\abs {d_X(x,y) - \delta(x,y,n)} \;  \leq 1/2^n
\]
\end{itemize}
L'espace  $X$  apparaît alors comme le séparé-complété de  $Y$.\\
On dira que le codage proposé pour  $Y$  et la description proposée pour la fonction distance constituent une {\em présentation rationnelle} de l'espace 
métrique  $X$.
 
Il est à noter qu'on n'envisage pas de traiter des espaces métriques non 
complets, pour lesquels les difficultés de codage semblent insurmontables, et pour lesquels on ne peut pratiquement rien démontrer de sérieux en analyse constructive.

Pareillement les espaces métriques traités sont ``à base dénombrable'' 
(on dit aussi ``séparables'').

\smallskip Les espaces métriques étudiés en analyse constructive (cf. 
\cite{fBB}) sont très souvent définis via une présentation rationnelle de 
ce type, ou au moins faciles à définir selon ce schéma. Le problème qui 
se pose est en général de définir constructivement une partie 
dénombrable dense de l'espace considéré. C'est évidemment impossible 
pour des espaces de Banach classiques non séparables du style  $L^\infty$, 
mais justement, ces espaces ne sont pas traitables sous leur forme classique par 
les méthodes 
constructives. Le problème est plus délicat pour des espaces qui sont 
classiquement séparables mais pour lesquels il n'y a pas de procédé 
naturel constructif qui donne une partie dénombrable dense. Par exemple c'est 
le cas pour un sous-espace fermé arbitraire d'un espace complet séparable, 
et c'est encore le cas pour certains espaces de fonctions continues.

\smallskip La présentation en unaire et la présentation en binaire de  
$\ZZ$  ne sont pas \equivas du point de vue la 
\com en temps \poll. La présentation en unaire est une présentation 
naturelle de  $\ZZ$  comme groupe tandis que 
la présentation en binaire est une \pres naturelle de  $\ZZ$  comme anneau. 
On peut se poser des questions analogues concernant les espaces métriques 
classiques usuels.

\smallskip La première question qui se pose est la comparaison des 
différentes présentations d'un espace métrique usuel.  Notez que la 
présentation usuelle de  $\RR$  est considérée comme la seule naturelle 
et que c'est à partir de cette présentation de  $\RR$  qu'est définie la 
\com d'une présentation  $(Y_1, \delta_1)$  de  $X$  ou la \com de 
l'application $\Id_X$  de  $(Y_1, \delta_1)$  vers  $(Y_2, \delta_2)$  lorsqu'on 
compare deux présentations distinctes de  $X $.

\smallskip La question qui se pose ensuite est celle de la \com des fonctions continues calculables entre espaces métriques.  S'il~y~a une notion naturelle de \com des fonctions dans le cas des espaces compacts, la question est nettement plus délicate dans le cas général, car elle renvoie à la \com des fonctionnelles de type 2 arbitraires.
 
\smallskip Une autre question est celle de la complexité d'objets liés de 
manière naturelle à l'espace métrique qu'on étudie. Par exemple dans le 
cas des réels, et en ne considérant que la structure algébrique de  $\RR$  
on est intéressé par le fait que la \com de l'addition, celle de la 
multiplication, ou celle de la recherche des racines complexes d'un \pol 
à coefficients réels soient toutes ``de bas niveau''. 
Ce genre de résultats légitime a posteriori le choix qui est fait usuellement pour présenter $\RR$, et les résultats de nature inverse disqualifient d'autres présentations (cf. \cite{fLL}), moins efficaces que la \pres via les suites de Cauchy de rationnels écrits en binaire{\footnote{Néanmoins, le test de signe est indécidable pour les réels présentés à la Cauchy. 
On se contente d'avoir en temps \poll le test constructif:  
$x + y \geq  1/2^n  \Rightarrow   ( x \geq  1/2^{n+2} \; \hbox{ou}\;  
y \geq  1/2^{n+2} )$.}}. 
De même, un espace métrique usuel est en général muni d'une structure 
plus riche que la seule structure métrique, et il s'agit alors d'étudier,
 pour chaque présentation, la \com de ces ``éléments naturels de 
structure''.

\medskip 
Dans la section \ref{fsec3}, nous introduisons l'espace  $\czu$  des fonctions réelles \uni continues sur l'intervalle $[0,1]$ du point de vue de ses présentations rationnelles.
 
La fonction d'évaluation  $(f, x) \mapsto f(x)$  n'est pas \loca \unico. Vue son importance, nous discutons ce que signifie la \com de cette fonction lorsqu'on a choisi une \rp  de l'espace  $\czu$. 

Nous donnons ensuite deux exemples significatifs de telles présentations 
rationnelles:
\begin{itemize}
\item Une \pres par circuits semilinéaires binaires notée  $\csl$, où les 
points rationnels sont exactement les fonctions semilinéaires binaires. Une telle fonction peut être définie par un circuit semilinéaire binaire (cf. définition~\ref{f321}) et codée par un programme d'évaluation correspondant au circuit.
\item une présentation, notée $\crf$, via des fractions rationnelles 
convenablement contrôlées et données en \pres par formule (cf. définition~\ref{f325}).
\end{itemize}
Enfin nous établissons des résultats de \com liés au théorème 
d'approximation de Newman. En bref, le théorème de Newman a une \com 
\polle et en conséquence les fonctions linéaires par morceaux sont, 
chacune individuellement, des points de \com $\p$  dans l'espace~$\crf$.

\medskip Dans la section~\ref{fsec4} nous définissons et étudions des \rps  
``naturelles'' de  $\czu$,  \equivas du point de vue de la \com en temps 
\poll:
\begin{itemize}
\item La \pres dite ``à la Ko-Friedman'', et notée $\ckf$, pour laquelle un point rationnel est donné par un quadruplet $(Pr, n, m, T)$ 
où  $Pr$  est un programme de machine de Turing. Les entiers $n$, $m$,  $T$  sont des paramètres de contrôle.  
Nous faisons le lien avec la notion de \com introduite par Ko et Friedman. Nous démontrons une propriété universelle 
qui caractérise cette \rp  du point de vue de la \com en temps \poll.
\item La \pres par circuits booléens qu'on note par $\cbo$ et pour laquelle un point rationnel est donné par un quadruplet  $(C,n,m,k)$  où  $C$  code un circuit booléen et  $n$,  $m$,  $k$  sont des paramètres de contrôle.
\item La \pres par circuits arithmétiques fractionnaires (avec magnitude) 
notée  $\caf$. 
Un point rationnel de cette \pres est donné par un couple $(C,M)$ où $C$  est le code d'un circuit arithmétique, et $M$ est un paramètre de contrôle.
\item La \pres par circuits arithmétiques polynomiaux (avec magnitude) notée  $\capo$ analogue à la précédente, mais ici le circuit est \poll 
(c.-à-d. ne contient pas les portes ``passage à l'inverse''). 
\end{itemize}
Nous démontrons dans la section \ref{fsubsec42} que les présentations  $\ckf$,  
$\cbo$,  $\csl$,  $\caf$  et  $\capo$  sont \equivas en temps \poll. Ce 
résultat généralise et précise les résultats de Hoover. Non seulement 
les $\p$-points de  $\ckf$  sont ``les mêmes'' que les $\p$-points de  $\capo$, 
mais bien mieux, les bijections  
$$ \ckf \rightarrow \capo \;\; \;  \hbox{et} \; \; \; \capo \rightarrow \ckf $$
qui représentent l'identité de  $\czu$  sont, globalement, calculables en 
temps \poll. 
Ainsi nous obtenons une formulation complètement contrôlée du point de vue 
algorithmique pour le théorème d'approximation de Weierstrass. \\

\smallskip Dans la section~\ref{fsubsec43} , nous démontrons que ces 
présentations ne sont pas de classe  $\p$  en établissant la non 
faisabilité du calcul de la norme (si  $\p\neq \np$). Plus précisément, 
nous définissons convenablement ``le problème de la norme'' et nous 
démontrons qu'il s'agit d'un problème $\np$-complet pour les présentations 
considérées. 

Pour ce qui concerne le test d'appartenance à l'ensemble des (codes des) 
points rationnels,  c'est un problème co-$\np$-complet pour les 
présentations $\ckf$  et  $\cbo$, tandis qu'il est en temps linéaire pour la 
\pres  $\csl$. Cette dernière est donc 
légèrement plus satisfaisante.

\medskip Nous définissons dans la section \ref{fsec5}  d'autres présentations 
de l'espace  $\czu$  à savoir:
\begin{itemize}
\item La \pres notée  $\cw$  (comme Weierstrass) pour laquelle l'ensemble des 
points rationnels est l'ensemble des \pols (à une variable) à 
coefficients rationnels donnés en \pres dense.
\item La \pres notée  $\csp$  pour laquelle l'ensemble des points rationnels 
est l'ensemble des fonctions \polles par morceaux, chaque \pol 
étant donné comme pour  $\cw$.
\item La \pres notée  $\csr$  est obtenue à partir de  $\crf$  de la même 
manière que   $\csp$    est obtenue à partir de  $\cw$.
\end{itemize}
Nous essayons de voir jusqu'à quel point le caractère de classe $\p$ de ces 
présentations fournit un cadre de travail adéquat pour l'analyse 
numérique.\\
Nous caractérisons les $\p$-points de  $\cw$ en établissant l'équivalence 
entre les propriétés suivantes: (cf. théorème \ref{f527})
\begin{itemize}
 \item[a)] $f$  est un $\p$-point de  $\cw$
\item[b)] la suite   $A_n(f)$ (qui donne le développement de  $f$  en série 
de Chebyshev) est une $\p$-suite dans  $\RR$   et vérifie une majoration : 
\[
\abS{A_n(f)}\;  \leq Mr^{n^{\gamma}} \;\hbox{avec}\;M > 0,~\gamma > 0,~0< r < 1.
\]

\item[c)] $f$ est un $\p$-point de  $\ckf$  et est dans la classe de Gevrey 
\end{itemize}
Nous en déduisons une équivalence analogue entre (cf. \thref{f528}):
\begin{itemize}
\item[a)] $f$ est une fonction analytique et est un $\p$-point de  $\cw$.
\item[b)] $f$ est une fonction analytique et est un $\p$-point de  $\ckf$.
\end{itemize}

\smallskip Les calculs usuels sur les $\p$-points de  $\cw$ (calcul de la norme, du maximum et de l'intégrale d'un $\p$-point de  $\cw$) sont en temps \poll (cf. proposition \ref{f526}). 

De plus, un assez bon comportement de la dérivation vis à vis de la \com est obtenu en démontrant que pour tout $\p$-point (ou toute $\p$-suite) de  $\cw$  la suite (ou la suite double) de ses dérivées est une $\p$-suite de  $\cw$ (cf. théorème \ref{f5210} et corollaire \ref{f5211}). 

Dans le théorème \ref{f5213} et son corollaire \ref{f5214}, nous 
donnons une version plus uniforme des résultats précédents. Combinés avec la proposition \ref{f526} on obtient une amélioration sensible des résultats de  \cite{fMu87}  et  \cite{fKF88}. En outre nos preuves sont plus conceptuelles. 

\smallskip Par ailleurs, ces résultats, combinés avec le théorème de Newman (dans sa version précisée à la section \ref{fsubsec33}), démontrent que le passage de la \pres par fraction rationnelles  $\crf$  à la \pres par \pols (denses)  $\cw$ n'est pas en temps \poll. Le théorème de Newman démontre également que les présentations  $\crf$  et  $\csr$  sont \polt 
équivalentes.

\bigskip En résumant les résultats des sections \ref{fsec4} et \ref{fsec5}, 
nous obtenons  que l'identité de $\czu$ est \uni de classe  $\p$  dans les cas suivants: 
\[
\cw \rightarrow \csp \rightarrow \crf \equiv \csr 
\rightarrow \ckf \equiv \cbo \equiv \csl \equiv \caf 
\equiv \capo.
\] 
et aucune des flèches dans la ligne ci-dessus n'est une $\p$-équivalence 
sauf peut être  $\csp \rightarrow \crf $  et très éventuellement  $\crf 
\rightarrow \ckf$  (mais cela impliquerait $\p = \np$).

\bigskip Nous terminons cette introduction par quelques remarques sur le 
constructivisme.

Le travail présenté ici est écrit dans le style des mathématiques 
constructives à la Bishop telles qu'elles ont été développées 
notamment dans les livres de Bishop \cite{fBi}, Bishop \& Bridges \cite{fBB},  
Mines, Richman \& Ruitenburg \cite{fmrr}. Il s'agit d'un corps de mathématiques constructives en quelque sorte minimal\footnote{Un travail précurseur dans un style purement formel est celui de Goodstein \cite{fGo1,fGo}.}. Tous les théorèmes 
démontrés de cette manière sont en effet également valables pour les 
mathématiciens classiques, le bénéfice étant qu'ici les théorèmes et 
les preuves ont toujours un contenu algorithmique. Le lecteur ou la lectrice classique peut donc lire ce travail comme un prolongement des travaux en analyse récursive par Kleene et Turing puis plus tard notamment par Ker-I. Ko \& H. Friedman (\cite{fKF82}, \cite{fKo91}), N. Th. Müller \cite{fMu86,fMu87}, Pour El \& Richards \cite{fPR}, qui se situent eux dans un cadre de mathématiques classiques.

Par ailleurs les théorèmes et leurs preuves dans le style de Bishop sont 
également acceptables dans le cadre des autres variantes du constructivisme, comme l'intuitionnisme de Brouwer ou l'école constructive russe de A.A. Markov, G.S. Ceitin, N.A. Shanin, B.A. Kushner et leurs élèves. On trouvera une discussion éclairante sur ces différentes variantes de constructivisme dans le livre de Bridges \& Richman 
\cite{fBR}. 

Le livre très complet de Beeson \cite{fBe} donne également des 
discussions approfondies de ces points de vue et de leurs variantes, notamment en s'appuyant sur une étude remarquable des travaux des logiciens qui ont tenté de formaliser les mathématiques constructives.  

Les mathématiques constructives russes peuvent être découvertes à 
travers les livres de Kushner \cite{fKu} et O. Aberth \cite{fAb}. Un article historique très pertinent sur la question est écrit par M.~Margenstern \cite{fMa}. 
Le chapitre IV de \cite{fBe} est également très instructif. Beeson déclare page 58: ``nous espérons démontrer que [cet] univers [mathématique] est un endroit extrêmement divertissant, plein de surprises (comme n'importe quel pays étranger), mais en aucun cas trop chaotique ni invivable''. Les mathématiques constructives russes restreignent leurs objets d'étude aux ``êtres récursifs'' et se rapprochent en cela de certains travaux d'analyse récursive classique développés 
notamment par Grzegorczyk, Kreisel, Lacombe, Schoenfield et Specker, avec 
lesquels elles partagent de nombreux résultats. Un des principes des mathématiques constructives russes est par exemple que seuls existent les réels récursifs donnés par des algorithmes (qui calculent des suites de rationnels à vitesse de convergence controlée). Et une fonction réelle définie sur les réels est un algorithme  $F$  qui prend en entrée un algorithme  $x$  et donne en sortie, sous la condition que  $x$  est un algorithme produisant un réel récursif, un algorithme  $y$  produisant un réel récursif. Ceci conduit par exemple au théorème de Ceitin, faux classiquement, selon lequel toute fonction réelle définie sur les réels est continue en tout point. 
L'analyse récursive classique énonce quant à elle: si un algorithme 
termine chaque fois que l'entrée est le code d'un réel récursif et donne 
alors en sortie le code d'un réel récursif, et s'il définit une fonction 
(c.-à-d. deux codes du même réel récursif en entrée conduisent à 
deux codes d'un même réel récursif en sortie) alors il définit une 
fonction continue en tout point réel récursif (théorème de Kreisel-
Lacombe-Schoenfield). Beeson a analysé la preuve de Kreisel-Lacombe-
Schoenfield pour en préciser les aspects non constructifs. La preuve de 
Ceitin, qui est plus constructive que celle de 
Kreisel-Lacombe-Schoenfield, est analysée dans \cite{fBR}.
 
Dans le style de Bishop, comme on considère que la notion d'effectivité est une notion primitive qui ne se réduit pas nécessairement à la 
récursivité, et que la récursivité, au contraire, ne peut être définie sans cette notion primitive d'effectivité, les nombres réels et les fonctions réelles sont des objets qui conservent une plus grande part de 
``liberté'' et sont donc plus proches des réels et des fonctions réelles 
telles que les conçoivent intuitivement les mathématiciens classiques. 
Et le théorème ``russe'' précédent est donc indémontrable dans le style 
Bishop. De manière symétrique, les théorèmes de mathématiques 
classiques qui contredisent directement des résultats du 
constructivisme russe (comme par exemple la possibilité de définir sans 
aucune ambigüité des fonctions réelles discontinues) ne peuvent être 
démontrés dans les mathématiques du style Bishop. 

En conclusion, les mathématiques constructives russes présentent un 
intérêt historique indéniable, développent une philosophie 
mathématique très cohérente et peuvent sans doute trouver un renouveau à 
partir de préoccupations d'informatique théorique. Il serait donc également intéressant de faire une étude de ces mathématiques du point de vue de la complexité algorithmique.    

\section{Préliminaires}\label{fsec1}
\subsection{Notations}\label{fsubsec11}
\begin{description}
\item[$\NN_1$] ensemble des entiers naturels en unaire.
\item[$\NN$] ensemble des entiers naturels en binaire. Du point de vue de la
 \com, $\NN_1$ est isomorphe à la partie de  $\NN$  formée des puissances 
de  $2$.
\item[$\ZZ$] ensemble des entiers relatifs en binaire.
\item[$\QQ$] ensemble des rationnels présentés sous forme d'une fraction 
avec numérateur et dénominateur en binaire.
\item[$\QQ^{\NN_1}$] ensemble des suites de rationnels, où l'indice est en 
unaire.
\item[$\DD$] ensemble des  nombres de la forme  $k/2^n$  avec $(k,n) \in  \ZZ \times  \NN_1$.
\item[$\DD_{[0,1]}$] $\DD \cap  [0,1]$.
\item[$\DD_n$] ensemble  des nombres de la forme  $k/2^n $ avec $k \in  \ZZ$.
\item[$\DD_{n,[0,1]}$] $\DD_n \cap  [0,1]$.
\item[${\DD[X]}$] ensemble des \pols à coefficients dans  $\DD$   
donnés en \pres dense.
\item[${\DD[X]_f}$] ensemble des \pols à coefficients dans  $\DD$   
donnés en \pres par formule.
\item[$\RR$] ensemble des nombres réels présentés par les suites de 
Cauchy dans  $\QQ$.
\item [$\CB$] ensemble des codes de circuits booléens.
\item [$\CA$] ensemble des codes de circuits arithmétiques.
\item[{\rm MT}] machine de Turing.
\item[{\rm MTO}] machine de Turing à oracle.
\item[$\mu$] \mcu (cf.  définition \ref{f221})
\item[$\Lg(a)$] la longueur du codage binaire du nombre dyadique $\vert a \vert$ (pour  $a \in  \DD$).
\item[$\ta(C)$] taille du circuit  $C$  (le nombre de ses portes).
\item[$\prof(C)$] profondeur du circuit  $C$.
\item[$\Mag(C)$] magnitude du circuit arithmétique  $C$ (cf. définition 
\ref{f4110}).
\item[$\wi{f}$] fonction continue codée par  $f$.
\item[$\log(n)$] pour un entier naturel $n$ c'est la longueur de son codage binaire.
\item[$\M(n)$] \com du calcul de la multiplication de deux entiers en représentation binaire (en multiplication rapide $\M(n) = \Oo(n \log(n) \log\log(n)$).
\item[$\flo{d}$]	la longueur (du codage discret) de l'objet  $d$  (le codage de  $d$  est un mot sur un alphabet fini fixé).
\item[$\Flo{x}$] la partie entière du nombre réel $x$.  
\end{description}

\paragraph{Présentations rationnelles de l'espace  $\czu$}~

\begin{itemize}\itemsep2pt
\item[$\ckf$] \pres par Machine de Turing, à la Ko-Friedman\\
$\ykf$ est l'ensemble (des codes de) points rationnels (cf. définition \ref{f411})
\item[$\caf$] \pres par circuits arithmétiques\\
$\yaf$ est l'ensemble (des codes de) points rationnels (cf. définition \ref{f4110})
\item[$\capo$] \pres par circuits arithmétiques sans divisions (ou polynomiaux)\\
$\yap$ est l'ensemble (des codes de) points rationnels (cf. section \ref{fsubsubsec414})
\item[$\cbo$] \pres par circuits booléens\\
$\ybo$ est l'ensemble (des codes de) points rationnels  (cf. définition \ref{f418})
\item[$\csl$] \pres par circuits semilinéaires binaires\\
$\ysl$ est l'ensemble (des codes de) points rationnels (cf. définition 
\ref{f321})
\item[$\crf$] \pres par fractions rationnelles dans un codage par formules\\
$\yrf$ est l'ensemble (des codes de) points rationnels (cf. 
définition \ref{f325})
\item[$\cw$]  \pres ``à la Weierstrass'' par \pols dans un codage dense\\
$\yw$ est l'ensemble (des codes de) points rationnels (cf. 
section \ref{fsubsec51})
\item[$\csp$] \pres par fonctions \polles par morceaux dans un codage dense\\
$\ysp$ est l'ensemble (des codes de) points rationnels (cf. section \ref{fsubsubsec512})
\item[$\csr$] \pres par fractions rationnelles par morceaux dans un codage par formules\\
$\ysr$ est l'ensemble (des codes de) points rationnels (cf. section \ref{fsubsubsec513})
\end{itemize}

\subsection{Classes de fonctions discrètes intéressantes}\label{fsubsec12} 
Nous considérerons des classes de fonctions discrètes~$\ca$   (une fonction 
discrète est une fonction de $A^{\star}$
vers  $B^{\star}$  où  $A$  et  $B$  sont deux alphabets finis{\footnote{Si 
on veut donner une allure plus mathématique et moins informatique à la 
chose, on pourra remplacer  $A^{\star}$  et  $B^{\star}$  par des ensembles  
$\NN^k$.}}) jouissant des propriétés de stabilité élémentaires 
suivantes
\begin{itemize}
\item $\ca$ contient les fonctions arithmétiques usuelles et le test de 
comparaison dans  $\ZZ$   (pour  $\ZZ$  codé en binaire).
\item $\ca$ contient les fonctions calculables en temps linéaire  (c.-à-d. 
$\LINT  \subset  \ca$  )
\item $\ca$ est stable par composition.
\item $\ca$ est stable par listes:  si  $f\colon A^{\star} \rightarrow
B^{\star}$ est dans~$\ca$ alors $\lst(f) \colon  \lst(A^{\star})
\rightarrow \lst(B^{\star})$ est également dans~$\ca$ 
($\lst(f)[x_1, x_2,\ldots, x_n] = [f(x_1), f(x_2),\ldots, f(x_n)]$).
\end{itemize}
Une classe vérifiant les propriétés de stabilité précédentes sera 
dite {\em élémentairement stable}.
En pratique, on sera particulièrement intéressé par les classes 
élémentairement stables suivantes:
\begin{itemize}
\item $\Rec :$ la classe des fonctions récursives.
\item $\Fnc :$ la classe des fonctions constructivement définies  (en 
mathématiques constructives, ce concept est un concept primitif qui ne 
coïncide pas avec le précédent, et il est nécessaire de l'avoir au 
préalable pour pouvoir définir le précédent, la récursivité étant 
interprétée comme une constructivité purement mécanique dans son 
déroulement comme processus de calcul)
\item $\Prim :$ la classe des fonctions primitives récursives.
\item $\p :$ la classe des fonctions calculables en temps \poll.
\item $\cE:$ la classe des fonctions élémentairement récursives, c.-à-d. 
encore calculables en temps majoré par une composée d'exponentielles.
\item $\PSP :$ la classe des fonctions calculables en espace \poll (avec 
une sortie 
polynomialement majorée en taille).
\item $\LINS :$ la classe des fonctions calculables en espace linéaire (avec une sortie linéairement majorée en taille).
\item $\DSRT(\Lin, \Lin, \Poly):$ la classe des fonctions calculables en espace 
linéaire, \etpo  et avec une sortie linéairement majorée en taille.
\item $\DSRT(\Lin, \Lin, \Oo(n^k)):$ la classe des fonctions calculables en espace 
linéaire, en temps $\Oo(n^k)$ (avec  $k > 1$)  et avec une sortie 
linéairement majorée en taille.
\item $\DSRT(\Lin, \Lin, \Exp):$ la classe des fonctions calculables en espace linéaire, en temps $\exp(\Oo(n))$  et avec une sortie linéairement majorée en taille.
\item $\DRT(\Lin,\Oo(n^k)):$ la classe des fonctions calculables en temps 
$\Oo(n^k)$, (avec  $k > 1$)  et avec une sortie linéairement majorée en 
taille.
\item $\DSR(\Poly, \Lin):$ la classe des fonctions calculables en espace 
polynomial  avec une sortie linéairement majorée en taille.
\item $\DSR(\Oo(n^k), \Lin):$ la classe des fonctions calculables en espace 
$\Oo(n^k)$, (avec  $k > 1$)   avec une sortie linéairement majorée en taille.
\item $\QL:$ la classe des fonctions calculables en temps 
quasilinéaire (cf. Schnorr \cite{fSc}). \\ ($\QL: = \cup_b \DTI 
(\Oo(n.\log^b(n))) = \DTI (\QLin)$).
\end{itemize}

\smallskip On remarquera que, hormis les classes $\Rec$  et $\Fnc$, toutes les classes que nous avons considérées sont des classes de \com (au sens de Blum). 
Il n'y a cependant aucune nécessité à cela, comme le montrent justement les exemples de $\Rec$  et $\Fnc$. 
Lorsque nous considérons uniquement la classe $\Fnc$, nous développons un chapitre de mathématiques constructives abstraites.
Notez que $\DTI (\Oo(n^k))$ pour $k > 1$ n'est pas stable par composition. Mais, le plus souvent, les calculs en temps $\Oo(n^k)$ que nous aurons à considérer sont dans la classe $\DRT(\Lin, \Oo(n^k))$  ou même $\DSRT(\Lin, \Lin, \Oo(n^k))$  et ces classes ont les bonnes propriétés de stabilité.

\subsection{Complexité d'une Machine de Turing Universelle}\label{fsubsec13} 
Nous aurons besoin dans la suite d'utiliser une Machine de Turing Universelle et d'estimer sa \com algorithmique. 
Le résultat suivant, pour lequel nous n'avons pas trouvé de référence, semble faire partie du folklore, il nous a été signalé par M. Margenstern.

\begin{flemma} \label{f131} 
Il existe une machine de Turing universelle $MU$ qui fait le travail suivant.\\
Elle prend en entrée:\\
---  le code  (dont la taille est  $p$)  d'une machine de Turing $M_0$ 
(fonctionnant  sur un alphabet fixé, avec une bande d'entrée, une bande de 
sortie, plusieurs bandes de travail)  supposée être de \com en temps  $T $ 
et en espace  $S$  (avec  $S(n) \geq  n$)\\ 
--- une entrée  $x$   (de taille $n$)  pour $M_0$.\\
Elle donne en sortie le résultat du calcul exécuté par  $M_0$  pour 
l'entrée  $x$.\\
Elle exécute cette tâche en un temps  $\Oo(T(n)(S(n)+p))$  et en utilisant un 
espace  $\Oo(S(n)+p).$
\end{flemma} 

\proof 
La machine $MU$ utilise une bande de travail pour y écrire, à chaque étape 
élémentaire de la machine $M_0$,  qu'elle simule, la liste des contenus de 
chacune des variables de~$M_0.$ Pour simuler une étape de $M_0$ la machine  
$MU$  a besoin de  $\Oo(S(n)+p)$  étapes élémentaires, en utilisant un 
espace du même ordre de grandeur.\eop

\subsection{Circuits et programmes d'évaluation}\label{fsubsec14} 
Les familles de circuits constituent des modèles de calcul intéressants, 
notamment du point de vue du parallélisme. Les familles de circuits booléens constituent dans une certaine mesure une alternative au modèle standard des Machines de Turing. Des circuits arithmétiques de faible taille sont capables de calculer des \pols de très grand degré. C'est par exemple le cas pour un circuit arithmétique qui simule une itération de Newton pour une fonction donnée par une fraction rationnelle.
 
Dans tous les cas se pose le problème de savoir quel codage on adopte pour un circuit. Nous choisirons de toujours coder un circuit par un des programmes d'évaluation (ou straight-line program) qui exécutent la même tâche que lui\footnote{À un même circuit peuvent correspondre différents programmes d'évaluation selon l'ordre dans lequel on écrit les instructions à exécuter.}.

Par ailleurs, concernant les circuits arithmétiques, qui représentent des 
\pols ou des fractions rationnelles, nous les envisagerons non pas du point de vue du calcul exact (ce qui serait trop coûteux), mais du point de vue du calcul approché. Se pose alors la question d'évaluer leur temps 
d'exécution lorsqu'on veut garantir une précision donnée sur le 
résultat, pour des entrées dans  $\DD$  données elles-mêmes avec une 
certaine précision. Aucune majoration raisonnable du temps d'exécution ne 
peut être obtenue par des arguments d'ordre général si la profondeur du 
circuit n'est pas très faible, car les degrés obtenus sont trop grands et 
les nombres calculés risquent de voir leur taille exploser. Comme on n'arrive pas toujours à se limiter à des circuits de très faible profondeur, il s'avère indispensable de donner un paramètre de contrôle, appelé magnitude, qui assure que, malgré un éventuel très grand degré, la 
taille de tous les nombres calculés par le circuit (qui seront évalués 
avec une précision elle même limitée) n'est pas trop grande lorsque 
l'entrée représente un réel variant dans un intervalle compact.

\section{Espaces métriques complets \raprs   }\label{fsec2}

\subsection[Présentation rationnelle d'un espace métrique, \com des 
points \ldots ]{Présentation rationnelle d'un espace métrique, \com des points 
et des familles de points}\label{fsubsec21}
Sauf mention expresse du contraire, les classes de fonctions discrètes que 
nous considérons seront des classes élémentairement stables.
\begin{fdefinition}[\rp  de classe~$\ca$  pour un espace métrique complet] \rm \label{f211}
~\\
Un espace métrique complet  $(X, d_X)$  est {\em donné dans une \pres  
rationnelle de classe~$\ca$} de la manière suivante. On donne un triplet  
$(Y, \delta, \eta)$  où
\begin{itemize}
\item $Y$  est une~$\ca$-partie d'un langage  $A^{\star}$
\item $\eta$ est une application de $Y$ dans $X$
\item$\delta\colon  Y\times Y\times \NN_1\rightarrow
\DD$   est une fonction dans la classe~$\ca$   et vérifiant  (pour  
$n\in\NN_1$  et  $x ,  y , z   \in  Y$):\\
--- $\abs{d_X(x,y)-\delta(\eta(x),\eta(y))}\; \leq 1/2^n$\\
--- $\delta(x,y,n) \in \DD_n $\\ 
--- $\delta(x,y,n) = \delta(y,x,n)$\\
--- $\delta(x,x,n) = 0$\\ 
--- $\abs{\delta(x,y,n+1)-\delta(x,y,n)}\;  \leq 1/2^{n+1}$\\ 
--- $\delta(x,z,n) \leq \delta(x,y,n) + \delta(y,z,n) + 2/2^n$ 
\end{itemize}
Si on définit l'écart  $d_Y(x,y)$   comme la limite de $\delta(x,y,n)$ 
lorsque $n$  tend vers l'infini, l'application  $\eta$  de $Y$ dans $X$ 
identifie $(X,d_X)$
au séparé complété de $(Y,d_Y)$.
L'ensemble  $\eta(Y)$  est appelé l'ensemble des {\em points rationnels}  de  
$X$  pour la \pres considérée.  Si  $y \in  Y$  on notera souvent   
$\wi{y}$   pour le point rationnel $\eta(y)$,  et  $y$  est appelé le code de  
$\wi{y}$.
\end{fdefinition}
Nous abrégerons parfois  ``$(Y,\delta,\eta)$ est une \pres  rationnelle de 
classe~$\ca$   pour  $(X,d_X)$''  en  ``$(Y,\delta)$ est une~$\ca$-\pres de  
$(X,d_X)$''.

\begin{fremarks} \label{f212}
1) On peut remplacer  $\DD$  par  $\QQ$  dans la définition ci-dessus sans 
modifier la notion qui est définie. 
Le fait de choisir  $\QQ$  est plus joli mathématiquement, tandis que le fait de choisir   $\DD$  est plus naturel d'un point de vue informatique. 
En outre la contrainte  $\delta(x,y,n) \in \DD_n$  répond à la requête naturelle de ne pas utiliser plus de place que nécessaire pour représenter une approximation à $1/2^n$  près d'un nombre réel.\\
2) Lorsqu'on a donné une \rp  pour un espace métrique abstrait  $(X,d_X)$, 
on dira qu'{\em on l'a muni d'une structure de calculabilité}. Cela revient essentiellement à définir un langage  
$Y \subset  A^{\star}$  puis une application $\eta$ de  $Y$  vers  $X$  dont l'image soit une partie dense de  $X$. La \pres est complètement définie uniquement lorsqu'on a aussi donné une application  
$\delta \colon  Y \times  Y \times  \NN_1  \rightarrow   \DD$  
vérifiant les requêtes de la définition \ref{f211}. 
{\em Dans la suite, on se permettra néanmoins de dire qu'un codage d'une 
partie dense  $Y$  de  $X$  définit une \pres de classe~$\ca$   de  $X$  
lorsqu'on démontre que les requêtes de la définition \ref{f211} peuvent être satisfaites.} \\
3)	Comprise au sens constructif, la phrase incluse dans la définition 	
	``on donne une application  $\eta$  de  $Y$  vers $X$''	 
réclame qu'on ait	 
	``un certificat que  $\eta(y)$  soit bien un élément de  $X$  pour tout $y$  de $Y$''. 	
Il se peut que ce certificat d'appartenance implique lui-même une notion 
naturelle de \com. Lorsque ce sera le cas, par exemple pour l'espace  
$\czu$, il sera inévitable de prendre en compte cette \com. 
{\em Ainsi, la définition \ref{f211} doit en l'état actuel être considérée comme incomplète et à préciser au cas par cas.} C'est sans doute dommage si on se place du point de vue l'élégance formelle des définitions 
générales. Mais c'est une situation de fait qui semble très difficile à 
contourner. 
\end{fremarks}

\begin{fexamples} \label{f213}~\\
--- L'espace  $\RR$  est défini usuellement dans la \pres où l'ensemble des 
points rationnels est  $\QQ$.  De manière équivalente, et c'est ce que nous 
ferons dans la suite, on peut considérer comme ensemble des points rationnels 
de  $\RR$  l'ensemble  $\DD$  des nombres dyadiques{\footnote{Cela correspond 
à la notation $\RR_{\rm conv}$  dans \cite{fLL}  et   $\RR_{\rm con}$  dans 
\cite{fKo83}.}}.\\ 
--- On définira sans difficulté la \pres produit de deux présentations 
pour deux espaces métriques.\\ 
--- Tout {\em ensemble discret}  $Z$  ($Z$  est donné comme une partie d'un 
langage  $A^{\star}$  avec une relation d'équivalence qui définit 
l'égalité dans  $Z$  et qui est testable dans la classe~$\ca$  )  donne 
lieu à un {\em espace métrique discret}, c.-à-d. dans lequel la distance 
de deux points distincts est égale à  $1$. \\
--- Les espaces métriques complets des mathématiques constructives dans le 
style Bishop \cite{fBB} admettent en général une \rp  de classe  $\Fnc$. 
Dans chaque cas concret la \pres s'avère être une \pres de classe $\Prim$ ou 
même  $\p$. 
\end{fexamples}

\begin{fdefinition}[complexité d'un point dans un 
espace métrique rationnellement présenté] \label{f214}~\\
On considère un espace métrique complet  $X$  donné dans  une \pres  
$(Y,\delta,\eta)$  de classe $\ca'$. Un point  $x$  de  $X$  est dit {\em de 
classe}~$\ca$   lorsqu'on connaît une suite  ($n \mapsto y_n$)  de classe 
$\ca$  (en tant que fonction   $\NN_1   \rightarrow   Y$)   avec   
$d_X(x,\eta(y_n)) \leq 1/2^n$.
 On dit encore que  $x$  est {\em un~$\ca$-point dans  $X$}.
\end{fdefinition}

\begin{fexample}  \label{f215}
Lorsque  $X = \RR$, la définition ci-dessus correspond à la notion usuelle 
de ``nombre réel de classe~$\ca$''    (au sens de Cauchy).
\end{fexample}

\begin{fremark}\label{f216}
Il semblerait naturel de demander que la classe $\ca'$ contienne la classe  
$\ca$, mais ce n'est pas complètement indispensable. Cette remarque vaut pour  à peu près toutes les définitions qui suivent.
\end{fremark}

\mni
{\bf Définition \ref{f214}bis  ~}
{\em (Complexité d'un point dans un espace métrique 
rationnellement présenté, version plus explicite)} 
On considère un espace métrique complet  $X$  donné dans  une \pres de 
classe  $\ca'$: $(Y, \delta,\eta)$. \\
Un point  $x$  de classe~$\ca$   dans  $X$  est donné par une suite  ($n 
\mapsto y_n$)  de classe~$\ca$   
(en tant que fonction   $\NN_1   \rightarrow   Y$)   
vérifiant les conditions
$\delta(y_n,y_{n+1},n+1) \leq 1/2^n$ 
et $x=\lim_{n\rightarrow \infty}\eta(y_n)$.

\medskip 
Nous laissons à la lectrice ou au lecteur le soin de vérifier que les deux 
définitions \ref{f214} et \ref{f214}bis sont équivalentes.
Nous passons maintenant à la définition de la \com pour une famille de 
points (avec un ensemble d'indices discret)

\begin{fdefinition}[complexité d'une famille de points dans un espace métrique rationnellement présenté] \label{f217} 
On considère un préensemble discret  $Z$  (c'est l'ensemble des indices de 
la famille, il est donné comme une partie d'un langage  $A^{\star}$  sur un alphabet fini  $A$)  et un espace métrique complet~$X$  donné dans  une \pres de classe $\ca':~(Y, \delta,\eta).$  Une fonction (ou famille)   
$f\colon  Z \rightarrow X$  est dite de classe~$\ca$  lorsqu'on connaît une fonction $\varphi : Z \times \NN_1 \rightarrow Y$ qui est de classe~$\ca$   et qui vérifie 
$d_X(f(z),\eta(\varphi(z,n))) \leq 1/2^n$  pour tout $z \in Z$. 
On dit alors que  $\varphi$  est {\em une \pres de classe~$\ca$  de la famille $f(z)_{z\in Z}$ de points de  $X$.}
\end{fdefinition}

\begin{fexamples} \label{f218}~\\
--- Lorsque  $Z = \NN_1$  et  $X = \RR$  la définition ci-dessus correspond 
à la notion usuelle de ``suite de réels de classe~$\ca$'' (on dit encore: 
{\em une~$\ca$-suite dans  $\RR$}). \\
--- La définition d'un espace \rapr dans la classe~$\ca$  peut être relue de 
la manière suivante. On donne: \\
\spa -- une~$\ca$-partie  $Y$  d'un langage  $A^{\star}$. \\
\spa -- une fonction $\varphi \colon  Y \rightarrow X$ telle que $\varphi(Y)$ soit 
dense dans  $X$  et telle que la famille de nombres réels  $\; \; Y \times Y
\rightarrow \RR$, $\; \; (y_1, y_2) \mapsto  d_X(\varphi(y_1), \varphi(y_2))$ 
soit de classe~$\ca$. 
\end{fexamples}
 
\begin{fproposition} \label{f219}
Soit  $(x_n)$  une suite de classe~$\ca$   (l'ensemble d'indices est  $\NN_1$) 
dans un espace métrique \rapr  $(X,d)$.  Si la suite est explicitement de 
Cauchy avec la majoration   $d(x_n, x_{n+1}) \leq 1/2^n$  alors la limite de la 
suite est un point de classe~$\ca$   dans $X$.
\end{fproposition}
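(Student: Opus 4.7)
Mon plan consiste à construire explicitement une suite de points rationnels convergeant suffisamment vite vers la limite $x$, à partir de la \pres de la suite $(x_n)$, par un argument diagonal standard. Soit $(Y,\delta,\eta)$ la \rp  de $(X,d)$. Par définition d'une suite de classe~$\ca$ (Définition \ref{f217} ou sa version bis), je dispose d'une application $\psi\colon \NN_1 \times \NN_1 \to Y$ de classe~$\ca$ telle que $d(x_n,\eta(\psi(n,m))) \leq 1/2^m$ pour tous $n,m \in \NN_1$.

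L'idée est alors de poser $z_n := \psi(n+1,n+1)$. Comme $\psi$ est de classe~$\ca$ et que la diagonale $n \mapsto (n+1,n+1)$ est une fonction triviale (en $\LINT$, donc dans toute classe élémentairement stable), la composée $n \mapsto z_n$ est encore une fonction de classe~$\ca$ de $\NN_1$ vers $Y$. Il suffit alors de vérifier que $\eta(z_n)$ approxime $x$ à $1/2^n$ près.

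Pour cela, je note que la condition de Cauchy explicite $d(x_k,x_{k+1}) \leq 1/2^k$ entraîne, par sommation de la série géométrique et passage à la limite, que $d(x,x_{n+1}) \leq 1/2^{n+1} + 1/2^{n+2} + \cdots = 1/2^n$ à première vue, mais en fait on peut faire mieux en choisissant la borne $1/2^{n+1}$ à partir de la distance à $x_{n+1}$ (après avoir éventuellement redécalé d'un cran). L'inégalité triangulaire donne alors
\[
d(x,\eta(z_n)) \leq d(x,x_{n+1}) + d(x_{n+1},\eta(\psi(n+1,n+1))) \leq 1/2^{n+1} + 1/2^{n+1} = 1/2^n,
\]
ce qui démontre que $x$ est bien un~$\ca$-point de $X$ au sens de la Définition \ref{f214}.

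Il n'y a pas à proprement parler d'obstacle dans cette preuve: le seul point à soigner est le choix diagonal $\psi(n+1,n+1)$ (plutôt que $\psi(n,n)$), qui permet d'absorber simultanément l'erreur d'approximation de $x_{n+1}$ par un point rationnel et l'erreur de convergence de $x_{n+1}$ vers $x$, en donnant à chacune la moitié du budget $1/2^n$. Ce schéma diagonal sera d'ailleurs réutilisé à plusieurs reprises dans les preuves ultérieures de la section, notamment pour passer d'une \pres à une \pres bis, et pour démontrer les propriétés de stabilité par composition (Propositions \ref{f226} et \ref{f227}).
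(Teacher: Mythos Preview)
Your proof is correct and follows exactly the same diagonal argument as the paper: set $z_n=\psi(n+1,n+1)$ and use the triangle inequality through $x_{n+1}$. You were in fact slightly more careful than the paper in noticing that the geometric tail gives $d(x,x_{n+1})\leq 1/2^n$ rather than $1/2^{n+1}$, and that a one-step shift fixes the bookkeeping; the paper writes $1/2^{n+1}$ directly without comment.
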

\proof Soient  $(Y, \delta, \eta)$  une re\rp  de  $(X,d)$  et  $x$  la limite de la suite  $(x_n)$.
La suite  $(x_n)$  est de classe~$\ca$   dans  $X$, donc il existe une fonction  $\psi \colon  \NN_1 \times \NN_1 \rightarrow Y$  de classe~$\ca$  telle que: 
$$d(x_n,\psi(n,m)) \leq 1/2^m \; \;\hbox{pour tout}\; n, m \in \NN_1.
$$
On pose   $z_n = \psi(n+1,n+1)$, c'est une suite de classe~$\ca$    et
$$d(x,z_n) \leq d(x,x_{n+1}) + d(x_{n+1},\psi(n+1,n+1)) \leq 1/2^{n+1} +
1/2^{n+1} = 1/2^n.
$$
Donc  $x$  est un~$\ca$-point dans  $X$.
\eop

\begin{fremark}\label{f2110}
Si on ralentit suffisamment la vitesse de convergence de la suite de Cauchy, on 
peut obtenir comme point limite d'une suite de classe  $\DTI (n^2)$  un point 
récursif arbitraire de~$X$  (cf. \cite{fKF82} pour l'espace  $[0,1]$).
\end{fremark} 
\mni
{\bf Définition \ref{f217}bis  }
{\em (complexité d'une famille de points dans un espace métrique 
rationnellement présenté, version plus explicite)} 
On considère un préensemble discret  $Z$  (une partie d'un 
langage~$A^{\star}$)  et un espace métrique complet  $X$  donné dans  une \pres de classe $\ca': (Y,\delta,\eta).$  
Une famille $ f \colon  Z \rightarrow X$ est dite  de classe~$\ca$    si on a une fonction  
$ \varphi \colon  Z \times \NN_1 \rightarrow Y$ de classe~$\ca$   qui vérifie $\delta(\varphi(z,n),\varphi(z,n+1),n+1) \leq 1/2^n  $
et $f(z)=\lim_{n\rightarrow \infty}\eta(\varphi(z,n))$ pour tout $z\in Z$.

\subsection{Complexité des fonctions \uni continues} \label{fsubsec22}

Nous commençons par donner une définition ``raisonnable'' qui sera 
justifiée par les exemples et propositions qui suivent.

\begin{fdefinition}[\com des fonctions \unicos entre espaces métriques rationnellement présentés] \label{f221}
On considère deux espaces métriques complets  $X_1$  et  $X_2$  donnés 
dans  des présentations de classe  $\ca'$:   $(Y_1,\delta_1,\eta_1)$  et  
$(Y_2,\delta_2,\eta_2).$  Soient  $f \colon  X_1 \rightarrow X_2 $  une fonction \unico 
et  $\mu \colon  \NN_1 \rightarrow \NN_1$ une suite d'entiers.\\ 
On dira que $\mu$ est {\em un \mcu pour la fonction   $f$}  si on a :  
$$d_{X_1}(x,y) \leq 1/2^{\mu(n)} \Rightarrow d_{X_2}(f(x),f(y)) \leq
1/2^n \; \hbox{pour tous} \; x, y \in X_1 \;\hbox{et}\; n \in \NN_1$$
On dira que la fonction   $f$   est {\em \uni de classe}~$\ca$    (pour les 
présentations considérées) lorsque\\
--- elle possède un \mcu  $\mu \colon  \NN_1  \rightarrow   \NN_1$   dans la 
classe~$\ca$\\  
--- la restriction de  $f$  à  $Y_1$  est dans la classe~$\ca$   au sens de 
la définition \ref{f217}, c.-à-d. qu'elle est présentée par une fonction   
$\varphi\colon \NN_1 \times Y_1 \rightarrow Y_2 $  qui est de classe~$\ca$   et qui 
vérifie:
$$d_{X_2}(f(y),\eta_2(\varphi(y,n))) \leq 1/2^n \; \hbox{pour tout}\,  y \in 
Y_1.$$	
Lorsque  $X_1 = X_2 =X$  la fonction  $\Id_X$  admet  $\Id_{\NN_1}$  pour 
\mcu. Si les deux fonctions  $\Id_X$  (de  $X_1$  vers  $X_2$  et de  $X_2$  
vers  $X_1$)  sont dans la classe~$\ca$   on dira que {\em les deux 
présentations sont 
(\uniz)~$\ca$-équivalentes.}
\end{fdefinition}

\begin{fremarks}\label{f222}~\\
1)	On ne demande pas que, pour  $n$  fixé, la fonction  $y \mapsto
\varphi(y,n)$  soit \unico sur  $Y_1$  ni même qu'elle soit continue en chaque 
point de  $Y_1$.\\
2)	Notez que la définition ici donne un \mcu correspondant (à très peu 
près) à la définition donnée en  théorie 
classique de l'approximation. On appelle en général \mcu une suite croissante d'entiers  $\NN\to\ZZ:\;n \mapsto \nu(n)$  qui tend vers $+\infty$ et qui vérifie:
\[
d_{X_1}(x,y) \leq 1/2^n \Rightarrow d_{X_2}(f(x),f(y)) \leq 1/2^{\nu(n)}
\]	
La fonction $\nu$ est en quelques sorte ``réciproque'' de la fonction $\mu$.
\end{fremarks}

\begin{fexamples}\label{f223}~\\
--- Lorsque  $X_1 = [0,1]$ et $X_2 = \RR$  et lorsque~$\ca$    est une classe 
de \com en temps ou en espace {\em élémentairement stable}, la définition 
ci-dessus est \equiva à la notion usuelle de fonction réelle calculable dans 
la classe~$\ca$  (cf. \cite{fKF82}, \cite{fKo91}), comme nous le prouverons en 
détail à la proposition~\ref{f415}.\\
--- Lorsque  $X_1$ est un espace métrique discret la définition~\ref{f221} 
redonne la définition~\ref{f217}. 
\end{fexamples}
La définition \ref{f221} rend accessible la notion de \com pour les fonctions 
\unicos. Cela traite donc toutes les fonctions continues dans le cas où 
l'espace de départ est compact. Rappelons à ce sujet qu'en analyse 
constructive on définit, dans le cas d'un espace compact, la continuité 
comme signifiant la continuité uniforme (cf. \cite{fBB}) et non pas la 
continuité en tout point. 

\begin{fremark}\label{f224} 
Le contrôle de la continuité donné par le \mcu est essentiel dans la 
définition \ref{f221}. On peut par exemple définir une fonction  $\varphi \colon [0,1] \rightarrow \RR$  qui est continue au sens classique, dont la restriction 
à  $\DD_{[0,1]} = \DD\cap [0,1]$  est $\LINT$  mais qui ne possède pas de 
\mcu récursif. Pour cela on considère une fonction  $\theta \colon  \NN 
\rightarrow \NN$  de classe $\LINT $ injective et d'image non récursive. Pour 
chaque $m \in \NN_1$ on considère  $n = \theta(m)\; , a_n = 1/(3.2^n)$  et on 
définit  $\psi_m \colon  [0,1] \rightarrow\RR$  partout nulle sauf sur un 
intervalle centré en   $a_n$  sur lequel le graphe de  $\psi_m$  fait une 
pointe de hauteur  $1/2^n$  avec une pente égale à  $1/2^m$.  Enfin, on 
définit  $\varphi$  comme la somme de la série  $\sum_m \psi_m$.  Bien que 
la suite  $\varphi(a_n)$  ne soit pas une suite récursive de nombres réels 
(ce qui implique d'ailleurs que $\varphi$ ne puisse avoir de \mcu récursif), 
la restriction de $\varphi$ aux nombres dyadiques est très facile à calculer  
(les nombres réels litigieux  $a_n$  ne sont pas des dyadiques). Cet exemple 
met bien en évidence que {\em la définition classique de la continuité (la 
continuité en tout point) ne permet pas d'avoir accès au calcul des valeurs 
de la fonction à partir de sa restriction à une partie dense de l'espace de 
départ.}
\end{fremark}

\begin{fremark} \label{f225}
Dans \cite{fKF82}  il est ``démontré'' que si une fonctionnelle définie via 
une machine de Turing à oracle (MTO) calcule une fonction de  $[0,1]$  
vers  $\RR$,  alors la fonction possède un module de continuité uniforme 
récursif. Si on évite tout recours aux principes non constructifs (caché 
dans le théorème de Heine-Borel), la preuve de \cite{fKF82} peut être 
facilement transformée en une preuve constructive du théorème suivant:  
si une fonction  $f\colon [0,1] \rightarrow \RR$  est \unico et calculable par une MTO, alors son \mcu est récursif.\\ 
A contrario, si on fait l'hypothèse (nullement invraisemblable) selon laquelle tout oracle d'une machine de Turing serait fourni par un procédé mécanique (mais inconnu), il est possible de définir des fonctions ``pathologiques'' de  $[0,1]$  dans  $\RR$  au moyen de machines de Turing à oracles: ce sont des fonctions continues en tout point réel récursif mais non \unicos. 
Ceci est basé sur l'``arbre singulier de Kleene'': un arbre binaire récursif infini qui ne possède aucune branche infinie récursive. Cf. \cite{fBe}, théorème de la section 7 du chapitre 4, page 70 où est donnée une fonction  $t(x)$  continue en tout point réel récursif sur $[0,1]$, mais qui n'est pas bornée (donc pas \unicoz) sur cet intervalle.
\\  
Il semble étrange que dans une preuve concernant les questions de 
calculabilité, on puisse utiliser sans même la mentionner l'hypothèse 
que les oracles d'une MTO se comportent de manière antagonique avec la 
Thèse de Church (du moins sous la forme où elle est admise dans le
 constructivisme russe).
\end{fremark}

On vérifie sans peine que la définition \ref{f221} peut être traduite sous 
la forme plus explicite suivante. 

\mni
{\bf Définition \ref{f221}bis  ~}
{\em (\com des fonctions \unicos entre espaces métriques rationnellement 
présentés, forme plus explicite)}
On considère deux espaces métriques complets  $X_1$  et  $X_2$  donnés 
dans  des présentations de classe  $\ca'$:   $(Y_1,\delta_1,\eta_1)$  et  
$(Y_2,\delta_2,\eta_2).$  Une fonction \unico  $f \colon  X_1 \rightarrow X_2 $ est 
dite {\em  uniformément de classe~$\ca$}   (pour les présentations 
considérées) lorsqu'elle est présentée au moyen de deux données:\\
--- la restriction de  $f$  à $Y_1$   est présentée par une fonction   
$\varphi \colon  Y_1\times \NN_1 \rightarrow Y_2$ qui est de classe~$\ca$   et qui vérifie:
$$\delta_2(\varphi(y,n),\varphi(y,n+1),n+1) \leq 1/2^n \; \hbox{pour tout} \; 
y \in Y_1.$$
avec $f(y)=\lim_{n\rightarrow \infty}\eta_2(\varphi(y,n))$.\\
--- une suite  $\mu\colon  \NN_1  \rightarrow   \NN_1$   dans la classe~$\ca$    
vérifiant:  pour tous  $x$  et  $y$  dans  $Y_1$
$$\delta_1(x,y,\mu(n))\le 1/2^{\mu(n)} \; \Rightarrow\;  
\delta_2(\varphi(x,n+2),\varphi(y,n+2),n+2) \leq 1/2^n.$$

\medskip 
Les résultats qui suivent sont faciles à établir.
\begin{fproposition} \label{f226}
La composée de deux fonctions \uni de classe~$\ca$  est une fonction \uni de 
classe~$\ca$. 
\end{fproposition}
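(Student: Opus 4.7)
\emph{Plan de preuve.} Soient $X_1, X_2, X_3$ trois espaces métriques complets munis de \rps de classe $\ca'$, et $f\colon X_1 \to X_2$, $g\colon X_2 \to X_3$ deux fonctions uniformément de classe $\ca$. J'utiliserais la forme explicite (Définition \ref{f221}bis) des hypothèses: chaque fonction est présentée par un couple (module de continuité uniforme, fonction d'approximation de sa restriction aux points rationnels). On dispose donc de $\mu_f, \mu_g \colon \NN_1 \to \NN_1$ dans $\ca$ ainsi que de $\varphi_f\colon Y_1 \times \NN_1 \to Y_2$ et $\varphi_g\colon Y_2 \times \NN_1 \to Y_3$ dans $\ca$, satisfaisant les conditions voulues.

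Je commencerais par construire un \mcu pour $g \circ f$ en posant simplement $\mu(n) := \mu_f(\mu_g(n))$. En effet, si $d_{X_1}(x,y) \leq 1/2^{\mu(n)}$, la définition de $\mu_f$ donne $d_{X_2}(f(x),f(y)) \leq 1/2^{\mu_g(n)}$, puis celle de $\mu_g$ fournit $d_{X_3}(g(f(x)),g(f(y))) \leq 1/2^n$. La fonction $\mu$ est dans $\ca$ par stabilité par composition.

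Ensuite je définirais la fonction d'approximation composée
\[
\varphi(y,n) := \varphi_g\bigl(\varphi_f(y,\mu_g(n+1)),\, n+1\bigr) \colon Y_1 \times \NN_1 \to Y_3.
\]
L'idée est d'approcher $f(y)$ par le point rationnel $z := \varphi_f(y,\mu_g(n+1))$ assez précisément pour que $g(\eta_2(z))$ reste à distance $\leq 1/2^{n+1}$ de $g(f(y))$ (c'est exactement le rôle du choix $\mu_g(n+1)$), puis d'approcher $g(\eta_2(z))$ à $1/2^{n+1}$ près par $\varphi_g(z,n+1)$. L'inégalité triangulaire donne alors $d_{X_3}(g(f(y)),\eta_3(\varphi(y,n))) \leq 1/2^{n+1} + 1/2^{n+1} = 1/2^n$, comme souhaité. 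La fonction $\varphi$ est dans $\ca$ parce que $\ca$ est élémentairement stable (stabilité par composition).

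La preuve est essentiellement formelle et ne présente pas d'obstacle véritable; le seul point de vigilance est le choix de la précision intermédiaire ($n+1$ au lieu de $n$) afin que les deux erreurs d'approximation se cumulent sans dépasser $1/2^n$ via l'inégalité triangulaire. Aucun recours aux axiomes non-constructifs n'est nécessaire, et le schéma se transpose sans changement aux versions en familles (Propositions \ref{f2211} et \ref{f2212}) ainsi qu'au cadre \loca uniformément continu de la section \ref{fsubsec23}.
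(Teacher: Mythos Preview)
Your proof is correct and is exactly the natural argument one expects. The paper itself does not give a proof of this proposition: it is introduced by the sentence ``Les résultats qui suivent sont faciles à établir'' and stated without demonstration, so there is nothing to compare beyond noting that your construction (composing the moduli and nesting the approximation functions with the intermediate precision shifted by one) is the standard way to fill in this omitted verification.
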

\begin{fproposition} \label{f227}
L'image d'un point  (respectivement d'une famille de points)  de classe~$\ca$  
par une fonction \uni de classe~$\ca$   est un point  (respectivement d'une 
famille de points)  de classe~$\ca$. 
\end{fproposition}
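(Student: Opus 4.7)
\textsl{Plan de la preuve.} Pour traiter le cas d'un point unique, je pars des données suivantes : $x$ est de classe $\ca$ dans $X_1$, donc il existe une fonction $\psi\colon \NN_1 \to Y_1$ de classe $\ca$ telle que $d_{X_1}(x,\eta_1(\psi(n))) \leq 1/2^n$ pour tout $n$ ; la fonction $f$ est uniformément de classe $\ca$, donc on dispose d'une application $\varphi\colon Y_1 \times \NN_1 \to Y_2$ de classe $\ca$ vérifiant $d_{X_2}(f(\eta_1(y)),\eta_2(\varphi(y,n))) \leq 1/2^n$, et d'un \mcu $\mu\colon \NN_1 \to \NN_1$ dans la classe $\ca$. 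L'idée est de poser
\[
\chi(n) := \varphi\big(\psi(\mu(n{+}1)),\,n{+}1\big).
\]

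La vérification est immédiate par l'inégalité triangulaire : puisque $d_{X_1}(x,\eta_1(\psi(\mu(n{+}1)))) \leq 1/2^{\mu(n+1)}$, le \mcu fournit
\[
d_{X_2}\big(f(x),\,f(\eta_1(\psi(\mu(n{+}1))))\big) \leq 1/2^{n+1},
\]
et la propriété de $\varphi$ donne $d_{X_2}(f(\eta_1(\psi(\mu(n{+}1)))),\eta_2(\chi(n))) \leq 1/2^{n+1}$, d'où $d_{X_2}(f(x),\eta_2(\chi(n))) \leq 1/2^n$. Il reste à remarquer que $\chi$ est de classe $\ca$ : c'est une composée de $\mu$, $\psi$ et $\varphi$, toutes trois dans $\ca$, et la stabilité par composition (propriété de base des classes élémentairement stables, cf. section \ref{fsubsec12}) permet de conclure.

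Pour le cas d'une famille $(x_z)_{z\in Z}$ de points de classe $\ca$ indexée par un préensemble discret $Z$, on dispose de $\psi\colon Z \times \NN_1 \to Y_1$ de classe $\ca$ approchant chaque $x_z$, et il suffit de poser
\[
\chi(z,n) := \varphi\big(\psi(z,\mu(n{+}1)),\,n{+}1\big).
\]
Le même calcul donne $d_{X_2}(f(x_z),\eta_2(\chi(z,n))) \leq 1/2^n$ uniformément en $z$, et $\chi$ est de classe $\ca$ par composition, ce qui établit la proposition. Aucune difficulté essentielle n'est à prévoir : le seul point à surveiller est le bon décalage de précision (prendre $n{+}1$ et $\mu(n{+}1)$ plutôt que $n$ et $\mu(n)$) afin d'absorber les deux sources d'erreur dans une majoration finale en $1/2^n$.
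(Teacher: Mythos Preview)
Your proof is correct and is exactly the natural argument one expects; the paper itself does not spell out a proof for this proposition (it merely announces that ``les résultats qui suivent sont faciles à établir''), so there is nothing to compare against. Your handling of the precision shift ($n{+}1$ and $\mu(n{+}1)$) and the appeal to stability by composition are the right ingredients.
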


\begin{fremark}\label{f228}
Considérons deux \rps  d'un même espace métrique complet~$X$ données 
par les deux familles de points rationnels respectifs  $(\wi{y})_{y \in 
Y}$  et  $(\wi{z})_{z \in Z}$.  Si la première famille est $\LINT $ pour 
la première présentation elle peut naturellement être de plus grande 
\com pour la deuxième. \\
De manière générale, {\em dire que l'identité de  $X$  est \uni de classe  
$\ca$   lorsqu'on passe de la première à la deuxième \pres revient 
exactement à dire que la famille $(\wi{y})_{y \in Y}$  est une~$\ca$-
famille de points pour la deuxième présentation.} \\
Ainsi  deux présentations de $X$ sont~$\ca$-\equivas si et seulement si la famille  $(\wi{y})_{y \in Y}$  est une famille de classe~$\ca$  dans la seconde \pres et $(\wi{z})_{z \in Z}$   est une famille de classe~$\ca$   dans la première. 
La proposition~\ref{f227} nous permet de formuler un résultat analogue et plus 
intrinsèque: {\em deux \rps  d'un même espace métrique complet sont 
$\ca$-\equivas si et seulement si elles définissent les mêmes familles de 
points de classe~$\ca$}.
\end{fremark}
Nous passons maintenant à la définition de la \com pour une famille de 
fonctions \unicos (avec un ensemble d'indices discret).

\begin{fdefinition}[\com d'une famille de fonctions \unicos entre espaces métriques 
rationnellement présentés]\label{f229}

On considère un préensemble discret  $Z$  (c'est l'ensemble des indices de 
la famille, il est donné comme une partie d'un langage  $A^{\star}$  sur un alphabet fini  $A$)  et deux espaces métriques complets  $X_1$  et  $X_2$  donnés dans  des présentations de classe  $\ca'$:   $(Y_1,\delta_1,\eta_1)$  et  $(Y_2,\delta_2,\eta_2)$.\\  
Nous notons  $\U(X_1,X_2)$  l'ensemble des fonctions \unicos de  $X_1$  dans  $X_2$.  
Une famille de fonctions \unicos $\wi{f}\colon Z\rightarrow 
\U(X_1,X_2)$ est dite {\em \uni de classe~$\ca$} (pour les présentations considérées) lorsque\\
--- la famille possède un \mcu  $\mu \colon  Z \times \NN_1 \rightarrow\NN_1$   
dans la classe~$\ca$ :   la fonction  $\mu$  doit vérifier     
\[
\forall  n \in \NN_1\;\forall  x,
y \in X_1 \;\; \; \;  \big(d_{X_1}(x,y) \leq 1/2^{\mu(f,n)} \Rightarrow 
d_{X_2}(\wi{f}(x),\wi{f}(y)) \leq 1/2^n \big)
\] 
--- la famille   $(f,x) \mapsto \wi{f}(x) \colon  Z \times X_2 \rightarrow
X_2$  est une famille de points dans  $X_2$  de classe~$\ca$   au sens de la 
définition~\ref{f217}, c.-à-d. qu'elle est présentée par une fonction    
$\varphi\colon  Z \times Y_1 \times \NN_1 \rightarrow Y_2 $    qui est de classe  
$\ca$   et qui vérifie:
$$d_{X_2}\big(\wi{f}(x),\varphi(f,x,n)\big) \leq 1/2^n \; \hbox{pour tout} \; 
(f, x, n) \in Z \times Y_1 \times \NN_.$$
\end{fdefinition}

\begin{fremark} \label{f2210}
La notion définie en 2.2.9 est naturelle car elle est la relativisation à la classe~$\ca$ de la notion constructive de famille de fonctions \unicos. 
Mais cette notion naturelle ne semble pas pouvoir se déduire de la 
définition \ref{f221} en munissant   $Z \times  X_1$  d'une structure 
convenable d'espace métrique \rapr et en demandant que la fonction   $(f,x) \mapsto \wi{f}(x) \,;\, Z \times X_2 \rightarrow X_2$   soit \uni de classe  
$\ca$. Si par exemple, on prend sur   $Z \times  X_1$  la métrique déduite 
de la métrique discrète de  $Z$  et de la métrique de  $X_1$  on obtiendra 
la deuxième des conditions de la définition  \ref{f229}  mais la première 
sera remplacée par la demande que toutes les fonctions de la famille aient un 
même \mcu de classe~$\ca$. C'est-à-dire que la famille devrait être \uni 
équicontinue. Cette condition est intuivement trop forte. Les propositions 
\ref{f2211} et \ref{f2212} qui suivent sont une confirmation que la définition 
\ref{f229} est convenable.
\end{fremark}
Les propositions \ref{f226}  et \ref{f227} se généralisent au cas des familles 
de fonctions. Les preuves ne présentent aucune difficulté.
\begin{fproposition} \label{f2211}
Soient  deux espaces métriques complets  $X_1$  et  $X_2$  
\raprs.  Soit   $\big(\wi{f}\,\big)_{f \in Z}$  une famille dans   $\U(X_1,X_2)$  
\uni de classe~$\ca$    et  $(x_n)_{n \in \NN_1}$  une famille de classe~$\ca$   
dans  $X_1$.\\  
Alors la famille  $(\wi{f}(x_n))_{(f,n) \in Z \times \NN_1}$  est une 
famille de classe~$\ca$   dans  $X_2$.
\end{fproposition}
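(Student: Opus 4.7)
\smallskip\noindent\textbf{Esquisse de preuve (plan).} Le plan est purement algorithmique: on va composer les données de classe $\ca$ qui présentent d'une part la famille $\big(\wi{f}\,\big)_{f\in Z}$ et d'autre part la famille $(x_n)_{n\in\NN_1}$, afin de fabriquer explicitement une \pres de classe $\ca$ de la famille double $(\wi{f}(x_n))_{(f,n)\in Z\times\NN_1}$ au sens de la définition \ref{f217}.

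Concrètement, la définition \ref{f229} appliquée à la famille $\big(\wi{f}\,\big)_{f\in Z}$ nous fournira un \mcu $\mu\colon Z\times\NN_1\to\NN_1$ et une fonction de \pres $\varphi\colon Z\times Y_1\times\NN_1\to Y_2$, toutes deux dans la classe $\ca$, telles que $d_{X_2}\big(\wi{f}(\eta_1(y)),\eta_2(\varphi(f,y,n))\big)\le 1/2^n$ pour tous $(f,y,n)$. D'autre part, la définition \ref{f217} appliquée à la famille $(x_n)_{n\in\NN_1}$ fournira une fonction $\psi\colon\NN_1\times\NN_1\to Y_1$ de classe $\ca$ avec $d_{X_1}(x_n,\eta_1(\psi(n,m)))\le 1/2^m$. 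On posera alors, pour $(f,n,k)\in Z\times\NN_1\times\NN_1$,
\[
\chi(f,n,k)\;:=\;\varphi\big(f,\;\psi(n,\mu(f,k+1)),\;k+1\big).
\]
Une simple inégalité triangulaire donnera la majoration voulue: en effet, $d_{X_1}(x_n,\eta_1(\psi(n,\mu(f,k+1))))\le 1/2^{\mu(f,k+1)}$, donc le \mcu entraîne $d_{X_2}\big(\wi f(x_n),\wi f(\eta_1(\psi(n,\mu(f,k+1))))\big)\le 1/2^{k+1}$, tandis que la propriété définissant $\varphi$ donne $d_{X_2}\big(\wi f(\eta_1(\psi(n,\mu(f,k+1)))),\eta_2(\chi(f,n,k))\big)\le 1/2^{k+1}$, d'où $d_{X_2}(\wi f(x_n),\eta_2(\chi(f,n,k)))\le 1/2^k$.

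Il n'y a pas de véritable obstacle ici --- les auteurs notent d'ailleurs que \og les preuves ne présentent aucune difficulté\fg{}. Le seul point à surveiller est la vérification que $\chi$, obtenue par composition de $\varphi$, $\psi$, $\mu$ et de l'incrémentation unaire $k\mapsto k+1$, reste dans la classe $\ca$; ceci est acquis sous l'hypothèse d'élémentaire stabilité de la section \ref{fsubsec12} (stabilité par composition et présence des fonctions arithmétiques de base). La même idée, complétée par la proposition \ref{f226}, permettra ensuite de traiter sans effort supplémentaire la variante \ref{f2212} portant sur la composition de deux familles \uniz de classe $\ca$.
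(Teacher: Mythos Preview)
Your argument is correct and is exactly the natural composition-and-triangle-inequality proof the paper has in mind; indeed the paper does not spell out a proof, merely stating that propositions \ref{f226} and \ref{f227} \og se généralisent au cas des familles de fonctions. Les preuves ne présentent aucune difficulté\fg. Your explicit construction of $\chi(f,n,k)=\varphi\big(f,\psi(n,\mu(f,k+1)),k+1\big)$ and the verification via the \mcu and stability of $\ca$ by composition is precisely what that remark implies.
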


\begin{fproposition} \label{f2212}
Soient  trois espaces métriques complets  $X_1$, $X_2$  et  $X_3$  \raprs.  
Soit   $\big(\wi{f}\,\big)_{f \in Z}$ une famille dans  $\U(X_1,X_2)$  \uni de 
classe~$\ca$    et    $(\wi{g})_{g \in Z'}$  une famille dans  
$\U(X_2,X_3)$  \uni de classe~$\ca$. \\ 
Alors la famille  
$(\wi{f} \circ \wi{g})_{(f,g) \in Z \times Z'}$  
dans  $\U(X_1,X_3)$  est \uni de classe~$\ca$.
\end{fproposition}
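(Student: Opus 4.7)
The plan is to construct, from the data uniformly presenting $(\wi f)_{f\in Z}$ and $(\wi g)_{g\in Z'}$, both a modulus of uniform continuity and an approximation map for the composite family, and then to check that both constructions lie in the class $\ca$ by appealing to elementary stability (stability under composition, in particular). Throughout, to conform with the direction of the arrows, I shall read the family as $(h_{f,g})_{(f,g)}$ with $h_{f,g} := \wi g \circ \wi f \colon X_1 \to X_3$.

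By hypothesis we have four $\ca$-maps: a modulus $\mu_1 \colon Z \times \NN_1 \to \NN_1$ and an approximation $\varphi_1 \colon Z \times Y_1 \times \NN_1 \to Y_2$ for $(\wi f)_f$, and similarly $\mu_2 \colon Z' \times \NN_1 \to \NN_1$ and $\varphi_2 \colon Z' \times Y_2 \times \NN_1 \to Y_3$ for $(\wi g)_g$, satisfying the properties listed in Definition~\ref{f229}. I define the candidate modulus by
\[
\mu(f,g,n) \;:=\; \mu_1\big(f,\mu_2(g,n)\big),
\]
which lies in $\ca$ by stability under composition. The verification is immediate: if $d_{X_1}(x,y)\le 1/2^{\mu(f,g,n)}$, then $d_{X_2}(\wi f(x),\wi f(y))\le 1/2^{\mu_2(g,n)}$ by definition of $\mu_1$, and hence $d_{X_3}(\wi g(\wi f(x)),\wi g(\wi f(y)))\le 1/2^n$ by definition of $\mu_2$, which is exactly the modulus condition required for $h_{f,g}$.

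For the approximation map, the idea is to chain the two approximations with a precision budget split evenly. Set
\[
\varphi(f,g,x,n) \;:=\; \varphi_2\!\left(g,\;\varphi_1\big(f,x,\,\mu_2(g,n+1)\big),\;n+1\right).
\]
Again $\varphi$ is of class $\ca$ by composition. To check correctness, write $y := \varphi_1(f,x,\mu_2(g,n+1)) \in Y_2$. The $\varphi_1$-estimate gives $d_{X_2}(\wi f(x),\eta_2(y))\le 1/2^{\mu_2(g,n+1)}$, whence the modulus condition for $\wi g$ yields
\[
d_{X_3}\big(\wi g(\wi f(x)),\, \wi g(\eta_2(y))\big)\;\le\; 1/2^{n+1}.
\]
The $\varphi_2$-estimate gives $d_{X_3}(\wi g(\eta_2(y)),\eta_3(\varphi_2(g,y,n+1)))\le 1/2^{n+1}$, and the triangle inequality delivers $d_{X_3}(h_{f,g}(x),\eta_3(\varphi(f,g,x,n)))\le 1/2^n$, as wanted.

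There is no real obstacle here; the only thing to be careful about is the precision bookkeeping, namely that one must approximate $\wi f(x)$ with \emph{extra} precision equal to $\mu_2(g,n+1)$ rather than $\mu_2(g,n)$ (and request $\varphi_2$ to precision $n+1$), so that the two errors of size $1/2^{n+1}$ sum to $1/2^n$. Once this is arranged, the proof consists only of the triangle inequality and the closure of $\ca$ under composition.
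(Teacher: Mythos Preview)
Your proof is correct and is precisely the routine verification the paper has in mind: the paper gives no detailed argument for this proposition, merely remarking that Propositions~\ref{f226} and~\ref{f227} generalise to families and that ``les preuves ne présentent aucune difficulté''. Your construction of the composed modulus $\mu(f,g,n)=\mu_1(f,\mu_2(g,n))$ and the chained approximation with the $n+1$ precision split is exactly the natural argument, and your remark that the statement should be read as $\wi g\circ\wi f$ (rather than the literal $\wi f\circ\wi g$) so that the composite lands in $\U(X_1,X_3)$ is well taken.
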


\subsection{Complexité des fonctions ``\loca \unicos''} \label{fsubsec23}
La notion de \com définie au paragraphe précédent pour les fonctions 
\unicos est entièrement légitime lorsque l'espace de définition est 
compact. Dans le cas d'un espace \loca compact au sens de Bishop{\footnote{I.e., toute partie bornée est contenu dans un compact, \cite{fBB}.}} les fonctions continues sont les fonctions \unicos sur tout borné (du point de vue classique c'est un théorème, du point de vue constructif, c'est une définition). Ceci conduit à la notion de \com naturelle suivante.

\begin{fdefinition} \label{f231}
On considère deux espaces métriques complets  $X_1$  et  $X_2$  donnés 
dans  des présentations de classe  $\ca'$:   $(Y_1,\delta_1,\eta_1)$  et  
$(Y_2,\delta_2,\eta_2).$  On suppose avoir spécifié un point  $x_0$  de  
$X_1$  et un point  $y_0$  de  $X_2$. Une fonction  $f\colon  X_1  \rightarrow   X_2$   
est dite {\em \loca \unicoz}   si elle est \unico et bornée sur toute partie 
bornée.\\
Elle est dite {\em \loca \uni de classe~$\ca$}   (pour les présentations 
considérées) lorsque\\
--- elle possède dans la classe~$\ca$   une borne sur tout borné, c.-à-d. 
une~$\ca$-suite  $\beta \colon  \NN_1 \rightarrow \NN_1$ vérifiant, pour tout  $x$  dans  
$X_1$  et tout  $m$   dans  $\NN_1$:    
\[
d_{X_1}(x_0,x) \leq 1/2^n \Rightarrow d_{X_2}(y_0,f(x)) \leq 1/2^{\beta(n)};
\]
--- elle possède dans la classe~$\ca$   un \mcu sur tout borné, c.-à-d. 
une~$\ca$-fonction  $\mu \colon  \NN_1 \times \NN_1 \rightarrow \NN_1$   vérifiant,   pour   
$x, z$  dans  $X_1$  et   $n, m$  dans  $\NN_1$:    
\[
\left( d_{X_1}(x_0,x) \leq 1/2^m, \;d_{X_1}(x_0,z) \leq 1/2^m, \; d_{X_1}(x,z) \leq 1/2^{\mu(m,n)}\right) \; \Rightarrow\; d_{X_2}(f(x),f(z)) \leq 1/2^n;
\] 
--- la restriction de $f$ à $Y_1$ est dans la classe~$\ca$ au sens de 
la définition \ref{f217}, c.-à-d. qu'elle est présentée par une~$\ca$-fonction    
$\varphi\colon \NN_1 \times Y_1 \rightarrow Y_2 $     qui vérifie:
\[
d_{X_2}(f(y),\varphi(y,n)) \leq 1/2^n \;\hbox{pour tout} \; y \in Y_1.
\]
\end{fdefinition}
 
Notez que la notion définie ci-dessus ne dépend pas du choix des points
$x_0$ et $y_0$.

\begin{fexamples}\label{f232}~\\
--- Lorsque  $X_1 = \RR$  et  $X_2 = \RR$  la définition ci-dessus est 
\equiva à la notion naturelle de fonction réelle calculable dans la classe  
$\ca$   telle qu'on la trouve dans \cite{fHo90} et \cite{fKo91}.\\
--- La fonction  $x \mapsto x^2$  est \loca \unico de classe $\QL$ mais elle 
n'est pas \unico sur $\RR$.
\end{fexamples}

\begin{fremarks}\label{f233} ~\\
1)	Lorsque  $f$  est une fonction \unico, la définition \ref{f231} et la 
définition \ref{f221} se ressemblent beaucoup. Cependant la définition 
\ref{f221} est a priori plus contraignante. En effet, si $\mu(n)$  est un module 
de continuité dans la définition \ref{f221}, alors on peut prendre dans la 
définition \ref{f231}  $\mu'(m,n) = \mu(n)$. Mais réciproquement, supposons 
qu'on ait un \mcu sur tout borné vérifiant  $\mu'(m,n) = \inf(m, 2^n)$   
alors, la fonction est \unico mais le meilleur \mcu qu'on puisse en déduire 
est  
$$\mu(n) = {\bf Sup} \left\{ \mu'(m,n)\,;\, m \in \NN \right\} = 2^n$$
et il a un taux de croissance exponentiel alors que  $\mu'(m,n)$  est 
linéaire. Ainsi la fonction  $f$  peut être de \com linéaire en tant que 
fonction \loca \unico, et exponentielle en tant que fonction \unico.  
Les deux définitions sont \equivas si l'espace  $X_1$  a un diamètre fini.
\\
2)	En analyse constructive un espace compact est un espace précompact et 
complet. Il n'est pas possible de démontrer constructivement que toute partie 
fermée d'un espace compact est compacte. Un espace 
(\uniz) \loca compact est un espace métrique complet dans lequel tout borné 
est contenu dans un compact  $K_n$,  où la suite  $(K_n)$  est une suite 
croissante donnée une fois pour toutes (par exemple le compact  $K_n$  
contient la boule  $B(x_0, 2^n)$). Une fonction définie sur un tel espace est 
alors dite continue si elle est \unico sur toutes les parties bornées.  En 
particulier elle est bornée sur toute partie bornée. La définition 
\ref{f231} permet donc de donner dans ce cas une version ``\com'' de la 
définition constructive de la continuité.
\\
3)	La proposition \ref{f226} sur la composition des fonctions \uni de classe 
$\ca$  reste valable pour les fonctions \loca \uni de classe~$\ca$. La 
proposition \ref{f227} également.
\end{fremarks}

\subsection{Une approche générale de la \com des fonctions continues  } 
\label{fsubsec24} 
La question de la \com des fonctions continues n'est manifestement pas 
épuisée. 
Comme nous l'avons déjà signalé, s'il est bien vrai qu'une fonction 
continue  $f(x)$  est classiquement bien connue à partir d'une \pres  $(y, 
n) \mapsto \varphi(y,n)$  qui permet de la calculer avec une précision 
arbitraire sur une partie dense  $Y$  de  $X$, la \com de  $\varphi$  ne peut 
être tenue que pour un pâle reflet de la \com de  $f$  (cf. les remarques 
\ref{f224} et \ref{f233}(1)).
Une question cruciale, et peu étudiée jusqu'à présent, est de savoir 
jusqu'à quel point on peut certifier qu'une telle donnée  $\varphi$  
correspond bien à une fonction continue~$f$. Dans le cas d'une réponse 
positive, il faut expliquer par quelle procédure on peut calculer des valeurs approchées de~$f(x)$  lorsque~$x$  est un point arbitraire de  $X$  (donné par exemple par une suite de Cauchy de points rationnels de la \pres dans le cas d'un espace métrique rationnellement présenté).\\
Nous proposons une approche un peu informelle de cette question.
Soit  $\phi \colon  X_1 \rightarrow X_2$  une fonction entre espaces métriques et  
$(F_{\alpha})_{\alpha \in M}$  une famille de parties de $X_1$.  {\em Un \mcu 
pour  $\phi$  près de chaque partie  $F_{\alpha}$ } est par définition une 
fonction   $\mu \colon M \times \NN_1 \rightarrow \NN_1$  vérifiant:	
\[ 
\forall \alpha \in M \; \forall x \in F_{\alpha} \; \forall x'\in X_1\; \; \; \left(d_{X_1}(x,x') 
\leq 1/2^{\mu(\alpha,n)} \Rightarrow d_{X_2}(\phi(x),\phi(x'))
 \leq 1/2^n \right).
\]

\begin{fdefinition}[définition générale mais un peu informelle pour ce qu'est une fonction continue et ce qu'est sa \com] \label{f241}
On considère deux espaces métriques complets  $X_1$  et  $X_2$  donnés 
dans  des \rps  de classe  $\ca'$:   $(Y_1,\delta_1,\eta_1)$  et  
$(Y_2,\delta_2,\eta_2).$  On suppose avoir spécifié un point  $y_0$ de 
$X_2$.\\ 
Supposons que nous ayons défini une famille  $F_{X_1} = (F_{\alpha})_{\alpha \in M}$  de parties de  $X_1$  avec la propriété suivante:\\
--- (2.4.1.1)   tout compact de  $X_1$  est contenu dans un des  $F_{\alpha}$. 
\\
On dira qu'une fonction  $\phi\colon  X_1 \rightarrow X_2$  est   {\em $F_{X_1}$-
\unicoz}   si elle est bornée sur chaque partie  $F_{\alpha}$  et si elle 
possède un \mcu près de chaque partie $F_{\alpha}$.\\  
Supposons maintenant en plus que la famille $(F_{\alpha})_{\alpha \in M}$  ait 
la propriété suivante:\\
--- (2.4.1.2)   l'ensemble d'indices  $M$  a une certaine ``structure de 
calculabilité''\\
On dira qu'une fonction  $\phi\colon  X_1 \rightarrow X_2$  est  {\em $F_{X_1}$-\unico 
de classe~$\ca$  }  si elle vérifie les propriétés suivantes:\\
--- une borne sur chaque partie  $F_{\alpha}$  peut être calculée en 
fonction de $\alpha$   dans la classe~$\ca$, c.-à-d. qu'on a une fonction de 
classe~$\ca$,     $ \beta \colon  M \rightarrow \NN_1$   vérifiant:  
\[
\forall \alpha \in M \; \forall x \in F_{\alpha} \; \;
 d_{X_2}(y_0, \phi (x)) \leq 2^{\beta(\alpha)},
\] 
--- un \mcu près de  $F_{\alpha}$  peut être calculé dans la classe  
$\ca$:  une fonction   $\mu \colon  M \times \NN_1 \rightarrow \NN_1$   dans la 
classe~$\ca$   vérifiant:
\[
\forall \alpha \in M \; \forall x \in F_{\alpha} \; \forall x' \in X_1\; \; \; \left(d_{X_1}(x,x') 
\leq 1/2^{\mu(\alpha,n)} \Rightarrow d_{X_2}(\phi(x),\phi(x'))
 \leq 1/2^n \right),
\]
--- la restriction de  $\phi$  à  $Y_1$ est calculable dans la classe~$\ca$.
\end{fdefinition}
Il s'agit manifestement d'une extension des définitions \ref{f221} et \ref{f231}.
Le caractère informel de la définition tient évidemment à ``la 
structure de calculabilité'' de  $M$.\\
A priori on voudrait prendre pour  $(F_{\alpha})_{\alpha \in M}$ une famille de 
parties suffisamment simple pour vérifier la condition (2.4.1.2) et 
suffisamment grande pour vérifier la condition (2.4.1.1).\\
Mais ces deux conditions tirent dans deux sens opposés.\\ 
Notez que la définition \ref{f241} est inspirée de la notion de fonction 
continue définie par D. Bridges dans \cite[Constructive functional analysis]{fBr}.

\subsection {Ouverts et fermés} \label{fsubsec25}
Nous passons à la définition d'une structure de calculabilité pour un 
sous-espace ouvert $U$ d'un espace métrique $X$ muni d'une structure de 
calculabilité. La métrique induite par $X$ sur $U$ ne saurait en général 
nous satisfaire car l'espace obtenu n'est généralement pas complet. On a 
néanmoins une construction qui fonctionne dans un cas particulier important.\\ 
Soient $X$ un espace métrique complet et   $f\colon  X \rightarrow \RR$  une 
fonction 
\loca \unico.  L'ouvert  $U_f = \{x \in X\,;\, f(x) > 0 \}$  est un espace 
métrique complet pour la distance  $d_f$  définie par:		
$$d_f(x,y) = d_X(x,y) + \abs{1/f(x) - 1/f(y)}$$

\begin{fpropdef} \label{f251}
Soient $X$ un espace métrique complet donné avec une~$\ca$-\pres $(Y, 
\delta, \eta)$, et   $f\colon  X \rightarrow \RR$  une fonction \loca \uni de classe  
$\ca$    représentée par un \mcu et une fonction discrète de classe  
$\ca$,
$ \varphi \colon  Y \times \NN_1 \rightarrow \DD$. Alors l'espace métrique complet  
$(U_f, d_f)$ peut être muni d'une~$\ca$-\pres où l'ensemble des points 
rationnels est (codé par) l'ensemble  $Y_U$ des couples  $(y,n)$  de  $Y 
\times  \NN_1$  vérifiant:		$$n \geq 10 \; \; \hbox{et} \; \; 
\varphi(y,n) \geq 1/2^{n/8}$$
\end{fpropdef}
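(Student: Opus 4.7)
\textit{Plan of proof.} The strategy is to verify one by one the requirements of Definition~\ref{f211} for the triple $(Y_U,\delta_U,\eta_U)$, where $\eta_U(y,n) := \eta(y)$ and $\delta_U$ will be defined from $\delta$ and $\varphi$. First one checks that $Y_U$ is an $\ca$-part of the ambient language: the condition $n\geq 10$ is trivial, and the condition $\varphi(y,n)\geq 1/2^{n/8}$ is decidable in class $\ca$ since $\varphi$ is. Next, if $(y,n)\in Y_U$, then by the approximation property of $\varphi$ one has
\[
 f(\eta(y))\;\geq\;\varphi(y,n)-1/2^n\;\geq\;1/2^{n/8}-1/2^n\;\geq\;1/2^{n/8+1},
\]
which is strictly positive, so $\eta_U$ indeed lands in $U_f$; the bound $f(\eta(y))\geq 1/2^{n/8+1}$ will be the numerical workhorse throughout.

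The second step is to build $\delta_U$. Since
\[
 d_f(\eta(y_1),\eta(y_2))\;=\;d_X(\eta(y_1),\eta(y_2))\;+\;\bigl|\,1/f(\eta(y_1))-1/f(\eta(y_2))\,\bigr|,
\]
one computes the first summand with $\delta$ at a suitable precision, and the second summand by first computing $\varphi(y_i,p_i)$ for well-chosen precisions $p_i$, then inverting. The key quantitative input is that if $f(\eta(y_i))\geq 1/2^{n_i/8+1}$, then in order to approximate $1/f(\eta(y_i))$ to absolute precision $1/2^{k+2}$ it suffices to approximate $f(\eta(y_i))$ to precision $1/2^{p_i}$ with $p_i = 2(n_i/8+1)+k+3$ (standard estimate $|1/a-1/b|\leq |a-b|/(ab)$), so one sets $p_i$ accordingly, uses $\varphi(y_i,p_i)$ and a truncation to $\DD_k$. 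All arithmetic being in $\LINT\subset\ca$ and $\varphi,\delta$ being in $\ca$, the resulting $\delta_U$ is in class $\ca$. The axioms of Definition~\ref{f211} (symmetry, triangle inequality, $\delta_U(x,x,n)=0$, the consistency condition between $\delta_U(\cdot,\cdot,n)$ and $\delta_U(\cdot,\cdot,n+1)$, values in $\DD_n$) are then routine, obtained by taking slightly more precision than the target and rounding.

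The third and main step is to prove density of $\eta_U(Y_U)$ in $(U_f,d_f)$. Given $x\in U_f$, there is some $k_0\in\NN_1$ with $f(x)\geq 1/2^{k_0}$. Using the locally uniform continuity modulus of $f$ around the ball containing $x$, pick $m$ large so that $d_X(x,x')\leq 1/2^m$ implies $|f(x)-f(x')|\leq 1/2^{k_0+2}$, hence $f(x')\geq 1/2^{k_0+1}$. Since $\eta(Y)$ is dense in $X$, there exists $y\in Y$ with $d_X(x,\eta(y))\leq 1/2^m$. Now choose $n\geq\max(10,\,8(k_0+3),\,m)$ large enough: then $\varphi(y,n)\geq f(\eta(y))-1/2^n\geq 1/2^{k_0+1}-1/2^n\geq 1/2^{n/8}$, so $(y,n)\in Y_U$; and the estimate $f(\eta(y))\geq 1/2^{k_0+1}$ combined with $f(x)\geq 1/2^{k_0}$ bounds $|1/f(x)-1/f(\eta(y))|$ by $2^{2(k_0+1)}\cdot 2^{-m}$, which together with $d_X(x,\eta(y))\leq 1/2^m$ makes $d_f(x,\eta(y))$ as small as desired. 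Finally, the completion of $(Y_U,d_Y^U)$ identifies with $(U_f,d_f)$ in the usual way, because any Cauchy sequence in $\eta_U(Y_U)$ for $d_f$ is Cauchy in $X$ and has its limit in $U_f$ (the $1/f$ component being bounded forces separation from the zero-set of $f$).

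The step I expect to be the delicate one is the density argument, precisely because one must simultaneously control $d_X$ and the quantity $1/f$, and justify that the threshold $\varphi(y,n)\geq 1/2^{n/8}$ is neither too strict (so that enough approximants exist) nor too loose (so that $1/f$ is controlled on $Y_U$). The exponent $1/8$ in the definition of $Y_U$ is chosen as a comfortable margin making the inequalities $1/2^{n/8}-1/2^n\geq 1/2^{n/8+1}$ hold and the inversion bounds work out cleanly; any exponent strictly between $0$ and $1$ would do, but the specific value $1/8$ avoids numerical hair-splitting in the verifications above.
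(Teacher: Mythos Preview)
Your argument is sound and follows the natural line: verify that $Y_U$ is a $\ca$-part, that $\eta_U$ lands in $U_f$ via the lower bound $f(\eta(y))\geq 1/2^{n/8}-1/2^n\geq 1/2^{n/8+1}$, build $\delta_U$ by approximating the two summands of $d_f$ separately (the inversion controlled by that same lower bound), and establish density by approximating an arbitrary $x\in U_f$ in $d_X$ while keeping $f$ bounded away from zero via the continuity modulus. The numerical checks you give are correct, and your remark that any exponent strictly between $0$ and $1$ would serve in place of $1/8$ is well taken.

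There is nothing to compare against in the paper itself: its proof consists of a reference to~\cite{fMo}, where the detailed verification is carried out. Your sketch is in fact more informative than what appears here, and is presumably close in spirit to what \cite{fMo} does, since the structure you follow is essentially forced by Definition~\ref{f211}.
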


\proof Une preuve détaillée est donnée dans \cite{fMo}.\eop

\medskip Nous attaquons maintenant la définition d'une structure de calculabilité 
pour un sous-espace fermé $F$ d'un espace métrique $X$ muni d'une structure 
de calculabilité. En analyse constructive, un sous-ensemble fermé n'est 
vraiment utile que lorsqu'il est situé, c.-à-d. lorsque la fonction  $D_F$  
distance au fermé est calculable. 
Au moment de traduire cette notion en termes de calculabilité récursive ou de \com, nous devons prendre garde que dans la définition constructive, le fait que la fonction  $D_F$  est la fonction distance au fermé doit être aussi rendu explicite.

\begin{fdefinition} \label{f252}
On considère un espace métrique complet $X$ donné par une~$\ca$-\pres  
$(Y, \delta, \eta)$.  Une partie  $F$  de $X$ est appelée {\em un fermé 
$\ca$-situé de $X$}  si:
\begin{itemize}
\item [i)]	la fonction  $D_F\colon   x \mapsto d_X(x,F)$   de $X$ vers  $\RR^{\geq 0}$   est 
calculable dans la classe~$\ca$,
\item [ii)]	il existe une fonction calculable dans la classe~$\ca$ :    $P_F \colon  Y 
\times \NN_1\rightarrow X$ qui certifie la fonction  $D_F$  au sens suivant: pour tout   $(y,n) \in  Y \times  \NN_1$,
\[
D_F(P_F(y,n)) = 0,\; \; \; \;  d_X(y,P_F(y,n)) \leq D_F(y) + 1/2^n 
\] 
%
\end{itemize}
\end{fdefinition}

\begin{fremarks}\label{f253}~\\
1)	La fonction  $P_F(y,n)$  calcule un élément de  $F$  dont la distance 
à  $y$  est suffisamment proche de  $D_F(y)$. Cependant, la fonction  
$P_F(y,n)$  ne définit pas en général, par prolongement par continuité~à~$X$ et par passage à la limite lorsque  $n$  tend vers  $\infty$, un 
projecteur sur le fermé $F$. \\
2)	On peut démontrer que les points  $P_F(y,n)$  (codés par les couples  $(y,n) 
\in  Y \times  \NN_1$)  forment une partie dénombrable dense de  $F$  qui est 
l'ensemble des points rationnels d'une~$\ca$-\pres de  $F$  (cf. \cite{fMo}).
\end{fremarks}

\subsection{Espaces de Banach rationnellement présentés}\label{fsubsec26}
Nous donnons une définition minimale. Il va de soi que pour chaque espace de Banach particulier, des notions de \com naturellement attachées à l'espace considéré peuvent éventuellement être prises en compte en plus pour obtenir une notion vraiment raisonnable. 

\begin{fdefinition}[\rp  d'un espace de Banach]\label{f261} 
Une \rp  d'un espace de Banach séparable $X$ sur le corps  $\KK$  ($\RR$  ou $\CC$) sera dite de classe~$\ca$ si, d'une part elle est de classe~$\ca$  en tant que \pres de l'espace métrique et, d'autre part les opérations 
d'espace vectoriel suivantes sont dans la classe~$\ca$\\  
--- le produit par un scalaire,\\ 
--- la somme d'une liste de vecteurs choisis parmi les points rationnels.
\end{fdefinition}

Dans le contexte du corps des complexes  $\CC$  nous désignerons par  
$\DD_\KK$  l'ensemble des complexes dont les parties réelle et imaginaire 
sont dans  $\DD$. Dans le contexte réel  $\DD_\KK$  sera seulement une 
autre dénomination de  $\DD$.\\
La proposition suivante n'est pas difficile à établir. 

\begin{fproposition} \label{f262}
Donner une \rp  de classe~$\ca$   d'un espace de Banach $X$ au sens de la 
définition ci-dessus revient à donner: \\
--- un codage de classe~$\ca$   pour  une partie dénombrable $G$ de $X$ qui 
engendre $X$ en tant qu'espace de Banach{\footnote{On peut supposer que tous les éléments de $G$ sont des vecteurs de norme comprise entre 1/2 et 1.}},\\    
--- une application   $\nu \colon  \lst(\DD_{\KK} \times G) \times \NN_1 
\rightarrow
 \DD$  qui est dans la classe~$\ca$   (pour le codage de $G$ considéré)  et 
qui calcule la norme d'une combinaison linéaire d'éléments de  $G$  au sens suivant:\\   	
pour tout $\big([(x_1,g_1),(x_2,g_2),\ldots,(x_n,g_n)],m\big)$ dans  $\lst(\DD_{\KK} \times G) \times \NN_1$, on a la majoration
\[
\Abs{\;\nu\big([(x_1,g_1),(x_2,g_2),\ldots,(x_n,g_n)],m\big) - \Norme{ x_1.g_1 + x_2.g_2 +\cdots.+ x_n.g_n }_X \; }\; \leq 1/2^m.
\]
\end{fproposition}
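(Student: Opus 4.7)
Le plan consiste à démontrer l'équivalence par double implication, en exploitant dans chaque sens le fait que la norme d'un vecteur $v$ de $X$ coïncide avec la distance de $v$ à l'origine.

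Pour l'implication (Définition~\ref{f261}) $\Rightarrow$ Proposition, je partirais d'une \rp de classe~$\ca$ notée $(Y,\delta,\eta)$ et je prendrais pour $G$ un codage de $\eta(Y)$, qui est dense dans $X$ et engendre donc $X$ comme espace de Banach; la normalisation éventuelle des éléments de $G$ dans $[1/2,1]$ s'obtient par une renormalisation préalable via la multiplication scalaire (après une première estimation des normes). Pour construire $\nu$ sur une liste $\ell = [(x_1,g_1),\ldots,(x_n,g_n)]$ avec $g_i = \eta(y_i)$, j'appliquerais d'abord la multiplication scalaire de classe~$\ca$ pour obtenir des codes $z_i \in Y$ approchant $x_i\cdot g_i$ à $1/(n\,2^{m+2})$ près, puis la somme de liste de classe~$\ca$ pour produire un code $z \in Y$ approchant $\sum_i x_ig_i$ à $1/2^{m+1}$ près, et je retournerais enfin $\delta(z,0_Y,m+1)$, où $0_Y \in Y$ est un code fixé du vecteur nul (obtenu par exemple comme somme d'une liste vide, ou comme $0\cdot g_1$). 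L'erreur globale est alors majorée par $1/2^m$ grâce à l'inégalité triangulaire inverse, et toutes les sous-procédures utilisées sont dans $\ca$.

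Pour l'implication réciproque, je poserais $Y := \lst(\DD_{\KK}\times G)$ avec $\eta(\ell) = \sum_i x_ig_i$; la densité de $\eta(Y)$ dans $X$ résulte directement de l'hypothèse que $G$ engendre $X$ comme espace de Banach. La fonction $\delta(\ell_1,\ell_2,n)$ se définit à partir de $\nu$ appliquée à la concaténation de $\ell_1$ avec la liste $\ell_2$ dont on aurait opposé les scalaires, évaluée à une précision légèrement meilleure que $1/2^n$ (typiquement $1/2^{n+c}$ pour un petit entier $c$). La multiplication scalaire se ramène alors à multiplier chaque scalaire de la liste, et la somme d'une liste à une simple concaténation, opérations clairement dans $\LINT \subset \ca$.

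Le point délicat sera d'assurer que la $\delta$ ainsi construite satisfait \emph{exactement} tous les axiomes listés dans la définition~\ref{f211}: la symétrie $\delta(x,y,n) = \delta(y,x,n)$ s'obtient en symétrisant la formule (par exemple par demi-somme des deux ordres possibles); la nullité sur la diagonale $\delta(x,x,n) = 0$ par un test syntaxique préalable $\ell_1 = \ell_2$; l'appartenance à $\DD_n$ par un arrondi convenable; enfin la monotonie $\abs{\delta(x,y,n+1)-\delta(x,y,n)} \leq 1/2^{n+1}$ et l'inégalité triangulaire à $2/2^n$ près s'obtiennent en partant d'une approximation commune $\sigma(x,y,n) := \nu(\ell_1\text{-}(-\ell_2),n+c)$ à très haute précision, puis en définissant $\delta(x,y,n)$ de manière cohérente pour toutes les valeurs de $n$ (par exemple comme l'élément de $\DD_n$ le plus proche de $\sigma(x,y,n)$, avec $c$ assez grand), en combinant enfin ces approximations avec les propriétés correspondantes de la vraie norme de $X$. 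Cette comptabilité fine des précisions et arrondis, quoique routinière, constituera la vérification la plus longue du résultat.
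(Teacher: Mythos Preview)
Your proposal is correct and follows the natural route: take $G=\eta(Y)$ in one direction and $Y=\lst(\DD_\KK\times G)$ in the other, reducing the norm to the distance-to-zero and the distance to the norm of a difference. The paper itself gives no proof of this proposition beyond the remark that it ``n'est pas difficile à établir'', so your sketch is already more detailed than what the paper provides; the bookkeeping you flag (symmetrisation, diagonal test, rounding to $\DD_n$, coherence in $n$) is exactly the routine work the authors are leaving implicit.
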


\begin{fremarks} \label{f263}~\\
1)	Comme pour la définition \ref{f211}  (cf. remarque \ref{f212} (3)), il se 
peut que le certificat d'inclusion de $G$ dans $X$ implique une notion de 
\com, qu'il sera inévitable de prendre en compte dans une définition plus 
précise, au cas par cas.\\
2)	Les espaces  $L^p(\RR)$  de l'analyse fonctionnelle, avec  $1 \leq p < 
\infty$  peuvent être rationnellement présentés de différentes 
manières, selon différents choix possibles pour l'ensemble des points 
rationnels et un codage de cet ensemble. Tous les choix raisonnables 
s'avèrent donner des présentations  $\Prim$-équivalentes. \\
3)	Il serait intéressant de savoir si le cadre de travail proposé par M. 
Pour El et I. Richards \cite{fPR} concernant la calculabilité dans les espaces 
de Banach peut avoir des conséquences concrètes qui iraient au delà de ce 
qui peut être traité par les \rps  (lesquelles offrent un cadre naturel non seulement pour les problèmes de récursivité mais aussi pour les 
problèmes de \comz). Le ``contre-exemple'' concernant  $L^{\infty}$  donné 
dans \cite{fPR} incite à penser le contraire. 
\end{fremarks}

\section[Fonctions réelles continues sur un intervalle compact 
\ldots]{Espace des fonctions réelles continues sur un intervalle compact, 
premières propriétés}\label{fsec3}
Dans cette section, nous introduisons le problème des \rps  pour l'espace  
$\czu$:  l'espace des fonctions réelles \unicos sur l'intervalle  [0,1], muni 
de la norme usuelle:
\[
\norme{f}_{\infty} = {\bf Sup}\{ \abs{f(x)} ; 0 \leq x \leq 1 \}.
\] 
Considérons une \rp  de l'espace  $\czu$  donnée par une famille  
$\big(\wi{f}\,\big)_{f \in Y}$  de fonctions \unicos indexée par une partie  $Y$  
d'un langage $A^{\star}$. Nous sommes intéressés par les problèmes de 
\com suivants:\\
--- la \com de l'ensemble des codes des points rationnels de la 
présentation, c.-à-d. plus précisément la \com de  $Y$  en tant que 
partie du langage  $A^{\star}$   (c.-à-d. la \com du test d'appartenance);\\
--- la \com des opérations d'espace vectoriel (le produit par un scalaire 
d'une part, la somme d'une liste de vecteurs d'autre part);\\
--- la \com du calcul de la norme (ou de la fonction distance);\\ 
--- la \com de l'ensemble  $\big(\wi{f}\,\big)_{f \in Y}$  des points rationnels de 
la présentation, en tant que famille de fonctions \unicos sur  $[0,1]$;\\
--- la \com de la fonction d'évaluation  $\Ev \colon  \czu \times [0,1] 
\rightarrow \RR$: $ (g,x) \mapsto g(x)$.\\
Il va de soi que l'on peut remplacer l'intervalle  $[0,1]$  par un autre 
intervalle  $[a,b]$  avec  $a$  et  $b$  dans~$\DD$  ou de faible \com dans  
$\RR$. 

\subsection{La définition d'une \rp  de l'espace des fonctions 
continues}\label{fsubsec31} 
La \com de l'ensemble  $\big(\wi{f}\,\big)_{f \in Y}$  des points rationnels de la 
présentation, en tant que famille de fonctions \unicos sur  $[0,1]$    n'est 
rien d'autre que la \com de l'application  $f \mapsto \wi{f}$    de 
l'ensemble des codes de points rationnels $Y$ vers l'espace  $\czu$.  Nous 
devons donc, conformément à ce que nous disions dans la remarque 
\ref{f212}(3), inclure dans la définition de ce qu'est une \rp  de classe  
$\ca$   de  $\czu$, le fait que  $\big(\wi{f}\,\big)_{f \in Y}$  est une famille 
\uni de classe~$\ca$   au sens de la définition \ref{f229}.
Le problème de la \com de la fonction d'évaluation est également un 
problème important car il serait ``immoral''  que la fonction d'évaluation ne 
soit pas une fonction de classe~$\ca$   lorsqu'on a une \rp  de classe~$\ca$. 
Cependant, la fonction d'évaluation n'est pas \unico, ni même 
\loca \unico.\\
Pour traiter en général la question des fonctions continues mais 
non \loca \unicos sur l'espace  $\czu$  nous faisons appel à la définition 
informelle \ref{f241} avec la famille suivante de parties de  $\czu$:  

\begin{fnotation}  \label{f311}
Si  $\alpha$  est une fonction croissante de  $\NN_1$ vers  $\NN_1$  et  $r 
\in \NN_1$,  on note  $F_{\alpha,r}$  la partie de  $\czu$  formée par toutes 
les fonctions qui, d'une part acceptent  $\alpha$   comme \mcu, et d'autre part 
ont leur norme majorée par  $2^r$.
\end{fnotation}

La ``structure de calculabilité''  sur l'ensemble d'indices  
$$M := \{ (\alpha,r); \alpha \; \hbox{est une  fonction croissante de} \;
 \NN_1 \;\hbox{vers} \; \NN_1 \; \hbox{et} \; r \in \NN_1 \}$$
n'est pas une chose bien définie dans la littérature, mais nous n'aurons 
besoin de faire appel qu'à des opérations parfaitement élémentaires 
comme ``évaluer  $\alpha$  en un entier  $n$''{\footnote{Notez que le 
théorème d'Ascoli classique affirme que toute partie compacte de $\czu$ 
est contenue dans une partie $F_{\alpha,r}$ , et que les parties  $F_{\alpha,r}$   
sont compactes. En mathématiques constructives la partie directe est encore 
valable, mais la deuxième partie de l'énoncé doit être raffinée, cf 
\cite{fBB} chap 4 théorème 4.8, pages 96 à 98.}}.
Le module de continuité de la fonction d'évaluation est alors très 
simple (\uni linéaire pour toute définition raisonnable de cette notion).\\
En effet, près de la partie  $F_{\alpha, r} \times [0,1]$    de  $\czu \times 
[0,1]$   un module de continuité de la fonction $\Ev$ est donné par:
$$\mu(n,\alpha,r) = \max (\alpha(n+1),n+1) \; \hbox{pour} \; n \in \NN_1 \; 
\hbox{et} \; (\alpha,r) \in M
$$                             
comme il est très facile de le vérifier. Et la borne sur  $F_{\alpha,r}$  
est évidemment donnée par  $\beta(\alpha,r) = r$.\\
Toute la question de la \com de la fonction d'évaluation dans une \pres 
donnée est donc concentrée sur la question de la \com de la fonction 
d'évaluation restreinte à l'ensemble des points rationnels  
$$(f,x) \mapsto \wi{f}(x) \qquad  Y \times \DD_{[0,1]} \rightarrow \RR.$$
Or cette \com est subordonnée à celle de  
$\big(\wi{f}\,\big)_{f \in Y}$  en tant que famille de fonctions \unicosz: c'est 
ce que nous précisons dans la proposition suivante (dont la démonstration est immédiate).

\begin{fproposition} \label{f312}
Considérons sur l'espace  $\czu$  la famille de parties  
$(F_{\alpha,r})_{(\alpha,r) \in M}$  pour contrôler les questions de 
continuité sur  $\czu$  (cf. notation \ref{f311} et définition \ref{f241}).\\  
Alors si   $\big(\wi{f}\,\big)_{f \in Y}$  est une famille \uni de classe~$\ca$    
et si on considère la \rp  de l'espace métrique  $\czu$  attachée à 
cette famille considérée comme ensemble des points rationnels de la 
présentation, la fonction d'évaluation      
$$\Ev \colon  \czu \times [0,1] \rightarrow \RR \colon  (g,x) \mapsto g(x)$$
est elle même de classe~$\ca$.
\end{fproposition}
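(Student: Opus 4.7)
The plan is to apply Definition~\ref{f241} to the map $\Ev\colon \czu \times [0,1] \to \RR$, using on the source the family $(F_{\alpha,r} \times [0,1])_{(\alpha,r) \in M}$ of parts of $\czu \times [0,1]$. The three conditions to verify are a computable bound on each part, a computable modulus of uniform continuity near each part, and the fact that the restriction of $\Ev$ to the set of rational points $Y \times \DD_{[0,1]}$ is itself of class $\ca$.

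The bound is trivial: for $(f,x) \in F_{\alpha,r} \times [0,1]$ one has $\abs{f(x)} \leq \norme{f}_\infty \leq 2^r$, so $\beta(\alpha,r) = r$ is a bound in class $\ca$ (in fact in $\LINT$). For the modulus, I would take $\mu(n,\alpha,r) = \max(\alpha(n+1), n+1)$ as announced in notation~\ref{f311}, and verify it by the triangle inequality: if $(f,x) \in F_{\alpha,r} \times [0,1]$ and $(g,y) \in \czu \times [0,1]$ satisfy $\norme{f-g}_\infty \leq 1/2^{\mu(n,\alpha,r)}$ and $\abs{x-y} \leq 1/2^{\mu(n,\alpha,r)}$, then
\[
\abs{f(x) - g(y)} \;\leq\; \abs{f(x) - f(y)} + \abs{f(y) - g(y)} \;\leq\; 1/2^{n+1} + 1/2^{n+1} = 1/2^n,
\]
the first term being controlled by the fact that $\alpha$ is a modulus of uniform continuity for $f$, the second by the uniform bound $\norme{f-g}_\infty$. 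This formula for $\mu$ is clearly in $\LINT$ in $n$ given the ability to evaluate $\alpha$ on an integer, which is the only operation on $M$ we need.

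For the third condition, what has to be exhibited is a map $\psi\colon Y \times \DD_{[0,1]} \times \NN_1 \to \DD$ in class $\ca$ with $\abs{\Ev(f,x) - \psi(f,x,n)} \leq 1/2^n$. But this is precisely the content of the hypothesis that the family $\big(\wi{f}\,\big)_{f \in Y}$ is uniformly of class $\ca$ in the sense of Definition~\ref{f229}: such a presentation map $\varphi$ is provided by hypothesis, and one takes $\psi := \varphi$.

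The only slightly subtle point, which is really more of a bookkeeping matter than an obstacle, is the adaptation of Definition~\ref{f241} from a single metric space $X_1$ to the product $\czu \times [0,1]$ equipped with an auxiliary family of parts that is the cartesian product of $(F_{\alpha,r})_{(\alpha,r)\in M}$ with the single compact $[0,1]$; once one agrees that the ``calculability structure'' on the index set of such a product is the obvious one inherited from $M$, all three verifications above fall into place immediately. Condition (2.4.1.1) of Definition~\ref{f241} is also granted, since Ascoli guarantees that every compact of $\czu$ is contained in some $F_{\alpha,r}$ and $[0,1]$ is itself compact.
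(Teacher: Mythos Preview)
Your proof is correct and follows exactly the approach the paper intends: the paper declares the proof ``immediate'' because the three ingredients you check (the bound $\beta(\alpha,r)=r$, the modulus $\mu(n,\alpha,r)=\max(\alpha(n+1),n+1)$, and the identification of the restriction of $\Ev$ to rational points with the presentation map $\varphi$ from Definition~\ref{f229}) are all spelled out in the text immediately preceding the proposition. You have simply made explicit what the authors left to the reader.
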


Comme en outre nous demandons que la structure d'espace de Banach soit elle-même de classe~$\ca$, cela nous donne finalement la définition suivante.

\begin{fdefinition} \label{f313}
Une \rp  de classe~$\ca$   de l'espace  $\czu$  est donnée par une famille de 
fonctions  $\big(\wi{f}\,\big)_{f \in Y}$  qui est une famille \uni de classe  
$\ca$, dense dans $\czu$  et telle que soient également dans la classe~$\ca$   
les calculs suivants:\\
--- le produit par un scalaire,\\  
--- la somme d'une liste de fonctions choisies parmi les points rationnels,\\
--- le calcul de la norme.
\end{fdefinition}

Dans toute la suite, nous nous intéresserons à une étude précise des 
complexités impliquées dans la définition \ref{f313}. Notre conclusion 
est qu'il n'existe pas de paradis \etpo des fonctions continues, du moins si  
$\p \not= \np$.

\subsection{Deux exemples significatifs de \rps  de l'espace 
\texorpdfstring{$\czu$}{C[0,1]}}\label{fsubsec32}

Nous donnons maintenant deux exemples significatifs de présentations de  
$\czu$ (d'autres exemples seront donnés plus loin)   

\subsubsection{Présentation par circuits semilinéaires binaires} 
\label{fsubsubsec321}  
Cette \pres et l'ensemble des codes des points rationnels seront notés 
respectivement  $\csl$  et  $\ysl$.  Nous appellerons {\em  fonction 
semilinéaire à coefficients dans  $\DD$ }  une fonction linéaire par 
morceaux qui est égale à une combinaison par  $\max$  et  $\min$  de 
fonctions  $x \mapsto ax+b$  avec  $a$  et  $b$  dans  $\DD$.

\begin{fdefinition} \label{f321}
Un {\em  circuit semilinéaire binaire} est un circuit qui a pour portes 
d'entrée des variables réelles  $x_i $  (ici, une seule suffira parce que 
le circuit calcule une fonction d'une seule variable) et les deux constantes  0  
et  1. Il y a une seule porte de sortie.\\
Les portes qui ne sont pas des portes d'entrée sont de l'un des types 
suivants:\\
--- des portes à une entrée, des types suivants:  $x \mapsto 2x$ ,$x 
\mapsto x/2$, $x \mapsto -x$\\ 
--- des portes à deux entrées, des types suivants:  
$(x,y) \mapsto x + y$, $(x,y) \mapsto \max(x,y)$, $(x,y) \mapsto \min(x,y)$.\\ 
Un circuit semilinéaire binaire avec une seule variable d'entrée définit 
une fonction semilinéaire à coefficients dans  $\DD$. Un tel circuit peut 
être codé par un programme d'évaluation. L'ensemble  $\ysl$   est 
l'ensemble (des codes) de ces circuits semilinéaires: ses éléments codent les 
points rationnels de la \pres  $\csl$.
\end{fdefinition}

Nous verrons plus loin que cette \pres est en quelque sorte la plus naturelle, 
mais qu'elle manque d'être une \pres de classe  $\p$   à cause du calcul de 
la norme.\\
On a une majoration facile d'un module de Lipschitz de la fonction définie par le circuit:  
\[
\abs{\wi{f}(x) - \wi{f}(y)}\;  \leq 2^p \abs{ x- y} \; \hbox{où} \; p \;  \hbox{est la profondeur du circuit}
\]
Ceci donne pour \mcu  $\mu(k) = k+n$.  Ceci implique en particulier qu'on n'a 
pas besoin de contrôler la précision dans  $\DD_{[0,1]}$   dans la 
proposition suivante.  

\begin{fproposition}[complexité de la famille de fonctions  $\big(\wi{f}\,\big)_{f \in \ysl }$] \label{f322}
~ \\
La famille de fonctions  $\big(\wi{f}\,\big)_{f \in \ysl }$  est  \uni de classe  
$\p$. Précisément, cette famille admet  
$\mu(f,k) = k+\prof(f)$
  pour  \mcu et on peut expliciter une fonction  $\varphi \colon  \ysl  \times \DD_{[0,1]} 
\times \NN_1 \rightarrow \DD_{[0,1]}$   de classe $\DRT(\Lin, \Oo(N^2))$  (où $N$ est 
la taille de l'entrée  $(f,x,k)$)  avec
\[
 \forall (f, x, k) \in \ysl  \times 
\DD_{[0,1]} \times \NN_1 \;
\abs{\wi{f}(x) - \varphi(f,x,k)}\;  \leq 1/2^k.
\]
Plus précisément encore, comme il n'est pas nécessaire de lire $x$ en 
entier,  la taille de $x$ n'intervient pas, et la fonction  $\varphi$   est dans 
les classes  $\DRT(\Lin, \Oo(\ta(f)(\prof(f)+k)))$  et  $\DSPA(\Oo(\prof(f) 
(\prof(f)+k)))$.
\end{fproposition}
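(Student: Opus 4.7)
The plan is to handle first the modulus of uniform continuity, then exhibit the evaluation procedure, and finally analyze time and space separately.

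For the modulus, I would argue by induction on the depth of the gate. Each elementary gate in Definition \ref{f321} is $2$-Lipschitz in each of its inputs: the gates $x\mapsto 2x$, $x\mapsto -x$, $x\mapsto x/2$ are linear with slope in $\{-1,1/2,2\}$, and $(x,y)\mapsto x+y$, $\max$, $\min$ are each $1$-Lipschitz in each argument. So by induction on $\pi=\prof(g)$ any gate of depth $\pi$ computes a function with Lipschitz constant $\leq 2^\pi$, and in particular $\wi f$ is $2^{\prof(f)}$-Lipschitz. From this the implication $\abs{x-y}\leq 1/2^{k+\prof(f)}\Rightarrow \abs{\wi f(x)-\wi f(y)}\leq 1/2^k$ is immediate, giving $\mu(f,k)=k+\prof(f)$.

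For the evaluation function $\varphi(f,x,k)$, the natural idea is to evaluate the circuit at (a truncation of) $x$ while truncating the intermediate value computed at every gate to a precision that is large enough to absorb error amplification in later gates. Concretely I would set the working precision to $k+2\prof(f)$ bits, truncate $x$ to this many bits, and at the output of the gate of depth $\pi$ keep $k+2\prof(f)-\pi$ significant bits after the point. A simple propagation argument — again by induction on depth, and using the $2$-Lipschitz property plus the bound $2^{\pi}$ on values — shows that the accumulated error at the output of the circuit is $\leq 1/2^k$, so that the final truncation to $k$ bits is correct to within $1/2^k$. Note that only the first $k+2\prof(f)$ bits of $x$ are ever consulted, which is why the size of $x$ does not appear in the complexity.

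For the time bound, a naive level-by-level execution performs $\sz(f)$ elementary operations, each on dyadic numbers of size $\Oo(k+\prof(f))$, each costing $\Oo(k+\prof(f))$ time; this yields total arithmetic time $\Oo(\sz(f)(\prof(f)+k))$, and in particular $\Oo(N^2)$ with $N$ the size of the input $(f,x,k)$. For the space bound, the naive strategy keeps all intermediate values of a given level in memory, which is too greedy. The main technical step will be to invoke Borodin's evaluation strategy \cite{fBo}, which reorganises the traversal so that at any time only $\Oo(\prof(f))$ intermediate values need to be stored; each value still has size $\Oo(\prof(f)+k)$, giving the space bound $\Oo(\prof(f)(\prof(f)+k))$. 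The subtle point, and the place where I expect to spend the most care, is to verify that Borodin's reshuffling of the order of gate evaluations is compatible with the \emph{truncated} computation: the precision assigned to each gate depends only on its depth, not on the order of traversal, so the same error-propagation lemma still applies, but this has to be stated cleanly. Once these pieces are in place, the ``management time'' for shuffling intermediate values on the tape is intentionally not counted (see Remark \ref{f323}), and the statement follows.
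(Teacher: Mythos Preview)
Your proposal is correct and follows essentially the same approach as the paper: the Lipschitz bound by induction on depth (stated just before the proposition), the evaluation by truncating $x$ to $k+2\prof(f)$ bits and each gate of depth $\pi$ to precision $k+2\prof(f)-\prof(\pi)$, the time count $\ta(f)\cdot\Oo(k+\prof(f))$, and the appeal to Borodin for the space bound. Your remark that the truncation precision depends only on the depth of a gate, hence is insensitive to the traversal order imposed by Borodin's strategy, is a worthwhile clarification that the paper leaves implicit.
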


\proof 
Pour calculer  $\varphi (f,x,k)$  on évalue le circuit $f$ sur l'entrée $x$ 
dont on ne considère que les  $k + 2 \prof(f)$  premiers bits, en tronquant le 
résultat intermédiaire calculé à la porte  $\pi$ à la précision  
$k+2\prof(f)-\prof(\pi)$.  Enfin, pour le résultat final on ne garde que la 
précision  $k$.\\
Une telle méthode appliquée naïvement nécessite de garder stockés 
tous les résultats obtenus à une profondeur fixée  $p$  pendant qu'on 
calcule des résultats à la profondeur  $p+1$.
Nous faisons alors  $\ta(f)$  calculs élémentaires  $(\bullet + \bullet, 
\bullet - \bullet,\bullet \times 2, \bullet/2, \max(\bullet, \bullet),
\min(\bullet, \bullet))$  
sur des objets de taille $\leq  k + 2 \prof(f)$.  Chaque calcul élémentaire 
prend un temps $\Oo(k+\prof(f))$, et donc le calcul global se fait en temps  
$\Oo(\ta(f)(\prof(f)+k))$.  Et cela prend aussi un $\Oo(\ta(f)(\prof(f)+k))$ comme 
espace de calcul.\\  
Il existe une autre méthode d'évaluation d'un circuit, un peu moins 
économe en temps mais nettement plus économe en espace, suivant l'idée de 
Borodin \cite{fBo}. Avec une telle méthode on économise l'espace de calcul 
qui devient un  $\Oo(\prof(f)(\prof(f)+k))$.
\eop

\begin{fremark} \label{f323}
Nous n'avons pas pris en compte dans notre calcul le problème posé par la 
gestion de  $t = \ta(f)$  objets (ici des nombres dyadiques) de tailles 
majorées par  $s = \prof(f)+k$. Dans le modèle RAM cette gestion serait a 
priori en temps   $\Oo(t \Lg(t) s)$  ce qui n'augmente pas 
sensiblement le  $\Oo(t s)$  que nous avons trouvé, et ce qui reste en  
$\Oo(N^2)$  si on se rappelle que le codage du circuit semilinéaire par un 
programme d'évaluation lui donne une taille de $\Oo(t \Lg(t))$. Dans le 
modèle des machines de Turing par contre, cette gestion réclame a priori un 
temps  $\Oo(t^2s)$  car il faut parcourir~$t$ fois la bande où sont stockés 
les objets sur une longueur totale  $\leq  ts$. Nous avons donc commis une 
certaine sous-estimation en nous concentrant sur le problème que nous 
considérons comme central: estimer le coût total des 
opérations arithmétiques proprement dites. Nous omettrons dans la suite 
systématiquement le calcul du {\em temps de gestion} des valeurs 
intermédiaires (très sensible au modèle de calcul choisi) chaque fois 
qu'il s'agira d'évaluer des circuits.
\end{fremark}

\subsubsection{Présentation \texorpdfstring{$\crf$}{Cfrac}
(via des fractions rationnelles contrôlées et données en \pres par formule)} \label{fsubsubsec322}
La \pres précédente de l'espace  $\czu$  n'est pas de 
classe  $\p$  (sauf si $\p$ = $\np$ comme nous le verrons à la section 4.3)  parce que la norme n'est pas calculable en temps \poll.  
Pour obtenir une \pres de classe $\p$ il est nécessaire de restreindre assez considérablement l'ensemble des points rationnels de la présentation, de manière à ce que la norme devienne une fonction calculable en temps \poll. 
Un exemple significatif est celui où les points rationnels sont des fractions rationnelles bien contrôlées et données dans une \pres du 
type dense.
On a le choix entre plusieurs variantes et nous avons choisi de donner le 
dénominateur et le numérateur dans une \pres dite ``par formules''.  Une 
formule est un arbre dont les feuilles sont étiquetées par la variable $X$ 
ou par un élément de  $\DD$  et dont chaque noeud est étiqueté par un 
opérateur arithmétique.  
Dans les formules que nous considérons, les seuls opérateurs utilisés  sont $\bullet + \bullet, \bullet - \bullet$ et $\bullet \times  \bullet,$ de sorte que l'arbre est un arbre binaire (chaque noeud de l'arbre est une sous-formule et représente un \pol de $\DD$[X].)

\begin{flemma} 
\label{f324} Désignons par $\DD[X]_f$  l'ensemble des \pols à 
coefficients dans $\DD$, donnés en \pres par formule.  Pour un \pol à 
une variable et à coefficients dans  $\DD$   le passage de la re\pres dense 
à la re\pres par formule est  $\LINT $  et le passage de la re\pres par 
formule à la re\pres dense est \poll. Plus précisément si on 
procède de manière naïve on est en  $\DTI (\Oo(N^2{\cal M}(N)))$).
\end{flemma}

\proof  Tout d'abord, la re\pres dense peut être considérée comme un cas 
particulier de re\pres par formule, selon le schéma de Horner.\\ 
Ensuite, passer de la re\pres par formule à la re\pres dense revient à 
évaluer la formule dans  $\DD[X]$. Introduisons les paramètres de controle 
suivants. Un \pol $P \in \DD[X]$   a un degré noté  $d_P$  et la taille 
de ses coefficients est controlée par l'entier $\sigma(P) := \log(\sum_i \abs{a_i})$ où les  $a_i$  sont les coefficients de  $P$.  Une formule  $F 
\in \DD[X]_f$  contient un nombre d'opérateurs arithmétiques noté  $t_F$ 
et la taille de ses coefficients est controlée par l'entier $\lambda(F) = 
\sum_i (\Lg(b_i))$   où les  $b_i$ sont les dyadiques apparaissant dans la 
formule.\\  
La taille  $\flo{F}$ de la formule $F$ est évidemment un majorant de   
$t_F$ et  $\lambda (F)$.\\
Pour deux dyadiques  $a$  et  $b$  on a toujours  $\Lg(a \pm b) \leq \Lg(a) + 
\Lg(b)$  et $ \Lg(ab) \leq  \Lg(a)+\Lg(b)$.\\
On vérifie alors facilement que $\sigma (P \pm Q) \leq \sigma(P) + \sigma(Q)$   
et  $\sigma (PQ) \leq \sigma (P) + \sigma (Q)$.  Le temps de calcul (naïf) 
de  $PQ$  est un  $\Oo(d_Pd_Q {\cal M}(\sigma(P) + \sigma(Q)))$.\\
On démontre ensuite par récurrence sur la taille de la formule $F$ que le 
\pol correspondant  $P \in \DD[X]$  vérifie $d_P \leq t_F$ et  $\sigma 
(P) \leq \lambda (F)$. On démontre également par récurrence que le temps pour 
calculer  $P$  à partir de $F$ est majoré par  $t_F^2{ \cal M}(\lambda(F))$. 
\eop

\begin{fdefinition} \label{f325}
L'ensemble  $\yrf  \subset \DD[X]_f \times \DD[X]_f$  est l'ensemble des 
fractions rationnelles (à une variable) à coefficients dans  $\DD$,  dont 
le dénominateur est minoré par  $1$  sur l'intervalle $[0,1]$.  L'espace  
$\czu$  muni de l'ensemble  $\yrf$  comme famille des codes des points 
rationnels est noté~$\crf$.
\end{fdefinition} 

\begin {fproposition}[complexité de la famille de fonctions  $\big(\wi{f}\,\big)_{f \in \yrf}$] \label{f326}
~ \\
La famille de fonctions  $\big(\wi{f}\,\big)_{f \in \yrf }$   est  \uni de classe  
$\p$, plus précisément de classe  
$\DRT(\Lin, \Oo({\M}(N)N))$, où $\M(N)$ est la \com de la multiplication de deux entiers de taille~$N$.
\end{fproposition}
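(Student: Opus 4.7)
Mon plan est de ramener l'évaluation d'une fonction rationnelle $f = P/Q \in \yrf$ à celle, séparée, de son numérateur et de son dénominateur, qui sont des formules dans $\DD[X]_f$. L'hypothèse que $Q \geq 1$ sur $[0,1]$ rend la division finale inoffensive, puisque $\abs{a/b - a'/b'}$ se contrôle par les erreurs sur $a$ et $b$ dès que les deux dénominateurs restent minorés par $1$. Tout le travail se concentre donc sur l'évaluation approchée d'une formule $F \in \DD[X]_f$, à précision $1/2^n$, en un point $x \in [0,1]$, et le \mcu ainsi que la complexité annoncés s'en déduiront.

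L'outil central est un lemme à démontrer par récurrence structurelle sur la formule, en adoptant la convention $\flo{F_1 * F_2} = \flo{F_1} + \flo{F_2} + 2$ pour $* \in \{+, -, \times\}$. Le lemme affirme que, d'une part, pour tout $x \in [0,1]$ on a $\abs{\wi F(x)} \leq 2^{\flo F}$; et d'autre part, si l'on évalue $F$ en $x$ en prenant $x$ ainsi que chaque valeur intermédiaire calculée aux n\oe uds avec une précision de $1/2^{n + \flo{F}}$, alors le résultat final approche $\wi F(x)$ à $1/2^n$ près. Le cas des feuilles (constante dyadique ou variable) est immédiat. Dans le cas $F = F_1 * F_2$, la première assertion s'obtient par inégalité triangulaire pour la somme et la différence, et par la majoration $2^{\flo{F_1}} \cdot 2^{\flo{F_2}} = 2^{\flo F - 2} \leq 2^{\flo F}$ pour le produit. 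Pour la seconde, l'erreur commise sur une somme ou une différence est au plus la somme des erreurs. Pour un produit, si $a_i$ approche $\wi{F_i}(x)$ à $\varepsilon_i$ près avec $\varepsilon_i \leq 1/2^{n + \flo{F}}$ et $\abs{\wi{F_i}(x)} \leq 2^{\flo{F_i}}$, on a
\[
\abs{a_1 a_2 - \wi{F_1}(x)\wi{F_2}(x)} \; \leq\; 2^{\flo{F_1}} \varepsilon_2 + 2^{\flo{F_2}} \varepsilon_1 + \varepsilon_1 \varepsilon_2,
\]
et la marge de deux bits dans la convention de taille est exactement ce qu'il faut pour conclure.

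De ce lemme j'extrais immédiatement un \mcu $\mu(f, k) = k + \flo f$ pour la famille: une perturbation de $x$ se propage comme n'importe quelle erreur intermédiaire. Côté \com, l'algorithme évalue les $\Oo(\flo f)$ n\oe uds du numérateur puis du dénominateur à la précision $N := n + \flo f$, effectue la division finale (licite puisque le dénominateur vaut au moins $1$), puis tronque à précision $n$. Chaque opération arithmétique sur des dyadiques de taille $\Oo(N)$ coûte $\Oo(\M(N))$ en temps; la sortie est de taille $\Oo(n)$, linéaire en $N$. Le temps total est donc $\Oo(\flo f \cdot \M(N)) = \Oo(N\,\M(N))$, et le calcul reste dans $\DRT(\Lin, \Oo(\M(N) N))$.

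Le point délicat sera la récurrence sur le cas du produit: la marge de deux bits dans la convention $\flo{F_1 \times F_2} = \flo{F_1} + \flo{F_2} + 2$ est exactement ce qui permet d'absorber la perte de précision due à la taille potentiellement exponentielle des valeurs intermédiaires, et il faudra vérifier qu'elle est cohérente avec la taille effective du codage des formules retenu. Il faudra également s'assurer que la gestion de la pile d'évaluation reste en espace linéaire, ce qui découle de la borne $\flo F$ sur la profondeur de l'arbre syntaxique.
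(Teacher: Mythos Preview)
Your proposal is correct and follows essentially the same approach as the paper's proof: reduce to evaluating the numerator and denominator separately (the division being controlled since $Q\ge 1$), then prove by structural induction on the formula the two facts that $\abs{\wi F(x)}\le 2^{\flo F}$ on $[0,1]$ and that evaluating everything to precision $1/2^{n+\flo F}$ yields a final error at most $1/2^n$; the paper adopts the same convention $\flo{F_1*F_2}=\flo{F_1}+\flo{F_2}+2$ and derives both the modulus of uniform continuity and the time bound from this lemma exactly as you do.
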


\proof Nous devons calculer un \mcu pour la famille $\big(\wi{f}\,\big)_{f \in \yrf}$.  Nous devons aussi expliciter une fonction $\varphi$ de classe  $\DRT(\Lin,\Oo(\M(N) N))$
\[
\varphi \colon  \yrf  \times \DD_{[0,1]} \times \NN_1 \rightarrow \DD \; ,\;
(f,x,n) \mapsto \varphi(f,x,n)
\]
vérifiant   $\abs{\wi{f}(x) - \varphi(f,x,n)}\;  \leq 1/2^n$.\\
En fait, le \mcu va se déduire du calcul de  $\varphi $.\\ 
Puisque le dénominateur de la fraction est minoré par  $1$, il nous suffit 
de donner un \mcu et une procédure de calcul en temps  $\Oo(\M(N) N)$ pour 
évaluer une formule  $F \in \DD[X]_f$  avec une précision  $1/2^n$  sur 
l'intervalle $[0,1]$   ($N = n+ \flo{F}$).  Nous supposons sans perte de 
généralité que la taille  $m = \flo{F}$  de la formule  $F = F_1 * 
F_2$  est égale à $ m_1+m_2+2$  si  $m_1 = \flo{F_1}$  et  $m_2 = 
\flo{F_2}$   ( $*$  désigne un des opérateurs $+, \; - \;$ ou $\;\times$).  
On établit alors (par récurrence sur la profondeur de la formule) les deux 
faits suivants:\\ 
--- lorsqu'on évalue de manière exacte la formule  $F \in \DD[X]_f$  en un  
$x \in [0,1]$    le résultat est toujours majoré en valeur absolue par  
$2^m$. \\
--- lorsqu'on évalue la formule $F$ en un  $x \in [0,1]$    de manière 
approchée, en prenant $x$ et  tous les résultats intermédiaires avec une 
précision (absolue)  $1/2^{n+m}$  le résultat final est garanti avec la 
précision  $1/2^n$. \\
On conclut ensuite sans difficulté.
 \eop
 
\begin{fproposition} \label{f327}~\\
a) La famille de nombres réels  
$(\Norme{ \wi{f}})_{f \in \yrf }$  est de \com  $\p$.\\
b) Le test d'appartenance  ``$\,f \in\yrf\; ?\,$''  est de \com  $\p$.
\end{fproposition}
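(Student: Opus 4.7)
Le plan pour (a) est de ramener le calcul de $\Norme{\wi{f}}_{\infty}$ à celui du maximum de $\abs{\wi{f}}$ sur un ensemble fini bien contrôlé de points de $[0,1]$. Étant donné $f = P/Q \in \yrf$ et $n \in \NN_1$, comme $Q \geq 1$ sur $[0,1]$ la fraction $\wi{f}$ est dérivable sur $[0,1]$ et ses extrema sont atteints soit en $0$, soit en $1$, soit en une racine réelle dans $(0,1)$ du \pol $R := P'Q - Q'P \in \DD[X]$. Je commencerais donc par convertir $P$ et $Q$ en \pres dense grâce à \lemref{f324} (en temps \pollz), puis je formerais $R$. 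La proposition~\ref{f326} me fournirait la précision $m$ nécessaire pour évaluer $\wi{f}$ à $1/2^{n+1}$ près à partir d'un argument donné à $2^{-m}$ près. Je calculerais ensuite les racines réelles $\alpha_1,\ldots,\alpha_s$ de $R$ situées dans $[0,1]$ avec précision $2^{-m}$, enfin j'évaluerais $\wi{f}$ à $1/2^{n+1}$ près en $0$, en $1$ et en chaque $\alpha_i$, pour retenir le maximum des valeurs absolues obtenues.

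Pour (b), le test d'appartenance se ramène à vérifier que $Q \geq 1$ sur $[0,1]$, ce qui équivaut à $\min_{x \in [0,1]} Q(x) \geq 1$. Ce minimum est atteint en $0$, en $1$, ou en une racine réelle de $Q'$ dans $(0,1)$. Les comparaisons aux bornes sont triviales puisque $Q(0),Q(1) \in \DD$. Pour les points critiques $\beta$ il s'agit de décider le signe de $Q(\beta) - 1$ en un nombre algébrique réel $\beta$ donné (racine de $Q' \in \DD[X]$ dans $(0,1)$), ce qui est un problème classique résoluble en temps \pollz, par exemple par les méthodes de type Sturm.

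Le point délicat principal sera l'appel aux résultats classiques sur le calcul algébrique réel: d'une part la calculabilité en temps \poll des racines réelles d'un \pol à coefficients dyadiques dans un intervalle avec une précision prescrite, et d'autre part la décision en temps \poll du signe d'un \pol de $\DD[X]$ évalué en une telle racine. Une fois ces résultats admis (cf.\ remarque~\ref{f328} pour des variantes et références plus efficaces, par exemple celles de Pan), les deux parties de l'énoncé s'obtiennent par combinaison directe avec les propositions~\ref{f324} et~\ref{f326} déjà établies, ce qui confère à l'ensemble une structure très parallèle à la preuve anglaise de~\ref{327}.
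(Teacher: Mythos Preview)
Your proposal is correct and follows essentially the same approach as the paper: for (a) you locate the extrema of $\wi{f}$ via the numerator of its derivative $P'Q-Q'P$, invoke \lemref{f324} and proposition~\ref{f326} for the conversions and precision control, and appeal to the classical polynomial-time computation of real roots; for (b) you reduce to comparing $Q$ to $1$ at $0$, $1$, and the critical points of $Q$, invoking the classical sign test for a polynomial at a real algebraic number. This matches the paper's argument step for step, with your version being slightly more explicit about taking absolute values and about the Sturm-type tools underlying the algebraic sign decision.
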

\proof 
a) Soit  $f = P/Q \in \yrf $.  Pour calculer une valeur approchée à  $1/2^n$  
de la norme de  $\wi{f}$, on procède comme suit:\\
--- calculer  $(P'Q - Q'P)$  en tant qu'élément de  $\DD[X]$  (lemme 
\ref{f324})\\
--- calculer  $m$: la précision requise sur $x$ pour pouvoir évaluer  
$\wi{f} (x)$   avec la précision  $1/2^n$  (proposition \ref{f326})\\
--- calculer les racines   $(\alpha_i)_{1 \leq i \leq n})$  de  $(P'Q - Q'P)$   
sur  $[0,1]$    avec la précision  $2^{-m}$ \\
--- calculer  $\max  \{\wi{f}(0);\wi{f}(1);\wi{f}(\alpha_i)
 \; 1 \leq i \leq s \}$ avec la précision  $1/2^n$ (proposition \ref{f326})\\
b) Le code $f$ contient les codes de $P$ et $Q$. Il s'agit de voir qu'on peut tester en temps \poll que le dénominateur $Q$ est minoré par $1$ sur 
l'intervalle $[0,1]$. 
Ceci est un résultat classique concernant les calculs avec les nombres réels algébriques: il s'agit de comparer à $1$ le inf des $Q(\alpha_i)$ avec $\alpha_i=0,1$ ou un zéro de $Q'$ sur l'intervalle. 
\eop

\begin{fremarks} \label{f328}~\\
1)	Il est bien connu que le calcul des racines réelles d'un \pol de  
$\ZZ[X]$  situées dans un intervalle rationnel donné est un calcul de 
classe  $\p$. Peut-être la méthode la plus performante n'est pas celle que 
nous avons indiquée, mais une légère variante. 
En effet la recherche des 
racines complexes d'un \pol (pour une précision donnée) est aujourd'hui 
extrêmement rapide (cf. \cite {fPa}). 
Plutôt que de chercher spécifiquement les zéros réels sur l'intervalle $[0,1]$ on pourrait donc chercher avec la précision  $2^{-m}$  les zéros réels ou complexes  $(\beta_j)_{1 \leq j \leq t}$ suffisamment proches de l'intervalle   $[0,1]$    (i.e. leur partie imaginaire est en valeur absolue  $\leq2^{-m}$ et leur partie réelle est sur  $[0,1]$  à   $2^{-m}$  près) et évaluer $\wi{f}$ en les  $Re(\beta_j)$.\\    
2) Comme cela résulte de la proposition \ref{f335} ci-dessous,  toute 
fonction semilinéaire à coefficients dans $\DD$  est un point de \com  
$\p$ dans $\crf$. 
Le fait que la \pres $\csl$ ne soit pas de classe  $\p$ (si $\p\neq\np$, cf. section \ref{fsubsec43}) implique par contre que la famille de fonctions $\big(\wi{f}\,\big)_{f \in \ysl }$ n'est pas une famille de classe $\p$   dans $\crf$.\\
3) On démontre facilement que les opérations d'espace vectoriel sont aussi en temps \poll. 
\end{fremarks}

Les résultats précédents se résument comme suit.

\begin{ftheorem} \label{f329}
La \pres  $\crf$  de  $\czu$  est de classe  $\p$.
\end{ftheorem}

\subsection{Le théorème d'approximation de Newman et sa \com 
algorithmique}\label{fsubsec33}
Le théorème de Newman est un théorème fondamental en théorie de 
l'approximation. L'énoncé ci-après est un cas particulier{\footnote{Nous 
avons pris la borne  $n^2/2$ sur les degrés de manière à obtenir une 
majoration en  $e^{-n}.$}}.

\begin{ftheorem}[Théorème de Newman, \cite{fNe}, voir par exemple \cite{fPP} p. 73--75] \label{f331}~  

\noindent   
Soit  $n$  entier  $\geq  6$,   définissons
\[
H_n(x) = \prod_{ 1 \leq k < n^2}(x + e^{-k/n}).
\]
et considérons les deux \pols  $P_n(x)$  et 
  $Q_n(x)$, de degrés majorés par  $n^2/2$,  donnés par
\[
P_n(x^2) = x(H_n(x) - H_n(-x)) \; \; \hbox{et} \; \; Q_n(x^2) = H_n(x) + H_n(-x)
\]
Alors on a pour tout  $x \in  [-1,1]$  
\[
\abS{ \; \abs{x}\; - \;(P_n/Q_n)(x^2) \; } \; \leq 3e^{-n}\; \leq 2^{-(n+1)}
\]
et 
\[\abS{Q_n(x^2)}\; \geq \;2H_n(0) = 2/e^{(n^3 - n)/2} \;\geq 1/2^{3(n^3 - 
n)/4} 
\]
\end{ftheorem}

Du théorème de Newman découle que les fonctions semilinéaires se 
laissent individuellement bien approcher par des fractions rationnelles 
faciles à écrire et bien contrôlées. C'est ce que précisent le 
lemme \ref{f333}  ci-après et ses corollaires, les propositions et 
théorèmes qui suivent. Nous rappelons d'abord le résultat suivant (cf. 
\cite{fBrent}).
\begin{flemma}[théorème de Brent] \label{f332}  
 Soit   $a \in  [-1,1]\cap \DD_m$. Le calcul de  
$\exp(a)$  avec la précision~$2^{-m}$ peut être fait en temps $\Oo(\M(m)\log(m))$.   
En d'autres termes, la fonction exponentielle sur l'intervalle $[-1,1]$ est de \com $\DTI(\Oo(\M(m)\log(m)))$   
\end{flemma} 

On en déduit facilement, en notant $\DD[X]_f$ la présentation de $\DD[X]$ 
par formules.

\begin{flemma} \label{f333}
Il existe une suite 
\[
\NN_1 \rightarrow \DD[X]_f \times \DD[X]_f \; ; \; n \mapsto (u_n, v_n),
\] 
de classe $\DRT(\Oo(n^5), \Oo({\M}(n^3) n^2 \log^3(n)))$, telle que pour tout   $x \in [0,1]$      
\[
\abs{\;\abs{x} - p_n(x^2)/q_n(x^2) \;} \;\;\leq \;2^{-n}.
\]
Les degrés des \pols $p_n$ et $q_n$ sont majorés par $n^2/2$, leurs tailles en présentation par formule sont majorées par un $\Oo(n^5)$,  
et $q_n(x^2)$ est minoré par  $1/e^{(n^3-n)}$.
\end{flemma} 

\proof On définit $p_n$ et  $q_n$  comme   $P_n$  et  $Q_n$  en rempla\c{c}ant 
dans la définition le réel  $e^{-k/n}$ par une approximation  dyadique  
$c_{n,k}$  suffisante calculée au moyen du lemme \ref{f332}. \\ 
Si  $\abs{e^{-k/n}-c_{n,k}}\;\leq \varepsilon$ on vérifie que 
pour tout   $x\in [0,1]$  
\[
\abs{P_n(x) - p_n(x)}\;  \leq  (n^{2}-1) 2^{n^2} \varepsilon  \quad  
\hbox{et}  \quad  \abs{Q_n(x)- q_n(x)}\; \leq  (n^2-1) 2^{n^2} \varepsilon  = \varepsilon_1. 
\]
Pour l'écart entre les fractions rationnelles on utilise
\[
\abs{A/B - a/b};  \leq \;  
\abs{A/B} \;  \abs{b - B}  / b + \; \abs{A - a}  / b 
\leq   3 \varepsilon_1/ b. 
\]
Comme $ B \geq  1/2^{3(n^3-n)/4} $ on a  $1/b\leq2.2^{3(n^3-n)/4}$ si  
$\abs{b - B}\;\leq  B/2$, en particulier si  
\[
(n^{2}-1) 2^{n^2} \varepsilon \leq (1/2) \cdot 1/2^{3(n^3-n)/4}.
\]
On est alors conduit à prendre un  $\varepsilon $  tel que   
\[
3\varepsilon_1/b\leq 6(n^{2}-1)2^{n^2}2^{3(n^3-n)/4}\varepsilon 
\leq 1/2^{n+1}.
\]
Il suffit donc de prendre  $\varepsilon  \leq  2 ^{-n^3}$ (pour  $n$  assez 
grand).\\
On va donc être amené à décrire   $p_n$  et  $q_n$  par des formules de 
taille  algébrique   $\Oo(n^2)$   portant sur des termes de base  $(x+c_{n,k})$  
où  $c_{n,k}$  est un dyadique de taille  $\Oo(n^{3}).$ 
La taille (booléenne) de la formule est donc un  $\Oo(n^{5}).$ 
L'essentiel du temps de calcul est absorbé par le calcul des  $c_{n,k}$  en 
utilisant le lemme \ref{f332}.  
\eop

Notez que la majoration du temps de calcul est à peine moins bonne que la 
taille du résultat.

On en déduit immédiatement les résultats suivants.
\begin{ftheorem} \label{f334}
La fonction   $x \mapsto \abs{ x - 1/2 }$ est un point de \com $\p$   (plus 
précisément  $\DRT(\Oo(n^5), \Oo({\M}(n^3) n^2 \log^3(n)))$   
dans l'espace  $\crf$. 
En d'autres termes, il existe une suite  
$\NN_1 \rightarrow \yrf  \; , \; 
n \mapsto (u_n,v_n)$, de classe
 $\DRT(\Oo(n^5), \Oo({\M}(n^3) n^2 \log^3(n)))$, telle que         
\[
\Norme{ \; \abs{ x-1/2 } - u_n(x)/v_n(x)\; } \; \leq 2^{-n}
\]
Les degrés des \pols  $u_n$  et  $v_n$ sont majorés par   $n^2$.
\end{ftheorem}
\begin{fproposition} \label{f335}
La fonction  $x \mapsto \abs{x}$ sur l'intervalle  $[-2^m , 2^m]$  peut 
être approchée à  $1/2^n$   près  par une fraction rationnelle  
$p_{n,m}/q_{n,m}$  dont le dénominateur est minoré par  $1$  (sur le même 
intervalle),  et le calcul  
$$(n,m) \mapsto (p_{n,m},q_{n,m}) \; \; \NN_1 \times \NN_1 \rightarrow
 \DD[X]_f \times \DD[X]_f $$
est de \com  $\DRT(\Oo(N^5), \Oo({\M}(N^3) N^2 \log^3(N)))$  
où  $N = n+m$.  Les degrés des \pols  
$p_{n,m}$  et  $q_{n,m}$ sont majorés par   $N^2$.
De même la fonction $(x,y) \mapsto \max(x,y)$ 
(resp. $(x,y) \mapsto \min(x,y)$)  sur le carré  
$[-2^m,2^m] \times  [-2^m,2^m]$  peut être approchée à  $1/2^n$   
près  par une fraction rationnelle du même type et de même \com 
que les précédentes.
\end{fproposition}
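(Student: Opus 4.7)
Le plan est de se ramener au lemme~\ref{f333} par une homothétie de rapport $2^m$, puis de rectifier la minoration du dénominateur en multipliant numérateur et dénominateur par une constante dyadique convenable.

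Pour la valeur absolue sur $[-2^m,2^m]$, j'écris $|x| = 2^m \abs{x/2^m}$ avec $t := x/2^m \in [-1,1]$. Le lemme~\ref{f333} appliqué à la précision $n+m$ fournit $(u_{n+m}, v_{n+m}) \in \DD[X]_f \times \DD[X]_f$ vérifiant $\abs{\,\abs{t} - u_{n+m}(t^2)/v_{n+m}(t^2)\,} \leq 2^{-(n+m)}$ et $v_{n+m}(t^2) \geq e^{-((n+m)^3-(n+m))}$ sur $[-1,1]$. La fraction
\[
R(x) \;=\; \frac{2^m\, u_{n+m}(x^2/2^{2m})}{v_{n+m}(x^2/2^{2m})}
\]
approche alors $|x|$ à $2^{-n}$ près sur $[-2^m,2^m]$ (le facteur $2^m$ du numérateur absorbe exactement le changement d'échelle). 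Pour garantir que le dénominateur soit minoré par $1$, je choisis une constante dyadique $k = 2^K$ avec $K = \lceil ((n+m)^3-(n+m))\log_2 e\rceil = \Oo(N^3)$ et je pose
\[
p_{n,m}(X) \;=\; 2^m\, k \cdot u_{n+m}(X^2/2^{2m}), \qquad q_{n,m}(X) \;=\; k \cdot v_{n+m}(X^2/2^{2m}),
\]
ce qui préserve le quotient $R$. Les degrés sont bornés par $2 \cdot (n+m)^2/2 = N^2$.

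Pour $\max$ et $\min$, j'utilise les identités classiques
\[
\max(x,y) = \tfrac{1}{2}(x+y+\abs{x-y}), \qquad \min(x,y) = \tfrac{1}{2}(x+y-\abs{x-y}).
\]
Sur le carré $[-2^m,2^m]^2$ on a $x-y \in [-2^{m+1},2^{m+1}]$ : il suffit d'appliquer la construction précédente au paramètre $m' = m+1$ puis d'injecter la fraction obtenue dans la formule ci-dessus, ce qui ne modifie les bornes annoncées qu'à un facteur constant près.

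Le principal point à vérifier avec soin est la propagation de la taille à travers la substitution $X \leftarrow X^2/2^{2m}$. La formule de $u_{n+m}$ (ou $v_{n+m}$) admet au plus $\Oo(N^2)$ occurrences de $X$ (taille \emph{algébrique} du produit définissant $H_{n+m}$) et chacune devient après substitution une sous-formule de taille $\Oo(m)$ ; l'accroissement total est donc $\Oo(N^3)$, absorbé par la taille $\Oo(N^5)$ déjà présente dans $u_{n+m}, v_{n+m}$ à cause des constantes $c_{n+m,k}$ de taille $\Oo(N^3)$ (lemme~\ref{f324}). La multiplication finale par la constante $2^m k$ de taille $\Oo(N^3)$ ajoute encore $\Oo(N^3)$. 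Le total reste en $\Oo(N^5)$, et le temps de calcul est dominé par celui du lemme~\ref{f333}, soit $\Oo(\M(N^3)\, N^2 \log^3 N)$.
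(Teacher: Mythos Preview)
Your proof is correct and follows essentially the same approach as the paper: the same homothety $|x| = 2^m|x/2^m|$ reducing to Lemma~\ref{f333} at precision $n+m$, and the same identities $\max(x,y) = \tfrac12(x+y+|x-y|)$, $\min(x,y) = \tfrac12(x+y-|x-y|)$ for the second part. Your version is in fact more complete than the paper's: the paper simply writes $p_{n,m}(x) = 2^m p_{n+m}(x/2^m)$, $q_{n,m}(x) = 2^m q_{n+m}(x/2^m)$ without addressing the normalization needed to get the denominator $\geq 1$, whereas you explicitly introduce the dyadic factor $k = 2^K$ with $K = \Oo(N^3)$ and verify that this does not disturb the size and time bounds.
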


\proof 	Pour la fonction valeur absolue:\\
Si   $x \in [-2^m , 2^m]$  on écrit   
\[
\abs{x}\;  = 2^m \abs{ x/2^m } 
\]
avec  $x/2^m \in [-1 , 1]$. Donc, avec les notations de la preuve du lemme 
\ref{f333}, il suffit de prendre    $p_{n,m}(x) = 2^m p_{n+m}(x/2^m)$  et   
$q_{n,m}(x) = 2^m q_{n+m}(x/2^m)$.\\
Pour les fonctions $\max$ et $\min$, il suffit d'utiliser les formules:	
\[
\max(x,y) = \frac{x+y\; + \abs{x-y}} {2}\quad  \hbox{et} \quad  
\min (x,y) = \frac{x+y\; - \abs{x-y}}{2} 
\]
\eop

\medskip Dans la suite, on aura besoin d'``approcher'' la fonction discontinue
\[  
C_a(x) = \left\{
\begin{array}{cl}
1, & \hbox{si} \; x \geq a \\
0, & \hbox{sinon} 
\end{array}
\right.
\] 
Une telle ``approximation''  est donnée par la fonction semilinéaire continue  
$ C_{p,a}$:
\[
C_{p,a} = \min(1, \max(0, 2^p(x - a))) \; \hbox{où} \; p \in \NN_1 
\; \hbox{et} \; a \in \DD_{[0,1]}
\]

\begin{figure}[htbp]  
\begin{center}
\includegraphics*[width=14cm]{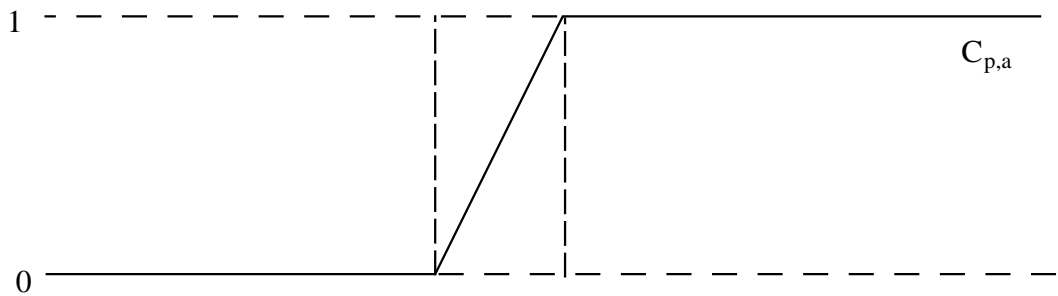}
\end{center}
\caption[Courbe représentative de la fonction $C_{p,a}$]{\label{ffi336}  
la fonction $C_{p,a}$}  
\end{figure}  

\noindent    
La \com de la famille de fonctions  $(p,a) \mapsto C_{p,a}$  est donnée dans la 
proposition suivante.

\begin{fproposition} \label{f336}
La famille de fonctions   
\[
\NN_1 \times \DD_{[0,1]} \; \; (p,a) \mapsto C_{p,a}
\]
définie par:
$
 C_{p,a} = \min(1,\max(0, 2^p(x - a)))
$  
est une famille de classe $\p$. 
Plus précisément, elle est de \com  
$\DRT(\Oo(N^5), \Oo({\M}(N^3) N^2 \log^3(N)))$   où  $N = \max(\Lg(a), n+p)$.
\end{fproposition}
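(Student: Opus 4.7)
The plan is to exhibit $C_{p,a}$ as an explicit arithmetic combination involving two evaluations of the absolute value on intervals of size $\Oo(2^{p})$, and to approximate each absolute value by a rational function using Proposition \ref{f335}. More precisely, writing $u(x) = 2^{p}(x-a)$, we use the identities
\[
\max(0,u) \;=\; \frac{u+\abs{u}}{2}, \qquad \min(1,v) \;=\; \frac{1+v-\abs{1-v}}{2},
\]
so that
\[
C_{p,a}(x) \;=\; \frac{1}{2}\left(\,1 + \frac{u(x)+\abs{u(x)}}{2} - \Abs{\,1 - \frac{u(x)+\abs{u(x)}}{2}\,}\,\right).
\]
For $x\in[0,1]$ we have $u(x)\in[-2^{p},2^{p}]$, hence the argument of the outer absolute value lies in $[-2^{p-1},1]\subset[-2^{p},2^{p}]$. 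Both absolute values therefore fall within the range to which Proposition \ref{f335} applies.

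First, I would compute the polynomial $u(x)=2^{p}(x-a)$ in formula presentation; this is trivial and takes linear time in the size of $a$ and $p$. Next, I would invoke Proposition~\ref{f335} twice, with precision $1/2^{n+3}$ and magnitude parameter $m=p$, to obtain pairs $(p_{1},q_{1})$ and $(p_{2},q_{2})$ of polynomials in $\DD[X]_{f}$ approximating $\abs{\bullet}$ on $[-2^{p},2^{p}]$, each with $q_{i}\geq1$, of the announced complexity $\DRT(\Oo(N^{5}),\Oo(\M(N^{3})N^{2}\log^{3}(N)))$ with $N=n+p$. I would then substitute $u(x)$ into the first approximation and the analogous inner expression into the second, and assemble the arithmetic combination above as a single element of $\yrf$. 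The denominator of the combined fraction is a product of $q_{1}$ and $q_{2}$ evaluated at these inputs, and since each $q_{i}\geq1$ on its range, so is the product; this is exactly what is required for membership in $\yrf$.

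The error analysis is essentially routine: substituting an exact polynomial $u(x)$ into a rational approximation of $\abs{\bullet}$ preserves the uniform error, and the linear combinations involved multiply the error by an absolute constant. Taking precision $1/2^{n+3}$ at each absolute value therefore guarantees total precision $1/2^{n}$ on $C_{p,a}$. The size of $a$ enters only through the exact computation of $u(x)$, which is why the parameter $N=\max(\Lg(a),n+p)$ appears in the final complexity bound rather than just $n+p$.

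The only mild obstacle is bookkeeping: one must check that composing the formula for $u(x)$ inside the formulae produced by Proposition \ref{f335} keeps the formula-size and magnitude within the required bounds, and that the denominator condition ``$\geq 1$ on $[0,1]$'' of the presentation $\crf$ (Definition~\ref{f325}) is preserved by the product of denominators. Both are immediate since the substitution $x\mapsto u(x)$ is polynomial of controlled degree, and multiplication of two polynomials $\geq1$ yields a polynomial $\geq1$. No step requires more than the resources already established in Proposition \ref{f335}, which gives the announced complexity class for the family $(p,a)\mapsto C_{p,a}$.
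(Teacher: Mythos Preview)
Your reduction to Proposition~\ref{f335} via the $\max$/$\min$--to--absolute-value identities is exactly the intended approach; the paper states Proposition~\ref{f336} without proof, as a direct corollary of~\ref{f335}. However, your nested implementation has a genuine technical gap that you have swept under the phrase ``bookkeeping''.

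The problem is the second substitution. After approximating $\max(0,u)$ by $v(x)=A(x)/B(x)$ with $A,B\in\DD[X]_f$, you propose to feed $1-v(x)$ into $p_2/q_2$. But $1-v(x)=(B-A)/B$ is a rational function, and elements of $\yrf$ are pairs of \emph{polynomial} formulas (Definition~\ref{f325}); substituting $(B-A)/B$ into $p_2$ produces an expression with internal divisions, not a member of $\DD[X]_f$. Clearing denominators requires forming $B^{d}\,p_2((B-A)/B)$ with $d=\deg q_2=\Oo(N^2)$, and since $B$ already has formula size $\Oo(N^5)$, the resulting formula has size of order $N^{9}$ or worse --- well beyond the announced $\Oo(N^5)$. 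Your assertion that the final denominator is simply ``a product of $q_1$ and $q_2$ evaluated at these inputs'' is therefore not correct. (A minor slip as well: the range of $1-\max(0,u)$ is $[1-2^p,1]$, not $[-2^{p-1},1]$; your inclusion in $[-2^p,2^p]$ survives.)

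The clean repair is to avoid nesting altogether via the single-layer identity
\[
\min\bigl(1,\max(0,u)\bigr)\;=\;\tfrac12\bigl(\abs{u}-\abs{u-1}+1\bigr),
\]
valid for all real $u$. Now both absolute values take \emph{linear} arguments $u(x)=2^p(x-a)$ and $u(x)-1$, so Proposition~\ref{f335} applies directly to each; combining over the common denominator $q_1(u)\,q_2(u-1)\geq 1$ gives a genuine product of polynomial formulas of size $\Oo(N^5)$, and the stated complexity follows with no composition of rational functions.
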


La proposition suivante concerne la fonction racine carrée.

\begin{fproposition} \label{f337}
La fonction  $ x \mapsto \sqrt {\abs { x - 1/2 }} $  sur $[0,1]$   est un  
$\p$-point de  $\crf$.
\end{fproposition}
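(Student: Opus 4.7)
Mon plan est de composer deux approximations rationnelles déjà établies. D'abord le lemme \ref{f333} fournit, sous la forme $p_n(y)/q_n(y)$, une approximation rationnelle de $\sqrt{y}$ sur $[0,1]$ à précision $2^{-n}$ : il suffit en effet de relire la conclusion du lemme en posant $y=x^2\in[0,1]$, puisque $\abs{x}=\sqrt{x^2}$. Ces \pols $p_n,q_n$ sont de degré $\Oo(n^2)$, leur formule est calculable en temps \poll, et $q_n(y)\geq 1$ sur $[0,1]$. Ensuite le théorème \ref{f334} fournit $(u_N,v_N)\in\yrf$ tels que $\Norme{u_N/v_N-\abs{x-1/2}}_\infty\leq 2^{-N}$, avec $v_N\geq 1$ sur $[0,1]$ et degrés en $\Oo(N^2)$.

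Pour obtenir l'approximation voulue, je substituerai $y=u_N(x)/v_N(x)$ dans $p_n(y)/q_n(y)$, ce qui, après réduction au même dénominateur, donne une fraction rationnelle $R_n(x)/S_n(x)$ à coefficients dans $\DD$, de degré $\Oo(n^2\cdot N^2)=\Oo(n^4)$ lorsqu'on choisit $N$ proportionnel à $n$. La taille en présentation par formule et le temps de calcul restent polynomiaux en $n$ d'après le lemme \ref{f324} et les bornes de complexité des lemmes \ref{f333} et du théorème \ref{f334}.

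Pour choisir $N$ et contrôler l'erreur totale, j'utiliserai l'inégalité $\abs{\sqrt{a}-\sqrt{b}}\leq\sqrt{\abs{a-b}}$ (la racine carrée est höldérienne d'exposant $1/2$). En prenant $N=2n+4$, j'obtiens $\abs{\sqrt{\abs{x-1/2}}-\sqrt{u_N(x)/v_N(x)}}\leq 2^{-(n+2)}$ pour tout $x\in[0,1]$, tandis que $\abs{\sqrt{u_N(x)/v_N(x)}-p_{n+2}(u_N/v_N)/q_{n+2}(u_N/v_N)}\leq 2^{-(n+2)}$ puisque $u_N/v_N\in[0,1]$ à une marge négligeable près (quitte à tronquer explicitement $u_N/v_N$ dans $[0,1]$ par un minimum et un maximum, ou à observer que $u_N/v_N\geq -2^{-N}$ et est $\leq 1/2+2^{-N}$, intervalle sur lequel l'estimation du lemme \ref{f333} reste valide). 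Par inégalité triangulaire on obtient bien la précision $2^{-n}$.

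Le point délicat sera de vérifier la \emph{correction de présentation}, c'est-à-dire que $(R_n,S_n)\in\yrf$, autrement dit que $S_n$ est minoré par $1$ sur $[0,1]$. Comme $q_{n+2}(y)\geq 1$ pour $y\in[0,1]$ et $v_N(x)\geq 1$ pour $x\in[0,1]$, la mise au dénominateur commun de $p_{n+2}(u_N/v_N)/q_{n+2}(u_N/v_N)$ donne $S_n(x)=v_N(x)^{\deg q_{n+2}}\cdot q_{n+2}(u_N(x)/v_N(x))$, qui est manifestement $\geq 1$ sur $[0,1]$. La remarque (déjà notée dans l'esquisse anglaise) que $P_n/Q_n$ n'est pas une fonction impaire n'est pas un problème ici : elle explique seulement pourquoi l'approximation de $\sqrt{y}$ par $p_n(y)/q_n(y)$ vaut sur $[0,1]$ (et non sur $[-1,1]$), ce qui suffit puisque $\abs{x-1/2}\geq 0$.
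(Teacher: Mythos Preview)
Votre démarche est essentiellement celle de l'esquisse donnée dans l'article : composer l'approximation rationnelle de $\sqrt{y}$ issue du lemme~\ref{f333} (lu en posant $y=x^2$) avec l'approximation rationnelle de $|x-1/2|$ donnée par le théorème~\ref{f334}, ce qui produit une fraction de degré $\Oo(n^4)$. Votre analyse d'erreur via la höldérianité de la racine carrée et votre vérification de l'appartenance à $\yrf$ sont plus détaillées que l'esquisse de l'article.

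Une inexactitude cependant : le lemme~\ref{f333} n'affirme pas $q_n(y)\geq 1$ sur $[0,1]$, mais seulement $q_n(y)\geq 1/e^{(n^3-n)}$, de sorte que votre conclusion $S_n\geq 1$ ne tient pas telle quelle. La réparation est immédiate : multipliez numérateur et dénominateur par un dyadique $D_n$ majorant $e^{((n+2)^3-(n+2))}$, de taille $\Oo(n^3)$ donc \polle, sans changer le quotient. Par ailleurs, votre option parenthétique « tronquer $u_N/v_N$ par $\min$ et $\max$ » ne donne pas une fraction rationnelle ; pour garantir $y\geq 0$, il est plus propre d'ajouter la constante $2^{-N}$ à $u_N/v_N$, ce qui assure $y\in[0,1]$ au prix d'un doublement d'erreur absorbé par le choix de $N$.
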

\begin{figure}[htbp]  
\begin{center}
\includegraphics*[width=10cm]{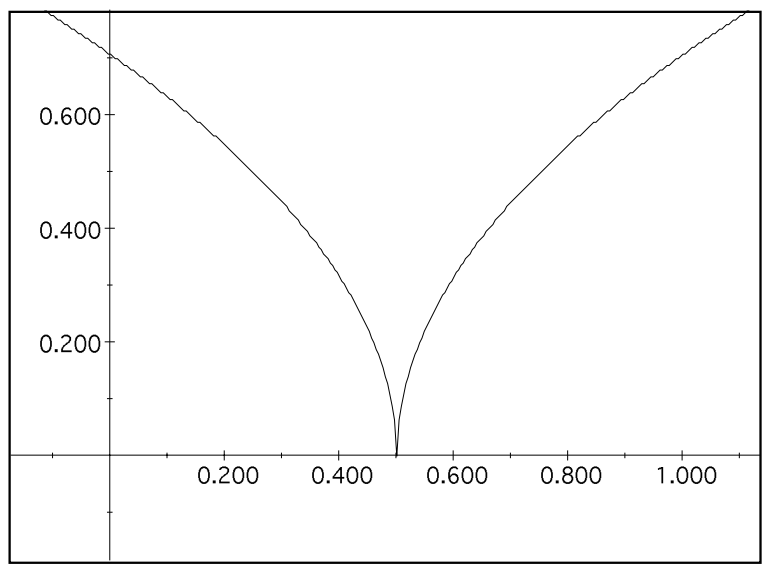}
\end{center}
\caption[Courbe de la fonction $\sqrt {\abs{ x - 1/2 }} $]{\label{ffi339}  
courbe de la fonction $ x \mapsto \sqrt {\abs { x - 1/2}} $}  
\end{figure}  

\proof (esquisse) 
Dans les \pols  $P_n(x^2)$ et $Q_n(x^2)$  du théorème de Newman, si on 
remplace  $x^2$  par une bonne approximation de $\abs{x}$ sur  $[0,1]$  
obtenue elle même par le théorème de Newman, on obtiendra une fraction 
rationnelle  $R_n(x)/S_n(x)$  telle que 
\[
\forall x \in [0,1] \abs{\sqrt{ \abs{x}} - (R_n/S_n)(x) }\;  \leq 2^{-
n}.
\]
Les degrés de  $R_n(x)$  et  $S_n(x)$  seront en  $\Oo(n^4)$. \\  
Notez que la fraction  $(P_n/Q_n)(x)$  n'est pas impaire, elle fournit donc une 
bonne approximation de la fonction  $\sqrt{ \abs{x}}$  uniquement sur 
l'intervalle $[0,1]$.
\eop

\section[Une \pres naturelle de l'espace \texorpdfstring{$\czu$}{C[0,1]}]{Une \pres naturelle de l'espace \texorpdfstring{$\czu$}{C[0,1]} 
et quelques présentations 
équivalentes}\label{fsec4}

Une notion naturelle de \com pour les ``points'' et pour les ``suites de points'' 
de l'espace   $\czu$  est donnée par  K. I Ko et H. Friedman dans \cite{fKF82}. 
Nous étudions dans cette section une \rp  de l'espace  $\czu$  qui donne (à très peu près) les mêmes notions de \com. Nous étudions également 
d'autres présentations utilisant des circuits, qui s'avèrent \equivas du 
point de vue de la \com  $\p$. La preuve de ces équivalences est basée sur 
la preuve d'un résultat analogue mais moins général donnée par 
\cite{fHo87,fHo90}. 

\subsection {Définitions de quelques présentations de l'espace 
 \texorpdfstring{$\czu$}{C[0,1]}}\label{fsubsec41}
 
\subsubsection{Presentation KF (KF comme Ko-Friedman) }\label{fsubsubsec411}
Rappelons qu'on note  $\DD_n = \{ k/2^n ; k \in \ZZ \}$  et  $\DD_{n,[0,1]} = 
\DD_n \cap [0,1]$.   \\ 
Dans la définition donnée par Ko et Friedman, on considère, pour  
$f \in \czu$, une machine de Turing à Oracle  (MTO)  qui ``calcule'' la 
fonction $f$ au sens suivant. 
 L'oracle délivre, pour la question  ``$\,m\;?\,$'' écrite en unaire  une approximation 
  $\xi \in\DD_{m,[0,1]}$ de $x$ avec la précision  $2^{-m}$. 
Pour l'entrée  $n \in  \NN_1$, la machine calcule, en utilisant l'oracle, une 
approximation $ \zeta  \in  \DD_n$  de  $f(x)$  à $1/2^n$  près.\\
La seule lecture du programme de la  MTO  ne permet évidemment pas de savoir si la  MTO  calcule bien une fonction de  $\czu$.  En conséquence, puisque nous souhaitons avoir des objets clairement identifiés comme ``points rationnels'' de la \pres que nous voulons définir, nous prenons le parti d'introduire des paramètres de contrôle. 
Mais pour que de tels paramètres soient vraiment efficaces, il est nécessaire de limiter l'exécution de la machine à une précision de sortie donnée a priori.  
En conséquence la précision nécessaire en entrée est également 
limitée a priori. Ainsi, la  MTO  est approximée par une suite de machines 
de Turing ordinaires (sans oracle) n'exécutant chacune qu'un nombre fini de calculs.
Nous sommes donc conduits à définir une \rp  de  $\czu$  qui sera notée  
$\ckf$ (l'ensemble des codes des points rationnels  sera noté  $\ykf$)  de la 
manière suivante. 
 
\begin{fdefinition}\label{f411}
On considère un langage, choisi une fois pour toutes, pour décrire les 
programmes de machines de Turing avec une seule entrée, dans $\DD_{[0,1]}$  
(notée  $x$), et avec une seule sortie, dans $\DD$  (notée  $y$). Soit  
${\bf Prog}$  la partie de ce langage formée par les programmes bien écrits (conformément à une syntaxe précisée une fois pour toutes).
Soit  $f = (Pr, n, m, T) \in {\bf Prog} \times \NN_1 \times \NN_1 
\times \NN_1$  où  on a:\\
--- $n$  est la précision réclamée pour  $y$,\\
--- $m$  est la précision avec laquelle est donnée  $x$.\\
Le quadruplet  $(Pr,n,m,T)$  est dit {\em correct} lorsque sont vérifiées 
les conditions suivantes:\\
--- le programme  $Pr$  calcule une fonction de $\DD_{m,[0,1]}$  vers  
$\DD_n$,  c.-à-d. pour une entrée dans  $\DD_{m,[0,1]}$   on obtient en 
sortie un élément de  $\DD_n$;\\
--- $T$  majore le temps d'exécution maximum pour tous les $x$ de  
$\DD_{m,[0,1]}$; \\
--- pour deux éléments consécutifs (distants de  $1/2^m$) de  
$\DD_{m,[0,1]}$  en entrée, le programme donne en sortie deux éléments de  
$\DD_n$  distants d'au plus  $1/2^n$.\\
L'ensemble des quadruplets  $(Pr,n,m,T)$  corrects est noté  $\ykf$\label{fykf}.
  Lorsque la donnée $f$ est correcte, elle définit le point rationnel  
$\wi{f}$  suivant:  c'est la fonction linéaire par morceaux qui
 joint les points du graphe donnés sur la grille  
$\DD_{m,[0,1]} \times  \DD_n$  par l'exécution du programme $Pr$  pour 
toutes les entrées 
possibles dans  $\DD_{m,[0,1]}$.\\
Ainsi $\ckf=(\ykf,\eta,\delta)$ où  \smash{$\eta(f)=\wi f$} et $\delta$ doit être précisé conformément à la définition~\ref{f211}. La complexité de $\eta$ et $\delta$ est traitée dans la proposition \ref{f413}. 
\end{fdefinition} 

\begin{fremark}\label{f412}
 On notera que les deux premières conditions pourraient être réalisées 
de manière automatique par des contraintes de type syntaxique faciles à 
mettre en oeuvre. Par contre, comme on le verra dans la proposition \ref{f441}, 
la troisième est incontrôlable \etpo (sauf si  $\p$ = $\np$ ) (cette 
condition de correction définit un problème $\cnp$-complet). 
Notez aussi que si on n'avait pas imposé la condition de correction pour les points de  $\ykf$  
on n'aurait eu aucun contrôle a priori du \mcu pour la fonction linéaire par 
morceaux définie par une donnée, et la famille  $\big(\wi{f}\,\big)_{f \in \ykf 
}$  n'aurait pas été \uni de classe  $\p$.
\end{fremark}

\begin{fproposition}[complexité de la famille de fonctions attachées à  $\ykf$] \label{f413}~\\
La famille de fonctions continues  $\big(\wi{f}\,\big)_{f \in \ykf}$  
est \uni de classe  $\DSRT(\Lin, \Lin, \Oo(N^2))$. 
\end{fproposition}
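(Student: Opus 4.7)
Le plan se décompose en deux parties: le calcul du \mcu et celui d'une fonction d'évaluation approchée.

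D'abord, par définition même de la correction d'une donnée $f=(Pr,n,m,T)\in\ykf$, la fonction $\wi f$ est obtenue en joignant linéairement, pour chaque $a\in\DD_{m,[0,1]}$ avec $a\leq 1-1/2^m$, les deux points $(a,\Exec(Pr,a))$ et $(a+1/2^m,\Exec(Pr,a+1/2^m))$, dont les ordonnées sont distantes d'au plus $1/2^n$. J'en déduirais immédiatement que la pente de chaque morceau est majorée en valeur absolue par $2^{m-n}$, d'où le \mcu $\mu(f,k)=k+m-n$, manifestement calculable en temps et espace linéaires à partir de $f$ et $k$.

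Ensuite, je construirais une fonction $\varphi:\ykf\times\DD_{[0,1]}\times\NN_1\to\DD$ vérifiant $\abs{\varphi(f,x,k)-\wi f(x)}\leq 1/2^k$. Étant donnée l'entrée $(f,x,k)=((Pr,n,m,T),x,k)$, je suppose sans perte de généralité que $k\geq n$ et que $x$ est fourni avec au moins $m$ bits significatifs. La procédure serait la suivante: lire les $m$ premiers bits de $x$ pour identifier les deux éléments consécutifs $a,b$ de $\DD_{m,[0,1]}$ qui encadrent $x$, puis poser $s=2^m(x-a)\in\DD_{[0,1]}$; exécuter $Pr$ sur $a$ puis sur $b$ via la Machine de Turing Universelle du lemme \ref{f131} pour obtenir $\wi f(a)=\Exec(Pr,a)$ et $\wi f(b)=\Exec(Pr,b)$; retourner enfin $\wi f(a)+s\cdot(\wi f(b)-\wi f(a))$, qui vaut exactement $\wi f(x)$, éventuellement tronqué à la précision $1/2^k$ pour rester dans $\DD_k$.

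L'étape qui gouverne la complexité globale est l'exécution de $Pr$ par la machine universelle, et c'est le seul point où il faut être un peu prudent. Si l'on pose $N=\flo{Pr}+n+m+T+\flo{x}+k$ pour la taille de l'entrée, le lemme \ref{f131} garantit un temps $\Oo(T(T+\flo{Pr}))=\Oo(N^2)$, un espace $\Oo(T+\flo{Pr})=\Oo(N)$ et une sortie de taille au plus $T=\Oo(N)$. Les étapes annexes (extraction de $a$, $b$ et $s$, interpolation affine, troncature finale) étant linéaires en temps et en espace, je conclurais que $\varphi$ est de classe $\DSRT(\Lin,\Lin,\Oo(N^2))$, ce qui est la borne annoncée.
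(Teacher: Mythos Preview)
Ta preuve est correcte et suit essentiellement la même démarche que celle du papier: même calcul du \mcu $\mu(f,k)=k+m-n$ via la borne sur la pente, même construction de $\varphi$ par repérage de $a,b$, exécution de $Pr$ en $a$ et $b$ via la machine universelle du lemme~\ref{f131}, puis interpolation linéaire. La seule différence cosmétique est que le papier encode l'écart $\wi f(b)-\wi f(a)$ par un $\varepsilon\in\{-1,0,1\}$ (puisque cet écart est toujours un multiple de $1/2^n$ de valeur absolue $\leq 1/2^n$), alors que tu gardes directement la différence; les deux conduisent à la même borne de complexité.
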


\proof 
Tout d'abord, on remarque que la fonction  $\wi{f}$  correspondant à  
$f = (Pr, n, m, T)$ est linéaire par morceaux et que, puisque la donnée est 
correcte, la pente de chaque morceau est majorée en valeur absolue par  
$2^{m-n}$  ce qui donne le \mcu  $\mu(f,k) = k+m-n$  pour la famille  
$\big(\wi{f}\,\big)_{f \in \ykf }$. \\
Nous devons exhiber une fonction  
$\varphi \colon  \ykf  \times \DD_{[0,1]} \times \NN_1 \rightarrow \DD$ de \com  
$\DSRT(\Lin, \Lin, N^2)$  telle que:
\[
\forall (f,x,k) \in \ykf  \times \DD_{[0,1]} \times \NN_1 \; \; 
\abs{ \varphi(f,x,k) - \wi{f}(x) }\;  \leq 2^{-k}.
\]
Soit  $z=(f,x,k)=((Pr,n,m,T),x,k) \in \ykf  \times \DD_{[0,1]} \times \NN_1$.
  Nous supposerons sans perte de généralité que  $k \geq  n$  et que $x$ 
est donné avec au moins $ m $ bits.\\
Par simple lecture de $x$ on repère deux éléments consécutifs  $a$  et  
$b$  de $\DD_{m,[0,1]}$   tels que   $a \leq  x \leq  b$  et on trouve le 
dyadique  $r \in \DD_{[0,1]}$  tel que  $x=a+r/2^n$. \\
Notons  $\varepsilon \in \{-1,0,1\}$   l'entier vérifiant   
$\wi{f}(a)=\wi{f}(b)+ \varepsilon/2^n$. \\  
Alors   $\wi{f}(x) = \wi{f}(a) + \varepsilon r/2^n$.  En outre  
$\wi{f}(a)= \Exec (Pr,a)$  est le résultat de l'exécution du 
programme  $Pr$   pour l'entrée  $a$.
Le calcul complet consiste donc essentiellement à \\ 
--- lire $x$  (et en déduire  $a$,  $b$  et  $r$), puis\\
--- calculer  $\Exec (Pr,a)$  et   $\Exec (Pr,b)$.\\
La \com est donc majorée par la \com de la Machine de Turing Universelle que nous utilisons pour exécuter le programme  $Pr$. D'après le lemme \ref{f131} cela se fait en temps $\Oo(T(T+ \flo{Pr}))$ et en espace $\Oo(T+ \flo{Pr})$. Et la taille de la sortie est majorée par $T$.
\eop

\medskip Le fait que la \pres  $\ckf$  qui résulte de la définition \ref{f411} soit 
appelée ``\pres à la Ko-Friedman'' est justifié par la proposition 
suivante.

\begin{fproposition} \label{f414}
Une fonction   $f \colon  [0,1] \rightarrow \RR$ est calculable \etpo  au sens de Ko-Friedman si et seulement si elle est un  $\p$-point de $\ckf$. Plus précisément 
\begin{itemize}
\item [a)] Si la fonction $f$ est de \com en temps  $T(n)$  au sens de Ko-Friedman alors  
elle est un  $\DTI (T)$-point de $\ckf$,
\item [b)]	Si la fonction $f$ est un  $\DTI (T)$-point de $\ckf$ alors elle est de \com en temps $T^2(n)$ au sens de Ko-Friedman. 
\end{itemize}
\end{fproposition}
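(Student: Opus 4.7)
\medskip\noindent\textbf{Proof plan for Proposition~\ref{f414}.}

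\emph{Direction (a).} The plan is to convert an OTM $M$ computing $f$ in time $T(n)$ into an explicit $\p$-sequence in $\ykf$. Given the Ko--Friedman program $Pr$ for $M$, I form, for each $n \in \NN_1$, a new Turing machine program $Pr_n$ by textually substituting each oracle query \textsl{``ask the oracle with precision $m$''} by the instruction \textsl{``read the first $m$ bits of $x$''}. Since the OTM runs in time $T(n)$, every precision requested is bounded by $T(n)$, so reading $x$ up to $T(n)$ bits is enough; the whole program $Pr_n$ runs in time $\Oo(T(n))$ on any dyadic input from $\DD_{T(n),[0,1]}$. Set $f_n := (Pr_n, n, T(n), T(n))$. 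I then check correction of $f_n$: for two consecutive dyadics $a,a+2^{-T(n)}$ in $\DD_{T(n),[0,1]}$, both $\Exec(Pr_n,a)$ and $\Exec(Pr_n,a+2^{-T(n)})$ equal $M^{\xi}(n)$ for any $\xi \in [a,a+2^{-T(n)}]$, so they agree and therefore differ by $0 \leq 2^{-n}$. For the approximation error, one uses that $\wi{f_n}$ is the piecewise linear interpolant of outputs of $Pr_n$ on $\DD_{T(n),[0,1]}$; for $\xi\in[0,1]$ and the adjacent dyadic $d$, $|\wi{f_n}(\xi)-f(\xi)| \leq |\wi{f_n}(\xi)-\wi{f_n}(d)| + |\Exec(Pr_n,d)-f(\xi)| \leq 2^{-n}+2^{-n}$, which delivers a $\DTI(\Oo(T))$-sequence converging to $f$ at rate $2^{-(n-1)}$.

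\emph{Direction (b).} Conversely, suppose $f$ is a $\DTI(T)$-point of $\ckf$ presented by $n \mapsto f_n = (Pr_n, q(n), m(n), t(n))$ with $\|\wi{f_n}-f\|_\infty \leq 2^{-n}$. I would build an OTM $M$ as follows. On input $n$ and oracle~$\xi$: (i) compute $f_{n+1}$ using the $\p$-sequence, which costs time $\Oo(T(n))$; (ii) query the oracle for precision $Q(n) := m(n+1) + (n+1) - q(n)$ to obtain a dyadic $d \in \DD_{Q(n),[0,1]}$ with $|d-\xi| \leq 2^{-Q(n)}$; (iii) invoke the evaluation procedure of Proposition~\ref{f413} on the input $(f_{n+1}, d, n+1)$ to compute $\wi{f_{n+1}}(d)$ by linear interpolation and output the result. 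Using the Lipschitz bound built into the correction of $f_{n+1}$ (slope bounded by $2^{m(n+1)-q(n+1)}$), one has $|\wi{f_{n+1}}(\xi) - \wi{f_{n+1}}(d)| \leq 2^{-(n+1)}$, whence $|M^{\xi}(n)-f(\xi)| \leq 2 \cdot 2^{-(n+1)} = 2^{-n}$.

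\emph{Complexity analysis and main obstacle.} The main obstacle is to justify the $T^2(n)$ bound. Since $f_{n+1}$ has size $N=\Oo(T(n))$, Proposition~\ref{f413} guarantees that step (iii) runs in time $\Oo(N^2) = \Oo(T^2(n))$; steps (i) and (ii) are absorbed into this bound. The squaring is unavoidable with this approach because the Ko-Friedman presentation has to recompute the entire Turing machine trace of $Pr_{n+1}$ via the Universal Turing Machine of Lemma~\ref{f131}, which takes time quadratic in the size of the encoded machine and its running time bound. This mismatch is the reason one gets $T$ in direction (a) but only $T^2$ in direction (b), and, as the authors remark after the proof, it reflects a structural phenomenon (the equivalence is perfect for classes closed under polynomial slowdown such as $\p$, $\Prim$, $\Rec$, but not for a rigid class like $\DRT(\Lin, \Oo(n^k))$).
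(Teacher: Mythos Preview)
Your approach is essentially identical to the paper's: same construction $f_n=(Pr_n,n,T(n),T(n))$ in direction (a), same OTM with oracle query $Q(n)=m(n+1)+(n+1)-q(n)$ and call to Proposition~\ref{f413} in direction (b), and the same explanation for the $T^2$ loss via the universal machine.

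There is, however, one genuine slip in your correctness argument for~(a). You assert that for consecutive dyadics $a$ and $b=a+2^{-T(n)}$, both $\Exec(Pr_n,a)$ and $\Exec(Pr_n,b)$ \emph{equal} $M^{\xi}(n)$ for any $\xi\in[a,b]$, and hence coincide. This is false: the value $M^{\xi}(n)$ is not determined by $\xi$ alone --- it depends on the particular oracle answers, and two different valid oracles for the same $\xi$ may produce two different outputs (each within $2^{-n}$ of $f(\xi)$, but not equal). What your bit-reading argument does establish is that the truncations of $a$ and those of $b$ are \emph{both} valid oracle answers for any $\xi\in[a,b]$; hence $|\Exec(Pr_n,a)-f(\xi)|\le 2^{-n}$ and $|\Exec(Pr_n,b)-f(\xi)|\le 2^{-n}$, so the two outputs differ by at most $2\cdot 2^{-n}$, not $0$. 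Consequently the quadruplet $(Pr_n,n,T(n),T(n))$ need not satisfy the correctness condition of Definition~\ref{f411}. The fix is routine: take instead $f_n=(Pr_{n+1},n,T(n+1),T(n+1))$ (or declare output precision $n-1$), which absorbs the factor $2$ and leaves the $\DTI(\Oo(T))$ conclusion intact. The paper itself glosses over this point (\og La correction de la donn\'ee $f_n$ est claire\fg), so the oversight is understandable, but your explicit claim that the two executions agree is incorrect and should be replaced by the $2\cdot 2^{-n}$ bound plus an index shift.
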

Nous prouverons d'abord une caractérisation de la \com d'une fonction au sens de Ko-Friedman, pour une classe~$\ca$   de \com en temps ou en espace élémentairement stable. Nous l'avons déjà affirmée sans preuve dans le premier exemple \ref{f223}. Nous en donnons un énoncé plus précis ici.

\begin{fproposition} \label{f415}
Soit~$\ca$    une classe élémentairement stable de type $\DTI (\bullet)$  
ou  $\DSPA(\bullet)$  ou $\DSRT(\bullet, \bullet, \bullet)$  ou  $\DRT(\bullet, 
\bullet)$  ou  $\DSR(\bullet, \bullet)$  et soit une fonction continue  
$f \colon  [0,1] \rightarrow \RR$. Les propriétés suivantes sont équivalentes. 
\begin{enumerate}
\item la fonction $f$ est calculable au sens de Ko-Friedman dans la classe~$\ca$.
\item la fonction $f$ est \uni de classe~$\ca$.
\end{enumerate}
{\bf NB}: 
Pour ce qui concerne la \com d'une MTO, nous entendons ici que les questions 
posées à l'oracle doivent être comptées parmi les sorties de la machine. 
Autremant dit la taille des entiers~$m$ qui sont les questions à l'oracle 
doivent avoir la majoration requise pour la sortie dans les classes du type  $\DSRT(\bullet,\bullet,\bullet)$ ou $\DRT(\bullet,\bullet,\bullet)$  ou  $\DSR(\bullet,\bullet,\bullet)$.
\end{fproposition}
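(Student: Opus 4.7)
Le plan est d'établir l'équivalence en traduisant explicitement, dans chaque sens, une MTO en une paire (machine ordinaire sur $\DD_{[0,1]}$, module de continuité uniforme dans $\ca$) et réciproquement. Rappelons d'abord que, par définition, $f$ est uniformément de classe $\ca$ lorsqu'elle admet un module $\mu\colon \NN_1 \to \NN_1$ de classe $\ca$ et que la restriction $(f(a))_{a\in\DD_{[0,1]}}$ est une famille de réels de classe $\ca$, c.-à-d.\ présentée par une fonction de $\DD\times \NN_1$ vers $\DD$ dans $\ca$.

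Pour le sens (1) $\Rightarrow$ (2), je partirais d'une MTO $M$ calculant $f$ dans la classe $\ca$. L'idée est de simuler $M$ en remplaçant chaque question ``$\,m\;?\,$'' à l'oracle par la lecture effective des $m$ premiers bits d'une entrée $a\in\DD_{[0,1]}$: la machine ordinaire ainsi obtenue calcule la famille $(f(a))_{a\in\DD}$ avec des complexités du même ordre que $M$, l'appartenance à $\ca$ résultant de la stabilité élémentaire. Pour le module de continuité, je poserais $\mu(n)$ égal au plus grand entier $m$ que $M$ puisse interroger à l'oracle sur l'entrée $n$, en parcourant tous les oracles possibles. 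L'estimation clé est que ce maximum est lui-même dans $\ca$: c'est le contenu du {\bf NB} qui accompagne l'énoncé. Suivant le type de classe, on s'y ramène en observant que les entiers $m$, étant écrits sur une bande de sortie, sont contraints par le temps/l'espace utilisé pour $\DTI$ et $\DSPA$, ou explicitement comptés comme sorties pour $\DSRT$, $\DRT$, $\DSR$.

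Pour le sens (2) $\Rightarrow$ (1), je construirais la MTO suivante à partir des deux données fournies: sur l'entrée $n$, calculer d'abord $m:=\mu(n+1)$ via la machine de classe $\ca$ qui présente $\mu$, interroger alors l'oracle avec cette précision pour obtenir un $a\in\DD_m$ approximant $x$ à $2^{-m}$ près, puis exécuter la machine qui présente la famille $(f(a))_{a\in\DD}$ pour calculer $f(a)$ à $1/2^{n+1}$ près, et renvoyer cette valeur tronquée d'un bit. La correction se vérifie par l'inégalité triangulaire et la définition du module $\mu$: on a $|f(x)-M^x(n)| \le |f(x)-f(a)| + |f(a)-\mathrm{sortie}| \le 1/2^{n+1} + 1/2^{n+1} = 1/2^n$. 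L'appartenance à $\ca$ s'obtient par stabilité par composition: évaluation de $\mu(n+1)$, lecture de $m$ bits de l'oracle, exécution de la machine ordinaire, troncature finale.

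L'obstacle principal sera précisément la justification précise du {\bf NB} dans toute la variété des classes mentionnées: il faut à chaque fois convaincre le lecteur que la taille des questions posées à l'oracle est bien dominable par une fonction de $\ca$. C'est trivial pour $\DTI$ et $\DSPA$ (borne par les ressources disponibles), et c'est pour cela que l'on insiste sur le fait que les questions à l'oracle comptent comme sorties dans $\DSRT$, $\DRT$, $\DSR$. On pourra remarquer en passant la parenté étroite entre cette preuve et celle, un peu plus technique, de la proposition~\ref{f414} qui suit, la seule différence étant qu'il faut y remplacer les MT ``génériques'' de classe $\ca$ par des quadruplets corrects de $\ykf$ contrôlant finement les paramètres.
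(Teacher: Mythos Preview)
Your proposal is correct and follows essentially the same approach as the paper: for (1)$\Rightarrow$(2) you replace oracle queries by reading bits of a dyadic input and extract the modulus $\mu(n)$ as the maximal precision queried (justified exactly via the {\bf NB}), and for (2)$\Rightarrow$(1) you build the MTO that computes $m=\mu(n+1)$, queries the oracle once, evaluates $f(a)$ to $1/2^{n+1}$ and truncates one bit. Your remark on the parallel with Proposition~\ref{f414} also mirrors the paper's own commentary.
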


\proof (de la proposition \ref{f415})
Rappelons qu'une fonction est \uni de classe~$\ca$   (cf. définition 
\ref{f211}) lorsque:\\
a)	la fonction $f$ possède un \mcu dans la classe~$\ca$.\\
b)	la famille   $(f(a))_{a \in \DD}$   est une~$\ca$-famille de nombres 
réels.\\
Soit tout d'abord  $f \in \czu$ et   $M$  une  MTO  qui, pour toute entrée  
$n \in \NN_1$ et tout  $x \in [0,1]$   (donné comme oracle) calcule  $f(x)$  
à  $2^{-n}$   près dans la classe~$\ca$.  Si on remplace l'oracle, qui 
donne à la demande une approximation de $x$ à   $2^{-m}$  près dans  
$\DD_{m}$,  par la lecture d'un nombre dyadique  $a$  arbitraire, on obtient 
une machine de Turing usuelle qui calcule  $(f(a))_{a \in \DD}$   en tant que 
famille de nombres réels, ceci dans la classe~$\ca$. \\
Voyons la question du \mcu. Sur l'entrée  $n$  (précision requise pour la 
sortie) et pour n'importe quel oracle pour n'importe quel  $x \in [0,1]$, la MTO 
va calculer  $y=f(x)$  à~$2^{-n}$  près en interrogeant l'oracle pour 
certaines précisions  $m$. Puisque~$\ca$   est une classe de \com du type 
prévu, le plus grand des entiers  $m$  utilisés sur l'entrée  $n$  peut 
être majoré par $\mu(n)$ où $\mu\colon  \NN_1 \rightarrow  \NN_1$  est une 
fonction dans la classe~$\ca${\footnote{Par exemple pour une classe de 
complexité où on spécifie que les sorties sont de taille linéaire,  la 
taille de $m$ dépend linéairement de $n$ (indépendamment de l'oracle) 
puisque $m$, en tant que question posée à l'oracle, est une des sorties.}}. 
C'est le \mcu que nous cherchons.\\
Supposons maintenant que la fonction $f$ soit  \uni de classe~$\ca$.
 Soit  $M$  la machine de Turing (sans oracle) qui calcule  
$(f(x))_{x \in \DD}$   en tant que famille de nombres réels dans la classe~$\ca$.            
Soit  $M'$ une machine de Turing qui calcule un \mcu  $\mu \colon  \NN_1 \rightarrow 
\NN_1$  dans la classe~$\ca$. La MTO à la Ko-Friedman est alors la suivante. 
Sur l'entrée  $n$  elle calcule  $m = \mu(n+1)$  (en utilisant  $M'$), elle 
interroge l'oracle avec la précision  m, l'oracle donne un 
\elem  $a \in \DD_m$. La MTO utilise alors  $M$  pour calculer  $f(a)$  avec 
une approximation de $1/2^{n+1}$, qui, privée de son dernier bit, constitue la 
sortie de la MTO. \eop  

\medskip La proposition \ref{f415} démontre, par contraste, que la non parfaite adéquation 
obtenue dans la proposition \ref{f414} est seulement due à la difficulté de 
faire entrer exactement et à tout coup la notion de complexité d'une 
fonction \unico (en tant que fonction) dans le moule de la notion de 
``complexité d'un point dans un espace métrique''. Cette adéquation 
parfaite a lieu pour des classes comme  $\p$, $\Prim $ ou $\Rec $ mais pas pour  
$\DRT(\Lin, \Oo(n^k))$. \\
Enfin on pourra noter la grande similitude entre les preuves des propositions 
\ref{f415} et \ref{f414}, avec une complication un peu plus grande pour cette 
dernière.   
\sni \proof (de la proposition \ref{f414}) Soit tout d'abord  $f \in \czu$  et   
$M$  une  MTO  qui, pour toute entrée  $n \in \NN_1$  et tout $x \in [0,1]$ 
(donné comme oracle) calcule  $f(x)$ à  $2^{-n}$ près, en temps majoré 
par  $T(n)$. \\
Considérons la suite  $f_n = (Pr_n, n, T(n), T(n))_{n \in \NN_1}$ où on 
a:\\
$Pr_n$  est obtenu à partir du programme  $Pr$  de la machine  $M$  en 
rempla\c{c}ant la réponse  $(x_m)$  de l'oracle  à la question  ``$\,m\;?\,$''  
par l'instruction   ``lire  $x$  avec la précision  $m$'' \\ 
La correction de la donnée $f_n$ est claire. Soit  $\wi{f_n}$  la 
fonction linéaire par morceaux correspondant à la donnée  $f_n$.  On a  
pour  $\xi \in [0,1]$  et  $d$  une approximation de  $\xi$ à  $1/2^{T(n)}$  
près
$$ {\Exec}(Pr_n, d) = {\Exec} (Pr, n, \hbox{Oracle pour } \xi) 
= M^{\xi}(n)$$
et donc
$$\abS{ \wi{f_n}(\xi) - f(\xi) }\;  \leq 
\abs{f(d) - {\Exec}(Pr_n, \xi) } + \abS{ \wi{f_n}(d) - {\Exec} (Pr_n, \xi) }\;  \leq 2^{-n} + 2^{-n} \leq 2^{-(n-1)}$$ 
Enfin la suite  $n \mapsto f_{n+1}$  est de complexité  $\DTI (\Oo(n))$.\\
Réciproquement, supposons que  $\norme{ f_n - f}  \leq 2^{-n}$  avec    
$n \mapsto f_n$   de classe  $\DTI (T(n))$. \\
On a  $f_n = (Pr_n, q(n), m(n), t(n))$. Considérons la MTO  $M$ qui, pour tout 
entier  $n \in \NN_1$ et tout  $\xi \in [0,1]$, effectue les tâches 
suivantes: \\
--- prendre l'élément  $f_{n+1} = (Pr_{n+1}, q(n+1), m(n+1), t(n+1))$, de la 
suite, correspondant à  $n+1$. \\
--- poser à l'oracle la question  $Q(n) = m(n+1) + n+1 - q(n)$  (on obtient 
une approximation  $d$  de  $\xi$ à $2^{-Q(n)}$ près). \\
--- calculer  $\wi{f_{n+}}(d)$ par interpolation linéaire (cf. 
proposition \ref{f413}): ceci est la sortie de la machine~$M$.\\   
On a alors pour tout $ \xi \in [0,1]$
\[
\abS{\wi{f_n}(\xi) - M^{\xi}(n) }\;  \leq 
\abS{\wi{f}_n(\xi) - \wi{f}_{n+1}(\xi)} + 
\abS{\wi{f}_{n+1}(\xi) - \wi{f}_{n+1}(d) }\;  
\leq 2^{-(n+1)} + 2^{-(n+1)} = 2^{-n}  
\] 
donc 
\[
\Norme{ f - M^{\xi}(n)} \; \leq\;  \Norme{ f - \wi{f_n}} + 
\Norme{ \wi{f_n} - M^{\xi}(n) } 
\; \leq\;  2^{-n} + 2^{-n}
\]
De plus la machine  $M$ est de complexité $\Oo(T^2(n))$ (cf. proposition 
\ref{f413}).\eop

\medskip \noindent Dans le même style que la proposition \ref{f415}, on obtient deux 
caractérisations naturelles, à $\p$-équivalence près, de la \rp   $\ckf$  
de  $\czu$.  

\begin{fproposition} \label{f416}
Soit $\sC^{\star}$  une \rp  de l'espace  $\czu$.  
Alors on a l'équivalence des deux assertions suivantes.   
\begin{enumerate}
\item La \pres  $\sC^{\star}$ est $\p$-\equiva à  $\ckf$.
\item Une famille  $\big(\wi{f}\,\big)_{f \in Z}$  dans  $\czu$  est \uni de classe  
$\p$  si et seulement si elle est une $\p$-famille de points de  $\sC^{\star}$.
\end{enumerate}
\end{fproposition}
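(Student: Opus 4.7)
Je suivrais la stratégie suggérée par la remarque \ref{f228}, selon laquelle, pour toute classe élémentairement stable $\ca$, deux \rps d'un même espace métrique complet sont $\ca$-équivalentes si et seulement si elles définissent les mêmes $\ca$-familles de points. Cette observation ramène la proposition à vérifier que la présentation $\ckf$ elle-même satisfait la condition (2), c.-à-d. qu'une famille $\big(\wi{f}\,\big)_{f \in Z}$ dans $\czu$ est \uni de classe $\p$ si et seulement si elle est une $\p$-famille de points dans $\ckf$. Une fois cette équivalence établie pour $\ckf$, l'équivalence (1)$\Leftrightarrow$(2) en découle immédiatement: l'assertion (1) signifie exactement, par la remarque \ref{f228}, que $\sC^{\star}$ et $\ckf$ définissent les mêmes $\p$-familles, donc que $\sC^{\star}$ caractérise ces $\p$-familles comme étant précisément les familles \uni de classe $\p$.

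La vérification que $\ckf$ satisfait (2) reprend et paramètre par $f \in Z$ la preuve de la proposition \ref{f414}. Pour le sens direct, partant d'une famille \uni de classe $\p$ présentée par une fonction $\p$-calculable $\varphi \colon Z \times \DD_{[0,1]} \times \NN_1 \to \DD$ approchant $\wi{f}(d)$ à $1/2^n$ près, et par un \mcu $\mu \colon Z \times \NN_1 \to \NN_1$ de classe $\p$, j'associerais à chaque couple $(f,n)$ le quadruplet $\big(Pr_{f,n}, n, \mu(f,n), S(\flo{f}+\mu(f,n)+n)\big) \in \ykf$, où $Pr_{f,n}$ est le programme qui prend en entrée un élément de $\DD_{\mu(f,n),[0,1]}$ et retourne $\varphi(f,\cdot,n)$ tronquée à $\DD_n$, et où $S$ est une fonction \polle majorant le temps d'exécution. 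La correction du quadruplet résulte du fait que $\mu(f,n)$ est un \mcu pour $\wi{f}$, et la fonction linéaire par morceaux associée coïncide avec $\varphi(f,\cdot,n)$ sur la grille, donc approche $\wi{f}$ à $1/2^n$ près.

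Réciproquement, si $\big(\wi{f}\,\big)_{f \in Z}$ est une $\p$-famille de points dans $\ckf$, on dispose d'une application $\p$-calculable $(f,n) \mapsto g_{f,n} = (Pr_{f,n}, q_{f,n}, m_{f,n}, t_{f,n}) \in \ykf$ telle que $\norme{\wi{g_{f,n}} - \wi{f}}_{\infty} \leq 1/2^n$. L'évaluation $\varphi(f,d,n) := \wi{g_{f,n+1}}(d)$, calculée par interpolation linéaire, est \polle en la taille totale de l'entrée grâce à la proposition \ref{f413}, et approche $\wi{f}(d)$ à $1/2^n$ près. Pour le \mcu, la fonction linéaire par morceaux $\wi{g_{f,n+2}}$ admet $k \mapsto k + m_{f,n+2} - q_{f,n+2}$ comme \mcu (d'après la proposition \ref{f413}), et l'inégalité triangulaire donne, pour $\abs{x-y} \leq 1/2^{\mu(f,n)}$ avec $\mu(f,n) := n+2+m_{f,n+2}-q_{f,n+2}$, la majoration $\abs{\wi{f}(x) - \wi{f}(y)} \leq 2/2^{n+2} + 2^{m_{f,n+2}-q_{f,n+2}}/2^{\mu(f,n)} \leq 1/2^n$; la dépendance \polle en $(\flo{f},n)$ des paramètres $m_{f,n+2}$, $q_{f,n+2}$ garantit que $\mu$ est de classe $\p$.

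Le seul point technique à soigner sera la gestion rigoureuse des paramètres de contrôle $(n,m,T)$ dans les quadruplets de $\ykf$ avec la dépendance supplémentaire en $f \in Z$, en vérifiant explicitement que tous les polynômes en jeu restent bien \polls en la taille totale $\flo{f}+n$ de l'entrée. Aucune idée nouvelle par rapport à la preuve de la proposition \ref{f414} n'est requise; il s'agit essentiellement de la relecture ``uniforme en $f$'' de cette preuve.
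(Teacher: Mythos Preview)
Your proposal is correct and follows exactly the same approach as the paper: invoke remark~\ref{f228} to reduce to showing that $\ckf$ itself satisfies condition~(2), then observe that this verification is the family-parametrized version of the proof of proposition~\ref{f414}. The paper's own proof says precisely this (in two sentences), and your detailed unfolding of both directions matches the construction used in the proof of théorème~\ref{f417} and the argument of proposition~\ref{f414}.
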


\proof 
Pour n'importe quelle classe élémentairement stable~$\ca$, deux \rps  d'un espace métrique arbitraire sont~$\ca$-\equivas si et seulement si elles définissent les mêmes~$\ca$-familles de points (cf. remarque \ref{f228}). Il suffit donc de vérifier que la \rp   $\ckf$  vérifie la condition (2). La preuve est identique à celle de la proposition \ref{f414}. 
\eop

\begin{ftheorem} \label{f417}
La \pres  $\ckf$  est universelle pour l'évaluation au sens suivant.\\
 Si  $\sC_Y=(Y,\delta,\eta)$  est une \pres rationnelle de  $\czu$  pour laquelle la famille  
$\big(\wi{f}\,\big)_{f \in Y}$  est \uni de classe $\p$,  
alors la fonction  $\Id_\czu$  de $\sC_Y$  vers  $\ckf$  est \uni de 
classe~$\p$.\\ 
Le résultat se généralise à toute classe de \com~$\ca$   
élémentairement stable vérifiant la propriété suivante:
si  $F \in \ca$  alors le temps de calcul de $F\colon  \NN_1 \rightarrow \NN_1,\; N 
\mapsto F(N)$,  est majoré par une fonction de classe~$\ca$.
\end{ftheorem}

\proof 
Cela résulte de \ref{f416}. Donnons néanmoins la preuve.  Nous devons démontrer 
que la famille   $\big(\wi{f}\,\big)_{f \in Y}$   est une $\p$-famille de points   pour la \pres  $\ckf$.  Puisque  la famille  $\big(\wi{f}\,\big)_{f 
\in Y}$  est \uni de classe $\p$, on a deux fonctions $\psi$ et  $\mu$  de classe  
$\p$:
$$ \psi \colon  Y \times \DD_{[0,1]} \times \NN_1 \rightarrow \DD \qquad  
(f,d,n) \mapsto \psi(f,d,n) = \wi{f}(d) \;  \hbox {avec  la  
précision} \; 1/2^n$$
et 			
$$ \mu\colon  Y \times \NN_1 \rightarrow \DD \qquad  (f,n) \mapsto \mu(f,d,n) = m $$
où  $\mu$  est un \mcu pour la famille.  
Soit $S \colon   \NN_1 \rightarrow \NN_1$   une fonction de classe~$\p$  qui donne une majoration du temps de calcul de $\psi$. \\  
Considérons alors la fonction  $\varphi \colon  Y \times \NN_1 \rightarrow \ykf $   
définie par:
$$\varphi(f,n) = (Pr(f,n), n, \mu(f,n), S(\flo{f} + \mu(f,n) + n))$$
où  $\flo{f}$  est la taille de $f$ et  $Pr$  est le programme calculant 
$\psi$ à peine modifié: 
il prend en entrée seulement les  $d \in \DD_{m,[0,1]}$ et ne garde pour la sortie que les chiffres significatifs dans $\DD_n$.\\
La fonction $\varphi$  est aussi dans la classe $\p$. \\
De plus il est clair que le point rationnel codé par $(Pr, n, m, S(\flo{f} + m+ 
n))$  ainsi construit approxime~$\wi f$~à~$1/2^n$   prés.  	\eop

\medskip Notez que cette propriété universelle de la \pres  $\ckf$  est évidemment 
partagée par toute autre \pres  $\p$-\equiva à  $\ckf$.

\subsubsection{Présentation  par circuits booléens} \label{fsubsubsec412}
La \pres par circuits booléens est essentiellement la même chose que la 
\pres à la Ko-Friedman. Elle est un peu plus naturelle dans le cadre que nous 
nous sommes fixés (\rp  de l'espace métrique  $\czu$). 

Notons   $\CB$  l'ensemble des (codes des) circuits booléens. Codé 
informatiquement, un circuit booléen  $C$  est simplement donné par un 
programme d'évaluation booléen (boolean straight-line program) qui représente l'exécution du circuit  $C$.
Un point rationnel de la \pres par circuits booléens est une fonction 
linéaire par morceaux  $\wi{f}$    codée par un quadruplet  
$f=(C,n,m,k)$ où $C$ est (le code d')un circuit booléen,  $n$  est la 
précision réclamée pour  $f(x)$, $m$ la précision nécessaire en 
entrée, et $2^{k+1}$  majore la norme de  $\wi{f}$. 

\begin{fdefinition} \label{f418}
Un point rationnel de la \pres par circuits booléens  $\cbo$  est codé par 
un quadruplet  $f = (C, n, m, k) \in \CB \times \NN_1 \times \NN_1 \times 
\NN_1$  où  $C$   est (le code d')un circuit booléen ayant  $m$  portes 
d'entrée et  $k+n+1$   portes de sortie:  les $m$   portes d'entrée 
permettent de coder un élément de $\DD_{m,[0,1]}$  et  les $k+n+1$  portes 
de sortie permettent de coder les éléments de  $\DD$  de la forme 
$\pm 2^k\left( \sum_{i=1,\ldots,n+k}b_i2^{-i}\right)$  (où les $b_i$ sont 
égaux à $0$ ou $1$, $b_0$ code le signe $\pm$).\\
La donnée  $f = (C, n, m, k)$  est dite {\em correcte} lorsque deux entrées 
codant des éléments de  $\DD_{m,[0,1]}$  distants de  $1/2^m$  donnent en 
sortie des codages de nombres distants de au plus  $1/2^n$.  L'ensemble des 
données correctes est  $\ybo$.\\
Lorsqu'une donnée $f$ est correcte, elle définit une fonction linéaire par 
morceau  $\wi{f}$   qui est bien contrôlée (c'est le ``point 
rationnel'' défini par la donnée).
\end{fdefinition}

On notera que les paramètres de contrôle  $n, m$ et $k$  sont surtout 
indiqués pour le confort de l'utilisateur. En fait  $n+k$   et  $m$  sont 
directement lisibles sur le circuit  $C$. \\
De ce point de vue, la situation est un peu améliorée par rapport à la 
\pres  $\ckf$  pour laquelle les paramètres de contrôle sont absolument 
indispensables.
La taille de  $C$  contrôle à elle seule le temps d'exécution du circuit  
(c.-à-d. la fonction qui calcule, à partir du code d'un circuit booléen et 
de la liste de ses entrées, la liste de ses sorties est une fonction de très 
faible \comz).
\begin{fproposition} \label{f419} 
{\em  (Complexité de la famille de fonctions attachées à   $\ybo$)}.  
La famille de fonctions continues  $\big(\wi{f}\,\big)_{f \in \ybo }$ est \uni de 
classe  $\DSRT(\Lin, \Lin, \QLin)$.
\end{fproposition}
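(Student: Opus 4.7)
Le plan est de recopier, en l'adaptant, la preuve de la proposition \ref{f413} concernant la famille $\big(\wi{f}\,\big)_{f \in \ykf}$, en remplaçant simplement l'étape ``exécution du programme $Pr$ par la Machine de Turing Universelle'' par l'étape ``évaluation du circuit booléen $C$''. La correction de la donnée $f=(C,n,m,k)$ impose que $\wi{f}$ est affine par morceaux sur la grille $\DD_{m,[0,1]}$ avec pente en valeur absolue majorée par $2^{m-n}$; ceci fournit immédiatement le module de continuité uniforme $\mu(f,\ell)=\ell+m-n$, qui appartient bien à la classe visée.

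Ensuite, je construirais la fonction $\varphi\colon \ybo \times \DD_{[0,1]} \times \NN_1 \rightarrow \DD$ approchant $\wi{f}(x)$ à $1/2^\ell$ près exactement comme dans \ref{f413}: sur une entrée $z=((C,n,m,k),x,\ell)$ de taille globale $N$, on lit les $m$ premiers bits de $x$ pour repérer les deux éléments consécutifs $a,b \in \DD_{m,[0,1]}$ encadrant $x$ et le coefficient d'interpolation $r \in \DD_{[0,1]}$ tel que $x = a + r/2^m$; on évalue le circuit $C$ sur $a$ puis sur $b$ (les codes des entrées se lisent par l'intermédiaire des bits de $a$ et $b$) pour obtenir les valeurs exactes $\wi{f}(a)$ et $\wi{f}(b)$ dans $\DD_n$; on renvoie enfin l'interpolation linéaire $\wi{f}(a)+(\wi{f}(b)-\wi{f}(a))\cdot r$, tronquée à la précision $1/2^\ell$.

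Le seul point nouveau par rapport à \ref{f413} est donc le coût de l'évaluation du circuit booléen. Or, un circuit booléen étant codé par un programme d'évaluation (straight-line program), la fonction qui, à partir de ce code et de la liste des entrées, calcule la liste des sorties est de complexité en temps $\Oo(t \log t)$ où $t=\ta(C)$ (il suffit en gros de parcourir le programme une fois en tenant à jour les valeurs courantes des variables, le $\log t$ servant à indexer les adresses); elle utilise un espace linéaire et produit une sortie de taille linéairement bornée. On obtient ainsi globalement la classification $\DSRT(\Lin,\Lin,\QLin)$, les étapes auxiliaires (lecture de $x$, repérage de $a$ et $b$, interpolation linéaire finale sur deux dyadiques de taille $\Oo(N)$) étant toutes linéaires en $N$, donc absorbées par le $\QLin$ dominant.

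Je ne prévois pas de véritable obstacle: la seule subtilité à laquelle je veillerais est, conformément à la remarque \ref{f323}, de bien omettre le coût de ``gestion'' des valeurs intermédiaires (qui dépend fortement du modèle et ferait passer la borne à $\Oo(N^2)$); c'est précisément parce qu'on se concentre sur le coût des opérations booléennes elles-mêmes, et non sur la gestion des accès aux bandes, qu'on obtient la borne quasi-linéaire annoncée et meilleure que celle de \ref{f413}.
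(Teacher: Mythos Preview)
Your proposal is correct and follows essentially the same approach as the paper: copy the proof of Proposition~\ref{f413}, replace $\Exec(Pr,a)$ by the evaluation of the boolean circuit, and observe that this evaluation costs $\Oo(t\log t)$ where $t=\ta(C)$. You even anticipate the paper's Nota Bene about omitting the ``temps de gestion'' (remarque~\ref{f323}), which is exactly the caveat the paper adds after its proof.
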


\proof
On recopie presque à l'identique la preuve de la proposition \ref{f413} 
concernant la famille  $\big(\wi{f}\,\big)_{f \in \ykf}$. La seule différence est 
le remplacement de la fonction  $\Exec(Pr,a)$  par l'évaluation d'un circuit 
booléen.   La fonction qui calcule, à partir du code d'un circuit booléen 
et de la liste de ses entrées, la liste de ses sorties est une fonction de 
\com en temps $\Oo(t\log(t))$ (avec $t = \ta(C)$), donc de classe  $\DSRT(\Lin, 
\Lin, \QLin)$. 	\eop

\medskip \noindent  {\bf NB}. Si on avait pris en compte ``le temps de gestion'' (cf. remarque \ref{f323}) on aurait trouvé une \com  $\DSRT(\Lin, \Lin, \Oo(N^2))$, c.-à-d. le même résultat que pour la famille  $\big(\wi{f}\,\big)_{f \in \ykf}$  (la \com de  $\Exec(Pr,a)$  prend justement en compte ``le temps de gestion'').

\subsubsection{Présentation par circuits arithmétiques fractionnaires (avec magnitude)}\label{fsubsubsec 413}
Rappelons qu'un circuit {\em arithmétique (fractionnaire)} est par 
définition un circuit qui a pour portes d'entrées des variables ``réelles''  $x_i$  et des constantes dans $\QQ$  (on pourrait évidemment ne tolérer que 
les deux constantes  0  et  1  sans changement significatif). \\
Les autres portes sont:\\
--- des portes à une entrée, des types suivants:  
$x \mapsto x^{-1}, \; x \mapsto -x$ \\
--- des portes à deux entrées, des deux types suivants:  
$x+y, \; x \times y$. \\ 
Un circuit arithmétique calcule une fraction rationnelle, (ou éventuellement 
``error'' si on demande d'inverser la fonction identiquement nulle).
Nous considérerons des circuits arithmétiques avec une seule variable en 
entrée et une seule porte de sortie et nous noterons  $\CA$  l'ensemble des 
(codes de) circuits arithmétiques à une seule entrée.  Le code d'un 
circuit est le texte d'un programme d'évaluation (dans un langage et avec une 
syntaxe fixés une fois pour toutes) correspondant au circuit arithmétique 
considéré.

\begin{fdefinition} \label{f4110}
Un entier  $M$  est appelé {\em un coefficient de magnitude} pour un circuit 
arithmétique $\alpha$ à une seule entrée lorsque $2^M$  majore en valeur 
absolue toutes les fractions rationnelles du circuit (c.-à-d. celles 
calculées à toutes les portes du circuit) en tout point de l'intervalle 
$[0,1]$. \\
Un couple  $f = (\alpha, M) \in \CA \times \NN_1$ est une donnée {\em 
correcte} lorsque $M$ est un coefficient de magnitude pour le circuit  $\alpha$.  
Cela définit les éléments de  $\yaf$.  Un élément $f$ de  $\yaf$  
définit la fraction rationnelle notée  $\wi{f}$    lorsqu'elle est 
vue comme un élément de  $\czu$.  
La famille  $\big(\wi{f}\,\big)_{f \in \yaf }$ est la famille des points rationnels 
d'une \pres de  $\czu$  qui sera notée  $\caf$
\end{fdefinition}
 
Notez que, à cause de la présence des multiplications et du passage à 
l'inverse, la taille de  $M$ peut être exponentielle par rapport à celle de  
$\alpha$, même lorsqu'aucune fraction rationnelle de $\alpha$ n'a de pole sur  
$[0,1]$.  Ce genre de mésaventure n'avait pas lieu avec les circuits 
semilinéaires binaires.
Il semble même improbable que la correction d'un couple 
$(\alpha, M) \in \CA \times \NN_1$  puisse être testée \etpo (par rapport 
à la taille de la donnée $(\alpha, M)$. 

\begin{fproposition}[complexité de la famille de fonctions attachées à  $\yaf$] \label{f4111}~\\    
La famille de fonctions continues  $\big(\wi{f}\,\big)_{f \in \yaf }$  est \uni de 
classe  $\p$, et plus précisément de classe  $\DSRT(\Oo(N^3), \Lin, 
\Oo(N{\M}(N^2)))$.
\end{fproposition}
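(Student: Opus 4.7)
La stratégie est d'établir d'abord un module de continuité uniforme polynomial pour la famille $(\wi{f})_{f \in \yaf}$, puis de construire une fonction d'évaluation approchée $\psi$ ayant la complexité annoncée. Soit $f = (\alpha, M) \in \yaf$ et $p = \prof(\alpha)$. Je démontrerai par récurrence sur la profondeur $\pi$ d'une porte du circuit que la fraction rationnelle $R_\pi$ qu'elle calcule vérifie $\norme{R_\pi'}_{\infty} \leq 2^{2M\pi}$ sur $[0,1]$. Le passage de $\pi$ à $\pi+1$ se fait porte par porte : pour l'addition c'est immédiat ; pour la multiplication on utilise $(uv)' = u'v + uv'$ avec $\abs{u}, \abs{v} \leq 2^M$ ; pour le passage à l'inverse, $(1/u)' = -u'/u^2$ reste contrôlé puisque la condition de magnitude garantit $\abs{1/u} \leq 2^M$ (donc $\abs{u} \geq 2^{-M}$) à toute porte d'inversion. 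Ceci fournira le module $\mu(f, k) = k + 2Mp$, qui est bien un $\Oo(N^2)$ en la taille $N$ de l'entrée.

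Ensuite, pour construire $\psi(f, x, k)$, je fixerai $m := k + 2Mp + p$, je lirai les $m$ premiers bits de $x$ afin d'obtenir un dyadique $a \in \DD_{m,[0,1]}$ avec $\abs{x - a} \leq 2^{-m}$, puis j'évaluerai le circuit $\alpha$ en $a$ en tronquant à chaque porte le résultat calculé à ses $m$ bits les plus significatifs, avant de ne conserver que $k$ bits pour la sortie. L'analyse d'erreur se fait gate par gate : à une porte de profondeur $\pi$, l'erreur totale accumulée sur les entrées de la porte, amplifiée par un facteur majoré par $2^M$ pour $\pm, \times$ et par $2^{2M}$ pour l'inversion, reste inférieure à $1/2^{m - c\pi}$ pour une constante $c$ dépendant de $M$. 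Comme $m - cp \geq k$ par construction, ceci garantit $\abs{\psi(f, x, k) - \wi{f}(x)} \leq 2^{-k}$.

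Pour la complexité, en posant $t = \ta(\alpha) \leq N$ et en observant que chaque opération arithmétique de base sur des dyadiques de taille $\Oo(m) = \Oo(N^2)$ s'effectue en temps $\Oo(\M(N^2))$, le temps total des opérations arithmétiques est un $\Oo(t \cdot \M(N^2)) = \Oo(N \cdot \M(N^2))$. L'espace, dominé par le stockage des $t$ valeurs intermédiaires chacune de taille $\Oo(N^2)$, est un $\Oo(N^3)$. La taille de la sortie étant linéaire en $k$, donc en $N$, on obtient bien la classe $\DSRT(\Oo(N^3), \Lin, \Oo(N\M(N^2)))$. Le principal obstacle sera le contrôle rigoureux de la propagation des erreurs à travers les portes d'inversion, car une division par un dénominateur proche de zéro amplifie considérablement l'erreur ; mais c'est précisément la raison d'être du paramètre de magnitude $M$, qui, via la correction de la donnée $(\alpha, M)$, fournit la borne inférieure $2^{-M}$ sur tout dénominateur rencontré et rend donc cette amplification gérable.
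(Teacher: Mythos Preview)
Your proposal is correct and follows essentially the same approach as the paper: the same induction on depth to get the derivative bound $2^{2M\pi}$ and hence the modulus $\mu(f,k)=k+2Mp$, the same choice $m=k+2Mp+p$ for the working precision, and the same gate-by-gate evaluation with truncation to $m$ significant bits. You actually spell out more detail than the paper does on the inductive step for the derivative (case analysis over $+,\times,1/\cdot$) and on the error propagation through inversion gates, which is welcome since the paper leaves those verifications implicit.
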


\proof 	Soit $ f = (\alpha,M) $ un élément de  $\yaf$. Soit $ p $ la 
profondeur du cirduit $\alpha$. On démontre par récurrence sur $\pi$ que, pour 
une porte de profondeur $\pi$  la fonction 
rationnelle correspondante a une dérivée majorée par $2^{2M \pi}$ sur 
l'intervalle $[0,1]$. Ceci fournit le \mcu  $\mu(f,k) = k + 2Mp$  (qui est en 
$\Oo(N^2)$) pour la famille  $\big(\wi{f}\,\big)_{f \in \yaf }$. \\
Nous devons maintenant exhiber une fonction  
$\psi \colon  \yaf  \times \DD_{[0,1]} \times \NN_1 \rightarrow \DD$  
de \com $ \DRT(\Lin,\Oo(N\M(N^2))) $ telle que
\[
\forall (f, x, k) \in \yaf  \times \DD_{[0,1]} \times \NN_1 \; \;
\abs{ \psi(f,x,k) - \wi{f}(x)} \;  \leq 2^{-k}.
\]
Soit  $(f, x, k) \in \yaf  \times \DD_{[0,1]} \times \NN_1$  où  
$f = (\alpha,M)$.  Soit $ t = \ta(\alpha) $ le nombre des portes du circuit 
$\alpha$. La taille des entrées est $N = t+M+k$. \\ 
Pour calculer $\psi((\alpha,M), x, k)$  on procède comme suit.  
On lit $ m = k+2Mp+p $ bits de $x$ ce qui donne les deux points consécutifs
 $ a $ et $ b $ de $\DD_{m,[0,1]}$   tels que $a \leq x \leq b$.  On exécute 
le circuit $ \alpha $ au point $ a $ en tronquant le calcul effectué sur 
chaque porte aux $ m $ premiers bits significatifs. La valeur obtenue à la 
sortie, tronquée aux $ k $ premiers bits significatifs, est l'élément 
$\psi((\alpha,M), x, k)$. Comme une multiplication (ou une division) se fait en 
temps ${\M}(m)$, le temps des opérations arithmétiques proprement dites est 
un $\Oo(t \M(k+2Mp+p)) = \Oo(N\M(N^2))$ et l'espace occupé est en $\Oo(N^3)$.
	\eop 

\mni {\bf NB}. Si on avait pris en compte ``le temps de gestion'' (cf. remarque 
\ref{f323}) on aurait dit: \\ 	
le temps des opérations arithmétiques proprement dites est $\Oo(N \M(N^2)),$ 
et la gestion des objets est en $\Oo(N^4).$ Ainsi si on considère la 
multiplication rapide (respectivement naïve), la fonction d'évaluation 
est dans $ \DRT(\Lin,\Oo(N^4))$ (respectivement $\DRT(\Lin,\Oo(N^5))$) où $N$ est 
la taille des données.
\subsubsection{Présentation par circuits arithmétiques polynomiaux (avec 
magnitude)}\label{fsubsubsec414}

La \pres suivante de  $\czu$  sera notée   $\capo$. L'ensemble des codes des points rationnels  sera noté  $\yap$. 

C'est la même chose que les circuits arithmétiques fractionnaires, sauf 
qu'on supprime les portes ``passage à l'inverse''. Il n'est donc pas 
nécessaire d'écrire la définition en détail.\\
On retrouve les mêmes difficultés concernant la magnitude, en un peu moins 
grave. Il ne semble pas que l'on puisse obtenir pour les circuits polynomiaux de 
majorations sensiblement meilleures que celles obtenues à la proposition 
\ref{f4111} pour les circuits avec divisions.

\subsection{Comparaisons des présentations précédentes}\label{fsubsec42}

Nous démontrons dans cette section un résultat important, l'équivalence entre 
la \pres à la Ko-Friedman et les quatre présentations ``par circuits'' de 
$\czu$ citées dans la section précédente, ceci du point de vue de la \com 
en temps \poll. En particulier nous obtenons 
une formulation complètement contrôlée du point de vue algorithmique 
pour le théorème d'approximation de Weierstrass.  \\
Pour démontrer l'équivalence entre ces cinq présentations du point de vue de 
la \com en temps \poll, nous  suivrons le plan ci-dessous:

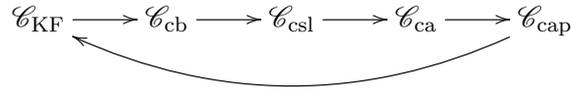
\begin{figure}[htbp]
\begin{center}
$$
\xymatrix @R=15pt{
  \ckf\ar[r] &  \cbo\ar[r] &  \csl\ar[r] &  \caf\ar[r] & \capo\ar@/^25pt/[llll] 
}
$$ 
\end{center}
\medskip\caption[schéma de preuve]{\label{ffi421}schéma de la preuve  
}  
\end{figure}  

Pour donner l'équivalence entre ces différentes présentations, nous devons 
construire des fonctions qui permettent de transformer un point rationnel d'une 
\pres donnée en un point rationnel d'une autre présentation, qui approche 
convenablement le premier.

\begin{fproposition} \label{f421}
L'identité de  $\czu$  de  $\ckf$  vers  $\cbo$  est \uni de classe  $\p$,  en 
fait de classe  $\DTI (N^{14})$.
\end{fproposition}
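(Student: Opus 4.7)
Le plan est de combiner deux ingrédients déjà disponibles: d'une part le lemme \ref{f131} sur la machine de Turing universelle $MU$, d'autre part le lemme \ref{f422} (énoncé juste après cette proposition) qui permet, pour une MT \emph{fixée} $M$, de construire en temps $\Oo((T+m)^7)$ un circuit booléen simulant les $T$ premières configurations de $M$ pour toute entrée de taille $m$. L'idée est d'appliquer le premier lemme pour réduire la simulation du programme variable $Pr$ à l'exécution d'une machine fixée (la machine universelle), puis le second lemme pour convertir cette exécution fixée en un circuit booléen.

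Je partirais de $f = (Pr, n, m, T) \in \ykf$ de taille $N = p + n + m + T$ avec $p = \flo{Pr}$. D'après le lemme \ref{f131}, la machine $MU$ simule l'exécution de $Pr$ sur une entrée $x \in \DD_{m,[0,1]}$ en utilisant un temps $T' = \Oo(T(T+p)) = \Oo(N^2)$ et un espace comparable, sur une entrée codant le triplet $(Pr, x, T)$ de taille $m' = \Oo(N)$. Le point clé est que $MU$ est une machine \emph{fixée une fois pour toutes}, indépendante de $f$: seule son entrée varie avec $f$, ce qui permet précisément d'invoquer le lemme \ref{f422}.

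J'appliquerais alors le lemme \ref{f422} à $M = MU$ avec les paramètres $T'$ et $m'$: on obtient, en temps $\Oo((T'+m')^7) = \Oo(N^{14})$, un circuit booléen $\gamma$ qui reproduit exactement la sortie de $Pr$ sur toute entrée $x \in \DD_{m,[0,1]}$. Il resterait à former le quadruplet $g = (\gamma, n, m, k)$ où $k$ est un majorant de la magnitude des sorties (lisible directement sur $f$). La condition de correction pour $g$ (deux entrées distantes de $1/2^m$ donnent des sorties distantes d'au plus $1/2^n$) découle immédiatement de celle de $f$, puisque $\gamma$ calcule la même fonction que $Pr$ sur la grille $\DD_{m,[0,1]}$; par construction des interpolations linéaires, on a alors $\wi{g} = \wi{f}$.

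L'essentiel de la difficulté technique est concentrée dans les deux lemmes préliminaires, et en particulier dans la majoration non triviale en $\Oo((T+m)^7)$ du lemme \ref{f422} (renvoyée à \cite{fSt} et \cite{fMo}). La preuve de la proposition elle-même se réduit ensuite à l'ajustement soigneux des paramètres, à la vérification que la composition des deux constructions préserve la fonction calculée aux points de la grille, et au contrôle que l'encodage final produit bien un élément de $\ybo$ tel que $\wi{g} = \wi{f}$.
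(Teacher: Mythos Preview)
Your proposal is correct and follows essentially the same approach as the paper: apply the universal machine lemma~\ref{f131} to reduce the variable program $Pr$ to an input of a fixed machine $MU$, then invoke lemma~\ref{f422} on $MU$ with parameters $T' = \Oo(N^2)$ and $m' = \Oo(N)$ to obtain the circuit in time $\Oo((T'+m')^7) = \Oo(N^{14})$. The only small detail you leave implicit (as does the paper) is that the resulting circuit has $m'$ inputs encoding $(Pr,x,T)$, so one must hardwire the fixed parts $Pr$ and $T$ to obtain a circuit with just the $m$ input gates for $x$ required by the format of $\ybo$; this is a trivial linear-time post-processing that does not affect the bound.
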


La preuve de cette proposition est basée sur le Lemme \ref{f131} qui décrit 
la \com d'une Machine de Turing Universelle et sur le lemme suivant.

\begin{flemma} \label{f422}
{\rm (cf. \cite{fSt} et \cite{fMo}) }  \\
Soient $ M $ une machine de Turing fixée, $ T $ et $ m $ des éléments de 
$\NN_1$. Alors on a une fonction calculable en temps $\Oo((T+m)^7)$  qui calcule  
$\gamma_{T,m}$:  un circuit booléen simulant les $ T $ premières 
configurations de $ M $ pour n'importe quelle entrée dans  $\{0,1\}^m$.
\end{flemma} 

\begin{fremark} \label{f423}
J. Stern a donné dans \cite {fSt}, pour tout entier $T \in \NN_1$, une 
construction assez simple d'un circuit  $\gamma_T$  qui calcule la configuration 
de $ M $ obtenue après $ T $ pas de calcul à partir d'une configuration 
initiale de taille $\leq  T$  (on peut supposer que $T \geq  m$ ou prendre 
$\max(T,m)$). La taille du circuit  $\gamma_T$  est un  $\Oo(T^2)$. L'auteur a 
mentionné aussi que cette construction se fait en temps \poll. Une \com 
plus précise (de l'ordre de  $\Oo(T^7)$) est donnée dans \cite{fMo}.
\end{fremark}

\proof (de la proposition \ref{f421})	On considère la Machine de Turing 
Universelle MU du lemme \ref{f131}.  Soit $ f = (Pr,n,m,T) $ un élément de  
$\ykf$.  Notons $ p $ la taille du programme $Pr.$ 
La taille de  $f$ est $N = p+n+m+T.$  La machine $ MU $ prend en entrée le 
programme $ Pr $  ainsi qu'une entrée
 $ x \in  \DD_{m,[0,1]}$  et le nombre d'étapes $ T.$ 
Elle exécute en $\Oo(T (\max(T,m)+p)) = \Oo(N^2)$ étapes (cf. lemme \ref{f131})  
la tâche de calculer la sortie pour le programme $ Pr $ sur la même entrée 
$x$ après $ T $ étapes de calcul.  
En appliquant le lemme \ref{f422}  on obtient un temps  $\Oo(N^{14})$  pour 
calculer à partir de l'entrée $f$ un \elem $ g $ de  $\ybo$  
(un circuit booléen ainsi que ses paramètres de contrôle)  
pour lequel on a 
$ \wi{g} = \wi{f}$.
	\eop

\begin{fproposition} \label{f424}
L'identité de  $\czu$  de  $\cbo$  vers  $\csl$  est \uni de classe    $\LINT 
$.  Plus précisément, on a une fonction discrète qui, à partir d'un 
élément $ f = (\gamma,n,m,k) $ de $\ybo$  et d'un entier 
$ q \in \NN_1$,  calcule en temps $ \Oo(N) $ (où $ N $ est la taille de 
l'entrée $ ((\gamma,n,m,k),q)$), un circuit semilinéaire binaire $ g $ tel 
que  
\[
\forall x \in [0,1]\;\; 
\abs{ \wi{f}(x) - \wi{g}(x)} \;  \leq 2^{-q} \; \;  \eqno (\F4.2.4) .
\]  
\end{fproposition}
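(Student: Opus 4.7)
Mon plan est le suivant. D'abord, je me ramènerais au cas où la précision demandée $q$ est liée à $n$ par une relation simple, par exemple $q = n-1$, en invoquant un lemme préalable (analogue au lemme \ref{f425} qui semble avoir été placé avant) permettant de transformer en temps linéaire une donnée $(\gamma,n,m,k)$ en une donnée équivalente $(\gamma',n+h,m+h,k)$ via interpolation linéaire. Ainsi il suffit de fabriquer un circuit semilinéaire $g$ qui approche $\wi f$ à $2^{-(n-1)}$ près.

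Ensuite, viendrait la partie aisée: la simulation exacte des portes internes du circuit booléen $\gamma$ par un circuit semilinéaire $\lambda$, en rempla\c{c}ant les constantes booléennes $0,1$ par les dyadiques correspondants, les portes $\neg u$ par $1-u$, les portes $u \land v$ par $\min(u,v)$, et les portes $u \lor v$ par $\max(u,v)$. Il faut aussi ajouter en sortie un petit circuit semilinéaire qui recombine les $n+k+1$ bits de sortie en le nombre dyadique $\pm 2^k \sum_i c_i 2^{-i}$. Sur des entrées dans $\{0,1\}$, ce circuit calcule exactement la même chose que $\gamma$, et sa taille et sa profondeur sont en $\Oo(\ta(\gamma))$ et $\Oo(\prof(\gamma))$.

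La partie délicate, qui constitue l'obstacle principal, est la \emph{simulation de la lecture en entrée}: le circuit $\gamma$ exige en entrée les bits booléens du développement dyadique de $x$, alors que $x$ est un réel variable dans $[0,1]$. L'idée naturelle est d'appliquer l'algorithme itératif de décomposition binaire (décrit dans l'énoncé comme algorithme 1) dans lequel le seuil $C(x)$ est remplacé par la fonction continue semilinéaire $C_{p,1/2}$ de la proposition \ref{f336}, avec une précision $p = m+2$. Ceci fournit un circuit semilinéaire $\varepsilon$ de taille et profondeur $\Oo(m)$ qui calcule les $m$ premiers bits de $x$ \emph{correctement} sauf quand $x$ tombe dans une zone critique $[k/2^m, k/2^m + 1/2^p]$, où certains bits sortent dans $(0,1)$.

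Pour contourner cette difficulté, j'utiliserais la technique de Hoover (\cite{fHo87,fHo90}): évaluer le circuit composé $\lambda \circ \varepsilon$ en trois points décalés $x_\sigma = x + \sigma/2^{m+2}$ pour $\sigma \in \{-1, 0, 1\}$, puis prendre la valeur médiane des trois résultats. En effet, au plus un des trois points $x_\sigma$ peut se trouver dans une zone critique; pour les deux autres, la sortie calculée co\"\i ncide avec $\wi f$ évaluée en un point proche de $x$ à $1/2^{m-1}$ près, ce qui garantit un écart majoré par $1/2^{n-1}$ avec $\wi f(x)$. La médiane de trois réels s'exprime comme un circuit semilinéaire de taille constante, par exemple par la formule $y_{-1}+y_0+y_1 - \min(y_{-1},y_0,y_1) - \max(y_{-1},y_0,y_1)$. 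On obtient ainsi un circuit global $g$ de taille et profondeur $\Oo(N)$, dont la construction à partir de $f$ s'effectue en temps linéaire en $N$, et qui vérifie $\Norme{\wi f - \wi g}_\infty \leq 2^{-n} \leq 2^{-(n-1)}$, d'où la conclusion en combinant avec la réduction initiale à $q = n-1$.
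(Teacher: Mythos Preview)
Your proposal is correct and follows essentially the same approach as the paper: reduction to $q=n-1$ via the interpolation lemma~\ref{f425}, exact simulation of the boolean gates by $1-u$, $\min$, $\max$ plus a recombination circuit at the output, approximate extraction of the input bits via the continuous threshold $C_{p,1/2}$ with $p=m+2$, and Hoover's three-point median trick to kill the at-most-one bad zone. The only cosmetic slip is your final inequality $\Norme{\wi f-\wi g}_\infty\le 2^{-n}$: the argument actually yields $2^{-(n-1)}$ (since $|z_\sigma-x|\le 1/2^{m-1}$), which is precisely what condition~(\F4.2.5) requires.
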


La preuve de cette proposition utilise le lemme suivant:
\begin{flemma} \label{f425} 
Il existe une fonction calculable en temps $\Oo(N)$ qui transforme tout 
élément $f = ((\gamma,n,m,k),h)$ de $\ybo \times  \NN_1$   en un élément 
$f' = (\gamma',n+h,m+h,k)$ de $\ybo$  correspondant à la même fonction 
semilinéaire.
\end{flemma}

\proof (du lemme \ref{f425})
Supposons que le circuit $ \gamma $ calcule, pour l'entrée $x = u/2^m$ où
 $0 \leq u \leq 2^m - 1$, la valeur $y = \ell/2^n$, et pour $x' = (u+1)/2^m$, la 
valeur $y' = \ell'/2^n$ avec $\ell , \ell' \in  \ZZ$  et  
$\abs{ \ell - \ell'} \;  \leq 1$.
 Alors le circuit   $\gamma'$  calcule, pour l'entrée  $x+r2^{-h}2^{-m}$, 
$0 \leq r \leq 2^h$, la valeur  $y+(y'-y)r2^{-h}$. \\
Noter de plus que  $  \ta(\gamma') = \Oo(\ta(\gamma )+h)  $  et  $\prof(\gamma') = 
\Oo(\prof(\gamma)+h)$.	\eop

\proof (de la proposition \ref{f424})
D'après le lemme \ref{f425}, la condition (\F4.2.4) de la 
proposition \ref{f423} peut être remplacée par la condition:   
\[
\forall x \in [0,1]\;\; \abs {\wi{f}(x) - \wi{g}(x)} \;  \leq 2^{-(n-1)}  
   \eqno{(\F4.2.5)}
\] 
En effet: si   $q < n$  alors (\F4.2.4) découle de (\F4.2.5), sinon, on utilise 
l'interpolation linéaire donnée dans la preuve du lemme \ref{f425}.\\
Nous cherchons donc maintenant à simuler le circuit booléen   
$f = (\gamma,n,m,k)$   par un circuit semilinéaire binaire  $g$  avec la 
précision  $ 1/2^n$. \\
Tout d'abord remarquons qu'il est facile de simuler de manière exacte toutes 
les portes, ``sauf l'entrée'', par un circuit semilinéaire simple  $\lambda$ 
qui consiste en:\\
1) remplacer les constantes booléennes $ 0 $ et $ 1 $ de $ \gamma $ par les 
constantes rationnelles $ 0$ et $1$.\\
2) remplacer chaque sommet de $ \gamma $ calculant $\neg u$ par un sommet de 
$ \lambda $ calculant $1-u$. \\
3) remplacer chaque sommet de $ \gamma $ calculant $u \land v$ par un sommet 
de $ \lambda $ calculant $\min(u,v)$.\\
4) remplacer chaque sommet de $ \gamma $ calculant $u \lor v$ par un sommet
 de $ \lambda $ calculant $\max(u,v)$. \\
5) calculer, à partir des sorties $c_0, c_1, \ldots,c_{n+k}$ du 
circuit $ \gamma, $ le point rationnel:
\[
\pm 2^k \sum_{i=1,\ldots,n+k}c_i2^{-i} \quad (c_0  \hbox{ code le  signe})
\]
Il est clair que le circuit $ \lambda $ se construit en temps linéaire et 
admet une taille $ \ta(\lambda) = \Oo(\ta(\gamma)) $ et une profondeur 
$ \prof(\lambda) = \Oo(\prof(\gamma))$.\\
Maintenant nous cherchons à ``simuler l'entrée'' du circuit $\gamma,$ 
c.-à-d. les bits codant $x$, par un circuit semilinéaire binaire.\\
Usuellement, pour déterminer le développement binaire d'un nombre réel 
$x$, on utilise le pseudo-algorithme  suivant qui utilise la fonction 
discontinue $ C $ définie par:
$$ C(x) = \left\{
\begin{array}{cl} 
1 & \hbox{ si  } x \geq 1/2 \\ 
0 &
\hbox{sinon} 
\end{array}
\right.
$$

\sni {\bf Algorithme 1} (``calcul'' des $m$ premiers bits d'un nombre réel $x\in [0,1]$)
\begin{itemize}
\item[] Entrées: $ x \in [0,1], \; m \in \NN$ \\
        Sortie:   la liste $(b_1, b_2,\ldots, b_m) \in \{ 0,1 \}^m$ 
	\begin{itemize}
	\item[] Pour $j:=1$ à $m$ Faire
		\begin{itemize}
		\item[] $ b_j \leftarrow C(x)$ \\
			 $ x \leftarrow 2x - b_j$ 
		\end{itemize}
	  Fait \\
 	Fin.
	\end{itemize}
\end{itemize}

\sni La fonction $C$ est discontinue, donc l'algorithme 1 {\em n'est pas vraiment 
un algorithme}; et il ne peut pas être simulé par un circuit semilinéaire 
binaire. On considère alors la 
fonction continue
$$C_p(x) := C_{p,1/2} = \min(1, \max(0,2^p(x-1/2))) \eqno \hbox{(cf. figure \ref{ffi336})}
$$
qui ``approche'' la 
fonction  $C$.  Et l'on considère l'algorithme suivant:

\newpage
\mni {\bf Algorithme 2} (calcul ``approximatif'' des $ m $ premiers  bits qui 
codent un nombre réel $x$)
\begin{itemize}
\item[] Entrées: $ x \in [0,1], \; m \in \NN_1$ \\
  Sortie: une liste $(b_1, b_2,\ldots, b_m) \in [ 0,1 ]^m$ 
	\begin{itemize}
	\item[] $p \leftarrow m+2$ \\
			 Pour $j:=1$ à $m$ Faire
		\begin{itemize}
		\item[] $ b_j \leftarrow C_p(x)$ \\
			    $ x \leftarrow 2x - b_j$  \\
				$p \leftarrow p-1$
	     \end{itemize}
	  Fait \\
 	Fin.
	\end{itemize}
\end{itemize}

\sni Ceci est réalisé par un circuit de taille et profondeur $\Oo(m)$ sous la 
forme suivante:

\mni {\bf Algorithme 2bis} (forme circuit semilinéaire de l'algorithme 2)
\begin{itemize}
\item[] Entrées: $ x \in [0,1], \; m \in \NN_1$ \\
 Sortie: une liste $(b_1, b_2,\ldots, b_m) \in [ 0,1 ]^m$ 
	\begin{itemize}
	 \item[] $p \leftarrow m+2 , \; q \leftarrow 2^p$ \\
			 Pour $j:=1$ à $m$ Faire
				\begin{itemize}
			 \item[] $ y \leftarrow q(x-1/2)$ \\
			 		 $ b_j \leftarrow \min (1, \max(0,y))$ \\
					$x \leftarrow 2x - b_j$ \\
 				    $q \leftarrow q/2$
				 \end{itemize}
	  Fait \\
 	Fin.
	 \end{itemize}
\end{itemize}

\sni Mais un problème crucial se pose quand le réel $x$ en entrée est dans 
un intervalle de type  $\; ]\,k/2^m, k/2^m + 1/2^p\,[\; $   où 
$0 \leq k \leq 2^m-1$. Dans ce cas au moins un bit calculé dans l'algorithme 2 est dans $]0,1[$. Par conséquent le résultat final de la 
simulation du circuit booléen peut être incohérent. 
Pour contourner cette difficulté, on utilise une technique introduite par 
Hoover \cite{fHo87,fHo90}. On fait les remarques suivantes:\\
--- Pour tout  $x \in [0,1]$  au plus une des valeurs  
$x_{\sigma} = x + \frac \sigma   {2^{m+2}}$  (où  $ \sigma \in \{-1, 0, 1 \} $) est dans un  intervalle de type précédent.\\
--- Notons  $z_{\sigma} = \sum_{j=1,\ldots,m}b_j2^{-j}$  où les $b_j$  sont 
fournis par l'algorithme 2bis sur l'entrée  $x_{\sigma}$. D'après la remarque précédente, au moins deux valeurs  $\lambda(\wi {\lambda}(z_{\sigma}) \; (\sigma \in \{-1,0,1\})$, 
correspondent exactement à la sortie du circuit arithmétique  $\gamma$  
lorsqu'on met en entrée les  $m$ premiers bits de~$x_{\sigma}$. 
C.-à-d.,  pour au moins deux valeurs de  $\sigma$, on a  $z_{\sigma} \in 
\DD_m$  et $\wi {\lambda}(z_{\sigma}) = \wi{f}(z_{\sigma})$   avec en outre $\abs{ z_{\sigma} - x }\;  \leq 1/2^m + 1/2^{m+2} \leq 1/2^{m-1}$ 
(d'où  $\abS{ \wi{f}(z_{\sigma}) - \wi{f}(x) }\;  \leq 1/2^{n-1}$).\\
--- Donc, une éventuelle mauvaise valeur calculée par le circuit 
semilinéaire ne peut être que la première ou la troisième des valeurs si 
on les ordonne par ordre croissant.
Ainsi, si   $y_{-1}$, $y_0$ et $y_1$,  sont les trois valeurs respectivement calculées par le circuit aux points $x_{\sigma}$  ($\sigma\in  \{-1, 0,1\}$), alors la deuxième des 3 valeurs approche correctement $\wi{f}(x)$. 
Etant donnés trois nombres réels  $y_{-1}$, $y_0$ et $y_1$, le  2ème par 
ordre croissant, $\theta(y_{-1},y_0,y_1)$,  est calculé par exemple par l'un des deux circuits semilinéaires binaires représentés par le 
deuxième membre dans les équations:
\[
\theta (y_{-1},y_0,y_1) = y_{-1}+y_0+y_1 - \min (y_{-1},y_0,y_1) - 
\max (y_{-1},y_0,y_1)
\]
\[\theta (y_{-1},y_0,y_1) = \min (\max (y_0,y_1), \max (y_1,y_{-1}), 
\max (y_0,y_{-1}))
\]
Résumons: à partir de l'entrée $x$ nous calculons les  
$x_{\sigma} = x + \frac \sigma   {2^{m+2}}$  (où  $\sigma \in \{-1,0,1 \}$), 
et nous appliquons successivement:\\
--- le circuit $\varepsilon$ (de taille $\Oo(m) = \Oo(N)$)  qui décode 
correctement les $m$ premiers digits de $x_{\sigma}$  pour au moins deux des 
trois $x_{\sigma}$; \\
--- le circuit  $\lambda$  qui simule le circuit booléen proprement dit et 
recode les digits de sortie sous forme d'un dyadique, ce circuit est de taille 
$\Oo(N)$;\\
--- puis le circuit  $\theta$  qui choisit la  2ème  valeur calculée par 
ordre croissant.\\
Nous obtenons donc un circuit qui calcule la fonction:
\[
\wi{g}(x) = \theta (\lambda(\varepsilon (x-1/2^p)),
\lambda (\varepsilon (x)), \lambda (\varepsilon (x+1/2^p))) \quad \hbox{avec} 
\; p = m+2
\]
 de taille et de profondeur \Oo(N) et tel que: 
\[
\abs{ \wi{f}(x) - \wi{g}(x) }\;  \leq 2^{-n} \; \forall x \in 
[0,1].
\]
En outre, le temps mis pour calculer  (le code de)  $g$  à partir de 
l'entrée $f$ est également en $\Oo(N)$.

\begin{figure}[htbp]  
\begin{center}
\includegraphics*[width=3cm]{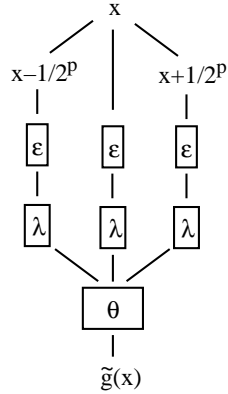}
\end{center}
\caption[vue globale du calcul]{\label{ffi422}  
vue globale du calcul}  
\end{figure}  
\eop

\medskip Nous passons à la simulation d'un circuit semilinéaire binaire par un circuit arithmétique (avec divisions). 
Nous donnons tout d'abord une version ``circuit'' de la proposition \ref{f335}.

\begin{fproposition} \label{f426}
Les fonctions  $(x,y) \mapsto \max(x,y)$ et $(x,y) \mapsto \min(x,y)$   sur le carré  $[-2^m, 2^m] \times [-2^m, 2^m]$   peuvent être approchées à  $1/2^n$ près par des circuits arithmétiques de taille  $\Oo(N^3)$, 
de magnitude  $\Oo(N^3)$ et qui peuvent être calculés dans la classe 
$\DTI (\Oo(N^3))$ où $(N = n+m)$.
\end{fproposition}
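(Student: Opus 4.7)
\smallskip
\textbf{Plan de preuve.} L'idée est de se ramener au calcul approché de la valeur absolue au moyen des identités
\[
\max(x,y) = \frac{x+y + \abs{x-y}}{2} , \quad  \min(x,y) = \frac{x+y - \abs{x-y}}{2}
\]
qui, appliquées sur $[-2^m,2^m]^2$, ne demandent qu'une approximation de $\abs{z}$ sur l'intervalle $[-2^{m+1},2^{m+1}]$ à précision $1/2^n$. En outre, pour $z$ dans $[-2^m, 2^m]$ on peut écrire $\abs{z}=2^m \abs{z/2^m}$, de sorte qu'il suffit de savoir approcher la fonction $\abs{t}$ sur $[-1,1]$ à précision $1/2^{N}$ (avec $N = n+m$) par un circuit arithmétique dont la taille, la magnitude et le temps de construction sont en $\Oo(N^3)$.

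\smallskip
Pour cela, je reprends la construction de Newman (théorème \ref{f331}) sous une forme ``circuit''. Le point délicat est le calcul des $e^{-k/N}$ pour $1\le k<N^2$, qui ne sont plus des dyadiques donnés à l'avance comme au lemme \ref{f333}, mais doivent être fournis par le circuit lui même. On commence donc par un petit sous-circuit qui évalue le développement de Taylor tronqué
\[
F_M(z) = \sum_{0\leq k\leq M} \frac{(-1)^k}{k!}\,z^k
\]
en $z=1/N$ avec $M = N^3$; ceci fournit une valeur $d_N$ telle que $\abs{d_N - e^{-1/N}} \leq 1/2^{N^3}$, au prix d'un circuit de taille et de magnitude toutes deux en $\Oo(N^3)$ (les factorielles inverses étant représentées par leurs valeurs dyadiques exactes, de taille $\Oo(N^2)$). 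Une fois $d_N$ disponible, on calcule par itérations successives $d_N^2, d_N^3, \ldots, d_N^{N^2-1}$, puis le produit
\[
h_N(x) = \prod_{1\leq k<N^2}(x + d_N^k)
\]
qui approche $H_N(x)$ de \ref{f331}, et enfin les polynômes $p_N(x^2)$ et $q_N(x^2)$ correspondant à $P_N$ et $Q_N$. Ceci représente $\Oo(N^2)$ opérations arithmétiques supplémentaires, absorbées dans le $\Oo(N^3)$.

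\smallskip
Il reste à contrôler les erreurs pour vérifier que le circuit final, qui calcule $p_N(t^2)/q_N(t^2)$, approche bien $\abs{t}$ à précision $1/2^N$ sur $[-1,1]$. C'est la même estimation que dans la preuve du lemme \ref{f333}: une précision $\varepsilon \leq 2^{-N^3}$ sur chacun des $d_N^k$ suffit à garantir la précision $1/2^N$ sur la fraction $p_N/q_N$, grâce à la minoration $q_N(t^2) \geq 2/e^{(N^3-N)/2}$ héritée du théorème de Newman. Cette même minoration donne la borne de magnitude $\Oo(N^3)$ pour tout le circuit. Enfin, après remise à l'échelle $z \mapsto z/2^m$ (une porte de division par la constante $2^m$), on obtient l'approximation voulue pour $\abs{z}$ sur $[-2^m,2^m]$, avec les paramètres $\Oo(N^3)$ annoncés. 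La combinaison via les formules ci dessus pour $\max$ et $\min$ n'ajoute qu'un nombre constant de portes. L'obstacle principal est donc le contrôle simultané de la magnitude et de la précision dans la cascade ``calcul de $d_N$ $\rightarrow$ puissances $\rightarrow$ produit $h_N$ $\rightarrow$ fraction $p_N/q_N$'', où le choix $M = N^3$ pour la troncature du développement de Taylor est précisément ajusté pour que l'erreur propagée reste dominée par $1/2^N$.
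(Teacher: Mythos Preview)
Your approach is essentially the same as the paper's: reduce $\max$/$\min$ to $\abs{\cdot}$ via the usual identities, rescale to $[-1,1]$, and rebuild Newman's approximation in circuit form by first computing $d_N \approx e^{-1/N}$ from a truncated Taylor series of length $N^3$, then the powers $d_N^k$, then $h_N$, $p_N$, $q_N$; the magnitude bound comes from the lower bound on $q_N$ inherited from Newman.

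One small correction: the inverse factorials $1/k!$ are \emph{not} dyadic (e.g.\ $1/6$), and putting them in as rational constants would blow up the circuit size well beyond $\Oo(N^3)$. The right way --- implicit in the paper's terse ``un circuit qui calcule $d_n = F_{n^3}(1/n)$'' --- is to compute the Taylor terms iteratively inside the circuit, e.g.\ $u_0=1$, $u_{k+1}=-u_k\cdot z/(k{+}1)$ with $z=1/N$, summing as you go; this uses only $\Oo(N^3)$ gates and constants of bounded size. With that fix your argument is complete.
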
 

\proof 
On reprend la preuve du lemme \ref{f333}. La re\pres de \pols approchant 
convenablement  $ P_n $ et $ Q_n $  au moyen de circuits est plus économique en espace. Tout d'abord il faut construire un circuit qui calcule une approximation de $ e^{-1/n} $ avec la précision $1/2^{n^3}$. 
On considère le développement de Taylor  
\[
F_m(z) = \sum_{0\leq k\leq m} \frac{(-1)^k}{k!} z^k
\] 
qui approche $ e^{z} $ à $1/2^{m+2}$  près si  $m\geq 5$ et $0\leq z\leq 1$. 
Ici, on peut se contenter de donner un circuit qui calcule $d_n = F_{n^3}(1/n)$ dont la taille et le temps de calcul sont a priori en $ \Oo(n^3) $ (alors qu'on était obligé de donner explicitement une approximation dyadique $c_{n,1}$  de $d_n$). 
Ensuite on construit un circuit qui calcule une bonne approximation 
de  $H_n(x) = \prod_{1 \leq k < n^2} (x+e^{-k/n})$ sous la forme  
$h_n(x) = (x+d_n)(x+d_n^2)\cdots(x+d_n^{n^2-1})$.
Ce qui réclame un temps de calcul (et une taille de circuit) en  $\Oo(n^2)$. 
Ceci fait que le circuit arithmétique qui calcule une approximation  à 
$ 1/2^n $ près  de  $\abs{x}$  sur  $[0,1]$   
est calculé en temps $\Oo(n^3)$. \\ 
On voit aussi facilement que le coefficient de magnitude est majoré par la 
taille de $ 1/h_n(0) $ c.-à-d. également un $ \Oo(n^3).$ 	\eop
 
\smallskip On remarquera que le coefficient de magnitude peut difficilement être amélioré. Par contre, il ne semble pas impossible que $ d_n$ puisse être calculé par un circuit de taille plus petite que la taille naïve en $\Oo(n^3)$.

\begin{fproposition} \label{f427}
L'identité de  $\czu$  de  $\csl$  vers  $\caf$  est \uni de classe  $\p$, en fait de classe $ \DTI (N^4). $ 
Plus précisément, on a une fonction discrète qui, à partir d'un élément $ g $ de $ \ysl $ de taille $ \ta(g) = t $ et de profondeur $ \prof(g) = p, $ et d'un entier $ n \in  \NN_1$,  calcule en temps $ \Oo( t (n+p)^3 ) $ un élément $ f = (\alpha,M)  \in \yaf$\\ 
\spa de taille	 $ \ta(\alpha)  =  \ta(g) \Oo((n+p)^3), $  \\
\spa de profondeur	 $ \prof(\alpha)  = \Oo(p(n+p)^3) $ \\
\spa avec coefficient de magnitude	 $ M = \Mag(\alpha) = \Oo((n+p)^3), $  
\\
et tel que 	
$\abS{\wi{f}(x)-\wi{g}(x)}\; \leq 2^{-n} \; \; \forall x \in [0,1]$. 
\end{fproposition}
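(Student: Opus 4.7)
\medskip

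\textbf{Proof proposal.} The plan is to walk through the semilinear circuit $g$ gate by gate, keeping the strictly linear operations as they are and replacing every $\max$/$\min$ gate by the rational-function approximation provided by Proposition~\ref{f426}. The main conceptual issue is error control: the gate $x\mapsto 2x$ doubles errors, so after $p$ layers an initial precision of $\epsilon$ at each gate can degrade up to $2^p\epsilon$. Hence I would aim from the start for a per-gate precision $\epsilon \leq 2^{-(n+p+c)}$ for a small constant $c$, and verify by induction on the depth of the gate that the output error stays below $2^{-n}$.

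First I would bound the intermediate values: by induction on the depth of a gate $\pi$ of $g$, every value computed at $\pi$ on an input $x\in[0,1]$ lies in $[-2^{\prof(\pi)},2^{\prof(\pi)}]\subseteq[-2^p,2^p]$; this uses only the forms of the semilinear gates and the fact that inputs are in $[0,1]$. Then the approximating arithmetic circuit is constructed layer by layer as follows: for gates of type $x\mapsto 2x,\ x\mapsto x/2,\ x\mapsto -x,\ (x,y)\mapsto x+y$ we insert the corresponding arithmetic gate (exact simulation, no magnitude blow-up beyond $2^{p+1}$); for gates of type $\max$ or $\min$, we apply Proposition~\ref{f426} on the square $[-2^{p+1},2^{p+1}]\times[-2^{p+1},2^{p+1}]$ with target precision $2^{-(n+p+c)}$, which yields a polynomial arithmetic sub-circuit of size, depth, magnitude and construction time all in $\Oo((n+p)^3)$.

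Next, I would run the error analysis: let $e_\pi$ be the worst-case error between the value computed by $g$ at gate $\pi$ and the value computed by the approximating arithmetic circuit at the corresponding output. The recurrence is $e_\pi \leq 2 e_{\pi'} + \epsilon$ for a $x\mapsto 2x$ gate and $e_\pi \leq e_{\pi'}+e_{\pi''}+\epsilon$ for a binary gate (using the $1$-Lipschitz property of $\max,\min,+$ in the $\ell^\infty$ norm on pairs, together with the bound $\epsilon$ on the new approximation when applicable). Iterating over $p$ layers gives $e_\pi\leq 2^p\epsilon$ (up to a harmless constant), so the choice $\epsilon = 2^{-(n+p+c)}$ guarantees $|\wi f(x)-\wi g(x)|\leq 2^{-n}$ uniformly in $x\in[0,1]$. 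I would also check that the approximate values stay inside $[-2^{p+1},2^{p+1}]$ (absorbing the $2^{-n}$ error), which justifies the repeated use of Proposition~\ref{f426} with parameter $m=p+1$.

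Finally, the global parameters: the size is $\ta(g)\cdot\Oo((n+p)^3)$ since each original gate contributes at most an $\Oo((n+p)^3)$-sized sub-circuit; the depth is $\Oo(p(n+p)^3)$ since at most $p$ sub-circuits of depth $\Oo((n+p)^3)$ are composed serially; the magnitude is $\Oo((n+p)^3)$ (the big contribution comes from the $\max/\min$ sub-circuits, not from the outer composition, which stays bounded by $2^{p+1}$); and the construction is done in time $\Oo(t(n+p)^3)$ because we do one local surgery per gate of $g$. The main difficulty I anticipate is not the construction itself (essentially forced by Proposition~\ref{f426} and error tracking), but the careful bookkeeping needed to show the magnitude of the \emph{global} arithmetic circuit is genuinely $\Oo((n+p)^3)$ and not a larger polynomial, which requires distinguishing between values along the ``main'' computation (bounded by $2^{p+1}$) and auxiliary values inside the $\max/\min$ sub-circuits (bounded by $2^{\Oo((n+p)^3)}$ by Proposition~\ref{f426}).
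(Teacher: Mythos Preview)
Your proposal is correct and follows essentially the same route as the paper: bound all intermediate values of $g$ by $2^p$, keep the linear gates exact, and replace each $\max/\min$ gate by the rational sub-circuit of Proposition~\ref{f426} with per-gate precision $2^{-(n+p)}$, which yields the stated size, depth, magnitude and construction-time bounds. Your write-up is in fact more explicit than the paper's (which compresses the error analysis into the single sentence ``il suffit de simuler les portes $\max$ et $\min$ \ldots\ à $2^{-(n+p)}$ près''), and your closing remark about separating the ``main'' values ($\leq 2^{p+1}$) from the auxiliary values inside the Newman sub-circuits is exactly the point that makes the global magnitude bound $\Oo((n+p)^3)$ rather than something larger.
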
 

\proof 
Notons   $ p = \prof(g). $   Les portes de $ g $ pour une entrée 
$ x \in  [0,1]$    prennent des valeurs dans l'intervalle $[-2^p,2^p]$.  
On veut simuler à  $2^{-n}$  près le circuit $ g $ par un circuit 
arithmétique.  Il suffit de simuler les portes $\max$ et $\min$ du circuit 
semilinéaire à $2^{-(n+p)}$ près sur l'intervalle $[-2^p,2^p]$.  
Chaque simulation réclame, d'après la proposition \ref{f426}, un circuit 
arithmétique (avec division) dont toutes les caractéristiques sont 
majorées par un  $\Oo(n+p)^3$.	\eop

Nous passons à la simultation d'un circuit arithmétique avec divisions par 
un circuit arithmétique \poll.

\begin{fproposition} \label{f428}
L'identité de  $\czu$  de  $\caf$  vers  $\capo$  est \uni de classe  $\p$, en 
fait de classe  $\DTI (N^2)$.  
Plus précisément, on a une fonction discrète qui, à partir d'un 
élément  $f=(\alpha,M)$ de  $\yaf$ et d'un entier  
$n\in\NN_1$,  calcule en temps  $\Oo(N^2)$  un circuit arithmétique 
\poll   $(\Gamma, M') \in \yap $ ($N$ est la taille de l'entrée 
$((\alpha,M),n)$ \\
--- de taille  $ \ta(G) = \Oo( \ta(\alpha) (n + M) ), $  \\
--- de profondeur  $ \prof(G) = \Oo( \prof(\alpha) (n + M) ) $, \\
--- de magnitude  $ \Mag(G) = M'= \Oo( n + M ) $,  \\
--- et tel que:  
$\abS{ \wi{f}(x) - \wi{g}(x) }\;  \leq 2^{-n}$  
pour tout $x \in [0,1]$.
\end{fproposition}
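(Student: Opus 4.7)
\medskip
\noindent\emph{Esquisse de preuve proposée.} L'idée centrale est de remplacer chaque porte de passage à l'inverse dans $\alpha$ par un circuit polynomial réalisant la méthode de Newton (Lemme \ref{f429}), l'entrée d'une telle porte étant déjà bien contrôlée par la magnitude $M$. Précisément, la condition de magnitude garantit que pour tout $x\in[0,1]$, toute valeur calculée par $\alpha$ est majorée par $2^M$ en valeur absolue. À l'entrée d'une porte d'inversion, la sortie $1/z$ étant également bornée par $2^M$, on a automatiquement $2^{-M}\leq |z|\leq 2^M$, ce qui place $z$ dans le domaine d'applicabilité du Lemme \ref{f429}.

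\medskip
\noindent Il faut ensuite déterminer la précision intermédiaire avec laquelle effectuer les calculs. Soit $p=\prof(\alpha)$. Un raisonnement identique à celui de la preuve de la Proposition~\ref{f4111} montre qu'une erreur absolue $\varepsilon$ sur l'entrée d'une multiplication ou d'une inversion est amplifiée au plus d'un facteur $2^{2M}$, et donc qu'en effectuant chaque opération avec une précision absolue $2^{-s}$ où $s:=n+cMp$ pour une constante $c$ convenable, on garantit une erreur finale $\leq 2^{-n}$. Par le Lemme \ref{f429}, chaque porte d'inversion peut alors être remplacée par un circuit polynomial de taille et de profondeur $\Oo(M+s)=\Oo(n+Mp)$ et de magnitude $\Oo(M)$, et ce remplacement se fait en temps linéaire.

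\medskip
\noindent En composant ces substitutions sur les $t=\ta(\alpha)$ portes du circuit, on obtient un circuit polynomial $\Gamma$ dont la taille est majorée par $\ta(\alpha)\cdot\Oo(n+M)$ et la profondeur par $\prof(\alpha)\cdot\Oo(n+M)$ (les contributions en $p$ étant absorbées par le facteur $\ta(\alpha)$, puisque $p\leq\ta(\alpha)$). La magnitude globale de $\Gamma$ reste bornée par $\Oo(n+M)$: chaque porte calcule soit une approximation à $2^{-s}$ près d'une valeur initialement bornée par $2^M$, soit (à l'intérieur d'un sous-circuit de Newton) une valeur bornée par $\Oo(M)$ selon le Lemme~\ref{f429}. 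Le temps total de construction, dominé par la production des sous-circuits de Newton et la gestion du câblage, est un $\Oo(N^2)$ où $N$ est la taille de l'entrée $((\alpha,M),n)$.

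\medskip
\noindent Le point délicat n'est pas la construction elle-même, mais l'analyse fine de la propagation des erreurs à travers le circuit modifié: il faut vérifier qu'à chaque porte substituée, le circuit de Newton re\c{c}oit en entrée une valeur suffisamment proche de $z$ pour rester dans son domaine de validité $[2^{-M}, 2^M]$ (ce qui impose que $s\geq M+\Oo(1)$, condition automatiquement satisfaite avec notre choix), puis que la précision de sortie $2^{-s}$ se combine correctement avec celle des autres opérations pour produire l'erreur finale $2^{-n}$. C'est l'inégalité triangulaire appliquée récursivement le long du graphe de calcul qui fournit les bornes annoncées.
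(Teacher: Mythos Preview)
Your approach is exactly the paper's: the proof there consists only of the observation that the problem is localised at the inverse gates, together with Lemma~\ref{f429} (Newton's iteration) to simulate each such gate by a polynomial circuit of controlled size and magnitude. You have supplied the error-propagation analysis that the paper leaves implicit.

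There is, however, a bookkeeping slip in your passage from the per-gate bound to the global one. You correctly argue that the intermediate precision must be $s = n + cMp$ with $p = \prof(\alpha)$, so each Newton sub-circuit has size $\Oo(M+s)=\Oo(n+Mp)$. Summing over the $t=\ta(\alpha)$ gates gives $\Oo\bigl(t(n+Mp)\bigr)$, and your parenthetical claim that ``les contributions en $p$ sont absorbées par le facteur $\ta(\alpha)$ puisque $p\le \ta(\alpha)$'' does not convert this into $\Oo\bigl(t(n+M)\bigr)$: the inequality $p\le t$ only yields $t(n+Mp)\le t n + Mt^2$, which is not $\Oo(t(n+M))$ in general. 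The same remark applies to the depth bound. Since the paper's own proof does not spell out this step either, it is plausible that the precise constants in the statement should read $\Oo\bigl(\ta(\alpha)(n+M\prof(\alpha))\bigr)$ rather than $\Oo\bigl(\ta(\alpha)(n+M)\bigr)$; in any case this does not affect the $\p$ (nor even the polynomial-time) conclusion, only the exact exponent. Your argument is otherwise sound and complete.
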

 
Le problème se pose seulement au niveau des portes ``passage à l'inverse''. 
Nous cherchons donc à les simuler  par des circuits polynomiaux tout en 
gardant la magnitude bien majorée.

\begin{flemma} \label{f429}
La fonction   $ x \mapsto 1/x  $  sur l'intervalle  $[2^{-m}, 2^m]$  peut être 
réalisée avec la précision   $ 1/2^n  $   par un circuit \poll de 
magnitude $\Oo(m)$,  de taille  $ \Oo(m+n) $   et qui est construit en temps 
linéaire.
\end{flemma}

\proof Nous utilisons comme Hoover la méthode de Newton qui permet de calculer l'inverse d'un réel $z$ à $2^{-n}$ prés en un nombre raisonnable 
d'additions et multiplications en fonction de $n$.\\
{\bf Méthode de Newton} (pour le calcul de l'inverse de $z$)\\
Pour   $2^{-m} < z < 2^m$  on pose 
\[
C(x) = \left\{
\begin{array}{l} 
y_0 = 2^{-m} \\ 
y_{i+1} = y_i (2- zy_i) 
\end{array}
\right.
\]
On vérifie facilement que, pour    $i \geq 3m + \log(m+n)$,  on a :
\[
\abs{ z^{-1} - y_i }\;  \leq 2^{-n}.
\]
Ainsi pour l'entrée  $(n,m) \in \NN_1 \times \NN_1$,  
l'application de la méthode de Newton jusqu'à l'itération  
$i = 3m + \log(m+n)$  est représentée par un circuit \poll de taille 
\Oo(m+n) et de magnitude  $\Oo(m+n)$.  De plus, il est facile de vérifier que la 
construction de ce circuit se fait en temps linéaire.
  \eop

\begin{fproposition} \label{f4210}
L'identité de  $\czu$  de  $\capo$  vers  $\ckf$  est \uni de classe  $\p$, en 
fait de classe  $\DRT(\Lin, \Oo(N^4))$.
\end{fproposition}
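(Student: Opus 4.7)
The plan is to deduce this proposition directly from two results already at our disposal: the universal property \thref{f417} of $\ckf$ with respect to presentations whose family of rational points is \uniformément de classe~$\p$, and the complexity estimate of \ref{f4111} for the evaluation on the family $(\wi f)_{f\in\yaf}$, which applies a fortiori to $\yap\subset\yaf$ (a polynomial arithmetic circuit is just an arithmetic circuit without inverse gates, so all the bounds on magnitude, depth and evaluation time carry over, in fact with a slight improvement since one never divides).

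First I would invoke \thref{f417}: since the family $(\wi f)_{f\in\yap}$ is \uni de classe~$\p$ by \ref{f4111}, the theorem gives immediately that $\Id_{\czu}\colon\capo\rightarrow\ckf$ is \uni de classe~$\p$. This settles the qualitative part of the statement. What remains is the quantitative refinement $\DRT(\Lin,\Oo(N^4))$, which requires tracking the constants through the two constructions.

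Next I would unwind the construction from the proof of \thref{f417} in the concrete case $\sC_Y=\capo$. Given an input $(f,n)=((\alpha,M),n)$ of size $N$, one must produce a quadruplet
\[
\varphi(f,n)=\bigl(Pr(f,n),\,n,\,\mu(f,n),\,S(\flo{f}+\mu(f,n)+n)\bigr)\in\ykf,
\]
where $Pr(f,n)$ is essentially the evaluation program of the circuit $\alpha$ with truncation parameters adjusted to the required precision, $\mu(f,n)$ is the input precision dictated by the modulus of uniform continuity coming from \ref{f4111}, and $S$ bounds the running time of $Pr$ on inputs in $\DD_{\mu(f,n),[0,1]}$. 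From the proof of \ref{f4111} we have $\mu(f,n)=n+2Mp+p=\Oo(N^2)$ (with $p=\prof(\alpha)$), while the time to execute $Pr$ is, according to the Nota bene following \ref{f4111}, in $\Oo(N^4)$ when management time is taken into account (or $\Oo(N\M(N^2))$ for the arithmetic operations alone).

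Writing the complete code of $\varphi(f,n)$ only requires copying $\alpha$, appending a few truncation directives and writing three integers of size $\Oo(N^2)$, so the output is linearly bounded in the size of the input and the computation itself is dominated by the time needed to evaluate and recopy these data together with the time bound $S$; all of this stays within $\DRT(\Lin,\Oo(N^4))$. The only genuinely delicate point is to certify correctness of the quadruplet produced, i.e.\ that the piecewise linear interpolant defined by $(Pr,n,\mu(f,n),T)$ does lie within $1/2^n$ of $\wi f$; this is precisely what the modulus of uniform continuity and truncation analysis of \ref{f4111} guarantees, and it is here that the bookkeeping must be done carefully so that the constants match.
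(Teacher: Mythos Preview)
Your approach is exactly the paper's own: derive the $\p$-class claim from \thref{f417} applied via the evaluation estimate of Proposition~\ref{f4111} (restricted to $\yap\subset\yaf$), then read off the sharper $\DRT(\Lin,\Oo(N^4))$ bound by tracing those two proofs together with the Nota bene after~\ref{f4111}. One small slip to clean up: you assert the output is ``linearly bounded in the size of the input'' immediately after saying the three control integers have size $\Oo(N^2)$, which is inconsistent---the paper does not spell this point out either, so the $\Lin$ part of the bound deserves a second look.
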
 

\proof 
Le fait d'être de classe  $\p$   résulte du théorème \ref{f417} et de la 
proposition \ref{f4111}.  La lecture précise des démonstrations du théorème 
\ref{f417} et de la proposition \ref{f4111} donne le résultat    
$\DRT(\Lin,\Oo(N^4))$   (en tenant compte du Nota bene après \ref{f4111}).  
\eop

\smallskip En résumant les résultats qui précèdent nous obtenons le théorème 
suivant.
\begin{ftheorem} \label{f4211}
Les cinq présentations    $\ckf$,  $\cbo$,  $\csl$,  $\caf$   et   $\capo$  de  
$\czu$  sont   $\p$-équivalentes.
\end{ftheorem}

\begin{fnotation} \label{f4212}
Pour autant qu'on se situe à un niveau de \com suffisamment élevé pour 
rendre le théorème de comparaison valable (en particulier la classe  $\p$  
suffit)  il n'y a pas de raison de faire de différence entre les cinq 
présentations  $\ckf$,  $\cbo$,  $\csl$,  $\caf$   et   $\capo$  de  $\czu$. 
En conséquence, désormais la notation  $\czu$  signifiera qu'on considère 
l'espace  $\czu$  avec la structure de calculabilité  $\csl$.
\end{fnotation}

\subsection{Complexité du problème de la norme}\label{fsubsec43}
On sait que la détermination du maximum,  sur un intervalle  $[0,1]$   d'une 
fonction calculable \etpo  $f \colon  \NN \rightarrow \{ 0,1 \}$ 
 est à très peu près la même chose que le problème $\np$-complet le 
plus classique:  SAT. 
 
Il n'est donc pas étonnant de trouver comme problème  
$\np$-complet un problème relié au calcul de la norme pour une fonction 
continue. Il nous faut tout d'abord formuler le problème de la norme attaché 
à une \rp  donnée de l'espace  $\czu$  d'une manière suffisamment 
précise et invariante. 
   
\begin{fdefinition}\label{f431}~\\
Nous appelons ``problème de la norme'', relativement à une \pres $\ca_1 = 
(Y_1, \delta_1, \eta_1)$ de  $\czu$)  le problème:\\
--- Résoudre {\em approximativement}  la question  
``$\; a \leq \norme{ f }_{\infty}\;?\;$''  dans la \pres  $\ca_1$  de $\czu$.\\
La formulation précise de ce problème est la suivante:\\
--- Entrées:  $(f,a,n) \in Y_1 \times \DD \times \NN_1$ \\
--- Sortie:    fournir correctement une des deux réponses:\\
\hspace*{1cm}--- voici un $x \in \DD$  tel que 
$\abs{f(x)}  \geq a - 1/2^n$, i.e.\  $\norme{f}_{\infty}\geq a - 1/2^n$, \\
\hspace*{1cm}--- il n'existe pas de  $x \in \DD$  vérifiant  
$\abs{f(x)}  \geq a$,  i.e.\  $\norme{f}_{\infty}\leq a$.       
\end{fdefinition}

Cette définition est justifiée par le lemme suivant.

\begin{flemma} \label{f432}
Pour deux \rps   $\ca_1$ et $\ca_2$  de  $\czu$  \polt équivalentes, 
les problèmes de la norme correspondants sont aussi \polt 
équivalents.
\end{flemma}

\proof 
La transformation du problème correspondant à une \pres $\ca_1$ au 
problème correspondant à une autre \pres  $\ca_2$  se fait par un algorithme 
ayant la même \com que l'algorithme qui permet de présenter la fonction 
identité entre  $\ca_1$  et  $\ca_2$. En effet, pour les données  $(f,a,n) 
\in Y_1 \times \DD \times \NN_1$, on cherche 
$g \in Y_2$ telle que  $\norme{ f-g }_{\infty} \leq 2^{-(n+2)}$,
 puis on résout le problème avec les entrées  $(g,a-1/2^{n+1}, n+2)$. 
 \\
Si on trouve  $x \in \DD$  tel que 
$\abs{g(x)}  \geq a - 2^{-(n+1)}- 2^{-(n+2)}$,  alors 
$\abs{f(x)}  \geq a - 2^{-n}$. 
\\ 
Si on déclare forfait, c'est qu'il n'existe pas de $x \in \DD$ tel que 
$\abs{g(x)}\;  \geq a - 2^{-(n+1)}$. 
A fortiori, il n'existe pas de  
$x \in \DD$  tel que  $\abs{f(x)} \geq a$.   
\eop

\begin{ftheorem} \label{f433}
Le problème de la norme est $\np$-complet pour les présentations  $\ckf$,  
$\cbo$,  $\csl$,  $\caf$   et   $\capo$.
\end{ftheorem}

\proof 
D'après le lemme \ref{f432} il suffit de faire la preuve pour la \pres $\cbo$. 
Le caractère  $\np$  du problème est immédiat.  
Pour voir la  $\np$-dureté, nous considérons le problème de la norme 
limité aux entrées  $((\gamma,1,m,1), 3/4, 2)$  où  $\gamma$  
est un circuit booléen arbitraire à  $m$  entrées et une sortie 
(le quadruplet est alors évidement correct), et la réponse oui 
correspond à la satisfiabilité du circuit  $\gamma$.  \eop

\medskip On a également le résultat suivant, essentiellement 
négatif{\footnote{Puisque  $\p \neq \np$ !}}, et donc moins intéressant.

\begin{fproposition} \label{f434}
Pour les présentations considérées, la fonction norme  
$f \mapsto \norme{ f }_{\infty}$ de $\czu$  vers  $\RR^{+}$ est \uni 
de classe $\p$  si et seulement si  $\p = \np$.
\end{fproposition}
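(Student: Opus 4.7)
Le plan consiste à établir les deux implications en exploitant le théorème~\ref{f433} selon lequel le problème de la norme est $\np$-complet pour les cinq présentations considérées $\ckf$, $\cbo$, $\csl$, $\caf$ et $\capo$.

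Pour l'implication directe, je partirais de l'hypothèse que la fonction norme est \uni de classe~$\p$. Sur une entrée $(f,a,n)\in Y\times\DD\times\NN_1$ du problème de la norme, il suffirait de calculer en temps \poll une approximation $b$ de $\norme{\wi f}_\infty$ à $1/2^{n+2}$ près puis de comparer $b$ à $a$ avec la tolérance voulue. Pour la version ``avec témoin'' (il faut produire un $x\in\DD$ en cas de réponse positive), on énumère les points d'une grille $\DD_{m,[0,1]}$ avec $m$ fourni par le \mcu, puis on évalue $\wi f$ en chacun d'eux en temps \poll~; la taille de cette grille est \polt bornée par la taille de la donnée. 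On disposerait ainsi d'un algorithme \poll pour un problème $\np$-complet, d'où $\p=\np$.

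Pour la réciproque, supposant $\p=\np$, j'utiliserais une recherche dichotomique de la norme. Partant d'un majorant $2^k$ de $\norme{\wi f}_\infty$ de taille polynomiale en $\vert f\vert$ (directement lu sur le paramètre de contrôle pour $\cbo$, $\ckf$, $\caf$ et $\capo$, et déduit de la profondeur du circuit pour $\csl$), on effectuerait $k+q$ étapes de dichotomie dans $[0,2^k]$ pour obtenir $\norme{\wi f}_\infty$ à $1/2^q$ près, chaque étape consistant en un appel à l'algorithme \poll du problème de la norme sur une entrée $(f,a_i,q+2)$ où $a_i$ est un dyadique de taille polynomiale. Le nombre d'étapes étant polynomial en $\vert(f,q)\vert$ et chaque étape s'exécutant en temps \poll, le calcul global reste dans la classe~$\p$.

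Le point le plus délicat sera de contrôler uniformément, sur les cinq présentations, que le majorant $2^k$ initial est bien de taille polynomiale en celle de $f$ et que la précision réclamée à chaque appel au problème de la norme (ici $1/2^{q+2}$) reste compatible avec son traitement en temps \poll. Ces deux points se règlent sans difficulté par inspection directe des définitions~\ref{f411}, \ref{f418}, \ref{f321} et \ref{f4110}, et des majorations déjà établies dans les propositions~\ref{f413}, \ref{f419}, \ref{f322} et \ref{f4111}.
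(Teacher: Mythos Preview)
L'implication réciproque est correcte et coïncide avec l'article (dichotomie en $k+q$ étapes à partir du majorant $2^k$). Pour l'implication directe en revanche, il y a une erreur dans l'argument de production du témoin : la grille $\DD_{m,[0,1]}$ contient $2^m+1$ points, ce qui est \emph{exponentiel} en $m$ (pour $\cbo$ par exemple, l'entier $m$ est codé en unaire dans $f$, d'où $m\le|f|$, mais $2^m$ n'est pas polynomial en $|f|$). L'énumération que vous proposez n'est donc pas polynomiale, et la conclusion ``on dispose d'un algorithme \poll pour un problème $\np$-complet'' ne suit pas de votre argument tel qu'écrit.

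Heureusement cette étape est superflue. Votre première phrase suffit : calculer une approximation de $\norme{\wi f}_\infty$ à $1/2^{n+2}$ près permet de décider correctement laquelle des deux réponses du problème de la norme est valable, et la réduction depuis SAT dans la preuve du théorème~\ref{f433} montre déjà que cette version décisionnelle est $\np$-dure. Il n'est donc pas nécessaire de produire le témoin pour conclure $\p=\np$ ; l'article procède d'ailleurs ainsi, sans exhiber de témoin.
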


\proof 
Il suffit de raisonner avec la \pres  $\cbo$. Si la fonction norme est  $\p$-
calculable{\footnote{Nous utiliserons quelquefois la terminologie  $\p$-
calculable   comme abréviation pour  calculable en temps \poll.}}, le 
problème de la norme se résout en temps \poll, et donc  $\p$ = $\np$.\\
Si  $\p = \np$, le problème de la norme se résout en temps \poll, ce 
qui permet de calculer la norme par dichotomie en initialisant avec la 
majoration $2^k$, jusqu'à obtenir la précision  $1/2^q$. Ceci réclame  
$k+q$ étapes de dichotomie. 
L'ensemble du calcul est \etpo sur l'entrée  
$(g,q) \in \ybo  \times \NN_1$.  \eop

\begin{fcorollary} \label{f435}
L'identité de  $\czu$  de  $\ckf$  vers  $\crf$  n'est pas calculable en temps 
\poll, au moins si  $\p \neq \np$.
\end{fcorollary}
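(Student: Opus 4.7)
\smallskip
\noindent\emph{Esquisse de preuve.} L'idée est de raisonner par contraposition en utilisant les deux ingrédients déjà établis : d'une part la bonne complexité du calcul de la norme dans la présentation $\crf$ (proposition~\ref{f327}), d'autre part la caractérisation $\p=\np$ donnée par la proposition~\ref{f434} pour la présentation $\ckf$.

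Plus précisément, je supposerais que l'identité $\Id_\czu \colon \ckf \to \crf$ est \uni de classe $\p$. Alors, en composant avec la fonction norme $\crf \to \RR^+$ qui est elle-même \uni de classe $\p$ d'après la proposition~\ref{f327}(a) (via la proposition~\ref{f226} sur la composition des fonctions \uni de classe $\ca$), on obtiendrait que la fonction norme $f \mapsto \norme{f}_\infty$ est \uni de classe $\p$ pour la présentation $\ckf$. Or la proposition~\ref{f434} affirme précisément que, pour la présentation $\ckf$, ceci équivaut à $\p=\np$. La contraposée donne le résultat voulu.

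Il n'y a pas d'obstacle véritable dans cette démonstration : toute la difficulté technique a été concentrée dans les deux propositions invoquées. Le seul point à vérifier est que la composition s'applique bien dans le cadre des familles \uni de classe $\p$ au sens de la définition~\ref{f229} ; cela résulte directement du fait que les deux étapes (passage de $\ckf$ à $\crf$ supposé polynomial, puis calcul de la norme sur $\crf$) s'effectuent en temps \poll en la taille du code fourni en entrée, et que la composition de deux fonctions calculables en temps \poll reste calculable en temps \poll.
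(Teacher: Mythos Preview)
Your proof is correct and follows essentially the same approach as the paper's own proof: assume the identity $\ckf \to \crf$ is in class $\p$, compose with the norm function (polynomial on $\crf$ by Proposition~\ref{f327}), and conclude via Proposition~\ref{f434} that $\p = \np$. One minor imprecision: the reference to Definition~\ref{f229} (families of functions) is not quite the right one here, since both the identity and the norm are single uniformly continuous maps; Definition~\ref{f221} and Proposition~\ref{f226} are what you actually use.
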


\proof 
La fonction norme est calculable \etpo pour la \pres  $\crf$  d'après la 
proposition  \ref{f327}.  On conclut par la proposition précédente.  \eop 

\begin{fproposition} \label{f436}
Pour les cinq présentations précédentes de $\czu$, si l'évaluation est 
dans  $\DSPA(S(N))$  avec $S(N) \geq N$, alors la fonction norme est aussi dans 
$\DSPA(S(N))$.
\end{fproposition}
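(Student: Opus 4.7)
Ma stratégie consiste à réduire le calcul de la norme à une énumération d'évaluations sur une grille dyadique, en exploitant le \mcu disponible dans chacune des cinq présentations. Pour chacune d'entre elles, le \mcu $\mu(f,n)$ vérifie $\mu(f,n) \leq |f|+n$ à une constante près (voir les propositions \ref{f322}, \ref{f413}, \ref{f419} et \ref{f4111}). En posant $m := \mu(f,n+1)$, la fonction $\wi f$ varie d'au plus $1/2^{n+1}$ sur chaque intervalle $[k/2^m,(k+1)/2^m]$, d'où
\[
\abS{\Norme{\wi f}_\infty - \max_{0\leq k\leq 2^m} \abs{\wi f(k/2^m)}}\; \leq 1/2^{n+1}.
\]
Résoudre le problème de la norme (définition \ref{f431}) sur l'entrée $(f,a,n)$ de taille $N$ revient alors à calculer ce maximum à $1/2^{n+2}$ près, puis à le comparer à $a$.

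L'algorithme procède de manière itérative: initialiser un compteur $k := 0$ et une variable $Y := 0$; tant que $k \leq 2^m$, invoquer la routine d'évaluation sur l'entrée $(f,k/2^m,n+2)$ pour obtenir une approximation $v$ de $\wi f(k/2^m)$, remplacer $Y$ par $|v|$ si $|v| > Y$, effacer intégralement la mémoire de travail consommée par l'évaluation, puis incrémenter $k$; à la fin, comparer $Y$ à $a - 1/2^{n+1}$ pour produire la réponse demandée dans la définition \ref{f431}.

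L'analyse en espace est directe. Les données conservées simultanément sont: l'entrée du problème de norme (taille $N$), le compteur $k$ (taille $m \leq N$ puisque $m \leq |f|+n \leq N$), la valeur courante $Y$ (dont la taille est majorée par celle d'une sortie d'évaluation, donc par $S(N)$), et l'espace de travail de l'évaluation en cours (majoré par $S(N)$ par hypothèse, la taille de l'appel $(f,k/2^m,n+2)$ étant bornée par $|f|+m+n+2 \leq 2N+2$). Comme $S(N) \geq N$, la somme reste en $O(S(N))$, et la fonction norme appartient bien à $\DSPA(S(N))$. Le seul point technique à vérifier avec soin est précisément la borne $m \leq N$ (afin que la taille des entrées soumises à la routine d'évaluation n'excède pas~$N$), mais elle se lit directement sur les \mcu explicites donnés dans les propositions citées plus haut pour chacune des cinq présentations considérées.
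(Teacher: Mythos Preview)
Your approach is the same as the paper's: compute the grid precision $m=\mu(f,n)$, enumerate the $2^m+1$ grid points, evaluate and keep the running maximum, erasing the evaluator's workspace between calls. Your write-up is in fact more detailed than the paper's three-line argument.

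One point deserves correction. You assert that $\mu(f,n)\le |f|+n$ à une constante près for all five presentations and cite Proposition~\ref{f4111} for $\caf$ (and implicitly $\capo$). But that proposition gives $\mu(f,k)=k+2Mp$ with $p=\prof(\alpha)$, which is only $\Oo(N^2)$, not $\Oo(N)$: if $M$ and $|\alpha|$ are comparable, $2Mp$ is quadratic in $|f|$. Hence your bound ``$|f|+m+n+2\le 2N+2$'' on the size of the evaluation calls fails for these two presentations, and so does your bound $m\le N$ on the counter. (The paper's own sentence ``puisque $S(N)\ge N\ge m$'' is just as loose on this point.)

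The fix is painless and does not change the conclusion. For $\caf$ and $\capo$, the space bound $S(N)=\Oo(N^3)$ in Proposition~\ref{f4111} is stated with $N=t+M+k$, i.e.\ the size of $x$ does \emph{not} enter (the evaluator reads only the $k+2Mp+p$ bits of $x$ it needs, and this cost is already absorbed in the $\Oo(N^3)$). So each evaluation call in the norm computation still costs $S(|f|+q)=S(N)$ in space regardless of $m$, and the $m$-bit counter fits since $m=\Oo(N^2)\le S(N)=\Oo(N^3)$. For the other three presentations your linear bound on $m$ is correct as stated.
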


\proof 
Si   $k \mapsto \mu(k)$  est un module de continuité d'un point rationnel  
$\wi{f}$    d'une \pres donnée de $\czu$, alors pour calculer la norme 
avec une précision  $n$  il suffit d'évaluer  $\wi{f}$    sur les 
éléments de $\DD_{m,[0,1]}$  (où  $ m = \mu(n)$)  et prendre la valeur 
maximale. 
Puisque les résultats intermédiaires inutiles sont immédiatement 
effacés, et puisque  $S(N) \geq N \geq m$ l'espace de calcul de la fonction 
norme est le même que celui de la fonction d'évaluation.  \eop

\subsection{Complexité des \rps  étudiées}\label{fsubsec44}
Dans cette section, nous présentons rapidement la complexité du test 
d'appartenance et des opérations d'espace vectoriel, pour les cinq 
présentations considérées, et nous récapitulons l'ensemble des 
résultats obtenus. 

\begin{fproposition} \label{f441}
Le test d'appartenance (à l'ensemble des codes des points rationnels) est:\\
--- $\LINS$  et $\cnp$-complet pour les présentations  $\ckf$ et  $\cbo$;  \\
--- $\LINT $ pour la \pres $\csl$.
\end{fproposition}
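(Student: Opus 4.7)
Le plan est de séparer les trois cas et d'utiliser pour $\ckf$ et $\cbo$ la même idée, l'une des présentations étant équivalente à l'autre en temps \poll. Le cas de $\csl$ est direct: l'appartenance à $\ysl$ se ramène à reconnaître qu'une chaîne est un programme d'évaluation syntaxiquement correct pour un circuit semilinéaire binaire, ce qui se fait par un unique parcours en temps linéaire.

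Pour établir que le test d'appartenance à $\ykf$ est dans $\LINS$, je propose l'algorithme de force brute: étant donné $(Pr,n,m,T)$ de taille $N$, je parcours successivement les points consécutifs de la grille $\DD_{m,[0,1]}$; pour chacun je simule le programme $Pr$ dans la borne de temps $T$ à l'aide de la machine universelle du lemme~\ref{f131}, je vérifie syntaxiquement que la sortie est bien dans $\DD_n$ et qu'aucune exécution ne dépasse la borne $T$, puis je compare à la valeur calculée au point précédent pour contrôler la condition d'écart. Comme $n$, $m$, $T$ et $\flo{Pr}$ sont tous bornés par $N$ (les entiers $n,m,T$ étant en unaire), chaque simulation utilise un espace $\Oo(T+\flo{Pr}) = \Oo(N)$; le reste ne demande qu'un compteur à $m$ bits et la mémorisation d'une seule valeur précédente. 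L'ensemble tient donc dans $\LINS$. La preuve pour $\ybo$ est identique, la simulation du programme étant remplacée par l'évaluation du circuit booléen.

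Le point central, et la principale difficulté, est la co-$\np$-complétude. L'appartenance à co-$\np$ est facile: un témoin court de non-appartenance est un indice $i$ tel que le circuit (ou le programme) donne sur les entrées codant $i/2^m$ et $(i+1)/2^m$ des sorties violant la condition d'écart, ce qui se vérifie en temps \poll. Pour la co-$\np$-dureté, l'enjeu est de concevoir une réduction de \textsc{unsat} où la condition de correction dégénère exactement en la non-satisfiabilité; je traite le cas $\cbo$ (celui de $\ckf$ s'en déduit en compilant un circuit en programme MT de taille \pollez). Étant donné un circuit booléen $\gamma$ à $m-1$ entrées (instance de SAT), je construis en temps linéaire un circuit $\gamma^*$ à $m$ entrées tel que $\gamma^*(0,y) = 0$ et $\gamma^*(1,y) = \gamma(y)$, puis je forme le quadruplet $(C,2,m,0)$ où $C$ a trois bits de sortie calculant $(0,\gamma^*(x),0)$ et code ainsi la valeur $\gamma^*(x)/2 \in \{0,1/2\}$. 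Comme $1/2 > 1/4 = 1/2^2$, la condition d'écart oblige $\gamma^*$ à prendre la même valeur sur toute paire d'entrées adjacentes, donc à être constante sur tout $\{0,1\}^m$; et comme $\gamma^*(0,0^{m-1}) = 0$ par construction, ``$\gamma^*$ constante'' équivaut à ``$\gamma^* \equiv 0$'', c'est-à-dire à la non-satisfiabilité de $\gamma$. Ceci fournit la réduction \polle cherchée, et achève la preuve de co-$\np$-complétude.
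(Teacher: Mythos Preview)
Your proof is correct and follows essentially the same approach as the paper: brute-force enumeration of the grid for the $\LINS$ upper bound, and the quadruplet $(C,2,m,0)$ with a single meaningful output bit for co-$\np$-hardness. Your treatment is actually more careful than the paper's on one point: the paper simply asserts that deciding whether a boolean circuit is non-constant ``implique la résolution du problème de satisfiabilité'', without saying how to handle the case where the SAT instance $\gamma$ happens to be a tautology; your extra-input trick $\gamma^*(0,y)=0$, $\gamma^*(1,y)=\gamma(y)$ is precisely the standard fix that makes the reduction go through cleanly.
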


\proof    Pour la \pres  $\csl$, c'est évident. Les preuves sont 
essentiellement les mêmes pour les deux présentations  $\ckf$  et  $\cbo$. 
Nous n'en donnons qu'une chaque fois.\\
Voyons que le test d'appartenance est  $\LINS$  pour  $\ykf$.  
Pour une entrée  $(Pr,n,m,T)$  on fait le calcul suivant: \\
 pour  $i = 1,\ldots,2^m$   vérifier que
\[
Pr(i/2^m) \in \DD_n \quad  \hbox{et} \quad  \Abs{ Pr((i-1)/2^m) - Pr(i/2^m) }\;  
\leq 1/2^n
\]
Ce calcul est $\LINS$.\\
Pour la $\cnp$-complétude du test d'appartenance, nous donnons la preuve pour 
les circuits booléens. On peut se limiter aux entrées  $(\gamma,2,m,0)$  
où  $\gamma$  est un circuit qui ne calcule qu'une sortie correspondant au 
premier bit, les deux autres bits sont nuls. On demande la cohérence sur deux 
points consécutifs de la grille. Seules les fonctions constantes sont donc 
tolérées.\\
Le problème opposé du test d'appartenance revient à savoir si un circuit 
booléen est non constant, ce qui implique la résolution du problème de 
satisfiabilité.  \eop

\begin{fproposition} \label{f442}
 Les opérations d'espace vectoriel (sur l'ensemble des points rationnels) sont 
dans $\LINT $ pour les cinq présentations  $\ckf$,  $\cbo$,  $\csl$,  $\caf$   
et   $\capo$  de  $\czu$.
\end{fproposition}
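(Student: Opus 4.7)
Le plan consiste à traiter chacune des cinq présentations séparément et, pour chacune, à expliciter la construction en temps linéaire d'un code représentant $\lambda \cdot \wi{f}$ (pour $\lambda \in \DD$) et $\sum_{i=1}^s \wi{f_i}$ (pour une liste $(f_1,\ldots,f_s)$ de codes de points rationnels), avec une précision $1/2^n$ prescrite pour la somme. L'observation centrale est que, pour obtenir la somme avec la précision $1/2^n$, il suffit de connaître chaque $\wi{f_i}$ avec la précision $1/2^{n+\Flo{\log s}+1}$: cette précision supplémentaire n'introduit qu'un coût additif $\Oo(\log s)$ dans la taille des données intermédiaires, donc compatible avec une exécution en temps linéaire.

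Pour les présentations à caractère ``circuit'' ($\csl$, $\caf$ et $\capo$), la construction est directe et symbolique: le circuit associé à la somme est obtenu en juxtaposant les circuits $f_i$ et en y adjoignant un arbre de sommation formé de portes $+$. La taille du circuit résultant est essentiellement $\sum_i \ta(f_i) + s$, donc linéaire en la taille totale de l'entrée. La multiplication par un scalaire $\lambda \in \DD$ revient à brancher à la sortie du circuit $f$ un petit circuit (de taille $\Oo(\Lg(\lambda))$ pour $\csl$, immédiat pour $\caf$ et $\capo$) calculant $\lambda \cdot y$. Pour $\caf$ et $\capo$, les paramètres de magnitude se mettent à jour en temps linéaire: la nouvelle magnitude est majorée par $\max_i M_i + \Flo{\log s} + 1$ pour une somme, et par $M + \Lg(\lambda)$ pour un produit par $\lambda$.

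Pour $\cbo$ et $\ckf$, la seule difficulté additionnelle est la gestion des paramètres $n_i$, $m_i$ (et $T_i$ pour $\ckf$). Pour $\cbo$, on commence par harmoniser tous les circuits booléens à une précision commune d'entrée $m = \max_i m_i$ et de sortie $n' = n + \Flo{\log s} + 1$, à l'aide du Lemme~\ref{f425}, qui garantit un coût linéaire pour ces ajustements. On juxtapose ensuite les circuits harmonisés et on les fait suivre d'un circuit booléen d'addition de $s$ entiers codés sur $\Oo(n')$ bits, également constructible en temps linéaire. Le traitement de $\ckf$ est complètement analogue: on compose les programmes $Pr_i$ aux précisions adéquates, puis on ajoute une instruction finale de sommation, en ajustant $T$ en conséquence.

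Le point à vérifier soigneusement, et qui constitue la principale subtilité, est que toutes les transformations intermédiaires (et notamment l'ajustement de précision pour $\cbo$ et $\ckf$) s'exécutent réellement en temps linéaire, et non seulement en temps \poll. Pour $\cbo$ ceci résulte immédiatement du Lemme~\ref{f425}; pour $\ckf$ cela vient du fait qu'augmenter la précision d'entrée/sortie d'un programme par interpolation linéaire (comme dans la preuve de la Proposition~\ref{f413}) ne demande qu'un nombre linéaire d'instructions supplémentaires. Le cas du produit par un scalaire est dans tous les cas trivial, puisqu'il suffit d'ajouter une seule opération à la fin du circuit ou du programme.
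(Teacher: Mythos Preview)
Your proof is correct and follows exactly the same approach as the paper: the central observation that approximating $\sum_i \wi{f_i}$ to within $1/2^n$ only requires each $\wi{f_i}$ to within $1/2^{n+\log s}$ is precisely the hint the paper gives. The paper's own proof is a two-line sketch (``Les calculs sont évidents''), so your detailed case analysis for each presentation, including the use of Lemma~\ref{f425} to harmonise precisions in $\cbo$, simply spells out what the authors leave to the reader.
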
 

\proof Les calculs sont évidents. Par exemple si 
$(f_1,\ldots,f_s) \in \lst(\ykf )$  et  $n \in \NN_1$, on peut calculer 
facilement 
$f \in \ykf $  tel que: \\
$$\NOrme{ \wi{f} - \sum_{i=1,\ldots,s} \wi{f_i} } \leq 
1/2^n$$ 
car il suffit de connaître chaque $\wi{f_i}$  avec la précision 
$1/2^{n+\log(s)}$.  \eop

Le seul ``drame'' est évidemment que les présentations $\ckf$ et $\cbo$ ne 
sont pas des $\p$-présentations de  $\czu$ (sauf si $\p = \np$  cf. la 
proposition \ref{f434}.) \\
Pour terminer cette section nous donnons un tableau récapitulatif dans lequel 
nous regroupons presque tous les résultats de \com établis pour les cinq 
\rps   $\ckf$, $\cbo$,  $\csl$,  $\caf$   et   $\capo$  de l'espace $\czu$.

\bni
\begin{tabular}{ c l p{3.5cm} c  c }  
& Evaluation & Fonction  & Test 
& Opérations\cr
&  & Norme & d'appartenance 
&  d'espace \cr
&&&&vectoriel\cr\cr
\ni $\ckf$  & $\DSRT(\Lin,\Lin,\Oo(N^2))$ & $\LINS$ et 
&  $\LINS$ et & $\LINT $\cr
\ni et  $\cbo$ &  &$\np$-complet  
&   $\cnp$-complet & \cr\cr
\ni $\csl$ & $\DSRT(\Oo(N^2),\Lin,\Oo(N^2))$ & $\DSPA(\Oo(N^2))$ 
& $\LINT $ & $\LINT $\cr
\ni  &  &et $\np$-complet\cr\cr
\ni $\caf$  & $\DSRT(\Oo(N^3),\Lin,\Oo(N^4))$ & 
$\DSPA(\Oo(N^3))$ 
& $\PSP$ & $\LINT $\cr
\ni et  $\capo$ &  &et $\np$-complet
\end{tabular}

\bni Pour le test d'appartenance $\PSP $ il est probable que sa complexité 
soit bien moindre.

\begin{fremark} \label{f444}
Malgré la facilité de calcul de la fonction d'évaluation pour les 
présentations $\ckf$  et   $\cbo$, c'est encore la \pres par circuits 
semilinéaires binaires qui semble au fond la plus simple. 
Sa considération a permis en outre d'éclairer le théorème de comparaison 
\ref{f4211}, qui est une version renforcée, uniforme, des résultats établis 
par Hoover. \\
Le défaut inévitable (si $\p \neq \np$) des présentations définies 
jusqu'à maintenant est la non faisabilité du calcul de la norme. 
Ceci empêche d'avoir une procédure de contrôle faisable pour les suites 
de Cauchy de points rationnels. 
Ceci diminue d'autant l'intérêt des  $\p$-points de  $\csl$. Cela souligne 
bien le fait qu'il est un peu artificiel d'étudier les  $\p$-points d'un 
espace qui est donné dans une \pres de \com non \polle.\\
En outre des problèmes a priori au moins aussi difficiles que la calcul de la 
norme, comme par exemple le calcul d'une primitive ou la solution d'une 
équation différentielle, sont également sans espoir de solution 
raisonnable dans le cadre des présentations que nous venons d'étudier.\\ 
Il est donc légitime de se tourner vers d'autres \rps  de l'espace  $\czu$  
pour voir dans quelle mesure elles sont mieux adaptées aux objectifs de 
l'analyse numérique.
\end{fremark}

\section{Quelques présentations de classe  \texorpdfstring{$\p$}{P} 
pour l'espace   \texorpdfstring{$\czu$}{C[0,1]}}\label{fsec5}
Dans cette section on aborde la question de savoir jusqu'à quel point des \rps  
dans la classe  $\p$  de l'espace  $\czu$  fournissent un cadre de travail 
adéquat pour l'analyse numérique. Il ne s'agit que d'une première étude, 
qui devrait être sérieusement développée.

\subsection {Définitions de quelques présentations de classe  
$\p$}\label{fsubsec51}

\subsubsection{Présentation  \texorpdfstring{$\cw$}{Cw}
 (à la Weierstrass)}\label{fsubsubsec511}
L'ensemble  $\yw$ des codes des points rationnels   de la \pres  $\cw$  est simplement l'ensemble  $\DD[X]$  des \pols (à une variable) à coefficients dans  $\DD$   donnés en \pres dense.

\smallskip Ainsi $\cw=(\yw,\eta,\delta)$ où la lectrice donnera précisément $\eta$ et $\delta$ conformément à la définition~\ref{f211}.

\smallskip Un  $\p$-point $f$ de  $\cw$  est donc donné par une suite $\p$-calculable:  
\[
m \mapsto u_m \; : \; \NN_1 \rightarrow \DD[X] \quad \hbox{avec} \quad 
\forall m \; \norme{ u_m - f }_{\infty} \leq 1/2^m.
\]

Et une  $\p$-suite  $f_n$ de  $\cw$  est donnée par une suite double  $\p$-calculable: 
\[
(n,m) \mapsto u_{n,m} \; : \; \NN_1 \times \NN_1 \rightarrow \DD[X] \quad \hbox{avec} \quad \; \forall n,m \; \norme{ u_{n,m} - f_n }_{\infty} 
\leq 1/2^m.
\]

\begin{fremark} \label{f511}
Une définition \equiva pour un $\p$-point $f$ de  $\cw$  est obtenue en 
demandant que $f$ s'écrive comme somme d'une série $\sum_m s_m$,
  où  $(s_m)_{m\in\NN_1}$  est une suite  $\p$-calculable dans~$\DD[X]$   vérifiant:  
$\norme{s_m}_{\infty} \leq 1/2^m$.  Ceci donne une manière 
agréable de présenter les  $\p$-points de~$\cw$. 
En effet on peut contrôler \etpo (par rapport à  $m$)  le fait que la suite est correcte pour les termes de $1$ à  $m$.  
En outre, dans l'optique d'un calcul paresseux, on peut contrôler la somme de série $\sum_m s_m$ au fur et à mesure que la précision requise augmente. 
Cette remarque est valable pour toute autre \rp  de classe  $\p$  alors qu'elle ne le serait pas pour les présentations étudiées dans la section \ref{fsec4}.
\end{fremark}

\smallskip Le résultat suivant est immédiat.

\begin{fproposition} \label{f512}
La \pres  $\cw$  de  $\czu$  est de classe  $\p$.
\end{fproposition}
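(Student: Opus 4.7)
\medskip

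\noindent\textbf{Plan de preuve.}
Pour démontrer que $\cw$ est une \rp de classe $\p$ de $\czu$, il faut vérifier les quatre requêtes de la définition \ref{f313}, à savoir que la famille $(\wi P)_{P\in\yw}$ est \uni de classe $\p$, que le produit par un scalaire et la somme d'une liste sont dans $\p$, et que la norme est calculable en temps \poll. Il s'agit essentiellement d'un exercice de vérification: aucun point ne présente de difficulté conceptuelle profonde, contrairement au cas de $\crf$ étudié en section \ref{fsubsec32}.

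Premièrement, pour la famille $(\wi P)_{P\in\yw}$ \uni de classe $\p$: on écrit $P=\sum_{i=0}^d a_iX^i$ en \pres dense et on utilise le schéma de Horner pour évaluer $\wi P(x)$ en un $x\in\DD_{[0,1]}$, en tronquant les résultats intermédiaires à une précision légèrement supérieure à $1/2^k$. Si $N=\flo{P}$ est la taille de $P$, alors $d\leq N$ et tous les $|a_i|\leq 2^N$, d'où $\sum|a_i|\leq N\,2^N$. Le temps de calcul et la précision requise sur les résultats intermédiaires sont donc polynomialement contrôlés par $N+k$. Pour le \mcu, on majore la dérivée sur $[0,1]$: $\|P'\|_\infty\leq \sum_{i=1}^d i|a_i|\leq N^2\,2^N$, ce qui fournit le \mcu $\mu(P,k)=k+\Oo(N)$, manifestement dans $\p$.

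Deuxièmement, les opérations d'espace vectoriel sur $\DD[X]$ en \pres dense sont en temps (quasi-)linéaire: l'addition d'une liste de \pols et la multiplication par un scalaire de $\DD$ s'effectuent coefficient par coefficient, en tenant compte du fait qu'il suffit de connaître chaque $\wi{P_i}$ à $1/(s\,2^n)$ près pour approximer $\sum_{i=1}^s\wi{P_i}$ à $1/2^n$ près (comme dans la preuve de la proposition \ref{f442}).

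Troisièmement, pour le calcul de la norme: on reprend la stratégie de la proposition \ref{f327}(a), mais appliquée à un simple \pol plutôt qu'à une fraction rationnelle. Pour $P\in\DD[X]$ et $n\in\NN_1$, on calcule $P'\in\DD[X]$, on détermine une précision $m$ suffisante pour évaluer $\wi P$ à $1/2^n$ près, on localise les racines réelles $(\alpha_j)$ de $P'$ situées dans $[0,1]$ à $2^{-m}$ près (ce qui est un calcul \poll classique sur les nombres algébriques réels, cf. remarque \ref{f328}(1)), et on renvoie $\max\{|\wi P(0)|,|\wi P(1)|,|\wi P(\alpha_j)|\}$ calculé à $1/2^n$ près via Horner. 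Le seul point qui requiert de la vigilance, et qui constituera la partie la moins triviale, est de vérifier que le calcul des racines réelles d'un \pol de $\DD[X]$ (ou après normalisation, de $\ZZ[X]$) avec une précision $2^{-m}$ est effectivement dans $\p$ — mais c'est un résultat bien établi. La combinaison de ces quatre vérifications achève la preuve.
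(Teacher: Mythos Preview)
Ta démarche est correcte et constitue un développement détaillé de ce que l'article laisse implicite: la preuve donnée dans l'article se réduit à la phrase \og Le résultat suivant est immédiat\fg, sans aucun détail. Ta vérification suit exactement le patron que l'article utilise pour $\crf$ (propositions \ref{f326} et \ref{f327}, remarque \ref{f328}(3)), en plus simple puisqu'il n'y a pas de dénominateur à contrôler.
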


Voici un résultat comparant les \rps   $\crf$  et $\cw$.

\begin{fproposition} \label{f513}~\\
--- L'identité de  $\czu$  de  $\cw$  vers  $\crf$  est  $\LINT $.\\
--- L'identité de  $\czu$  de  $\crf$  vers  $\cw$  n'est pas de classe  $\p$.
\end{fproposition}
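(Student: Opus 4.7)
Pour la première assertion, je remarquerais que tout élément $P \in \DD[X]$ donné en présentation dense représente trivialement la même fonction continue sur $[0,1]$ que la fraction $P/1$, dont le dénominateur constant $1$ est évidemment minoré par $1$ sur $[0,1]$. Il suffit donc de convertir $P$ (présentation dense) en sa présentation par formule via le schéma de Horner --- conversion qui est $\LINT$ d'après le lemme \ref{f324} --- puis d'adjoindre la formule constante $1$ comme dénominateur pour produire un code dans $\yrf$. Aucun calcul approché n'est requis, et la fonction associée coïncide exactement avec $\wi P$ dans $\czu$; le passage à la précision $n$ ne nécessite donc que la recopie du code, d'où la $\LINT$-complexité de la fonction discrète attendue dans la définition \ref{f221}bis.

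Pour la seconde assertion, je raisonnerais par l'absurde. Si l'identité $\crf \rightarrow \cw$ était \uni de classe $\p$, alors, d'après la proposition \ref{f227}, l'image d'un $\p$-point de $\crf$ par cette identité serait un $\p$-point de $\cw$. Or le théorème \ref{f334} (conséquence directe du théorème de Newman \ref{f331}) nous fournit explicitement un tel $\p$-point de $\crf$, à savoir la fonction $x \mapsto |x - 1/2|$; celle-ci devrait alors être un $\p$-point de $\cw$.

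Mais le théorème \ref{f527}, qui caractérise les $\p$-points de $\cw$, affirmera que tout $\p$-point de $\cw$ appartient à la classe de Gevrey, donc en particulier est de classe $\Ci$ sur $[0,1]$. Comme la fonction $x \mapsto |x - 1/2|$ n'est manifestement pas dérivable en $x = 1/2$, elle ne peut pas être de classe $\Ci$: contradiction. Le ressort essentiel de la preuve réside donc dans l'opposition entre deux phénomènes antagonistes: d'une part, l'approximation rationnelle rapide de la valeur absolue (Newman), qui rend $|x-1/2|$ effectivement $\p$-calculable dans $\crf$; d'autre part, la régularité $\Ci$ imposée par une approximation polynomiale rapide (essentiellement via les inégalités de V.A.~Markov appliquées aux polynômes de Chebyshev), qui contraint tout $\p$-point de $\cw$ à être indéfiniment dérivable. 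La seule subtilité formelle est que le théorème \ref{f527} est énoncé plus loin dans le texte, mais sa démonstration s'appuiera exclusivement sur des résultats classiques de théorie de l'approximation et ne dépendra pas de la proposition présente, de sorte que l'invocation n'est pas circulaire.
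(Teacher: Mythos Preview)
Your proof is correct and follows essentially the same approach as the paper: the paper dismisses the first assertion as trivial and argues the second by noting that $x\mapsto|x-1/2|$ is a $\p$-point of $\crf$ (th\'eor\`eme~\ref{f334}) while all $\p$-points of $\cw$ are infinitely differentiable (th\'eor\`eme~\ref{f527}). Your version is simply more explicit on both points---spelling out the Horner conversion for the first assertion and invoking proposition~\ref{f227} to transport $\p$-points for the second---and correctly flags the forward reference to~\ref{f527} as non-circular.
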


\proof La première affirmation est triviale. 
La seconde résulte du fait que la fonction  $x \mapsto \abs{ x-1/2} $  est 
un  $\p$-point de  $\crf$  (théorème \ref{f334}) tandis que tous les 
$\p$-points de  $\cw$  sont des fonctions infiniment dérivables 
(cf. ci-dessous le théorème \ref{f527}).  \eop 

\smallskip L'intérêt de la \pres   $\cw$  est notamment souligné par les 
théorèmes de caractérisation (cf. section \ref{fsubsec52}) 
qui précisent des phénomènes ``bien connus'' en analyse numérique, avec 
les \pols de Chebyshev comme méthode d'attaque des problèmes.

\subsubsection{Présentation   \texorpdfstring{$\csp$}{Csp}
(via des semi-\pols en \pres 
dense)}\label{fsubsubsec512}
Il s'agit d'une \pres qui augmente notablement l'ensemble des $\p$-points (par 
rapport à~$\cw$). Un élément de  $\ysp$   représente une fonction 
\polle par morceaux (ou encore un  semi-\pol) donné ``en \pres 
dense''. 

Plus précisément $\ysp \subset \lst(\DD) \times \lst(\DD[X])$, et les 
deux listes dans $\DD$ et $\DD[X]$ sont assujetties aux conditions 
suivantes:\\
--- la liste  $(x_i)_{0 \leq i \leq t}$ de nombres rationnels dyadiques est 
ordonnée par ordre croissant: 
$$0 = x_0 < x_1 < x_2 <\cdots< x_{t-1} < x_t =1\,;$$
--- la liste  $(P_i)_{1 \leq i \leq t}$ dans $\DD[X]$
vérifie  $P_i(x_i) = P_{i+1}(x_i)$  pour  $ 1 \leq i \leq t-1$.\\
Le code $f = ((x_i)_{0 \leq i \leq t},(P_i)_{1 \leq i \leq t})$ 
définit le point rationnel  $\wi{f}$: la fonction continue qui coïncide avec $\wi{P_i}$ sur l'intervalle $[x_{i-1},x_i]$. 

\smallskip Ainsi $\csp=(\ysp,\eta,\delta)$ où \smash{$\eta(f)=\wi f$} et le lecteur donnera  $\delta$ conformément à la définition~\ref{f211}.

\smallskip La \pres  $\csp$  de  $\czu$  est clairement de classe  $\p$.

La proposition suivante se démontre comme la proposition \ref{f513}.

\begin{fproposition} \label{f514} ~\\
--- L'identité de  $\czu$  de  $\cw$  vers  $\csp$  est  $\LINT $.\\
--- L'identité de  $\czu$  de  $\csp$  vers  $\cw$  n'est pas de classe  $\p$.
\end{fproposition}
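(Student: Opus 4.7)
Le plan est de suivre exactement le schéma de preuve de la proposition \ref{f513}, en transposant l'argument de $\crf$ à $\csp$.

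Pour la première affirmation, l'approche naturelle est d'envoyer chaque $P \in \DD[X] = \yw$ sur le semi-\pol trivial constitué d'une seule pièce, à savoir le code $((0,1),(P)) \in \ysp$. Cette application est manifestement calculable en temps linéaire (il s'agit simplement d'encapsuler $P$ dans les bonnes listes d'extrémités et de morceaux), et elle préserve la fonction continue représentée : la condition de recollement $P_i(x_i)=P_{i+1}(x_i)$ est vide ici puisqu'il n'y a qu'un seul morceau. L'identité de $\czu$ de $\cw$ vers $\csp$ en résulte par composition avec n'importe quelle $\p$-suite de rationnels à convergence contrôlée.

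Pour la seconde affirmation, je propose d'exhiber la fonction $g\colon x \mapsto \abs{x - 1/2}$ sur $[0,1]$ comme témoin. Cette fonction est \polle par morceaux à coefficients dans $\DD$ : elle est directement codée par l'\elem
\[
f_0 = ((0,\, 1/2,\, 1),\, (1/2 - X,\, X - 1/2)) \in \ysp,
\]
le recollement en $x=1/2$ étant vérifié ($0 = 0$). Ainsi $g = \wi{f_0}$ est un point rationnel, donc trivialement un $\p$-point, de $\csp$. Or d'après le théorème \ref{f527}, tout $\p$-point de $\cw$ est dans la classe de Gevrey et en particulier appartient à $\Ci$. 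La fonction $g$ n'étant pas dérivable en $x=1/2$, elle ne peut être un $\p$-point de $\cw$. Si l'identité $\csp \to \cw$ était \polt calculable, elle transformerait toute $\p$-suite dans $\ysp$ convergeant vers $g$ en une $\p$-suite dans $\yw$ convergeant vers $g$, ce qui ferait de $g$ un $\p$-point de $\cw$ : contradiction.

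L'essentiel du travail technique est déjà fourni par le théorème \ref{f527}, et il n'y a pas d'obstacle sérieux à surmonter ; la seule vérification délicate est syntaxique (s'assurer que $f_0$ satisfait bien toutes les conditions d'appartenance à $\ysp$ définies en \ref{fsubsubsec512}, en particulier le recollement), ce qui est immédiat.
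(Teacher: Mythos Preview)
Your proposal is correct and follows exactly the approach indicated by the paper, which simply states that the proposition ``se démontre comme la proposition \ref{f513}''. The only (expected) simplification compared to \ref{f513} is that $x\mapsto|x-1/2|$ is directly a rational point of $\csp$, so you do not need the analogue of the Newman-based théorème \ref{f334} to exhibit it as a $\p$-point.
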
 

\subsubsection{Présentation $\csr$ (via des semi-fractions rationnelles 
contrôlées et données en \pres par formule)}\label{fsubsubsec513}
L'ensemble  $\ysr$ des codes des points rationnels  est maintenant un ensemble qui code des fonctions rationnelles par morceaux (ou semi-fractions rationnelles) à coefficients dyadiques et qui sont convenablement contrôlées. 

\smallskip Plus précisément, $\ysr  \subset \lst(\DD) \times \lst(\DD[X]_f \times \DD[X]_f)$,  et les deux listes dans $\DD$ et~$\DD[X]_f \times \DD[X]_f$  sont assujetties aux conditions suivantes:\\
--- la liste  $(x_i)_{0 \leq i \leq t}$ de nombres rationnels dyadiques est 
ordonnée par ordre croissant: 
$$0 = x_0 < x_1 < x_2 <\cdots< x_{t-1} < x_t =1;$$
--- chaque couple   $(P_i,Q_i) \; (1 \leq i \leq t$ de la 2ème liste 
représente une fraction rationnelle  $R_i = P_i/Q_i$    avec le 
dénominateur $Q_i$ minoré par  $1$  sur l'intervalle $[x_{i-1} , x_i]$;\\
--- la liste  $(R_i)_{1 \leq i \leq t}$  vérifie   $R_i(x_i) = R_{i+1}(x_i)$
   pour  $1 \leq i < t$.
\\
Le code  $f = ((x_i)_{0 \leq i \leq t},(P_i,Q_i)_{1 \leq i \leq t})$  
définit le point rationnel \smash{$\wi{f}$}: la fonction continue qui coïncide avec $\wi{R_i}$  sur chaque intervalle  $[x_{i-1} , x_i]$.

\smallskip Ainsi $\csr=(\ysr,\eta,\delta)$ avec $\eta(f)=\wi f$ et la lectrice donnera $\delta$ conformément à la définition~\ref{f211}.

\smallskip La \pres  $\csr=(\ysr,\eta,\delta)$  de  $\czu$  est clairement de classe  $\p$.

\subsection {Résultats concernant la \pres à la Weierstrass}\label{fsubsec52}

Cette section est pour l'essentiel un développement de la section C-c de l'article \cite{fLo89} qui reprend la troisième partie la thèse du deuxième auteur.

Les références classiques de base pour la théorie de l'approximation sont
\cite[Bakhvalov, 1973]{fBa},
\cite[Cheney, 1966]{fCh} et \cite[Rivlin, 1974]{fRi}. Pour la classe de Gevrey on se réfère à \cite[Hörmander, 1983]{fHo}. Nous suivons les notations de \cite{fCh}.  

\smallskip Avant de caractériser les $\p$-points de $\cw$, il nous faut rappeler quelques 
résultats classiques de la théorie de l'approximation uniforme par des 
\pols.

\mni {\bf Attention~!} Vu la manière usuelle dont est formulée la théorie 
de l'approximation, on utilisera l'intervalle     $[-1,1]$ pour donner les 
résultats et les preuves concernant  $\cw$.

\subsubsection{Quelques définitions et résultats de la théorie de 
l'approximation uniforme par des \pols}\label{fsubsubsecf521}

Voir par exemple  \cite{fBa}, \cite{fRi}  et   \cite{fCh}. 

\begin{fnotation} \label{f521} ~ 
\begin{itemize}\itemsep2pt
\item  $\cab$ est l'espace des fonctions réelles continues sur le segment 
$[a,b]$.
\item  
$\C$ est l'espace  $\cuu$,  la norme uniforme sur cet intervalle est notée  
$\norme{ f }_{\infty}$   et  la distance correspondante $d_{\infty}$. 
\item   
$\C^{(k)}$  est l'espace des fonctions $k$ fois continûment dérivables sur 
$[-1,1]$. 
\item  
$\Ci$  est l'espace des fonctions  indéfiniment dérivables sur  $[-1,1]$.  
\item  
$\po_n$ est l'espace des \pols de degré  $\leq n$.  
\item  
$\Tch_n$  est le \pol de Chebyshev de degré $n$:  
\[
\Tch_n\big(\varphi (z)\big)= \varphi(z^n) \; \hbox{avec} \; \varphi(z) = \frac {1} {2} (z 
+ 1/z)
\]
on peut également les définir par    $\Tch_n\big(\cos (x)\big) = \cos (nx)$    ou par  
\[
F(u,x) = \frac {1-u\,x} {1-u^2-2u\,x} = \sum_{n=0}^{\infty} \Tch_n(x)u^n.
\]
\item  
On note: \fbox{$E_n(f) = d_{\infty} (f, \po_n)$}    pour  $f \in \C$.    
\item  On considère   sur $\C$  le produit scalaire  
\[
\left < g,h \right > := \int_{-1}^1 \frac {g(x)\,h(x)} {\sqrt {1-x^2}}\, dx 
= \int_0^{\pi} g\big(\cos (x)\big)\,h \big(\cos (x)\big)dx. 
\]
On notera  $\norme{ f }_2$  la norme au sens de ce produit scalaire.  
Les \pols  $(\Tch_i)_{0 \leq i \leq n}$   forment une base orthogonale de  
$\po_n$   pour ce produit scalaire, avec  
\[
\left < \Tch_0,\Tch_0 \right > = \pi \; \hbox{et} \; \left < \Tch_i,\Tch_i \right > = 
\pi /2 \; \hbox{pour} \; i>0.
\]
\item  On note
\[
A_k = A_k(f):= \frac {2} {\pi} \int_{-1}^{1}  \frac {f(x)\,\Tch_k(x)} {\sqrt {1-x^2}}\, {dx}= \frac {2} {\pi} \int_{0}^{\pi} \cos (kx) f(\cos (x))dx
\]
Les  $A_k$  sont appelés les  {\em coefficients de Chebyshev}  de  $f$.
\item  La fonction 
\[
s_n(f) := A_0/2 + \sum_{i=1}^{n}A_i\Tch_i, \quad   \hbox{ aussi notée } 
 {\sum_{i=0}^n}\,'\, A_i\Tch_i
\]  
est la projection orthogonale de $f$ sur $\po_n$ au sens du produit scalaire considéré.    
\item  
La série correspondante est appelée  {\em la série de Chebyshev}  de 
$f$~{\footnote{Elle converge au sens de  $L^2$  pour le produit scalaire 
considéré.  Les séries de Chebyshev sont aux fonctions continues sur  
$[-1,1]$  ce que les séries de Fourier sont aux fonctions continues 
périodiques, ce qui se comprend bien en considérant le ``changement de 
variable''  $z\mapsto 1/2(z + 1/z)$   qui transforme le cercle unité du plan complexe en le segment   $[-1,1]$  et la fonction  $z\mapsto z^n$  en le \pol~$\Tch_n.$}}. 
\item  
On note  \fbox{$S_n(f) = \norme{ f - s_n(f) }_{\infty}$}.        
On a immédiatement   $\abs{ A_{n+1}(f) }\;  \leq S_n(f) + S_{n+1}(f)$. 
\item  
Les  zéros de  $\Tch_n$   sont les 
\[
\xi_i^{[n]} = \cos \left(\frac{2i-1} {n}\cdot \frac {\pi} {2}\right) \; \; \; i =1,\ldots,n
\]
et l'on a  
\[
\Tch_n(x) = 2^{n-1} \prod_{i=1}^{n} (x - \xi_i^{[n]}) \; \; (  
\hbox{pour}\; n \geq 1).
\]          
\item  
Les  extrema de  $\Tch_n$  sur  $[-1,1]$  sont égaux à $\pm 1$ et obtenus aux points
\[
\eta_i^{[n]} = \cos \left(\frac{i} {n}\cdot \pi\right) \; \; \; i=0,\ldots,n.
\] 
\item  
Des valeurs approchées de  $s_n(f)$ peuvent être calculées au moyen de 
formules d'interpolation:  on pose   
\[
\alpha_k^{[m]} = \frac {2} {m} \;
{\sum_{i=0}^m}\,'f(\xi_i^{[m]})\Tch_k(\xi_i^{[m]}), \; \; \; \; u_n^{[m]} = 
\sum_{k=1}^{n} \alpha_k^{[m]}\Tch_k(x)
\]
et l'on a:  $u_n^{[n+1]}$   est le \pol qui interpole $f$ aux zéros de  
$\Tch_{n}$
\end{itemize}
\end{fnotation}
La théorie de l'approximation uniforme par des \pols établit des liens 
étroits entre ``être suffisamment bien approchable par des \pols''  et  
``être suffisamment régulière''.

\subsubsection{Quelques résultats classiques}\label{fsubsubsec 522}

Ces résultats se trouvent pour l'essentiel dans \cite{fCh}.

\smallskip  Dans ce paragraphe les fonctions sont dans $\C=\cuu$.

\smallskip \noindent {\bf Évaluation d'un \pol} $P=\sum_{k=0}^{n}a_k\Tch_k$\\
 Les formules récurrentes  $\Tch_{m+1}(x) = 2x\Tch_m(x) - \Tch_{m-1}(x)$  conduisent à un algorithme à la Horner:
\[
B_{n+1} = B_{n+2} = 0, \quad  B_k = 2xB_{k+1} - B_{k+2} + a_k, \quad  P(x) = \frac 
{B_0 - B_2} {2}.
\]

\smallskip\noindent {\bf Inégalités de Markov} \\
Si  $g \in \po_n$ alors (A.A. Markov, \cite[page 91]{fCh})    
\begin{equation} \label{fF 5.2.1}
\Norme{ g' }_{\infty} \leq n^2 \norme{ g }_{\infty}
\end{equation}
et pour  $k \geq 2$ (V.A. Markov, \cite[Theorem 2.24]{fRi})   
\begin{equation} \label{fF 5.2.2}
\Norme{ g^{(k)} }_{\infty} \leq \Tch_n^{(k)}(1) \norme{ g 
}_{\infty} = \frac {n^2(n^2-1)\cdots(n^2-(k-1)^2)} {1.3.5\cdots(2k-1)} 
\,\norme{ g }_{\infty}
\end{equation}

\smallskip\noindent {\bf Comparaison de $E_n(f)$ et $S_n(f)$}
\begin{equation} \label{fF 5.2.3}
E_n(f) \leq S_n(f) \leq \left(4+ \frac {4} {\pi^{2}} \log (n)\right)\,E_n(f)
\end{equation}

\smallskip\noindent {\bf  Comparaison de $E_n(f)$ et $A_{n+1}(f)$}\\
Pour  $n \geq  1$  on a	
\[
\int_{-1}^{1} \frac {\abs{ \Tch_n(x) }} {\sqrt {1-x^2}}\,dx = 2
\]
d'où l'on déduit   
\begin{equation} \label{fF 5.2.4}
(\pi /4) \abS{A_{n+1}(f)}   \leq E_n(f)
\end{equation}

\smallskip\noindent{\bf Théorèmes de Jackson}\\
Soit $f \in \C$. Pour tout entier $n \geq 1$ on a 
\begin{equation} \label{fF 5.2.5}
E_n(f) \leq \pi \lambda /(2n+2) \; \; \; \hbox{si} \; \; \; \abs{f(x)-f(y)}\;  \leq \lambda \abs{x-y}
\end{equation}
\begin{equation} \label{fF 5.2.6}
E_n(f) \leq (\pi /2)^k \Norme{ f^{(k)} }_{\infty} \big/ \big((n+1)(n)(n-
1)\cdots(n-k+2)\big) \; \;\; \hbox{si} \; f \in \C^{(k)} \; \hbox{et} \; n\geq k
\end{equation}

\smallskip\noindent{\bf Convergence de la série de Chebyshev d'une fonction} 
\\
La série de Chebyshev d'une fonction  $f \in \C^{(k)}$  converge \uni  vers 
$f$ si  $k \geq 1$,   et elle est absolument convergente (pour la norme  
$\norme{ f }_{\infty}$)  si  $k \geq 2$.   
\begin{equation} \label{fF 5.2.7}
S_n(f) = \norme{ s_n(f)-f }_{\infty}\; \leq \sum_{j=n+1}^{\infty} 
\abS{A_j}
\end{equation}
et (cf. \cite{fRi} Theorem 3.12 p. 182)
\begin{equation} \label{fF 5.2.8}
\NOrme{ s_n(f)- u_n^{[n+1]} }_{\infty}\; \leq \sum_{j=n+2}^{\infty} 
\abS{A_j}
\end{equation}

\smallskip\noindent{\bf Approximation uniforme des fonctions dans
 $\Ci$  par des \pols} \\
Les propriétés suivantes sont équivalentes. 
\begin{itemize}\itemsep2pt
\item [(i)]  $\forall k \; \; \exists M > 0 \; \; \forall n> 0 \; \; E_n(f) \leq M/n^k$; 
\item [(ii)]
 $\forall k \; \; \exists M > 0 \; \; \forall n> 0 \; \; S_n(f) \leq M/n^k$;
\item [(iii)] 
$\forall k \; \; \exists M > 0 \; \; \forall n> 0 \; \; \abS{A_n(f)}  \leq 
M/n^k$;
\item [(iv)] $\forall k \; \exists M > 0 \; \forall n> 0 \; \NOrme{ u_n^{[n+1]} - f }_{\infty} \leq M/n^k$;
\item [(v)] 
La fonction  $f$ est de classe $\ca^{\infty}$ (i.e., $f \in \Ci$).
\end{itemize}

\proof 
(i)  et  (ii)  sont équivalents d'après  (\ref{fF 5.2.3}). \\   
(iv)$\Rightarrow$ (i)  trivialement.  \\  
(ii) $\Rightarrow$ (iii)  parce que  $\abS{A_n(f)}\;  \leq S_n(f) + S_{n-
1}(f)$.  \\
(iii) $\Rightarrow$ (iv)  d'après  (\ref{fF 5.2.7})  et  (\ref{fF 5.2.8}).  \\  
(iii) $\Rightarrow$ (v): la série  $\sum' A_i\Tch_i^{(h)}$  est  absolument 
convergente d'après  (\ref{fF 5.2.2})  et les majorations  (iii)~;  donc on 
peut dériver $h$ fois terme à terme la série de Chebyshev.    \\ 
(v) $\Rightarrow$ (i)  d'après  (\ref{fF 5.2.6}).  \eop

\smallskip\noindent{\bf Analyticité et approximation uniforme par des 
\pols} \\
Les propriétés suivantes sont \equivas:   
\begin{itemize}\itemsep2pt
\item [(i)] $ \exists M > 0, \; \; r < 1 \; \; \forall n> 0, \; \; E_n(f) \leq Mr^n$;

\item [(ii)] $\exists M > 0, \; \; r < 1 \; \; \forall n> 0, \; \; S_n(f) \leq Mr^n$; 

\item [(iii)] $\exists M >0, \; \; r < 1 \; \forall n> 0, \; \; \abS{A_n(f)} \leq Mr^n $;

\item [(iv)] $ \exists M > 0, \;  r < 1 \; \; \forall n> 0,  \; \NOrme{ u_n^{[n+1]} - f }_{\infty}   \leq Mr^n$;

\item [(v)] $\exists r < 1$ 
telle que  $f$ est analytique dans le plan complexe à l'intérieur                      
de l'ellipse  $\sE_\rho$  de foyers  $1$, $-1$  et dont le demi-somme des                      
diamètres principaux est égale  à  $\rho = 1/r$;

\item [(vi)] $ \exists M > 0, \;  R > 1 \; \; \forall n, \;  \NOrme{f^{(n)} }_{\infty}  \leq MR^nn!$\,.

\item [(vii)]  $f$ est analytique sur l'intervalle $[-1,1]$.
\end{itemize}
En outre la limite inférieure des valeurs de  $r$  possibles est la même 
dans les  5 premiers cas{\footnote{Les équivalences   (i) \ldots (iv)  se 
démontrent comme pour la proposition précédente.  Pour l'équivalence avec  
(v)  voir par exemple \cite{fRi}.  La condition  (vi)   représente à très 
peu près l'analyticité dans l'ouvert  $U_R$  formé des points dont la 
distance à l'intervalle est inférieure à  $1/R$.}}.

\begin{figure}[htbp]  
\begin{center}
\includegraphics*[width=10cm]{fi521}
\end{center}
\caption[L'ellipse $\sE_\rho$]{\label{ffi521}  
L'ellipse $\sE_\rho$}  
\end{figure}  

\begin{fremarks}\label{frem-anal} ~\\
1) L'espace des fonctions analytiques sur un intervalle compact possède donc 
une bonne description constructive, en termes de série de Chebyshev par 
exemple.  Il apparaît comme une réunion dénombrable emboîtée 
d'espaces métriques complets  (ceux obtenus en utilisant la définition  
(iii)  et en fixant  $M$  et  $r$  rationnels par exemple).  
L'espace des fonctions  $\Ci$  est beaucoup plus difficile à décrire 
constructivement, essentiellement parce qu'il n'existe pas de manière 
agréable d'engendrer les suites de rationnels à 
décroissance rapide{\footnote{Cela tient au  $\forall k  \; \exists M $  dans 
la définition de la décroissance rapide.  
Cette alternance de quantificateurs prend une forme explicite lorsqu'on donne  
$M$  en fonction de  $k$  explicitement.  Mais, en vertu de l'argument diagonal 
de Cantor, il n'y a pas de manière effective d'engendrer les fonctions 
effectives de $\NN$   vers $\NN$.}}. \\
2) La condition  (i)  peut être également lue comme suit:  
la fonction $f$ peut être approchée à  $1/2^n$  (pour la norme uniforme)  
par un \pol de degré $\leq  c.n$,  où  $c$  est une constante 
fixée,  c.-à-d. encore: il existe un entier $h$ tel que
  $E_{hn}(f) \leq 1/2^n$. \\
   Même remarque pour les conditions  (ii),  (iii)  et  (iv). Cela implique 
que la fonction $f$ peut être approchée à  $1/2^n$  par un \pol à 
coefficients dyadiques dont la taille (en \pres dense sur la base des  $X^n$  ou 
sur la base des  $\Tch_n$)  est   en  $\Oo(n^2)$.  La taille de la somme des 
valeurs absolues des coefficients est, elle, en  $\Oo(n)$.    
Bakhvalov  (cf.  \cite {fBa}  IV-8  Th. p. 233)  donne une condition 
suffisante du même genre pour qu'une fonction $f$ soit analytique dans une 
lentille d'extrémités   $-1$  et  $1$  du plan complexe  (et non plus dans 
un voisinage du segment):  il suffit que la somme des valeurs absolues des 
coefficients d'un \pol donnant $f$ à  $1/2^n$  près soit majorée par  
$M2^{qn}$  (où~$M$ et $q$  sont des constantes fixées). C.-à-d. encore: 
la taille de la somme des valeurs absolues des coefficients d'un \pol 
approchant $f$ à  $1/2^n$  est en  $\Oo(n)$.
\begin{figure}[htbp]  
\begin{center}
\includegraphics*[width=12cm]{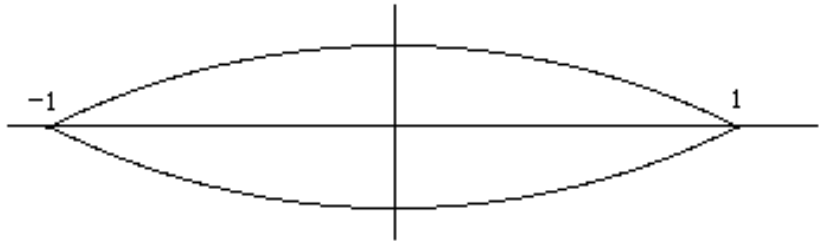}
\end{center}
\caption[La lentille de Bakhvalov]{\label{ffi522}  
La lentille de Bakhvalov}  
\end{figure}  

\end{fremarks}

\smallskip\noindent{\bf Classe de Gevrey et approximation uniforme par des 
\pols} \\
Si $f$ est un  $\p$-point de  $\cw$   donné par une suite 
$\; m \mapsto P_m\; $
 $\p$-calculable (avec $\norme{ f - P_m}_{\infty} \leq 2^m$), alors 
le degré de $P_m$  
est majoré par un \pol en  $m$,  donc il existe un entier $ k $ et une 
constante~$ B $ telles que le degré de $ P_m $ soit majoré par $(Bm)^k$. 
Soit alors $ n $ arbitraire, et considérons le plus grand entier $ m $ tel que  
$(Bm)^k \leq n$,  c.-à-d.  $m := \Flo{\sqrt[k]{n}/B}$. 
On a donc  $m+1 \geq \sqrt[k]{n}/B$.  En posant  $r := 1/2^{1/B}$
  et   $\gamma := 1/k$,  on obtient:  
$$E_n(f) \leq 1/2^m \leq 2.r^{n^{\gamma}}, \; \hbox{avec} \; r \in ]0,1[, \; 
\gamma > 0.$$
En particulier, la suite  $E_n(f)$  est à décroissance rapide et $f\in\Ci$.
Ceci nous amène à étudier les fonctions $f$ pour lesquelles ce genre de 
majoration est obtenu.

\begin{fdefinition}[classe de Gevrey{\footnote{Cf. par exemple  Hörmander \cite{fHo}: The 
Analysis of Linear Partial Differential Operators  I  p 281  (Springer 1983).  
Une fonction est Gevrey d'ordre $ 1 $ si et seulement si elle est 
analytique.}}] \label{f523}

Une fonction $f\in \Ci$ est dite dans la classe de Gevrey d'ordre $\alpha > 0 $ 
si ses dérivées vérifient une majoration:   
\[
\NOrme{f^{(n)} }_{\infty}\leq  MR^n n^{\alpha n}.
\]
La classe de Gevrey est obtenue lorsqu'on ne précise pas l'ordre  $\alpha$. 
\end{fdefinition}

\begin{ftheorem} \label{f524}
Soit $f \in \C$.  Les propriétés suivantes sont équivalentes. 
\begin{itemize}\itemsep2pt
\item [(i)] 
$ \exists M > 0, \; \; r < 1, \; \; \gamma > 0 \; \; 
\forall n> 0, \; \; E_n(f) \leq Mr^{n^{\gamma}}$,
\item [(ii)] 
$ \exists M > 0, \; \; r < 1, \; \; \gamma > 0 \; \; 
\forall n> 0, \; \; S_n(f) \leq Mr^{n^{\gamma}}$,
\item [(iii)] 
$ \exists M > 0, \; \; r < 1, \; \; \gamma > 0 \; \; 
\forall n> 0, \; \; \abS{A_n(f)}\;  
 \leq Mr^{n^{\gamma}}$, 
\item [(iv)] 
$ \exists M > 0, \; r < 1 \; \forall n> 0, \; \NOrme{ u_n^{[n+1]} - f }_{\infty} \leq Mr^{n^{\gamma}}$,
\item [(j)] 
$ \exists c, \beta > 0 \; \forall n> 0,\; \forall m \geq cn^{\beta} 
\; E_m(f) \leq 1/2^n $,
\item [(jj)]
$ \exists c, \beta > 0 \; \forall n> 0,\; \forall m \geq cn^{\beta} \; S_m(f) \leq 1/2^n $,
\item [(jjj)] 
$ \exists c, \beta > 0 \; \forall n> 0,\; \forall m \geq cn^{\beta} \; 
\abS{A_m(f)}\;  \leq 1/2^n $,
\item [(jw)] 
$ \exists c, \beta > 0 \; \forall n> 0,\;\forall m \geq cn^{\beta} \NOrme{ u_m^{[m+1]} - f }_{\infty} \leq 1/2^n $,
\item [(k)]  $f$ est dans la classe de Gevrey.
\end{itemize}
\end{ftheorem}

\proof
(i) $\Leftrightarrow$ (ii)  à partir de l'équation (\ref{fF 5.2.3}). \\  
(i)  $\Rightarrow$ (iii) à partir de l'équation (\ref{fF 5.2.4}). \\ 
(iv)  $\Rightarrow$ (i)  est triviale.\\  
Les  4  équivalences du type (i) $\Leftrightarrow$ (j)  résultent du même 
genre de calcul que celui qui a été fait avant le théorème.\\
L'implication  (jjj) $\Rightarrow$ (jw)  résulte d'un calcul de majoration 
simple utilisant  les inégalités (\ref{fF 5.2.7}) et (\ref{fF 5.2.8}). \\
Supposons  (k), c.-à-d. que $f$ soit Gevrey d'ordre  $\alpha$, et démontrons 
(i). 
Le problème de majoration n'est délicat que pour  $\alpha \geq 1$,
 ce qu'on supposera maintenant. En appliquant le théorème de Jackson, on 
obtient une majoration  
$E_n(f) \leq \pi^{k} \Norme{f^{(k)} }_{\infty} /n^k$
  dès que  $n \geq 2k$,  ce qui donne avec la majoration de Gevrey    
$E_n(f) \leq A(Ck^{\alpha}/n)^k$.  
On peut supposer  $C^{1/ \alpha} \geq 2$  et on prend pour  $k$  un entier 
proche de  $(n/2C)^{1/ \alpha}$ ($ \leq n/2$),
 d'où à très peu près:
$$E_n(f) \leq A(1/2)^{(n/2C)^{1/ \alpha}} = Ar^{n^{\gamma}}, \; \hbox{avec} \; 
\gamma = 1/ \alpha.$$
Supposons maintenant que $f$ vérifie (j)  et démontrons que $f$ est Gevrey.  \\
Le problème de majoration n'est délicat que pour  $\beta \geq 1$, ce qu'on 
supposera maintenant. On écrit  $f^{(k)} = \sum' A_m \Tch_m^{(k)}$.   
D'où   $\Norme{f^{(k)} }_{\infty} \leq \sum' \abS{A_m} m^{2k}$ d'après l'inégalité de V.A. Markov (\ref{fF 5.2.2}).  
On utilise maintenant la majoration  (jjj). On prend $c$ et $\beta$ 
entiers pour simplifier (ce n'est pas une restriction). Dans la somme ci-dessus, 
on regroupe les termes pour $ m $ compris entre $cn^{\beta}$ et 
$c(n+1)^{\beta}$. Dans le paquet obtenu, on majore chaque terme par $ (1/2^n) 
m^2k$, et on majore le nombre de termes par $ c(n+1)b,$ d'où:
\[
\Norme{f^{(k)} }_\infty \leq \sum_n (c(n+1)^{\beta}/2^n)(c(n+1)^{\beta})^{2k} \leq 2c^{2k+1} \sum_n (n+1)^{\beta (2k+1)}/2^n
\]
\[
\leq 4c^{2k+1} \sum_n n^h/2^n, \; \hbox{où} \; h = \beta(2k+1).
\] 
On majore cette série par la série obtenue en dérivant $h$ fois  la 
série  $\sum_n x^n$  (puis en faisant  $x=1/2$)  et on obtient que $f$ est 
Gevrey d'ordre  $2\beta$.  \eop

\begin{fremarks} \label{f525}~\\
1)  L'espace des fonctions Gevrey possède donc une \pres constructive 
agréable.\\
2)  Pour  $\gamma = 1$ on obtient les fonctions analytiques. Pour  $\gamma > 1$, 
on obtient des fonctions entières.\\
3)  Pour $\gamma \leq 1$, la limite supérieure des $ \gamma $ possibles est la 
même dans  (i),  (ii),  (iii)  et  (iv),  la limite inférieure des $\beta$ 
possibles est la même dans  (j),  (jj),  (jjj)  et  (jw),  avec
$\gamma = 1/ \beta$. \\
4)  Si on se base sur le cas des fonctions analytiques  
($\alpha = \beta = \gamma = 1$), 
on peut espérer, pour l'implication  (j) $\Rightarrow$ (k),
 obtenir que $f$ soit  Gevrey d'ordre $\beta$  au moyen d'un calcul de 
majoration plus sophistiqué.\\
5)  Dans  (j),  (jj),  (jw)  on peut supprimer le quantificateur $\forall m$   
si on prend $ c $ et $\beta$ entiers  et $m = cn^{\beta}$.
\end{fremarks}

\subsubsection{Retour aux questions de \com dans l'espace  $\cw$}
\label{fsubsubsec 523}
Nous commençons par une remarque importante.

\smallskip \noindent {\bf  Remarque importante.}  En ce qui concerne  $\DD[X]$, la \pres dense ordinaire  (sur la base des  $X^n$)  et la \pres dense sur la base des \pols de Chebyshev   $\Tch_n$, sont \equivas en temps \poll. Nous utiliserons indifféremment l'une ou l'autre des deux bases, selon la commodité du moment.

\medskip  Rappelons également que la norme 
$\; P \to \norme{ P }_{\infty}\; $ 
 est une fonction $\p$-calculable de $\DD[X]$  vers~$\RR$.

La preuve de la proposition suivante est immédiate. En fait toute 
fonctionnelle définie sur  $\cw$  qui a un \mcu \poll et dont la 
restriction à  $\DD[X]$  est ``facile à calculer'' est elle même ``facile 
à calculer''. Cette proposition prend toute sa valeur au vu du théorème de 
caractérisation \ref{f527}. 

\begin{fproposition}[bon comportement des fonctionnelles usuelles] \label{f526}~ \\ 
Les fonctionnelles:
\[
\cw \to \RR \quad  f \mapsto \norme{f}_{\infty}, \; \norme{f}_2, \; \norme{f}_1
\]
  sont  \uni de classe  $\p$.\\
Les fonctionnelles:
\[
\cw \times [0,1] \times [0,1] \to \RR \quad  (f,a,b) \mapsto \sup_{x 
\in [a,b]} (f(x)), \; \int_{a}^{b} f(x) dx
\]
sont  \uni de classe  $\p$.
\end{fproposition}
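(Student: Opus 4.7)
\proof (esquisse)

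Le plan est de vérifier, pour chacune des fonctionnelles considérées, les deux ingrédients requis par la définition \ref{f221}bis (et son analogue \ref{f229} pour la famille paramétrée par $(a,b)$) : l'existence d'un \mcu \poll, et la calculabilité \etpo de la restriction de la fonctionnelle à l'ensemble $\yw = \DD[X]$ des points rationnels de $\cw$.

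Le \mcu ne pose aucune difficulté. Chaque fonctionnelle est presque lipschitzienne pour la norme $\norme{\cdot}_{\infty}$ sur $\czu$ : on a les estimations immédiates $|\norme{f}_{\infty} - \norme{g}_{\infty}| \leq \norme{f-g}_{\infty}$, $|\norme{f}_p - \norme{g}_p| \leq C_p\,\norme{f-g}_{\infty}$ pour $p=1,2$ (avec $C_p$ une constante ne dépendant que de l'intervalle et éventuellement du poids), et $|\sup_{[a,b]} f - \sup_{[a,b]} g| \leq \norme{f-g}_{\infty}$. Pour la fonctionnelle intégrale, on utilise la majoration $|\int_a^b f - \int_{a'}^{b'} g| \leq (b-a)\,\norme{f-g}_{\infty} + \norme{f}_{\infty}(|a-a'|+|b-b'|)$, qui fournit un \mcu linéaire en les données ; l'éventuelle majoration de $\norme{f}_{\infty}$ se fait par l'intermédiaire d'un point rationnel de référence, comme le permet le cadre \uniz{} de la définition \ref{f229}.

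L'essentiel du travail, et le seul obstacle technique sérieux, réside dans la calculabilité \etpo de la restriction des fonctionnelles à $\DD[X]$. Pour $\norme{P}_{\infty}$ et $\sup_{[a,b]} P$, on suit la stratégie déjà employée dans la preuve de la proposition~\ref{f327}(a) : calculer $P'$ à partir de $P$, isoler ses racines réelles dans l'intervalle considéré, évaluer $|P|$ (ou $P$) en ces racines et aux bornes, puis prendre le maximum. La fonctionnelle $\int_a^b P(x)\,dx$ se calcule en intégrant $P$ symboliquement ($\LINT$ sur le vecteur des coefficients) puis en évaluant le primitif en $a$ et $b$, ce qui est clairement $\p$. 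Pour $\norme{P}_2$, le carré $\norme{P}_2^2$ est un polynôme en les coefficients de $P$ (par orthogonalité des $\Tch_n$ si l'on munit l'espace du produit scalaire de Chebyshev, ou par calcul direct pour le produit scalaire $L^2$ usuel), et l'on extrait ensuite la racine carrée par dichotomie dyadique, en temps \poll. Pour $\norme{P}_1$, on isole les racines réelles $\alpha_1 < \cdots < \alpha_s$ de $P$ dans l'intervalle, on détermine le signe constant $\varepsilon_i \in \{-1,+1\}$ de $P$ sur chaque sous-intervalle $[\alpha_{i-1}, \alpha_i]$, puis on somme les intégrales exactes $\varepsilon_i \int_{\alpha_{i-1}}^{\alpha_i} P$. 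Les $\alpha_i$ n'étant connus qu'avec une précision dyadique, on contrôle l'erreur induite grâce à un majorant de $\norme{P'}_{\infty}$, lui-même calculé par la méthode précédente.

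Le seul ingrédient réellement non trivial est donc l'isolation \etpo des racines réelles d'un polynôme à coefficients dyadiques situées dans un intervalle dyadique donné, résultat classique de calcul formel déjà invoqué pour \ref{f327}(a). Une fois ce résultat acquis, la composition avec les majorations lipschitziennes ci-dessus fournit automatiquement la \pres $\p$ recherchée pour chacune des fonctionnelles, au sens de la définition \ref{f221}bis (pour les fonctionnelles sans paramètre) ou \ref{f229} (pour les fonctionnelles indexées par $(a,b)$). C'est ce qui justifie la remarque de l'article selon laquelle ``la preuve est immédiate''.

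\eop
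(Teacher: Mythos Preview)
Your proposal is correct and follows exactly the approach the paper indicates: the paper's own ``proof'' is the single sentence preceding the statement, namely that any functional on $\cw$ with a polynomial \mcu whose restriction to $\DD[X]$ is $\p$-computable is itself uniformly of class $\p$. You have simply (and correctly) unpacked both ingredients, in particular the real-root-isolation argument for $\norme{P}_\infty$, $\sup_{[a,b]}P$ and $\norme{P}_1$, which is the only non-trivial point and is handled just as in the proof of Proposition~\ref{f327}(a).

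One small remark: for the sup functional you only wrote down the Lipschitz estimate in the $f$-variable; the dependence on $(a,b)$ requires the modulus of continuity of $f$ itself (a small shift of $a$ past a sharp peak of $f$ can change the sup by an amount governed by $\omega_f$, not by $\norme{f}_\infty$). This is exactly the same phenomenon as for the evaluation map in Section~\ref{fsubsec31}, and is handled by the $F_{\alpha,r}$ framework (Notation~\ref{f311}, Definition~\ref{f241}) rather than by a global uniform modulus. The paper glosses over this point too, so your sketch is at the same level of precision as the original.
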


\begin{ftheorem}[caractérisation des  $\p$-points de  $\cw$] \label{f527}
~\\
Soit   $f \in \C$.  Les propriétés suivantes sont équivalentes.
\begin{itemize}\itemsep2pt
\item [a)]  La fonction $f$  est  un $\p$-point de  $\ckf$  et est dans la classe de Gevrey.
\item  [b)]  La suite   $A_n(f)$   est une  $\p$-suite dans  $\RR$   et vérifie une majoration     $\abS{A_n(f)}   \leq Mr^{n^{\gamma}}$
avec   $M > 0$, $\gamma > 0$ et $0 < r < 1$.
\item  [c)]  La fonction $f$  est un  $\p$-point de  $\cw$.
\end{itemize}

\end{ftheorem}

\proof  Les implications  (c) $\Rightarrow$ (a)  et   (c)   $\Rightarrow$  (b) sont faciles à partir du théorème \ref{f524}. \\   
(b)  $\Rightarrow$  (c). Un \pol (en \pres dense sur la base des $\Tch_n$)  approchant $f$  avec la précision  $1/2^{n+1}$  
est obtenu avec la somme partielle extraite de la série de Chebyshev de $f$ en s'arrêtant à l'indice  $(Bn)^h$   (où  $B$  et $h$ se calculent à partir 
de  $M$ et $\gamma$).  
Il reste à remplacer chaque coefficient de Chebyshev par un dyadique l'approchant avec la précision: 
\[
1/ \Flo{(Bn)^h2^{n+1}}  = 1/2^{n+1+h \log(Bn)}.
\]
(a)  $\Rightarrow$  (c).  Un \pol  approchant  $f$  avec la précision  
$1/2^{n+1}$  est obtenu avec   $u_m^{[m+1]}$  (où  $m = (Cn)^k$),
$C$ et $k$  se calculent à partir de  $M$ et $\gamma$, en tenant compte des inégalités  (\ref{fF 5.2.7}) et (\ref{fF 5.2.8})).   
La formule définissant  $u_m^{[m+1]}$   fournit ses coefficients sur la base des  $\Tch_n$  et on peut calculer (en temps \poll)  une approximation à  $1/2^{n+1+k \log(Cn)}$  près de ces coefficients en profitant du fait que la suite double $\xi_i^{[n]}$ est une  $\p$-suite de réels et que la fonction $f$ est  un $\p$-point de  $\ckf$.  \eop

\smallskip Une conséquence immédiate du théorème précédent est obtenue dans le 
cas des fonctions analytiques.

\begin{ftheorem} \label{f528}
Soit   $f \in \C$.  Les propriétés suivantes sont équivalentes. 
\begin{itemize}

\item [(a)] La fonction $f$ est une fonction analytique et c'est un $\p$-point 
de  $\ckf$.

\item [(b)] La suite  $A_n(f)$  est une  $\p$-suite dans  $\RR$  et vérifie 
une majoration
$$ \abS{A_n(f)}\;  \leq Mr^n \; (M > 0, \; r < 1).$$

\item [(c)] La fonction $f$ est une fonction analytique et est un  $\p$-point de $\cw$.
\end{itemize}
\end{ftheorem}

\begin{fdefinition}[fonctions $\p$-analytiques]  \label{f529} 
~\\ 
Lorsque ces propriétés sont vérifiées, on dira que la fonction $f$ est  
$\p$-analytique sur l'intervalle  $[-1,1]$.
 \end{fdefinition}

\begin{ftheorem}[assez bon comportement de la dérivation vis à vis de la \com] \label{f5210} ~\\
Soit $\wi f$ un  $\p$-point de  $\cw$.  Alors  la suite $k \mapsto \wi{f^{(k)}}$   est  une  $\p$-suite de  $\cw$.  
Plus généralement, si $\big(\wi{f_n}\big)$ est une $\p$-suite de  $\cw$  alors la suite 
double $\bigg(\wi{f_n^{(k)}}\bigg)$ est une $\p$-suite de  $\cw$.
\end{ftheorem}

\proof Nous donnons la preuve pour la première partie de la proposition. Elle 
s'appliquerait sans changement pour le cas d'une $\p$-suite de $\cw$. \\
La fonction $\wi f$ est un $\p$-point de $\cw$ donné comme limite d'une suite   $\p$-calculable $n \mapsto P_n$. 
La suite double  $P_n^{(k)}$  est  $\p$-calculable  ($n,k\in\NN_1$).  
Il existe deux entiers  $a$  et  $b$  tels que le degré de~$P_n$ soit majoré par  $2^a n^b$.  
Donc,  d'après l'inégalité de V.A. Markov  (\ref{fF 5.2.2})  on a la majoration:
\[
\NOrme{ P_n^{(k)} - P_{n-1}^{(k)} }_\infty \leq (2^an^{2b})^k \Norme{ P_n - P_{n-1} }_{\infty} \leq (2^an^{2b})^k /2^{n-2} = 1/2^{n-(k.(a+2b \log(n))+2)}
\]
On détermine alors aisément une constante  $n_0$  telle que, pour 
$n \geq 2n_0k$, on ait:   $$n \geq 2(k.(a+2b \log(n))+2)$$ 
et donc  
\[
\NOrme{ P_n^{(k)} - P_{n-1}^{(k)} }_\infty \leq 1/2^{n/2}
\] 
de sorte qu'en posant  $\nu(n) := 2 \sup(n_0k,n)$,  on a, pour 
$q \geq \nu(n),$  
\[
\NOrme{ P_{\nu(n)}^{(k)} - P_{\nu(n+1)}^{(k)} }_\infty \leq 1/2^{n-1}
\]
et donc, puisque  
$\nu(n+1) = \nu(n)$ ou  $\nu(n)+2,$ 
\[
\NOrme{ P_{\nu(n)}^{(k)} - P_{\nu(n+1)}^{(k)} }_\infty \leq 1/2^{n-1}
\]
 d'où enfin:   
\[
\NOrme{ P_{\nu(n)}^{(k)} - f^{(k)} }_\infty \leq 1/2^{n-2}
\]
On termine en notant que la suite double  $(n,k) \mapsto P_{\nu(n+2)}^{(k)}$  
est  $\p$-calculable.    \eop

\begin{fcorollary} \label{f5211}
Si $f$ est un  $\p$-point de  $\cw$   et   $a, b$   deux  $\p$-points de  
$[-1,1]$, alors les suites  
\[
\Norme{f^{(n)} }_{\infty}, \; \Norme{f^{(n)} }_2, \; \Norme{f^{(n)} }_1, \; \Norme{f^{(n)}(a)} \;\hbox{et} \; \sup\nolimits_{x \in [a,b]} (f^{(n)}(x))
\]
sont des $\p$-suites dans  $\RR$.\\
Plus généralement, si $(f_p)$ est une $\p$-suite de  $\cw$  alors les suites 
doubles  
\[
\norme{f_p^{(n)}}_\infty, \; \norme{f_p^{(n)} }_2, \; \norme{f_p^{(n)} }_1, \; \norme{f_p^{(n)}(a)} \; \hbox{et} \; \sup\nolimits_{x \in [a,b]} (f_p^{(n)}(x))
\]
sont des $\p$-suites dans  $\RR$.
\end{fcorollary}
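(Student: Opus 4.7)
The plan is to deduce the corollary by simply composing the two main results that precede it, namely Theorem \ref{f5210} (derivation sends $\p$-points/$\p$-sequences of $\cw$ to $\p$-sequences/$\p$-double-sequences of $\cw$) with Proposition \ref{f526} (the functionals $\norme{\cdot}_\infty$, $\norme{\cdot}_2$, $\norme{\cdot}_1$, evaluation, and $\sup_{[a,b]}$ are uniformly of class $\p$ on $\cw$).

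First I would treat the single-point case. Starting from a $\p$-point $f$ of $\cw$, Theorem \ref{f5210} gives us a $\p$-sequence $n\mapsto f^{(n)}$ in $\cw$, i.e.\ a presentation $\varphi \colon \NN_1\times\NN_1\to\yw$ with $\norme{f^{(n)} - \eta(\varphi(n,m))}_\infty \leq 1/2^m$. Proposition \ref{f526} asserts that each of $\norme{\cdot}_\infty$, $\norme{\cdot}_2$, $\norme{\cdot}_1$ is uniformly of class $\p$ on $\cw$. By the general Proposition \ref{f227} (image of a $\p$-family by a uniformly $\p$ function) the sequences $\norme{f^{(n)}}_\infty$, $\norme{f^{(n)}}_2$, $\norme{f^{(n)}}_1$ are $\p$-sequences in $\RR$.

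For the two remaining quantities, I would invoke the uniform $\p$-complexity of the two-variable functional $(f,a)\mapsto f(a)$ and the three-variable functional $(f,a,b)\mapsto \sup_{x\in[a,b]}f(x)$ (both contained in Proposition \ref{f526}, since evaluation is a special case $a=b$ of the $\sup$, or read from its proof). Combined with the fact that $a$ and $b$ are $\p$-points of $[-1,1]$ and that the $f^{(n)}$ form a $\p$-sequence of $\cw$, Proposition~\ref{f2211} (image of $\p$-families by uniformly $\p$ families of functions) yields that $(f^{(n)}(a))_n$ and $(\sup_{x\in[a,b]}f^{(n)}(x))_n$ are $\p$-sequences in $\RR$.

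For the generalization to a $\p$-sequence $(f_p)$, Theorem \ref{f5210} now supplies a $\p$-double-sequence $(p,k)\mapsto f_p^{(k)}$ in $\cw$, and exactly the same composition with Proposition \ref{f526} (and Proposition \ref{f2211}) produces the announced $\p$-double-sequences in $\RR$. There is no essential obstacle here: both ingredients are already packaged uniformly, and the only care required is bookkeeping to ensure that the precision parameter is transported correctly through the composition, which is routine from the explicit definitions \ref{f217} and \ref{f229}.
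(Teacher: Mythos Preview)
Your proposal is correct and matches the paper's intended argument: the corollary is stated without proof, immediately after Theorem~\ref{f5210}, precisely because it is obtained by composing that theorem with Proposition~\ref{f526} via the general transport principles (Propositions~\ref{f227} and~\ref{f2211}), exactly as you outline. The paper confirms this reading a few lines later when it derives the analogous Corollary~\ref{f5214} from Theorem~\ref{f5213} with the one-line justification ``En appliquant la proposition~\ref{f526}, on obtient''.
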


La preuve du théorème \ref{f5210} (et donc du corollaire \ref{f5211}) est en 
quelque sorte uniforme et a une signification plus générale. Nous allons 
maintenant définir le cadre naturel dans lequel s'applique ce théorème et donner un nouvel énoncé, plus général et plus satisfaisant.

\begin{fdefinition} \label{f5212}
Pour  $c$ et $\beta > 0$  on note $\Gv_{c, \beta}$ la classe des fonctions 
Gevrey vérifiant la majoration (du type (jjj) dans \ref{f524})
$$ \forall m > cn^{\beta} \quad  \abS{A_m(f)}\;  \leq 1/2^n$$
C'est une partie convexe fermée de $\C$.   
Pour  $c$ et $\beta$  entiers, on note  $\Y_{\Gv_{c, \beta}}$  les éléments de  $\DD[X]$  qui sont dans la classe  $\Gv_{c, \beta}$. 
Cet ensemble $\Y_{\Gv_{c, \beta}}$  peut être pris 
pour ensemble des codes des points rationnels d'une \pres rationnelle $\sC_{\Gv, c, \beta}$  de $\Gv_{c, \beta}$.
\end{fdefinition}

On notera que le test d'appartenance à la partie $\Y_{\Gv_{c, \beta}}$  de  
$\DD[X]$   est en temps \poll, puisque les $A_m(f)$ pour un \pol $f$ 
sont ses coefficients sur la base de Chebyshev.
Dans ce nouveau cadre, le théorème \ref{f528} admet une formulation plus 
uniforme et plus efficace.

\begin{ftheorem} \label{f5213}
Chaque fonctionnelle $f \mapsto f^{(k)}$  est une fonction \uni de classe  $\p$   
de $\sC_{\Gv_{c, \beta}}$ vers  $\cw$.   Plus précisément la suite de 
fonctionnelles 
$$ (k,f) \mapsto f^{(k)} \; : \; \NN_1 \times \sC_{\Gv_{c, \beta}} \rightarrow 
\cw $$
est \uni de classe  $\p$   (au sens de la définition \ref{f229}).
\end{ftheorem}

\proof 
La suite double   $(k,f) \mapsto f^{(k)}$   est de faible \com en tant que 
fonction de  $\NN_1 \times  \DD[X]$   
vers  $\DD[X]$  donc aussi en tant que fonction de 
 $\NN_1 \times \Y_{\Gv, \beta}$
  vers  $\DD[X]$. \\
Tout le problème est donc de démontrer que l'on a un \mcu \poll (au sens de 
\ref{f229}).  
Nous devons calculer une fonction  $\mu(k,h)$  telle que l'on ait pour tous $f$ 
et $g$  dans  $\Y_{\Gv, \beta}$:
\[
\norme{ f - g }_\infty \leq 1/2^{\mu (k,h)} \Rightarrow \NOrme{f^{(k)} - g^{(k)} }_\infty \leq 1/2^h. 
\] 
Ce calcul de majoration est assez proche de celui qui a été fait dans la 
preuve du théorème \ref{f528}. On écrit   
\[
\NOrme{f^{(k)} - g^{(k)} }_\infty \leq \NOrme{f^{(k)} - s_n(f)^{(k)} }_{\infty} + \NOrme{ g^{(k)} - s_n(g)^{(k)} }_{\infty} + \NOrme{ s_n(f-g)^{(k)} }_{\infty}.
\]
Dans la somme du second membre les deux premiers termes sont majorés comme 
suit
\[
\NOrme{f^{(k)} - s_n(f)^{(k)} }_{\infty} \;\leq\;\sum_{q>n} \abS{A_q(f)} \NOrme{ T_q^{(k)} }_\infty \;\leq\; \sum_{q>n} \abS{A_q(f)} q^{2k}.
\]
Comme on a:  $\forall q > cn^{\beta} \; \abS{A_q(f)}\;  \leq 1/2^n$, 
$\sum_{q>n} \abS{A_q(f)} q^{2k}$ 
est ``bien'' convergente et on peut expliciter un  $\alpha(k,h)$  \poll en 
$k,h$  tel que  (voir l'explicitation en fin de preuve),
\[
\hbox{avec} \; n = \alpha(k,h) \; \; \forall f \in \Y_{\Gv,c,\beta} \; : \; \; 
\sum_{q>n} \abS{A_q(f)} q^{2k} \leq 1/2^{h+2}.
\]
Une fois fixé  $n = \alpha(k,h)$ il nous reste à rendre petit le terme 
$\NOrme{ s_n(f-g)^{(k)} }_{\infty}$. 
L'inégalité de Markov  (\ref{fF 5.2.2}) implique que 
\[
\NOrme{ s_n(f-g)^{(k)} }_\infty \leq \norme{ s_n(f-g) }_{\infty} n^{2k}.
\]
Il ne reste plus qu'à obtenir une majoration convenable de  
$\norme{ s_n(f-g) }_{\infty}$ à partir de  
$\norme{ f-g }_{\infty}$.
Par exemple, on peut utiliser la majoration  
$S_n(f) \leq (4+ \log (n)) E_n(f)$ 
  (d'après la formule (\ref{fF 5.2.3})) d'où
\[
\NOrme{f^{(k)} }_\infty \leq \norme{f}_\infty + S_n(f) \leq \norme{f}_\infty + (4+ \log (n)) E_n(f) \leq (5+ \log (n)) \norme{f}_\infty.
\]
Donnons pour terminer l'explicitation de $\alpha(k,h)$.  On a les inégalités
\[
\sum_{q \geq cn_0^{\beta}} \abS{A_q(f)} q^{2k} \leq \sum_{n \geq n_0}\sum_{q \leq c(n+1)^{\beta}}q^{2k}/2^n \leq \sum_{n \geq n_0} c(n+1)^{\beta}(c(n+1)^{\beta})^(2k)/2^n,
\]
donc
\[
\sum_{q \geq cn_0^{\beta}} \abS{A_q(f)} q^{2k} \leq \sum_{n \geq n_0} (c(n+1)^{\beta})^{2k+1}/2^n \leq \sum_{n \geq n_0} 1/2^{\varphi (n, \beta ,c, k)}
\]
avec, si  $2^a \geq c$,
\[
\varphi (n, \beta ,c, k) \geq n- (2k+1)a-(2k+1) \beta \log(n+1).
\]
Si l'on a   
\[
\hbox{pour} \; n \geq n_0 \; \; \varphi (n, \beta ,c, k) \geq h+n/2+4, \eqno (\star)
\]
on obtient
\[
\sum_{q \geq cn_0^{\beta}} \abS{A_q(f)} q^{2k} \leq \sum_{n \geq n_0} 1/2^{\varphi (n, \beta ,c, k)} \leq (1/2^{h+2}(1/4) \sum_{n \geq n_0} 1/2^{n/2} \leq 1/2^{h+2}.
\]
Et l'on peut prendre $\alpha (k,h) = cn_0^{\beta}.$ \\
Il reste à voir comment on peut réaliser la condition $(\star)$. \\
Pour tout entier $b$  on a un  entier  $\nu (b) \leq \max (8,b^2)$  pour lequel  
\[
n > \nu (b) \; \Rightarrow\; n \geq b \log (n+1).
\]
Si donc $n \geq \nu (4(2k+1) \beta)$  on obtient
\[
\varphi (n, \beta ,c, k) \geq n-(2k+1)a-(2k+1)\beta \log(n+1) \geq (3n/4)-(2k+1)a
\]
et la condition $\varphi (n, \beta ,c, k)\geq h+n/2+4$ est réalisée  si    
$n/4 \geq (2k+1)a +h +4.$
D'où finalement  
$\alpha (h,k) = c \max(\nu (4(2k+1)\beta ),4((2k+1)a + h +4))^{\beta}.$    \eop
 
En appliquant la proposition \ref{f526}, on obtient le corollaire suivant.

\begin{fcorollary} \label{f5214}~\\
i) Les trois suites de fonctionnelles 
\[
(n,f) \mapsto \Norme{f^{(n)} }_{\infty} , \;  \Norme{f^{(n)} }_2, \; \Norme{f^{(n)} }_1 \qquad  \NN_1 \times \sC_{\Gv,c, \beta} \to \RR
\]
sont \uni de classe  $\p$   (au sens de la définition \ref{f229}).\\
ii) La suite de fonctionnelles
\[(n,f,x) \mapsto f^{(n)}(x) \qquad 
\NN_1 \times \sC_{\Gv,c, \beta} \times [-1,1] \to \RR  
\] 
est \uni de classe  $\p$.\\
iii) La suite de fonctionnelles
\[  (n,f,a, b) \mapsto \sup_{x \in [a,b]} (f^{(n)}(x)) \qquad 
\NN_1 \times \sC_{\Gv,c, \beta} \times [-1,1] \times [-1,1] \to \RR 
\] 
est \uni de classe  $\p$.  
\end{fcorollary}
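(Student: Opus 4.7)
Le plan est d'utiliser le théorème \ref{f5213}, qui fournit la famille $(k,f) \mapsto f^{(k)}$ uniformément de classe $\p$ de $\NN_1 \times \sC_{\Gv,c,\beta}$ vers $\cw$, puis de la composer avec les fonctionnelles usuelles sur $\cw$ mises à disposition par la proposition \ref{f526}. L'outil technique principal est la proposition \ref{f2212}, appliquée après avoir regroupé les paramètres discrets multiples en un unique préensemble produit et après avoir muni les espaces produits de la structure de \pres rationnelle naturelle (cf. exemples \ref{f213}).

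Pour (i), je composerais $(k,f) \mapsto f^{(k)}$ avec chacune des trois fonctionnelles $\norme{\cdot}_{\infty}$, $\norme{\cdot}_2$, $\norme{\cdot}_1$, qui sont uniformément de classe $\p$ de $\cw$ vers $\RR$ d'après la proposition \ref{f526} (vues comme familles triviales indexées par un singleton). La composition fournit immédiatement les trois familles $(n,f) \mapsto \norme{f^{(n)}}$ uniformément de classe $\p$. Pour (iii), on applique le même schéma avec la fonctionnelle $(f,a,b) \mapsto \sup_{x \in [a,b]} f(x)$, également garantie uniformément de classe $\p$ par la proposition \ref{f526}.

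Pour (ii), le schéma reste identique à condition de disposer préalablement de la fonctionnelle d'évaluation $(f,x) \mapsto f(x)$ comme application uniformément de classe $\p$ de $\cw \times [-1,1]$ vers $\RR$. Ceci est facile à obtenir directement : le schéma de Horner calcule $f(x)$ à précision $1/2^n$ en temps \poll à partir des coefficients dyadiques de $f$ et des bits de $x$, et un module de continuité uniforme s'obtient en contrôlant la dérivée par l'inégalité de A.A.~Markov (\ref{fF 5.2.1}), qui majore $\Norme{f'}_\infty$ par $d^2 \norme{f}_\infty$ où $d = \deg f$. On compose ensuite cette évaluation avec $(k,f) \mapsto f^{(k)}$.

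Le seul point à surveiller est la gestion formelle de la composition : la proposition \ref{f2212} est énoncée pour des familles $Z \to \U(X_1,X_2)$ et $Z' \to \U(X_2,X_3)$, mais son extension au cadre où l'on compose $\NN_1 \times \sC_{\Gv,c,\beta} \to \cw$ avec $\cw \times [-1,1]^p \to \RR$ (pour $p = 0, 1, 2$) est routinière, en regardant la seconde famille comme une famille de fonctions $\cw \to \U([-1,1]^p,\RR)$ paramétrée par le point fixé dans $\cw$, ou bien en invoquant directement la proposition \ref{f2211} pour l'évaluation des familles de points. Aucun obstacle conceptuel réel n'est donc présent, le résultat étant un pur corollaire de composition combinant les théorèmes \ref{f5213} et la proposition \ref{f526}.
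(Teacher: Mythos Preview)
Your proposal is correct and follows exactly the approach the paper takes: the corollary is obtained by composing the family $(k,f)\mapsto f^{(k)}$ from théorème~\ref{f5213} with the functionals of proposition~\ref{f526}, and the paper's own justification is simply the one-line ``En appliquant la proposition~\ref{f526}''. Your extra care in spelling out the composition via proposition~\ref{f2212} and in treating the evaluation functional separately for~(ii) is fine, though you could have shortened~(ii) by observing that $f^{(n)}(x)=\sup_{t\in[x,x]} f^{(n)}(t)$ makes it a special case of~(iii).
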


\begin{fremark} 
Les théorèmes  \ref{f527}, \ref{f528}, \ref{f5210}, \ref{f5213}, la proposition 
\ref{f526} et les corollaires \ref{f5211} et \ref{f5214}  améliorent sensiblement 
les résultats de  \cite{fKF82}, \cite{fKF88}  et  \cite{fMu87}  sur les fonctions 
analytiques calculables \etpo (au sens de Ko-Friedman).
\end{fremark}

\subsection {Comparaisons de différentes présentations de classe  
$\p$}\label{fsubsec53}
Dans cette section, nous obtenons la chaîne suivante de fonctions \uni de 
classe  $\p$  pour l'identité de $\czu$. 
\[
\cw \to \csp \to \crf \equiv \csr \to \ckf 
\] 
et aucune des flèches  $\rightarrow$   dans la ligne ci-dessus n'est une $\p$-équivalence sauf peut-être  
$\csp  \rightarrow   \crf$  et très éventuellement  $\crf  \rightarrow   
\ckf$  (mais cela impliquerait $\p = \np$).
Tout d'abord, il est clair que l'identité de $\czu$  est de classe  $\LINT $ 
pour les cas suivants:  
\[
\cw \rightarrow \csp; \quad  \csp \to \csr ;  \quad  \crf \rightarrow \csr. 
\]
Par ailleurs l'identité de $\czu$  est de classe  $\p$   dans le cas suivant 
\[
\crf \rightarrow \caf 
\] 
(en fait, seul le calcul de la magnitude n'est pas complètement trivial, et il est sûrement dans $\DTI (\Oo(N^2))$). 

Il nous reste à démontrer que l'identité de $\czu$ de  $\csp$  vers  $\crf$ et celle de  $\csr$  vers  $\crf$  
sont de classe~$\p$.

\begin{ftheorem} \label{f531}
L'identité de  $\czu$  de   $\csp$  vers  $\crf$  est \uni de classe $\p$.
\end{ftheorem} 
\proof 
Soit  $n \in  \NN_1$  et  $f \in  \ysp$. On doit calculer un élément  
$g \in \yrf $  tel que  
\[
\Norme{ \wi{f} - \wi{g} }_{\infty} \leq 1/2^n.
\] 
On a $f = ((x_0,x_1,\ldots,x_t),(P_1,P_2,\ldots,P_t))$ avec $x_0=0$, $x_t=1$ et 
$P_i(x_i) = P_{i+1}(x_i)$   pour  $i = 1,\ldots,t-1$.   
On calcule   $m \in \NN_1$   tel que  $2^m$  majore les 
$\norme{P_i}_{\infty}$ et $\norme{P_i'}_{\infty}$.  \\
On pose $p=m+n+1$ et $z_i = x_i - 1/2^{p+1}$ pour $i=1,\dots,t-1$. Pour  $i = 0$ on pose $h_0 := -C_{p,z_1}$. Pour  $i = t-1$ on pose $h_{t-1} := C_{p,z_{t-1}}$. Pour  $i = 1,\ldots,t-2$ on pose 
 $h_i := C_{p,z_i} - C_{p,z_{i+1}}$   (voir la figure \ref{ffi531}). 

\begin{figure}[htbp]  
\begin{center}
\includegraphics*[width=12cm]{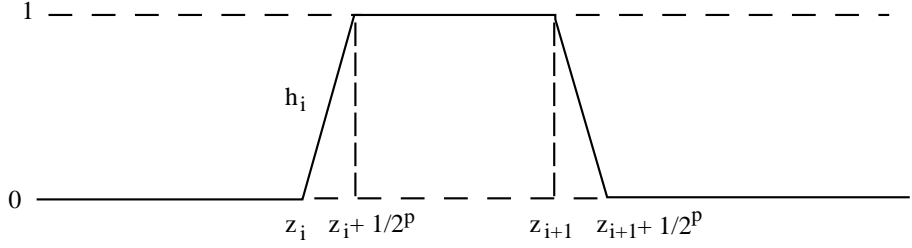}
\end{center}
\caption[La fonction $h_i$]{\label{ffi531}  
La fonction $h_i$}  
\end{figure}  

\noindent Alors la fonction  $\wi{f}$ est à très peu près égale à  
$h=\sum_i h_iP_{i+1}$:  sur les intervalles  
$[z_i+1/2^p,z_{i+1}]$ on a   $h =\wi{f}$, 
tandis que sur un intervalle  
\[
[z_i, z_i +1/2^p] = [x_i - 1/2^{p+1}, x_i +1/2^{p+1}]
\]
on obtient  $h = h_{i-1}P_i + h_i P_{i+1}$  qui est une moyenne pondérée de  
$P_i$ et $P_{i+1}$,   au lieu de  $P_i$   ou  $P_{i+1}$. 
En un point $x$ de cet intervalle, on a  
$\abs { x-x_i }\;  \leq 1/2^{p+1}$, on applique le théorème des 
accroissements finis et en utilisant $P_i(x_i) =P_{i+1}(x_i)$  on obtient
\[
\abs{P_i(x) - P_{i+1}(x)}\; \leq \abs{P_i(x) - P_i(x_i)} + \abs{P_{i+1}(x) - P_{i+1}(x_i)}\; \leq 2^{m+1}/2^{p+1} \leq 1/2^{n+1}
\]
et donc:
\[
\Norme{ \wi{f} - h }_{\infty} \leq 1/2^{n+1}.
\]
Il reste à remplacer chaque  $h_i$  par un élément  $g_i$ de  $\yrf$  
vérifiant   
$\Norme{ \wi{g_i} - h_i }_{\infty} \leq 1/(t2^{p})$
 (de sorte que
$\Norme{\wi{g_i}P_{i+1}-h_iP_{i+1}}_{\infty}\leq 1/(t2^{n+1})$
et donc
\[
\NOrme{ h - \sum\nolimits_i \wi{g_i}P_{i+1} }_\infty \leq 1/2^{n+1}.
\]
Vue la proposition \ref{f337} concernant l'approximation des fonctions  $C_{p,a}$  
par des fractions rationnelles, le calcul des  $g_i$  se fait \etpo à partir 
de la donnée  $(f,n)$. Il reste à exprimer  
$\sum_i\wi{g_i}P_{i+1}$  
 sous forme $\wi{g}$   avec  $ g \in \yrf$, ce qui n'offre pas de 
difficulté, pour obtenir  
$\Norme{ \wi{f} - \wi{g} }_{\infty} \leq 1/2^n.$  
\eop

\begin{ftheorem} \label{f532}
Les représentations   $\crf$  et   $\csr$  de  $\czu$  sont $\p$-
équivalentes.
\end{ftheorem}
\proof
À partir d'un élément  
$f=((x_0,x_1,\ldots,x_t),((P_1,Q_1),\ldots,(P_t,Q_t)))$  arbitraire de  $\ysr$  
on peut calculer \etpo des nombres dyadiques $d_1, d_2,\ldots,d_t \geq 1$   
vérifiant   
$d_iQ_i(x_i) = d_{i+1}Q_{i+1}(x_i)$  (et donc aussi  $d_iP_i(x_i) = 
d_{i+1}P_{i+1}(x_i)$)  pour  $i = 1, \ldots, t-1$ 
Alors  $\wi{f} = \wi{g} / \wi{h}$    où  
$g,h \in \Y_{Sp}$  sont donnés par:
\[
g = ((x_0,x_1,\ldots,x_t),(d_1P_1,\ldots,d_tP_t)) \quad  \hbox{et} \quad  h = ((x_0,x_1,\ldots,x_t),(d_1Q_1,\ldots,d_tQ_t)).
\]
On conclut en utilisant le \thref{f531} qui permet d'approcher 
convenablement  $\wi g$  et $\wi h$ par des fractions rationnelles.  \eop

\section*{Conclusion}\label{fsec Conclusion}
\addcontentsline{toc}{section}{Conclusion}
Hoover a relié de manière intéressante la notion naturelle de \com des 
fonctions réelles continues donnée par Ko et Friedman à une autre notion, 
basée sur les circuits arithmétiques. Il a donné de cette manière une 
certaine version ``en temps \poll'' du théorème d'approximation de 
Weierstrass. 

Dans cet article nous avons généralisé l'approche de Hoover, en 
introduisant un point de vue uniforme grâce à la notion de \rp  d'un espace 
métrique.
Ceci fournit un cadre de travail général satisfaisant pour l'étude de 
très nombreux problèmes de \com algorithmique en analyse. Grâce à cette approche
nous avons également généralisé  les 
résultats de Ko, Friedman et Müller concernant les fonctions analytiques 
calculables en temps \poll.  
La \rp  $\csl$ $\p$-\equiva à  $\ckf$ est la plus naturelle du point de vue de 
l'informatique théorique. Cependant ce n'est pas une $\p$-\pres et elle est 
mal adaptée à l'analyse numérique dés qu'on se pose des problèmes plus 
compliqués que l'évaluation (le calcul de la norme, ou d'une primitive par 
exemple).

Parmi les représentations que nous avons étudiées, la \pres  $\cw$  semble 
être la plus facile à utiliser pour de nombreux problèmes de l'analyse 
numérique. 

Quant à la \pres par les fractions rationnelles, elle mérite 
une étude plus approfondie. On aimerait obtenir pour cette \pres un analogue du théorème (donné pour la \pres~$\cw$) concernant la caractérisation des $\p$-points. 
Il serait également intéressant d'obtenir pour $\crf$ la 
$\p$-calculabilité de certaines opérations usuelles de l'analyse numérique, 
comme le calcul d'une primitive ou plus généralement le calcul de la 
solution d'une équation différentielle ordinaire.

\bni{\bf Remerciements}
Nous remercions Maurice Margenstern et le referee pour leurs remarques 
pertinentes.

\addcontentsline{toc}{section}{Références bibliographiques}

\end{document}